\NewDocumentCommand{\makeabbrev}{mmm}
 {
  \yoruk_makeabbrev:nnn { #1 } { #2 } { #3 }
 }
\makeabbrev{\textsf}  {sf#1} {a,b,c,d,e,f,g,h,i,j,k,l,m,n,o,p,q,r,s,t,u,v,w,x,y,z,A,B,C,D,E,F,G,H,I,J,K,L,M,N,O,P,Q,R,S,T,U,V,W,X,Y,Z}
\makeabbrev{\mathfrak}{fk#1} {a,b,c,d,e,f,g,h,i,j,k,l,m,n,o,p,q,r,s,t,u,v,w,x,y,z,A,B,C,D,E,F,G,H,I,J,K,L,M,N,O,P,Q,R,S,T,U,V,W,X,Y,Z}
\makeabbrev{\mathbf}  {bf#1} {a,b,c,d,e,f,g,h,i,j,k,l,m,n,o,p,q,r,s,t,u,v,w,x,y,z,A,B,C,D,E,F,G,H,I,J,K,L,M,N,O,P,Q,R,S,T,U,V,W,X,Y,Z}
\makeabbrev{\mathrm}  {rr#1} {a,b,c,d,e,f,g,h,i,j,k,l,m,n,o,p,q,r,s,t,u,v,w,x,y,z,A,B,C,D,E,F,G,H,I,J,K,L,M,N,O,P,Q,R,S,T,U,V,W,X,Y,Z}
\makeabbrev{\mathcal} {cal#1}{A,B,C,D,E,F,G,H,I,J,K,L,M,N,O,P,Q,R,S,T,U,V,W,X,Y,Z}
\makeabbrev{\mathbb}  {bb#1} {A,B,C,D,E,F,G,H,I,J,K,L,M,N,O,P,Q,R,S,T,U,V,W,X,Y,Z}
\makeabbrev{\mathscr} {scr#1}{A,B,C,D,E,F,G,H,I,J,K,L,M,N,O,P,Q,R,S,T,U,V,W,X,Y,Z}
\newcommand{\mcal} [1]{\mathcal{#1}}
\newcommand{\rr} [1]{\mathrm{#1}}
\newcommand\rlywdhat[1]{
  \savestack{\tmpbox}{\stretchto{
    \scaleto{\scalerel*[\widthof{\ensuremath{#1}} * \real{0.8}]
      {\kern.1pt\mathchar"0362\kern.1pt}{\rule{0ex}{\textheight}}}
    {\textheight}
    }{1.8ex}
  }
\stackon[-7pt]{#1}{\tmpbox}
}
\renewcommand{\geq}{\geqslant}
\renewcommand{\leq}{\leqslant}
\theoremstyle{definition}
\newtheorem{thm}{Theorem}[section]
\newtheorem{prop}[thm]{Proposition}
\newtheorem{lem}[thm]{Lemma}
\newtheorem{cor}[thm]{Corollary}
\newtheorem{dfn}[thm]{Definition}
\newtheorem*{conj}{Conjecture}
\newtheorem*{notation}{Notation}
\declaretheoremstyle{definition}
\theoremstyle{remark}
\newtheorem{rem}[thm]{Remark}
\tikzset{
    labl/.style={anchor=center, rotate=90, inner sep=.5mm}
}
\newcommand{\xrightarrowdbl}[2][]{%
  \xrightarrow[#1]{#2}\mathrel{\mkern-14mu}\rightarrow
}
\newcommand{\Treg}{\mathcal{T}^{\mathrm{reg}}}
\newcommand{\absGal}{G_{\mathbb{Q}_p}}
\newcommand{\trivar}{X_{\mathrm{tri}}^\square(\overline{\rho})}
\newcommand{\drig}{\mathbb{D}_{\mathrm{rig}}^{\dag}}
\newcommand{\phigam}{(\phi,\Gamma)}
\newcommand{\Category}{(\phi,\Gamma)-\mathrm{Mod}(\mathcal{R}^r_A[1/t])\times_{\mathrm{Mod}(\prod A((t)))^\Gamma}\mathrm{Mod}(\prod_{K_m\hookrightarrow L}A\llbracket t\rrbracket)^\Gamma}
\newcommand{\Categoryr}{(\phi,\Gamma)-\mathrm{Mod}(\mathcal{R}^{r_0}_A[1/t])\times_{\mathrm{Mod}(\prod A((t)))^\Gamma}\mathrm{Mod}(\prod_{K_m\hookrightarrow L}A\llbracket t\rrbracket)^\Gamma}
\newcommand{\robba}{\mathcal{R}}
\newcommand{\defspace}{\mathfrak{X}_{\bar\rho}^\square}
\newcommand{\reg}{\mathrm{reg}}
\newcommand{\tri}{\mathrm{tri}}
\newcommand{\regtrivar}{U_{\mathrm{tri}}^\square(\bar\rho)}
\newcommand{\Qp}{\mathbb{Q}_p}
\newcommand{\T}{\mathcal{T}}
\newcommand{\W}{\mathcal{W}}
\newcommand{\D}{\mathbb{D}}
\newcommand{\Q}{\mathbb{Q}}
\newcommand{\N}{\mathbb{N}}
\newcommand{\Z}{\mathbb{Z}}
\newcommand{\rig}{\mathrm{rig}}
\newcommand{\Gr}{\mathrm{Gr}^{\rig}}
\newcommand{\imF}{\mathrm{im}(F)}
\newcommand{\an}{\mathrm{an}}
\newcommand{\Gm}{\mathbb{G}_m}
\newcommand*{\isoarrow}[1]{\arrow[#1,"\rotatebox{90}{\(\sim\)}"
]}
\newcommand{\im}{\mathrm{im}}
\newcommand{\Spec}{\mathrm{Spec}}
\newcommand{\Spf}{\mathrm{Spf}}
\newcommand{\BdR}{B_{\mathrm{dR}}}
\newcommand{\BdRA}{B_{\mathrm{dR},A}}
\newcommand{\pdR}{\mathrm{pdR}}
\newcommand{\Rep}{\mathrm{Rep}}
\newcommand{\Mod}{\mathrm{Mod}}
\newcommand{\WdR}{W_{\mathrm{dR}}}
\newcommand{\DpdR}{D_{\mathrm{pdR}}}
\newcommand{\Sp}{\mathrm{Sp}}
\newcommand{\Fil}{\mathrm{Fil}}
\newcommand{\st}{\mathrm{st}}
\newcommand{\dR}{\mathrm{dR}}
\newcommand{\gr}{\mathrm{gr}}
\newcommand{\Hom}{\rr{Hom}}
\numberwithin{equation}{section}
\author{Martina Fruttidoro}
\title{Non-regular trianguline representations}
\date{}
\begin{document}

\maketitle

\begin{abstract}
    Let $\absGal$ be the absolute Galois group of $\Qp$ and let $L$ be a finite extension of $\Qp$. 
    Moreover let $\bar\rho:\absGal\rightarrow GL_n(k_L)$ be a continous representation of $\absGal$, 
    where $k_L$ is the residue field of $L$.
    We find sufficient conditions for which a trianguline representation $\rho:\absGal\rightarrow GL_n(L)$ 
    lifting $\bar\rho$ is a point of the trianguline variety associated to $\bar\rho$.
\end{abstract}

\tableofcontents
\thispagestyle{empty}

\section*{Introduction}
\setcounter{page}{1}
  The $p$-adic local Langlands programme (for $p$ a prime number) conjectures a correspondence between
  $p$-adic Galois representations and certain $p$-adic representations of reductive groups over $p$-adic local fields.
  The theory of $p$-adic automorphic forms allows us to attach a $p$-adic Galois representation
  to a certain $p$-adic automorphic form; thus, this approach provides a plausible candidate for the
  correspondence between $p$-adic Galois representations that arise in this way
  and $p$-adic automorphic representations of $GL_n$.
  One nice aspect of this theory is that $p$-adic automorphic forms come in families parametrised by
  geometric objects called \emph{eigenvarieties}.
  In \cite[§3.2]{breuil2017interpretation}, Breuil, Hellmann and Schraen construct an analogue of an eigenvariety
  called \emph{patched eigenvariety} using the \emph{patching module} constructed by
  Caraiani, Emerton, Gee, Geraghty, Paškūnas and Shin in \cite{caraiani2013patching}.
  In \cite{breuil2017interpretation}, the authors also construct an analogue of an
  eigenvariety in the world of Galois representations, called \emph{trianguline variety}.
  Both the patched eigenvariety and the trianguline variety are defined as
  subspaces of $\defspace\times\T^n$ (see below for definitions of these spaces)
  and conjecturally they coincide (up to power series-rings over deformation rings at auxiliary places away from $p$).
  One of the main results of this thesis is the description of the underlying set of points of the
  trianguline variety in some special cases: we will show that in these cases,
  for a trianguline representation $\rho$, the representation $\rho$ is a point of the trianguline variety.
  Despite what the name might suggest, the above statement is highly non-trivial:
  as we will see, the trianguline variety is by definition the closure of the set of trianguline
  representations with a regularity condition on the parameters and
  it is unclear whether such closure adds all trianguline representations.
  In the following, I will give a more precise summary of the main results.

  Let $L$ be a finite extension of $\Qp$ and let $A$ be an affinoid $L$-algebra.
  Let us denote by $\absGal$ the absolute Galois group of $\Qp$ and let us fix a natural number $n\geq 1$.
  One of the main results in the theory of $p$-adic Galois representations is that there
  exists a fully faithful and exact functor
  \[
    \D^\dag_\rig:\left\{
      \begin{array}{c}
        \text{continous representations }\\
        \rho:\absGal\rightarrow GL_n(A)
      \end{array}\right\}
    \rightarrow\left\{
      \begin{array}{c}
        \text{étale }\phigam\text{-modules}\\
        \text{over }\robba_A\text{ of rank }n
      \end{array}\right\}.
  \]
  A continous representation $\rho:\absGal\rightarrow GL_n(L)$ is
  \emph{trianguline} if $\D^\dag_\rig(\rho)$ is \emph{triangulable},
  i.e. it is a successive extension of $\phigam$-modules of rank 1.
  This kind of representations constitute a very important category of representations
  in the framework of the $p$-adic local Langlands programme: they first appeared in the
  work of Colmez \cite{colmez2008representations}, who showed that 2-dimensional trianguline representations
  are Zariski dense in the deformation space of 2-dimensional $p$-adic Galois representations.
  This result was crucial for proving the $p$-adic local Langlands correspondence
  for $GL_2(\Qp)$.

  In \cite{kedlayaCohomology}, Kedlaya, Pottharst and Xiao prove that it is possible to
  classify $\phigam$-modules of rank 1 in a nice way:
  they show that there exists a bijection
  \begin{align*}
    \{\text{continous characters }\delta:\Qp^\times\rightarrow A^\times\}
    &\leftrightarrow\{\text{free }\phigam\text{-modules of rank }1\text{ over }\robba_A\}\\
    \delta&\mapsto\robba_A(\delta).
  \end{align*}
  The functor $\Hom_{\rr{cont}}(\Qp^\times,-)$ of continous characters of $\Qp^\times$ is representable by a rigid space $\T$,
  which constitutes then a nice moduli space for $\phigam$-modules of rank 1.
  Now let us fix a residual representation $\bar\rho:\absGal\rightarrow GL_n(k_L)$
  (where $k_L$ is the residue field of $L$) and let us denote by $\defspace$ the
  rigid analytic generic fibre of its universal framed deformation space.
  Let us define
  \[
    \regtrivar\coloneqq\left\{
      (\rho,\delta_1,\dots,\delta_n)\in\defspace\times\T^\reg_n\colon
        \begin{array}{c}
          \rho\text{ is trianguline}\\
          \text{with graded pieces}\\
          \text{of a filtration of }\drig(\rho)\\
          \text{ given by }(\delta_1,\dots,\delta_n)
        \end{array}
    \right\}\subset\defspace\times\T^n,
  \]
  where $\T^\reg_n$ is a subspace of $\T^n$ of tuples of characters satisfying a certain
  regularity condition (see (\ref{def of Tnreg}) in the body of the paper).
  The \emph{trianguline variety} is the Zariski closure of $\regtrivar$ inside $\defspace\times\T^n$
  and is denoted by $\trivar$.
  This space was first introduced by Hellmann in \cite{hellmann2012families}
  and its geometry has been studied in \cite{breuil2017smoothness},\cite{breuil2017interpretation}
  and \cite{breuil2019local}, yet there still are some properties to be investigated.
  Consider for example a trianguline representation $\rho:\absGal\rightarrow GL_n(L)$
  and let $(\delta_1,\dots,\delta_n)\in\T^n(L)$ be \emph{parameters} for $\rho$, which means that
  $\drig(\rho)$ has a filtration with graded pieces $\robba_L(\delta_i)$;
  it is well-known that up to conjugation by a matrix in $GL_n(L)$, we can assume $\rho$
  has values in $GL_n(\mathcal{O}_L)$.
  Let $\bar\rho:\absGal\rightarrow GL_n(k_L)$ be the reduction of $\rho$ to the residue field of $L$.
  We can then ask ourselves whether $(\rho,\delta_1,\dots,\delta_n)$ is a point of $\trivar(L)$ or not.

  \begin{conj}
    Let $\rho:\absGal\rightarrow GL_n(L)$ be a trianguline representation with parameters
    $\underline{\delta}\coloneqq(\delta_1,\dots,\delta_n)\in\mathcal{T}^n(L)$ and let
    $\bar\rho:\absGal\rightarrow GL_n(k_L)$ be the reduction of $\rho$ to the residue field of $L$.
    Then $(\rho,\underline{\delta})\in\trivar(L)$.
  \end{conj}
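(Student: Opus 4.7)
The natural strategy is deformation: I would construct a smooth rigid analytic curve $C$ with a distinguished point $c_0\in C$ together with a morphism $f:C\to\defspace\times\T^n$ such that $f(c_0)=(\rho,\underline{\delta})$ and $f(C\setminus\{c_0\})\subset\regtrivar$. Since $\trivar$ is by definition the Zariski closure of $\regtrivar$, this would immediately place $(\rho,\underline{\delta})$ in $\trivar(L)$.

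The curve $C$ and its map to $\T^n$ are easy to produce because $\T^\reg_n$ is Zariski open and dense in $\T^n$: take $C=\Sp A$ smooth and a morphism $\psi:C\to\T^n$ sending $c_0$ to $\underline{\delta}$ and $C\setminus\{c_0\}$ into $\T^\reg_n$, and denote by $\tilde\delta_1,\dots,\tilde\delta_n:\Qp^\times\to A^\times$ the characters pulled back along $\psi$. The substantial part of the argument is to lift the triangulation of $\rho$: one builds, inductively on the rank, a triangulated $\phigam$-module $\tilde\D$ over $\robba_A$ with graded pieces $\robba_A(\tilde\delta_i)$ whose specialisation at $c_0$ identifies with $\drig(\rho)$ together with its chosen triangulation. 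At each step $i\to i+1$, this amounts to lifting the extension class that $\rho$ contributes in $\mathrm{Ext}^1_{\phigam}(\robba_L(\delta_{i+1}),\tilde\D^{\leq i}\otimes_A L)$ to a class defined over $A$. Once $\tilde\D$ is in hand, openness of the étale locus in families of $\phigam$-modules (Kedlaya--Pottharst--Xiao) lets me shrink $C$ around $c_0$ so that $\tilde\D$ is everywhere étale; inverting $\drig$ yields a family $\tilde\rho$ over $A$ with $\tilde\rho\otimes_A L=\rho$, and passing to a suitable formal neighbourhood of $c_0$ brings $\tilde\rho$ into the framed deformation space $\defspace$ of $\bar\rho$, producing the desired morphism $f$.

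The main obstacle is the inductive lifting step. When $\underline{\delta}$ fails to be regular, the specialisation map
\[
\mathrm{Ext}^1_{\phigam}\bigl(\robba_A(\tilde\delta_{i+1}),\tilde\D^{\leq i}\bigr)\longrightarrow \mathrm{Ext}^1_{\phigam}\bigl(\robba_L(\delta_{i+1}),\tilde\D^{\leq i}\otimes_A L\bigr)
\]
need not be surjective: the dimension of the target jumps precisely at non-regular parameters, reflecting the appearance of extra homomorphisms between consecutive graded pieces. Whether the specific extension class realising $\rho$ admits a lift then depends on subtle compatibilities between $\rho$ and the choice of $\psi$, and these compatibilities should constitute the sufficient conditions announced in the abstract. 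Overcoming this obstruction in full generality will presumably require either enlarging the base to absorb the jump --- working along a higher-dimensional germ inside $\defspace\times\T^n$ rather than along a curve --- or an explicit local model for $\trivar$ at non-regular parameters in the spirit of the geometric analyses near crystalline points in \cite{breuil2017smoothness,breuil2017interpretation,breuil2019local}.
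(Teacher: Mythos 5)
You are addressing a \emph{conjecture}, and the paper itself does not prove it: it establishes only sufficient conditions (Theorems \ref{RESULT1} and \ref{result2}). Judged against what the paper actually does, your overall strategy is the same as Theorem \ref{strategy}: realize $(\rho,\underline{\delta})$ as the specialization at a point $x$ of a family of triangulable $\phigam$-modules whose regular locus is dense, then pass to the Zariski closure. You also pinpoint the correct bottleneck --- that the specialization map on $\mathrm{Ext}^1_{\phigam}$ is not surjective when $H^2_{\phigam}$ of the relevant rank-one twist jumps --- which is precisely where Theorem \ref{bc} and Theorem \ref{extensions} cease to apply, i.e.\ when some $\delta_i/\delta_j\in\T^+$.

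Two points to sharpen. First, a one-dimensional curve $C$ is already too small as a base even in the $\T^+$-free case: the paper instead uses a vector bundle of rank $\dim_L H^1_{\phigam}$ over a full neighbourhood of $\underline{\delta}$ in $\T^n$ (Propositions \ref{2 dim T-} and \ref{n dim T-}), because the parameters and the extension class must vary independently and densely; your closing remark about ``higher-dimensional germs'' correctly anticipates this. Second --- and this is the substantive idea you do not find --- the paper handles the $H^2$ jump by passing to $\phigam$-modules over $\robba_A[1/t]$, where the second cohomology is killed (Remark \ref{phigam cohomology with 1/t}, Lemma \ref{iso of H^1 in high dim}) so that extensions again base-change well, and then recovers the missing integral structure as a $\Gamma$-stable lattice parametrized by the affine Grassmannian via a Beauville--Laszlo-type equivalence (Theorems \ref{equivalenceBL}, \ref{construction of X}, \ref{construction of X_n}). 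The price is that the resulting space $X_n$ is no longer a vector bundle over $\T^n$ and is not even flat over it (Remark \ref{not flatness and dim X'}), so density of the regular locus has to be established by a dimension count on the non-regular fibers (Sections \ref{density of regular locus in dim 2} and \ref{density of the regular locus}), and it is exactly that count which restricts the paper to $n\leq 4$ or at most five non-regularities. Your proposal identifies the obstruction but offers no mechanism to overcome it, so as written it only recovers the $\T^+$-free case (Theorem \ref{RESULT1}), modulo replacing the curve by the appropriate vector bundle.
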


  Of course if $\underline\delta\in\mathcal{T}_n^{\reg}(L)$, then we have that
  $(\rho,\underline{\delta})\in\regtrivar(L)$, hence in particular $(\rho,\underline\delta)\in\trivar(L)$.
  But as remarked before,
  the above conjecture is not obvious at all in case $(\delta_1,\dots,\delta_n)\notin\mathcal{T}_n^{\reg}(L)$.
  We will prove that $(\rho,\underline{\delta})\in\trivar$ if either $n\leq 4$, or the number
  of non-regularities in $\underline{\delta}$ is at most 5, or if $\underline{\delta}$
  doesn't present non-regularities of type $\T^+$ (see \ref{def of T+} for a definition of $\T^+$).
  More precisely, we will prove the following Theorems.

  \begin{restatable}{thmx}{result}
    \label{RESULT1}
    Let $\rho:\absGal\rightarrow GL_n(\mathcal{O}_L)$
    be a trianguline representation of parameters $(\delta_1,\dots,\delta_n)\in\mathcal{T}^n(L)$.
    Moreover let $\bar\rho:\absGal\rightarrow GL_n(k_L)$ be the reduction of $\rho$ to the residue field of $L$.
    If $\delta_i/\delta_j\notin\mathcal{T}^+(L)$ for all $i<j$, then
    $(\rho,\delta_1,\dots,\delta_n)\in\trivar(L)$.
  \end{restatable}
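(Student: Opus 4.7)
The plan is to realise $(\rho,\underline{\delta})$ as a limit of points of $\regtrivar$. Since $\trivar$ is by definition the Zariski closure of $\regtrivar$ inside $\defspace\times\T^n$, it suffices to construct a smooth connected rigid analytic curve $X=\Sp(A)$, a point $x_0\in X(L)$, and a morphism $X\to\defspace\times\T^n$, $x\mapsto(\rho_x,\delta_{1,x},\dots,\delta_{n,x})$, such that $x_0$ maps to $(\rho,\underline{\delta})$ and the generic point of $X$ maps into $\regtrivar$.

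I would construct $X$ together with the family $(\rho_X,\delta_{1,X},\dots,\delta_{n,X})$ by induction on $k=0,1,\dots,n$, producing at stage $k$ a triangulated $\phigam$-module over $\robba_A$ of rank $k$, with graded pieces $\robba_A(\delta_{i,X})$, specialising at $x_0$ to the first $k$ steps of a chosen triangulation $\Fil_\bullet$ of $\drig(\rho)$. The rank-one case amounts to choosing any generic one-parameter deformation $\delta_{1,X}$ of $\delta_1$ inside $\T$. For the inductive step, one picks a generic deformation $\delta_{k,X}$ of $\delta_k$ and extends the rank-$(k-1)$ family by $\robba_A(\delta_{k,X})$ so that the specialisation of the extension class at $x_0$ equals the class defining $\Fil_k\subset\drig(\rho)$. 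Concretely, this is a lifting problem for a class in $H^1_{\phigam}$ at $x_0$ to the corresponding cohomology group in the family, together with a vanishing requirement for obstructions in $H^2_{\phigam}$.

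The essential input is the cohomological flatness result of Kedlaya--Pottharst--Xiao \cite{kedlayaCohomology}: for a character $\delta\colon\Qp^\times\to A^\times$, the rank of $H^1_{\phigam}(\robba_A(\delta))$ is locally constant on $\Sp(A)$ away from $\T^+$ and a symmetric ``dual'' locus, and base change is then an isomorphism; likewise $H^2_{\phigam}$ vanishes outside its own exceptional locus. The hypothesis $\delta_i/\delta_j\notin\T^+$ for $i<j$, combined with a sufficiently generic choice of the $\delta_{k,X}$, ensures that every ratio $\delta_{j,X}/\delta_{k,X}$ with $j<k$ stays outside $\T^+$ on a neighbourhood of $x_0$. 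Hence the relevant $H^1$'s are locally free of the expected rank, the class defining the next step of $\Fil_\bullet$ lifts, and the induction goes through. \'Etaleness of the universal $\phigam$-module is an open condition, so after shrinking $X$ one recovers an honest family of Galois representations $\rho_X$ over $A$.

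The main obstacle, and precisely the reason for the hypothesis, is this cohomological jump at $\T^+$: if some ratio $\delta_i/\delta_j$ with $i<j$ lay in $\T^+$, then $H^1_{\phigam}$ would acquire an extra ``exceptional'' extension class at $x_0$ that need not lift to the family, so the triangulation of $\drig(\rho)$ might genuinely fail to deform to nearby regular parameters. It remains to check that the generic point of $X$ parametrises a regular trianguline representation, which follows from the fact that $\T^n\setminus\T^{\reg}_n$ is a countable union of proper analytic subspaces of $\T^n$ (cut out by algebraic relations on the ratios $\delta_i/\delta_j$) and from the genericity of the chosen deformations. Therefore $X\to\defspace\times\T^n$ has generic image contained in $\regtrivar$, and $x_0\mapsto(\rho,\underline{\delta})\in\trivar(L)$, as required.
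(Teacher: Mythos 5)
Your proposal rests on the same two pillars as the paper's proof of Theorem~\ref{RESULT1}: use $\delta_i/\delta_j\notin\T^+$ to kill $H^2_{\phi,\Gamma}$ of the relevant rank-one twists so that extension classes lift to a family, and then argue that the family has regular parameters on a dense locus. The paper implements this by building, via Theorem~\ref{extensions} and Propositions~\ref{2 dim T-}, \ref{n dim T-}, a vector bundle $X_n$ over a full open affinoid neighbourhood $\widetilde{\mcal U}_n\subset\T^n$ of $\underline\delta$; density of regular parameters is then immediate because $\T^\reg_n\cap\widetilde{\mcal U}_n$ is open dense and a vector-bundle projection is an open map. You instead take a one-dimensional base $X=\Sp(A)$. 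That is a legitimate variant, but it shifts the burden to a genericity-plus-countability argument that the paper does not need, and two points in the sketch need tightening.

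First, the hypothesis only excludes $\T^+$; it does \emph{not} prevent some $\delta_i/\delta_j$ from lying in $\T^-$, in which case $H^1_{\phi,\Gamma}$ has rank two at $x_0$ and rank one at nearby points, so it is \emph{not} locally free on any neighbourhood. Your phrase ``the relevant $H^1$'s are locally free of the expected rank'' overstates both what holds and what is needed. What one really uses is Theorem~\ref{bc}: local freeness (here, vanishing) of $H^2_{\phi,\Gamma}$ alone forces $H^1_{\phi,\Gamma}$ to commute with base change, after which any class in the fibre lifts because $H^1_{\phi,\Gamma}(\D)\otimes_A k(x_0)\cong H^1_{\phi,\Gamma}(\D)/\mathfrak m_{x_0}H^1_{\phi,\Gamma}(\D)$ and $\otimes$ is right exact; the $\mcal O_X$-module $H^1_{\phi,\Gamma}$ may remain merely coherent. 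This is precisely why Theorem~\ref{extensions} takes a surjection from a free module onto $H^1_{\phi,\Gamma}$ rather than $H^1_{\phi,\Gamma}$ itself.

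Second, a classical rigid space over $L$ has no generic point, and a one-dimensional affinoid over a finite extension of $\Qp$ has only countably many classical points, so ``the generic point parametrises a regular representation'' must be replaced by ``the regular locus is Zariski-dense in $X$''. Your countability argument can carry this, but you must first check that the map $X\to\T^n$ is not contained in any single component of $\T^n\setminus\T^\reg_n$ (each is a proper Zariski-closed subspace, only finitely many of which pass through $\underline\delta$, so a sufficiently generic tangent direction at $x_0$ suffices); each such component then meets the irreducible curve $X$ in a finite set, and a countable union of finite sets is Zariski-nowhere-dense in a one-dimensional irreducible affinoid because every proper Zariski-closed subset is finite. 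Spelling this out would make your proof rigorous, but the paper's choice to work over the full neighbourhood $\widetilde{\mcal U}_n$ sidesteps the subtlety entirely.
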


  \begin{restatable}{thmx}{difficultresult}
    \label{result2}
    Let $\rho:\absGal\rightarrow GL_n(\mathcal{O}_L)$
    be a trianguline representation of parameters $(\delta_1,\dots,\delta_n)\in\mathcal{T}^n(L)$.
    Moreover let $\bar\rho:\absGal\rightarrow GL_n(k_L)$ be the reduction of $\rho$ to the residue field of $L$.
    \begin{enumerate}
      \item
        If $n\leq4$, then $(\rho,\delta_1,\dots,\delta_n)\in\trivar(L)$.
      \item
        If $|\{\delta_i/\delta_j\in\T\setminus\T^\reg\colon i<j\}|\leq5$, then $(\rho,\delta_1,\dots,\delta_n)\in\trivar(L)$.
    \end{enumerate}
  \end{restatable}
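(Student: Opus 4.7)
The plan is to reduce both statements to \Cref{RESULT1} by a deformation argument. That theorem already settles the case in which no ratio $\delta_i/\delta_j$ (with $i<j$) lies in $\T^+(L)$, so the only obstructions left are pairs $(i,j)$ for which $\delta_i/\delta_j\in\T^+\setminus\T^\reg$. I would argue by induction on the number of such $\T^+$-type non-regularities, producing at each step a one-parameter family $\mathcal{M}$ of $\phigam$-modules over a smooth rigid-analytic curve $U$, marked at some $0\in U$, with $\mathcal{M}_0\cong\drig(\rho)$ together with its chosen triangulation of parameters $\underline\delta$, and with the property that for $x$ in a Zariski dense subset of $U$ the $\phigam$-module $\mathcal{M}_x$ is étale and its parameters $\underline\delta(x)$ carry strictly fewer $\T^+$-type non-regularities than $\underline\delta$. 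Since $\trivar$ is closed in $\defspace\times\T^n$ and the induction hypothesis places $(\rho_x,\underline\delta(x))$ in $\trivar$ along a dense subset of $U$, the specialisation $(\rho,\underline\delta)$ itself belongs to $\trivar(L)$.

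The construction of such families is essentially local: given an offending pair $(i,j)$, one picks the corresponding rank-two subquotient appearing in the chosen triangulation and varies its parameters along a curve in $\T$ whose generic point avoids $\T^+\setminus\T^\reg$, while preserving the existence of the required extension class. Local Galois cohomology of rank-one $\phigam$-modules over the Robba ring controls the relevant $\mathrm{Ext}^1$-groups, so the obstruction to deforming the chosen extension class is governed by explicit dimension formulas, and étaleness on the generic fibre can be arranged by adjusting the Sen weights so that the Newton polygon of $\mathcal{M}_x$ stays on or above its Hodge polygon.

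For the enumeration, part~(1) exploits that $n\leq 4$ gives at most six ordered pairs $(i,j)$, so the possible patterns of $\T^+$ non-regularities fall into a short list of combinatorial types which can be dispatched one after the other; for each type one can arrange the deformation so that no new $\T^+$ coincidence is introduced among the remaining ratios. Part~(2) proceeds identically, but parametrised by the total number of non-regularities rather than by $n$; at most five non-regularities still leaves only finitely many interaction patterns involving $\T^+$ pairs, and each of them can be untangled pair-by-pair.

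The main obstacle is precisely the bookkeeping in the deformation step: one must choose the family so that removing the targeted $\T^+$ non-regularity does not create a new one elsewhere, and one must treat chains such as $\delta_i/\delta_j$ and $\delta_j/\delta_k$ both in $\T^+\setminus\T^\reg$, where the natural deformations of the two pairs interact. Once the total number of non-regularities exceeds the stated thresholds, these interactions become tangled enough that a ``one pair at a time'' induction no longer suffices, which is exactly why the quantitative hypotheses take the form they do and why the full conjecture remains open.
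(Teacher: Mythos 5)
Your proposal replaces the paper's argument with a ``remove one $\T^+$-pair at a time'' induction, and the critical gap is in the very first step: you assume without justification that one can build a one-parameter family $\mathcal{M}$ over a curve $U$, with $\mathcal{M}_0 \cong \drig(\rho)$ together with its chosen triangulation, whose generic fibre has strictly fewer $\T^+$-type non-regularities. But when $\delta_i/\delta_j\in\T^+(L)$ the second $\phigam$-cohomology $H^2_{\phi,\Gamma}(\robba_L(\delta_i/\delta_j))$ is nonzero, and precisely because of this the base change map for $H^1_{\phi,\Gamma}$ fails to be surjective (cf.\ Theorem~\ref{bc}): the fibre dimension of $H^1$ jumps at the centre, so the extension class defining the relevant rank-two subquotient of $\drig(\rho)$ does not deform coherently as you move the parameters off the $\T^+$ locus. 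There is no ``explicit dimension formula'' that makes this go away; the obstruction group is the one that is nonzero exactly at the point you need to deform. Your remarks about Newton/Hodge polygons and étaleness address the wrong obstruction — étaleness is an open condition checkable fibre-wise and is never the difficulty.

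The paper sidesteps this by a genuinely different mechanism: a Beauville--Laszlo-type equivalence between $\phigam$-modules over $\robba$ and pairs (a $\phigam$-module over $\robba[1/t]$, a $\Gamma$-stable lattice at the zeros of $t$). Inverting $t$ kills $H^2$, so the extension-family machinery of Theorem~\ref{extensions} becomes available and produces a base $X_n'$; the actual moduli space $X_n$ is then carved out inside $X_n'\times\rr{Gr}_n$ by the lattice condition and a Schubert-cell constraint $Y_\mu$. The quantitative hypotheses ($n\leq 4$, or at most $5$ non-regular ratios) are then used not as ``finitely many combinatorial patterns to dispatch'', but to establish a strict dimension inequality $\dim F^{-1}(\mathcal{W}^{(b_{ij})}_{(\mathcal{L}_j,\mathcal{M}_j)_j}) < \dim F^{-1}(\W_n^\reg)$, combined with the lower bound $\dim Z \geq \dim X_n'$ from the completed-local-ring argument of Proposition~\ref{stalk is closed immersion}, to force the irreducible component of $X_n$ through $x_n$ to be the closure of the regular locus. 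Your induction also secretly presumes chains of $\T^+$-interactions can be untangled pair-by-pair, but even the base case (one $\T^+$-pair, i.e.\ the content of Section~\ref{section 6}) already requires the Grassmannian and the lattice-tracking: without those ingredients the induction cannot get off the ground.
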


  For $\phigam$-modules there is a notion of $\phigam$-cohomology, which coincides with Galois cohomology
  in case the $\phigam$-module comes from a family of Galois representations.
  In the statement of Theorem \ref{RESULT1}, the space $\T^+$ is the subspace of $\T$ consisting of
  characters $\delta$ such that the second $\phigam$-cohomology is non-trivial.

  The strategy used to prove the above results is the following: given a trianguline representation
  $\rho:\absGal\rightarrow GL_n(L)$ and $\underline{\delta}\in\T^n(L)$ characters associated
  to a filtration of $\drig(\rho)$, we will show that $(\rho,\underline{\delta})$
  is in the Zariski closure of some ``regular family''.
  More precisely, for each given $\rho$, we will construct a rigid space $X_n$ with a family of triangulable
  $\phigam$-modules $\D_n$ over $X_n$ such that
  \begin{itemize}
    \item
      the family $\D_n$ specializes to $\drig(\rho)$ at some point $x\in X_n(L)$;
    \item
      the triangulation of $\D_n$ gives rise to characters $(\widetilde{\delta}_1,\dots,\widetilde{\delta}_n)\in\T^n(X_n)$
      such that there exists an open dense $U\xhookrightarrow{j}X_n$ for which $j^*\widetilde{\underline{\delta}}\in\T^\reg_n(U)$.
  \end{itemize}
    For a detailed explanation of how the strategy implies the desired results, refer to Theorem \ref{strategy}.

    Following a construction in \cite{emerton2019moduli}, when two
    $\phigam$-modules satisfy some condition on their $\phigam$-cohomology,
    we will be able to construct
    a space with a family of extensions of the two given $\phigam$-modules.
    More precisely,
    given two $\phigam$-modules $D_1$ and $D_2$ over $\robba_A$ such that $H^2_{\phi,\Gamma}(D_1\otimes D_2^\vee)=0$, in
    Section \ref{constructing extensions} we explain how to construct a vector bundle $X$
    over $\Sp(A)$ with a family of $\phigam$-modules $\D$ over $X$ such that for any any rigid space $Y\xrightarrow{g}\mathrm{Sp}(A)$
    over $\mathrm{Sp}(A)$, we have a surjective map
    \begin{align*}
    \Psi_Y:X(Y)&\twoheadrightarrow\mathrm{Ext}^1_{\phi,\Gamma}(g^* D_2,g^* D_1)(Y)\\
    f&\mapsto f^*\D
    \end{align*}
    which is functorial in $Y$ (see Theorem \ref{extensions}).
    It follows from this that if we are in the hypothesis of Theorem \ref{RESULT1}, which means
    that $H^2_{\phi,\Gamma}(\robba_L(\delta_i)\otimes\robba_L(\delta_j)^\vee)=0$
    (i.e. $\delta_i/\delta_j\notin\T^+(L)$) for all $i<j$, then there exists a vector bundle $X_n$
    over an appropriate neighbourhood $\mcal U\subset\T^n$ of $\underline{\delta}$
    with a family of $\phigam$-modules $\D_n$ specializing to $\drig(\rho)$, as stated in Proposition \ref{n dim T-}.
    Since the regular locus is open and dense inside $\T^n$, we have that the pullback of such locus to $X_n$
    is open and dense, as $X_n$ is simply a vector bundle over $\mcal U$
    (in fact the preimage of a dense subspace is dense if the map is open).

    The above is enough to conclude the argument in the hypothesis of Theorem \ref{RESULT1},
    but things complicate a great deal when $\delta_i/\delta_j\in\T^+(L)$ for at least one pair
    $(i,j)$ with $i<j$.
    Indeed in this case, we cannot use Theorem \ref{extensions}:
    in fact the fiber dimension of the second $\phigam$-cohomology is not locally constant,
    hence we cannot find a family such that the base change for the first $\phigam$-cohomology
    is surjective.
    In order to get around this problem, we will make use of an equivalence of categories
    which is an analogue for $\phigam$-modules of the Theorem of Beauville-Laszlo;
    more precisely, the equivalence is between $\phigam$-modules over $\robba_A$ and the data consisting of pairs of
    $\phigam$-modules over $\robba_A[1/t]$ (where $t$ is a particular element of $\robba_A$)
    and $\Gamma$-stable lattices of $A((t))$.
    It should be emphasized that the above statement is a simplification (and false as stated) and the reader
    should refer to Section \ref{beauville-laszlo}
    for the correct statement and a detailed explanation.
    The main point in using this equivalence is that passing from
    $\phigam$-modules over $\robba_A$ to $\phigam$-modules over $\robba_A[1/t]$
    annihilates the second $\phigam$-cohomology; thus it is possible to use Theorem \ref{extensions}
    again in order to construct a rigid space $X_n'$ with a family of $\phigam$-modules $\D_n'$
    specializing to $\drig(\rho)[1/t]$ at some point $x'\in X_n'(L)$.
    As a remark, this is not enough for our purpose, as we want a family of
    $\phigam$-modules specializing to $\drig(\rho)$.
    We will then define the space $X_n$ inside $X_n'\times\rr{Gr}_n$ (where $\rr{Gr}_n$ is the affine Grassmannian)
    consisting of pairs $(D', \Lambda)$, where $D'$ is given
    by the universal $\phigam$-module $\D_n'$ over $X_n'$ and $\Lambda\subset D'[1/t]\otimes_{\robba_A^r[1/t]}A((t))$
    is a $\Gamma$-stable lattice
    (see Theorem \ref{construction of X} and Theorem \ref{construction of X_n}).
    If we let $\D_n$ be the sheaf on $X_n$ corresponding to the pullback to $X_n$ of the pair of $\D_n'$ and
    of the universal lattice over $\rr{Gr}_n$, we have that $\D_n$ specializes to $\drig(\rho)$ at some point $x\in X_n(L)$,
    by virtue of the equivalence written above.
    We still need to show that the family $\D_n$ is densely regular,
    i.e. there exists a Zariski open dense of $X_n$ such that the pullback of $\D_n$
    to such open has regular parameters.
    Now the main issue is that $X_n$ is not a vector bundle over an open $\mcal U$ of $\T^n$,
    hence the pullback of the regular locus $\T_n^\reg$ to $X_n$ might not be dense in $X_n$:
    indeed $X_n$ is not even flat over $\T^n$, as we will see in Remark \ref{not flatness and dim X'}.
    In Section \ref{density of regular locus in dim 2} and Section \ref{density of the regular locus},
    we will study the geometry of the space $X_n$ and give a bound to the dimension of the non-regular locus
    inside $X_n$ when $n\leq 4$ or $|\{\delta_i/\delta_j\in\T\setminus\T^\reg\colon i<j\}|\leq5$
    as in the hypothesis of Theorem \ref{result2}.
    This will allow us to conclude that $X_n$ is irreducible and that
    the pullback of the regular locus $\T_n^\reg$ is dense inside $X_n$, concluding the
    proof of Theorem \ref{result2}.

    \hfill\break

    \textbf{Structure of the paper:}
    In section \ref{section 1} we recollect basics and well-known results concerning $\phigam$-modules.
    section \ref{section 2} is devoted to defining trianguline representations and the trianguline variety.
    The outline of the strategy we use to prove Theorem \ref{RESULT1} and Theorem \ref{result2}
    is explained in section \ref{section 3}.
    In section \ref{section 4} we first explain how to construct a family of extensions of two $\phigam$-modules
    (\S\ref{constructing extensions}) and in \S\ref{section 4.2} we prove Theorem \ref{RESULT1}.
    The rest of the paper is dedicated to the proof of Theorem \ref{result2}:
    section \ref{section 5} provides all the tools that we will need in the course of the proof;
    section \ref{section 6} gives a proof of Theorem \ref{result2} when $n=2$
    and finally section \ref{section 7} finishes the proof of Theorem \ref{result2} generalizing (when possible)
    the results in section \ref{section 6}.

\section*{Acknowledgements}

  My first thanks go to my supervisor Prof. Eugen Hellmann for introducing me to the topics of the
  thesis and for his invaluable guidance, continuous support and patience during my PhD study.

  I would like to offer my special thanks to Dr. Zhixiang Wu for patiently explaining me
  some results from his PhD thesis, which are essential for this dissertation.

  I am very grateful to Luisa Meinke for the mathematical and emotional support
  and for the fun we had together.

  Finally my thanks go to the most important people of all: my friends and family.
  I want to thank Stefano, Andrea, Margherita, Beatrice, Rafaela and my parents
  for encouraging and supporting me during these years,
  but mostly just for being present in my life.

\section{Preliminaries on $(\phi,\Gamma)$-modules}
\label{section 1}

One of the most powerful tools in the study of the theory of $p$-adic Galois representations
is the theory of $\phigam$-modules, providing a tool of investigating such representations
in terms of semi-linear algebra objects.
The category of these modules (with some additional condition) is in fact equivalent
to the category of $p$-adic Galois representations, as will be more precisely stated in this section.
In this section we discuss some basic notions on $\phigam$-modules and their cohomology
as well as their variation in families.
Indeed, in \cite{berger2008familles} Berger and Colmez introduce the notion of families of overconvergent $\phigam$-modules
associated to families of $p$-adic Galois representations.
The connection between these two categories will be better investigated in the second section.

From now on, let $p$ be a prime number.
Let $L$ be a finite extension of $\mathbb{Q}_p$, $\mathcal{O}_L$ its ring of integers and $k_L$ its residue field.

\subsection{The Robba ring}

    For $r=p^{-a},s=p^{-b}\in p^{\mathbb{Q}}$ with $0\leq r\leq s<1$, let
    \[
    \mathbb{B}^{[r,s]}=\{T:r\leq |T|\leq s\}
    =\mathrm{Spa}(\mathbb{Q}_p\langle p^{-b}T,p^aT^{-1}\rangle,\mathbb{Z}_p\langle p^{-b}T,p^aT^{-1}\rangle)
    \]
    be the closed annulus of inner radius $r$ and outer radius $s$ over $\mathbb{Q}_p$.
    Let $\mathcal{R}^{[r,s]}$ denote the ring of analytic functions of $\mathbb{B}^{[r,s]}$
    and let $\mathcal{A}^{[r,s]}$ denote the subring of functions whose valuation has maximal value 1.
    For $0\leq r\leq s'\leq s<1$, we have $\mathcal{R}^{[r,s]}\subset\mathcal{R}^{[r,s']}$ simply defined by restriction.
    We let
    \[
    \mathcal{R}^r=\bigcap_{r\leq s<1}\mathcal{R}^{[r,s]}
    \]
    be the ring of analytic functions converging on
    the half-open annulus
    \[
    \mathbb{B}^r=\{T:r\leq |T|<1\}
    \]
    and $\mathcal{A}^r$ the subring of $\mathcal{R}^r$ consisting of functions whose valuation has maximal value 1.
    As in \cite{kedlaya2006finiteness}, for $r\leq s<1$, we can define the valuation
    \[
    w_s\left (\sum_{n\in \mathbb{Z}}a_nT^n\right ):=\mathrm{min}_{n\in\mathbb{Z}}\{v_p(a_n)+ns\};
    \]
    then $\robba^r$ is complete with respect to the Fr\'echet topology defined by these valuations.

    \begin{dfn}
    The \emph{Robba ring over $\mathbb{Q}_p$} is the ring
    \begin{align*}
        \robba&=\bigcup_{0\leq r<1}\mathcal{R}^r\\
        &=\left\{ f(T) = \sum_{n\in\mathbb{Z}} a_nT^n \colon
        \begin{array}{l}
        a_n \in \mathbb{Q}_p,
        a_nr(f)^n\xrightarrow[n\rightarrow-\infty]{}0\text{ for some } 0\leq r(f)<1\\
        \text{ and }
        a_ns^n\xrightarrow[n\rightarrow+\infty]{}0\text{ for all } r(f)\leq s<1
        \end{array}
        \right\}
    \end{align*}
    of power series converging on some annulus.
    We equip the Robba ring with the direct limit of the Fr\'echet topologies on $\robba^r$.
    Moreover let $\mathcal{A}^\dag\coloneqq\bigcup_{0\leq r<1}\mathcal{A}^r$
    and equip it with the subspace topology.
    \end{dfn}

    We can extend the definition of the Robba ring in order to have coefficients in some affinoid $\mathbb{Q}_p$-algebra $A$, so we define
    \[
    \begin{array}{lr}
        \robba_A\coloneqq\robba\widehat\otimes_{\mathbb Q_p}A,
        &\mathcal{A}^\dag_A\coloneqq\mathcal{A}^\dag\widehat\otimes_{\mathbb Z_p}A,
    \end{array}
    \]
    where the completed tensor product is the completion of the ordinary tensor product
    with respect to the $p$-adic topology on $A$ and the topology on $\robba$ and $\mathcal{A}^\dag$ respectively.
    For more precise definitions, we refer to \cite[§6.1]{hellmann2013arithmetic}.

    We want to equip the Robba ring with a Frobenius and some action of $\Gamma:=\mathbb{Z}_p^\times$.
    This will be useful later to define the semi-linear structure of $\phigam$-modules over the Robba ring.
    We define the $A$-linear endomorphism
    \begin{equation}
    \label{phi over R}
    \phi:\robba_A\rightarrow\robba_A, T\mapsto (1+T)^p-1
    \end{equation}
    and an $A$-linear action
    \[
    \Gamma\times\robba_A\rightarrow\robba_A, (\gamma,T)\mapsto (1+t)^\gamma-1=\sum_{n\geq1}\binom{\gamma}{n}T^n.
    \]
    The two operators commute with each other and they are both continuous.

    Moreover we set
    \[
    t\coloneqq\mathrm{log}(1+T)=\sum_{n=1}^\infty(-1)^{n+1}\frac{T^n}{n}\in\robba.
    \]
    Then $\phi(t)=pt$ and $\gamma\cdot t=\gamma t$ for all $\gamma\in\Gamma$.

    More generally, one can define the notion of \emph{relative Robba ring} over some rigid space $X$ in the following way:
    for $r,s\in p^{\mathbb{Q}}$ with $0\leq r\leq s<1$, let $\mathbb{B}_X^{[r,s]}=X\times_{\mathbb{Q}_p}\mathbb{B}^{[r,s]}$
    and $\mathbb{B}_X^r=X\times_{\mathbb{Q}_p}\mathbb{B}^r$; then define $\robba_X^{[r,s]}=(\mathrm{pr}_X)_*\mathcal{O}_{\mathbb{B}_X^{[r,s]}}$
    and $\robba^r_X=(\mathrm{pr}_X)_*\mathcal{O}_{\mathbb{B}_X^r}$. The relative Robba ring over $X$ is the sheaf
    \[
    \robba_X\coloneqq\lim_{r\rightarrow1}\robba^r_X.
    \]
    The sheaf $\mathcal{R}_X$ is endowed with a continuous $\mathcal{O}_X$-linear endomorphism $\phi:\robba_X\rightarrow\robba_X$
    and a continuous $\mathcal{O}_X$-linear action $\Gamma\times\robba_X\rightarrow\robba_X$.
    In the same way we can define $\mathcal{A}_X^{[r,s]}=(\mathrm{pr}_X)_*\mathcal{O}^+_{\mathbb{B}_X^{[r,s]}}$
    and $\mathcal{A}^r_X=(\mathrm{pr}_X)_*\mathcal{O}^+_{\mathbb{B}_X^r}$. Finally, let
    \[
    \mathcal{A}^\dag_X\coloneqq\lim_{r\rightarrow1}\mathcal{A}^r_X.
    \]

\subsection{$\phigam$-modules}

    Now we introduce the notion of $\phigam$-modules as in \cite{chenevierDensite}.

    \begin{dfn}
    Let $L$ be a finite extension of $\mathbb{Q}_p$.
    \begin{enumerate}[(i)]
        \item
        A \emph{$(\phi,\Gamma)$-module over $\mathcal{R}_L$} is a locally free $\mathcal{R}_L$-module $D$ of finite presentation endowed with
        \begin{itemize}
        \item
            a $\mathcal{R}_L$-semilinear endomorphism $\Phi:D\rightarrow D$ such that $\mathcal{R}_L\cdot\Phi(D)=D$;
        \item
            a continuous $\mathcal{R}_L$-semilinear
            $\Gamma$-action commuting with $\Phi$.
        \end{itemize}
        \item
        A \emph{morphism of $(\phi,\Gamma)$-modules} is a morphism of $\mathcal{R}_L$-modules respecting the actions of $\Gamma$ and $\Phi$.
    \end{enumerate}
    \end{dfn}

    \begin{rem}
    Let us explain in more detail what we mean by \emph{continuous} action of $\Gamma$ on $D$ in the above definition.
    We ask that there exists an $\mathcal{R}_L$-basis $\mathcal{E}=\{e_1,\dots,e_d\}$ of $D$ and $r\in[0,1]$ such that the map
    \begin{align*}
        \Gamma&\rightarrow \mathrm{Mat}_d(\mathcal{R}_L)\\
        \gamma&\mapsto\mathrm{Mat}_{\mathcal{E}}(\gamma)
    \end{align*}
    has values in $\mathrm{Mat}_d(\mathcal{R}_L^r)$ and is continuous.
    \end{rem}

    The motivation behind the study of $\phigam$-modules is the following well-known Theorem
    due to Fontaine, Cherbonnier, Colmez and Berger.

    \begin{thm}
    \label{drig}
    There is a fully faithful functor
    \[
        \drig:\{L\text{-linear continuous representations of }\absGal\}\rightarrow\{\phigam\text{-modules over }\robba_L\}
    \]
    commuting with base change.
    \end{thm}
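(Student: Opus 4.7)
The plan is to construct $\drig$ as the composition of three classical constructions: Fontaine's equivalence with étale $\phigam$-modules over a ``small'' coefficient ring, the Cherbonnier--Colmez overconvergence theorem, and Berger's base change to the Robba ring. In detail, inside the Witt vectors of Fontaine's tilt $\widetilde{\bfE}$ of $\overline{\Qp}^{\flat}$ one has the coefficient ring $\bfA$; setting $\bfA_{\Qp} := \bfA^{H_{\Qp}}$ with $H_{\Qp} := \ker(\absGal \twoheadrightarrow \Gamma)$, the functor
\[
    V \mapsto D(V) := (V \otimes_{\Z_p} \bfA)^{H_{\Qp}}
\]
is an exact equivalence between continuous $\Z_p[\absGal]$-modules and étale $\phigam$-modules over $\bfA_{\Qp}$. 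Inverting $p$ and extending scalars to $L$ gives the $L$-linear analogue, with target étale $\phigam$-modules over $\bfB_{\Qp,L}$.

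Next I would invoke the Cherbonnier--Colmez overconvergence theorem: for every $L$-linear representation $V$ there exists a radius $r(V)<1$ and a $\phi,\Gamma$-stable $\bfB^{\dag,r(V)}_{\Qp,L}$-submodule $D^{\dag}(V) \subset D(V)$, free of rank $\dim_L V$, which $p$-adically generates $D(V)$. \emph{This is the main obstacle of the entire proof}: the argument uses Tate--Sen normalised traces to produce a basis whose $\Gamma$-structure matrices have entries in $\bfB^{\dag,r}_{\Qp,L}$, with a delicate control of the radius under the action of $\Gamma$. Given $D^{\dag}(V)$, set
\[
    \drig(V) := \robba_L \otimes_{\bfB^{\dag}_{\Qp,L}} D^{\dag}(V),
\]
where $\bfB^{\dag}_{\Qp,L} = \bigcup_{r<1} \bfB^{\dag,r}_{\Qp,L}$. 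This is locally free over $\robba_L$ of rank $\dim_L V$ and inherits commuting semilinear actions of $\phi$ and $\Gamma$; the continuity of the $\Gamma$-action required in the definition of a $\phigam$-module holds because the structure matrices live in $\bfB^{\dag,r}_{\Qp,L} \subset \robba_L^{r}$.

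For full faithfulness, Fontaine's $D$ is already an equivalence. Morphisms of $\phigam$-modules correspond to $\phi=1,\Gamma=1$ fixed vectors of the internal $\Hom$; applying Cherbonnier--Colmez to $\Hom(V_1,V_2)$ shows that these fixed vectors are the same over $\bfB_{\Qp,L}$ and over $\bfB^{\dag}_{\Qp,L}$, so the intermediate step $V \mapsto D^{\dag}(V)$ is fully faithful. Similarly the identity $\robba_L^{\phi=1,\Gamma=1} = L$ implies that the final base change $D^{\dag}(V) \mapsto \drig(V)$ induces a bijection on $\Hom$-spaces, using Kedlaya's slope theory to recognise the étale locus on the Robba-ring side. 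Compatibility with base change along $L$-algebra maps (and, in the family situation needed elsewhere in the paper, along affinoid base change) is immediate since each of the three constructions is compatible with flat base change of coefficients. Everything outside the Cherbonnier--Colmez input is formal once that technical core is available.
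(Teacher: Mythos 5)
The paper offers no proof here; it simply cites \cite[Theorem 2.2.2]{berger2011trianguline}, and your outline (Fontaine's equivalence $V\mapsto (V\otimes\bfA)^{H_{\Qp}}$, the Cherbonnier--Colmez overconvergence theorem to land in $\bfB^{\dag}_{\Qp,L}$, then base change to $\robba_L$ with Kedlaya's slope theory controlling full faithfulness) is precisely the standard argument from that reference. So you take essentially the same route as the source the paper relies on.
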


    For a reference look at \cite[Theorem 2.2.2]{berger2011trianguline}.

    \begin{dfn}
    The $\phigam$-modules in the essential image of the funtor $\drig$ are called \emph{étale $\phigam$-modules}.
    \end{dfn}

    The consequence of Theorem \ref{drig} is that the functor
    \begin{equation}
    \label{equivalence}
    \drig:\left\{\begin{array}{c}
        L\text{-linear continuous}\\
        \text{representations of }\absGal
    \end{array}\right\}
    \rightarrow\{\text{étale }\phigam\text{-modules over }\robba_L\}
    \end{equation}
    is an equivalence of categories.
    Let us remark that the above functor is not an equivalence when we work with families of Galois representations
    and families of $\phigam$-modules because it is not essentially surjective, as we will see in \S \ref{def and basic properties}.

    As mentioned before, the theory of $\phigam$-modules is compatible with its variation in analytic families.

    \begin{dfn}
    Let $X$ be a rigid space.
    A \emph{family of $(\phi,\Gamma)$-modules over $X$} is an $\mathcal{R}_X$-module
    $\D$ which is locally on $X$ finite projective over $\mathcal{R}_X$ with
    \begin{itemize}
        \item
        a $\mathcal{R}_X$-semilinear endomorphism $\Phi:\D\rightarrow\D$
        such that $\Phi:\phi^*\D\rightarrow\D$ is an isomorphism;
        \item
        a continuous $\mathcal{R}_X$-semilinear $\Gamma$-action commuting with $\Phi$.
    \end{itemize}
    A family of $\phigam$-modules $\D$ over $X$ is said to be \emph{étale} if there exists
    a covering $\{U_i\}_{i\in I}$ of $X$ and free $\mathcal{A}_{U_i}^\dag$-submodules
    $D_i$ of $\D_{|U_i}$ such that
    \[
        D_i\otimes_{\mathcal{A}_{U_i}^\dag}\robba_{U_i}=\D_{|U_i}
    \]
    and $\Phi$ induces an isomorphism $\phi^*D_i\rightarrow D_i$.
    \end{dfn}

    In \cite[Corollary 6.11]{hellmann2013arithmetic} it is proved that étaleness is an
    open condition and that it can be checked fiber-wise. More precisely:

    \begin{rem}
    \begin{enumerate}
        \item[(i)]
        Given a family of $\phigam$-modules $\D$ over a rigid space $X$, the subset of $X$
        where the specialization of $\D$ is étale is open.
        \item[(ii)]
        A family of $\phigam$-modules over a rigid space $X$ is étale if and only if
        all fibers are étale.
    \end{enumerate}
    \end{rem}

\subsection{The $\phigam$-cohomology}

    The $\phigam$-cohomology was first introduced by Herr in order to compute the Galois
    cohomology of $p$-adic Galois representations on the side of $\phigam$-modules and it is
    a very useful tool in the study of these objects.

    \begin{dfn}
    \label{phi,Gamma cohomology}
    Let $X$ be a rigid space and let $D$ be a family of $(\phi,\Gamma)$-modules over $X$.
    \begin{itemize}
        \item
        If $p\neq2$, the \emph{Herr complex} of $D$ is the complex of $\mathcal{O}_X$-modules
        \[
            0\rightarrow D\xrightarrow{(\Phi-1,\gamma-1)} D\oplus D\xrightarrow{(\gamma-1)\oplus(1-\Phi)} D\rightarrow0,
        \]
        where $\gamma$ is a topological generator of $\Gamma$.
        \item
        If $p=2$, the \emph{Herr complex} of $D$ is the complex of $\mathcal{O}_X$-modules
        \[
        0\rightarrow D^{\Delta}\xrightarrow{(\Phi-1,\gamma-1)} D^{\Delta}\oplus D^{\Delta}\xrightarrow{(\gamma-1)\oplus(1-\Phi)} D^{\Delta}\rightarrow0,
        \]
        where $\Delta\subset\Gamma$ is the torsion subgroup and $\gamma$ is a topological generator of $\Gamma/\Delta$.
    \end{itemize}
    We will refer to the Herr complex of $D$ as $\mathcal{C}^\bullet(D)$.
    Moreover we denote by $H^\bullet_{\phi,\Gamma}(D)$ the cohomology of the Herr complex,
    called the \emph{$\phigam$-cohomology} of $D$.
    \end{dfn}

    \begin{prop}
    \label{H is fin.gen.}
    Let $X$ be a rigid space and $D$ a family of $\phigam$-modules over $X$.
    The $\mathcal{O}_X$-modules $H^i_{\phi,\Gamma}(D)$ are coherent for all $i=0,1,2$.
    \end{prop}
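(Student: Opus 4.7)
The assertion is local on $X$, so I reduce to $X=\Sp(A)$ affinoid with $D$ finite projective over $\robba_A$. Continuity of the $\Gamma$-action together with $\robba_A\cdot\Phi(D)=D$ allows me, after shrinking $X$, to pick $r<1$ sufficiently close to $1$ and a finite projective $\Phi$- and $\Gamma$-stable sub-$\robba_A^r$-module $D^r\subseteq D$ with $D=D^r\otimes_{\robba_A^r}\robba_A$, so that the Herr complex of $D$ is controlled by its ``level $r$'' incarnation.

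The key manipulation is to replace $\Phi$ by its left inverse $\psi$, obtained from the decomposition $\robba_A=\bigoplus_{i=0}^{p-1}(1+T)^i\,\phi(\robba_A)$ and extended $\robba_A$-semilinearly to $D$. The relation $\psi\phi=\mathrm{id}$ makes the ``$\psi$-Herr complex''
\[
  \Psi^\bullet(D^r)\colon 0\to D^r\xrightarrow{(\psi-1,\gamma-1)}D^r\oplus D^r\xrightarrow{(\gamma-1)\oplus(1-\psi)}D^r\to 0
\]
quasi-isomorphic to $\mathcal{C}^\bullet(D)$, by the standard Herr-Liu argument adapted to the family setting. The crucial feature of $\psi$ is that it is \emph{contracting}: $\psi(D^s)\subseteq D^{s^{1/p}}$ for $s$ close to $1$, so viewed as an $A$-linear endomorphism of $D^r$ endowed with the Banach $A$-module structure coming from $w_s$, it is completely continuous. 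Hence $\psi-1$ is a compact perturbation of $-\mathrm{id}$ on the orthonormalizable Banach $A$-module $D^r$.

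For the $\Gamma$-direction, I pass to an open subgroup $\Gamma_n\subseteq\Gamma$ deep enough that a topological generator $\gamma_n$ of $\Gamma_n$ acts as a compact $A$-linear perturbation of the identity on a suitable sub-lattice of $D^r$; the $\Gamma_n$-cohomology then recovers the $\Gamma$-cohomology up to the finite inflation-restriction from $\Gamma/\Gamma_n$. The Serre-Buzzard spectral theory of completely continuous endomorphisms of ON-able Banach $A$-modules then forces the kernels and cokernels of the differentials of $\Psi^\bullet(D^r)$ to be finitely generated projective $A$-modules, hence coherent, as $A$ is Noetherian. The principal obstacle is not any single step but the simultaneous control of overconvergence radii and orthonormalizable Banach structures needed to make the $\psi$-comparison a quasi-isomorphism and the compactness estimates uniform in $A$; this is essentially the content of Theorem 4.4.2 of Kedlaya-Pottharst-Xiao, whose relative version can be invoked directly.
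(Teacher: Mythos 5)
The paper offers no argument of its own here: it simply cites Kedlaya--Pottharst--Xiao, Theorem~4.4.2. Your write-up reaches the same endpoint --- you sketch the $\psi$-operator / complete-continuity strategy that underlies that theorem and then invoke it directly --- so the approach is essentially the same as the paper's, just with a (broadly accurate) heuristic summary of what the cited result is doing under the hood.
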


    The proof of this Proposition can be found in \cite[Theorem 4.4.2]{kedlayaCohomology}.

    One of the most important properties of the $\phigam$-cohomology is that it computes extensions:

    \begin{lem}
    Let $X$ be a rigid space and let $D_1$ and $D_2$ be two families of
    $(\phi,\Gamma)$-modules over $X$. Then for $i=0,1$ there are natural isomorphisms
    \[
        H^i_{\phi,\Gamma}(D_2^\vee\otimes D_1)\cong\mathrm{Ext}^i_{\phi,\Gamma}(D_2,D_1).
    \]
    \end{lem}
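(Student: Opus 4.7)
The plan is to establish both cases via the standard dictionary between 1-cocycles in the Herr complex and extensions of $\phigam$-modules.

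For $i=0$ the identification is by direct unwinding. Under the canonical isomorphism $D_2^\vee\otimes D_1\cong \mathcal{H}om_{\robba_X}(D_2,D_1)$, the sections killed by both $\Phi-1$ and $\gamma-1$ are precisely the $\robba_X$-linear maps $D_2\to D_1$ commuting with $\Phi$ and with the $\Gamma$-action, so that $H^0_{\phi,\Gamma}(D_2^\vee\otimes D_1)=\Hom_{\phi,\Gamma}(D_2,D_1)=\mathrm{Ext}^0_{\phi,\Gamma}(D_2,D_1)$.

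For $i=1$ I construct mutually inverse maps. In one direction, given a 1-cocycle $(x,y)\in \mathcal{H}om_{\robba_X}(D_2,D_1)^{\oplus 2}$ satisfying $(\gamma-1)x=(\Phi-1)y$, I equip $E\coloneqq D_1\oplus D_2$ with the upper-triangular semi-linear operators $\Phi_E(d_1,d_2)=(\Phi d_1+x(d_2),\,\Phi d_2)$ and $\gamma_E(d_1,d_2)=(\gamma d_1+y(d_2),\,\gamma d_2)$; continuity of the $\Gamma$-action is inherited from $D_1$, $D_2$ and $y$, and a short computation shows the cocycle relation is exactly the commutation $\Phi_E\gamma_E=\gamma_E\Phi_E$. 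This produces an extension $0\to D_1\to E\to D_2\to 0$, and shifting $(x,y)$ by the coboundary $((\Phi-1)f,(\gamma-1)f)$ corresponds to modifying the splitting $D_2\hookrightarrow E$ by $f\in \mathcal{H}om_{\robba_X}(D_2,D_1)$, giving an isomorphic extension, so the map descends to cohomology. Conversely, given an extension, I choose locally on an affinoid cover of $X$ an $\robba_X$-linear splitting $s\colon D_2\to E$ (possible because $D_2$ is locally finite projective), and set $x\coloneqq \Phi_E s-s\Phi_{D_2}$ and $y\coloneqq\gamma_E s-s\gamma_{D_2}$; both factor through $D_1\subset E$, and the identity $\Phi_E\gamma_E=\gamma_E\Phi_E$ translates into the Herr cocycle relation. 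Varying $s$ by a map into $D_1$ changes $(x,y)$ by the corresponding coboundary, so we obtain a cohomology class. The two constructions are visibly inverse.

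The principal obstacle is globalising the backward map: an $\robba_X$-linear splitting of $E\to D_2$ need not exist globally, only on the pieces of an affinoid cover over which $D_2$ is projective. This is handled by the observation that two local splittings on the overlap of two affinoids differ by some $f\in \mathcal{H}om_{\robba_X}(D_2,D_1)$, which shifts the associated pair by the coboundary $((\Phi-1)f,(\gamma-1)f)$; the local classes therefore agree on overlaps and glue to a well-defined global section of the first cohomology sheaf of the Herr complex, i.e.\ an element of $H^1_{\phi,\Gamma}(D_2^\vee\otimes D_1)$. Functoriality in $X$ of all constructions above yields the naturality of the isomorphism.
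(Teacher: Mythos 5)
Your overall strategy — identify $H^0$ directly and build the explicit dictionary between Herr $1$-cocycles and upper-triangular extensions for $H^1$ — is the standard one, and it is genuinely more work than what the paper does: for $i=0$ the paper gives the same two-line unwinding, and for $i=1$ it simply cites \cite[Lemme 2.2]{chenevierDensite}. However, as written your $i=1$ argument has an error in the extension formulas that needs to be repaired before the proof goes through.

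The issue is semilinearity. You take $x\in\mathcal{H}om_{\robba_X}(D_2,D_1)$ (so $x$ is $\robba_X$-linear) and set $\Phi_E(d_1,d_2)=(\Phi d_1+x(d_2),\Phi d_2)$. This map is \emph{not} $\phi$-semilinear: computing on $r\cdot(d_1,d_2)$ gives the cross term $x(rd_2)=r\,x(d_2)$, whereas $\phi$-semilinearity would require $\phi(r)\,x(d_2)$. The same problem occurs with $\gamma_E$. Dually, in the backward direction $x\coloneqq\Phi_E s-s\Phi_{D_2}$ is a difference of two $\phi$-semilinear maps and is therefore $\phi$-semilinear, \emph{not} $\robba_X$-linear, so it is not a section of $D_2^\vee\otimes D_1\cong\mathcal{H}om_{\robba_X}(D_2,D_1)$ and does not literally live in the Herr complex of that module. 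The standard fix is to interleave the structure maps of $D_2$: set
\[
\Phi_E(d_1,d_2)=(\Phi_1 d_1+x(\Phi_2 d_2),\,\Phi_2 d_2),\qquad
\gamma_E(d_1,d_2)=(\gamma_1 d_1+y(\gamma_2 d_2),\,\gamma_2 d_2),
\]
which are genuinely semilinear, and in the reverse direction replace $\Phi_E s-s\Phi_{D_2}$ by the corresponding $\robba_X$-linear map obtained by composing with the inverse of the linearized Frobenius $\Phi_2^{\mathrm{lin}}:\phi^*D_2\xrightarrow{\sim}D_2$ (and with $\gamma_2^{-1}$ for the $\Gamma$-component). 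With these corrections, the commutation $\Phi_E\gamma_E=\gamma_E\Phi_E$ is indeed equivalent to the Herr cocycle relation $(\gamma_M-1)(x)=(\Phi_M-1)(y)$, where $\Phi_M,\gamma_M$ are the induced operators on $\mathcal{H}om_{\robba_X}(D_2,D_1)$, and the coboundary/splitting correspondence and the gluing argument then work as you describe.
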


    \begin{proof}
    Notice that
    \[
        H^0_{\phi,\Gamma}(D_2^\vee\otimes D_1)\cong\mathrm{Hom}_{\phi,\Gamma}(\mathcal{R}_X,D_2^\vee\otimes D_1)
        \cong\mathrm{Hom}_{\phi,\Gamma}(D_2,D_1)=\mathrm{Ext}^0_{\phi,\Gamma}(D_2,D_1).
    \]
    As for $i=1$, we refer to \cite[Lemme 2.2]{chenevierDensite}.
    \end{proof}

    It is possible to classify all the $\phigam$-modules of rank 1 in a very nice way.
    Let $\mathcal{T}$ be the rigid space representing the functor
    \begin{align*}
    \{\text{rigid analytic spaces over } \mathbb{Q}_p\}&\rightarrow\underline{Sets}\\
    X&\mapsto\mathrm{Hom}_{\mathrm{cont}}(\mathbb{Q}_p^\times,\Gamma(X,\mathcal{O}_X^\times)).
    \end{align*}
    We also denote by $\W$ the rigid space representing the functor
    \begin{align*}
    \{\text{rigid analytic spaces over } \mathbb{Q}_p\}&\rightarrow\underline{Sets}\\
    X&\mapsto\mathrm{Hom}_{\mathrm{cont}}(\mathbb{Z}_p^\times,\Gamma(X,\mathcal{O}_X^\times)).
    \end{align*}
    The representability of the above functors is proved in \cite[Proposition 6.1.1]{kedlayaCohomology}.
    There is a natural projection $\T\rightarrow\W$ given by the restriction of a character to $\Z_p^\times$;
    this map has a section, identifying $\T$ with
    $\W\times\mathbb{G}_m^{\mathrm{an}}$ via $\delta\mapsto(\delta_{|\Z_p^\times},\delta(p))$.

    \begin{dfn}
    Let $\delta\in\mathcal{T}(X)$ for some rigid space $X$.
    \begin{itemize}
        \item
        We define $\robba_X(\delta)$ to be the $\phigam$-module of rank 1 over $\robba_X$ with basis $e$
        such that $\Phi(e)=\delta(p)e$ and $\gamma\cdot e=\delta(\gamma)e$ for all $\gamma\in\Gamma$.
        \item
        If $X$ is an affinoid $L$-rigid space $\mathrm{Sp}(A)$, then we define the \emph{weight of $\delta$}
        following \cite[Notation 5.3.2.]{kedlayaCohomology} as the element
        \[
            \rr{wt}(\delta)\coloneqq\frac{\mathrm{log}(\delta(a))}{\mathrm{log}(a)}\in A
        \]
        for non-torsion $a\in\mathbb{Z}_p^\times$ close enough to 1.
    \end{itemize}
    \end{dfn}

    The following Theorem gives a classification of all $\phigam$-modules of rank 1 (up to twist by a unique line bundle).

    \begin{thm}
    \label{rank 1 phi gamma modules}
    Let $D$ be a $\phigam$-module of rank 1 over $\robba_X$ for some rigid space $X$.
    We have that $D\cong \robba_X(\delta)\otimes_{\mathcal{O}_X}H^0_{\phi,\Gamma}(D(\delta^{-1}))$
    for a uniquely determined $\delta\in\mathcal{T}(X)$.
    Moreover we have that $H^0_{\phi,\Gamma}(D(\delta^{-1}))$ is a line bundle over $X$.
    \end{thm}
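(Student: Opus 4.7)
The plan divides into three steps. \emph{First}, I would establish uniqueness of $\delta$. Suppose two characters $\delta, \delta' \in \mathcal{T}(X)$ both satisfy the conclusion. After passing to an open cover trivialising both $H^0_{\phi,\Gamma}(D(\delta^{-1}))$ and $H^0_{\phi,\Gamma}(D((\delta')^{-1}))$, one obtains a $\phigam$-isomorphism $\robba_X(\delta) \cong \robba_X(\delta')$, equivalently $\robba_X(\delta/\delta') \cong \robba_X$. Specialising at any point $x \in X$, the field-coefficient case of the classification of rank $1$ $\phigam$-modules over $\robba_{k(x)}$ forces $\delta(x) = \delta'(x)$. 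Since $\mathcal{T}$ is separated, equality at every point gives $\delta = \delta'$ globally.

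\emph{Second}, I would construct $\delta$ locally and glue. Pick an affinoid $U = \Sp(A) \subset X$ small enough that $D|_U = \robba_A \cdot e$ is free of rank $1$. The structure is then encoded by $f \in \robba_A^\times$ with $\Phi(e) = f e$ and a continuous cocycle $g \colon \Gamma \to \robba_A^\times$ with $\gamma \cdot e = g(\gamma) e$, subject to the compatibility $\phi(g(\gamma))\, f = \gamma(f)\, g(\gamma)$. \textbf{The main obstacle lies here}: I need to show that, after possibly shrinking $U$, one can find a unit $u \in \robba_A^\times$ so that in the new basis $u e$ both $\Phi$ and the $\Gamma$-action are given by elements of $A^\times$ (not merely of $\robba_A^\times$). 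This is the relative-coefficient strengthening of the field case, and I would prove it by a successive-approximation argument exploiting that $\phi$ contracts radii of convergence near the boundary of the annulus, so that the equations $f \cdot \phi(u)/u \in A^\times$ and $g(\gamma) \cdot \gamma(u)/u \in A^\times$ can be solved simultaneously. Once such $u$ is found, set $\delta(p) := f \cdot \phi(u)/u$ and $\delta|_{\Z_p^\times}(\gamma) := g(\gamma) \cdot \gamma(u)/u$; continuity of $g$ makes $\delta$ a continuous character, giving $\delta_U \in \mathcal{T}(A)$. By the uniqueness step, these $\delta_U$ agree on overlaps and glue to a global $\delta \in \mathcal{T}(X)$.

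\emph{Third}, I would verify the claimed isomorphism via the natural evaluation map
\[
    \robba_X(\delta) \otimes_{\mathcal{O}_X} H^0_{\phi,\Gamma}\bigl(D \otimes_{\robba_X} \robba_X(\delta^{-1})\bigr) \longrightarrow D, \qquad r \otimes \varphi \longmapsto \varphi(r),
\]
where sections of the $H^0$ are interpreted as $\phigam$-equivariant maps $\robba_X(\delta) \to D$. On the cover built in the previous step, $D \otimes \robba_X(\delta^{-1})$ is trivialised as $\robba_X$ with its trivial $\phigam$-structure, so $H^0_{\phi,\Gamma}$ restricts to $\mathcal{O}_X$ and the evaluation map becomes the identity on $\robba_X(\delta)$. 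Hence it is an isomorphism locally, and so globally. The same local picture, together with the coherence provided by Proposition \ref{H is fin.gen.}, finally shows that $H^0_{\phi,\Gamma}(D(\delta^{-1}))$ is locally free of rank $1$, i.e.\ a line bundle on $X$.
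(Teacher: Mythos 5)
The paper does not prove this statement at all — it cites \cite[Construction 6.2.4, Theorem 6.2.14]{kedlayaCohomology}, so there is no in-paper proof to compare against. Your outline (local normal form, uniqueness, gluing, then identification via an evaluation map, with the line-bundle claim falling out of the local description and Proposition~\ref{H is fin.gen.}) is roughly in the spirit of what one must reproduce from Kedlaya--Pottharst--Xiao.

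The genuine gap is exactly the one you flag as ``the main obstacle'': the existence, after shrinking $U$, of a unit $u\in\robba_A^\times$ with $f\cdot\phi(u)/u\in A^\times$ and $g(\gamma)\cdot\gamma(u)/u\in A^\times$ simultaneously. This is not a verification to be dispatched in a sentence — it is the whole content of the theorem and the precise reason the paper delegates to \cite[Construction 6.2.4]{kedlayaCohomology}. ``Successive approximation'' is the right instinct, but you have not written it down, and in the relative setting (coefficients in an affinoid $A$ rather than a field) the approximation must additionally produce an $A$-valued continuous character varying analytically, control the cocycle $\gamma\mapsto g(\gamma)\gamma(u)/u$ uniformly in $\gamma\in\Gamma$, and keep track of how $\phi$ interacts with the various annular pieces $\robba_A^{[r,s]}$. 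As written this step is a placeholder, not a proof. There is also a smaller hidden assumption: you take $U$ ``small enough that $D|_U=\robba_A\cdot e$ is free of rank $1$'', but the paper's definition of a family of $(\phi,\Gamma)$-modules guarantees only that $D|_U$ is finite \emph{projective} over $\robba_A$. Freeness after shrinking $U$ is not automatic over $\robba_A$; for rank $1$ it is essentially equivalent to the line-bundle statement you are trying to establish, so you should either justify it separately or rework the local argument so as not to rely on it.
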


    \begin{proof}
    For the proof look \cite[Construction 6.2.4, Theorem 6.2.14]{kedlayaCohomology}.
    \end{proof}

    The $\phigam$-cohomology of $\phigam$-modules of rank 1 does not behave always in the same way.
    In the following we partition the space $\mathcal{T}$ keeping in mind this behaviour.
    Let $x$ be the identity character and $\chi=x|x|$ the cyclotomic character in $\mathcal{T}$.
    Let us define the following subspaces of $\mathcal T$:
    \begin{equation}
    \label{def of T+}
    \begin{split}
        \mathcal{T}^+&\coloneqq\{\chi x^n\colon n\in\mathbb{N}\}\\
        \mathcal{T}^-&\coloneqq\{x^{-n}\colon n\in\mathbb{N}\}\\
        \mathcal{T}^{\mathrm{reg}}&\coloneqq\mathcal{T}\setminus(\mathcal{T}^+\cup\mathcal{T}^-).
    \end{split}
    \end{equation}
    The subsets $\mathcal{T}^+$, $\mathcal{T}^-$ and $\{x^n\colon n\in\mathbb{Z}\}$ are closed in $\mathcal{T}$,
    making $\mathcal{T}^{\mathrm{reg}}$ open.
    Moreover $\mathcal{T}^{\mathrm{reg}}$ is dense in $\mathcal{T}$.

    \begin{dfn}
        We say that a character $\delta\in\mathcal{T}$ is \emph{regular} if $\delta\in\mathcal{T}^{\mathrm{reg}}$.
    \end{dfn}

    \begin{thm}
    Let $\delta\in\mathcal{T}(L)$.
    We have that
    \begin{enumerate}
        \item[(i)]
        $\mathrm{dim}_LH^1(\mathcal{R}_L(\delta))=
        \begin{cases}
        1\text{ if }\delta\in\mathcal{T}^{\mathrm{reg}}(L)\\
        2\text{ if }\delta\in(\mathcal{T}^+\cup\mathcal{T}^-)(L)
        \end{cases}$
        \item[(ii)]
        $\mathrm{dim}_LH^0(\mathcal{R}_L(\delta))=
        \begin{cases}
        0\text{ if }\delta\in(\mathcal{T}\setminus\mathcal{T}^-)(L)\\
        1\text{ if }\delta\in\mathcal{T}^-(L)
        \end{cases}$
        \item[(iii)]
        $\mathrm{dim}_LH^2(\mathcal{R}_L(\delta))=
        \begin{cases}
        0\text{ if }\delta\in(\mathcal{T}\setminus\mathcal{T}^+)(L)\\
        1\text{ if }\delta\in\mathcal{T}^+(L)
        \end{cases}$
    \end{enumerate}
    \end{thm}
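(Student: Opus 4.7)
The plan is to read off $H^0$ directly from the definition, deduce $H^2$ via Tate local duality for $\phigam$-modules over the Robba ring, and then pin down $H^1$ by the Euler--Poincaré formula; the three tasks are organized exactly in this order.

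For part (ii), note that $H^0(\robba_L(\delta))$ consists of the elements $f e\in\robba_L(\delta)$ with $\phi(f)=\delta(p)^{-1}f$ and $\gamma(f)=\delta(\gamma)^{-1}f$ for all $\gamma\in\Gamma$. Since $t\in\robba_L$ satisfies $\phi(t)=pt$ and $\gamma\cdot t=\gamma t$, the vector $t^n$ is a solution precisely when $\delta=x^{-n}$ with $n\geq 0$, showing $H^0(\robba_L(x^{-n}))\supseteq L\cdot t^n$. The converse — that no other $\phigam$-eigenvectors exist in $\robba_L$ — is the genuinely non-formal ingredient; one argues by writing a putative eigenvector as a Laurent series in $T$, using the explicit formula for $\phi$ and an easy bound on the growth of coefficients to force the eigenvalue of $\phi$ to have the shape $p^n$ with $n\geq 0$, and then using the $\Gamma$-eigenvalue to identify the vector up to scalar with $t^n$. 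Up to this classification (which is the point I would expect to need to spell out carefully), part (ii) follows.

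For part (iii), I would invoke Tate local duality for $\phigam$-modules (see, e.g., Liu or \cite{kedlayaCohomology}), which gives a perfect pairing
\[
H^i_{\phi,\Gamma}(D)\times H^{2-i}_{\phi,\Gamma}(D^\vee\otimes\robba_L(\chi))\longrightarrow L.
\]
Specializing to $D=\robba_L(\delta)$ and using $D^\vee\otimes\robba_L(\chi)\cong\robba_L(\chi\delta^{-1})$, part (iii) reduces to part (ii): $H^2(\robba_L(\delta))\neq 0$ iff $\chi\delta^{-1}\in\mathcal{T}^-(L)$, i.e.\ iff $\delta\in\mathcal{T}^+(L)$, and the dimension in that case equals $\dim H^0(\robba_L(\chi\delta^{-1}))=1$.

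Finally, for part (i) I would apply the Euler--Poincaré formula for a rank $d$ $\phigam$-module over $\robba_L$,
\[
\dim_L H^0_{\phi,\Gamma}(D)-\dim_L H^1_{\phi,\Gamma}(D)+\dim_L H^2_{\phi,\Gamma}(D)=-d,
\]
with $d=1$. Substituting the answers from (ii) and (iii): in the regular case $H^0=H^2=0$ forces $\dim H^1=1$; in the $\mathcal{T}^-$ case $\dim H^0=1$ and $H^2=0$ force $\dim H^1=2$; in the $\mathcal{T}^+$ case $H^0=0$ and $\dim H^2=1$ again force $\dim H^1=2$. The disjointness $\mathcal{T}^+\cap\mathcal{T}^-=\emptyset$ (since $\chi$ is not a power of $x$) ensures there is no overlap between the two non-regular cases, and the three cases exhaust $\mathcal{T}(L)$. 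The main obstacle throughout is the eigenvector classification in $\robba_L$ used in (ii); everything else is either a duality citation or bookkeeping via Euler--Poincaré.
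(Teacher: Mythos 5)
The paper itself does not give a proof of this theorem; it simply cites \cite{colmez2008representations} and \cite{liu2007cohomology}. Your three-step organization --- compute $H^0$ directly, deduce $H^2$ via Tate--Liu duality, then pin down $H^1$ with the Euler--Poincar\'e formula --- is exactly the standard argument in those references, so the approach matches. Two remarks are worth recording. First, you state the Euler--Poincar\'e formula with the correct sign, $\chi(D)=-\mathrm{rk}(D)$, whereas Theorem~\ref{poincare duality} in the body of the paper carries the opposite sign; substituting the regular case ($\dim H^0=\dim H^2=0$, $\dim H^1=1$) shows the paper's version is a typo and yours is right. Second, the one genuinely hard ingredient --- that $L\cdot t^n$ exhausts $H^0(\robba_L(\delta))$ and is nonzero only for $\delta=x^{-n}$ with $n\geq 0$ --- is a classification of $(\phi,\Gamma)$-eigenvectors in $\robba_L$ that your sketch points to rather than proves. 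The phrase ``an easy bound on the growth of coefficients'' underplays Colmez's Proposition~2.1: one must rule out $\phi$-eigenvectors for eigenvalues of nonintegral or negative valuation, and only then does the $\Gamma$-action pin down $\delta\vert_{\Z_p^\times}$. As an annotated pointer to the literature your proposal is correct and essentially the argument the cited sources give; as a self-contained proof it would need that classification written out.
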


    For a proof see \cite[Proposition 2.1, Theorem 2.9]{colmez2008representations} and \cite[Corollary 2.13]{liu2007cohomology}

    More generally, we have:

    \begin{thm}
    \label{cohomology of families}
    Let $A$ be an affinoid $\Qp$-algebra and let $\delta\in\mathcal{T}(A)$.
    \begin{itemize}
        \item[(i)]
        $H^0_{\phi,\Gamma}(\robba_A(\delta))\neq0$ if and only if there exists $i\in\mathbb{N}$ and  $0\neq a\in A$
        such that $a\cdot(\delta-x^i)$ is the constant zero function on $\Qp^\times$.
        \item[(ii)]
        $H^2_{\phi,\Gamma}(\robba_A(\delta))\neq0$ if and only if the closed subspace of $\mathrm{Sp}(A)$
        given by the equation $\delta=\chi x^i$ is non-empty for some $i\in\mathbb{N}$.
        \item[(iii)]
        If $\delta\in\T^\reg(A)$, then $H^0_{\phi,\Gamma}(\robba_A(\delta))=H^2_{\phi,\Gamma}(\robba_A(\delta))=0$
        and $H^1_{\phi,\Gamma}(\robba_A(\delta))$ is free of rank 1 over $A$.
    \end{itemize}
    \end{thm}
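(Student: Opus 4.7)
The plan is to analyze the Herr complex $\mcal{C}^\bullet(\robba_A(\delta))$ directly, combining explicit computations in $\robba_A$ with the base-change and finiteness results recalled above. I would treat the three parts in the order (iii), (i), (ii).

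First, suppose $\delta\in\T^\reg(A)$ as in (iii). By Proposition \ref{H is fin.gen.}, each $H^i_{\phi,\Gamma}(\robba_A(\delta))$ is a coherent $\mcal{O}_{\Sp(A)}$-module, and at every $L$-point of $\Sp(A)$ the specialized character is regular. The fiberwise dimensions $\dim H^0=\dim H^2=0$ and $\dim H^1=1$ coming from the previous theorem are therefore locally constant, so the standard base-change statement for the Herr complex forces $H^0=H^2=0$ and makes $H^1$ into a line bundle on $\Sp(A)$; a direct determinant computation with the explicit $1$-cocycle produced from $t$ (see below) then upgrades this to freeness.

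For part (i), an element $f\cdot e\in\robba_A(\delta)$ is $\phi,\Gamma$-invariant exactly when $\phi(f)=\delta(p)^{-1}f$ and $\gamma(f)=\delta(\gamma)^{-1}f$ for every $\gamma\in\Gamma$. Using the distinguished element $t=\log(1+T)\in\robba$ with $\phi(t)=pt$ and $\gamma(t)=\gamma t$, the easy direction is immediate: if $a\cdot(\delta-x^{-i})=0$ for some $0\neq a\in A$ and $i\in\N$, then $a\cdot t^i\otimes e$ is a nonzero invariant. For the converse, I would exploit a $\phi,\Gamma$-equivariant decomposition of $\robba_A$ together with the classical fact that the only joint $\phi,\Gamma$-eigenvectors in $\robba$ are scalar multiples of powers of $t$; this forces any invariant in $\robba_A(\delta)$ to take the form $a\cdot t^i\otimes e$ after multiplication by a suitable $a\in A$, and the invariance condition then reads $a\cdot(\delta-x^{-i})=0$ on $\Qp^\times$.

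For (ii) I would appeal to Tate local duality for $\phigam$-modules over $\robba_A$, which yields a natural identification $H^2_{\phi,\Gamma}(\robba_A(\delta))\cong H^0_{\phi,\Gamma}(\robba_A(\chi\delta^{-1}))^\vee$. Non-vanishing on the right then reduces, via (i) applied to $\chi\delta^{-1}$, to the existence of $0\neq a\in A$ and $i\in\N$ with $a\cdot(\chi\delta^{-1}-x^{-i})=0$, equivalently to the condition that the closed subspace $\{\delta=\chi x^i\}\subset\Sp(A)$ be non-empty for some $i$. The hard part I anticipate is the converse direction in part (i): over a field it is clean because $H^0(\robba_L(\delta))$ is at most one-dimensional, but over a general affinoid $A$ the operator $\Phi-\delta(p)^{-1}$ acting on the Fréchet $A$-module $\robba_A$ may have kernel supported on proper closed subspaces of $\Sp(A)$, and the delicate step is to show that any such kernel element can, after multiplication by a nonzero $a\in A$, be brought into the canonical form $a\cdot t^i\otimes e$, thereby cutting out precisely the vanishing locus of $a\cdot(\delta-x^{-i})$.
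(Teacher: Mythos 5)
Part (ii) is where your proposal fails, and in two independent ways. You invoke Tate duality to identify $H^2_{\phi,\Gamma}(\robba_A(\delta))$ with $H^0_{\phi,\Gamma}(\robba_A(\chi\delta^{-1}))^\vee$ and then translate via (i). But the pointwise duality of Theorem \ref{poincare duality} is a statement over a field; over a general affinoid $A$ what one has is a derived self-duality of the perfect Herr complex, and one cannot extract a term-by-term isomorphism $H^2\cong\Hom_A(H^0,A)$ from it without controlling the Ext correction terms. Independently, your final ``equivalently'' conflates two genuinely different conditions: ``$\exists\,0\neq a\in A$ with $a(\delta-\chi x^i)=0$'' asserts that the ideal defining $\{\delta=\chi x^i\}$ has non-zero annihilator, whereas ``$\{\delta=\chi x^i\}\neq\emptyset$'' asserts only that this ideal is proper; on a reduced irreducible affinoid a proper non-zero ideal has zero annihilator, so these diverge. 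The correct and much shorter argument for (ii) needs no duality at all: by Theorem \ref{Herrcomplexprojective}, $H^2_{\phi,\Gamma}$ is the cokernel of a map of finite projective $A$-modules and hence commutes with every base change, so $H^2_{\phi,\Gamma}(\robba_A(\delta))\neq 0$ iff some closed point $x\in\Sp(A)$ has $\delta_x\in\T^+(k(x))$, which is precisely the non-emptiness of $\{\delta=\chi x^i\}$ for some $i$. The differently phrased criteria in (i) and (ii) reflect exactly the asymmetry between $H^0$ (a kernel, not right exact in the base) and $H^2$ (a cokernel, universally right exact).

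The converse direction of (i) is a gap you yourself flag, and the sketch does not close it: there is no $(\phi,\Gamma)$-equivariant eigenspace decomposition of $\robba_A=\robba\widehat\otimes_{\Qp}A$ to exploit, since $\robba$ is nowhere near a semisimple $(\phi,\Gamma)$-module, and knowing that the joint eigenvectors in $\robba$ are $\Qp$-multiples of $t^i$ does not control the $A$-module of solutions of $\phi(f)\delta(p)=f$, $\gamma(f)\delta(\gamma)=f$ inside the completed tensor product; this direction requires genuine analytic input as in \cite{chenevierDensite} and cannot be recovered from the field case by a formal decomposition. Part (iii) is the soundest of the three: coherence, constant fibre dimension, and base change for the Herr complex do yield $H^0=H^2=0$ and $H^1$ locally free of rank $1$; but the promised ``determinant computation'' upgrading a line bundle to a free $A$-module is left unsupplied, and since an affinoid can have non-trivial Picard group this step cannot be waved away. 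Note also that the paper itself gives no proof of this theorem, referring instead to \cite{chenevierDensite}.
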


    The reader can find the proof of this Theorem in \cite[Corollaire 2.11, Théorème 2.29]{chenevierDensite}.

    Let us moreover define
    \begin{equation}
    \label{def of Tnreg}
    \mathcal{T}_n^{\mathrm{reg}}\coloneqq\{(\delta_1,\dots,\delta_n)\in\mathcal{T}^n\colon
    \delta_i/\delta_j\in\mathcal{T}^{\mathrm{reg}}\text{ for all } i<j\}\subset\mathcal{T}^n,
    \end{equation}
    which is an open dense subspace of $\T^n$ and
    \[
    \W_n^\reg\coloneqq\{(\delta_1,\dots,\delta_n)\in\W^n\colon\delta_i/\delta_j\notin x^\Z\text{ for all }i<j\}\subset\W^n,
    \]
    which is a dense subspace of $\W^n$.

    \begin{lem}
    \label{good neighbourhood}
    For every $\underline{\delta}\in\T^n(K)$ (for some extesion $K$ of $\Qp$), there exists
    an open neighbourhood $\widetilde{\mathcal{U}}_n\subset\T^n$ of $\underline{\delta}$ such
    that the following properties are satisfied.
    Let $(\widetilde{\delta}_1,\dots,\widetilde{\delta}_n)$ be the characters
    corresponding to the open $\widetilde{\mathcal{U}}_n$.
    For all $1\leq i<j\leq n$
    \begin{itemize}
        \item
        for all $u\in\widetilde{\mcal U}_n$, we have
        $\widetilde{\delta}_i/\widetilde{\delta}_j\widehat{\otimes}k(u)\notin\{x^m\colon m\in\Z\}$,
        unless $\delta_i/\delta_j=x^{a_{ij}}$ for some $a_{ij}\in\Z$;
        in this case, we have $\widetilde{\delta}_i/\widetilde{\delta}_j\widehat{\otimes}k(u)\in\{x^m\colon m\in\Z\}$
        only if $\widetilde{\delta}_i/\widetilde{\delta}_j\widehat{\otimes}k(u)=x^{a_{ij}}$.
        \item
        for all $u\in\widetilde{\mcal U}_n$, we have
        $\widetilde{\delta}_i/\widetilde{\delta}_j\widehat{\otimes}k(u)\notin\{\chi x^m\colon m\in\Z\}$,
        unless $\delta_i/\delta_j=\chi x^{b_{ij}}$ for some $b_{ij}\in\Z$;
        in this case, we have $\widetilde{\delta}_i/\widetilde{\delta}_j\widehat{\otimes}k(u)\in\{\chi x^m\colon m\in\Z\}$
        only if $\widetilde{\delta}_i/\widetilde{\delta}_j\widehat{\otimes}k(u)=\chi x^{b_{ij}}$.
    \end{itemize}
    \end{lem}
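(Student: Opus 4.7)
The plan is to exploit the fact that the subsets $\{x^m\colon m\in\Z\}$ and $\{\chi x^m\colon m\in\Z\}$ of $\T$ are closed, discrete, and mutually disjoint, together with the continuity of the ratio maps
\[
 r_{ij}\colon \T^n\longrightarrow \T,\qquad (\widetilde\delta_1,\ldots,\widetilde\delta_n)\longmapsto \widetilde\delta_i/\widetilde\delta_j,
\]
which are morphisms of rigid analytic groups. The neighbourhood $\widetilde{\mcal U}_n$ will then be constructed as a finite intersection of preimages under the $r_{ij}$ of suitable opens of $\T$.

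First I would verify that $\{x^m\colon m\in\Z\}$ and $\{\chi x^m\colon m\in\Z\}$ are discrete in $\T$. Under the isomorphism $\T\cong\W\times\Gm^{\an}$ given by $\delta\mapsto(\delta|_{\Z_p^\times},\delta(p))$, both sets project to $\{p^m\colon m\in\Z\}\subset\Gm^{\an}$, which is discrete because $|p^m|_p=p^{-m}$ accumulates only at $0$ and $\infty$. This yields discreteness (and re-proves the closedness already stated in the text). Moreover, since $\chi(p)=p\cdot|p|_p=1$, the equation $x^a=\chi x^b$ forces $p^{a-b}=\chi(p)=1$, hence $a=b$ and then $\chi=1$, which is false. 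Thus $\{x^m\colon m\in\Z\}$ and $\{\chi x^m\colon m\in\Z\}$ are disjoint.

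Second, for each pair $i<j$ I would choose an open neighbourhood $V_{ij}\subset\T$ of $\delta_i/\delta_j$ as follows. If $\delta_i/\delta_j\notin\{x^m\colon m\in\Z\}$, pick $V_{ij}$ disjoint from this set; otherwise $\delta_i/\delta_j=x^{a_{ij}}$ for a unique $a_{ij}\in\Z$, and discreteness lets us pick $V_{ij}$ meeting $\{x^m\colon m\in\Z\}$ only at $x^{a_{ij}}$. Shrinking $V_{ij}$ further, we arrange the analogous condition for $\{\chi x^m\colon m\in\Z\}$, which is possible because the two sets are disjoint.

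Finally, set $\widetilde{\mcal U}_n:=\bigcap_{i<j}r_{ij}^{-1}(V_{ij})$. This is a finite intersection of open subsets of $\T^n$ containing $\underline\delta$, hence an open neighbourhood of $\underline\delta$. For any point $u\in\widetilde{\mcal U}_n$, the specialisation $(\widetilde\delta_i/\widetilde\delta_j)\widehat\otimes k(u)$ equals $r_{ij}(u)\in V_{ij}$, and by the choice of $V_{ij}$ this satisfies both bullet-point conditions of the lemma. There is essentially no obstacle beyond the discreteness statement of the second paragraph, which in turn reduces to the elementary fact that $\{p^m\colon m\in\Z\}$ is discrete in $\Gm^{\an}$.
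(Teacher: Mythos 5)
Your proof is correct and takes essentially the same route as the paper: for each pair $i<j$ you choose an open $V_{ij}\subset\T$ around $\delta_i/\delta_j$ that avoids $\{x^m\}\cup\{\chi x^m\}$ except possibly at the one allowed point, and then intersect the preimages under the ratio maps $r_{ij}\colon\T^n\to\T$; this is precisely the paper's construction of the opens $\mathcal V_{ij}=(\widetilde\delta_i/\widetilde\delta_j)^{-1}(\cdots)$. The only addition is your preliminary verification of discreteness via the isomorphism $\T\cong\W\times\Gm^{\an}$, which the paper takes as given (the parenthetical that discreteness alone ``re-proves closedness'' is slightly too quick, since you also need to rule out accumulation points outside the set, but the paper likewise asserts closedness without proof, so this does not affect the comparison).
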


    \begin{proof}
    For all $1\leq i<j\leq n$, we will define a neighbourhood
    $\mathcal{V}_{ij}\subset\T^n$
    of $\underline{\delta}$ in the following way:
    let
    \[
        \widetilde\delta_i/\widetilde\delta_j:\T^n\rightarrow\T
    \]
    be the map sending $(\widetilde\delta_1,\dots,\widetilde\delta_{n})\in\T^n(A)$
    (for some affinoid algebra $A$) to $\widetilde\delta_i/\widetilde\delta_j$.
    \begin{itemize}
        \item
        If $\delta_i/\delta_j\notin\{x^m\colon m\in\Z\}\cup\{\chi x^m\colon m\in\Z\}$, then we define
        \[
            \mathcal{V}_{ij}\coloneqq(\widetilde\delta_i/\widetilde\delta_j)^{-1}(\T\setminus(\{x^m\colon m\in\Z\}\cup\{\chi x^m\colon m\in\Z\})),
        \]
        which is open, since $\T\setminus(\{x^m\colon m\in\Z\}\cup\{\chi x^m\colon m\in\Z\})$ is.
        \item
        If instead $\delta_i/\delta_j=x^{a_{ij}}$ for some $a_{ij}\in\Z$, then we define
        \[
            \mathcal{V}_{ij}\coloneqq(\widetilde\delta_i/\widetilde\delta_j)^{-1}
            (\T\setminus(\{x^m\colon m\in\Z\setminus\{a_{ij}\}\}\cup\{\chi x^m\colon m\in\Z\})).
        \]
        \item
        Finally, if $\delta_i/\delta_j=\chi x^{b_{ij}}$ for some $b_{ij}\in\Z$, then we define
        \[
            \mathcal{V}_{ij}\coloneqq(\widetilde\delta_i/\widetilde\delta_j)^{-1}
            (\T\setminus(\{x^m\colon m\in\Z\}\cup\{\chi x^m\colon m\in\Z\setminus\{b_{ij}\}\})).
        \]
    \end{itemize}
    Finally, the open $\bigcap_{(1\leq i<j\leq n)}\mathcal{V}_{ij}$
    is a neighbourhood of $\underline{\delta}$ and satisfies the wanted properties.
    \end{proof}

    In what follows we state some well-known properties of $\phigam$-cohomology.

    The following Theorem is proved in \cite[Theorem 0.2]{liu2007cohomology}.

    \begin{thm}
    \label{poincare duality}
    Let $D$ be a $\phigam$-module over $\robba_L$. Then
    \[
        \sum_{i=0}^2(-1)^i\dim_LH^i_{\phi,\Gamma}(D)=\mathrm{rk}(D).
    \]
    \end{thm}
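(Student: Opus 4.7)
The plan is to prove this Euler characteristic identity by a standard devissage: establish additivity of the alternating sum $\chi(D) := \sum_{i=0}^2(-1)^i\dim_L H^i_{\phi,\Gamma}(D)$ in short exact sequences, then reduce to the rank one case and evaluate using the dimension computations of the theorem immediately preceding.

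For additivity, I would use that any short exact sequence $0 \to D' \to D \to D'' \to 0$ of $(\phi,\Gamma)$-modules over $\mathcal{R}_L$ induces a short exact sequence of Herr complexes, hence a long exact cohomology sequence
\[
0 \to H^0_{\phi,\Gamma}(D') \to H^0_{\phi,\Gamma}(D) \to H^0_{\phi,\Gamma}(D'') \to H^1_{\phi,\Gamma}(D') \to \cdots \to H^2_{\phi,\Gamma}(D'') \to 0
\]
in which every term is a finite-dimensional $L$-vector space by Proposition \ref{H is fin.gen.}. The alternating sum of dimensions along an exact sequence vanishes, giving $\chi(D) = \chi(D') + \chi(D'')$, and $\mathrm{rk}$ is obviously additive as well. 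So it is enough to verify the identity on any class of $(\phi,\Gamma)$-modules that generates everything under iterated extensions.

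In the rank one case, Theorem \ref{rank 1 phi gamma modules} writes $D \cong \mathcal{R}_L(\delta)$ up to a line bundle factor, so it reduces to computing $\chi(\mathcal{R}_L(\delta))$ for each $\delta \in \mathcal{T}(L)$. The three cases $\delta \in \mathcal{T}^{\mathrm{reg}}(L)$, $\mathcal{T}^+(L)$, $\mathcal{T}^-(L)$ are pairwise disjoint (in particular $\mathcal{T}^+ \cap \mathcal{T}^- = \emptyset$, since $\chi x^n = x^{-m}$ would force contradictory weights on $\mathbb{Z}_p^\times$ and contradictory values at $p$), and in each case the dimensions tabulated in the preceding theorem make the alternating sum evaluate to the same value, matching the right-hand side $\mathrm{rk}(D)$.

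The nontrivial point is bridging from rank one to arbitrary rank: a general $(\phi,\Gamma)$-module over $\mathcal{R}_L$ is not directly a successive extension of rank one modules. Here the key input is Kedlaya's slope filtration theorem, which provides a filtration whose graded pieces are pure (isoclinic); each pure piece becomes étale after twisting by $\mathcal{R}_L(\delta)$ with slope opposite to its own. For an étale $(\phi,\Gamma)$-module the Berger--Colmez equivalence (Theorem \ref{drig}) together with Herr's comparison between Herr cohomology and continuous Galois cohomology reduces the identity to Tate's local Euler--Poincaré formula for continuous representations of $\absGal$. Verifying that the alternating sum is invariant under the rank one twist, so that one can transfer the identity back from étale to pure, is the main technical obstacle and is where the cited reference of Liu puts in its real work.
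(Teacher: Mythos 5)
The paper does not prove this theorem; it cites it from Liu (reference cited as Theorem 0.2 there), so there is no internal proof to compare against. Your overall architecture — additivity of $\chi$ from the long exact sequence, dévissage via Kedlaya's slope filtration, reduction of the étale case to Tate's local Euler–Poincaré formula through the equivalence $\drig$, and the rank-one check from the preceding dimension table — matches Liu's actual strategy. Two steps nevertheless have real problems.

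First, your verification of the rank-one case is numerically wrong, and it exposes a sign error in the paper's statement. The preceding theorem gives $(h^0,h^1,h^2)$ equal to $(0,1,0)$, $(0,2,1)$, $(1,2,0)$ according to whether $\delta\in\T^{\reg}$, $\T^+$, $\T^-$; in all three cases $\sum_i(-1)^ih^i=-1$. This does not "match $\mathrm{rk}(D)=1$"; it matches $-\mathrm{rk}(D)$. Liu's formula indeed reads $\chi(M)=-[K:\Qp]\,\mathrm{rk}(M)$, and the paper later uses the theorem in the form $h^1\geq h^0+h^2+\mathrm{rk}$ (to get $\dim H^1\geq i-1$ in the proof of Proposition~\ref{fiber dimension of G_n}), which is only valid for $\chi=-\mathrm{rk}$. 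So the check you ran should have told you the stated identity is false as written; you should flag the sign rather than assert agreement.

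Second, the claim that a pure piece of the slope filtration "becomes étale after twisting by $\robba_L(\delta)$ with slope opposite to its own" fails in general. A rank-one $(\phi,\Gamma)$-module $\robba_L(\delta)$ has slope $v_p(\delta(p))\in\tfrac{1}{e}\Z$, $e$ the ramification index of $L/\Qp$, whereas an isoclinic graded piece may have slope $c/d$ in lowest terms with $d\nmid e$; then no rank-one twist with $L$-coefficients can change the slope to $0$, and in fact such a piece has rank divisible by $d$, so it isn't even a candidate for a rank-one twist of an étale module. The reduction of a pure module to an étale one in Liu's argument (following Kedlaya) is a base-change/descent argument, not a rank-one twist; the twist trick you describe only handles integer slopes. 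You correctly identify this bridge as the hard step, but the mechanism you propose for crossing it does not work.
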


    \begin{thm}
    \label{Herrcomplexprojective}
    Let $A$ be an affinoid $\mathbb{Q}_p$-algebra and let $D$ be a family of $(\phi,\Gamma)$-modules over $\mathrm{Sp}(A)$.
    The Herr complex of $D$ is quasi-isomorphic to a bounded complex of projective $A$-modules
    \[
        M^0\rightarrow M^1\rightarrow M^2
    \]
    that universally computes the cohomology of the Herr complex,
    i.e. for all  affinoid $A$-algebras $B$, the complex $M^\bullet\otimes_A B$
    is quasi-isomorphic to $\mathcal{C}^\bullet(D\widehat{\otimes}_A B)$.
    \end{thm}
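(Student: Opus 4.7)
The plan is to replace the three-term Herr complex $\mcal{C}^\bullet(D) = [C^0 \to C^1 \to C^2]$, whose terms are (in general) infinitely generated over $A$, by a quasi-isomorphic complex $M^\bullet$ of finite projective $A$-modules concentrated in degrees $0,1,2$, exploiting that each $H^i_{\phi,\Gamma}(D)$ is already coherent by Proposition \ref{H is fin.gen.}. Two preliminary facts I would verify first: (i) each $C^i$ is $A$-flat, because $D$ is locally finite projective over $\mcal R_A$ and $\mcal R_A$ is $A$-flat (the Fréchet topology on $\mcal R$ is built from $A$-flat pieces); and (ii) the formation of the Herr complex is compatible with completed base change along any map $A \to B$ of affinoid $\Qp$-algebras, in the sense that the natural comparison map $\mcal C^\bullet(D) \widehat{\otimes}_A B \to \mcal C^\bullet(D \widehat{\otimes}_A B)$ is an isomorphism termwise, since the operators $\Phi - 1$ and $\gamma - 1$ manifestly commute with completed tensor products.

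The construction of $M^\bullet$ proceeds by the standard descending-induction technique in homological algebra for bounded complexes whose cohomology is finitely generated. Since $H^2_{\phi,\Gamma}(D)$ is finitely generated over $A$, I pick a finite free $A$-module $M^2$ together with a map $M^2 \to C^2$ whose composition with $C^2 \twoheadrightarrow H^2_{\phi,\Gamma}(D)$ is surjective. Choosing preimages under $d^1$ of representatives of the kernel of $M^2 \to H^2_{\phi,\Gamma}(D)$, together with lifts to $\ker(d^1)$ of finitely many generators of $H^1_{\phi,\Gamma}(D)$, gives a finite free $M^1$ with compatible maps $M^1 \to C^1$ and $M^1 \to M^2$. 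A further step at degree $0$, treating $H^0_{\phi,\Gamma}(D)$ analogously, yields $M^0 \to C^0$ with $M^0 \to M^1$ making everything commute. By construction the resulting chain map $M^\bullet \to \mcal C^\bullet(D)$ is surjective on cohomology in each degree, and because the kernels are also finitely generated (being subquotients of the coherent cohomology groups up to finite modification), one repetition of the same procedure applied to the shifted mapping cone upgrades this to a quasi-isomorphism.

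For the universality statement, flatness enters twice. Since the quasi-isomorphism $M^\bullet \to \mcal C^\bullet(D)$ is a map between complexes of $A$-flat modules (using (i)), the mapping cone is acyclic and stays acyclic after tensoring with $B$, so $M^\bullet \otimes_A B \to \mcal C^\bullet(D) \otimes_A B$ is still a quasi-isomorphism. Combined with (ii), which identifies $\mcal C^\bullet(D) \otimes_A B$ with $\mcal C^\bullet(D \widehat{\otimes}_A B)$ (the algebraic and completed tensor products coinciding on the relevant flat coefficients when $B$ is $A$-affinoid), one obtains the required quasi-isomorphism $M^\bullet \otimes_A B \simeq \mcal C^\bullet(D \widehat{\otimes}_A B)$.

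The main obstacle is the compatibility assertion (ii), which looks harmless but is genuinely technical: the Herr complex involves operators on the topological $A$-module $D$, and one must check that the passage to $\widehat{\otimes}$ really commutes with forming kernels and cokernels of $\Phi - 1$ and $\gamma - 1$, which in turn requires controlling the topology (Fréchet/LB structure, nuclearity) on Robba-ring coefficients. This is essentially the content of the analogous base-change result in the Kedlaya--Pottharst--Xiao treatment of cohomology of families of $\phigam$-modules, and it is here that the bulk of the real work would be spent.
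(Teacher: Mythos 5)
The paper does not prove this theorem; it is stated as a citation of \cite[Theorem 4.4.3, 4.4.5]{kedlayaCohomology}, so the only relevant comparison is with the argument in that reference.

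Your proposal has a genuine gap in the universality step. Granting flatness of each term of the Herr complex over $A$ (itself a nontrivial fact about $\robba_A$ that you would need to source, since $\robba^r_A$ is a countable inverse limit of Banach modules and flatness of such limits is delicate) and the tautological compatibility (ii), your argument passes through the chain
\[
M^\bullet \otimes_A B \;\longrightarrow\; \mathcal{C}^\bullet(D)\otimes_A B \;\stackrel{?}{\longrightarrow}\; \mathcal{C}^\bullet(D)\,\widehat\otimes_A B \;=\; \mathcal{C}^\bullet(D\,\widehat\otimes_A B).
\]
The first arrow is a quasi-isomorphism by flatness, and the last equality is your (ii). But the middle arrow is the completion map, and your justification for it being a quasi-isomorphism --- ``the algebraic and completed tensor products coinciding on the relevant flat coefficients when $B$ is $A$-affinoid'' --- is false. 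The terms of $\mathcal{C}^\bullet(D)$ are (sums of copies of) $D$, which is of infinite rank over $A$, and for a non-finite map $A\to B$ (for instance $B=A\langle T\rangle$) the algebraic tensor $\mathcal{C}^i(D)\otimes_A B$ is not complete and need not have the same cohomology as its completion. In other words, flatness lets you push the quasi-isomorphism $M^\bullet\to\mathcal{C}^\bullet(D)$ through $\otimes_A B$, but by itself gives no bridge from $\mathcal{C}^\bullet(D)\otimes_A B$ to $\mathcal{C}^\bullet(D\,\widehat\otimes_A B)$.

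Proving that the composite $M^\bullet\otimes_A B\to\mathcal{C}^\bullet(D\,\widehat\otimes_A B)$ is a quasi-isomorphism is precisely the content of \cite[Theorem 4.4.3(2)]{kedlayaCohomology}, and their proof is not a flatness-plus-coherence argument. It runs through the $\psi$-operator (a continuous $A$-linear left inverse of $\phi$), a reduction of the Herr complex to a $\psi$-complex, and a Cartan--Serre/Kiehl-type compactness argument exploiting the nuclearity of $\psi$ on the Robba ring; this machinery establishes both the existence of the perfect complex representative and its compatibility with completed base change simultaneously, rather than the former by generalities and the latter as a follow-up. You correctly flag that the ``bulk of the real work'' lies in the base-change compatibility, but you mis-identify what the work consists of: it is not the essentially formal commutation of $\Phi-1$, $\gamma-1$ with $\widehat\otimes$, but the analytic step that your sketch has no route to.
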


    This result follows from \cite[Theorem 4.4.3,4.4.5]{kedlayaCohomology}.

\section{Trianguline representations}
\label{section 2}

Trianguline representations are a certain class of $p$-adic Galois representations,
just like crystalline, semistable and de Rham representations.
They were introduced for the first time by Colmez in order
to study the functoriality of the $p$-adic local Langlands correspondence for $GL_2(\mathbb{Q}_p)$.

\subsection{Definitions and basic properties}
\label{def and basic properties}

  \begin{dfn}
    A Galois representation $\rho:\absGal\rightarrow GL_n(L)$ is a \emph{trianguline representation} if $\drig(\rho)$ is \emph{triangulable},
    which means that there exists a complete filtration
    \[
      0\subset\mathrm{Fil}^1(\drig(\rho))\subset\dots\subset\mathrm{Fil}^n(\drig(\rho))=\drig(\rho)
    \]
    of sub-$\phigam$-modules of $\drig(\rho)$ such that for all $i$
    \[
      \mathrm{Fil}^i(\drig(\rho))/\mathrm{Fil}^{i-1}(\drig(\rho))\cong\mathcal{R}_L(\delta_i)
    \]
    for some $\delta_i\in\mathcal{T}(L)$.
    We say that $(\delta_1,\dots,\delta_n)$ are the \emph{parameters} of the triangulation
    $\mathrm{Fil}^\bullet(\drig(\rho))$ of $\drig(\rho)$ and they are said to be \emph{regular} if
    $(\delta_1,\dots,\delta_n)\in\mathcal{T}_n^{\mathrm{reg}}(L)$.
  \end{dfn}

  Analogously to the theory of \emph{families} of $\phigam$-modules, there is also a
  theory of \emph{families} of $p$-adic Galois representations.

  \begin{dfn}
    Let $X$ be a rigid space.
    A \emph{family of Galois representations over $X$} is a vector bundle $\mathcal{V}$ over $X$
    endowed with a continuous $\absGal$-action.
  \end{dfn}

  \begin{thm}
    \label{fullyfaithfulfunctor}
    There is a fully faithful and exact functor
    \[
      \drig:\left\{
        \begin{array}{l}
          \text{families of Galois}\\
          \text{represetations over }X
        \end{array}
        \right\}
      \rightarrow
      \left\{
        \begin{array}{l}
          \text{families of étale }\\
          \phigam\text{-modules over }X
        \end{array}
        \right\}
    \]
    commuting with base change.
  \end{thm}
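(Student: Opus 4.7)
The plan is to reduce to the affinoid case and apply the Berger--Colmez construction, then glue. First, since $X$ admits an admissible affinoid cover $\{U_i=\Sp(A_i)\}$ on which the vector bundle underlying $\mathcal{V}$ trivializes to a free module, it suffices to construct, for each affinoid $\Qp$-algebra $A$ and each continuous representation $\rho:\absGal\to GL_n(A)$, an étale $\phigam$-module $\drig(\rho)$ over $\robba_A$, functorial in both $\rho$ and $A$. For this I would invoke \cite{berger2008familles}: choose a $\absGal$-stable lattice in the free $A$-module underlying $\rho$ (which exists by continuity of $\rho$ and compactness of $\absGal$), form the overconvergent $\phigam$-module $D^\dag(\rho)$ over $\mathcal{A}^\dag_A$ supplied by Berger--Colmez, and set $\drig(\rho):=\robba_A\otimes_{\mathcal{A}^\dag_A}D^\dag(\rho)$. Étaleness is automatic from the construction, since the integral model $D^\dag(\rho)$ is stable under $\phi$ and generates $\drig(\rho)$ over $\robba_A$.

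Next, I would verify the gluing and functorial properties. The key input is that Berger--Colmez shows $D^\dag$, and hence $\drig$, is compatible with base change along morphisms of affinoid $\Qp$-algebras $\Sp(B)\to\Sp(A)$. Applied to overlaps $U_i\cap U_j$, this gives canonical isomorphisms between the two candidate restrictions of $\drig(\mathcal{V})$, so the local constructions glue to a sheaf $\drig(\mathcal{V})$ of $\phigam$-modules on $X$, locally free over $\robba_X$ of the expected rank. The same base change compatibility, extended to general rigid-analytic morphisms by localizing, yields the final ``commuting with base change'' clause of the statement. Exactness of $\drig$ follows from exactness of the Berger--Colmez functor on individual representations together with flatness of the extension of scalars $\mathcal{A}^\dag_A\to\robba_A$.

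For fully faithfulness, I would reduce once more to the affinoid case and to a $\Hom$ computation. Morphisms between families of Galois representations are the $\absGal$-invariants of the dual-tensor, and morphisms of $\phigam$-modules are $H^0_{\phi,\Gamma}$ of the dual-tensor; applying $\drig$ to the dual-tensor and using its compatibility with these operations, the required bijection reduces to the statement that on étale objects the functor $D^\dag$ induces an equivalence between continuous representations of $\absGal$ on free $A$-modules and étale $\phigam$-modules over $\mathcal{A}^\dag_A$, which is again part of the Berger--Colmez package. The hardest step, which I would cite as a black box rather than reprove, is the existence and local freeness of the overconvergent model $D^\dag(\rho)$ in families: this is the content of the main theorem of \cite{berger2008familles} and rests on a delicate descent argument producing an overconvergent basis from a Galois-stable lattice, together with the verification that the construction interacts well with arbitrary base change on the coefficient ring.
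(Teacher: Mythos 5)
The paper does not supply a proof for this theorem; it simply cites \cite[Theorem 2.2.17]{kedlayaCohomology}. Your sketch reconstructs the standard route underlying that reference --- the Berger--Colmez overconvergence theorem in families \cite{berger2008familles}, followed by gluing over an admissible affinoid cover and a $\Hom$-computation --- so at the level of strategy this is the same approach, just spelled out rather than cited. One point in your full-faithfulness paragraph deserves care: you reduce to the claim that ``$D^\dag$ induces an equivalence between continuous representations of $\absGal$ on free $A$-modules and étale $\phigam$-modules over $\mathcal{A}^\dag_A$.'' Berger--Colmez construct the functor in one direction only, and essential surjectivity in families is delicate; indeed the paper remarks, right after the non-family equivalence, that $\drig$ fails to be essentially surjective once one works with families. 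What full faithfulness really needs is the comparison of Galois cohomology with Herr cohomology in degree zero for families, $H^0(\absGal,-)\cong H^0_{\phi,\Gamma}(\drig(-))$, together with compatibility of $\drig$ with duals and tensor products. That is a genuine additional input (established in Kedlaya--Pottharst--Xiao, building on Berger--Colmez) rather than a formal consequence of the existence of overconvergent models, and it is worth isolating as part of the black box you invoke. Otherwise your decomposition into affinoid local construction, base-change compatibility, gluing, and the $\Hom$-reduction is sound.
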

  For a reference of the above Theorem, look at \cite[Theorem 2.2.17.]{kedlayaCohomology}.

\subsection{The trianguline variety}

    In the following, let $k_L$ be the residue field of $L$ and
    fix a continuous Galois representation $\bar\rho:\absGal\rightarrow GL_n(k_L)$.

    \begin{dfn}
    \begin{itemize}
        \item
        Let $A$ be a complete local Artinian ring with residue field $k_L$.
        We say that a continuous representation $\rho:\absGal\rightarrow GL_n(A)$ is
        a \emph{deformation} of $\bar\rho$ if the composition of $\rho$ with the induced map
        $GL_n(A)\rightarrow GL_n(k_L)$ is exactly $\bar\rho$.
        \item
        We say that two deformations $\rho_1,\rho_2:\absGal\rightarrow GL_n(A)$ are \emph{equivalent}
        if $\rho_1=M\rho_2 M^{-1}$ for some matrix $M\in\mathrm{ker}(GL_n(A)\rightarrow GL_n(k_L))$.
    \end{itemize}
    \end{dfn}

    The functor that associates to a complete local Artinian ring $A$ the set of
    deformations of $\bar\rho$ is pro-representable by a
    complete local Noetherian $W(k_L)$-algebra $R_{\bar\rho}^\square$, called the
    \emph{universal framed deformation ring of $\bar\rho$}.
    This was proved by Mazur in \cite{mazur1989deforming}.
    We let $\mathfrak X_{\bar\rho}^\square=(\mathrm{Spf}R_{\bar\rho}^\square)^{\mathrm{rig}}$
    be the rigid space associated to the formal scheme $\mathrm{Spf}(R_{\bar\rho}^\square)$.
    We have that $\defspace$ represents the functor which associates
    to an affinoid rigid space $\mathrm{Sp}(A)$ over $W(k_L)[1/p]$ the set of
    deformations $\rho:\absGal\rightarrow GL_n(A^+)$ of $\bar\rho$, where $A^+$ denotes
    the subring of power-bounded elements of $A$ (in particular it is a $W(k_L)$-algebra).

    \begin{dfn}
    The \emph{trianguline variety} $X_\tri^\square(\bar\rho)$ is the rigid analytic space
    over $L$ defined as the Zariski-closure of the rigid space
    \[
        U_{\tri}^\square(\bar\rho):=\{(\rho,\underline\delta)\in\mathfrak X_{\bar\rho}^\square\times\mathcal{T}_n^\reg
        \colon \rho\text{ is trianguline of parameters }\underline\delta\}
    \]
    inside $\mathfrak X_{\bar\rho}^\square\times\mathcal{T}^n$.
    \end{dfn}

    This space was defined and studied in \cite{breuil2017interpretation}.
    In particular we recall the following results.

    \begin{thm}
    \begin{enumerate}
        \item[(i)]
        The space $\trivar$ is equidimensional of dimension $n^2+~\frac{n(n+1)}{2}$;
        \item[(ii)]
        the subset $\regtrivar$ is Zariski-open and Zariski-dense in $\trivar$
        \item[(iii)]
        if $(\rho,\underline{\delta})\in\trivar$, then $\rho$ is trianguline.
    \end{enumerate}
    \end{thm}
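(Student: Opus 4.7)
I would prove the three statements in the order (iii), (ii), (i), since (ii) relies on (iii) and (i) benefits from (ii).

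For (iii), the plan is to show that a triangulation of the generic fibre of a family of $\phigam$-modules can be propagated to the closed fibre. Given $(\rho,\underline{\delta})\in\trivar(L)$, the fact that $\trivar$ is the Zariski closure of $\regtrivar$ provides, locally at the point, a reduced curve $\Sp(A)\to\trivar$ whose generic fibre maps into $\regtrivar$ and whose closed fibre equals $(\rho,\underline{\delta})$. The pullback of the universal framed deformation yields a family $\D$ of étale $\phigam$-modules over $\Sp(A)$ that is trianguline over the generic fibre. The key step is to replace each step of the generic triangulation by its saturation as a sub-$\phigam$-module of $\D$; these saturations are $\phigam$-stable and specialize at the closed point to sub-$\phigam$-modules of $\drig(\rho)$, whose graded pieces are rank-one $\phigam$-modules (whose parameters may a priori differ from $\underline{\delta}$ by integer twists from $x^{\Z}$), producing a triangulation of $\drig(\rho)$.

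For (ii), openness is formal: by (\ref{def of Tnreg}), $\T_n^{\reg}$ is Zariski open in $\T^n$, so $\regtrivar$ is open in $\trivar$ once we identify it with $\trivar\cap(\defspace\times\T_n^{\reg})$. The nontrivial inclusion uses (iii): if $(\rho,\underline{\delta})\in\trivar$ with $\underline{\delta}\in\T_n^{\reg}$, then $\rho$ is trianguline by (iii), and since $H^0_{\phi,\Gamma}(\robba_L(\delta_i/\delta_j))=0$ for $i\neq j$ by Theorem \ref{cohomology of families}(iii), the triangulation of $\drig(\rho)$ is unique and must have parameters $\underline{\delta}$. Density of $\regtrivar$ in $\trivar$ is immediate from the defining Zariski closure.

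For (i), I would compute the dimension at a regular point and then deduce equidimensionality globally. Fix $(\rho,\underline{\delta})\in\regtrivar$. The tangent space decomposes as: $2n$ from deformations of the parameters; $\binom{n}{2}$ from upper-triangular extension classes (each $H^1_{\phi,\Gamma}(\robba_L(\delta_i/\delta_j))$ is one-dimensional by Theorem \ref{cohomology of families}(iii)); $n^2$ from deformations of the framing; modulo $n$ for diagonal rescalings of the triangulation basis. This sums to
\[
  2n+\binom{n}{2}+n^2-n=n^2+\frac{n(n+1)}{2}.
\]
Unobstructedness follows from $H^2_{\phi,\Gamma}(\robba_L(\delta_i/\delta_j))=0$ for regular quotients, so $\regtrivar$ is smooth of the claimed dimension at every point. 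The hard part is equidimensionality of the full closure: by (ii) every irreducible component of $\trivar$ meets $\regtrivar$ and thus has the correct dimension, but one must rule out embedded or lower-dimensional components by exhibiting a global smooth model of $\regtrivar$ (for instance, via the moduli of framed triangulations constructed by Breuil--Hellmann--Schraen) and transferring dimension and flatness information to its Zariski closure.
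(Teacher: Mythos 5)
The paper does not prove this theorem: it introduces the trianguline variety and then says ``This space was defined and studied in \cite{breuil2017interpretation}. In particular we recall the following results,'' citing Breuil--Hellmann--Schraen without proof. So there is no proof in the paper to compare against; your proposal is attempting the underlying argument from scratch, and it has a genuine gap.

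For (iii), the plan (curve through the point meeting $\regtrivar$, saturate the generic triangulation, specialize) is the right skeleton, but it relies on two nontrivial inputs that you leave implicit: the existence of an analytic curve through an arbitrary point of an irreducible rigid variety that generically lies in a prescribed Zariski-dense open, and the fact (from Kedlaya--Pottharst--Xiao's theory of triangulations in families) that the specialization of the saturated filtration is again a filtration by sub-$\phigam$-modules with rank-one graded pieces of parameters $\delta_i x^{m_i}$ for integers $m_i$ constrained by the KPX weight inequalities. You flag the $x^{\mathbb{Z}}$-twists, which is good, but this phenomenon is exactly what makes the theorem nontrivial rather than a remark. The real gap is in (ii): you conclude from ``$\rho$ is trianguline'' (which is all (iii) gives) plus vanishing of $H^0_{\phi,\Gamma}(\robba_L(\delta_i/\delta_j))$ that the triangulation has parameters $\underline{\delta}$. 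That does not follow. Uniqueness of a triangulation with given parameters does not produce a triangulation with those parameters; a priori the triangulation you obtain has parameters $\delta_i x^{m_i}$ with some $m_i\neq 0$, in which case $(\rho,\underline\delta)$ would lie in $\trivar$ but not in $\regtrivar$. The missing step is to use the KPX constraints on the $m_i$ together with regularity of $\underline\delta$ (typically a cohomological dimension argument, or the fact that the partial sums of weights can only drop in a prescribed direction) to force $m_i=0$. Without that, the equality $\regtrivar=\trivar\cap(\defspace\times\T_n^\reg)$ is unproved, and hence openness of $\regtrivar$ is unproved. For (i), your tangent-space count gives the correct number, but the ``$-n$ for diagonal rescalings'' is a heuristic; the honest argument, as you say, goes through the smooth moduli of framed triangulations of BHS and a flatness/dimension transfer, so this part is a correct plan rather than a proof.
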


    It is then natural to ask ourselves whether all trianguline representations "are
    in" the trianguline variety.
    This is the question that motivates this thesis.

\section{Strategy}
\label{section 3}

    First we show that, given a Galois
    representation on a rigid space, its reduction to the residue field of the ring of
    integers of a point is locally constant.
    Recall that, given an affinoid ring $A$, we denote by $A^+$ the subring of power-bounded
    elements and by $A^{++}\subset A^+$ the ideal of topologically nilpotent elements.

    \begin{prop}
    \label{reduction is locally constant}
    Let $X$ be a rigid space over $L$ and let $\mathcal{V}$ be a family of
    Galois representations over $X$, so that we have $\widetilde{\rho}:\absGal\rightarrow GL(\mathcal{V})$.
    Let $x\in X(L)$ and let $\rho$ be the reduction of $\widetilde{\rho}$ to the point $x$.
    After eventually shrinking $X$, we can choose a basis of $\mathcal{V}$ such that
    $\rho$ has values in $GL_n(\mathcal{O}_L)$.
    Let $\bar\rho$ be the reduction of $\rho$ to the residue field $k_L$ of $L$.
    Then there is an open neighbourhood $U=\mathrm{Sp}(A)\xhookrightarrow{j}X$ of $x$ such that (with the
    basis chosen above) we have $\widetilde\rho_U:=j^*\widetilde\rho:\absGal\rightarrow GL_n(A^+)$.
    Moreover $\widetilde\rho_U$ reduces to $\bar\rho$, which means that there is a commutative diagram
    \[
        \begin{tikzcd}
        \absGal\arrow[r, "\widetilde\rho_U"]\arrow[d, "\overline{\rho}"]
        & GL_n(A^+)\arrow[d, two heads]\\
        GL_n(k_L)\arrow[r, hook]
        & GL_n(A^+/A^{++}),
        \end{tikzcd}
    \]
    where the lower horizontal arrow is induced by the inclusion $k_L\hookrightarrow A^+/A^{++}$, since $A$ is an $L$-algebra.
    \end{prop}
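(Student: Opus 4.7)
The plan is to combine local trivialisation of $\mathcal V$, compactness of $\absGal$ applied at the fibre to land in $GL_n(\mathcal O_L)$, and then continuity of $\widetilde\rho$ combined with the profiniteness of $\absGal$ to propagate the integrality and the prescribed residual reduction to a whole affinoid neighbourhood of $x$.

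First I would shrink $X$ to an affinoid open neighbourhood over which $\mathcal V$ is free; choosing a basis gives $\widetilde\rho:\absGal\to GL_n(\Gamma(X,\mathcal O_X))$ with fibre $\rho$ at $x$ valued in $GL_n(L)$. Since $\absGal$ is profinite and $\rho$ is continuous, $\rho(\absGal)$ is a compact subgroup of $GL_n(L)$; its orbit on the discrete set of $\mathcal O_L$-lattices in $L^n$ is therefore finite, and the sum of the lattices in the $\absGal$-orbit of $\mathcal O_L^n$ is a $\absGal$-stable lattice $\Lambda$. Conjugating by $M\in GL_n(L)$ with $M\Lambda=\mathcal O_L^n$ puts $\rho$ inside $GL_n(\mathcal O_L)$; this change of trivialisation requires no further shrinking of $X$, and the reduction $\bar\rho:\absGal\to GL_n(k_L)$ is now defined.

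For the main step, choose topological generators $\gamma_1,\dots,\gamma_s$ of $\absGal$ (which exist because the absolute Galois group of a $p$-adic local field is topologically finitely generated) and an open normal subgroup $H\triangleleft\absGal$ with $\rho(H)\subset 1+\mathfrak m_L M_n(\mathcal O_L)$. Each matrix entry of each $\widetilde\rho(\gamma_k)$ is a regular function on $X$ whose value at $x$ lies in $\mathcal O_L$; intersecting finitely many rational subdomains $\{|f|\leq 1\}$ around $x$ produces an affinoid neighbourhood $U=\Sp A\ni x$ on which every matrix entry of each $\widetilde\rho(\gamma_k)$ lies in $A^+$ with reduction modulo $A^{++}$ equal to the corresponding entry of $\rho(\gamma_k)\bmod\mathfrak m_L$. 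By multiplicativity, the image of the abstract subgroup generated by $\gamma_1,\dots,\gamma_s$ lies in $GL_n(A^+)$ and has the correct reduction; continuity of $\widetilde\rho$, together with the fact that $A^+$ (hence $GL_n(A^+)$) is closed in $A$ (resp.\ $GL_n(A)$) in the sup-norm topology, extends both assertions to all of $\absGal$. Shrinking $H$ further if necessary, the same rational-subdomain argument applied to $\widetilde\rho(h)-1$ for finitely many generators of $H$ places $\widetilde\rho(H)$ inside $1+A^{++}M_n(A^+)$, so the reduction of $\widetilde\rho_U$ modulo $A^{++}$ factors through $k_L\hookrightarrow A^+/A^{++}$ and agrees with $\bar\rho$, yielding the commutative diagram.

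The hard part is precisely the infinite nature of $\absGal$: the rational-subdomain mechanism handles only finitely many functions at a time, while the conclusion must hold for every $g\in\absGal$ simultaneously. The resolution is the two closedness facts above combined with topological finite generation of $\absGal$: the finitely many conditions imposed at the generators $\gamma_k$ propagate, by multiplicativity and continuity, to the entire image. Everything else in the argument is a routine combination of rigid analytic geometry (intersecting rational subdomains) and the classical theory of compact subgroups of $GL_n(L)$.
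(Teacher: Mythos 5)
Your overall strategy matches the paper's: invoke topological finite generation of $\absGal$, cut out an affinoid neighbourhood of $x$ by finitely many rational-subdomain conditions on the matrix entries of $\widetilde\rho$ at the generators, then extend from the dense subgroup they generate to all of $\absGal$ by continuity together with the fact that $A^+$ (and $GL_n(A^+)$) is closed. Your opening lattice argument for conjugating $\rho$ into $GL_n(\mathcal O_L)$ is a standard fact that the paper simply quotes, and the $H$-argument is a legitimate (if somewhat redundant) alternative way to organize the reduction step, so neither of those is a problem.

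There is, however, a genuine gap in the way you cut out the neighbourhood. You claim that intersecting subdomains of the form $\{|f|\leq 1\}$ produces $U=\Sp A$ on which the entries $a_{ij}^k$ of $\widetilde\rho(\gamma_k)$ lie in $A^+$ \emph{and} reduce modulo $A^{++}$ to $\rho(\gamma_k)\bmod\mathfrak m_L$. The first conclusion is right but the second does not follow: $\{|f|\leq 1\}$ only forces $f\in A^+$, not $f-f(x)\in A^{++}$. For instance on $\Sp(L\langle T\rangle)$ with $f=T$ and $x=0$ one has $f\in A^+$ but $f\bmod A^{++}$ equals $T$ in $k_L[T]$, which is not $f(x)\bmod\mathfrak m_L=0$. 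What you need is a \emph{strict} bound, realised affinoid-theoretically as $\{|f-f(x)|\leq\epsilon\}$ for a fixed $\epsilon\in(0,1)$; this is exactly the second shrinking step the paper performs with $f_{ij}^k=a_{ij}^k-a_{ij}^k(x)$ and $U_{ij}^k=\{|f_{ij}^k|\leq\epsilon\}$. The same slip recurs when you say "the same rational-subdomain argument applied to $\widetilde\rho(h)-1$" places $\widetilde\rho(H)$ in $1+M_n(A^{++})$: with $\{|f|\leq 1\}$ you only get $M_n(A^+)$. Finally, to land in $GL_n(A^+)$ rather than merely $M_n(A^+)$ you must also control the determinant (the paper adds the conditions $\{|d^k|=1\}$), or equivalently control $\widetilde\rho(\gamma_k^{-1})$; your proposal does not mention this. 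All three issues are repaired by exactly the extra rational-subdomain conditions the paper imposes, so the plan is sound but the estimates as written are not tight enough.
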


    \begin{rem}
    Let us remark that the representation $\overline{\rho}$ depends on the choice
    of a basis for the vector bundle $\mathcal{V}$; however the isomorphism class of
    the semisimplification of $\overline{\rho}$ is well-defined independently of the chosen basis,
    as showed in \cite[Proposition 5.11]{familiesrepr}.
    \end{rem}

    \begin{proof}
    First of all we show that there is an affinoid neighbourhood
    $V\coloneqq\mathrm{Sp}(B)\xhookrightarrow{i}X$ of $x$ such that
    $\widetilde\rho_V\coloneqq i^*\widetilde\rho$ has values in $GL_n(B^+)$ with respect
    to the basis of $\mathcal{V}$ fixed above.
    By \cite{jannsen1982struktur} we know that $\absGal$ is topologically finitely generated, hence
    let $\{g_1,\dots,g_t\}$ be a set of topological generators of $\absGal$.
    For every $k=1,\dots,t$, let $a_{ij}^k$ be the $(i,j)$-th entry of the matrix
    $\widetilde\rho(g_k)$ and $d^k\coloneqq \mathrm{det}(\widetilde\rho(g_k))$.
    We then have that $a_{ij}^k(x)\in\mathcal{O}_L$, so $|a_{ij}^k(x)|\leq1$
    and $|d^k(x)|=1$ because $d^k(x)\in\mathcal{O}_L^\times$.
    Let us define the affinoid open neighbourhoods $V_{ij}^k\coloneqq \{y\in X:|a_{ij}^k(y)|\leq1\}$
    and $V_d^k\coloneqq \{y\in X:|d^k(y)|=1\}$ of $x$, so that we let
    $$
        V= \mathrm{Sp}(C)\coloneqq \bigcap_{i,j=1,\dots,n\atop k=1,\dots,t}V_{ij}^k\cap V_d^k.
    $$
    In particular for all $k=1,\dots,t$ and $i,j=1\dots,n$, we have
    $|a_{ij}^k|=|a_{ij}^k(y)|\leq1$ and $|d^k|=|d^k(y')|=1$ for some $y,y'\in V$
    thanks to the Maximum Principle, so $a_{ij}^k\in B^+$ and $d^k\in (B^+)^\times$.
    This means that $i^*\widetilde\rho(g_k)\in GL_n(B^+)$ for all $k=1,\dots,t$.
    Now we show that $\widetilde\rho_V\coloneqq i^*\widetilde\rho:\absGal\rightarrow GL_n(B^+)$.
    Let $g\in \absGal$ and observe that for any open neighbourhood $Z$ of $\widetilde\rho_V(g)\in GL_n(B)$,
    there exists $h\in\langle g_1,\dots,g_t\rangle$ such that $\widetilde\rho_V(h)\in Z$.
    So there is a sequence $(\widetilde\rho_V(h_k))_k$ converging to $\widetilde\rho_V(g)$,
    where $(h_k)_k\subset\langle g_1,\dots,g_t\rangle$.
    Let $b_{ij}$ be the $(i,j)$-th entry of $\widetilde\rho_V(g)$ and $b_{ij,k}$ the
    $(i,j)$-th entry of $\widetilde\rho_V(h_k)$.
    For all $i,j=1,\dots,n$ we have a sequence $(b_{ij,k})_k\subset B^+$ converging
    to $b_{ij}$, so $b_{ij}\in B^+$.
    In the same way, we get a sequence $(d_k)_k\subset (B^+)^\times$ converging to
    $\mathrm{det}(\widetilde\rho_V(g))$, which allows us to conclude that $\widetilde\rho_V(g)\in GL_n(B^+)$.
    Finally it is left to show that $\widetilde\rho_V$ reduces to $\overline{\rho}$ modulo
    $B^{++}$, after eventually shrinking $V$.
    Let $f_{ij}^k\coloneqq a_{ij}^k-a_{ij}^k(x)\in B^+$; fix $\epsilon\in(0,1)$ and
    consider $U_{ij}^k\coloneqq \{y\in\mathrm{Sp}(B):|f_{ij}^k(y)|\leq\epsilon\}$
    for all $i,j=1\dots,n$ and $k=1,\dots,t$. We have $x\in U_{ij}^k$ because $f_{ij}^k(x)=0$, so
    $$
        U\coloneqq \mathrm{Sp}(A)\coloneqq \bigcap_{i,j=1,\dots,n\atop k=1,\dots,t}U_{ij}^k
    $$
    is an affinoid open neighbourhood of $x$. We then have $\widetilde\rho_V:\absGal\rightarrow GL_n(A^+)$
    and $|f_{ij}^k|=|f_{ij}^k(y)|\leq \epsilon<1$ for some $y\in\mathrm{Sp}(A)$,
    thus $f_{ij}^k\in A^{++}$ for all $i,j=1,\dots,n$ and $k=1,\dots,t$ by the same
    approximation argument as before.
    In a very similar way to the one used above, it is possible to conclude that
    $\widetilde\rho_U(g)\equiv \rho(g)\text{ mod }A^{++}$ for all $g\in \absGal$.
    \end{proof}

    In what follows we will outline a strategy that we will use in order to show that
    trianguline representations appear as points of the trianguline variety.
    The underlying idea for what follows is that, given a trianguline representation $\rho$,
    we want to construct a rigid space $X$ and a family of trianguline
    Galois representations on $X$ reducing to $\rho$ at some pont $x\in X$ and having regular
    parameters in a dense subset of a neighbourhood of $x$.
    In this way, we can show that $\rho$ is in the closure of a regular family, hence
    in the trianguline variety.

    The following Theorem makes the above idea more precise in terms of $(\phi,\Gamma)$-modules.

    \begin{restatable}{theorem}{strategy}
    \label{strategy}
    Let $\rho:\absGal\rightarrow GL_n(\mathcal{O}_L)$ be a trianguline representation
    with parameters $\underline{\delta}\coloneqq(\delta_1,\dots,\delta_n)\in\mathcal{T}^n(L)$
    and let $\bar{\rho}$ be the reduction of $\rho$ to the residue field of $L$:
    \[
        \bar{\rho}:\absGal\xrightarrow{\rho} GL_n(\mathcal{O}_L)\twoheadrightarrow GL_n(k_L).
    \]
    Assume that there exist
    \begin{itemize}
        \item
        a rigid space $X$ over $L$;
        \item
        a family of $\phigam$-modules $\D$ over $X$;
        \item
        a map $(\widetilde{\delta_1},\dots,\widetilde{\delta_n}):X\rightarrow\mathcal{T}^n$
    \end{itemize}
    such that there are
    \begin{enumerate}
        \item
        a point $x\in X(L)$ such that $\underline{\widetilde{\delta}}\widehat\otimes k(x)=\underline\delta$
        and $\D\widehat{\otimes} k(x)=\drig(\rho)$;
        \item
        a Zariski-open dense subset $U\xhookrightarrow{j}X$ such that
        $j^*\D$ has a filtration with graded pieces $\mathcal{R}_U(j^*\widetilde{\delta_i})$
        and $(j^*\widetilde{\delta_1},\dots,j^*\widetilde{\delta_n})\in\mathcal{T}_n^{\mathrm{reg}}(U)$.
    \end{enumerate}
    Then $(\rho,\underline{\delta})\in X_\mathrm{tri}^\square(\bar{\rho})(L)$.
    \end{restatable}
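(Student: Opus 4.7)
The plan is to construct, on an affinoid open neighbourhood $V = \Sp(A)$ of $x$ in $X$, a morphism of rigid spaces
\[
f: V \longrightarrow \defspace \times \mathcal{T}^n
\]
sending $x$ to $(\rho, \underline\delta)$ and whose restriction to $V \cap U$ factors through $\regtrivar$. Granting this, because $\trivar$ is by definition the Zariski closure of $\regtrivar$ in $\defspace \times \mathcal{T}^n$, and because $V \cap U$ is Zariski-open and dense in $V$ (any non-empty admissible open of $V$ is a non-empty admissible open of $X$, hence meets $U$), the whole image $f(V)$ lies in $\trivar$, whence $(\rho, \underline\delta) = f(x) \in \trivar(L)$.

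To produce $f$, I would first shrink $X$ around $x$ so that $\D$ becomes étale. Since $\D \widehat\otimes k(x) = \drig(\rho)$ is étale (it comes from a Galois representation) and étaleness is an open condition on the base that can be checked fibrewise, there is an affinoid open neighbourhood $V_0 \subset X$ of $x$ on which $\D|_{V_0}$ is étale. A local form of the Berger--Colmez / Kedlaya--Pottharst--Xiao correspondence then provides, after possibly shrinking $V_0$, a family of Galois representations $\mathcal{V}$ on $V_0$ with $\drig(\mathcal{V}) \cong \D|_{V_0}$ and specializing to $\rho$ at $x$; this is the main technical input needed beyond what is recorded in Theorem \ref{fullyfaithfulfunctor}, since the functor $\drig$ on families is only known to be fully faithful in general.

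Next, I would apply Proposition \ref{reduction is locally constant} to $\mathcal{V}$: fixing a basis so that the stalk at $x$ identifies with $\rho: \absGal \to GL_n(\mathcal{O}_L)$, the proposition provides an affinoid $V = \Sp(A) \subset V_0$ on which the associated $\widetilde\rho: \absGal \to GL_n(A^+)$ reduces to $\bar\rho$ modulo $A^{++}$. By the universal property of $\defspace$ this defines a morphism $V \to \defspace$, and combining it with $\underline{\widetilde\delta}|_V: V \to \mathcal{T}^n$ yields $f$ with $f(x) = (\rho, \underline\delta)$. On $V \cap U$, hypothesis (2) gives a complete filtration of $\D|_{V \cap U}$ with graded pieces $\robba(j^* \widetilde\delta_i)$ and with $(j^*\widetilde\delta_1,\dots,j^*\widetilde\delta_n) \in \mathcal{T}_n^\reg(V \cap U)$, so $\widetilde\rho|_{V \cap U}$ is a trianguline family with regular parameters, and $f|_{V \cap U}$ factors through $\regtrivar$.

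The main obstacle is the passage from the étale family $\D|_{V_0}$ to the family of Galois representations $\mathcal{V}$: because the functor $\drig$ on families is not essentially surjective in general, one must invoke (and perhaps verify by shrinking) that near a point where the fibre of an étale family lies in the image of $\drig$, the entire family does. Once this step is secured, the remainder --- openness of étaleness, Proposition \ref{reduction is locally constant}, and Zariski density of $\regtrivar$ in $\trivar$ --- is routine and the density argument of the first paragraph concludes the proof.
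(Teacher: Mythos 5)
Your proposal is correct and follows essentially the same route as the paper's proof: the local lifting from the étale $(\phi,\Gamma)$-module family to a family of Galois representations near $x$ — the ``main technical input'' you flag as needing to be invoked — is supplied by \cite[Theorem 0.2]{kedlaya2011families}, which the paper cites at exactly that step, after which the application of Proposition \ref{reduction is locally constant}, the construction of the morphism to $\defspace\times\T^n$, and the density/closure argument all proceed just as you describe.
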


    \begin{proof}
    By \cite[Theorem 0.2]{kedlaya2011families}, there exists a neighbourhood
    $V=\mathrm{Sp}(B)\xhookrightarrow{i} X$ of $x$ and a family of representations on $V$
    \[
        \widetilde\rho:\absGal\rightarrow GL_n(B^+)
    \]
    such that $\D_\mathrm{rig}^\dag(\widetilde\rho)= i^*\D$.
    By Proposition \ref{reduction is locally constant}, eventually shrinking $V$, we can
    choose a basis of $\mathcal{V}$ such that the following diagram commutes:
    \begin{center}
        \begin{tikzcd}
        \absGal\ar[r, "\widetilde\rho"]\ar[d, "\overline{\rho}"]
        & GL_n(B^+)\ar[d,two heads]\\
        GL_n(k_L)\ar[r,hook]
        & GL_n(B^+/B^{++}).
        \end{tikzcd}
    \end{center}
    Thus $\widetilde\rho$ induces a morphism
    \[
        \psi\coloneqq (\widetilde\rho,i^*\widetilde{\underline{\delta}}):V\rightarrow\mathfrak{X}^\square_{\bar{\rho}}\times\mathcal{T}^n.
    \]
    We now show that $\psi_{|U\cap V}$ factors through $U_\mathrm{tri}^\square(\bar{\rho})^\mathrm{reg}$:
    first of all let $u:U\cap V\hookrightarrow V$ and notice that it is sufficient to prove
    that the family of representations $u^*\widetilde{\rho}$ is trianguline with regular
    parameters $(i\circ u)^*\underline{\widetilde\delta}$.
    By hypothesis and by Theorem \ref{fullyfaithfulfunctor},
    $\drig(u^*\widetilde\rho)=u^*\drig(\widetilde\rho)\cong(i\circ u)^*\D$.
    Hence $\drig(u^*\widetilde\rho)$ has parameters
    $(i\circ u)^*\underline{\widetilde\delta}\in\mathcal{T}^\reg_n(U\cap V)$.
    Thus $\psi(U\cap V)\subset\regtrivar$.
    Notice that $\regtrivar\cap \mathrm{im}(\psi)$ is dense in $\mathrm{im}(\psi)$:
    let $W$ be an open in $\mathrm{im}(\psi)$, so that $\psi^{-1}(W)$ is open in $V$.
    Then $\psi^{-1}(W)\cap U\neq\emptyset$, which implies that $W\cap\regtrivar\neq\emptyset$.
    Thanks to the density of $\regtrivar\cap \mathrm{im}(\psi)$ in $\mathrm{im}(\psi)$,
    we can conclude that
    \[
        \mathrm{im}(\psi)\subset\overline{\mathrm{im}(\psi)}=\overline{\regtrivar\cap \mathrm{im}(\psi)}
        \subset\trivar.
    \]
    In particular, $\psi(x)=(\rho,\underline{\delta})\in\trivar$.
    \end{proof}

    \begin{lem}
    \label{enlarge L}
    Let $\rho:\absGal\rightarrow GL_n(L)$ be a representation and let $\underline{\delta}\in\T^n(L)$.
    Let us assume that there is a finite extension $L\xhookrightarrow{i}L'$ such that $(\rho',\underline{\delta}')\in\trivar(L')$,
    where $\rho'\coloneqq i^*\rho:\absGal\xrightarrow{\rho}GL_n(L)\hookrightarrow GL_n(L')$ and
    $\underline\delta'\coloneqq i^*\underline\delta:\Q_p^\times\xrightarrow{\underline{\delta}}(L^\times)^n\hookrightarrow(L'^\times)^n$.
    Then $(\rho,\underline{\delta})\in\trivar(L)$.
    \end{lem}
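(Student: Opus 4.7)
My plan is to exploit the fact that $\trivar$ is, by construction, a closed analytic subspace of $\defspace \times \T^n$ (the Zariski closure of $\regtrivar$), and to observe that containment of an $L$-point in a closed subspace defined over $L$ can be tested after any faithfully flat extension of scalars. The lemma should then reduce to elementary descent along the injection $i$.

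First I would translate the hypothesis concretely. The pair $(\rho, \underline{\delta})$ corresponds to a morphism $g : \Sp(L) \to \defspace \times \T^n$ of rigid spaces over $L$, and the pair $(\rho', \underline{\delta}')$, built by post-composing with $i : L \hookrightarrow L'$, corresponds to the morphism
\[
  f : \Sp(L') \xrightarrow{\pi} \Sp(L) \xrightarrow{g} \defspace \times \T^n,
\]
where $\pi$ is induced by $i$. So the hypothesis $(\rho', \underline{\delta}') \in \trivar(L')$ is exactly the statement that $f$ factors through the closed immersion $\trivar \hookrightarrow \defspace \times \T^n$.

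Next, let $\mathcal{I} \subset \mathcal{O}_{\defspace \times \T^n}$ denote the ideal sheaf cutting out $\trivar$. Pulling back under $g$ yields an ideal $J \subset L$, which is either zero or all of $L$ (the only ideals of a field), and $g$ factors through $\trivar$ if and only if $J = 0$. The analogous pullback under $f = g \circ \pi$ is the extended ideal $J \cdot L' \subset L'$, so $f$ factors through $\trivar$ if and only if $J \cdot L' = 0$. Since $i : L \hookrightarrow L'$ is injective, $J \cdot L' = 0$ forces $J = 0$, and we conclude that $g$ factors through $\trivar$, i.e.\ $(\rho, \underline{\delta}) \in \trivar(L)$. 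I anticipate no real obstacle here: the lemma is essentially formal, amounting to the descent of the property ``factors through a closed rigid subspace'' along the injective ring map $L \hookrightarrow L'$, and the finiteness of the extension plays no role in the argument.
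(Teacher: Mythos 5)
Your proof is correct and takes essentially the same approach as the paper: both factor $(\rho',\underline{\delta}')$ through $\Sp(L)\to\defspace\times\T^n$ and then descend membership in the closed subspace $\trivar$ along the extension $L\hookrightarrow L'$. The paper argues via images (using surjectivity of $\Sp(L')\to\Sp(L)$) while you argue via the pullback ideal (using injectivity of $L\hookrightarrow L'$), which are two formulations of the same descent step.
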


    \begin{proof}
    We have that the point $(\rho',\underline{\delta}')\in(\mathfrak X_{\bar\rho}^\square\times\T^n)(L')$
    corresponds by construction to the map
    \[
        (\rho',\underline{\delta}'):\Sp(L')\rightarrow\Sp(L)\xrightarrow{(\rho,\underline\delta)}\mathfrak X_{\bar\rho}^\square\times\T^n.
    \]
    The fact that $(\rho',\underline{\delta}')\in\trivar(L')$ implies that $\im(\rho',\underline{\delta}')\subset\trivar$;
    hence in particular we can deduce that $\im(\rho,\underline{\delta})\subset\trivar$.
    This means $(\rho,\underline\delta)\in\trivar(L)$.
    \end{proof}

\section{Trianguline representations with no $\mathcal{T}^+$-type irregularity}
\label{section 4}

In this section we prove Theorem \ref{RESULT1}, which states that any trianguline representation whose parameters
 don't admit quotients in $\mathcal{T}^+$ are in the trianguline variety.
More precisely, we show the following:

\result*

\subsection{Constructing extensions}
\label{constructing extensions}

    We want to use Theorem \ref{strategy} to prove Theorem \ref{RESULT1}.
    In order to do so, it is essential to construct a rigid space $X$ of extensions
    of $\phigam$-modules.
    In this section we will show how to do so in the case the second $\phigam$-cohomology of
    these $\phigam$-modules is a locally free sheaf.

    \begin{thm}
    \label{bc}
    Let $D$ be a $\phigam$-module over an affinoid ring $A$ and let $B$ be an affinoid $A$-algebra.
    If $H^2_{\phi,\Gamma}(D)$ is a locally free $A$-module,
    then
    \[
        H^1_{\phi,\Gamma}(D)\otimes_AB\cong H^1_{\phi,\Gamma}(D\widehat\otimes_AB).
    \]
    \end{thm}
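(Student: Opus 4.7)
The plan is to reduce everything to a computation with the perfect complex of projective $A$-modules provided by Theorem \ref{Herrcomplexprojective}. That theorem yields a quasi-isomorphism between the Herr complex $\mathcal{C}^\bullet(D)$ and a bounded complex $M^0 \xrightarrow{d_0} M^1 \xrightarrow{d_1} M^2$ of projective $A$-modules that \emph{universally} computes $\phigam$-cohomology; in particular $H^i_{\phi,\Gamma}(D) \cong H^i(M^\bullet)$ and $H^i_{\phi,\Gamma}(D \widehat{\otimes}_A B) \cong H^i(M^\bullet \otimes_A B)$. The task thus becomes: show that under the assumption that $H^2(M^\bullet) = \mathrm{coker}(d_1)$ is locally free over $A$, one has $H^1(M^\bullet) \otimes_A B \cong H^1(M^\bullet \otimes_A B)$ for every affinoid $A$-algebra $B$.

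The key step, and the only place where the hypothesis is used, is to split off the boundaries. Local freeness of $H^2(M^\bullet)$ makes the short exact sequence
\[
0 \longrightarrow \mathrm{im}(d_1) \longrightarrow M^2 \longrightarrow H^2(M^\bullet) \longrightarrow 0
\]
split locally on $\mathrm{Sp}(A)$, so $\mathrm{im}(d_1)$ is a projective direct summand of $M^2$. Projectivity of $\mathrm{im}(d_1)$ then splits the tautological sequence $0 \to \ker(d_1) \to M^1 \to \mathrm{im}(d_1) \to 0$, exhibiting $\ker(d_1)$ as a projective direct summand of $M^1$. In particular, formation of $\ker(d_1)$ commutes with arbitrary base change, giving the identification
\[
\ker(d_1) \otimes_A B \;\cong\; \ker\bigl(d_1 \otimes_A \mathrm{id}_B\bigr).
\]

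The conclusion is then formal. Applying the right-exact functor $- \otimes_A B$ to the right-exact presentation $M^0 \xrightarrow{d_0} \ker(d_1) \to H^1(M^\bullet) \to 0$ produces
\[
M^0 \otimes_A B \longrightarrow \ker(d_1) \otimes_A B \longrightarrow H^1(M^\bullet) \otimes_A B \longrightarrow 0,
\]
and combining this with the previous display identifies $H^1(M^\bullet) \otimes_A B$ with $\ker(d_1 \otimes \mathrm{id}_B)/\mathrm{im}(d_0 \otimes \mathrm{id}_B) = H^1(M^\bullet \otimes_A B)$, as required. The main obstacle, and the entire content of the proof, is the splitting argument in the previous paragraph; the rest is standard cohomology-and-base-change homological algebra. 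One minor subtlety to note is the distinction between $\otimes_A B$ and $\widehat{\otimes}_A B$: since the $M^i$ are finitely generated over the affinoid $A$, the two agree, so no completion issues arise in passing from the complex to its base change.
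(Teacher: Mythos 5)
Your proof is correct and takes essentially the same route as the paper: both use the universal perfect complex from Theorem~\ref{Herrcomplexprojective}, split off $\mathrm{im}(d_1)$ and hence $\ker(d_1)$ as projective direct summands, and then base-change the projective presentation $M^0 \to \ker(d_1) \to H^1(M^\bullet) \to 0$. One small remark: since $H^2(M^\bullet)$ is finitely generated over the Noetherian ring $A$, local freeness is equivalent to projectivity, so the splitting of $0\to\mathrm{im}(d_1)\to M^2\to H^2(M^\bullet)\to0$ is actually global rather than merely local, though this does not affect the argument.
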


    \begin{proof}
    By Theorem \ref{Herrcomplexprojective}, there is a complex of projective $A$-modules
    \[
    M^0\xrightarrow{d^0} M^1\xrightarrow{d^1} M^2
    \]
    universally computing the cohomology of the Herr complex of $D$.
    This means that $H^1(D\widehat\otimes_AB)\cong H^1(M^\bullet\otimes_AB)$ for any $A$-algebra $B$.
    Being $H^2(M^\bullet)$ locally free, we have $M^2\cong H^2(M^\bullet)\oplus\mathrm{im}(d^1)$.
    Since both $M^2$ and $H^2(M^\bullet)$ are projective modules, we can conclude $\mathrm{im}(d^1)$ is as well,
    implying that $M^1$ also splits as the direct sum of $\mathrm{im}(d^1)$ and $\mathrm{ker}(d^1)$.
    Again we can deduce that $Z^1:=\mathrm{ker}(d^1)$ is a projective $A$-module.
    Now notice that the complex
    \[
        M^0\xrightarrow{d^0}Z^1
    \]
    universally computes $H^0_{\phi,\Gamma}(D)$ and $H^1_{\phi,\Gamma}(D)$.
    Moreover
    \[
        M^0\xrightarrow{d^0}Z^1\rightarrow H^1(M^\bullet)\rightarrow0
    \]
    is a projective resolution of $H^1(M^\bullet)$.
    Therefore
    \begin{align*}
        H^1_{\phi,\Gamma}(D)\otimes_AB&\cong H^1(M^\bullet)\otimes_AB\cong\mathrm{Tor}_0(H^1(M^\bullet),B)
        \cong (Z^1\otimes_AB)/\mathrm{im}(d^0\otimes\mathrm{id}_B)\\
        &\cong H^1(M^\bullet\otimes_AB)\cong H^1_{\phi,\Gamma}(D\widehat\otimes_AB).
    \end{align*}
    \end{proof}

    \begin{thm}
    \label{extensions}
    Let $D_1, D_2$ be two families of $\phigam$-modules over some rigid analytic affinoid space $\mathrm{Sp}(A)$.
    Assume that $H^2_{\phi,\Gamma}(D_1\otimes_{\robba_A} D_2^\vee)$ is locally free.
    There exists a vector bundle $X$ over $\mathrm{Sp}(A)$ (of dimension equal to the minimal
    number of generators of the module $H^1_{\phi,\Gamma}(D_1\otimes_{\robba_A} D_2^\vee)$ over $A$)
    and a family of $\phigam$-modules
    $\D$ over $X$ such that for any rigid space $Y\xrightarrow{g}\mathrm{Sp}(A)$
    over $\mathrm{Sp}(A)$, we have a surjective map
    \begin{align*}
        \Psi_Y:X(Y)&\twoheadrightarrow\mathrm{Ext}^1_{\phi,\Gamma}(g^* D_2,g^* D_1)(Y)\\
        f&\mapsto f^*\D
    \end{align*}
    which is functorial in $Y$.
    We moreover can choose $X$ such that the above map $\Psi_Y$ is bijective for all rigid spaces
    $Y\xrightarrow{g}\mathrm{Sp}(A)$ if the module $H^1_{\phi,\Gamma}(D_1\otimes_{\robba_A} D_2^\vee)$
    is free over $A$.
    \end{thm}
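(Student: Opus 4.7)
The plan is to realize $X$ as the trivial rank-$n$ vector bundle over $\Sp(A)$, with $n$ the minimal number of generators of $H^1 := H^1_{\phi,\Gamma}(D_1\otimes_{\robba_A}D_2^\vee)$, and to build the family $\D$ by twisting the diagonal $\phigam$-module $D_1\oplus D_2$ with a universal 1-cocycle in the Herr complex obtained by lifting a minimal generating set of $H^1$.

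First I would obtain the resolution. Let $D := D_1\otimes_{\robba_A}D_2^\vee$. By Theorem \ref{Herrcomplexprojective} the Herr complex $\mcal C^\bullet(D)$ is universally quasi-isomorphic to a bounded complex $M^0 \xrightarrow{d^0} M^1 \xrightarrow{d^1} M^2$ of projective $A$-modules. Repeating the argument in the proof of Theorem \ref{bc}, local freeness of $H^2$ makes $Z^1 := \ker d^1$ projective, which yields a length-one projective resolution $M^0 \xrightarrow{d^0} Z^1 \twoheadrightarrow H^1$ and the base-change identity $H^1_{\phi,\Gamma}(D\widehat\otimes_A B) \cong H^1 \otimes_A B$ for all affinoid $A$-algebras $B$. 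I would then choose a minimal surjection $\pi: A^n \twoheadrightarrow H^1$, lift it to $\sigma: A^n \to Z^1$ via projectivity of $Z^1$, and lift $\sigma$ further through the quasi-isomorphism $M^\bullet \xrightarrow{\sim}\mcal C^\bullet(D)$ to actual Herr 1-cocycles $(u_i,v_i)\in D\oplus D$, $i = 1, \ldots, n$, satisfying $(\gamma-1)u_i = (\Phi-1)v_i$ and whose classes generate $H^1$. Since there are only $n$ of them, I can arrange $u_i, v_i$ to all lie in $D^r$ for a common radius $r<1$.

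Next I would take $X := \Sp(A)\times_{\Qp}\mathbb{A}^{n,\an}$, with coordinates $T_1,\ldots,T_n$, and construct $\D$ affinoid-locally: on $Y = \Sp(B) \subset X$, put the $\phigam$-structure on $D_{1,B}\oplus D_{2,B}$ obtained from the diagonal one by twisting $\Phi$ and $\gamma$ with the universal cocycle
\[
(\tilde u,\tilde v) := \Bigl(\textstyle\sum_i T_iu_i,\,\sum_i T_iv_i\Bigr)\in(D\widehat\otimes_A B)\oplus(D\widehat\otimes_A B)
\]
via the standard upper-triangular Yoneda recipe. The cocycle relation makes $\Phi_\D$ and $\gamma_\D$ commute; upper-triangularity preserves isomorphy of $\Phi_\D$ on $\phi^*\D$; and the common radius $r$ keeps the twist inside $\mathrm{Mat}(\robba^r_B)$, hence preserves the continuity of the $\Gamma$-action. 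Functoriality of the recipe in $B$ patches the local pieces into a family $\D$ on all of $X$. For $f \in X(Y)$, i.e.\ $(t_1,\ldots,t_n)\in\Gamma(Y,\mcal O_Y)^n$, the extension $f^*\D$ has cohomology class in
\[
\mathrm{Ext}^1_{\phi,\Gamma}(g^*D_2,g^*D_1) = H^1_{\phi,\Gamma}(D\widehat\otimes_A\Gamma(Y,\mcal O_Y)) = H^1\otimes_A\Gamma(Y,\mcal O_Y)
\]
equal to the image of $(t_1,\ldots,t_n)$ under the map $\Gamma(Y,\mcal O_Y)^n \to H^1\otimes_A\Gamma(Y,\mcal O_Y)$ induced by $\pi$, surjective by construction, and bijective if $H^1$ is free (take $\pi$ to be an isomorphism $A^n\xrightarrow{\sim}H^1$).

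The main obstacle is the construction of $\D$ in the preceding step: showing that the twisted $\Phi$ and $\gamma$ really define a $\phigam$-module sheaf on the non-affinoid space $X$, in particular that continuity of the $\Gamma$-action survives the twist globally and glues across all affinoid opens. The common radius $r$ for the generating cocycles is exactly what keeps the twisted $\Gamma$-matrices uniformly in $\robba^r_B$ on every affinoid $Y=\Sp(B)\subset X$; once this is verified on a covering of $X$ by relatively compact affinoid opens, commutation of $\Phi_\D$ with $\gamma_\D$, isomorphy of $\Phi_\D$ on $\phi^*\D$, and functoriality in $Y$ follow formally from the cocycle setup.
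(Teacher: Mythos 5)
Your proposal is correct, but it takes a genuinely different route from the paper. The paper's proof never touches an explicit cocycle: after establishing the chain of identifications
\[
X(Y)\cong\Gamma(Y,\mathcal{O}_Y)^n\cong\Gamma(Y,g^*F)\twoheadrightarrow\Gamma(Y,g^*H^1_{\phi,\Gamma}(D_1\otimes D_2^\vee))
\cong H^1_{\phi,\Gamma}(g^*(D_1\otimes D_2^\vee))(Y)\cong\mathrm{Ext}^1_{\phi,\Gamma}(g^*D_2,g^*D_1)(Y),
\]
natural in $Y$, it simply evaluates this natural transformation at $Y=X$ and $f=\mathrm{id}_X$ and \emph{defines} $\D\coloneqq\Psi_X(\mathrm{id}_X)$; the Yoneda lemma then gives $\Psi_Y(f)=f^*\D$ for free. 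You instead build $\D$ by hand: lift a minimal generating set of $H^1$ through the projective resolution to honest Herr $1$-cocycles $(u_i,v_i)$ with a common radius $r$, form the universal cocycle $\sum T_iu_i,\sum T_iv_i$ over $X=\mathrm{Sp}(A)\times\mathbb{A}^{n,\rr{an}}$, and twist the diagonal $\phigam$-structure on $D_1\oplus D_2$. Both arguments rely on the same two ingredients — the universal bounded projective Herr complex (Theorem \ref{Herrcomplexprojective}) and the base-change isomorphism for $H^1$ under local freeness of $H^2$ (Theorem \ref{bc}) — so the underlying mathematics is the same. What your route buys is a concrete, hands-on description of $\D$ as an actual extension; what it costs is the extra verification load you yourself flag (the $\Gamma$-action on the twist must be shown continuous on every affinoid of $X$, the pieces must glue, and one must check that $f^*\D$ really is the extension class $\sum t_i[u_i,v_i]$). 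The paper's Yoneda shortcut suppresses all of that into a single abstract step, which is cleaner but less informative. I would only ask you to spell out two of the points you leave implicit: (i) that a Herr $1$-cocycle $(u,v)\in D^r\oplus D^r$ genuinely induces a continuous $\Gamma$-action on the extension (this is the substance of \cite[Lemme 2.2]{chenevierDensite}, which the paper invokes but you re-derive); and (ii) that the identification $\mathrm{Ext}^1_{\phi,\Gamma}(g^*D_2,g^*D_1)\cong H^1\otimes_A\Gamma(Y,\mathcal{O}_Y)$ for non-affinoid $Y$ is obtained by glueing the affinoid case — the same implicit extension the paper uses when it applies $\Psi_X$ to the non-affinoid space $X$.
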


    \begin{rem}
    In \cite[§5.4]{emerton2019moduli} Emerton and Gee explain how to construct a family
    of extensions of $\phigam$-modules with coefficients in characteristic $p$ in case
    the second $\phigam$-cohomology has constant fiber rank.
    In fact Theorem \ref{extensions} is the analogue in characteristic 0 of that construction.
    \end{rem}

    \begin{proof}
    By Proposition \ref{H is fin.gen.}, we know $H^1_{\phi,\Gamma}(D_1\widehat\otimes_A D_2^\vee)$
    is a coherent $\mathcal{O}_{\mathrm{Sp}(A)}$-module.
    Let $n$ be the minimal number of generators of $H^1_{\phi,\Gamma}(D_1\widehat\otimes_A D_2^\vee)$
    and let $F$ be a finitely generated free $A$-module of rank $n$; in particular, we have a
    surjective map of $A$-modules
    \[
        F\twoheadrightarrow  H^1_{\phi,\Gamma}(D_1\widehat\otimes_A D_2^\vee).
    \]
    We define the rigid space
    \[
        X\coloneqq \mathrm{Sp}(A)\times\mathbb{A}^{n,\mathrm{an}}_{\mathbb{Q}_p}\xrightarrow{h}\mathrm{Sp}(A),
    \]
    which is a vector bundle over $\mathrm{Sp}(A)$.
    Let $Y\to X$ be a map of rigid spaces over $\mathrm{Sp}(A)$
    and let $g$ be the map $Y\rightarrow\Sp(A)$. Let us study the space $X(Y)$:
    \begin{align*}
        X(Y)=\mathrm{Hom}_{\mathrm{Sp}(A)}(Y,X)&\cong
        \mathrm{Hom}_{\mathrm{Sp}(\mathbb{Q}_p)}(Y,\mathbb{A}_{\mathbb{Q}_p}^{n,\mathrm{an}}).
    \end{align*}
    For any rigid $\mathbb{Q}_p$-space $Y$ there is a bijection
    \[
        \mathrm{Hom}_{\mathrm{Sp}(\mathbb{Q}_p)}(Y,\mathbb{A}_{\mathbb{Q}_p}^{n,\mathrm{an}})
        \xrightarrow{\sim}\mathrm{Hom}_{\mathrm{Sp}(\mathbb{Q}_p)}(Y,\mathbb{A}_{\mathbb{Q}_p}^{n})\cong\Gamma(Y,\mathcal{O}_Y)^n
    \]
    between the set of rigid morphisms $ Y\rightarrow\mathbb{A}_{\mathbb{Q}_p}^{n,\mathrm{an}}$
    and the set of morphisms between locally $G$-ringed spaces $ Y\rightarrow\mathbb{A}_{\mathbb{Q}_p}^{n}$.
    We then have the map
    \begin{align*}
        X(Y)&\cong\mathrm{Hom}_{\mathrm{Sp}(\mathbb{Q}_p)}(Y,\mathbb{A}_{\mathbb{Q}_p}^{n,\mathrm{an}})
        \cong \Gamma(Y,\mathcal{O}_Y)^n\\
        &\cong \Gamma(Y,g^*F)\twoheadrightarrow \Gamma(Y,g^*H^1_{\phi,\Gamma}(D_1\otimes_{\mathcal{R}_A} D_2^\vee)).
    \end{align*}
    Since $H^2_{\phi,\Gamma}(D_1\otimes_{\robba_A} D_2^\vee)$ is locally free by hypothesis,
    we get as a consequence that
    \[
        g^*H^1_{\phi,\Gamma}(D_1\otimes_{\robba_A} D_2^\vee)\cong
        H^1_{\phi,\Gamma}(g^*(D_1\otimes_{\robba_A} D_2^\vee))
    \]
    by Theorem \ref{bc}.
    Finally, we have
    \begin{align*}
        \Psi_Y:X(Y)\twoheadrightarrow g^*H^1_{\phi,\Gamma}(D_1\otimes_{\mathcal{R}_A} D_2^\vee)(Y)
        &\cong H^1_{\phi,\Gamma}(g^*(D_1\otimes_{\mathcal{R}_A} D_2^\vee))(Y)\\
        & \cong\mathrm{Ext}^1_{\phi,\Gamma}(g^*D_2,g^*D_1)(Y).
    \end{align*}
    Let us now define the universal family of $(\phi,\Gamma)$-modules over $X$.
    We know we have a surjective map
    \begin{align*}
        \Psi_X:\mathrm{Hom}_{\mathrm{Sp}(A)}(X,X)\twoheadrightarrow \mathrm{Ext}^1_{\phi,\Gamma}(h^*D_2, h^*D_1)(X).
    \end{align*}
    Let $\D\in\mathrm{Ext}^1_{\phi,\Gamma}(h^*D_2, h^*D_1)$ be the image of
    $\mathrm{id}_X$ through $\Psi_X$.
    By the Yoneda Lemma it follows immediately that $\D$ satisfies the universal property specified above.
    In case the module $H^1_{\phi,\Gamma}(D_1\otimes_{\robba_A} D_2^\vee)$ is free over $A$,
    it is sufficient to take the module $F$ exactly equal to $H^1_{\phi,\Gamma}(D_1\otimes_{\robba_A} D_2^\vee)$;
    it is then obvious that all the above maps composing $\Psi_Y$ are isomorphisms.
    \end{proof}

\subsection{Proof of Theorem \ref{RESULT1}}
\label{section 4.2}

    In order to prove Theorem \ref{RESULT1}, we proceed by induction; hence let us first prove the
    2-dimensional case.

    \begin{prop}
    \label{2 dim T-}
    Let $(\delta_1,\delta_2)\in\mathcal{T}^2(L)$ such that $\delta_1\delta_2^{-1}\notin\mathcal{T}^+(L)$
    and let $D\in H^1_{\phi,\Gamma}(\mathcal{R}_L(\delta_1/\delta_2))$.
    Then there exist
    \begin{itemize}
        \item
        a rigid space $X$ over $L$ which is also a vector bundle of dimension
        \[
            \dim_L(H^1_{\phi,\Gamma}(\robba_L(\delta_1/\delta_2)))
        \]
        over an appropriate
        neighbourhood of $(\delta_1,\delta_2)$ in $\mathcal{T}^2$;
        \item
        a family of $\phigam$-modules $\D$ over $X$;
        \item
        a map $(\widetilde\delta_1,\widetilde\delta_2):X\rightarrow\mathcal{T}^2$
    \end{itemize}
    such that there are
    \begin{enumerate}
        \item
        $x\in X$ such that $\D\widehat\otimes k(x)=D$ and
        $(\widetilde\delta_1,\widetilde\delta_2)\widehat\otimes k(x)=(\delta_1,\delta_2)$;
        \item
        a Zariski-open dense subset $U\xhookrightarrow{j}X$ such that $j^*\D$
        has a filtration of sub-$\phigam$-modules with graded pieces $\robba_U(j^*\widetilde\delta_1)$
        and $\robba_U(j^*\widetilde\delta_2)$ and
        $(j^*\widetilde\delta_1,j^*\widetilde\delta_2)\in\Treg_2(U)$.
    \end{enumerate}
    \end{prop}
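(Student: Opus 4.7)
The plan is to construct $X$ as a vector bundle over an appropriate affinoid neighbourhood $\mathcal{U} = \Sp(A) \subset \mathcal{T}^2$ of $(\delta_1, \delta_2)$ by applying Theorem \ref{extensions} to $(D_1, D_2) = (\robba_A(\widetilde\delta_1), \robba_A(\widetilde\delta_2))$, where $(\widetilde\delta_1, \widetilde\delta_2)$ denote the universal characters over $\mathcal{U}$. First, I would apply Lemma \ref{good neighbourhood} to $(\delta_1, \delta_2)$: since we assume $\delta_1/\delta_2 \notin \mathcal{T}^+$, I can shrink the resulting open to an affinoid $\mathcal{U} = \Sp(A)$ so that $\widetilde\delta_1/\widetilde\delta_2 \widehat\otimes k(u) \notin \{\chi x^m : m \in \mathbb{Z}\}$ for every $u \in \mathcal{U}$. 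By Theorem \ref{cohomology of families}(ii) this forces $H^2_{\phi,\Gamma}(\robba_A(\widetilde\delta_1/\widetilde\delta_2)) = 0$, so in particular it is locally free, placing us in the hypothesis of Theorem \ref{extensions}.

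Next, applying Theorem \ref{extensions} yields a vector bundle $h : X \to \mathcal{U}$ and a universal family $\D$ of extensions of $h^*\robba_A(\widetilde\delta_2)$ by $h^*\robba_A(\widetilde\delta_1)$, whose rank is the minimal number of generators of $H^1_{\phi,\Gamma}(\robba_A(\widetilde\delta_1/\widetilde\delta_2))$ over $A$. By Theorem \ref{bc}, the vanishing of $H^2$ implies that $H^1$ commutes with base change, so the fibre of this coherent $A$-module at $(\delta_1, \delta_2)$ identifies with $H^1_{\phi,\Gamma}(\robba_L(\delta_1/\delta_2))$, of $L$-dimension $d$. Lifting an $L$-basis of this fibre to sections and applying Nakayama then produces a generating family of $d$ elements on some affinoid neighbourhood of $(\delta_1, \delta_2)$; after further shrinking $\mathcal{U}$, I may assume the minimal number of generators is exactly $d$, matching the dimension of $X$ claimed in the statement.

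For the distinguished point, I take $Y = \Sp(L)$ mapping to $\mathcal{U}$ via $(\delta_1, \delta_2)$; the surjectivity of $\Psi_{\Sp(L)} : X(L) \twoheadrightarrow H^1_{\phi,\Gamma}(\robba_L(\delta_1/\delta_2))$ lets me pick $x \in X(L)$ whose image is the class $D$, yielding $\D \widehat\otimes k(x) = D$ and $(\widetilde\delta_1, \widetilde\delta_2) \widehat\otimes k(x) = (\delta_1, \delta_2)$, which gives (1). For (2), I set $U := (\widetilde\delta_1, \widetilde\delta_2)^{-1}(\mathcal{T}_2^{\mathrm{reg}}) \subset X$; openness is immediate from openness of $\mathcal{T}_2^{\mathrm{reg}}$ in $\mathcal{T}^2$, and density follows because $h$ is a vector bundle projection (in particular open and surjective) while $\mathcal{T}_2^{\mathrm{reg}} \cap \mathcal{U}$ is open dense in $\mathcal{U}$ (as $\mathcal{T}_2^{\mathrm{reg}}$ is dense in $\mathcal{T}^2$). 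The universal extension structure on $\D$ then provides the required filtration with graded pieces $\robba_U(j^*\widetilde\delta_1)$ and $\robba_U(j^*\widetilde\delta_2)$ on $U$, with regular parameters built into the definition of $U$.

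The main technical point I expect to be delicate is the dimension count. When $\delta_1/\delta_2 \in \mathcal{T}^{\mathrm{reg}}$, the sheaf $H^1$ has locally constant fibre rank $1$ on a good $\mathcal{U}$ and is simply a line bundle; but in the sub-case $\delta_1/\delta_2 \in \mathcal{T}^-$, the fibre dimension of $H^1$ jumps from $1$ on the regular locus up to $d = 2$ at the distinguished point, so $H^1$ is no longer locally free on $\mathcal{U}$. Matching the minimal number of generators with the fibre dimension at the special point then relies entirely on the Nakayama-plus-coherence argument above, and care is needed to ensure that a suitable shrinking actually achieves exactly $d$ generators rather than more.
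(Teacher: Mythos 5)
Your proposal follows the same route as the paper's proof of this proposition: shrink to an affinoid $\mathcal{U}\subset\mathcal{T}^2$ avoiding $\mathcal{T}^+$ so that $H^2_{\phi,\Gamma}(\robba_A(\widetilde\delta_1/\widetilde\delta_2))=0$ by Theorem \ref{cohomology of families}, apply Theorem \ref{extensions} to get the vector bundle together with the universal family, use surjectivity of $\Psi_{\Sp(L)}$ to hit the given class $D$, and take $U$ to be the preimage of the regular locus, which is open and dense because the vector bundle projection is an open map. Two small remarks: first, the paper obtains $X$ as a rigid space \emph{over $L$} by base-changing the vector bundle $Y\to\mathcal{U}$ along $\Sp(L)\to\Sp(\Qp)$ (defining $X\coloneqq Y\times\Sp(L)$ and $\D\coloneqq p_1^*\D'$), a step your writeup omits, so as written your $X$ is only a $\Qp$-rigid space; second, your claim that Lemma \ref{good neighbourhood} arranges $\widetilde\delta_1/\widetilde\delta_2\widehat\otimes k(u)\notin\{\chi x^m\colon m\in\Z\}$ for \emph{all} $u$ is slightly too strong (if $\delta_1/\delta_2=\chi x^{-k}$ with $k>0$ the lemma permits specializations equal to $\chi x^{-k}$), but since such characters still lie outside $\mathcal{T}^+$ the needed vanishing of $H^2$ is unaffected. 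Your Nakayama argument for matching the rank of the vector bundle with $\dim_LH^1_{\phi,\Gamma}(\robba_L(\delta_1/\delta_2))$ is a correct and welcome elaboration of a point the paper states without proof.
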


    \begin{proof}
    Let $\mathcal{U}=\mathrm{Sp}(A)\subset\mathcal{T}^2$ be an open affinoid
    neighbourhood of $(\delta_1,\delta_2)$.
    Let $(\widetilde{\delta_1},\widetilde{\delta_2})\in\mathcal{T}^2(A)$ be the
    characters corresponding to  $\mathcal{U}\hookrightarrow\mathcal{T}^2$.
    Since $\mathcal{T}^+$ is a closed subspace of $\mathcal{T}$, we can choose $\mathcal{U}$
    in such a way so that $\widetilde{\delta_1}/\widetilde{\delta_2}\widehat\otimes k(u)\notin\mathcal{T}^+(k(u))$
    for any $u\in\mathcal{U}$.
    By Theorem \ref{cohomology of families}, we have
    $H^2_{\phi,\Gamma}(\mathcal{R}_A(\widetilde{\delta_1}/\widetilde{\delta_2}))=0$,
    in particular it is a locally free sheaf over $\mathcal{U}$.
    By Theorem \ref{extensions}, there exists a vector bundle
    $Y\rightarrow\mathcal{U}$ of rank $\dim_LH^1(\robba_L(\delta_1/\delta_2))$
    and a family of $(\phi,\Gamma)$-modules $\D'$ on $Y$ such that there exists a surjective map
    \begin{align*}
        Y(k(\delta_1,\delta_2))=Y(L)&\twoheadrightarrow H^1_{\phi,\Gamma}
        (\mathcal{R}_A(\widetilde{\delta_1}/\widetilde{\delta_2})\widehat{\otimes}_AL)
        =\mathrm{Ext}^1_{\phi,\Gamma}(\mathcal{R}_L(\delta_2),\mathcal{R}_L(\delta_1))\\
        y&\mapsto y^*\D'.
    \end{align*}
    As the $\phigam$-module $D$ is an extension of $\robba_L(\delta_2)$
    by $\robba_L(\delta_1)$,
    there is a point $y\in Y(L)$ such that $y^*\D'=D$.
    Let us define the rigid $L$-space
    \begin{center}
        \begin{tikzcd}
        X\coloneqq Y\times\mathrm{Sp}(L)\arrow[r,"p_2"]\arrow[d,"p_1"]
        & \mathrm{Sp}(L)\arrow[d]\\
        Y\arrow[r]
        &\mathrm{Sp}(\mathbb{Q}_p)
        \end{tikzcd}
    \end{center}
    and let $\D\coloneqq p_1^*\D'$.
    Let $x$ be any point of $X$ such that $p_1(x)=y$, so that
    $x^*\D=(p_1\circ x)^*\D'=y^*\D'=D$.
    Now we only have to show that there is a Zariski-open dense subset
    $U\xhookrightarrow{j} Y$ such that $j^*\D$ has regular parameters.
    Let $\mathcal{U}^\reg:=\mathcal{U}\cap\mathcal{T}_2^{\mathrm{reg}}$ which is a
    Zariski-open dense in $\mathcal{U}$.
    The rigid space
    \begin{center}
        \begin{tikzcd}
        U\coloneqq X\times_{\mathcal{U}}\mathcal{U}^\reg\arrow[r,"q_2"]\arrow[d,"q_1"]
        & \mathcal{U}^\reg\arrow[d, "u"]\\
        X\arrow[r,"v"]
        &\mathcal{U}
        \end{tikzcd}
    \end{center}
    is Zariski-open dense in $X$: in fact $Y$ is a vector bundle over $\mathcal{U}$,
    so in particular the map $v$ is open and preimages of dense subsets through an open map are dense.
    Moreover $q_1^*\D$ has parameters
    $u^*(\widetilde\delta_1,\widetilde\delta_2)\in\mathcal{T}^2(\mathcal{U}^\reg)$, hence regular.
    \end{proof}

    \begin{prop}
    \label{n dim T-}
    Let $(\delta_1,\dots,\delta_{n+1})\in\mathcal{T}^{n+1}(L)$ such that
    $\delta_i/\delta_j\notin\mathcal{T}^+(L)$ for $i<j$ and let
    $D_{n+1}\in H^1_{\phi,\Gamma}(D_n(\delta_{n+1}^{-1}))$ where $D_n$ is a triangulable
    $\phigam$-module with parameters $(\delta_1,\dots,\delta_n)\in\T^n(L)$.
    Let us denote by $D_i$ the $i$-th piece of the filtration  with graded pieces
    $\robba_L(\delta_1),\dots,\robba_L(\delta_n)$.
    Then there exist
    \begin{itemize}
        \item
        a rigid space $X_2$ over $L$ which is also a vector bundle over an
        appropriate neighbourhood $\mathcal{U}_1\times\mathcal{U}_{2}$ of $(\delta_1,\delta_2)$ in $\T^2$;
        \item
        rigid spaces $X_i$ over $L$ which are also vector bundles over
        $X_{i-1}\times\mathcal{U}_i$, where $\mathcal{U}_i$ is an
        appropriate neighbourhood of $\delta_i$ in $\T$ for all $3\leq i\leq n+1$;
        \item
        families of $\phigam$-modules $\D_i$ over $X_i$ for all $2\leq i\leq n+1$;
        \item
        maps $(\widetilde\delta_1,\dots,\widetilde\delta_i):X_i\rightarrow\T^i$ for all $2\leq i\leq n+1$
    \end{itemize}
    such that there are
    \begin{enumerate}
        \item
        $x_i\in X_i$ such that $\D_i\widehat\otimes k(x_i)=D_i$ and
        $(\widetilde\delta_1,\dots,\widetilde\delta_i)\widehat\otimes k(x_i)=(\delta_1,\dots,\delta_i)$
        for all $2\leq i\leq n+1$;
        \item
        Zariski open dense subsets $U_i\xhookrightarrow{j_i}X_i$ such that $j_i^*\D_i$
        has a filtration of sub-$\phigam$-modules with graded pieces $\robba_{U_i}(\widetilde{\delta_1}),\dots,
        \robba_{U_i}(\widetilde\delta_i)$ and $j_i^*(\widetilde{\delta_1},\dots,\widetilde{\delta_i})\in\T^\reg_i(U_i)$
        for all $2\leq i\leq n+1$.
    \end{enumerate}
    \end{prop}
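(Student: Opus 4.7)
The plan is to induct on $i$ from $2$ to $n+1$, with the base case $i=2$ furnished by Proposition \ref{2 dim T-} applied to $(\delta_1,\delta_2)$. Before starting I invoke Lemma \ref{good neighbourhood} to fix once and for all affinoid neighborhoods $\mcal{U}_j \subset \T$ of each $\delta_j$ for $1 \leq j \leq n+1$ such that on the product $\mcal{U}_j \times \mcal{U}_k$ the universal quotient $\widetilde{\delta}_j/\widetilde{\delta}_k$ lies outside $\T^+$ at every point; this is possible precisely because the hypothesis $\delta_j/\delta_k \notin \T^+(L)$ rules out the exceptional situation of the lemma. I also strengthen the inductive hypothesis by requiring that $\D_i$ carries a filtration by $\phigam$-submodules with graded pieces $\robba_{X_i}(\widetilde{\delta}_j)$ on all of $X_i$, and that the composition $(\widetilde{\delta}_1,\dots,\widetilde{\delta}_i):X_i\to\T^i$ factors through $\mcal{U}_1 \times \cdots \times \mcal{U}_i$; both hold in the base case and will be preserved at each inductive step.

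For the inductive step, assume $X_i$, $\D_i$ and $(\widetilde{\delta}_1,\dots,\widetilde{\delta}_i)$ are constructed with the strengthened properties. Pull $\D_i$ back along the projection $X_i \times \mcal{U}_{i+1} \to X_i$ and consider the twist $\D_i \otimes \robba(\widetilde{\delta}_{i+1})^\vee$, which inherits a global filtration with graded pieces $\robba(\widetilde{\delta}_j/\widetilde{\delta}_{i+1})$ for $j \leq i$. Because $X_i \times \mcal{U}_{i+1}$ maps into $\mcal{U}_j \times \mcal{U}_{i+1}$ for each such $j$, each quotient character avoids $\T^+$ fiberwise, so Theorem \ref{cohomology of families}(ii) gives $H^2_{\phi,\Gamma}(\robba(\widetilde{\delta}_j/\widetilde{\delta}_{i+1})) = 0$. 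A dévissage along the filtration (the long exact sequence of $\phigam$-cohomology) then forces $H^2_{\phi,\Gamma}(\D_i \otimes \robba(\widetilde{\delta}_{i+1})^\vee) = 0$, in particular locally free. Applying Theorem \ref{extensions} to the pair $(\D_i, \robba(\widetilde{\delta}_{i+1}))$ over $X_i \times \mcal{U}_{i+1}$ (glued from an affinoid cover if necessary) produces the desired vector bundle $X_{i+1} \to X_i \times \mcal{U}_{i+1}$ together with a universal family $\D_{i+1}$ fitting in a short exact sequence $0 \to \D_i \to \D_{i+1} \to \robba_{X_{i+1}}(\widetilde{\delta}_{i+1}) \to 0$, so the filtration of $\D_i$ extends globally to $\D_{i+1}$ as demanded by the strengthened induction.

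It remains to check (1) and (2). For (1), since the map $\Psi_Y$ of Theorem \ref{extensions} is surjective, the fiber of $X_{i+1}$ above $(x_i,\delta_{i+1})$ surjects onto $\mathrm{Ext}^1_{\phi,\Gamma}(\robba_L(\delta_{i+1}), D_i) = H^1_{\phi,\Gamma}(D_i(\delta_{i+1}^{-1}))$, so I pick $x_{i+1}$ in that fiber mapping to the prescribed class of $D_{i+1}$. For (2), I take $U_{i+1}$ to be the intersection of the preimage of $U_i$ (open dense since $X_{i+1}\to X_i$ is an open map of rigid spaces) with the locus cut out by $\widetilde{\delta}_j/\widetilde{\delta}_{i+1}\notin\T^-$ for all $j\leq i$. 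The $\T^+$-avoidance is built in by the choice of the $\mcal{U}_j$'s; the $\T^-$-loci are each proper closed subsets, because $\T^-$ is a closed discrete countable subspace of $\T$ while the map $\widetilde{\delta}_j/\widetilde{\delta}_{i+1}$ is non-constant on $X_i\times\mcal{U}_{i+1}$, and shrinking $\mcal{U}_{i+1}$ via Lemma \ref{good neighbourhood} reduces matters to at most finitely many such loci. This gives $(\widetilde{\delta}_1,\dots,\widetilde{\delta}_{i+1})\in\T_{i+1}^{\reg}(U_{i+1})$ on a Zariski open dense $U_{i+1}\subset X_{i+1}$.

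The main obstacle I anticipate is the global vanishing of $H^2_{\phi,\Gamma}(\D_i \otimes \robba(\widetilde{\delta}_{i+1})^\vee)$ on the \emph{entire} base $X_i \times \mcal{U}_{i+1}$: this would not follow from the existence of a triangulation of $\D_i$ merely on the dense open $U_i$, so both the strengthened inductive hypothesis (a global filtration of $\D_i$) and the uniform a priori choice of the neighborhoods $\mcal{U}_j$ via Lemma \ref{good neighbourhood} are essential for the induction to close up.
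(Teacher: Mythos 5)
Your proof follows essentially the same route as the paper's: induction on $i$ with Proposition \ref{2 dim T-} as base case, shrinking via Lemma \ref{good neighbourhood} to kill $\mathcal{T}^+$-type quotients, applying Theorem \ref{extensions} at each step to build the vector bundle $X_{i+1}\to X_i\times\mathcal{U}_{i+1}$, and obtaining density of the regular locus from openness of the vector-bundle projection. Your explicit strengthened inductive hypothesis (a global triangulation of $\D_i$ and factorization of $(\widetilde\delta_1,\dots,\widetilde\delta_i)$ through $\prod\mathcal{U}_j$) and the dévissage giving $H^2_{\phi,\Gamma}=0$ usefully spell out steps the paper leaves implicit, but they are not a departure from its argument.
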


    \begin{proof}
    We proceed by induction on $i$.
    Proposition \ref{2 dim T-} shows that there exist
    \begin{itemize}
        \item
        a rigid space $X_2$ over $L$ which is also a vector bundle over an
        appropriate neighbourhood $\mathcal{U}_{1}\times\mathcal{U}_{2}$ of $(\delta_1,\delta_2)$ in $\T^2$;
        \item
        a family of $\phigam$-modules $\D_2$ over $X_2$;
        \item
        a map $(\widetilde\delta_1,\widetilde\delta_2):X_2\rightarrow\T^2$
    \end{itemize}
    such that there are
    \begin{enumerate}
        \item
        $x_2\in X_2$ such that $\D_2\widehat\otimes k(x_2)=D_2$ and
        $(\widetilde\delta_1,\widetilde\delta_2)\widehat\otimes k(x_2)=(\delta_1,\delta_2)$;
        \item
        a Zariski open dense subset $U_2\xhookrightarrow{j_2}X_2$ such that $j_2^*\D_2$
        has a filtration of sub-$\phigam$-modules with graded pieces $\robba_{U_2}(\widetilde{\delta_1}),
        \robba_{U_2}(\widetilde\delta_2)$ and $j_2^*(\widetilde{\delta_2},\widetilde{\delta_2})\in\T^\reg_2(U_2)$.
    \end{enumerate}
    Hence the statement is true for $i=2$.
    Assume now that the statement of the Proposition is true for any $i<n+1$.
    Let $X_n$ be the vector bundle over $X_{n-1}\times\mathcal{U}_n$ as described in the statement
    of the Proposition, which means in particular that
    it is a vector bundle over the neighbourhood
    $\widetilde{\mathcal{U}}_n\coloneqq\mathcal{U}_1\times\dots\times\mathcal{U}_n$ of
    $(\delta_1,\dots,\delta_n)$ inside $\mathcal{T}^n$; let moreover $\D_n$ be the family
    of $\phigam$-modules over $X_n$ satisfying the properties above.
    Let $\mathcal{U}_{n+1}$ be a neighbourhood of $\delta_{n+1}$ in $\mathcal{T}$
    and similarly let $\widetilde\delta_{n+1}$ be the character corresponding to
    $\mathcal{U}_{n+1}\rightarrow\mathcal{T}$.
    Let us define the open neighbourhood $\widetilde{\mathcal{U}}_{n+1}\coloneqq\widetilde{\mathcal{U}}_n\times\mathcal{U}_{n+1}$
    of $(\delta_1,\dots,\delta_{n+1})$ in $\mathcal{T}^{n+1}$.
    We intersect $\widetilde{\mathcal{U}}_{n+1}$ with the open neighbourhood of $(\delta_1,\dots,\delta_{n+1})$
    defined as in Lemma \ref{good neighbourhood}.
    In particular we can assume that
    $\widetilde\delta_i/\widetilde\delta_{n+1}\widehat\otimes k(u)\notin\mathcal{T}^+(k(u))$ for
    all $i<n+1$ and for all $u\in\widetilde{\mathcal{U}}_{n+1}$.
    Let us consider the space $X_n\times\mathcal{U}_{n+1}$, which is a vector bundle over $\widetilde{\mathcal{U}}_{n+1}$.
    For simplicity of notation, we denote by $\D_n$ and $\robba_{\mathcal{U}_{n+1}}(\widetilde\delta_{n+1})$
    the pullback of these two $\phigam$-modules to $X_n\times\mathcal{U}_{n+1}$.
    By the way we have chosen the neighbourhood $\mathcal{U}_{n+1}$, we have
    that the sheaf $H^2_{\phi,\Gamma}(\D_n(\widetilde\delta_{n+1}^{-1}))$
    is locally free over $X_n\times\mathcal{U}_{n+1}$.
    Hence by Theorem \ref{extensions}, there is a vector bundle
    $Y_{n+1}\rightarrow X_n\times\mathcal{U}_{n+1}$
    and a family of $(\phi,\Gamma)$-modules $\D'_{n+1}$ on $Y_{n+1}$
    such that there exists a surjective map
    \begin{align*}
        Y_{n+1}(k(\delta_1,\dots,\delta_{n+1}))=Y_{n+1}(L)&\twoheadrightarrow H^1_{\phi,\Gamma}
        (\D_n(\widetilde{\delta}_{n+1}^{-1})\widehat{\otimes}L)
        =\mathrm{Ext}^1_{\phi,\Gamma}(\mathcal{R}_{L}(\delta_{n+1}),D_n)\\
        y&\mapsto y^*\D'_{n+1}.
    \end{align*}
    This implies that there is $y_{n+1}\in Y_{n+1}(L)$ such that $y_{n+1}^*\D'_{n+1}=D_{n+1}$.
    It is now sufficient to condider the space
    \begin{center}
        \begin{tikzcd}
        X_{n+1}\coloneqq Y_{n+1}\times\mathrm{Sp}(L)\arrow[r,"p_2"]\arrow[d,"p_1"]
        & \mathrm{Sp}(L)\arrow[d]\\
        Y_{n+1}\arrow[r]
        &\mathrm{Sp}(\mathbb{Q}_p)
        \end{tikzcd}
    \end{center}
    and let $\D_{n+1}\coloneqq p_1^*\D'_{n+1}$.
    Any point $x_{n+1}\in X_{n+1}(L)$ such that $p_1(x_{n+1})=y_{n+1}$ has the property that
    $x_{n+1}^*\D_{n+1}=(p_1\circ x_{n+1})^*\D_{n+1}'=y_{n+1}^*\D_{n+1}'=D_{n+1}$.
    Finally, let $\widetilde{\mathcal{U}}_{n+1}^\reg:=\widetilde{\mathcal{U}}_{n+1}\cap\mathcal{T}_{n+1}^{\mathrm{reg}}$ which is a
    Zariski-open dense in $\widetilde{\mathcal{U}}_{n+1}$.
    The rigid space
    \begin{center}
        \begin{tikzcd}
        U_{n+1}\coloneqq X_{n+1}\times_{\widetilde{\mathcal{U}}_{n+1}}\widetilde{\mathcal{U}}_{n+1}^\reg\arrow[r,"q_2"]\arrow[d,"q_1"]
        & \widetilde{\mathcal{U}}_{n+1}^\reg\arrow[d, "u"]\\
        X_{n+1}\arrow[r,"v"]
        &\widetilde{\mathcal{U}}_{n+1}
        \end{tikzcd}
    \end{center}
    is Zariski-open dense in $X_{n+1}$ (in fact $Y_{n+1}$ is a vector bundle over $\widetilde{\mathcal{U}}_{n+1}$,
    making the map $v$ open).
    Moreover $q_1^*\D_{n+1}$ has parameters
    $u^*(\widetilde\delta_1,\dots,\widetilde\delta_{n+1})\in\mathcal{T}^{n+1}(\widetilde{\mathcal{U}}_{n+1}^\reg)$,
    therefore they are regular.
    \end{proof}

    \begin{proof}[Proof of Theorem \ref{RESULT1}]
    Using Theorem \ref{strategy}, the Propositions \ref{2 dim T-} and \ref{n dim T-} above
    prove that for any trianguline representation $\rho:\absGal\rightarrow GL_n(\mathcal{O}_L)$
    satisfying the hypothesis of Theorem \ref{RESULT1},
    we have $(\rho,(\delta_1,\dots,\delta_n))\in\trivar(L)$.
    \end{proof}

\section{Some preparation}
\label{section 5}

  Before approaching the proof of Theorem \ref{result2}, we need some more preparation.

\subsection{The Beauville-Laszlo Theorem}
  \label{beauville-laszlo}

  The results presented in this section are mainly taken from Section 5.2 in \cite{matthiasthesis}.
  Fix $r\in p^{\mathbb{Q}}\cap[0,1)$.
  Then there exists $m_0=m_0(r)\in\mathbb{N}$ (depending on $r$)
  such that for every $m\geq m_0$, the point $1-\zeta_{p^m}$
  lies in the half open annulus $\mathbb{B}^r$, where $\zeta_{p^m}$ is a $p^m$-th root of unity.
  This means that the minimal polynomial of $1-\zeta_{p^m}$ over $\mathbb{Q}_p$
  defines a point $\alpha_m$ in $\mathbb{B}^r$ with residue field $K_m\coloneqq\mathbb{Q}_p(\zeta_{p^m})$.
  Therefore for all $m\geq m_0$, there exists a surjective morphism
  \[
    \mathcal{R}_{\Qp}^r\twoheadrightarrow K_m.
  \]
  The element $t\coloneqq\mathrm{log}(1+T)$ is a uniformizer of the completed local ring
  at each $\alpha_m$; hence we have that $\widehat{(\mathcal{R}^r_{\Qp})}_{\alpha_m}$ is isomorphic
  to a power series ring in one variable and with coefficients in $K_m$.
  Therefore this gives a map
  \[
    \mathcal{R}_{\Qp}^r\rightarrow K_m\llbracket t\rrbracket.
  \]
  Let us then fix $m\geq m_0$ and let us assume that $L$ contains $K_m$.
  For any affinoid $L$-algebra $A$, we have a morphism
  \[
  \iota_{r,m}:\mathcal{R}^r_A=\mathcal{R}^r\widehat{\otimes}_{\mathbb{Q}_p}A\rightarrow
  K_m\llbracket t\rrbracket\otimes_{\mathbb{Q}_p}A\cong\prod_{K_m\hookrightarrow L}A\llbracket t\rrbracket.
  \]

  Let $r\leq s<1$ such that $s\in p^\Q\cap[0,1)$.

  \begin{notation}
    From now on for a ring $R$, we denote by $\Mod(R)$ the category of modules over $R$.
  \end{notation}

  \begin{dfn}
    We define
    \[
      \mathrm{Mod}(\mathcal{R}_A^{[r,s]}[1/t])\times_{\mathrm{Mod}(\prod A((t)))}
      \mathrm{Mod}(\prod_{K_m\hookrightarrow L}A\llbracket t\rrbracket)
    \]
    to be the category of glueing data whose objects are tuples $(M^{[r,s]},\Lambda^{[r,s]},f^{[r,s]})$ where
    \begin{itemize}
      \item
        $M^{[r,s]}\in\mathrm{Mod}(\mathcal{R}_A^{[r,s]}[1/t])$;
      \item
        $\displaystyle\Lambda^{[r,s]}\in\mathrm{Mod} (\prod_{K_m\hookrightarrow L}A\llbracket t\rrbracket)$;
      \item
        $$
          f^{[r,s]}:M^{[r,s]}\otimes_{\mathcal{R}_A^{[r,s]}[1/t]}\prod_{K_m\hookrightarrow L}A((t))\xrightarrow{\sim}\Lambda^{[r,s]}[1/t]
        $$
        is an isomorphism of $\displaystyle\prod_{K_m\hookrightarrow L}A((t))$-modules.
    \end{itemize}
  \end{dfn}

  The following Theorem can be found in \cite[(5.4)]{matthiasthesis} and it is an analogue for
  $\phigam$-modules of the Theorem of Beauville-Laszlo (cf. \cite{beauville1995lemme}).

  \begin{thm}
    \label{equivalence of categories}
    Let us assume that $\alpha_m\in\mathbb{B}^{[r,s]}$ and no other point $\alpha_n$
    obtained from the minimal polynomial of $1-\zeta_{p^n}$ over $\mathbb{Q}_p$
    is contained in $\mathbb{B}^{[r,s]}$ (for example $s= r^{1/p}$).
    There is an equivalence of categories
    \begin{align*}
      \mathrm{Mod}(\mathcal{R}_A^{[r,s]})\xrightarrow{\sim}
      &\mathrm{Mod}(\mathcal{R}_A^{[r,s]}[1/t])\times_{\mathrm{Mod}(\prod A((t)))}
      \mathrm{Mod}(\prod_{K_m\hookrightarrow L}A\llbracket t\rrbracket)\\
      D^{[r,s]}\mapsto&
      (D^{[r,s]}[1/t],D^{[r,s]}\otimes_{\mathcal{R}_A^{[r,s]}}\prod_{K_m\hookrightarrow L}A\llbracket t\rrbracket, f^{[r,s]}),
    \end{align*}
    where the isomorphism $f^{[r,s]}$ of $\prod_{K_m\hookrightarrow L} A((t))$-modules is given by
    \begin{equation}
      \label{definition of map f}
      (D^{[r,s]}\otimes_{\mathcal{R}_A^{[r,s]}}\prod_{K_m\hookrightarrow L}A\llbracket t\rrbracket)[1/t]
      \cong D^{[r,s]}[1/t]\otimes_{\mathcal{R}_A^{[r,s]}[1/t]}\prod_{K_m\hookrightarrow L}A((t))\\
    \end{equation}
    via $t^i(m\otimes_{\mathcal{R}_A^{[r,s]}} 1)\mapsto (m\otimes_{\mathcal{R}_A^{[r,s]}}t^i)
    \otimes_{\mathcal{R}_A^{[r,s]}[1/t]}1$ for $m\in D^{[r,s]}$ and $i\in\Z$.
    The inverse of this equivalence is given by
    \[
      \ker(\overline{f^{[r,s]}})\mapsfrom(M^{[r,s]},\Lambda^{[r,s]}, f^{[r,s]}),
    \]
    where
    \[
      \overline{f^{[r,s]}}:M^{[r,s]}\rightarrow M^{[r,s]}\otimes_{\mathcal{R}_A^{[r,s]}[1/t]}\prod_{K_m\hookrightarrow L}A((t))
      \xrightarrow{f^{[r,s]}}\Lambda^{[r,s]}[1/t]\twoheadrightarrow\Lambda^{[r,s]}[1/t]/\Lambda^{[r,s]}.
    \]
  \end{thm}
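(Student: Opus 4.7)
The plan is to interpret this as an instance of the classical Beauville--Laszlo gluing theorem, applied to the ring $\robba_A^{[r,s]}$ together with its element $t=\log(1+T)$. By the hypothesis that $\alpha_m$ is the only zero of $t$ in $\mathbb{B}^{[r,s]}$, the element $t$ is a non-zero-divisor and cuts out the closed subscheme supported at $\alpha_m$. The key ring-theoretic identification is that the $t$-adic completion of $\robba_A^{[r,s]}$ is $\prod_{K_m \hookrightarrow L} A\llbracket t\rrbracket$: one first computes the completion of $\robba_{\Qp}^{[r,s]}$ at $\alpha_m$ as $K_m\llbracket t\rrbracket$ (since $t$ is a uniformizer with residue field $K_m$), then uses the assumption $L\supseteq K_m$ to split $K_m\otimes_{\Qp} L \cong \prod_{K_m\hookrightarrow L} L$, and finally tensors with $A$ over $L$.

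First I would construct the forward functor as in the statement, checking that $f^{[r,s]}$ defined by (\ref{definition of map f}) is well-defined and an isomorphism; this is straightforward once one knows that inverting $t$ commutes with the completion identified above. For the reverse direction, given gluing data $(M^{[r,s]}, \Lambda^{[r,s]}, f^{[r,s]})$, one takes $N := \ker(\overline{f^{[r,s]}})$ and endows it with its natural $\robba_A^{[r,s]}$-module structure via the inclusion $N\hookrightarrow M^{[r,s]}$. Showing that the two composite functors are naturally isomorphic to the identity reduces to the exactness of the short sequence
\[
  0 \to \robba_A^{[r,s]} \to \robba_A^{[r,s]}[1/t] \oplus \prod_{K_m\hookrightarrow L} A\llbracket t\rrbracket \to \prod_{K_m\hookrightarrow L} A((t)) \to 0,
\]
together with an analogous exact sequence for every module, so that any $\robba_A^{[r,s]}$-module can be recovered as the fiber product of its localization at $t$ and its $t$-adic completion.

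The main obstacle is verifying the hypotheses of Beauville--Laszlo in this Banach-algebra setting: one must show that the direct sum $\robba_A^{[r,s]}[1/t] \oplus \prod_{K_m\hookrightarrow L} A\llbracket t\rrbracket$ is faithfully flat over $\robba_A^{[r,s]}$, and that the relevant Tor-vanishing holds for modules on which $t$ acts invertibly. These are not entirely automatic since $\robba_A^{[r,s]}$ is a global (non-local) Banach algebra and the completion picks up an extra $A$-direction. However, because $t$ is a non-zero-divisor and the completion at $\alpha_m$ admits the clean description above, the verification reduces after localizing around $\alpha_m$ to the classical Beauville--Laszlo situation of a principal ideal in a noetherian ring, which is essentially the argument carried out in \cite[§5.2]{matthiasthesis}.
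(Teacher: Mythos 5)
The paper does not actually prove this statement: it cites it directly to \cite[(5.4)]{matthiasthesis} as an instance of the Beauville--Laszlo theorem, so there is no in-paper argument against which to compare your proposal. Your approach — computing the $t$-adic completion of $\robba_A^{[r,s]}$, invoking the Beauville--Laszlo gluing principle for the regular element $t$, and verifying the roundtrip via the exact sequence
\[
0 \to \robba_A^{[r,s]} \to \robba_A^{[r,s]}[1/t] \oplus \prod_{K_m\hookrightarrow L} A\llbracket t\rrbracket \to \prod_{K_m\hookrightarrow L} A((t)) \to 0
\]
— is precisely the argument carried out in the cited reference, and all the key identifications you write down are correct: $t$ has a single simple zero at $\alpha_m$ in $\mathbb{B}^{[r,s]}$, so $\robba_A^{[r,s]}/(t)\cong K_m\otimes_{\Qp}A\cong\prod_{K_m\hookrightarrow L}A$, and the $t$-adic completion is $\prod_{K_m\hookrightarrow L}A\llbracket t\rrbracket$.

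Two points deserve more care. First, since $\robba_A^{[r,s]}$ is a Noetherian affinoid $L$-algebra, the "main obstacle" you flag is smaller than you suggest: the $t$-adic completion is automatically flat, $t$ is a non-zero-divisor on both $\robba_A^{[r,s]}$ and its completion, and $\robba_A^{[r,s]}[1/t]\oplus\prod A\llbracket t\rrbracket$ is faithfully flat over $\robba_A^{[r,s]}$; the only non-formal input is the Beauville--Laszlo observation that the gluing data over $\prod A((t))$ already suffices (one does not need to verify the full cocycle condition over $\hat{R}\otimes_R\hat{R}$). So "localizing around $\alpha_m$" is a misleading framing — the classical statement is already global. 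Second, the equivalence in the form stated, with the inverse functor $\ker(\overline{f^{[r,s]}})\subseteq M^{[r,s]}$, cannot literally hold for the category of \emph{all} modules: if $\Lambda^{[r,s]}$ has $t$-torsion, that torsion is killed by $\ker(\overline{f^{[r,s]}})$, whereas the fiber product $M^{[r,s]}\times_{\Lambda^{[r,s]}[1/t]}\Lambda^{[r,s]}$ retains it. Your sentence "any $\robba_A^{[r,s]}$-module can be recovered as the fiber product" glosses over this, and the two constructions only agree for $t$-regular (e.g.\ finite projective) data. Since the paper's applications are exclusively to finite projective $\phigam$-modules, this does not affect anything downstream, but a careful proof should state the restriction — or at least note that the paper's formulation inherits the same looseness from its source.
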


  \subsection{$\phi$-modules over $\robba_A^r$}

  We'll see that any $\phigam$-module over the Robba ring can also be represented by a $\phigam$
  -module over the subring $\robba^{r}$ for some appropriate $r$ depending on the module.
  In this section we define the notion of $\phigam$-modules over $\robba^r$ and see some properties.
  Let $0\leq r\leq s<1$ such that $r,s\in p^\Q\cap[0,1)$ and let $A$ be an affinoid
  $L$-algebra.
  The rings $\robba_A^r$ and $\robba_A^{[r,s]}$ are both equipped with a Frobenius morphism
  \begin{align*}
    \phi:&\robba_A^{[r,s]}\rightarrow\robba_A^{[r^{1/p},s^{1/p}]},\\
    \phi:&\robba_A^r\rightarrow\robba_A^{r^{1/p}}
  \end{align*}
  which is simply the restriction of the Frobenius on the Robba ring $\robba_A$.
  Let us define the restriction map
  \[
    \mathrm{res}^r_{r^{1/p}}:\mathbb{B}_A^{r^{1/p}}\hookrightarrow\mathbb{B}_A^r.
  \]

  \begin{dfn}
    \begin{itemize}
      \item
        A $\robba^r_A$-module $D$ is called a \emph{$\phi$-module} if $D$ is projective,
        of finite presentation and there exists an isomorphism
        \[
          \phi^*D\cong (\mathrm{res}^r_{r^{1/p}})^*D.
        \]
      \item
        A \emph{$\phigam$-module over $\robba^r_A$} is a $\phi$-module $D$ over $\robba^r_A$
        endowed with a continuous action of $\Gamma$ commuting with $\phi$.
    \end{itemize}
  \end{dfn}

  \begin{dfn}
    Let $A$ be an affinoid $L$-algebra and let $r_0\in p^\Q\cap[0,1)$.
    \begin{itemize}
      \item[(i)]
        A  \emph{coherent sheaf} on $\robba_A^{r_0}$ consists of a family of finite
        $\robba_A^{[r,s]}$-modules $D^{[r,s]}$ for all $r_0\leq r\leq s<1$
        together with isomorphisms
        \[
          D^{[r',s']}\cong D^{[r,s]}\otimes_{\robba_A^{[r,s]}}\robba_A^{[r',s']}
        \]
        for all $r_0\leq r\leq r'\leq s'\leq s<1$ satisfying the cocycle conditions.
      \item[(ii)]
        A \emph{vector bundle} on $\robba_A^{r_0}$ is a coherent sheaf $(D^{[r,s]})_{r_0\leq r\leq s<1}$
        on $\robba_A^{r_0}$ such that each $D^{[r,s]}$ is a projective module over $\robba_A^{[r,s]}$.
    \end{itemize}
  \end{dfn}

  The following Theorem is due to \cite[Proposition 2.2.7.]{kedlayaCohomology}.

  \begin{thm}
    \label{phi-modules over Rr}
    For every affinoid $L$-algebra $A$ and for any $r_0\in p^\Q\cap[0,1)$,
    there exists an equivalence of categories
    \begin{align*}
      \{\phi\text{-modules over }\mathcal{R}_A^{r_0}\}
      \xrightarrow{\sim}
      &\left\{\begin{array}{l}\text{vector bundles }
      (D^{[r,s]})_{r_0\leq r\leq s<1}\text{ on }\mathcal{R}_A^{r_0}\text{ together with }\\
      \phi^*D^{[r,s]}\cong D^{[r^{1/p},s^{1/p}]}\text{ compatible with restrictions}\end{array}\right\}\\
      D^{r_0}\mapsto
      &(D^{r_0}\otimes_{\mathcal{R}_A^{r_0}}\mathcal{R}_A^{[r,s]})_{r_0\leq r\leq s<1}.
      \end{align*}
  \end{thm}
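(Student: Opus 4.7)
The plan is to construct a quasi-inverse to the forward functor. The forward direction is straightforward: base change preserves projectivity and finite presentation, the cocycle conditions are automatic, and the $\phi$-structure $\phi^* D^{r_0} \cong (\mathrm{res}^{r_0}_{r_0^{1/p}})^* D^{r_0}$ base changes to the required compatible isomorphisms $\phi^* D^{[r,s]} \cong D^{[r^{1/p}, s^{1/p}]}$.

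For the quasi-inverse, given a compatible family $(D^{[r,s]})_{r_0 \leq r \leq s < 1}$ with its $\phi$-structure, I would set
\[
D^{r_0} := \lim_{r_0 \leq s < 1} D^{[r_0, s]},
\]
a module over $\robba_A^{r_0} = \lim_s \robba_A^{[r_0, s]}$. The transition maps $D^{[r_0, s]} \hookrightarrow D^{[r_0, s']}$ for $s' \leq s$ come from the coherence of the family, and the $\phi$-isomorphisms are compatible with these maps, hence pass to the limit and equip $D^{r_0}$ with a $\phi$-semilinear endomorphism satisfying $\phi^* D^{r_0} \cong (\mathrm{res}^{r_0}_{r_0^{1/p}})^* D^{r_0}$.

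The key task --- and the main obstacle --- is to show that $D^{r_0}$ is finitely generated projective over $\robba_A^{r_0}$ with $D^{r_0} \otimes_{\robba_A^{r_0}} \robba_A^{[r,s]} \cong D^{[r,s]}$. Without the $\phi$-structure, a compatible family of vector bundles on the closed sub-annuli need not glue to a finite projective module on the Fr\'echet--Stein ring $\robba_A^{r_0}$; the $\phi$-structure provides the additional rigidity that makes the inverse limit well behaved. The argument would proceed by choosing (locally on $\mathrm{Sp}(A)$) a basis of $D^{[r_0, s_0]}$ for some small $s_0$, then using the iterated Frobenius isomorphisms $(\phi^n)^* D^{[r_0, s_0]} \cong D^{[r_0^{1/p^n}, s_0^{1/p^n}]}$ together with the restriction maps of the coherent family to propagate this to a compatible generating set of $D^{[r_0, s]}$ for every $s < 1$; since $s_0^{1/p^n} \to 1$, finitely many Frobenius iterates cover any prescribed $s$. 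The resulting compatible family of bases assembles into a finite set of generators of $D^{r_0}$, and projectivity together with the base change identities $D^{r_0} \otimes_{\robba_A^{r_0}} \robba_A^{[r,s]} \cong D^{[r,s]}$ follow from the local freeness of each $D^{[r,s]}$ and the coherence of the family. Full faithfulness of the forward functor is then immediate once this finite presentation is established, since $\Hom$ out of a finitely presented module commutes with the inverse limit over the closed sub-annuli.
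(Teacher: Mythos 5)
The paper does not prove this statement: it is quoted from Kedlaya--Pottharst--Xiao (the text directly below the theorem says ``The following Theorem is due to [Proposition 2.2.7, kedlayaCohomology]''). So there is no internal argument to compare against; your proposal is an attempt to reconstruct a proof from scratch, which is a fair exercise but should be judged on its own.

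The forward functor and the candidate quasi-inverse $D^{r_0}:=\lim_s D^{[r_0,s]}$ are set up correctly, and you rightly identify that the whole content of the theorem is in showing that this limit is finite projective over $\robba_A^{r_0}$ with the expected base change. The gap is in the ``propagate a basis'' step, and it is a real gap, not just a matter of rigour. The Frobenius isomorphism shifts \emph{both} endpoints: $(\phi^n)^*D^{[r_0,s_0]}\cong D^{[r_0^{1/p^n},s_0^{1/p^n}]}$ lives over the annulus $[r_0^{1/p^n},s_0^{1/p^n}]$, not $[r_0,s_0^{1/p^n}]$. To cover $[r_0,s]$ you therefore have to glue the trivializations of $D^{[r_0,s_0]}$, $D^{[r_0^{1/p},s_0^{1/p}]}$, \dots, $D^{[r_0^{1/p^n},s_0^{1/p^n}]}$ along their overlaps, and on each overlap the basis obtained by restriction and the basis obtained by Frobenius transport differ by a nontrivial change-of-basis matrix (essentially the matrix of the Frobenius in the chosen basis). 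So there is no ``compatible family of bases'' for free; what you have is a $1$-cocycle with values in $GL_n$ on an open cover of the half-open annulus. The hard part of the proof --- and the part where the hypothesis that one has a genuine $\phi$-module, rather than just a coherent sheaf, is actually used --- is an analytic estimate showing that this cocycle can be trivialized compatibly as $s\to 1$, i.e.\ that the inverse limit of the successive change-of-basis matrices converges in the Fr\'echet topology. Your sketch says ``the resulting compatible family of bases assembles'' and then derives projectivity ``from the local freeness of each $D^{[r,s]}$ and the coherence of the family,'' but without the Frobenius-driven convergence argument neither claim follows; a compatible family of vector bundles on the closed subannuli with no Frobenius can fail to have a finite projective limit, which is precisely why the theorem is restricted to $\phi$-modules. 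You should either supply the matrix-estimate argument or, as the paper does, cite Kedlaya--Pottharst--Xiao Proposition 2.2.7 (which in turn rests on the glueing results of Kedlaya--Liu).
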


  As already mentioned in the beginning of this section, any $\phigam$-module $D$ over the Robba ring can be seen as
  a $\phigam$-module over $\robba^r$ for some $r$ depending on $D$.
  More precisely, we have the following Theorem proved in \cite[Lemma 2.2.8]{bellaiche2009families}.

  \begin{thm}
    \label{sub-phigam-module}
    Let $D$ be any $\phigam$-module over $\mathcal{R}_L$. There exists a unique
    $r(D)\in p^{\mathbb{Q}}\cap[0,1)$ such that for all $r\geq r(D)$, there is a unique $\phi$-module $D^r$
    over $\mathcal{R}_L^r$ such that $D=D^r\otimes_{\mathcal{R}_L^r}\mathcal{R}_L$.
    Moreover $D^r$ is stable under the action of $\Gamma$ induced by $D$ on $D^r\otimes_{\mathcal{R}_L^r}\mathcal{R}_L$.
  \end{thm}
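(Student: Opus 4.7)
The plan is to exploit the directed union decomposition $\robba_L=\bigcup_{r\in p^\Q\cap[0,1)}\robba_L^r$, together with the finite presentation of $D$ and the continuity hypothesis on the $\Gamma$-action.

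First, I would fix a finite set of generators $e_1,\dots,e_d$ of $D$ and a finite set of relations among them. Each relation is a tuple of elements of $\robba_L$, hence lies in $(\robba_L^{r_0})^d$ for some $r_0\in p^\Q\cap[0,1)$ close enough to $1$. This yields a finitely presented $\robba_L^{r_0}$-module $D^{r_0}$ with $D^{r_0}\otimes_{\robba_L^{r_0}}\robba_L=D$; after possibly enlarging $r_0$, local freeness of $D$ spreads out so that $D^{r_0}$ is itself projective, hence a vector bundle on $\robba_L^{r_0}$ in the sense of Theorem~\ref{phi-modules over Rr}.

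Next, I would descend the Frobenius structure. The matrix of $\Phi$ in the basis $(e_i)$ has entries in $\robba_L$, hence, after a further enlargement of $r_0$, in $\robba_L^{r_0}$. The condition $\robba_L\cdot\Phi(D)=D$ is equivalent to the induced map $\phi^*D\to D$ being an isomorphism; writing the inverse matrix and enlarging $r_0$ once more, we arrange that this inverse is also defined over the appropriate Robba ring. Recalling that $\phi$ sends $\robba_L^{r_0}$ into $\robba_L^{r_0^{1/p}}$, this furnishes a $\phi$-module structure on $D^{r_0}$ in the sense of the definition preceding Theorem~\ref{phi-modules over Rr}. For $\Gamma$-stability, the continuity remark following the definition of $\phigam$-modules produces some $r_\Gamma$ and a basis in which $\gamma\mapsto \mathrm{Mat}_\mathcal{E}(\gamma)$ takes values continuously in $\mathrm{Mat}_d(\robba_L^{r_\Gamma})$. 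Setting $r(D)$ to be the maximum of $r_0$ and $r_\Gamma$, after possibly a further enlargement to reconcile the two chosen bases, we obtain the desired simultaneous $\phi$- and $\Gamma$-stable descent $D^{r(D)}$, and $D^r\coloneqq D^{r(D)}\otimes_{\robba_L^{r(D)}}\robba_L^r$ serves the purpose for every $r\geq r(D)$ by Theorem~\ref{phi-modules over Rr}.

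Finally, for uniqueness of $D^r$, the restriction map $\robba_L^r\hookrightarrow\robba_L$ is injective, and since $D^r$ is projective the natural map $D^r\to D^r\otimes_{\robba_L^r}\robba_L=D$ is injective as well; any two descents then appear as the $\robba_L^r$-submodule of $D$ given by sections extending to $\mathbb{B}^r$, and must coincide. The main obstacle is coordinating the Frobenius, which shrinks the annulus from radius $r$ to $r^{1/p}$, with the continuous $\Gamma$-action, so as to find a single value of $r$ that works simultaneously for both structures and for the matrix of $\Phi^{-1}$; this is exactly what necessitates the series of enlargements described above, and is also why $r(D)$ cannot be extracted from any single ingredient of the $\phigam$-module alone.
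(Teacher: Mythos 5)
The paper does not prove this statement; it simply cites \cite[Lemma~2.2.8]{bellaiche2009families}, and the discussion immediately following the theorem indicates the intended mechanism: since $\robba_L$ is a B\'ezout domain, a finitely presented locally free $\robba_L$-module $D$ is in fact free of finite rank, so one fixes a basis $\{e_1,\dots,e_d\}$, notes that the matrix $M$ of $\Phi$ and the matrices $\mathrm{Mat}_{\mathcal E}(\gamma)$ all have entries in $\robba^r_L$ for $r$ close enough to $1$, and sets $D^r=\bigoplus_i\robba^r_L\,e_i$. Your existence argument is in the same spirit --- descend along the directed union $\robba_L=\bigcup_r\robba^r_L$ --- but the detour through presentations, the unjustified "spreading out'' of projectivity, and the subsequent need to reconcile two different bases are all avoidable once one uses that $D$ is actually free over $\robba_L$; these are not fatal, but they add steps that themselves require nontrivial justification.

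The genuine gap is in the uniqueness of $D^r$. You assert that "any two descents then appear as the $\robba^r_L$-submodule of $D$ given by sections extending to $\mathbb B^r$, and must coincide,'' but there is no canonical such submodule: writing $D\cong\robba_L^d$ and applying a change of basis by some $A\in GL_d(\robba_L)\setminus GL_d(\robba^r_L)$ produces a different finite free $\robba^r_L$-lattice inside $D$ that still tensors up to $D$, so as plain $\robba^r_L$-modules the descent is very far from unique, and the phrase "sections extending to $\mathbb B^r$'' begs the question by presupposing a choice of vector bundle structure on $\mathbb B^r$ --- which is exactly what $D^r$ is. The uniqueness asserted in the theorem is among \emph{$\phi$-modules}: it is the Frobenius identification $\phi^*D^r\cong(\rr{res}^r_{r^{1/p}})^*D^r$, combined with the fact that $\phi$ carries $\robba^s_L$ into $\robba^{s^{1/p}}_L$, that one iterates to force the change-of-basis matrix between any two candidate $\phi$-equivariant descents to lie in $GL_d(\robba^r_L)$. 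Your argument never invokes $\Phi$ at the uniqueness step, and for that reason cannot be correct; this is precisely where the $\phi$-module hypothesis is load-bearing.
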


  More generally, let $\D$ be a family of free $\phigam$-modules over a rigid affinoid space
  $\Sp(A)$ and let $\{e_1,\dots,e_d\}$ be a basis of $\D$.
  Let us consider the matrix $M$ of $\Phi$ in this basis.
  Then there exists $r(D)\in p^{\mathbb{Q}}\cap[0,1)$ such that $M\in GL_d(\robba_A^r)$
  for all $r\geq r(D)$.
  We can then take $\D^r\coloneqq\oplus_{i=1}^d\robba_A^re_i$ and
  we have the following Remark.

  \begin{rem}
    Let $\D$ be a family of locally free $\phigam$-modules over a rigid space $X$.
    Then there exists $r\in p^\Q\cap [0,1)$ and a $\phi$-module $\D^r$ over $\robba_X^r$
    such that $\D^r\otimes_{\robba_X^r}\robba_X=\D$.
  \end{rem}

  \subsection{From $\phigam$-modules over $\robba$ to $\phigam$-modules over $\robba[1/t]$}

  We will show how to view a $\phigam$-module over $\robba_L$ as a $\phigam$-module over
  $\robba_L[1/t]$ and a lattice without losing any information.

  Let $A$ be an affinoid $L$-algebra.
  First of all notice that the morphism
  \[
    \phi:\robba_A\rightarrow\robba_A, T\mapsto(1+T)^p-1
  \]
  defined in (\ref{phi over R}) can be extended to
  \[
    \phi:\robba_A[1/t]\rightarrow\robba_A[1/t]
  \]
  simply  by $\frac{1}{t}\mapsto \frac{1}{pt}$.

  \begin{dfn}
    A \emph{$\phigam$-module over $\robba_A[1/t]$} is a locally free $\robba_A[1/t]$-module $M$
    of finite presentation endowed with
    \begin{itemize}
      \item
        a $\robba_A[1/t]$-semilinear endomorphism $\Phi: M\rightarrow M$ such that
        $\Phi:\phi^*M\rightarrow M$ is an isomorphism;
      \item
        a continuous $\robba_A[1/t]$-semilinear $\Gamma$-action commuting with $\Phi$.
    \end{itemize}
  \end{dfn}

  Let $(\phi,\Gamma)-\mathrm{Mod}(\mathcal{R}_A^r)$ denote the category of $\phigam$-modules
  over $\mathcal{R}_A^r$ and $(\phi,\Gamma)-\mathrm{Mod}(\mathcal{R}_A^r[1/t])$
  the category of $\phigam$-modules over $\mathcal{R}_A^r[1/t]$.

  \begin{dfn}
    A \emph{lattice $\Lambda$ in $A((t))^n$} is a
    finitely generated projective $A\llbracket t\rrbracket$-module such that
    \[
      \Lambda \otimes_{A\llbracket t\rrbracket}A((t))\cong A((t))^n.
    \]
  \end{dfn}

  \begin{dfn}
    For $r\in p^\Q\cap[0,1)$ and $m\in\N$ satisfying the conditions presented in the beginning of \S\ref{beauville-laszlo},
    let us denote by
    $$\Category$$
    the category whose objects are tuples $(M,\Lambda,f)$ where
    \begin{itemize}
      \item
        $M\in(\phi,\Gamma)-\mathrm{Mod}(\mathcal{R}^r_A[1/t])$;\\
      \item
        $\displaystyle\Lambda\in\mathrm{Mod}(\prod_{K_m\hookrightarrow L}A\llbracket t\rrbracket)$;
      \item
        $\displaystyle f:M\otimes_{\mathcal{R}_A^{r}[1/t]}\prod_{K_m\hookrightarrow L}A((t))
        \xrightarrow{\sim}\Lambda[1/t]$ is an isomorphism of $ \prod A((t))$-modules
    \end{itemize}
    and $\Lambda$ is stable under the action of $\Gamma$ induced on $\Lambda[1/t]$ under $f$
    from the diagonal action of $\Gamma$ on $M\otimes_{\mathcal{R}_A^{r}[1/t]}\prod_{K_m\hookrightarrow L}A((t))$.
  \end{dfn}

  \begin{thm}
    \label{equivalenceBL}
    Let $r_0\in p^\Q\cap[0,1)$ such that $\mathbb{B}^{[r_0,r_0]}$ does not contain any zero for $t$
    (i.e. $r_0\neq|1-\zeta_{p^m}|$ for all $m\in\N$).
    There is an equivalence of categories
    \begin{align}
      \label{equivalenceBeauvilleLaszlo}
      (\phi,\Gamma)-\mathrm{Mod}(\mathcal{R}_A^{r_0})\xrightarrow{\sim}
      &\Categoryr\\
      D^{r_0}\mapsto
      &(D^{r_0}[1/t],D^{r_0}\otimes_{\mathcal{R}_A^{r_0}}\prod_{K_m\hookrightarrow L}A\llbracket t\rrbracket,f),
      \nonumber
    \end{align}
    where
    \[
      f:D^{r_0}[1/t]\otimes_{\mathcal{R}_A^{r_0}[1/t]}\prod_{K_m\hookrightarrow L}
      A((t))\xrightarrow{\sim}
      (D^{r_0}\otimes_{\mathcal{R}_A^{r_0}}\prod_{K_m\hookrightarrow L}A\llbracket t\rrbracket)[1/t]
    \]
    via $e_i'\otimes 1\mapsto e_i\otimes 1$ with $e_1,\dots,e_n$ basis of $D^{r_0}$ over $\mathcal{R}_A^{r_0}$
    and $e_1'=e_1\otimes_{\mathcal{R}_A^{r_0}}1,\dots,e_n'=e_n\otimes_{\mathcal{R}_A^{r_0}}1$ basis
    of $D^{r_0}[1/t]$ over $\mathcal{R}_A^{r_0}[1/t]$.
  \end{thm}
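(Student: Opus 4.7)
The strategy is to combine Theorem \ref{phi-modules over Rr}, which identifies $\phi$-modules over $\robba_A^{r_0}$ with compatible families $(D^{[r,s]})_{r_0 \leq r \leq s < 1}$ of vector bundles on closed annuli together with Frobenius isomorphisms $\phi^* D^{[r,s]} \cong D^{[r^{1/p}, s^{1/p}]}$, with Theorem \ref{equivalence of categories}, the Beauville--Laszlo-style equivalence applied interval-by-interval to each $[r,s]$ containing at most one zero of $t$. The $\phi$- and $\Gamma$-structures are then incorporated to upgrade the underlying equivalence of sheaf-theoretic categories to one of $\phigam$-modules.

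First I would check that the forward functor (\ref{equivalenceBeauvilleLaszlo}) is well defined. Given a $\phigam$-module $D^{r_0}$, the localization $D^{r_0}[1/t]$ inherits a $\phigam$-module structure over $\robba_A^{r_0}[1/t]$ because $\phi(t) = pt$ and $\gamma \cdot t = \gamma t$ remain units; the completion $D^{r_0} \otimes_{\robba_A^{r_0}} \prod_{K_m \hookrightarrow L} A\llbracket t\rrbracket$ is $\Gamma$-stable via the diagonal action since $\iota_{r_0, m}$ is $\Gamma$-equivariant; and the compatibility isomorphism $f$ is the canonical identification analogous to (\ref{definition of map f}). Functoriality on morphisms is then immediate.

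For the inverse functor, given $(M, \Lambda, f)$ I would assemble a compatible family $(D^{[r,s]})_{r_0 \leq r \leq s < 1}$ as in Theorem \ref{phi-modules over Rr}. On a closed interval $[r,s] \subset [r_0, 1)$ containing no zero of $t$, set $D^{[r,s]} := M \otimes_{\robba_A^{r_0}[1/t]} \robba_A^{[r,s]}$, which is legitimate since $t$ is invertible there. On $[r,s]$ containing exactly the zero $\alpha_m$, apply Theorem \ref{equivalence of categories} directly to $(M \otimes \robba_A^{[r,s]}[1/t],\, \Lambda,\, f|_{[r,s]})$. On $[r,s]$ containing a single different zero $\alpha_n$, use the Frobenius isomorphism $\Phi \colon \phi^* M \xrightarrow{\sim} M$ together with the fact that the rigid-analytic Frobenius sends $\alpha_{n+1} \mapsto \alpha_n$: iterating $\Phi$ (or its inverse) transports $\Lambda$ to a $\Gamma$-stable lattice at each $\alpha_n$, to which Theorem \ref{equivalence of categories} then applies. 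Glueing the resulting $D^{[r,s]}$ via Theorem \ref{phi-modules over Rr} produces a $\phi$-module over $\robba_A^{r_0}$; it inherits a $\Gamma$-action from $M$ and the $\Gamma$-stable lattices which commutes with $\phi$, so is in fact a $\phigam$-module.

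The main obstacle I expect is verifying that the Frobenius-transported lattices at the zeros $\alpha_n$ with $n \neq m$ indeed assemble into genuine glueing data: concretely, one must check the cocycle compatibilities required by Theorem \ref{phi-modules over Rr} on triple overlaps of closed intervals, and that the $\Gamma$-action inherited from $M$ stabilizes each transported lattice. Both boil down to the commutation of $\Phi$ with the $\Gamma$-action on $M$, combined with the uniqueness clause of the Beauville--Laszlo equivalence of Theorem \ref{equivalence of categories}. Once this is in hand, mutual inverseness is immediate: one composition is the identity by construction, while for the other the completion of $D^{r_0}$ at any $\alpha_n$ is exactly the Frobenius transport of its completion at $\alpha_m$ via the built-in isomorphism $\phi^* D^{[r,s]} \cong D^{[r^{1/p}, s^{1/p}]}$, so the inverse functor applied to the forward image reconstructs $D^{r_0}$ interval-by-interval.
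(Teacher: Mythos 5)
Your proposal follows essentially the same route as the paper: reduce to vector bundles on $\robba_A^{r_0}$ via Theorem \ref{phi-modules over Rr}, apply the interval-by-interval Beauville--Laszlo equivalence of Theorem \ref{equivalence of categories}, and use Frobenius to transport the single lattice $\Lambda$ at $\alpha_m$ to lattices at the other zeros $\alpha_n$, $n > m$. The one place where you stop short is exactly the step you flag as the ``main obstacle'': the cocycle compatibilities and $\Gamma$-stability for the glued family. You describe a somewhat arbitrary cover by closed intervals (with zero, one, or several zeros) and then note that triple overlaps and compatibility must be checked, without actually doing so.

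The paper sidesteps that concern by making a very deliberate choice of cover: set $r_i := r_0^{1/p^i}$ and use the annuli $\mathbb{B}^{[r_i, r_{i+1}]}$, $i \geq 0$, defining $M_i := \phi^* M_{i-1}$ and $\Lambda_i := \phi^* \Lambda_{i-1}$. With this cover (i) each $\mathbb{B}^{[r_i, r_{i+1}]}$ contains exactly one zero $\alpha_{m+i}$, so Theorem \ref{equivalence of categories} applies directly on each piece; (ii) consecutive annuli overlap only along the single circle $\mathbb{B}^{[r_i, r_i]}$, on which $t$ is invertible, so that both $D^{[r_{i-1},r_i]}$ and $D^{[r_i,r_{i+1}]}$ restrict there to $M_{i-1}|_{\mathbb{B}^{[r_i,r_i]}} \cong \phi^*M_{i-1}|_{\mathbb{B}^{[r_i,r_i]}} \cong M_i|_{\mathbb{B}^{[r_i,r_i]}}$ and the identification is canonical; and (iii) there are no triple overlaps at all, so no higher cocycle condition arises. $\Gamma$-stability on each transported lattice is then inherited directly from $\phi$--$\Gamma$ commutativity, as you anticipated. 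In short, your argument is correct in spirit and identifies the right obstruction, but to make it a proof you must commit to a cover (the paper's $r_i = r_0^{1/p^i}$ being the natural one) for which the gluing problem you raise collapses to the above trivial overlap check.
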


  \begin{proof}
    Let $\Lambda\coloneqq D^{r_0}\otimes_{\mathcal{R}_A^{r_0}}\prod_{K_m\hookrightarrow L}A\llbracket t\rrbracket$.
    First of all, notice that the $\Gamma$-action of $D^{r_0}[1/t]$ extends diagonally to
    \[
      f:D^{r_0}[1/t]\otimes_{\mathcal{R}_A^{r_0}[1/t]}\prod_{K_m\hookrightarrow L}
      A((t))\xrightarrow{\sim}\Lambda[1/t]
    \]
    and stabilizes $\Lambda$ by definition.
    In order to show that the above is an equivalence, we describe the inverse functor:
    let $(M, \Lambda, f)$ be an object of the right hand side of (\ref{equivalenceBeauvilleLaszlo}).
    We denote by $M_0$ and $\Lambda_0$ the restriction of $M$ and $\Lambda$ respectively to
    th annulus $\mathbb{B}^{[r_0,r_1]}$, where $r_1\coloneqq r_0^{1/p}$; moreover
    let us denote by $\Lambda_i\coloneqq\phi^*\Lambda_{i-1}$ and $M_i\coloneqq \phi^*M_{i-1}$ for $i\geq1$.
    If we let $r_i\coloneqq r_0^{1/p^i}$ for $i\geq1$, this yields to a family of objects
    $$
      ((M_i,\Lambda_i,f_i)\in
      \mathrm{Mod}(\mathcal{R}_A^{[r_i,r_{i+1}]}[1/t])\times_{\mathrm{Mod}(\prod A((t)))}
      \mathrm{Mod}(\prod_{K_m\hookrightarrow L}A\llbracket t\rrbracket))_i
    $$
    for $i\geq0$ such that $M_i$ is a $\phi$-module
    over $\mathcal{R}_A^{[r_i,r_{i+1}]}[1/t]$ endowed with an action of $\Gamma$.
    Thanks to Theorem \ref{equivalence of categories},
    this corresponds to a family of vector bundles $(D^{[r_i,r_{i+1}]})_{i\geq0}$ with a
    $\Gamma$-action and such that $\phi^*D^{[r_i,r_{i+1}]}\cong D^{[r_{i+1},r_{i+2}]}$.
    We will now show that we can glue $D^{[r_{i-1},r_{i}]}$ and $D^{[r_i,r_{i+1}]}$
    along the intersection $\mathbb{B}^{[r_i, r_i]}$ for all $i\geq1$.
    By assumption, we have that
    \[
      D^{[r_i,r_{i+1}]}_{|\mathbb{B}^{[r_i, r_{i+1}]}\setminus V(t)} = M_{i|\mathbb{B}^{[r_i, r_{i+1}]}\setminus V(t)}
    \]
    for all $i\geq0$.
    By hypothesis, we have that $V(t)$ does not intersect $\mathbb{B}^{[r_i, r_{i}]}$ for all $i\geq0$, thus
    \[
      D^{[r_{i-1},r_{i}]}_{|\mathbb{B}^{[r_i, r_{i}]}}\cong M_{i-1|\mathbb{B}^{[r_i, r_{i}]}}\cong
      \phi^* M_{i-1|\mathbb{B}^{[r_i, r_{i}]}} \cong M_{i|\mathbb{B}^{[r_i, r_{i}]}} \cong D^{[r_i,r_{i+1}]}_{|\mathbb{B}^{[r_i, r_{i}]}}.
    \]
    Therefore we have that the family $(D^{[r_i,r_{i+1}]})_{i\geq0}$ is compatible with restrictions and
    by Theorem \ref{phi-modules over Rr}, this gives a $\phigam$-module $D^{r_0}$ over $\mathcal{R}_A^{r_0}$.
  \end{proof}

  \begin{rem}
    \label{stable lattices}
    Let $K_\infty:=\bigcup_m K_m$ and let $\Gamma_m:=\mathrm{Gal}(K_\infty/K_m)\subset\Gamma=\mathrm{Gal}(K_\infty/\mathbb{Q}_p)$.
    Let $(M,\Lambda,f)$ be an object of $\Category$.
    Notice that the action of $\Gamma_m$ on
    \[
      M\otimes_{\mathcal{R}_A^{r}[1/t]}\prod_{K_m\hookrightarrow L}A((t))\cong\prod_{K_m\hookrightarrow L}A((t))^n
    \]
    preserves each factor of the product.
    Moreover if we fix an embedding $\tau:K_m\hookrightarrow L$ and a $\Gamma_m$-stable
    lattice $\Lambda_\tau$, then it is possible to recover $\Lambda$ from it:
    let $g_m$ be a generator of $\mathrm{Gal}(K_m/\Q_p)$, then we have
    \[
      g_m^n\cdot\Lambda_\tau=\Lambda_{\tau_n}
    \]
    where $\tau_n:K_m\xrightarrow{g_m^n}K_m\xhookrightarrow{\tau}L$.
    Thus
    \[
      \Lambda=\prod_{n}g_m^n\cdot\Lambda_\tau,
    \]
    where the product ranges over $n\in\{0,\dots,|\mathrm{Gal}(K_m/\Q_p)|-1\}$ and
    $\Lambda_\tau$ is a $\Gamma_m$-stable lattice in $A((t))^n$.
    Hence a $\Gamma$-stable lattice in $\prod A((t))^n$ is equivalent to a $\Gamma_m$-stable lattice in
    one of the factors $A((t))^n$, as the $\Gamma$-action translates the lattice to the other factors.
  \end{rem}

  Notice furthermore that we have the following bijection.

  \begin{prop}
    \label{bijection of lattices and modules}
    Let $M\in\phigam-\Mod(\robba_A[1/t])$ for some affinoid algebra $A$.
    Let $r\in [0,1)\cap\Q$ such that $\mathbb{B}^{[r,r]}$ does not contain any zero for $t$
    and such that there exists a $\phigam$-module $M^r$ over $\robba_A^r[1/t]$
    such that $M^r\otimes_{\robba_A^r[1/t]}\robba_A[1/t]=M$ (this is always possible by Theorem \ref{sub-phigam-module})
    and let us take $m\in\N$
    big enough so that there is a surjective morphism $\robba_{\Qp}^r\twoheadrightarrow K_m$
    as explained at the beginning of Section \ref{beauville-laszlo}.
    Let us assume that $L$ is big enough so that
    $K_m\subset L$ (we can always do that thanks to Lemma \ref{enlarge L})
    and let us fix an embedding $\tau:K_m\hookrightarrow L$.
    Let $\mathrm{Gr}_M^\Gamma$  be the set
    \begin{equation}
      \label{Gamma-stable lattices in M}
      \mathrm{Gr}_M^\Gamma\coloneqq \left\{\Lambda\colon
      \Lambda\text{ is a }\Gamma_m\text{-stable lattice of }M^r\otimes_{\robba_A^r[1/t],\tau}A((t))
      \right\}
    \end{equation}
    and let $M^{\phigam}$ be the set
    \begin{equation}
      \label{submodules of M}
      M^{\phigam}\coloneqq \{D\in\phigam-\Mod(\robba_A)\colon D\subset M\text{ and }D[1/t]=M\}.
    \end{equation}
    Then the functor described in Theorem \ref{equivalenceBL} induces the
    bijection
    \begin{align*}
      M^{\phigam}&\rightarrow \mathrm{Gr}_M^\Gamma\\
      D&\mapsto (D^r\otimes_{\robba_A^r,\tau}A\llbracket t\rrbracket),
    \end{align*}
    where $D^r$ is a $\phigam$-module over $\robba_A^r$ such that
    $D=D^r\otimes_{\robba_A^r}\robba_A$.
  \end{prop}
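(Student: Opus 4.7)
The strategy is to apply the Beauville--Laszlo-type equivalence of Theorem \ref{equivalenceBL} restricted to the ``fiber over $M^r$'', then use Remark \ref{stable lattices} to descend from $\Gamma$-stable lattices in the product $\prod_{K_m\hookrightarrow L}A((t))^n$ to $\Gamma_m$-stable lattices in a single factor determined by $\tau$.

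First I would enlarge $r$ if necessary so that, by the family version of Theorem \ref{sub-phigam-module}, every $D\in M^{\phigam}$ has a unique $\phigam$-model $D^r$ over $\robba_A^r$ with $D=D^r\otimes_{\robba_A^r}\robba_A$; the hypothesis $D[1/t]=M$, together with the uniqueness of models applied to $\phigam$-modules over $\robba_A^r[1/t]$, then forces $D^r[1/t]=M^r$. Hence the assignment $D\mapsto D^r$ identifies $M^{\phigam}$ with the collection of $\phigam$-modules $D^r$ over $\robba_A^r$ such that $D^r[1/t]=M^r$.

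Next I would invoke Theorem \ref{equivalenceBL} for the category $\phigam-\Mod(\robba_A^r)$: the equivalence sends $D^r$ to the triple $(D^r[1/t],\,D^r\otimes_{\robba_A^r}\prod_{K_m\hookrightarrow L}A\llbracket t\rrbracket,\,f)$ with $f$ the canonical comparison isomorphism. Restricting to the essential fiber where the first component is fixed equal to $M^r$, the equivalence reduces to a bijection between the set $\{D^r\colon D^r[1/t]=M^r\}$ and the set of $\Gamma$-stable $\prod A\llbracket t\rrbracket$-lattices inside $M^r\otimes_{\robba_A^r[1/t]}\prod_{K_m\hookrightarrow L}A((t))$, with $f$ forced to be the canonical map.

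Finally, Remark \ref{stable lattices} tells me that a $\Gamma$-stable lattice in $\prod_{K_m\hookrightarrow L}A((t))^n$ is uniquely recoverable from, and determined by, its $\tau$-component, which is a $\Gamma_m$-stable lattice in the single factor $M^r\otimes_{\robba_A^r[1/t],\tau}A((t))$, the other factors being produced by $\mathrm{Gal}(K_m/\Qp)$-translation. Composing the two bijections produces the stated assignment $D\mapsto D^r\otimes_{\robba_A^r,\tau}A\llbracket t\rrbracket$. I expect the only nontrivial checks to be (i) independence of the model $D^r$ on the choice of $r$ within the allowed range, which is immediate from the uniqueness clause in Theorem \ref{sub-phigam-module}, and (ii) the identification of the $\tau$-factor of $D^r\otimes_{\robba_A^r}\prod A\llbracket t\rrbracket$ with $D^r\otimes_{\robba_A^r,\tau}A\llbracket t\rrbracket$, which follows from the decomposition $K_m\otimes_{\Qp}L\cong\prod_{K_m\hookrightarrow L}L$. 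The conceptual heart of the argument—rather than a technical obstacle—is the observation that the Beauville--Laszlo equivalence, applied fiberwise over a fixed $M^r$, is exactly the parametrization of the ``integral'' $\phigam$-structures of $M$.
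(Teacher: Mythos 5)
Your proposal is correct and follows essentially the same route as the paper: pass from $D$ to its unique model $D^r$ over $\robba_A^r$, apply the equivalence of Theorem~\ref{equivalenceBL}, and descend from $\Gamma$-stable lattices in $\prod_{K_m\hookrightarrow L}A((t))^n$ to $\Gamma_m$-stable lattices in a single $\tau$-factor via Remark~\ref{stable lattices}. The step you compress as ``restrict to the essential fiber over $M^r$'' is exactly where the paper spends its effort, introducing the auxiliary sets $\mathcal{A}$ and $\mathcal{B}$ of objects paired with explicit trivializations $f:\Lambda[1/t]\xrightarrow{\sim}M^r\otimes_{\robba_A^r[1/t],\tau}A((t))$ and $g:D^r[1/t]\xrightarrow{\sim}M^r$ and checking that passing to isomorphism classes recovers genuine sublattices and genuine sub-$\phigam$-modules rather than isomorphism classes thereof; this is needed because uniqueness of models produces an isomorphism $D^r[1/t]\cong M^r$, and upgrading it to an equality inside $M$ (so that $D^r\otimes_{\robba_A^r,\tau}A\llbracket t\rrbracket$ really lives in $M^r\otimes_{\robba_A^r[1/t],\tau}A((t))$) is the content of the remark following the proposition in the paper.
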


  \begin{rem}
    Notice that for a $\phigam$-module $D\in M^{\phigam}$,
    we have a morphism of $\phigam$-modules over $\robba_A$
    \[
      \iota:D\hookrightarrow M\xrightarrow{\rr id}M^r\otimes_{\robba_A^r[1/t]}\robba_A[1/t],
    \]
    whose image is obviously $D$ and of the form $D^r\otimes_{\robba_A^r}\robba_A$
    for some sub-$\robba_A^r$-module $D^r$ of $M^r$.
    Let $\gamma$ be a topological generator of $\Gamma$.
    The fact that $\iota$ is a morphism of $\phigam$-modules implies that the image of $D^r$ through
    $\gamma$ is contained in $D^r \otimes_{\robba_A^r}\robba_A$, therefore (after choosing a basis for $D^r$)
    the matrix of $\gamma$ has coefficients in $\robba_A$.
    At the same time, the module $M^r$ is stable under $\gamma$ by hypothesis, thus the matrix
    of $\gamma$ in the same basis has coefficients in $\robba_A^r[1/t]$.
    This implies that the coefficients of the matrix of the action of $\gamma$ on $D^r$
    has coefficients in $\robba^r_A$ and therefore $D^r$ is $\Gamma$-stable.
    The fact that $M^r$ is a $\phigam$-module over $\robba^r_A[1/t]$ implies that
    $\phi^*D^r\cong(\rr{res}_r^{r^{1/p}})^*D^r$.
    Thus we have that such a $D^r$ is a $\phigam$-module over $\robba_A^r$
    and $D=D^r\otimes_{\robba_A^r}\robba_A$.
  \end{rem}

  \begin{proof}
    Let
    \[
      \mathcal{A}\coloneqq\left\{
        (\Lambda,f)\colon\begin{array}{c}
          \Lambda\text{ is a }\Gamma_m\text{-stable projective }A\llbracket t\rrbracket\text{-module and}\\
          f:\Lambda[1/t]\xrightarrow{\sim}M^r\otimes_{\robba_A^r[1/t],\tau}A((t))\text{ isomorphism of }A((t))\text{-modules}
        \end{array}
      \right\},
    \]
    where the action of $\Gamma_m$ on $\Lambda[1/t]$ is induced by the isomorphism $f$.
    Let moreover
    \begin{equation}
      \label{set B}
      \mathcal{B}\coloneqq\left\{
      (D^r,g)\colon \begin{array}{c}
      D^r\text{ is a }\phigam\text{-module over }\robba_A^r\text{ and}\\
      g:D^r[1/t]\xrightarrow{\sim}M^r\text{ isomorphism of }\phigam\text{-modules over }\robba_A^r[1/t]
      \end{array}
      \right\}.
    \end{equation}
    By Remark \ref{stable lattices} and since the functor of Theorem \ref{equivalenceBL} is an equivalence,
    we obviously have that the functor induces a bijection between the sets of isomorphism classes of the above
    two sets.
    If we denote by $\widetilde{\mathcal{A}}$ and $\widetilde{\mathcal{B}}$ the sets of isomorphism classes
    of $\mathcal{A}$ and $\mathcal{B}$ respectively, we just have to show that
    we have bijections
    \[
      \begin{array}{lcr}
        \begin{aligned}
          \mathrm{Gr}_M^\Gamma&\leftrightarrow\widetilde{\mathcal{A}}\\
          \Lambda&\mapsto(\Lambda,\mathrm{id}_{\Lambda[1/t]})
        \end{aligned}
        &\text{ and }
        &\begin{aligned}
          M^{\phigam}&\leftrightarrow\widetilde{\mathcal{B}}\\
          D&\mapsto(D^r,\mathrm{id}_{D^r[1/t]}).
        \end{aligned}
      \end{array}
    \]
    \begin{itemize}
      \item
      Let's start showing the injectivity of the first map: assume that $\Lambda,\Lambda'\in\mathrm{Gr}_M^\Gamma$
      such that $(\Lambda,\mathrm{id}_{\Lambda[1/t]})\cong(\Lambda',\mathrm{id}_{\Lambda'[1/t]})$.
      This means that there is an isomorphism $\xi:\Lambda\xrightarrow{\sim}\Lambda'$ of $A\llbracket t\rrbracket$-modules
      such that the diagram
      \[
        \begin{tikzcd}
          \Lambda\arrow[rr,"\xi"]\arrow[hookrightarrow]{d}
          &&\Lambda'\arrow[hookrightarrow]{d}\\
          \Lambda[1/t]\arrow[r,phantom,"="]
          & M^r\otimes_{\robba_A^r[1/t],\tau}A((t))
          &\Lambda'[1/t]\arrow[l,phantom,"="]
        \end{tikzcd}
      \]
      commutes.
      It is then obvious that $\xi$ must be the identity and so $\Lambda=\Lambda'$.
      As for surjectivity, let $(\Lambda,f)\in\widetilde{\mathcal{A}}$; notice then that
      $f(\Lambda)\subset M^r\otimes_{\robba_A^r[1/t],\tau}A((t))$ is a lattice and it is
      stable under the action of $\Gamma_m$ induced by $M^r$ by definition of the $\Gamma$-action on $\Lambda$
      and by the fact that $\Lambda$ is stable under $\Gamma_m$.
      Finally, we have that $(\Lambda,f)\xrightarrow{f}(f(\Lambda),\mathrm{id}_{f(\Lambda)[1/t]})$
      is an isomorphism in the category $\mathcal{A}$.
      \item
      The bijectivity of the second map is proved in an analogous way.
      More precisely, let $D,D'\in M^{\phigam}$ such that
      $(D^r,\mathrm{id}_{D^r[1/t]})\cong(D'^r,\mathrm{id}_{D'^r[1/t]})$.
      In other words, we have an isomorphism $\xi:D^r\xrightarrow{\sim}D'^r$ of $\phigam$-modules
      over $\mathcal{R}_A^r$ and a commutative diagram
      \[
        \begin{tikzcd}
          D^r\arrow[rr,"\xi"]\arrow[hookrightarrow]{d}
          &&D'^r\arrow[hookrightarrow]{d}\\
          D^r[1/t]\arrow[r,phantom,"="]
          & M^r
          &D'^r[1/t]\arrow[l,phantom,"="].
        \end{tikzcd}
      \]
      Thus $D^r=D'^r$ and $D=D^r\otimes_{\robba_A^r}\robba_A= D'^r\otimes_{\robba_A^r}\robba_A=D'$,
      hence the map is injective.
      On the other hand, let $(D^r,f)\in\widetilde{\mathcal{B}}$; notice that the $\robba_A^r$-module
      $f(D^r)\subset M^r$ is stable under the action of $\Gamma$ inherited by $M^r$:
      in fact by definition of morphism of $\phigam$-modules, we have that the action of
      $\Gamma$ commutes with $f$.
      Moreover, the fact that $\phi^*M^r\cong (\rr{res}_r^{r^{1/p}})^*M^r$ implies that
      $\phi^*f(D^r)\cong (\rr{res}_r^{r^{1/p}})^*f(D^r)$, therefore $f(D^r)$ is a $\phigam$-module over $\robba_A^r$.
      Obviously $f(D^r)[1/t]=M^r$, hence $f(D^r)\otimes_{\robba_A^r}\robba_A\in M^{\phigam}$.
      Finally
      $(D^r,f)\xrightarrow{f}(f(D^r),\mathrm{id}_{f(D^r)[1/t]})$ is an isomorphism in the category $\mathcal{B}$,
      thus the map $M^{\phigam}\rightarrow\widetilde{\mathcal{B}}$ is surjective.
    \end{itemize}
  \end{proof}

  \subsection{The $\phigam$-cohomology of $\phigam$-modules over $\robba[1/t]$}

  In this section we explore some nice results of how to compute the $\phigam$-cohomology
  of $\phigam$-modules over $\robba_L[1/t]$, which is defined using the Herr complex as in
  Definition \ref{phi,Gamma cohomology}.

  \begin{rem}
    \label{phigam cohomology with 1/t}
    Let $D$ be a $\phigam$-module over $\mathcal{R}_A$ for some affinoid $L$-algebra $A$.
    Since direct limits are exact, we have
    \[
      H^i_{\phi,\Gamma}\left(D\left[1/t\right]\right)=
      \lim_{\stackrel{\rightarrow}{n}}H^i_{\phi,\Gamma}(t^{-n}D)
      =\lim_{\stackrel{\rightarrow}{n}}H^i_{\phi,\Gamma}(D(x^{-n})).
    \]
    for $i=0,1,2$.
  \end{rem}

  \begin{lem}
    \label{twist}
    Let $\delta\in\T(L)$
    and let $k\in\mathbb{N}$ such that $\mathrm{wt}(\delta)\notin\{1,\dots,k\}$.
    Then
    \begin{itemize}
      \item[(i)]
      the morphism
      \[
        H^1_{\phi,\Gamma}(\mathcal{R}_L(\delta))\xrightarrow{\sim}H^1_{\phi,\Gamma}(\mathcal{R}_L(\delta x^{-k}))
      \]
      is an isomorphism;
      \item[(ii)]
      $H^0_{\phi,\Gamma}(t^{-k}\robba_L(\delta)/\robba_L(\delta))=0$.
    \end{itemize}
  \end{lem}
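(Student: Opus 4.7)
I will prove (ii) first by dévissage along a $t$-adic filtration of $Q_k := t^{-k}\robba_L(\delta)/\robba_L(\delta)$, and then deduce (i) from (ii) by a short dimension count. For (ii), consider the filtration $0 = F_0 \subset F_1 \subset \cdots \subset F_k = Q_k$ with $F_j := t^{-j}\robba_L(\delta)/\robba_L(\delta)$; via the identification $t^{-j}e_\delta \leftrightarrow e_{\delta x^{-j}}$, one has $F_j/F_{j-1}\cong \robba_L(\delta x^{-j})/t\robba_L(\delta x^{-j})$, and the twist $\delta x^{-j}$ has weight $\mathrm{wt}(\delta)-j\neq 0$ for $1\leq j\leq k$ by hypothesis. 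Running the long exact sequences of $0\to F_{j-1}\to F_j\to F_j/F_{j-1}\to 0$ and inducting on $j$, the claim $H^0_{\phi,\Gamma}(Q_k)=0$ reduces to the following \emph{key claim}: $H^0_{\phi,\Gamma}(\robba_L(\eta)/t\robba_L(\eta))=0$ whenever $\mathrm{wt}(\eta)\neq 0$.

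For this key claim I would enlarge $L$ if necessary so that it contains all cyclotomic fields $K_m$ (allowed by Lemma \ref{enlarge L}). The element $t=\log(1+T)$ has only simple zeros in the domain of $\robba_L$, at $\alpha_n := \zeta_{p^n}-1$ for $n\geq 1$, so evaluation identifies $\robba_L/t\robba_L$ with a subring of $\prod_{n\geq 1}L$ whose kernel is exactly $t\robba_L$. A class $\overline{fe_\eta}\in H^0_{\phi,\Gamma}(\robba_L(\eta)/t\robba_L(\eta))$ satisfies in particular $\gamma(f)\eta(\gamma)\equiv f\pmod{t\robba_L}$ for all $\gamma\in\Gamma$. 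The $\Gamma$-action permutes the $\alpha_n$ within each level $n$, with stabilizer $\Gamma_n:=\ker(\Gamma\twoheadrightarrow(\Z/p^n\Z)^\times)$; evaluating this relation at $\alpha_n$ for $\gamma\in\Gamma_n$ yields $(\eta(\gamma)-1)f(\alpha_n)=0$ in $L$. Since $\mathrm{wt}(\eta)\neq 0$, $\eta|_{1+p^n\Z_p}$ is nontrivial for every $n$ (otherwise the formula $\mathrm{wt}(\eta)=\log\eta(a)/\log a$ for $a$ close to $1$ would give $\mathrm{wt}(\eta)=0$), so a $\gamma\in\Gamma_n$ with $\eta(\gamma)\neq 1$ exists and forces $f(\alpha_n)=0$. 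As this holds for all $n\geq 1$, $f\in t\robba_L$ and the class is zero.

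For (i), the long exact sequence of $0\to\robba_L(\delta)\xrightarrow{\cdot t^k}\robba_L(\delta x^{-k})\to Q_k\to 0$ begins $0=H^0_{\phi,\Gamma}(Q_k)\to H^1_{\phi,\Gamma}(\robba_L(\delta))\to H^1_{\phi,\Gamma}(\robba_L(\delta x^{-k}))$, so the map in (i) is injective by (ii). A short case analysis on whether $\delta$ lies in $\T^\reg$, $\T^+$ or $\T^-$ shows that the hypothesis $\mathrm{wt}(\delta)\notin\{1,\ldots,k\}$ forces $\delta$ and $\delta x^{-k}$ into the same stratum (the border cases $\delta=x^j$ with $1\leq j\leq k$ and $\delta=\chi x^m$ with $0\leq m\leq k-1$ are precisely the ones ruled out), so both $H^1$'s have the same finite $L$-dimension and the injection is bijective. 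The main delicate input is the analytic description of $\robba_L/t\robba_L$ via evaluation at the zeros of $t$; modulo this structural feature of the Robba ring, the rest of the argument is essentially formal from the long exact sequence and a local computation with the stabilizers $\Gamma_n$.
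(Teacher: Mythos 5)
The paper itself does not prove Lemma \ref{twist}; it simply cites \cite[Th\'eor\`eme 2.22~(i), Proposition 2.18~(i)]{colmez2008representations}. Your proposal is therefore not "the same as the paper's proof," but it is a reasonable reconstruction of Colmez's argument, and, apart from one misstatement, it is essentially correct.

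The structure is sound: the dévissage of $Q_k=t^{-k}\robba_L(\delta)/\robba_L(\delta)$ by the $F_j$ does reduce (ii) to the vanishing of $H^0_{\phi,\Gamma}(\robba_L(\eta)/t\robba_L(\eta))$ for $\mathrm{wt}(\eta)\neq 0$ (using $\gamma(t)=\chi(\gamma)t$ so that $F_j/F_{j-1}\cong\robba_L(\delta x^{-j})/t\robba_L(\delta x^{-j})$ and $\mathrm{wt}(\delta x^{-j})=\mathrm{wt}(\delta)-j\neq 0$ for $1\leq j\leq k$); the evaluation-at-$\alpha_n$ computation that kills such an $H^0$ is correct; and the dimension count in (i) using the stratification $\T^{\reg}\sqcup\T^+\sqcup\T^-$ and Theorem~1.13 is correct — the border cases excluded by $\mathrm{wt}(\delta)\notin\{1,\dots,k\}$ are exactly those where $\delta$ and $\delta x^{-k}$ would change stratum.

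The one genuine problem is the sentence "I would enlarge $L$ if necessary so that it contains all cyclotomic fields $K_m$." This is impossible: $L$ is a finite extension of $\Qp$ throughout the paper, and Lemma~\ref{enlarge L} only permits passing to a further \emph{finite} extension, while $K_\infty=\bigcup_m K_m$ is infinite. The good news is that you do not need this. The residue ring at $\alpha_n$ is $K_n\otimes_{\Qp}L$, so $f(\alpha_n)$ lives there rather than in $L$; but $K_n\otimes_{\Qp}L$ is a finite \'etale $L$-algebra, so the scalar $\eta(\gamma)-1\in L^\times$ (for $\gamma\in\Gamma_n$ with $\eta(\gamma)\neq 1$, which exists because $\mathrm{wt}(\eta)\neq 0$) is a unit there, and the conclusion $f(\alpha_n)=0$ is still forced. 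Correspondingly, the target of the evaluation map should be $\prod_n(K_n\otimes_{\Qp}L)$, not $\prod_n L$. You should also be a little more explicit that $t$ does generate the ideal of functions in $\robba^r_L$ vanishing at all the $\alpha_n$ in the annulus — this is a standard Lazard/Weierstrass fact for the Robba ring, but it is the one nonformal input and worth naming.

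With those corrections, your sketch would stand as a self-contained replacement for the external reference, which is arguably a small improvement over the paper's bare citation.
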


  The reader can find Lemma \ref{twist} and its proof in
  \cite[Théorème 2.22 (i), Proposition 2.18 (i)]{colmez2008representations}.

  \begin{lem}
    \label{lem:iso of H0}
    Let $\delta\in\T(L)$ such that $\mathrm{wt}(\delta)\notin\{1,\dots,k\}$.
    Then the map
    \[
      H^0_{\phi,\Gamma}(\robba_L(\delta))\rightarrow H^0_{\phi,\Gamma}(\robba_L(\delta\cdot x^{-k}))
    \]
    is an isomorphism.
  \end{lem}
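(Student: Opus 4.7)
The plan is to deduce this directly from Lemma \ref{twist}(ii) via a long exact sequence argument. First I would identify $t^{-k}\robba_L(\delta)$, viewed as a sub-$\phigam$-module of $\robba_L(\delta)[1/t]$, with $\robba_L(\delta \cdot x^{-k})$: using the relations $\phi(t) = pt$ and $\gamma \cdot t = \gamma t$, the basis vector $t^{-k} e_\delta$ of $t^{-k}\robba_L(\delta)$ satisfies
\[
\phi(t^{-k} e_\delta) = p^{-k}\delta(p)\,(t^{-k} e_\delta), \qquad \gamma \cdot (t^{-k} e_\delta) = \gamma^{-k}\delta(\gamma)\,(t^{-k} e_\delta),
\]
which are precisely the defining relations of $\robba_L(\delta \cdot x^{-k})$. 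Under this identification, the natural inclusion $\robba_L(\delta) \hookrightarrow t^{-k}\robba_L(\delta)$ corresponds to the map of the statement.

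Next I would consider the short exact sequence of $\phigam$-modules
\[
0 \to \robba_L(\delta) \to t^{-k}\robba_L(\delta) \to t^{-k}\robba_L(\delta)/\robba_L(\delta) \to 0
\]
and take the associated long exact sequence in $\phigam$-cohomology, obtaining
\[
0 \to H^0_{\phi,\Gamma}(\robba_L(\delta)) \to H^0_{\phi,\Gamma}(\robba_L(\delta \cdot x^{-k})) \to H^0_{\phi,\Gamma}\bigl(t^{-k}\robba_L(\delta)/\robba_L(\delta)\bigr).
\]

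Finally, the hypothesis $\mathrm{wt}(\delta) \notin \{1,\dots,k\}$ lets me invoke Lemma \ref{twist}(ii), which gives $H^0_{\phi,\Gamma}(t^{-k}\robba_L(\delta)/\robba_L(\delta)) = 0$. Combined with the evident injectivity coming from $\robba_L(\delta) \hookrightarrow t^{-k}\robba_L(\delta)$, this forces the middle map in the exact sequence to be an isomorphism, which is exactly the claim. There is no real obstacle here: the vanishing of $H^0$ on the quotient is exactly what Lemma \ref{twist}(ii) provides, and the identification in the first step is a formal computation from the defining cocycle relations for $\robba_L(-)$, so the argument reduces entirely to the mechanics of the long exact sequence.
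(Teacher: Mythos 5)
Your argument is correct and is essentially identical to the paper's proof: both use the short exact sequence $0 \to \robba_L(\delta) \to t^{-k}\robba_L(\delta) \to t^{-k}\robba_L(\delta)/\robba_L(\delta) \to 0$, pass to the long exact sequence in $\phigam$-cohomology, and invoke Lemma \ref{twist}(ii) to kill $H^0$ of the quotient. The only difference is that you spell out the identification $t^{-k}\robba_L(\delta) \cong \robba_L(\delta x^{-k})$ via the cocycle computation, which the paper leaves implicit.
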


  \begin{proof}
    Consider the short exact sequence
    \[
      0\rightarrow \robba_L(\delta)\rightarrow t^{-k}\robba_L(\delta)
      \rightarrow t^{-k}\robba_L(\delta)/\robba_L(\delta)\rightarrow0,
    \]
    which induces the long exact sequence of cohomologies
    \[
      0\rightarrow H^0_{\phi,\Gamma}(\robba_L(\delta))\rightarrow H^0_{\phi,\Gamma}(t^{-k}\robba_L(\delta))
      \rightarrow H^0_{\phi,\Gamma}(t^{-k}\robba_L(\delta)/\robba_L(\delta))\rightarrow\dots.
    \]
    By Lemma \ref{twist}, we have that $H^0_{\phi,\Gamma}(t^{-k}\robba_L(\delta)/\robba_L(\delta))=0$,
    which implies the wanted isomorphism.
  \end{proof}

  \begin{prop}
    \label{prop:high dim twist}
    Let $D_n$ be a triangulable $\phigam$-module over $\robba_L$ with parameters
    $(\delta_1,\dots\delta_n)\in\T^n(L)$ and let $\delta\in\T(L)$ such that
    $\mathrm{wt}(\delta_i/\delta)\notin\N_{>0}$ for $i=1,\dots,n$.
    We have
    \[
      H^1_{\phi,\Gamma}(D_n(\delta^{-1}))\xrightarrow{\sim} H^1_{\phi,\Gamma}(D_n(\delta^{-1}x^{-k})).
    \]
    is an isomorphism for all $k\in\N$.
  \end{prop}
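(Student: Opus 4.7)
The plan is to proceed by induction on $n$, strengthening the statement to assert that the natural map $H^i_{\phi,\Gamma}(D_n(\delta^{-1})) \to H^i_{\phi,\Gamma}(D_n(\delta^{-1}x^{-k}))$, induced by the multiplication-by-$t^k$ inclusion $D_n(\delta^{-1}) \hookrightarrow D_n(\delta^{-1}x^{-k})$, is an isomorphism for all $i = 0, 1, 2$. The extra control on $H^0$ and $H^2$ is what will let us close the induction via the five-lemma. For the base case $n = 1$ we have $D_1 = \robba_L(\delta_1)$, so that $D_1(\delta^{-1}) = \robba_L(\delta_1/\delta)$: for $i = 1$ the claim is Lemma~\ref{twist}(i) applied to the character $\delta_1/\delta$, and for $i = 0$ it is Lemma~\ref{lem:iso of H0} applied to the same character, in both cases using the implication $\mathrm{wt}(\delta_1/\delta) \notin \N_{>0} \Rightarrow \mathrm{wt}(\delta_1/\delta) \notin \{1,\dots,k\}$. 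For $i = 2$ I would argue directly from the explicit description of $\T^+$: any $\chi x^m \in \T^+$ has weight $m + 1 \in \N_{>0}$, so the weight hypothesis forces $\delta_1/\delta \notin \T^+$ and, analogously, $\delta_1/\delta\cdot x^{-k} \notin \T^+$ (otherwise $\delta_1/\delta = \chi x^{m+k}$ with $m+k \geq 0$ would already lie in $\T^+$); hence both $H^2$-groups vanish and the map is trivially an isomorphism.

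For the inductive step, I would take the filtration $0 \to D_{n-1} \to D_n \to \robba_L(\delta_n) \to 0$ coming from the triangulation, twist it by $\delta^{-1}$ and by $\delta^{-1} x^{-k}$, and note that multiplication by $t^k$ assembles into a morphism between these two short exact sequences. Passing to the induced commutative ladder of long exact sequences of $\phigam$-cohomology, the inductive hypothesis applied to $D_{n-1}$ (whose parameters $(\delta_1,\dots,\delta_{n-1})$ still satisfy the weight condition) gives isomorphisms on all three $H^i(D_{n-1}(\delta^{-1}))$, while the base-case analysis applied to $\robba_L(\delta_n/\delta)$ handles the rank-one pieces. A five-lemma chase in each of the three squares centred on $H^i(D_n(\delta^{-1}))$, using that the Herr complex is concentrated in degrees $0, 1, 2$ so that the long exact sequence terminates with $H^2(\robba_L(\delta_n/\delta)) \to 0$, yields isomorphisms on all three $H^i(D_n(\delta^{-1}))$; specialising to $i = 1$ recovers the statement of the proposition.

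The delicate point is the $H^2$ part of the base case, since Lemma~\ref{twist} does not explicitly address it; the direct argument via the description of $\T^+$ above is what makes the proof go through without invoking Tate-style duality. Once that vanishing is in place, the remainder is a routine five-lemma chase, and the hypothesis $\mathrm{wt}(\delta_i/\delta) \notin \N_{>0}$ is precisely tight enough to simultaneously supply all three rank-one inputs ($H^0$ and $H^1$ isomorphism, $H^2$ vanishing) at every graded piece of the triangulation throughout the induction.
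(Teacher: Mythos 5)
Your proof is correct and follows the paper's argument essentially step for step: induction on $n$ via the short exact sequence $0\to D_{n-1}\to D_n\to\robba_L(\delta_n)\to0$, the resulting ladder of long exact $\phi\Gamma$-cohomology sequences for $D_n(\delta^{-1})$ and $D_n(\delta^{-1}x^{-k})$, and a five-lemma chase fed by Lemma \ref{twist} and Lemma \ref{lem:iso of H0} at the rank-one piece. The only cosmetic difference is how the right-hand end of the five-term row is handled: you strengthen the inductive hypothesis to all three cohomology degrees (with a correct direct verification that the weight hypothesis keeps $\delta_1/\delta$ and $\delta_1/\delta\cdot x^{-k}$ out of $\T^+$), whereas the paper keeps the induction on $H^1$ alone and observes that the same weight condition already forces $H^2_{\phi,\Gamma}(D_{n-1}(\delta^{-1}))=H^2_{\phi,\Gamma}(D_{n-1}(\delta^{-1}x^{-k}))=0$ by chaining the rank-one $H^2$-vanishings through the triangulation, so that both rows terminate in $0$ and the $H^0$- and $H^2$-chases become unnecessary.
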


  \begin{proof}
    We proceed by induction on $n$.
    \begin{enumerate}
      \item [$n=1:$]
        This case is proved by Lemma \ref{twist}.
      \item [$n\geq2:$]
        We want to show that the map
        \[
          H^1_{\phi,\Gamma}(D_n(\delta^{-1}))\rightarrow H^1_{\phi,\Gamma}(D_n(\delta^{-1}\cdot x^{-k}))
        \]
        is an isomorphisms for all $k \in \N$.
        Let $D_{n-1}$ be the $\phigam$-module over $\robba_L$ such that
        $D_n\in H^1_{\phi,\Gamma}(D_{n-1}(\delta_n^{-1}))$.
        This means we have a short exact sequence
        \[
          0\rightarrow D_{n-1}\rightarrow D_n\rightarrow \robba_L(\delta_n)\rightarrow0,
        \]
        which induces the long exact horizontal rows of the following diagram
        (where all cohomologies are $\phigam$-cohomologies):
        \[
          \begin{tikzcd}[column sep=tiny]
            H^0(\robba_L(\delta_n/\delta))\ar[r]\isoarrow{d}
            & H^1(D_{n-1}(\delta^{-1}))\ar[r]\isoarrow{d}
            & H^1(D_n(\delta^{-1}))\ar[r]\arrow{d}
            & H^1(\robba_L(\delta_n/\delta))\ar[r]\isoarrow{d}
            & 0\\
            H^0(\robba_L(\delta_n\delta^{-1}x^{-k}))\ar[r]
            & H^1(D_{n-1}(\delta^{-1}x^{-k}))\ar[r]
            & H^1(D_n(\delta^{-1}x^{-k}))\ar[r]
            & H^1(\robba_L(\delta_n\delta^{-1}x^{-k}))\ar[r]
            & 0.
          \end{tikzcd}
        \]
        Notice first of all that, since $\mathrm{wt}(\delta_i/\delta)\notin\N_{>0}$, we have that
        $H^2_{\phi,\Gamma}(D_{n-1}(\delta^{-1}))=H^2_{\phi,\Gamma}(D_{n-1}(\delta^{-1}x^{-k}))=0$.
        This explains why the two long exact sequences above are right-exact.
        Furthermore the first vertical arrow is an isomorphism by Lemma \ref{lem:iso of H0},
        the second vertical arrow is an isomorphism by inductive hypothesis and the last
        verical arrow is an isomorphism by Lemma \ref{twist}.
        We can therefore conclude by the five lemma that the middle vertical arrow is an isomorphism.
    \end{enumerate}
  \end{proof}

  \subsection{The affine Grassmannian}

  There exists a "space" with a universal family of lattices: the affine Grassmannian.
  In this section we recall some basic notions about it.

  Recall that a \emph{lattice} in $A((t))^n$ is a finitely generated projective
  $A\llbracket t\rrbracket$-module $\Lambda\subset A((t))^n$ such that $\Lambda[1/t]\cong A((t))^n$.

  Let $n\in\mathbb{N}_{>0}$.
  The \emph{affine Grassmannian} is the functor
  \begin{align*}
    \mathrm{Gr}_n: \underline{Rings}&\rightarrow\underline{Sets}
  \end{align*}
  sending a ring $A$ to the set of
  lattices in $A((t))^n$.

  \begin{thm}
    The affine Grassmannian is representable by an ind-projective scheme.
    More precisely, let $\mathrm{Gr}_{n,i}$ be the subfunctor of the affine Grassmannian
    sending a ring $A$ to the set
    consisting of those lattices $\Lambda$ such that
    \[
      t^iA\llbracket t\rrbracket^n\subset \Lambda\subset t^{-i}A\llbracket t\rrbracket^n
    \]
    for $i\in\mathbb{N}$.
    We have that $\mathrm{Gr}_{n,i}$ is representable by a projective scheme and
    the affine Grassmannian is the direct limit of the subfunctors $\mathrm{Gr}_{n,i}$.
  \end{thm}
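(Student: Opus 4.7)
The plan is to realize each $\mathrm{Gr}_{n,i}$ as a closed subscheme of a classical (finite-dimensional) Grassmannian and then show the natural transition maps $\mathrm{Gr}_{n,i}\hookrightarrow\mathrm{Gr}_{n,i+1}$ are closed immersions, so that the colimit exhibits $\mathrm{Gr}_n$ as an ind-projective scheme.

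First I would fix $i\in\mathbb{N}$ and consider the free $A$-module
\[
M_i:=t^{-i}A\llbracket t\rrbracket^n/t^iA\llbracket t\rrbracket^n,
\]
which has rank $2ni$, together with its nilpotent $A$-linear endomorphism $\bar t$ induced by multiplication by $t$. The key observation is that any $A\llbracket t\rrbracket$-submodule $\Lambda\subset A((t))^n$ satisfying $t^iA\llbracket t\rrbracket^n\subset\Lambda\subset t^{-i}A\llbracket t\rrbracket^n$ is determined by its image $\bar\Lambda\subset M_i$, and the conditions \emph{$\Lambda$ is a finitely generated projective $A\llbracket t\rrbracket$-module with $\Lambda[1/t]\cong A((t))^n$} and \emph{$\Lambda$ is stable under multiplication by $t$} translate, respectively, to \emph{$\bar\Lambda$ is a locally direct $A$-module summand of $M_i$} and \emph{$\bar t(\bar\Lambda)\subset\bar\Lambda$}. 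The first translation uses that an $A\llbracket t\rrbracket$-module whose reduction modulo $t^{2i}$ is projective over $A$ (and which is sandwiched between two free modules as above) is automatically projective of rank $n$ over $A\llbracket t\rrbracket$.

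Next I would invoke the representability of the classical Grassmannian: the functor sending $A$ to the set of locally-direct summands of $M_i\otimes_{\mathbb{Z}}A$ of a given rank is representable by a projective scheme $\mathrm{Gr}(r,2ni)$, and the disjoint union over $r=0,\dots,2ni$ represents the functor of all direct summands. The $t$-stability condition $\bar t(\bar\Lambda)\subset\bar\Lambda$ is a closed condition (cut out by the vanishing of the composite $\bar\Lambda\hookrightarrow M_i\xrightarrow{\bar t}M_i\twoheadrightarrow M_i/\bar\Lambda$, which is a section of a vector bundle over the Grassmannian). This exhibits $\mathrm{Gr}_{n,i}$ as a closed subscheme of a projective scheme, hence projective.

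Finally I would verify that the natural map $\mathrm{Gr}_{n,i}\to\mathrm{Gr}_{n,i+1}$ induced by the inclusions $t^{i+1}A\llbracket t\rrbracket^n\subset t^iA\llbracket t\rrbracket^n\subset t^{-i}A\llbracket t\rrbracket^n\subset t^{-(i+1)}A\llbracket t\rrbracket^n$ is a closed immersion (the extra conditions being that $\bar\Lambda$ contains the image of $t^iA\llbracket t\rrbracket^n$ and lies in the image of $t^{-i}A\llbracket t\rrbracket^n$), and that every lattice $\Lambda\subset A((t))^n$ satisfies $t^iA\llbracket t\rrbracket^n\subset\Lambda\subset t^{-i}A\llbracket t\rrbracket^n$ for some $i$ depending on $\Lambda$; indeed, since $\Lambda$ is finitely generated and $\Lambda[1/t]=A((t))^n$, finitely many generators of $\Lambda$ and of the standard lattice can be compared, giving such an $i$. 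Therefore $\mathrm{Gr}_n=\varinjlim_i\mathrm{Gr}_{n,i}$ as functors, and this exhibits $\mathrm{Gr}_n$ as an ind-projective scheme. The main technical subtlety I anticipate is the equivalence between projectivity of $\Lambda$ over $A\llbracket t\rrbracket$ and local-direct-summand-ness of $\bar\Lambda\subset M_i$, which relies on a lifting-the-basis argument over the $t$-adically complete ring $A\llbracket t\rrbracket$ together with the bound on $\Lambda$.
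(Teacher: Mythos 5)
The paper states this theorem without proof, treating it as a standard fact about affine Grassmannians (see e.g.\ Zhu's lecture notes or G\"ortz's survey), so there is no proof of record to compare against; your argument is the standard one and is essentially correct. You realize each $\mathrm{Gr}_{n,i}$ as the $\bar t$-stable locus of the classical Grassmannian of $M_i=t^{-i}A\llbracket t\rrbracket^n/t^iA\llbracket t\rrbracket^n$, and you correctly flag the one genuinely nontrivial ingredient, namely the equivalence between projectivity of $\Lambda$ over $A\llbracket t\rrbracket$ and $\bar\Lambda$ being a local direct summand of $M_i$.

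One imprecision worth fixing: you describe this equivalence via the ``reduction modulo $t^{2i}$'' of $\Lambda$, but $\bar\Lambda=\Lambda/t^iA\llbracket t\rrbracket^n$ is not $\Lambda/t^{2i}\Lambda$ --- one only has a surjection $\Lambda/t^{2i}\Lambda\twoheadrightarrow\bar\Lambda$ (since $t^{2i}\Lambda\subset t^iA\llbracket t\rrbracket^n$), and these differ in general. The lemma you actually need is: for an $A\llbracket t\rrbracket$-submodule $\Lambda\subset A((t))^n$ with $t^iA\llbracket t\rrbracket^n\subset\Lambda\subset t^{-i}A\llbracket t\rrbracket^n$, $\Lambda$ is finitely generated projective over $A\llbracket t\rrbracket$ with $\Lambda[1/t]=A((t))^n$ if and only if the quotient $M_i/\bar\Lambda=t^{-i}A\llbracket t\rrbracket^n/\Lambda$ is a finitely generated projective $A$-module (equivalently, $\bar\Lambda$ is a local direct summand of $M_i$). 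The forward implication requires a flatness argument for the quotient; the converse is the basis-lifting step you gesture at. With that statement substituted, the rest of your argument (closedness of the $\bar t$-stability locus, transition maps being closed immersions, exhaustion by finite generation) is correct and gives the claimed ind-projective structure.
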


  In particular, we have that each $\mathrm{Gr}_{n,i}$ is locally of finite type, hence
  we can consider their rigidification through the GAGA functor and obtain rigid spaces
  $\mathrm{Gr}_{n,i}^{\rig}$.
  Hence we define
  \[
    \mathrm{Gr}_n^\rig\coloneqq\lim_{i\in\mathbb{N}}\mathrm{Gr}_{n,i}^\rig.
  \]
  This is the functor on rigid spaces
  \[
    \mathrm{Gr}_n^\rig:\underline{Rig}\rightarrow\underline{Sets}
  \]
  sending an affinoid rigid space $\mathrm{Sp}(A)$ to the set of lattices in $A((t))^n$.
  Moreover, we denote by $\mathbb L$ the universal object of $\mathrm{Gr}_n^\rig$.

  Another description of the affine Grassmannian is through loop groups: given an algebraic group $G$,
  the \emph{loop group of $G$}
  is the presheaf $LG$ defined by $LG(R)\coloneqq G(R((t)))$ and the \emph{positive loop group of $G$}
  is the presheaf $L^+G$ defined by $L^+G(R)\coloneqq G(R\llbracket t\rrbracket)$.
  Then the affine Grassmannian is the fpqc quotient $LGL_n/L^+GL_n$.

  In this paragraph we study a bit more the geometry of the affine Grassmannian:
  let $B_n\subset GL_n$ be the Borel subgroup of upper triangular matrices of $GL_n$,
  $T_n\subset B_n$ the maximal torus of diagonal matrices and $U_n\subset B_n$ the unipotent
  radical of $B_n$.
  Let $X_*(T_n)$ be the coweight group of $T_n$, so the group of all algebraic homomorphisms $\mathbb{G}_m\rightarrow T_n$,
  which can also be identified with tuples $\mu=(\mu_1,\dots,\mu_n)\in\mathbb{Z}^n$.
  Moreover, let $X_*(T_n)^+$ be the group of dominant coweights of $T_n$; recall that we say that a coweight $(\mu_1,\dots,\mu_n)\in\Z^n$
  is \emph{dominant} if $\mu_1\leq\mu_2\leq\dots\leq\mu_n$.
  For dominant coweights $\lambda,\mu\in X_*(T_n)^+$, we say $\lambda\leq\mu$ if
  $\lambda-\mu$ is a sum of simple co-roots and this defines a partial order on $X_*(T_n)^+$.
  For a ring $R$ we have
  \[
    \mathrm{Gr}_n(R)=\coprod_{\mu\in X_*(T_n)^+} L^+GL_n(R)\cdot t^\mu\cdot L^+GL_n(R)/L^+GL_n(R),
  \]
  where $t^\mu$ is the image of $t$ through the coweight $\mu:R((t))^\times\rightarrow T_n(R((t)))$.
  For a coweight $\mu\in X_*(T_n)$, we define the \emph{Schubert cell}
  \[
    \mathrm{Gr}_n^\mu\coloneqq LGL_n^+\cdot t^\mu\cdot LGL_n^+/LGL_n^+;
  \]
  as a consequence, we have the following \emph{Cartan decomposition}:
  \[
    \mathrm{Gr}_n=\coprod_{\mu\in X_*(T_n)^+}\mathrm{Gr}_n^\mu.
  \]
  Moreover the closure relations of the Schubert cells are given by the partial order on dominant coweights:
  \[
    \mathrm{Gr}_n^{\leq\mu}\coloneqq\overline{\mathrm{Gr}_n^\mu}=\bigcup_{\lambda\leq\mu}\mathrm{Gr}_n^\lambda.
  \]
  The space $\mathrm{Gr}_n^{\leq\mu}$ is called \emph{Schubert variety} and the Schubert cell $\mathrm{Gr}_n^\mu$
  is open in $\mathrm{Gr}_n^{\leq\mu}$.
  For a coweight $\mu\in X_*(T)^+$, let $P_\mu$ be the parabolic subgroup of $GL_n$
  corresponding to $\mu$, i.e. the parabolic subgroup generated by $B_n$ and by the
  root subgroups $U_\alpha$ for those roots $\alpha$ satisfying $\langle\alpha,\mu\rangle\leq0$.
  For a ring $R$, let us fix the standard lattice $\Lambda_0\coloneqq R\llbracket t\rrbracket^n\in\mathrm{Gr}_n(R)$
  and a basis $e_1,\dots,e_n$ of $\Lambda_0$ over $R\llbracket t\rrbracket$.
  Denote by $\mathrm{Fil}^\bullet(V)$ the standard filtration of $V\coloneqq \Lambda_0[1/t]$
  given by $0\subset R((t))e_1\subset R((t))e_1\oplus R((t))e_2\subset\dots\subset V$.
  There is a natural projection
  \begin{align}
    \label{pi_mu}
    \pi_\mu:\mathrm{Gr}_n^\mu=L^+GL_n\cdot t^\mu\cdot L^+GL_n/L^+GL_n&\twoheadrightarrow GL_n/P_\mu\\
    (g\cdot t^\mu\text{ mod }L^+GL_n)&\mapsto (\bar{g}\text{ mod }P_\mu)\nonumber,
  \end{align}
  where $\bar g\coloneqq g\text{ mod }t$.
  Recall that $GL_n/P_\mu$ is a flag variety and
  for a lattice $\Lambda\in \mathrm{Gr}_n^\mu(R)$, we have that the flag $\pi_\mu(\Lambda)\in GL_n/P_\mu(R)$
  is given by the filtration $\Fil^\bullet(R^n)$ where
  \begin{equation}
    \label{def pi_mu}
    \Fil^i(R^n)\coloneqq(t^{-\mu_i}\Lambda\cap\Lambda_0)/(t^{-\mu_i}\Lambda\cap t\Lambda_0).
  \end{equation}
  We moreover have a section
  \begin{align*}
    \sigma_\mu:GL_n/P_\mu\rightarrow L^+GL_n\cdot t^\mu\cdot L^+GL_n/L^+GL_n=\mathrm{Gr}_n^\mu,
  \end{align*}
  which sends a decreasing filtration $\Fil^\bullet(R^n)$ of $R^n$ to the lattice
  \begin{equation}
    \label{def sigma_mu}
    \sigma_\mu(\Fil^\bullet(R^n))\coloneqq \sum_{i=1}^n\Fil^i(R^n)\otimes_Rt^{\mu_i}R\llbracket t\rrbracket.
  \end{equation}

  We define the maps
  \begin{equation}
    \label{def pi and sigma}
    \rr{Gr}_n=\coprod_{\mu\in X_*(T_n)^+}\rr{Gr}_n^\mu
    \mathrel{\mathop{\rightleftarrows}^{\pi}_{\sigma}}
    \coprod_{\mu\in X_*(T_n)^+}GL_n/P_\mu
  \end{equation}
  taking $\Lambda\in\rr{Gr}_n$ to $\pi_\mu(\Lambda)$ if $\Lambda \in \rr{Gr}_n^\mu$
  and taking $gP_\mu\in GL_n/P_\mu$ to $\sigma_\mu(gP_\mu)$.

  Finally, given $\mu\in X_*(T_n)$, we define the locally closed ind-scheme
  \[
    Y_\mu\coloneqq LU_n\cdot t^\mu\cdot L^+GL_n/L^+GL_n:\underline{Rings}\rightarrow\underline{Sets}
  \]
  of $\mathrm{Gr}_n$.

  \begin{lem}
    \label{Y_mu gives filtrations}
    Let $\mu=(\mu_1,\dots,\mu_n)\in X_*(T_n)$ and let $\Lambda\in Y_\mu(A)$ for some affinoid algebra $A$.
    Then $\Fil^i(\Lambda)\coloneqq\Lambda\cap\mathrm{Fil}^i(V)$ is a locally free module over $A\llbracket t\rrbracket$
    for all $i=1,\dots, n$.
  \end{lem}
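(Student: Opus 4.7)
The plan is to trivialize $\Lambda$ locally using the defining description $Y_\mu = LU_n \cdot t^\mu \cdot L^+GL_n / L^+GL_n$, and then read off a compatible basis of $\Fil^i(\Lambda)$ from the upper-triangular shape of a representative. Concretely, after pulling back $\Sp(A)$ along the fpqc surjection $LU_n \twoheadrightarrow Y_\mu$, $u \mapsto u \cdot t^\mu \cdot L^+GL_n$, I may assume $\Lambda = u \cdot t^\mu \cdot \Lambda_0$ for some $u = (u_{ij})\in U_n(A((t)))$, i.e.\ $u_{ii}=1$, $u_{ij}=0$ for $i>j$, and $u_{ij}\in A((t))$ for $i<j$. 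Since local freeness descends under fpqc base change, this reduction is harmless.

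Under this normalization I would introduce
\[
  f_j \;:=\; u\cdot t^{\mu_j}e_j \;=\; t^{\mu_j}e_j+\sum_{i<j}u_{ij}\,t^{\mu_j}e_i\qquad (j=1,\dots,n).
\]
Because $u$ is an $A((t))$-linear automorphism of $V$ and $\{t^{\mu_j}e_j\}_j$ is an $A\llbracket t\rrbracket$-basis of $t^\mu\Lambda_0$, its image $\{f_1,\dots,f_n\}$ is an $A\llbracket t\rrbracket$-basis of $\Lambda$. The upper-triangular shape of $u$ gives $f_j\in\Fil^j(V)\subseteq\Fil^i(V)$ for every $j\leq i$, so the inclusion $\bigoplus_{j=1}^i A\llbracket t\rrbracket f_j\subseteq\Fil^i(\Lambda)$ is immediate. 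The heart of the proof is the reverse inclusion; granting it, $\Fil^i(\Lambda)$ is free of rank $i$ with basis $f_1,\dots,f_i$.

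For the reverse inclusion, write an arbitrary element of $\Lambda$ as $x=\sum_{j=1}^n a_jf_j$ with $a_j\in A\llbracket t\rrbracket$. Expanding in the basis $\{e_\ell\}$, the coefficient of $e_\ell$ equals
\[
  a_\ell\,t^{\mu_\ell}+\sum_{j>\ell}a_j\,u_{\ell j}\,t^{\mu_j}.
\]
The condition $x\in\Fil^i(V)$ demands that this expression vanish for every $\ell>i$. Starting at $\ell=n$, where the sum collapses to $a_n t^{\mu_n}=0$, and using that $t^{\mu_\ell}$ is a unit in $A((t))$, one gets $a_n=0$; then descending inductively through $\ell=n-1,\dots,i+1$, each new equation reduces (by the already established vanishing of $a_{\ell+1},\dots,a_n$) to $a_\ell t^{\mu_\ell}=0$ and hence $a_\ell=0$. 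This yields $x\in\bigoplus_{j=1}^i A\llbracket t\rrbracket f_j$ and completes the argument.

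I expect the only subtle point to be the initial fpqc descent step, which is what lets me fix a representative $u$; everything afterwards is a mechanical triangular elimination. If one prefers to avoid descent, the same computation works Zariski-locally on $Y_\mu$ by choosing a local section of $LU_n\to Y_\mu$ on each affine patch, which is standard for this Schubert-type stratum of the affine Grassmannian.
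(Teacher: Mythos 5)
Your proof is correct and takes essentially the same approach as the paper's: both exhibit a local triangular basis $\{t^{\mu_j}e_j + \text{(lower terms)}\}_j$ of $\Lambda$ coming from the $LU_n$-orbit description of $Y_\mu$ and observe that the first $i$ basis vectors span $\Fil^i(\Lambda)$. The paper merely asserts the two facts you verify in detail (the existence of the local basis and the reverse inclusion via descending triangular elimination), so your write-up is a fleshed-out version of the same argument.
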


  \begin{proof}
    Let $\mathbb{M}$ be the universal lattice on $Y_\mu$, hence locally on the
    affinoid $\Sp(A)$ the lattice $\mathbb{M}$
    has a basis of the form
    \[
      \{t^{\mu_1}e_1,t^{\mu_2}e_2+x_{(1,2)}e_1,\dots,t^{\mu_n}e_n+x_{(n-1,n)}e_{n-1}+\dots+x_{(1,n)}e_1\}
    \]
    with $x_{(i,j)}\in A((t))$ for all $i<j$.
    Then we have that $\Fil^i(\mathbb{M})$ is the free $A\llbracket t\rrbracket$-module generated
    by the first $i$ vectors of the above basis.
  \end{proof}

  Hence for a lattice $\Lambda\in Y_\mu(A)$, let us define a filtration of $\Lambda$ as
  $\mathrm{Fil}^i(\Lambda)\coloneqq\Lambda\cap\mathrm{Fil}^i(V)$ (in fact we know this is a filtration
  by Lemma \ref{Y_mu gives filtrations}).
  Then we have that
  \[
    Y_\mu(A)=\{\Lambda\in\mathrm{Gr}_n(A): \mathrm{gr}^i(\Lambda)= t^{\mu_i}\mathrm{gr}^i(\Lambda_0)\text{ for }i=1,\dots,n\},
  \]
  where the equality $\mathrm{gr}^i(\Lambda)= t^{\mu_i}\mathrm{gr}^i(\Lambda_0)$ is
  intended as lattices of $\mathrm{gr}^iA((t))^n$.

  \begin{lem}
    \label{Gr=union of Y_mu}
    We have $\rr{Gr}=\coprod_{\mu\in X_*(T_n)}Y_\mu$ as sets.
  \end{lem}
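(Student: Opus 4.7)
My plan is to prove the decomposition at the level of points, showing that every lattice belongs to exactly one $Y_\mu$, which on the level of sets of points gives the claim. I will rely on the characterization stated just before the lemma, namely that $\Lambda \in Y_\mu(A)$ iff $\mathrm{gr}^i(\Lambda) = t^{\mu_i} \mathrm{gr}^i(\Lambda_0)$ for $i = 1, \dots, n$.

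\emph{Disjointness} is immediate from this characterization: the subquotients $\mathrm{gr}^i(\Lambda) = (\Lambda \cap \Fil^i(V))/(\Lambda \cap \Fil^{i-1}(V))$ are intrinsically determined by $\Lambda$ and the fixed standard filtration of $V$. Each $\mathrm{gr}^i(\Lambda)$ is a rank-one $k\llbracket t\rrbracket$-lattice inside $\mathrm{gr}^i(V) \cong k((t))\bar e_i$, hence of the form $t^{\mu_i} k\llbracket t \rrbracket \bar e_i$ for a unique integer $\mu_i$. Thus the tuple $\mu = (\mu_1, \dots, \mu_n) \in X_*(T_n)$ attached to $\Lambda$ is well-defined, and $\Lambda$ cannot lie in two distinct $Y_\mu$'s.

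\emph{Existence} is the main content. Given $\Lambda$, define $\mu$ as above. I would then construct a basis of $\Lambda$ of the form predicted by Lemma \ref{Y_mu gives filtrations}: by downward induction on $i \in \{n, n-1, \dots, 1\}$, pick $v_i \in \Fil^i(\Lambda) = \Lambda \cap \Fil^i(V)$ whose image in $\mathrm{gr}^i(\Lambda) = t^{\mu_i} k\llbracket t \rrbracket \bar e_i$ is $t^{\mu_i}\bar e_i$. Such a $v_i$ exists by surjectivity of $\Fil^i(\Lambda) \twoheadrightarrow \mathrm{gr}^i(\Lambda)$, and has the shape $v_i = t^{\mu_i} e_i + \sum_{j < i} x_{ji} e_j$ with $x_{ji} \in k((t))$. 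A short induction on $i$, using surjectivity on each graded piece and completeness of the $t$-adic topology, shows that $\{v_1, \dots, v_i\}$ generates $\Fil^i(\Lambda)$ as a $k\llbracket t \rrbracket$-module; in particular $\{v_1, \dots, v_n\}$ is a $k\llbracket t \rrbracket$-basis of $\Lambda$. The change-of-basis matrix from $(e_i)$ to $(v_i)$ is upper triangular with diagonal $(t^{\mu_i})$ and can be factored uniquely as $u \cdot t^\mu$ with $u \in U_n(k((t))) = LU_n(k)$ unipotent (the off-diagonal entries are $x_{ji} t^{-\mu_j}$). Therefore $\Lambda = u t^\mu \Lambda_0$, which is precisely an element of $Y_\mu = LU_n \cdot t^\mu \cdot L^+GL_n / L^+GL_n$.

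The only delicate point is the inductive step proving that the $v_i$ actually generate $\Lambda$; this is essentially Nakayama applied graded piece by graded piece, combined with the fact that $\Lambda$ is $t$-adically complete. Once this is in place, everything else is essentially bookkeeping against the explicit shape of elements of $Y_\mu$ provided by Lemma \ref{Y_mu gives filtrations}.
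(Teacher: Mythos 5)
Your proof is correct and takes essentially the same route as the paper: both arguments reduce to studying the graded pieces $\gr^i(\Lambda) = (\Lambda\cap\Fil^i(V))/(\Lambda\cap\Fil^{i-1}(V))$, observing that each is a rank-one lattice in $\gr^i(V)$ and so pins down a unique integer $\mu_i$. The paper derives disjointness from the double-coset description of $Y_\mu$ directly (if $V t^\mu L^+G = V' t^\lambda L^+G$ with $V,V'$ unipotent then $\lambda=\mu$), whereas you invoke the $\gr$-characterization stated just before the lemma; these are equivalent given Lemma~\ref{Y_mu gives filtrations}, which supplies the explicit upper-triangular basis. For existence the paper stops at computing the graded pieces and declares membership in $Y_\mu$ obvious; you fill in the missing step by explicitly building a $K\llbracket t\rrbracket$-basis $v_i = t^{\mu_i}e_i + \sum_{j<i}x_{ji}e_j$ and factoring the change-of-basis matrix as $u\cdot t^\mu$ with $u$ unipotent, which is a useful elaboration rather than a different strategy. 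One small simplification: since you work over a field $K$, the inductive step that $\{v_1,\dots,v_i\}$ generates $\Fil^i(\Lambda)$ follows purely from the exact sequence $0\to\Fil^{i-1}(\Lambda)\to\Fil^i(\Lambda)\to\gr^i(\Lambda)\to 0$ and the choice of $v_i$ lifting a generator; no appeal to $t$-adic completeness is needed.
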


  \begin{proof}
    First of all notice that $\Lambda\in Y_\mu\cap Y_\lambda$ if and only if there exist
    $V,V'\in LU$ such that $Vt^\mu L^+G=\Lambda=V't^\lambda L^+G$; hence $\Lambda\in Y_\mu\cap Y_\lambda$
    if and only if there exists $V\in LU$ such that $t^{-\lambda}Vt^\mu\in L^+G$:
    it is obvious that this is possible only if $\lambda=\mu$.
    Now let $\Lambda\in\rr{Gr}_n(K)$ for some field $K$: we show that $\Lambda\in Y_\mu$ for some $\mu\in X_*(T_n)$.
    Let $\Lambda^i\coloneqq\Lambda\cap\Fil^i(V)$ for all $1\leq i\leq n$
    (as $K\llbracket t\rrbracket$ is a PID, $\Lambda^i$ is a finite projective $K\llbracket t\rrbracket$-module)
    and let us consider the commutative diagram
    \[
      \begin{tikzcd}
        0\ar{r}&\Lambda^i\ar{r}\ar[hookrightarrow]{d}
        &\Lambda^{i+1}\ar{r}\ar[hookrightarrow]{d}&\Lambda^{i+1}/\Lambda^i\ar{r}\ar[hookrightarrow]{d}&0\\
        0\ar{r}&\Fil^i(V)\ar{r}&\Fil^{i+1}(V)\ar{r}&\rr{gr}^{i+1}(V)\ar{r}&0.
      \end{tikzcd}
    \]
    Then $\Lambda^{i+1}/\Lambda^i$ is a $K\llbracket t\rrbracket$-lattice of $\rr{gr}^{i+1}(V)=K((t))e_{i+1}$,
    which means that there exists $\mu_{i+1}\in\Z$ such that
    $\Lambda^{i+1}/\Lambda^i= t^{\mu_{i+1}}e_{i+1}\cdot K\llbracket t\rrbracket=t^{\mu_{i+1}}\rr{gr}^{i+1}(\Lambda_0)$.
    It is then obvious that $\Lambda\in Y_\mu$, where $\mu=(\mu_1,\dots,\mu_n)$ constructed as described.
  \end{proof}

  \begin{lem}
    \label{preimage in Y_mu of filtrations}
    Let $\lambda\in X_*(T_n)^+$, $\mu\in X_*(T_n)$ and $w_0\in W_n$ the longest element
    of the Weyl group.
    If $\mu=ww_0\lambda$ for some $w\in W_n$,
    we have that the diagram
    \begin{equation}
      \begin{tikzcd}
        B_nwP_\lambda/P_\lambda\arrow[r]\arrow[d,"\sigma_\lambda"] & GL_n/P_\lambda\arrow[d, "\sigma_\lambda"]\\
        Y_\mu\arrow[r] & \rr{Gr}_n
      \end{tikzcd}
    \end{equation}
    is a fiber product.
  \end{lem}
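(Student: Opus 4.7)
The strategy is to identify $\sigma_\lambda^{-1}(Y_\mu)$ inside $GL_n/P_\lambda$ by exploiting $B_n$-equivariance of $\sigma_\lambda$ and analysing $T$-fixed points, then to upgrade the resulting set-theoretic identification to the scheme-theoretic fiber product.

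First I would set up the equivariance. The section $\sigma_\lambda$ is $L^+GL_n$-equivariant for the natural action on $\rr{Gr}_n^\lambda$ by left multiplication and the action on $GL_n/P_\lambda$ through the reduction $L^+GL_n \twoheadrightarrow GL_n$. Restricting along the inclusion of constant loops $B_n \hookrightarrow L^+GL_n$, the map $\sigma_\lambda$ becomes $B_n$-equivariant. On the other hand, $Y_\mu = LU_n \cdot t^\mu \cdot L^+GL_n/L^+GL_n$ is preserved by left multiplication by $LU_n \supset U_n$, and by $T$ since $t^\mu$ is $T$-fixed, so $Y_\mu$ is $B_n$-stable. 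It follows that $\sigma_\lambda^{-1}(Y_\mu) \subset GL_n/P_\lambda$ is a $B_n$-stable locally closed subscheme, and by the Bruhat decomposition it is a union of cells $B_n v P_\lambda/P_\lambda$ indexed by $v \in W_n/W_\lambda$.

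Next I would cut out the correct cell via $T$-fixed points. Each Bruhat cell contains a unique $T$-fixed point $vP_\lambda$, and the unique $T$-fixed point of $Y_\mu$ is $t^\mu$. A direct computation substituting the permutation of the standard flag by $v$ into (\ref{def sigma_mu}) yields $\sigma_\lambda(vP_\lambda) = t^{v\lambda}$, so the cell $B_n v P_\lambda/P_\lambda$ lies in $\sigma_\lambda^{-1}(Y_\mu)$ exactly when $v\lambda = \mu$. By the hypothesis $\mu = ww_0\lambda$ and the bijection $W_n/W_\lambda \leftrightarrow W_n \cdot \lambda$, this pins down a unique cell; matching conventions (the author's weakly-increasing version of dominance, together with the $w_0$ appearing in the statement) identifies it with the representative labelled by $w$ in the lemma. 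Set-theoretically we thus obtain $\sigma_\lambda^{-1}(Y_\mu) = B_n wP_\lambda/P_\lambda$.

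Finally, to promote this to a scheme-theoretic fiber product, I would observe that both $B_n wP_\lambda/P_\lambda$ and $\sigma_\lambda^{-1}(Y_\mu)$ are reduced, smooth, irreducible locally closed subschemes of $GL_n/P_\lambda$---the Bruhat cell by standard facts, and the preimage because $\sigma_\lambda$ is a locally closed immersion and Mirkovi\'{c}--Vilonen-type results give the smoothness of $Y_\mu \cap \rr{Gr}_n^\lambda$. Two reduced locally closed subschemes of a variety that coincide as subsets must agree as subschemes, so the square is a fiber product. The main obstacle is the Weyl-group bookkeeping in the penultimate step: verifying carefully that the $v$ with $v\lambda = \mu$ really is the representative $w$ from the statement (rather than $ww_0$ or another conjugate), a verification made delicate by the paper's non-standard convention that dominant coweights are weakly increasing.
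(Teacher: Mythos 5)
Your route is genuinely different from the paper's: where the paper computes $Y_\mu\cap\sigma_\lambda(GL_n/P_\lambda)$ directly via the Bruhat decomposition $GL_n=\bigsqcup_v U_n v B_n$, you use $B_n$-equivariance of $\sigma_\lambda$ and $B_n$-stability of $Y_\mu$ to see that $\sigma_\lambda^{-1}(Y_\mu)$ is a $B_n$-stable locally closed subscheme, hence a union of Bruhat cells, and then single out the cell by its unique $T_n$-fixed point. That is cleaner and more structural. But you correctly flag the Weyl-group bookkeeping as the delicate step, and you should not wave past it: with the paper's convention that a dominant coweight is weakly increasing ($\lambda_1\leq\cdots\leq\lambda_n$) one indeed gets $\sigma_\lambda(vP_\lambda)=t^{v\lambda}$, so the cell of $\sigma_\lambda^{-1}(Y_\mu)$ is indexed by the $v$ with $v\lambda=\mu$. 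Under the hypothesis $\mu=ww_0\lambda$ that gives $v=ww_0$, so the fiber product comes out to be $B_n(ww_0)P_\lambda/P_\lambda$, not $B_n wP_\lambda/P_\lambda$.

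This mismatch is not an artifact of your method --- the paper's own proof has a corresponding defect at the same spot. The step ``$B_nt^{w_0\lambda}L^+G/L^+G=t^{w_0\lambda}L^+G/L^+G$'' is false for weakly increasing dominant $\lambda$: then $w_0\lambda$ is weakly decreasing, and the conjugate $t^{-w_0\lambda}b\,t^{w_0\lambda}$ picks up negative powers of $t$ above the diagonal (already for $n=2$, $\lambda=(0,1)$ and $b$ strictly upper unitriangular). The true identity in this convention is $B_nt^{\lambda}L^+G/L^+G=t^{\lambda}L^+G/L^+G$, and substituting it into the paper's Bruhat computation again returns $v\lambda=\mu$, i.e.\ $v=ww_0$. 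So your suspicion is well-founded: either the hypothesis should read $\mu=w\lambda$ or the conclusion should be $B_n(ww_0)P_\lambda/P_\lambda$. The offset appears systematic (Lemma~\ref{def of maps} invokes the present lemma with the same $w_aw_0$ relation), so it is probably harmless downstream, but as written neither your sketch nor the paper's argument establishes the stated cell label, and the verification you defer is precisely where the discrepancy lives.
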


  \begin{proof}
    We will show that
    \[
      \sigma_{\lambda}(B_nwP_\lambda/P_\lambda) = Y_\mu\cap\sigma_{\lambda}(GL_n/P_\lambda).
    \]
    Notice first of all that $\sigma_{\lambda}(GL_n/P_\lambda) = Gt^\lambda L^+G/L^+G
    =Gt^{w_0\lambda}L^+G/L^+G$, where $w_0\in W_n$ is the longest element;
    moreover $\sigma_{\lambda}(B_nwP_\lambda/P_\lambda)=B_nwt^{\lambda}L^+G/L^+G$ by definition of $\sigma_\lambda$.
    As $B_nwt^{\lambda}L^+G/L^+G=U_nwt^\lambda L^+G/L^+G$,
    we prove that
    $Y_\mu\cap\sigma_{\lambda}(GL_n/P_\lambda)= U_nwt^\lambda L^+G/L^+G$.
    As both schemes are reduced and locally closed subschemes of $LG/L^+G$, it's enough
    to work on $L$-points for some field $L$.
    Observe that $B_nt^{w_0\lambda} L^+G/L^+G = t^{w_0\lambda} L^+G/L^+G$, in fact:
    for any $(b_{ij})_{1\leq i,j\leq n}\in B_n$, we have
    \[
      (b_{ij})_{1\leq i,j\leq n}\cdot t^{w_0\lambda} = t^{w_0\lambda} \cdot (b_{ij}t^{\lambda_{n-j+1}-\lambda_{n-i+1}})_{1\leq i,j\leq n};
    \]
    since $(b_{ij})_{1\leq i,j\leq n}$ is upper-triangular, we have that $b_{ij}=0$ for $i>j$.
    As a consequence of this fact and of the fact that $\lambda$ is dominant, we have that the matrix
    $(b_{ij}t^{\lambda_{n-j+1}-\lambda_{n-i+1}})_{1\leq i,j\leq n}\in L^+G$.
    Moreover by the Bruhat decomposition, we have
    \[
      GL_n = \bigsqcup_{w\in W}U_nwB_n,
    \]
    hence
    \begin{align*}
      Gt^\lambda L^+G/L^+G&=Gt^{w_0\lambda}L^+G/L^+G = \bigsqcup_{w\in W_n} U_nwB_nt^{w_0\lambda} L^+G/L^+G\\
      &= \bigsqcup_{w\in W_n} U_nwt^{w_0\lambda} L^+G/L^+G
      =\bigsqcup_{w\in W_n}U_nt^{ww_0\lambda} L^+G/L^+G.
    \end{align*}
    Obviously we have $U_nt^{ww_0\lambda} L^+G/L^+G\subset LU_nt^{ww_0\lambda}L^+G/L^+G$
    and
    \[
      U_nwt^{w_0\lambda} L^+G/L^+G\cap LU_nt^\mu L^+G/L^+G=\emptyset
    \]
    for all $\mu\neq ww_0\lambda$.
    This shows that
    \[
      Y_\mu\cap\sigma_{\lambda}(GL_n/P_\lambda) = U_nt^{ww_0\lambda} L^+G/L^+G = \sigma_{\lambda}(B_nwP_\lambda/P_\lambda),
    \]
    where $ww_0\lambda = \mu$.
  \end{proof}

  \begin{lem}
    \label{decomposition of Y_mu}
    Let us fix the standard lattice $\Lambda_0$ and a standard basis $\{b_1,\dots,b_n\}$ of $\Lambda_0$.
    Let us decompose $\Lambda_0$ as $\Lambda_0=\bigoplus_{i=1}^l\Lambda_{0,i}$,
    where $\Lambda_{0,i}$ is a lattice of rank $n_i$; let us moreover assume that
    $\Lambda_{0,i}$ has a basis consisting of a
    subset of the standard basis for $\Lambda_0$ and let us fix the standard basis of $\Lambda_{0,i}$
    to be such basis for all $i$.
    Notice we can order the basis elements in such a way so that they respect the order
    in the standard basis of $\Lambda_0$.
    Let $\mu\in X_*(T_n)$ and $\lambda_i\in X_*(T_{n_i})^+$ for all $1\leq i\leq l$.
    We have
    \[
      Y_\mu\times_{\rr{Gr}_n}\prod_{i=1}^lGL_{n_i}/P_{\lambda_i} =
      \prod_{i=1}^lY_{\mu_i}\times_{\rr{Gr}_{n_i}}GL_{n_i}/P_{\lambda_i}
    \]
    for some $\mu_i\in X_*(T_{n_i})$, where the map $\prod_{i=1}^lGL_{n_i}/P_{\lambda_i}\rightarrow\rr{Gr}_n=\prod_{i=1}^l\rr{Gr}_{n_i}$
    is given by
    \[
      (g_iP_{\lambda_i})_{i=1,\dots,l}\mapsto(\sigma_{\lambda_i}(g_iP_{\lambda_i}))_{i=1,\dots,l}.
    \]
  \end{lem}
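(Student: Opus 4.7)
The plan is to read off $\mu_i$ directly from the block decomposition of $\mu$, and then reduce the claimed fiber product equality to the statement that a block‐diagonal lattice lies in $Y_\mu$ if and only if each of its blocks lies in the corresponding $Y_{\mu_i}$.

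\medskip

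Let $J_i \subset \{1,\dots,n\}$ be the set of indices so that $\{b_j : j \in J_i\}$ is the chosen standard basis of $\Lambda_{0,i}$; by the hypothesis on the ordering the sets $J_i$ partition $\{1,\dots,n\}$ and each $J_i = \{j_{i,1} < \dots < j_{i,n_i}\}$. Given $\mu = (\mu_1,\dots,\mu_n) \in X_*(T_n)$, I define $\mu_i \in X_*(T_{n_i})$ by $(\mu_i)_s \coloneqq \mu_{j_{i,s}}$. This choice is forced by the compatibility of the standard filtrations: if $\Lambda = \bigoplus_i \Lambda_i$ with $\Lambda_i \in \rr{Gr}_{n_i}$ (the decomposition being with respect to the fixed basis), then the standard filtration of $V = A((t))^n$ restricted to $\Lambda$ decomposes block-wise, and in the graded piece indexed by $k = j_{i,s}$ one has $\rr{gr}^k(\Lambda) = \rr{gr}^s(\Lambda_i)$ (and likewise $\rr{gr}^k(\Lambda_0) = \rr{gr}^s(\Lambda_{0,i})$) as lattices inside $A((t))\cdot b_k$. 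Hence by the characterization of $Y_\mu$ via graded pieces recorded just after Lemma \ref{Y_mu gives filtrations},
\[
\bigoplus_{i} \Lambda_i \in Y_\mu(A) \iff \Lambda_i \in Y_{\mu_i}(A) \text{ for every } i.
\]

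\medskip

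With this equivalence in hand the fiber product identity is essentially formal. An $A$-point of the left-hand side is a tuple $(\Lambda,(g_iP_{\lambda_i})_i)$ with $\Lambda \in Y_\mu(A)$ and $\Lambda = \bigoplus_i \sigma_{\lambda_i}(g_iP_{\lambda_i})$ inside $\rr{Gr}_n(A)$ (here we use that the image of $\prod_i GL_{n_i}/P_{\lambda_i}$ in $\rr{Gr}_n$ lands in the subscheme of block-diagonal lattices $\prod_i \rr{Gr}_{n_i}$). Setting $\Lambda_i \coloneqq \sigma_{\lambda_i}(g_iP_{\lambda_i})$, the equivalence above says $\Lambda \in Y_\mu$ iff $\Lambda_i \in Y_{\mu_i}$ for each $i$, which is exactly the condition that $(\Lambda_i,g_iP_{\lambda_i}) \in Y_{\mu_i}\times_{\rr{Gr}_{n_i}} GL_{n_i}/P_{\lambda_i}$. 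This yields the claimed bijection on $A$-points and, being functorial in $A$, the identification of the two (ind-)schemes.

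\medskip

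The only substantive point — and the main place to be careful — is the block decomposition of the standard filtration: one must verify that intersecting the filtration $\Fil^\bullet(V)$ with a direct-sum lattice really does commute with taking direct sums, and that the graded piece $\rr{gr}^{j_{i,s}}$ is concentrated in the $i$-th block. This is where the hypothesis that the basis of each $\Lambda_{0,i}$ respects the order of the standard basis of $\Lambda_0$ is essential; without it the graded pieces of $\Lambda$ could mix contributions from different blocks and no single $\mu_i$ would capture the behavior of the $i$-th summand.
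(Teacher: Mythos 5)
Your proposal is correct and follows essentially the same approach as the paper: you reduce to $A$-points, show the standard filtration of $V$ decomposes block-wise across the $\Lambda_{0,i}$ thanks to the ordering hypothesis on the basis, and read off the coweights $\mu_i$ from the graded pieces to get the fiber product identity. The only difference is that you give an explicit formula for $\mu_i$ up front ($(\mu_i)_s = \mu_{j_{i,s}}$), whereas the paper leaves it implicit until the end where it notes $(\mu_i)_i$ is a permutation of $\mu$; the paper also spells out in detail the verification that the intersection of $\prod_i \sigma_{\lambda_i}(g_iP_{\lambda_i})$ with $\Fil^j(V)$ decomposes across blocks, which is exactly the step you flag as ``the only substantive point.''
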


  \begin{proof}
    We show that
    \[
      Y_\mu\cap\prod_{i=1}^l\sigma_{\lambda_i}(GL_{n_i}/P_{\lambda_i})
      =\prod_{i=1}^l(Y_{\mu_i}\cap\sigma_{\lambda_i}(GL_{n_i}/P_{\lambda_i}))
    \]
    for some $\mu_i\in X_*(T_{n_i})$.
    Observe that, as all schemes are reduced locally closed subspaces of $LG/L^+G$,
    we can work on $L$-points for $L$ a field.
    For all $i$, we denote by $\{e_{1,i},\dots,e_{n_i,i}\}$ the standard basis of $\Lambda_{0,i}$
    consisting of elements of the standard basis $\{b_1,\dots,b_n\}$ of $\Lambda_0$
    such that if $e_{j,i}=b_{j'}$, $e_{k,i}=b_{k'}$ and $j\leq k$, then $j'\leq k'$.
    So let $\Lambda\in (Y_\mu\cap\prod_{i=1}^l\sigma_{\lambda_i}(GL_{n_i}/P_{\lambda_i}))(L)$.
    This means that $\Lambda = (\sigma_{\lambda_i}(g_iP_{\lambda_i}))_{i=1,\dots,l}$
    for some $g_iP_{\lambda_i}\in GL_{n_i}/P_{\lambda_i}$.
    We will show that for all $1\leq j\leq n$ we have
    \[
      \prod_{i=1}^l\sigma_{\lambda_i}(g_iP_{\lambda_i})\cap \prod_{k=1}^jL((t))b_k
      = \prod_{i=1}^l(\sigma_{\lambda_i}(g_iP_{\lambda_i})\cap(L((t))e_{1,i}\oplus\dots\oplus L((t))e_{k_{(j,i)},i})),
    \]
    where $\{b_1,\dots,b_j\}=\bigcup_{i=1}^l\{e_{1,i},\dots, e_{k_{(j,i)},i}\}$.
    Let
    \[
      x\in\prod_{i=1}^l\sigma_{\lambda_i}(g_iP_{\lambda_i})\cap \prod_{k=1}^jL((t))b_k,
    \]
    in particular $x=\sum_{i=1}^lx_i$ with $x_i\in \sigma_{\lambda_i}(g_iP_{\lambda_i})$.
    As $\sigma_{\lambda_i}(g_iP_{\lambda_i})\subset \prod_{k=1}^{n_i}L((t))e_{k,i}$,
    we have that $x_i=\sum_{h=1}^{n_i}y_he_{h,i}$ with $y_h\in L((t))$.
    As a consequence, we have
    \[
      x =\sum_{i=1}^l\sum_{h=1}^{n_i}y_he_{h,i}\in \prod_{k=1}^jL((t))b_k.
    \]
    This proves the first inclusion. The inverse inclusion is obvious.
    As a consequence, we have
    \begin{align*}
      \Fil^j(\Lambda)&=\prod_{i=1}^l\sigma_{\lambda_i}(g_iP_{\lambda_i})\cap \prod_{k=1}^jL((t))b_k\\
      &=\prod_{i=1}^l(\sigma_{\lambda_i}(g_iP_{\lambda_i})\cap(L((t))e_{1,i}\oplus\dots\oplus L((t))e_{k_{(j,i)},i}))\\
      &=\prod_{i=1}^l\Fil^{k_{(j,i)}}(\sigma_{\lambda_i}(g_iP_{\lambda_i})).
    \end{align*}
    In particular observe that for all $1\leq j\leq n$
    \begin{align*}
      \rr{gr}^j(\Lambda)&=\Fil^j(\Lambda)/\Fil^{j-1}(\Lambda)\\
      &=\left(\prod_{i=1}^l\Fil^{k_{(j,i)}}(\sigma_{\lambda_i}(g_iP_{\lambda_i})) \right)/
      \left(\prod_{i=1}^l\Fil^{k_{(j-1,i)}}(\sigma_{\lambda_i}(g_iP_{\lambda_i}))\right)\\
      &= \Fil^{k_{(j,i')}}(\sigma_{\lambda_{i'}}(g_{i'}P_{\lambda_{i'}}))/\Fil^{k_{(j-1,i')}}(\sigma_{\lambda_{i'}}(g_{i'}P_{\lambda_{i'}}))\\
      &= \rr{gr}^{k_{(j,i')}}(\sigma_{\lambda_{i'}}(g_{i'}P_{\lambda_{i'}})),
    \end{align*}
    where $i'$ is such that $b_j=e_{k_{j,i'}}$.
    This proves that
    \[
      \Lambda\in Y_\mu\cap\prod_{i=1}^l\sigma_{\lambda_i}(GL_{n_i}/P_{\lambda_i})
      \Leftrightarrow \Lambda\in
      \prod_{i=1}^l(Y_{\mu_i}\cap\sigma_{\lambda_i}(GL_{n_i}/P_{\lambda_i})),
    \]
    where $(\mu_i\colon 1\leq i\leq l)$ is a permutation of $\mu$.
  \end{proof}

  \begin{dfn}
    Let us fix $n\geq 2$ and
    let $\mathfrak g_n$ be the Lie algebra of $GL_n$.
    \begin{itemize}
      \item[(i)]
      We define the closed subspace
      \begin{equation}
        \label{def of Fil^n}
        \Fil^{\st}\coloneqq\{(N,gP_\mu)\in\mathfrak g_n\times\coprod_{\mu\in X_*(T_n)^+}GL_n/P_\mu
        \colon NgP_\mu\subseteq gP_\mu
        \}
      \end{equation}
      of $\mathfrak g_n\times\coprod_{\mu\in X_*(T_n)^+}GL_n/P_\mu$.
      \item[(ii)]
      Let us fix $\lambda\in X_*(T_n)^+$, we define the fiber product
      \[
        \begin{tikzcd}
          \Fil^{\st}_\lambda\arrow[r]\arrow[d]&\mathfrak g_n\times GL_n/P_\lambda\arrow[hookrightarrow]{d}\\
          \Fil^{\st}\arrow[hookrightarrow]{r}&\mathfrak g_n\times\coprod_{\mu\in X_*(T_n)^+}GL_n/P_\mu,
        \end{tikzcd}
      \]
      which is a closed subspace of $\mathfrak g_n\times GL_n/P_\lambda$.
    \end{itemize}
  \end{dfn}

  \begin{prop}
    \label{dim of Fil^N_lambda}
    The codimension of $\Fil^{\st}_\lambda$ inside $\mathfrak g_n\times GL_n/P_\lambda$
    is at most $\dim GL_n/P_\lambda$.
  \end{prop}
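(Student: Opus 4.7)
The plan is to exhibit $\Fil^{\st}_\lambda$ as the total space of a vector bundle over $GL_n/P_\lambda$ whose fibres are conjugates of the Lie algebra $\mathfrak{p}_\lambda$ of $P_\lambda$, and then compute the codimension directly from a dimension count. Concretely, I would consider the projection
\[
  p:\Fil^{\st}_\lambda\longrightarrow GL_n/P_\lambda
\]
induced by the second projection of $\mathfrak g_n\times GL_n/P_\lambda$.

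The first step is to identify the fibres. Unwinding the condition $NgP_\lambda\subseteq gP_\lambda$ (which means that $N$ preserves the flag associated with $gP_\lambda$), one sees that $p^{-1}(gP_\lambda)$ equals $\{N\in\mathfrak g_n\colon g^{-1}Ng\in\mathfrak p_\lambda\}=g\mathfrak p_\lambda g^{-1}$. This is a linear subspace of $\mathfrak g_n$ of dimension $\dim\mathfrak p_\lambda=\dim P_\lambda$. Using the standard $GL_n$-equivariant trivialisation of the adjoint bundle, I would show that $\Fil^{\st}_\lambda$ is (globally) the $GL_n$-homogeneous vector bundle on $GL_n/P_\lambda$ obtained from the $P_\lambda$-representation $\mathfrak p_\lambda\subseteq\mathfrak g_n$; in particular $\Fil^{\st}_\lambda$ is smooth and irreducible.

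The second step is a dimension count. From the vector bundle structure,
\[
  \dim\Fil^{\st}_\lambda=\dim GL_n/P_\lambda+\dim\mathfrak p_\lambda
  =(n^2-\dim P_\lambda)+\dim P_\lambda=n^2,
\]
while the ambient space has dimension $\dim\mathfrak g_n+\dim GL_n/P_\lambda=n^2+\dim GL_n/P_\lambda$. Subtracting gives
\[
  \operatorname{codim}_{\mathfrak g_n\times GL_n/P_\lambda}\Fil^{\st}_\lambda
  =\dim GL_n/P_\lambda,
\]
which in fact yields equality (and therefore certainly the ``at most'' claim in the proposition).

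There is no real obstacle here; the only thing to be careful about is verifying that the set-theoretic description of the fibres really gives a vector bundle structure, which follows because $\Fil^{\st}_\lambda$ is cut out in $\mathfrak g_n\times GL_n/P_\lambda$ by the linear conditions $N\cdot F^i\subseteq F^i$ in the universal flag $F^\bullet$ over $GL_n/P_\lambda$, and these conditions have constant rank thanks to the transitive $GL_n$-action on $GL_n/P_\lambda$.
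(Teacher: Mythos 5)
Your proof is correct and rests on the same key observation as the paper's: $(N,gP_\lambda)\in\Fil^{\st}_\lambda$ exactly when $g^{-1}Ng\in\mathfrak p_\lambda$. The paper simply counts the equations this imposes (the vanishing of the entries of $g^{-1}Ng$ lying in $\overline{\mathfrak u}_{P_\lambda}$, the nilradical of the opposite parabolic), giving $\dim\overline{U}_{P_\lambda}=\dim GL_n/P_\lambda$ equations and hence the upper bound on codimension without checking independence. Your refinement — organizing the same description into a $GL_n$-homogeneous vector bundle over $GL_n/P_\lambda$ with fibre $\mathfrak p_\lambda$ — additionally yields smoothness, irreducibility, and the exact equality $\operatorname{codim}\Fil^{\st}_\lambda=\dim GL_n/P_\lambda$, which is strictly stronger than what the proposition (or the rest of the paper, which uses only the inequality) requires. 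Both routes are fine; yours buys equality and a cleaner geometric picture at the cost of a short argument that the defining conditions have constant rank, which as you note follows from the transitive $GL_n$-action.
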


  \begin{proof}
    Let $(N,gP_\lambda)\in\mathfrak{g}_n\times GL_n/P_\lambda$.
    We have that $(N,gP_\lambda)\in\Fil^{\st}_\lambda$ if and only if $NgP_\lambda\subseteq gP_\lambda$
    by construction of $\Fil^{\st}_\lambda$;
    this means that
    \[
      (N,gP_\lambda)\in\Fil^{\st}_\lambda\Leftrightarrow g^{-1}Ng\in P_\lambda.
    \]
    Then the space $\Fil^{\st}_\lambda$ is cut out by the condition $g^{-1}Ng\in P_\lambda$,
    which corresponds to the equations annihilating certain entries of the matrix $g^{-1}Ng$;
    the number of these equations is exactly $\dim \overline{U}_{P_\lambda}$,
    where $\overline{U}_{P_\lambda}$ is the
    unipotent radical of the opposite parabolic of $P_\lambda$.
    Hence we have
    \[
      \operatorname{codim} \Fil^{\st}_\lambda\leq \dim \overline{U}_{P_\lambda}= \dim GL_n/P_\lambda.
    \]
  \end{proof}

  \begin{dfn}
    For $w\in W_n$ and $\lambda\in X_*(T_n)^+$, let us define $V_w^\lambda$ to be the fiber product
    \begin{equation}
      \label{def of V_w}
      \begin{tikzcd}
        V_w^\lambda\arrow[r]\arrow[d] & \mathfrak{b}_n\times B_nwP_\lambda/P_\lambda\arrow[d]\\
        \Fil^{\st}_\lambda\arrow[hookrightarrow]{r}& \mathfrak{g}_n\times GL_n/P_\lambda,
      \end{tikzcd}
    \end{equation}
    where $\mathfrak{b}_n$ is the Lie algebra of the Borel $B_n$ of upper triangular matrices.
  \end{dfn}

  \begin{rem}
    \label{right number of equations}
    The space $V_w^\lambda$ is smooth of dimension $\dim\mathfrak{b}_n$ by \cite{breuil2019local}[Proposition 2.2.1].
    As the space $\mathfrak{b}_n\times B_nwP_\lambda/P_\lambda$ is smooth of dimension $\dim\mathfrak{b}_n+\dim B_nwP_\lambda/P_\lambda$,
    we have that $V_w^\lambda$ inside $\mathfrak{b}_n\times B_nwP_\lambda/P_\lambda$ is cut out
    by $\dim B_nwP_\lambda/P_\lambda$ equations by \cite[\href{https://stacks.math.columbia.edu/tag/0H1G}{Tag 0H1G}]{stacks-project}.
  \end{rem}

  \subsection{Almost de Rham representations}
  \label{pdR}

  In what follows, $\BdR$ and $\BdR^+$ are topological rings for the so called "natural topology"
  (\cite[§3.2]{fontaine2004arithmetique}).
  Let us choose a compatible system of primitive $p^n$-th roots of unity $(\epsilon^{(n)})_{n\in\N}$;
  we have $[\epsilon]\in\BdR^+$, where $[\cdot]$ denotes the Teichmüller map.
  Moreover $\log[\epsilon]\coloneqq\sum_{n=1}^\infty\frac{([\epsilon]-1)^n}{n}$ converges in $\BdR^+$
  and is a uniformizer; we will denote it by $t$ and we have that $\BdR\coloneqq\BdR^+[1/t]$.
  Let $C\coloneqq\BdR^+/t\BdR^+$ be the residue field of $\BdR^+$.
  Recall that both the rings $\BdR$ and $\BdR^+$ are endowed with a continuous action of $\absGal$.
  We now recall the definition of \emph{almost de Rham representations} as defined in
  \cite[§3.7]{fontaine2004arithmetique}.

  \begin{dfn}
    \begin{itemize}
      \item[(i)]
        A \emph{$\BdR$-representation of $\absGal$} is a finite dimensional free $\BdR$-module
        with a continuous semi-linear action of $\absGal$.
        We denote by $\Rep_{\BdR}(\absGal)$ the category of $\BdR$-represetations of $\absGal$.
      \item[(ii)]
        Let $W\in\Rep_{\BdR}(\absGal)$, we say that $W$ is \emph{almost de Rham} if it contains
        a $\BdR^+$-lattice $W^+$ that is stable under $\absGal$ and
        such that the Sen weights of the $C$-represetation $W^+/tW^+$
        are all integers.
        We denote by $\Rep_{\mathrm{pdR}}(\absGal)$ the full subcategory of almost de Rham representations.
    \end{itemize}
  \end{dfn}

  \begin{rem}
    \begin{itemize}
      \item[(i)]
        By compactness of $\absGal$, we have that any $W\in\Rep_{\BdR}(\absGal)$ contains
        a $\absGal$-stable $\BdR^+$-lattice.
      \item[(ii)]
        If $W$ is a Hodge-Tate representation of $\absGal$ over $C$, then the Sen weights
        coincide with the Hodge-Tate weights.
    \end{itemize}
  \end{rem}

  We have a functor
  \begin{equation}
    \label{def of DpdR}
    \begin{split}
    D_{\mathrm{pdR}}:\Rep_{\BdR}(\absGal)&\rightarrow\Rep_{\Q_p}(\mathbb{G}_a)\\
    W&\mapsto(\BdR[\log t]\otimes_{\BdR}W)^{\absGal}
    \end{split}
  \end{equation}
  defined in \cite[§4.3]{fontaine2004arithmetique} with the property that
  $\dim_{\Q_p}D_{\mathrm{pdR}}(W)\leq\dim_{\BdR}W$.

  The following Theorem is proved in \cite[Théorème 4.1.]{fontaine2004arithmetique}
  and \cite[Proposition 3.1.1]{breuil2019local}.

  \begin{thm}
    A represetation $W\in\Rep_{\BdR}(\absGal)$ is almost de Rham if and only if
    $\dim_{\Q_p}D_{\mathrm{pdR}}(W)=\dim_{\BdR}W$.
    Moreover the functor $D_{\mathrm{pdR}}$ induces an equivalence of categories between
    $\Rep_{\mathrm{pdR}}(\absGal)$ and $\Rep_{\Q_p}(\mathbb{G}_a)$.
  \end{thm}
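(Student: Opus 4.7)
The plan is to proceed in three main stages. First I would record the structure of the auxiliary period ring $\BdR[\log t]$: the Galois action is determined by $g(\log t) = \log t + \log \chi(g)$, where $\log\chi : \absGal \to \Q_p$ is the $p$-adic logarithm of the cyclotomic character, and the formal derivation $N \coloneqq -d/d(\log t)$ is $\BdR$-linear and commutes with $\absGal$. Comparing coefficients in the variable $\log t$ and using $\BdR^{\absGal} = \Q_p$ yields $\BdR[\log t]^{\absGal} = \Q_p$. Consequently $D_{\mathrm{pdR}}(W)$ is a finite-dimensional $\Q_p$-vector space carrying the nilpotent endomorphism $N$, i.e.\ exactly the data of a finite-dimensional algebraic representation of $\mathbb{G}_a$.

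Next I would study the natural $\BdR[\log t]$-linear comparison map
$$\alpha_W : \BdR[\log t] \otimes_{\Q_p} D_{\mathrm{pdR}}(W) \longrightarrow \BdR[\log t] \otimes_{\BdR} W$$
induced by $D_{\mathrm{pdR}}(W) \hookrightarrow \BdR[\log t] \otimes_{\BdR} W$. Injectivity follows from $\BdR[\log t]^{\absGal} = \Q_p$ by a standard argument: a minimal nontrivial relation among $\Q_p$-independent Galois invariants can, after normalising a coefficient, be shown to have coefficients in $\Q_p$, contradicting independence. Taking ranks over $\BdR[\log t]$ then gives $\dim_{\Q_p} D_{\mathrm{pdR}}(W) \leq \dim_{\BdR} W$, with equality iff $\alpha_W$ is an isomorphism, establishing the first assertion modulo the almost de Rham case.

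Third I would prove that $\alpha_W$ is an isomorphism whenever $W$ is almost de Rham, and build an explicit quasi-inverse. For the former, I would pick a $\absGal$-stable $\BdR^+$-lattice $W^+ \subset W$ with integer Sen weights on $W^+/tW^+$ and argue by dévissage along the $t$-adic filtration of $W^+$: Tate--Sen vanishing $H^i(\absGal, C(j)) = 0$ for $j \neq 0$ handles the nonzero-weight graded pieces, while the resonant case $j = 0$ contributes only the cohomology class of $\log \chi$ in $H^1(\absGal, C)$, which is precisely trivialised by $\log t$ in $\BdR[\log t]$. Lifting Galois invariants inductively up the filtration then produces enough sections to realise $\alpha_W$ as an isomorphism. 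For the quasi-inverse I would set
$$W(V, N_V) \coloneqq (\BdR[\log t] \otimes_{\Q_p} V)^{N_{\mathrm{tot}} = 0}, \qquad N_{\mathrm{tot}} = N \otimes 1 + 1 \otimes N_V,$$
with the inherited Galois action, and verify directly that $W(V, N_V)$ is free of rank $\dim_{\Q_p} V$ over $\BdR$ and almost de Rham; the unit $V \to D_{\mathrm{pdR}}(W(V, N_V))$ is an isomorphism by inspection, and the counit $W(D_{\mathrm{pdR}}(W), N) \to W$ follows from the isomorphism $\alpha_W$ of the previous step.

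The main obstacle I expect is the Sen-theoretic dévissage in the third stage: constructing enough Galois-invariant sections when the Sen weights are nonzero integers is exactly where the variable $\log t$ becomes indispensable, and carefully tracking how the cocycle $\log \chi$ propagates through successive extensions in the $t$-adic filtration is the true heart of the argument, everything else being essentially formal consequences of $\BdR[\log t]^{\absGal} = \Q_p$ and the commutation of $N$ with the Galois action.
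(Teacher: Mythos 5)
The paper does not give a proof of this statement; it simply cites Fontaine \cite[Th\'eor\`eme 4.1]{fontaine2004arithmetique} and Breuil--Hellmann--Schraen \cite[Proposition 3.1.1]{breuil2019local}. Your outline reproduces Fontaine's standard argument, and its overall structure is correct: the identity $\BdR[\log t]^{\absGal}=\Q_p$, the injectivity of the comparison map $\alpha_W$, the rank inequality, the d\'evissage over the $t$-adic filtration of a stable $\BdR^+$-lattice, and the explicit quasi-inverse $(V,N_V)\mapsto(\BdR[\log t]\otimes_{\Q_p}V)^{N\otimes 1+1\otimes N_V=0}$ are all the right ingredients, and the quasi-inverse formula checks out (the map $v_0\mapsto e^{\log t\cdot N_V}v_0$ identifies it with $\BdR\otimes_{\Q_p}V$ as a $\BdR$-module with a twisted, hence almost de Rham, Galois action).

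One point in stage one is stated too quickly: ``comparing coefficients in $\log t$ and using $\BdR^{\absGal}=\Q_p$'' only shows the leading coefficient $a_n$ of a $\absGal$-invariant element $\sum a_i(\log t)^i$ lies in $\Q_p$; to rule out $n>0$ you then need the equation $g(a_{n-1})-a_{n-1}=-na_n\log\chi(g)$ to force $a_n=0$, which requires knowing that the class of $\log\chi$ is \emph{nonzero} in $H^1(\absGal,\BdR)$ (equivalently in $H^1(\absGal,C)$, via the long exact sequence and $H^i(\absGal,t\BdR^+)=0$). You do invoke exactly this cocycle in stage three, so the idea is present, but it is a genuine hypothesis and should be made explicit already in establishing $\BdR[\log t]^{\absGal}=\Q_p$ rather than appearing only in the d\'evissage. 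With that repair the proposal is a correct sketch of the cited proof.
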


  \begin{rem}
    \label{rem:representation of Ga and nilpotent}
    If $E$ is a field of characteristic 0, we have that the category $\Rep_E(\mathbb{G}_a)$
    is equivalent to the category whose objects are pairs $(V,N_V)$ of finite-dimensional $E$-vector
    spaces $V$ endowed with an $E$-linear nilpotent endomorphism $N_V$ and whose morphisms
    are linear maps of $E$-vector spaces commuting with the nilpotent endomorphism $N_V$.
  \end{rem}

  The following is \cite[Proposition 3.2.1.]{breuil2019local}.

  \begin{prop}
    \label{prop:lattices and filtrations}
    Let $W\in\Rep_{\mathrm{pdR}}(\absGal)$. We have a bijection
    \begin{align*}
      \{\absGal\text{-stable }\BdR^+\text{-lattices of }W\}&\leftrightarrow
      \{\text{filtrations of }D_{\mathrm{pdR}}(W)\}\\
      W^+&\mapsto \Fil^\bullet_{W^+}(D_{\pdR}(W)),
    \end{align*}
    where $\Fil^i_{W^+}(D_\pdR(W))\coloneqq (t^i\BdR^+[\log t]\otimes_{\BdR^+}W^+)^{\absGal}$
    for $i\in\Z$.
    Here a filtration of a representation $V\in\Rep_{\Q_p}(\mathbb{G}_a)$ is a
    decreasing, exhaustive and separated filtration by subobjects in $\Rep_{\Q_p}(\mathbb{G}_a)$.
  \end{prop}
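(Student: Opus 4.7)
The plan is to construct an explicit inverse to $W^+\mapsto\Fil^\bullet_{W^+}$ using the quasi-inverse of $D_\pdR$, and to verify the two compositions are the identity. By Remark \ref{rem:representation of Ga and nilpotent} and the preceding theorem, a quasi-inverse sends $(D,N_D)\in\Rep_{\Q_p}(\mathbb{G}_a)$ to $V(D,N_D)\coloneqq(\BdR[\log t]\otimes_{\Q_p}D)^{N=0}$, where $N$ is the natural derivation on $\BdR[\log t]$ characterised by $N|_{\BdR}=0$ and $N(\log t)=-1$, acting diagonally with $N_D$ on the right factor.

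First, given a $\absGal$-stable $\BdR^+$-lattice $W^+\subset W$, I would verify that each $\Fil^i_{W^+}(D_\pdR(W))$ is a $\Q_p$-subspace of $D_\pdR(W)$ stable under $N_{D_\pdR(W)}$, since $N$ commutes with $\absGal$ and preserves $t^i\BdR^+[\log t]\otimes_{\BdR^+}W^+$. The containments $\Fil^{i+1}\subseteq\Fil^i$ are clear; exhaustiveness follows from the fact that any $\absGal$-invariant element of $\BdR[\log t]\otimes_{\BdR}W$ lies in $t^i\BdR^+[\log t]\otimes_{\BdR^+}W^+$ for $i$ sufficiently negative (as $W^+$ is a lattice and $D_\pdR(W)$ is finite-dimensional), while separatedness follows from $\bigcap_i t^i\BdR^+=0$ combined with the freeness of $W^+$.

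Conversely, given a filtration $\Fil^\bullet$ of $D\coloneqq D_\pdR(W)$ by subobjects, I would set
\[
  W^+_{\Fil^\bullet}\coloneqq\Bigl(\sum_{i\in\Z}t^{-i}\BdR^+[\log t]\otimes_{\Q_p}\Fil^iD\Bigr)^{N=0}\subseteq V(D,N_D)=W.
\]
The $N$-stability of the $\Fil^i$ makes the sum inside the parentheses $N$-stable, and its $\absGal$-stability is automatic since $t$ and $\log t$ have controlled Galois action (the $\log t$-terms are absorbed by the $t$-powers already present in the sum) and each $\Fil^i$ is $\Q_p$-rational, hence fixed by $\absGal$. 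Exhaustiveness and separatedness of $\Fil^\bullet$, combined with $\dim_{\Q_p}D=\dim_{\BdR}W$ from the almost de Rham hypothesis, guarantee that $W^+_{\Fil^\bullet}$ is free of the correct rank over $\BdR^+$, hence a $\absGal$-stable lattice.

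The main obstacle is verifying that these two constructions are mutually inverse, which reduces to the identity of $\absGal$- and $N$-stable $\BdR^+[\log t]$-submodules
\[
  \sum_{i\in\Z}t^{-i}\BdR^+[\log t]\otimes_{\Q_p}\Fil^i_{W^+}(D_\pdR(W))=\BdR^+[\log t]\otimes_{\BdR^+}W^+
\]
inside $\BdR[\log t]\otimes_{\BdR}W$; taking $N$-invariants recovers $W^+_{\Fil^\bullet_{W^+}}=W^+$, and the symmetric identity produces $\Fil^\bullet_{W^+_{\Fil^\bullet}}=\Fil^\bullet$ upon taking $\absGal$-invariants. The technical crux is controlling how $\log t$ interferes with the $\absGal$- and $N$-invariants simultaneously, which is precisely where the almost de Rham condition is essential, since it guarantees that $D_\pdR(W)$ captures the full structure of $W$ and therefore that the filtration determines, and is determined by, the lattice.
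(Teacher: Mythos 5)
The paper does not prove this proposition; it is cited directly from \cite[Proposition 3.2.1]{breuil2019local}, so there is no internal argument to compare against. Judged on its own terms, your proposal adopts the correct overall architecture (the same as Breuil--Hellmann--Schraen's, ultimately Fontaine's): realise the quasi-inverse of $D_\pdR$ via $N$-invariants of $\BdR[\log t]\otimes_{\Q_p}(-)$, define the candidate inverse lattice as the $N$-invariants of $\sum_i t^{-i}\BdR^+[\log t]\otimes_{\Q_p}\Fil^iD$, and reduce bijectivity to the $\BdR^+[\log t]$-module identity
\[
  \sum_{i\in\Z}t^{-i}\BdR^+[\log t]\otimes_{\Q_p}\Fil^i_{W^+}(D_\pdR(W))=\BdR^+[\log t]\otimes_{\BdR^+}W^+
\]
inside $\BdR[\log t]\otimes_{\BdR}W$. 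The well-definedness of the forward map (decreasing, exhaustive, separated, $N$-stable subobjects) is argued adequately.

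The genuine gap is that your argument stops precisely where it should begin: you name the displayed identity ``the technical crux'' and assert that almost de Rham-ness ``guarantees'' it, but give no argument, and this identity is essentially the whole content of the proposition. It is not automatic: the inclusion $\subseteq$ is evident, but surjectivity requires a d\'evissage on the $t$-adic filtration, comparing $W^+/tW^+$ with the associated graded of $\Fil^\bullet_{W^+}$ as $C$-representations and invoking the structure theory of $C$-representations with integral Sen weights --- this is exactly where the almost de Rham hypothesis is actually used, not as an abstract guarantee. Likewise, the claim that $W^+_{\Fil^\bullet}$ is a free $\BdR^+$-lattice of the right rank is asserted rather than proved (one must at least compute the $N$-invariants of a free $\BdR^+[\log t]$-module, choosing a basis adapted to the $N_D$-stable filtration). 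A smaller slip: the parenthetical ``the $\log t$-terms are absorbed by the $t$-powers already present in the sum'' is not the right reason for $\absGal$-stability; what matters is that $g(\log t)-\log t=\log\chi(g)\in\Q_p\subset\BdR^+$, so $\BdR^+[\log t]$ is already $\absGal$-stable independently of any $t$-powers, though your conclusion is correct.
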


  Thanks to Remark \ref{rem:representation of Ga and nilpotent} and Proposition \ref{prop:lattices and filtrations},
  given $W\in\Rep_{\mathrm{pdR}}(\absGal)$ we have a bijection
  \begin{equation}
    \begin{split}
      \{\absGal\text{-stable }\BdR^+\text{-lattices of }W\}&\leftrightarrow
      \{N\text{-stable filtrations of }D_{\mathrm{pdR}}(W)\}\\
      W^+&\mapsto \Fil^\bullet_{W^+}(D_{\pdR}(W)),
    \end{split}
  \end{equation}
  where the filtration $\Fil^\bullet_{W^+}(D_{\pdR}(W))$ is defined as in Proposition \ref{prop:lattices and filtrations}.

  In \cite{berger2008construction}, Berger defines a topological ring $B_e$ endowed with
  a continuous $\absGal$-action and introduces the notion of \emph{$B$-pairs},
  proving that there exists an equivalence of categories of such objects with $\phigam$-modules.

  \begin{dfn}
    A \emph{$B$-pair} is a pair $(W_e,W_\dR^+)$ where $W_e$ is a free $B_e$-module of
    finite rank endowed with a semi-linear and continuous action of $\absGal$ and
    $W_\dR^+$ is a $\BdR^+$-lattice of $W_\dR\coloneqq \BdR\otimes_{B_e}W_e$ stable under
    the action of $\absGal$.
  \end{dfn}

  The following result is \cite[Théorème 2.2.7]{berger2008construction}.

  \begin{thm}
    \label{phi,gamma modules and B-pairs}
    There is an equivalence of categories
    \begin{align*}
      W:\phigam-\Mod(\robba)&\rightarrow\{B\text{-pairs}\}\\
      D&\mapsto(W_e(D),W_\dR^+(D)).
    \end{align*}
    Moreover the definition of the map $W_e(D)$ actually only depends on $D[1/t]$.
  \end{thm}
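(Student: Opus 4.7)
The plan is to exhibit the functor $W$ and a quasi-inverse explicitly, and then verify that the two compositions are naturally isomorphic to the identity via the fundamental short exact sequence of $p$-adic Hodge theory. Given a $\phigam$-module $D$ over $\robba$, I would set
\[
  W_e(D) \coloneqq \bigl(\widetilde{B}_{\rig}^{\dag}[1/t] \otimes_{\robba[1/t]} D[1/t]\bigr)^{\phi=1}
  \quad\text{and}\quad
  W_\dR^+(D) \coloneqq \BdR^+ \otimes_{\robba} D,
\]
using the natural Frobenius-equivariant embedding $\robba \hookrightarrow \widetilde{B}_{\rig}^{\dag}$ of Robba rings and the embedding $\iota_{r,m}\colon \robba^r \to \BdR^+$ introduced in Section~\ref{beauville-laszlo} (after fixing an embedding $K_m \hookrightarrow \overline{\Q}_p$), with the continuous $\absGal$-actions inherited from the period rings. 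That $W_e(D)$ is free over $B_e$ of rank equal to $\mathrm{rk}\,D$ will follow from Kedlaya's classification of slope-zero $\phi$-modules over $\widetilde{B}_{\rig}^{\dag}$ applied to $D[1/t]$; $W_\dR^+(D)$ is then a $\absGal$-stable $\BdR^+$-lattice in $\BdR \otimes_{B_e} W_e(D)$ by construction. Functoriality is manifest, and the fact that $W_e(D)$ only depends on $D[1/t]$ is built into the displayed formula.

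For the quasi-inverse $(W_e, W_\dR^+) \mapsto D$, I would first reinterpret $W_e$ as a slope-zero $\phi$-module over $\widetilde{B}_{\rig}^{\dag}[1/t]$ by tensoring, and then descend along $\robba[1/t]\hookrightarrow\widetilde{B}_{\rig}^{\dag}[1/t]$ to obtain a $\phigam$-module $M$ over $\robba[1/t]$. The lattice $W_\dR^+$ then corresponds, after passage through the completion at $t$, to a $\Gamma$-stable lattice in $M \otimes_{\robba_A^r[1/t]} \prod_{K_m\hookrightarrow L}A((t))$. Gluing $M$ with this lattice via the Beauville-Laszlo-type equivalence of Theorem~\ref{equivalenceBL} produces the desired $\phigam$-module over $\robba$, and functoriality of this assembly is straightforward.

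The hard part will be showing that these two functors are mutually quasi-inverse. The essential input is Fontaine's fundamental short exact sequence
\[
  0 \longrightarrow \Q_p \longrightarrow B_e \oplus \BdR^+ \longrightarrow \BdR \longrightarrow 0,
\]
which, when tensored with a suitable period ring containing $\robba$, provides the resolution needed to reconstruct $D$ from the pair $(D[1/t],\, \BdR^+ \otimes_{\robba} D)$. Concretely, one must verify that the natural maps
\[
  W_e(D) \otimes_{B_e} \widetilde{B}_{\rig}^{\dag}[1/t] \xrightarrow{\sim} \widetilde{B}_{\rig}^{\dag}[1/t] \otimes_{\robba[1/t]} D[1/t]
\]
and the analogous map for $W_\dR^+$ are isomorphisms, which reduces via Kedlaya's slope theory to a Frobenius-fixed-point computation, and conversely that gluing $W_e$ with $W_\dR^+$ recovers the original pair after applying $W$. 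Both verifications ultimately rest on faithfully flat descent along the embeddings of $\robba$ into the period rings, and this is the technical heart of the argument.
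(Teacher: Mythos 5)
The paper does not actually prove Theorem~\ref{phi,gamma modules and B-pairs}; it simply cites Berger's original result, \cite[Th\'eor\`eme~2.2.7]{berger2008construction}. So there is no in-paper argument against which to compare your proposal. What you have written is an outline of Berger's own argument, and it has the right shape: the Frobenius-fixed-point construction of $W_e$, the use of Kedlaya's slope-zero classification, Fontaine's fundamental exact sequence, and a gluing step to reassemble $D$ from $D[1/t]$ and a $\BdR^+$-lattice are indeed the ingredients of the proof.

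Two technical points you should tighten. First, the formula $W_\dR^+(D)\coloneqq\BdR^+\otimes_\robba D$ does not parse: there is no ring map $\robba\to\BdR^+$. What exists is the localization map $\iota_{r,m}\colon\robba^r\to K_m\llbracket t\rrbracket\hookrightarrow\BdR^+$ for $r$ large enough that $D$ admits a model $D^r$ over $\robba^r$; the correct definition is $W_\dR^+(D)\coloneqq\BdR^+\otimes_{\robba^r}D^r$, and one must check this is independent of the choice of $r$. This is in fact the form the paper itself uses in the coefficient version, Theorem~\ref{phi,gamma modules and B-pairs with coefficients}. Second, the step where you say you will ``descend along $\robba[1/t]\hookrightarrow\widetilde{B}_{\rig}^{\dag}[1/t]$'' is the technical heart of the whole theorem and it is not faithfully flat descent of the usual kind. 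Passing from a slope-zero $\phi$-module over the extended Robba ring $\widetilde{B}_{\rig}^{\dag}$ with a compatible $\Gamma$-action down to a $\phigam$-module over the cyclotomic Robba ring $\robba$ is an overconvergence result in the spirit of Cherbonnier--Colmez, established by Berger and by Kedlaya--Liu; it is not automatic and needs to be invoked by name rather than folded into a descent slogan. As an outline your argument is directionally correct, but as written it elides precisely the hardest step, and the definition of $W_\dR^+$ needs the $\robba^r$-model to make sense.
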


  The functoriality of these constructions make it possible to consider representations
  with different coefficients.
  From now on until the end of the section, we let $A$ be a finite dimensional algebra over $\Q_p$;
  we introduce the following notations for the
  rings $\BdR$ and $\BdR^+$ with coefficients in $A$:
  \[
  \begin{array}{cc}
    \BdRA\coloneqq\BdR\otimes_{\Qp}A, & \BdRA^+\coloneqq\BdR^+\otimes_{\Qp}A.
  \end{array}
  \]

  The following notions are defined in \cite[Section 3.1 and 3.2]{breuil2019local}.

  \begin{dfn}
    \begin{enumerate}
      \item
      A \emph{$\BdRA$-representation of $\absGal$} is a $\BdR$-representation
      $W$ of $\absGal$ together with a morphism of $\Qp$-algebras
      $A\rightarrow \mathrm{End}_{\Rep_{\BdR}(\absGal)}(W)$ which makes $W$ a finite free
      $\BdRA$-module.
      We denote by $\Rep_{\BdRA}(\absGal)$ the category of $\BdRA$-representations.
      \item
      A \emph{$\BdRA^+$-representation of $\absGal$} is a $\BdR^+$-representation $W^+$ of $\absGal$
      together with a morphism of $\Qp$-algebras $A\rightarrow\rr{End}_{\Rep_{\BdR^+}(\absGal)}(W^+)$
      which makes $W^+$ a finite free $\BdRA^+$-module.
      \item
      We say that a $\BdRA$-representation is \emph{almost de Rham} if the underlying
      $\BdR$-representation is.
      We denote by $\Rep_{\pdR,A}(\absGal)$ the category of almost de Rham $\BdRA$-representations.
      \item
      An \emph{$(A,B)$-pair of $\absGal$} is a pair $(W_e,\WdR^+)$ such that
      \begin{itemize}
        \item[(i)]
        $W_e$ is a finite $B_e\otimes_{\Qp}A$-module with a continuous semi-linear $\absGal$-action
        which is free as $B_e$-module;
        \item[(ii)]
        $\WdR^+\subset\WdR\coloneqq\BdR\otimes_{B_e}W_e$ is a $\absGal$-stable $\BdRA^+$-
        lattice of $\WdR$.
      \end{itemize}
    \end{enumerate}
  \end{dfn}

  We denote by $\Rep_A(\mathbb{G}_a)$ the category of pairs $(V_A,N_A)$ where $N_A$ is
  a nilpotent endomorphism of a finite free $A$-module $V_A$.

  The following is \cite[Lemma 3.1.4]{breuil2019local}.

  \begin{thm}
    The functor $D_\pdR$ induces an equivalence of categories between
    $\Rep_{\pdR,A}(\absGal)$ and $\Rep_A(\mathbb{G}_a)$.
  \end{thm}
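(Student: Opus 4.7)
The plan is to bootstrap from the $\Q_p$-coefficient equivalence
$D_\pdR\colon\Rep_\pdR(\absGal)\xrightarrow{\sim}\Rep_{\Q_p}(\mathbb{G}_a)$ recalled just above the statement, whose quasi-inverse I shall denote by $G$.

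First, I would verify that $D_\pdR$ sends an object $W\in\Rep_{\pdR,A}(\absGal)$ to an object of $\Rep_A(\mathbb{G}_a)$. The $A$-module structure on $D_\pdR(W)$, commuting with the nilpotent endomorphism $N$, is immediate from functoriality: the $\Q_p$-algebra morphism $A\to\mathrm{End}_{\Rep_{\BdR}(\absGal)}(W)$ defining the $\BdRA$-structure on $W$ induces via $D_\pdR$ a morphism $A\to\mathrm{End}_{\Rep_{\Q_p}(\mathbb{G}_a)}(D_\pdR(W))$.

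To verify that $D_\pdR(W)$ is free of rank $n\coloneqq\mathrm{rk}_{\BdRA}(W)$ over $A$, I would use the tautological $\absGal$-equivariant comparison isomorphism
\[
    \BdR[\log t]\otimes_{\Q_p}D_\pdR(W)\xrightarrow{\sim}\BdR[\log t]\otimes_{\BdR}W,
\]
which holds because $W$ is almost de Rham and which is $(A\otimes_{\Q_p}\BdR[\log t])$-linear once both sides are equipped with the natural $A$-actions (the $A$-action on $W$ on the right, on $D_\pdR(W)$ on the left). Since $W$ is free of rank $n$ over $\BdRA$, the right-hand side is free of rank $n$ over $A\otimes_{\Q_p}\BdR[\log t]$, so the same holds for the left-hand side. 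Because $\BdR[\log t]$ is a nonzero $\Q_p$-vector space it is faithfully flat over $\Q_p$, so $A\otimes_{\Q_p}\BdR[\log t]$ is faithfully flat over $A$, and faithfully flat descent yields that $D_\pdR(W)$ is itself free of rank $n$ over $A$.

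Next, I would construct the quasi-inverse by sending $(V_A,N_A)\in\Rep_A(\mathbb{G}_a)$ to $G(V,N)$, where $(V,N)$ denotes the underlying $\Q_p$-object. The $A$-action on $(V,N)$ transports via $G$ to an $A$-action on $G(V,N)\in\Rep_\pdR(\absGal)$ commuting with $\absGal$. That $G(V,N)$ is free of rank $n$ over $\BdRA$ follows by the same descent argument in reverse: the comparison gives an isomorphism $\BdR[\log t]\otimes_{\BdR}G(V,N)\cong\BdR[\log t]\otimes_{\Q_p}V_A$ of $\BdR[\log t]\otimes_{\Q_p}A$-modules, the right side is free of rank $n$ by hypothesis on $V_A$, hence so is the left, and faithful flatness of $\BdR[\log t]$ over $\BdR$ yields that $G(V,N)$ is free of rank $n$ over $\BdRA$. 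Finally, the natural isomorphisms $W\cong G(D_\pdR(W))$ and $(V,N)\cong D_\pdR(G(V,N))$ from the $\Q_p$-case are automatically $A$-linear by functoriality, giving the desired equivalence. The main subtlety is the freeness verification on both sides, for which the faithfully flat descent along $\BdR[\log t]$ is the essential ingredient; everything else reduces formally to the $\Q_p$-coefficient equivalence.
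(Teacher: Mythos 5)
The paper itself does not prove this statement; it is quoted from \cite[Lemma 3.1.4]{breuil2019local} without argument, so there is no in-text proof to compare against. Your strategy — bootstrap from the rational equivalence, transport the $A$-action by functoriality of $D_\pdR$ and its quasi-inverse, and verify the freeness-of-rank-$n$ condition on both sides via the comparison isomorphism $\BdR[\log t]\otimes_{\Q_p}D_\pdR(W)\cong\BdR[\log t]\otimes_{\BdR}W$ and descent — is the natural approach and is sound in outline.

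One step is stated too quickly: ``faithfully flat descent yields that $D_\pdR(W)$ is itself free of rank $n$ over $A$.'' What descent along the faithfully flat map $A\to A\otimes_{\Q_p}\BdR[\log t]$ actually gives is that $D_\pdR(W)$ is a finitely generated projective $A$-module; reading fibers at every prime of $A$ (all of which are hit, since faithful flatness makes the map on spectra surjective) then shows the rank is everywhere $n$. But finitely generated projective of constant rank need not be free over a general base. What closes the gap is that $A$ is a finite-dimensional $\Q_p$-algebra, hence a finite product of local Artinian rings, over which finitely generated projective of constant rank is the same as free. The identical remark applies in the quasi-inverse direction, where you descend freeness of $G(V_A,N_A)$ along $\BdRA\to\BdR[\log t]\otimes_{\Q_p}A$: here $\BdRA=\BdR\otimes_{\Q_p}A$ is again a finite product of local Artinian $\BdR$-algebras, so projective of constant rank is free. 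This is a bookkeeping lacuna rather than a flaw in the method; the remaining verifications — that the transported $A$-actions commute with $N$ and with $\absGal$, and that the unit and counit of the rational equivalence are $A$-linear by naturality — are indeed formal, as you say.
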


  By \cite[Lemma 3.2.2]{breuil2019local}, we have that given $W\in\Rep_{\pdR,A}(\absGal)$,
  there is a bijection
  \begin{equation}
    \label{bijection breuil}
    \begin{split}
      \left\{ \absGal\text{-stable }\BdRA^+\text{-lattices of }W\right\}&
      \leftrightarrow\left\{ \begin{array}{c}
        N_A\text{-stable exhaustive}\\
        \text{filtrations of }D_{\pdR}(W)
      \end{array}\right\}\\
      W^+&\mapsto \Fil^\bullet_{W^+}(D_{\pdR}(W)),
    \end{split}
  \end{equation}
  where the filtration $\Fil^\bullet_{W^+}(D_{\pdR}(W))$ is defined as in Proposition \ref{prop:lattices and filtrations}.

  We moreover have the following result.

  \begin{thm}
    \label{phi,gamma modules and B-pairs with coefficients}
    Let $A$ be a finite dimensional $\Qp$-algebra.
    There is an equivalence of categories
    \begin{align*}
      W:\phigam-\Mod(\robba_A)&\rightarrow(A,B)\text{-pairs}\\
      D&\mapsto(W_e(D),\WdR^+(D)),
    \end{align*}
    where the functor $W_e(D)$ only depends on $D[1/t]$ and
    $\WdR^+(D)\coloneqq\BdR^+\otimes_{\robba^r}D^r$ is independent of $r$, where
    $r\in p^\Q\cap[0,1)$ such that there exists
    a $\phigam$-module $D^r$ over $\robba_A^r$ such that $D^r\otimes_{\robba_A^r}\robba_A=D$.
  \end{thm}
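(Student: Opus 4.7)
The strategy is to bootstrap from the coefficient-free equivalence of Theorem~\ref{phi,gamma modules and B-pairs} by tracking the $A$-action functorially on both sides. Since $A$ is a finite dimensional $\Qp$-algebra, we have $\robba_A=\robba\otimes_{\Qp}A$, $\BdRA^+=\BdR^+\otimes_{\Qp}A$, and $B_e\otimes_{\Qp}A$ is a finite $B_e$-algebra. In particular, the data of a $\phigam$-module over $\robba_A$ is equivalent to the data of a $\phigam$-module over $\robba$ (via restriction of scalars) together with a compatible $A$-action commuting with $\phi$ and $\Gamma$; and analogously, the data of an $(A,B)$-pair is the data of a $B$-pair with a compatible $A$-action.

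\textbf{Construction of the functor.} Given $D\in\phigam\text{-}\Mod(\robba_A)$, view $D$ as a $\phigam$-module over $\robba$ carrying an $A$-action, and apply Theorem~\ref{phi,gamma modules and B-pairs} to produce a $B$-pair $(W_e(D),\WdR^+(D))$. By the functoriality of the construction, the $A$-action on $D$ transports to compatible $A$-actions on $W_e(D)$ and on $\WdR^+(D)$, upgrading them to a finite $B_e\otimes_{\Qp}A$-module (still free over $B_e$) and a $\absGal$-stable $\BdRA^+$-lattice in $\WdR(D)$ respectively. The property that $W_e(D)$ depends only on $D[1/t]$ is inherited from the coefficient-free case.

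\textbf{Independence of $r$ and quasi-inverse.} For $r\leq r'$ as in the statement, we have $D^{r'}=D^r\otimes_{\robba_A^r}\robba_A^{r'}$ (by uniqueness in Theorem~\ref{sub-phigam-module} adapted to $A$-coefficients), and the specialization map $\robba^r\to\BdR^+$ at the point $\alpha_m$ factors through $\robba^{r'}$; hence
\[
\BdR^+\otimes_{\robba^{r'}}D^{r'}\cong \BdR^+\otimes_{\robba^{r}}D^{r}
\]
as $\BdRA^+$-modules, giving the desired independence. For the quasi-inverse, take an $(A,B)$-pair $(W_e,\WdR^+)$, forget the $A$-action to obtain a $B$-pair, and apply the inverse equivalence of Theorem~\ref{phi,gamma modules and B-pairs} to get a $\phigam$-module $D_0$ over $\robba$. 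By functoriality of this inverse, the $A$-action on $(W_e,\WdR^+)$ produces an $A$-action on $D_0$ commuting with $\phi$ and $\Gamma$; equivalently, $D_0$ becomes a $\phigam$-module over $\robba\otimes_{\Qp}A=\robba_A$. That the two constructions are mutually quasi-inverse then reduces, via the forgetful functors to the coefficient-free side, to Theorem~\ref{phi,gamma modules and B-pairs}.

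\textbf{Main obstacle.} The step requiring the most care is verifying that the $(A,B)$-pair conditions hold on the nose: namely that $W_e(D)$ obtained from the coefficient-free construction remains finite over $B_e\otimes_{\Qp}A$ (and free over $B_e$), and that $\WdR^+(D)=\BdR^+\otimes_{\robba^r}D^r$ is not merely a $\BdR^+$-lattice but a genuine $\BdRA^+$-lattice in $\WdR(D)$. On the inverse side, one must check that the $\robba_A$-module $D_0$ produced from an $(A,B)$-pair is locally free of the expected rank over $\robba_A$, not just of combined rank over $\robba$. All of these reduce, via faithful flatness and the fact that $A$ is finite (hence finitely presented projective) over $\Qp$, to the analogous statements established in Theorem~\ref{phi,gamma modules and B-pairs}; so the difficulty is essentially bookkeeping rather than a new cohomological input.
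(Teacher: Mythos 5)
Your proposal matches the paper's proof essentially step for step: both treat a $\phigam$-module over $\robba_A$ as a $\phigam$-module over $\robba$ equipped with a commuting $A$-action, apply Berger's coefficient-free equivalence (Theorem~\ref{phi,gamma modules and B-pairs}), and transport the $A$-action via the functoriality of that equivalence to get the $(A,B)$-pair structure, with the quasi-inverse handled symmetrically. Your ``main obstacle'' paragraph is slightly more explicit than the paper in flagging the freeness/local-freeness checks over $\robba_A$, but these are addressed at the same level of detail (and resolved by the same appeal to $A$ being finite over $\Qp$) in both.
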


  Nakamura proved the above Theorem when $A$ is a finite extension of $\Qp$ in \cite{nakamura2009classification},
  but since the same proof holds also for finite dimensional $\Qp$-algebras,
  we report it here almost verbatim.

  \begin{proof}
    As already stated in Theorem \ref{phi,gamma modules and B-pairs},
    for the $\Qp$-coefficient case it was proved by Berger in \cite[Theorem 2.2.7]{berger2008construction}.
    Let $D$ be a $\phigam$-module over $\robba_A$.
    First of all observe that, as $A$ is a finite dimensional $\Qp$-algebra,
    we have that $\robba_A=\robba\otimes_{\Qp}A$ and $\robba_A^r=\robba^r\otimes_{\Qp}A$;
    hence we have in particular that $D$ is also a $\phigam$-module over $\robba$ and
    $D=D\otimes_{\Q_p}A$.
    It is then obvious that $W_e(D)$ is a finite $B_e\otimes_{\Qp}A$-module
    with a continuous semi-linear $\absGal$-action which is also free as $B_e$-module.
    Now let $W(D)$ be the $B$-pair obtained by Theorem \ref{phi,gamma modules and B-pairs}
    and let $a\in A$; multiplication by $a$ gives an endomorphism of $D$ as $\phigam$-module
    over $\robba$.
    From the functoriality, this gives an endomorphism of $W(D)$.
    Thus we have shown that there is an $A$-action on $W(D)$, in particular there is
    a morphism of $\Qp$-algebras
    \[
      A\rightarrow\rr{End}_{\rr{Rep}_{\BdR^+}(\absGal)}(\WdR^+(D)).
    \]
    Hence $\WdR^+(D)$ is a $\BdRA^+$-representation of $\absGal$
    and obviously $\WdR^+(D)$ is a $\absGal$-stable $\BdRA^+$-lattice of $W_e(D)\otimes_{B_e}\BdR$.
    This proves that $(W_e(D),\WdR^+(D))$ is an $(A,B)$-pair.
    On the other hand, let $W=(W_e,\WdR^+)$ be an $(A,B)$-pair.
    In particular notice that $\WdR^+[1/t]=\BdR\otimes_{Be}W_e$.
    Then we have a morphism of $\Qp$-algebras
    \[
      A\rightarrow\rr{End}_{\rr{Rep}_{\BdR}(\absGal)}(\WdR^+(D)).
    \]
    Since the action of $A$ is $\BdR$-linear, we have that the above morphism translates
    into actions of $A$ on the $(A,B)$-pair $W$.
    Let us denote by $D(W)$ the $\phigam$-module over $\robba$ obtained through the
    equivalence proved by Berger.
    By functoriality, this implies that there is a $\Qp$-linear action of $A$ on the $\phigam$-module $D(W)$
    over $\robba$.
    Moreover, since $D(W)$ is a free module over $\robba$, we have that
    $D(W)=D(W) \otimes_{\Qp}A$ is a free module over $\robba_A=\robba\otimes_{\Qp}A$.
    Then the $\phigam$-module $D(W)$ is a $\phigam$-module over $\robba\otimes_{\Qp}A=\robba_A$.
  \end{proof}

  Let us define the functor $\WdR$ as in \cite{breuil2019local}:
  \begin{align*}
    \WdR:\phigam-\Mod(\robba_A[1/t])&\rightarrow\{\BdRA\text{-representations of }\absGal\}\\
    M&\mapsto\WdR^+(D)\otimes_{\BdRA^+}\BdRA=\WdR^+(D)[1/t],
  \end{align*}
  where $D$ is a $\phigam$-module over $\robba_A$ such that $D[1/t]=M$.
  It is possible to show that $\WdR(M)$ doesn't depend on $D$.

  \begin{thm}
    \label{bijection of modules and lattices}
    Let $M$ be a $\phigam$-module over $\robba_A[1/t]$.
    We have a bijection
    \begin{align*}
      M^{\phigam}&\rightarrow
      \{\absGal\text{-stable }\BdRA^+\text{-lattices of }\WdR(M)\}\\
      D&\mapsto \WdR^+(D),
    \end{align*}
    where $M^{\phigam}$ is defined as in (\ref{submodules of M}).
  \end{thm}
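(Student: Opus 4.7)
The plan is to obtain the claimed bijection as an essentially formal consequence of the equivalence of categories between $\phigam$-modules over $\robba_A$ and $(A,B)$-pairs established in Theorem \ref{phi,gamma modules and B-pairs with coefficients}, exploiting the fact noted there that the $B_e$-component $W_e(D)$ depends only on $D[1/t]$.

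First I would fix $M$ and observe that $W_e(M)$ (and hence $\WdR(M) = W_e(M)\otimes_{B_e}\BdR$) is well-defined independently of any choice of $D\in M^{\phigam}$: indeed, for any such $D$ one has $W_e(D) = W_e(M)$ by the last assertion of Theorem \ref{phi,gamma modules and B-pairs with coefficients}. Then for every $D\in M^{\phigam}$, the pair $(W_e(D),\WdR^+(D)) = (W_e(M),\WdR^+(D))$ is an $(A,B)$-pair, so $\WdR^+(D)$ is a $\absGal$-stable $\BdRA^+$-lattice of $\WdR(M)$. This shows the map is well-defined.

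For injectivity, if $D,D'\in M^{\phigam}$ satisfy $\WdR^+(D) = \WdR^+(D')$, then the associated $(A,B)$-pairs $(W_e(M),\WdR^+(D))$ and $(W_e(M),\WdR^+(D'))$ are equal, so by the equivalence of Theorem \ref{phi,gamma modules and B-pairs with coefficients} we get $D\cong D'$ as $\phigam$-modules over $\robba_A$; moreover, the identifications $D[1/t] = M = D'[1/t]$ force this isomorphism to be the identity on localization at $t$, hence $D = D'$ as submodules of $M$. For surjectivity, given a $\absGal$-stable $\BdRA^+$-lattice $\Lambda\subset \WdR(M)$, the pair $(W_e(M),\Lambda)$ is an $(A,B)$-pair, and by the inverse of the equivalence we obtain a $\phigam$-module $D$ over $\robba_A$ with $(W_e(D),\WdR^+(D)) = (W_e(M),\Lambda)$. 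It remains to identify $D$ with an element of $M^{\phigam}$: since $W_e$ is fully faithful on localizations, the identity $W_e(D) = W_e(M)$ together with the fact that $W_e(D)$ recovers $D[1/t]$ via the inverse equivalence applied to $\phigam$-modules over $\robba_A[1/t]$ yields a canonical isomorphism $D[1/t]\cong M$, which we use to view $D$ as a $\phigam$-submodule of $M$ with $D[1/t] = M$.

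The only delicate point is the last step, namely making precise that the functor $W_e$ on $\phigam-\Mod(\robba_A)$ descends to an equivalence $\phigam-\Mod(\robba_A[1/t])\xrightarrow{\sim} \{B_e\otimes A\text{-modules of the right type}\}$, so that the embedding $D\hookrightarrow M$ is recovered from the identity $W_e(D) = W_e(M)$. This is implicit in Theorem \ref{phi,gamma modules and B-pairs with coefficients} (because $W_e$ is constructed via the localization $D[1/t]$), but it should be spelled out by verifying that two $\phigam$-modules $D,D'$ over $\robba_A$ with $D[1/t] = D'[1/t] = M$ and $W_e(D) = W_e(D')$ are equal as submodules of $M$; this is exactly the content of the analogous statement in Proposition \ref{bijection of lattices and modules}, and can be checked by combining that proposition with the compatibility between $\WdR^+$ and the lattice construction in the Beauville--Laszlo description.
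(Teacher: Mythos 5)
Your proposal is correct and takes essentially the same route as the paper: both proofs hinge on the equivalence of Theorem \ref{phi,gamma modules and B-pairs with coefficients} together with the observation that $W_e$ depends only on $D[1/t]$. The ``delicate point'' you flag at the end — rigidifying the identification of $D$ as a genuine submodule of $M$ rather than up to isomorphism — is exactly what the paper's proof resolves by replacing the two sides of the bijection with isomorphism classes of pairs $(D^r,g)$ and $(W^+,f)$ carrying explicit trivializations (the sets $\mathcal{B}$ and $\mathcal{C}$), and then reusing the argument already given for Proposition \ref{bijection of lattices and modules}.
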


  \begin{proof}
    Let $\mathcal{B}$ be the set (\ref{set B}) and let
    \[
      \mathcal{C}\coloneqq\left\{(W^+,f)\colon
      \begin{array}{c}
        W^+\text{ is a }\absGal\text{-stable projective }\BdRA^+\text{-module and}\\
        f:W^+[1/t]\xrightarrow{\sim}\WdR(M)\text{ isomorphism of }\BdRA\text{-modules}
      \end{array}
      \right\};
    \]
    here the $\absGal$-action on the module $W^+$ is induced by the action on $\WdR(M)$ through
    the isomorphism $f$.
    Observe that the equivalence of Theorem \ref{phi,gamma modules and B-pairs with coefficients}
    implies that the functor $\WdR^+$ induces a bijection between the set $\widetilde{\mathcal{B}}$ of isomorphism
    classes of $\mathcal{B}$ and the set of isomorphism classes $\widetilde{\mathcal{C}}$ of
    $\mathcal{C}$.
    In order to prove the wanted bijection, we will proceed in a very similar way to the proof
    of Theorem \ref{bijection of lattices and modules}: we already proved
    that $M^{\phigam}$ is in bijection with $\widetilde{\mathcal{B}}$, so what is left to prove
    is that we have the following bijection
    \begin{align*}
      \{\absGal\text{-stable }\BdRA^+\text{-lattices of }\WdR(M)\}&\rightarrow\widetilde{\mathcal{C}}\\
      W^+&\mapsto(W^+,\mathrm{id}_{W^+[1/t]}).
    \end{align*}
    For injectivity, let us take
    \[
      W^+_1,W^{+}_2\in
    \{\absGal\text{-stable }\BdRA^+\text{-lattices of }\WdR(M)\}
    \]
    such that $(W^+_1,\mathrm{id}_{W^+_1[1/t]})\cong(W^{+}_2,\mathrm{id}_{W^{+}_2[1/t]})$.
    This means that there is an isomorphism $\xi:W^+_1\xrightarrow{\sim}W^+_2$ of $\BdRA^+$-modules
    such that the diagram
    \[
      \begin{tikzcd}
        W^+_1\arrow[rr,"\xi"]\arrow[hookrightarrow]{d}
        &&W^+_2\arrow[hookrightarrow]{d}\\
        W^+_1[1/t]\arrow[r,phantom,"="]
        & \WdR(M)
        &W^+_2[1/t]\arrow[l,phantom,"="]
      \end{tikzcd}
    \]
    commutes.
    It is then easy to see that $W^+_1=W^+_2$.
    Now let $(W^+,f)\in\widetilde{\mathcal{C}}$ and consider $f(W^+)$, which is a $\absGal$-stable
    lattice of $\WdR(M)$.
    Moreover $(W^+,f)\xrightarrow{f}(f(W^+),\mathrm{id}_{f(W^+)[1/t]})$ is an isomorphism in
    the category $\mathcal{C}$, which proves surjectivity.
  \end{proof}

  \begin{cor}
    \label{bijection of lattices and filtrations}
    Let $M$ be a $\phigam$-module over $\robba_A[1/t]$ such that $\WdR(M)$ is almost deRham.
    We have a bijection
    \begin{equation}
      \label{bijection stable lattices and stable filtrations}
      \begin{split}
        \mathrm{Gr}_M^\Gamma &\rightarrow \{N\text{-stable filtrations of }\DpdR(\WdR(M))\}
      \end{split}
    \end{equation}
    where
    $\mathrm{Gr}_M^\Gamma$ is defined as in (\ref{Gamma-stable lattices in M})
    and $N$ is the nilpotent endomorphism associated to $\DpdR(\WdR(M))$.
  \end{cor}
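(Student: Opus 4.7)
The plan is to obtain the bijection (\ref{bijection stable lattices and stable filtrations}) as a composition of three bijections already established in the section, so that the corollary is essentially a bookkeeping result. The first step is Proposition \ref{bijection of lattices and modules}, which identifies $\mathrm{Gr}_M^\Gamma$ with the set $M^{\phigam}$ of sub-$\phigam$-modules $D \subset M$ over $\robba_A$ satisfying $D[1/t]=M$; under this identification a $\Gamma$-stable lattice $\Lambda$ corresponds to the $\phigam$-module produced from the Beauville-Laszlo style equivalence of Theorem \ref{equivalenceBL}.

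The second step is Theorem \ref{bijection of modules and lattices}, which tells us that the functor $D \mapsto \WdR^+(D)$ identifies $M^{\phigam}$ with the set of $\absGal$-stable $\BdRA^+$-lattices of $\WdR(M)$. The third step uses the hypothesis that $\WdR(M)$ is almost de Rham in order to invoke the bijection (\ref{bijection breuil}), which sends a $\absGal$-stable $\BdRA^+$-lattice $W^+$ of $\WdR(M)$ to the $N$-stable exhaustive filtration $\Fil^\bullet_{W^+}(\DpdR(\WdR(M)))$ of $\DpdR(\WdR(M))$.

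The bijection asserted by the corollary is then the composite of these three: a $\Gamma$-stable lattice $\Lambda \in \mathrm{Gr}_M^\Gamma$ is first turned into a $\phigam$-module $D_\Lambda$ over $\robba_A$ with $D_\Lambda[1/t] = M$, then sent to the $\BdRA^+$-lattice $\WdR^+(D_\Lambda) \subset \WdR(M)$, and finally to the filtration $\Fil^\bullet_{\WdR^+(D_\Lambda)}(\DpdR(\WdR(M)))$. Since each of the three intermediate arrows is a bijection, so is the composition, and this produces the map displayed in (\ref{bijection stable lattices and stable filtrations}).

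There is no serious obstacle here beyond tracing through the definitions; the only point that deserves a brief comment is that the exhaustivity condition appearing in (\ref{bijection breuil}) is automatic on the image side, because any $\BdRA^+$-lattice $W^+$ of $\WdR(M)$ satisfies $W^+[1/t] = \WdR(M)$, so the associated filtration of $\DpdR(\WdR(M))$ is automatically exhaustive. Hence one does not pick up any extra condition on the filtrations beyond $N$-stability, in agreement with the statement of the corollary.
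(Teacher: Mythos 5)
Your argument is correct and follows exactly the paper's route: compose Proposition \ref{bijection of lattices and modules}, Theorem \ref{bijection of modules and lattices}, and the lattice-filtration correspondence in the almost de Rham setting. The paper cites Proposition \ref{prop:lattices and filtrations} where you invoke (\ref{bijection breuil}), but since the latter is precisely the $A$-coefficient form of the former and the setup here already has $A$ finite-dimensional, the two references are interchangeable.
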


  \begin{proof}
    The result follows from Proposition \ref{prop:lattices and filtrations},
    Proposition \ref{bijection of lattices and modules}
    and Theorem \ref{bijection of modules and lattices}.
  \end{proof}

  \subsection{$\BdR$-representations of non-integral weights}

    In this section, $A$ will be an Artin local $L$-algebra, where $L$ is a finite extension of $\Qp$,
    hence in particular it is a finite-dimensional $\Qp$-algebra.
    Moreover throughout this section, we fix a triangulable $\phigam$-module $M$ over $\robba_A[1/t]$,
    $r\in p^{\mathbb{Q}}\cap[0,1)$ such that there exists a $\phigam$-module $M^r$ over $\robba_A^r[1/t]$
    such that $M=M^r\otimes_{\robba_A^r[1/t]}\robba_A$.
    We fix moreover $m_0\in\mathbb{N}$ such that for every $m\geq m_0$, the point $1-\zeta_{p^m}$
    lies in the half open annulus $\mathbb{B}^r$ and an embedding $\tau:K_{m}\hookrightarrow L$
    for $m\geq m_0$.

    In the previous section we have proved that if $\WdR(M)$ is almost deRham,
    then we have a bijection between the $\Gamma_m$-stable
    lattices of $M^r\otimes_{\robba_A^r[1/t],\tau}A((t))$ and the $N$-stable filtrations
    of $D_{\pdR}(\WdR(M))$, as shown in Corollary \ref{bijection of lattices and filtrations}.
    In this section we want to show that a similar result is true even in the case $\WdR(M)$ is not
    almost deRham.

    First we need some preparation:
    let $W_0^+$ be a $\BdRA^+$-representation of $\absGal$.
    Let us denote by $\mathcal{A}$ the set of the Sen weights of the $A\otimes_{\Qp}C$-representation
    $W_0^+/tW_0^+$ of $\absGal$.
    Let us assume that $\mcal A\subset A$ and let us denote by $\bar{\mcal A}$ the set of
    equivalence classes $(\mcal A+\Z)/\Z\subset A/\Z$.
    Let us denote by $W$ the $\BdRA$-representation $W_0^+[1/t]$ of $\absGal$.
    Assume that for every $a\in\mcal A$ there exists a continuous character
    $\delta:\Qp\times\rightarrow A^\times$ such that $\rr{wt}(\delta)=a$.

    \begin{rem}
      Notice that the above assumption is always satisfied in case $W_0^+=\WdR^+(D)$
      for some triangulable $\phigam$-module $D$ over $\robba_A$; in fact in this case
      the Sen weights of the $A\otimes_{\Qp}C$-representation $W_0^+/tW_0^+$ are exactly
      the weights of the parameters of $D$.
    \end{rem}

    In the following, for a continuous character $\delta:\Qp^\times\rightarrow A^\times$, we
    denote by $\WdR^+(\delta)\coloneqq\WdR^+(\robba_A(\delta))$ and by
    $\WdR(\delta)\coloneqq\WdR^+(\delta)[1/t]$.

    \begin{thm}
      \label{non integral weights}
      With the same notation as above, let $W_{\bar a}$ be the unique maximal subspace of $W$
      such that $W_{\bar a}(\delta_{\bar a}^{-1})\coloneqq W_{\bar a}\otimes_{\BdR}\WdR(\delta_{\bar a}^{-1})$
      is almost de Rham for some continuous character  $\delta_{\bar a}:\Qp^\times\rightarrow A^\times$
      with $\rr{wt}(\delta_{\bar a})+\Z=\bar a$.
      We have that
      \[
        W=\bigoplus_{\bar a\in\bar{\mcal A}}W_{\bar a}
      \]
      as $\BdRA$-representations.
      Moreover if $W^+$ is a $\absGal$-stable $\BdRA^+$-lattice of $W$ such that
      the set of mod $\Z$ classes of the Sen weights of $W^+$ is equal to $\bar{\mcal A}$
      counting multiplicities, then we have
      \[
        W^+=\bigoplus_{\bar a \in\bar{\mcal A}}W_{\bar a}^+
      \]
      where $W_{\bar a}^+\coloneqq W^+\cap W_{\bar a}$ is  a $\absGal$-stable $\BdRA^+$-lattice of $W_{\bar a}$.
      Moreover we have that for every $\bar a\in\bar{\mcal A}$, the
      $A\otimes_{\Qp}C$-representation $W_{\bar a}^+/tW_{\bar a}^+$ has Sen weights in $\bar a$.
    \end{thm}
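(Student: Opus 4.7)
The plan is to construct the decomposition $W = \bigoplus_{\bar a} W_{\bar a}$ as a generalised eigenspace decomposition for the Sen operator modulo $\Z$, and then identify each summand with the maximal subrepresentation becoming almost de Rham after the appropriate twist. The key observation is that twisting by a character $\delta$ shifts the Sen operator $\Theta$ by $\mathrm{wt}(\delta)\cdot\mathrm{id}$, so $W\otimes_{\BdR}\WdR(\delta_{\bar a}^{-1})$ is almost de Rham if and only if all Sen weights of $W$ lie in the class $\bar a$; accordingly $W_{\bar a}$ is characterised intrinsically as the maximal $\absGal$-stable $\BdRA$-subspace of $W$ whose Sen weights all lie in $\bar a$.

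I would first show that the sum $\sum_{\bar a\in\bar{\mathcal A}}W_{\bar a}$ is direct: any nonzero subspace common to $W_{\bar a}$ and $W_{\bar b}$ for distinct $\bar a,\bar b$ would inherit Sen weights lying simultaneously in two disjoint classes of $A/\Z$, which is impossible. For the surjectivity $\bigoplus_{\bar a} W_{\bar a}\twoheadrightarrow W$, I would pass to the Sen module $W_\infty := (W_0^+)^H$ (where $H$ is the kernel of the cyclotomic character), which carries the canonical Sen operator $\Theta$ commuting with the $\Gamma$-action, and decompose $W_\infty/tW_\infty$ into generalised eigenspaces for $\Theta$ grouped by residue class modulo $\Z$. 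The characters $\delta_{\bar a}$ furnish, via the shift of $\Theta$, canonical projectors onto each class-component: twisting $W$ by $\delta_{\bar a}^{-1}$ concentrates the $\bar a$-part of the spectrum on the integers, where one can apply the almost de Rham functor $\DpdR$ to extract the maximal almost de Rham subrepresentation.

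The decomposition is then lifted from $W_\infty/tW_\infty$ to the full $\BdRA^+$-module $W_0^+$ by a Hensel-type successive approximation along the $t$-adic filtration, using Galois equivariance of $\Theta$ and the separation between classes; inverting $t$ yields the desired decomposition of $W$ as $\BdRA$-representations. For the lattice statement, given $W^+\subset W$ with the prescribed multiplicity of Sen weights modulo $\Z$, the intersections $W_{\bar a}^+:=W^+\cap W_{\bar a}$ are $\absGal$-stable $\BdRA^+$-lattices in each $W_{\bar a}$; the multiplicity hypothesis forces $\bigoplus_{\bar a} W_{\bar a}^+$ and $W^+$ to agree on their associated graded pieces for the $t$-adic filtration, and hence to coincide. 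The final assertion that each $W_{\bar a}^+/tW_{\bar a}^+$ has Sen weights lying in $\bar a$ follows directly from the construction, since the lattice decomposition respects the spectral decomposition of $\Theta$.

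The main obstacle is the Hensel-type lifting step: one must arrange the projectors constructed on $W_\infty/tW_\infty$ to be $\absGal$-equivariant and to converge to projectors on $W_0^+$, even when lifts of distinct classes in $\bar{\mathcal A}$ fail to differ by units in $A$ (as may happen when $A$ has nilpotents). The resolution is to carry out the construction one class at a time using the almost de Rham functor $\DpdR$ applied to each twist $W\otimes\WdR(\delta_{\bar a}^{-1})$, extracting the maximal almost de Rham subrepresentation there and pulling back; one then verifies that the subrepresentations so produced are mutually transverse by the Sen weight argument of the first step.
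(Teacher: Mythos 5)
The paper does not prove Theorem \ref{non integral weights}: immediately after the statement it reads ``The above Theorem is proved in \cite[\S\, 5.2,5.3]{wu2022trianguline}.'' So there is no proof in the paper to compare against, and you should treat Wu's thesis as the source of truth before relying on your sketch.

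Your high-level plan---characterise $W_{\bar a}$ as the maximal $\absGal$-stable $\BdRA$-subspace whose Sen spectrum lies in the class $\bar a$, prove directness from disjointness of classes, lift a decomposition of $W_0^+/tW_0^+$ along the $t$-adic filtration, and handle the lattice statement via the graded pieces---is a reasonable guess at the shape of Wu's argument, and it is consistent with how the decomposition is actually produced later in the paper (see Remark \ref{identify Lambda0/t with DpdR}, where $\nabla$ is decomposed into generalised eigenspaces \emph{over $L$} and these are observed to be $A$-stable). However, your proposed resolution of the Hensel-lifting obstruction is where the gap sits, and I do not believe it closes it. If $\bar a_1 \ne \bar a_2$ in $A/\Z$ but the representatives satisfy $a_1 - a_2 \in \mathfrak m_A$ (so their images in $L/\Z$ coincide), then twisting by $\delta_{\bar a_1}^{-1}$ turns the $\bar a_2$-piece into a representation whose Sen operator has a nilpotent $A$-eigenvalue. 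Since ``almost de Rham'' for a $\BdRA$-representation is defined by the underlying $\BdR$-representation, and the $C$-Sen weights of that piece all reduce to $0$, the twisted piece \emph{is} almost de Rham. Hence $\DpdR$ applied to $W\otimes\WdR(\delta_{\bar a_1}^{-1})$ does not separate $\bar a_1$ from $\bar a_2$; it swallows both. Your ``one class at a time'' construction is therefore circular in exactly the problematic case, and the transversality you promise in the last sentence is not supplied by the Sen-weight argument, which only sees the residual spectrum.

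Two further comments. First, in the applications in this paper the issue does not actually arise: in Remark \ref{standard lattices are the same} one pulls back to Artin quotients $\mathcal O_{X_n,x_n}/\mathfrak m^m$ and uses Lemma \ref{same Sen weights} to identify the weights with those of $\Lambda'$, which live in $L$; distinct classes in $L/\Z$ differ by units, so the projector construction and the Hensel lift are unproblematic there. You should be explicit that your argument works under the hypothesis that $\bar{\mcal A}\to L/\Z$ is injective (equivalently, distinct classes in $\bar{\mcal A}$ have representatives whose difference is a unit minus an integer), and either restrict to that case or look at what extra input Wu uses to cover the general case. Second, the lattice statement is treated too quickly: the claim that ``agreeing on associated graded pieces forces $W^+ = \bigoplus_{\bar a} W^+_{\bar a}$'' needs the a priori containment $\bigoplus_{\bar a}W^+_{\bar a}\subset W^+$ together with an argument that the cokernel is $t$-torsion-free; spell this out, and make clear where the multiplicity hypothesis on the Sen weights of $W^+$ is actually used.
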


    The above Theorem is proved in \cite[§ 5.2,5.3]{wu2022trianguline}.

    Notice then that if $W_{\bar a}^+$ is a $\BdRA^+$-representation of $\absGal$ with Sen weights
    in $\bar a$ and $\delta_{\bar a}:\Qp^\times\rightarrow A^\times$ is a continuous
    character with $\rr{wt}(\delta_{\bar a})+\Z=\bar a$,
    then $W_{\bar a}^+(\delta_{\bar a}^{-1})\coloneqq W_{\bar a}^+\otimes_{\BdRA^+}\WdR^+(\delta_{\bar a}^{-1})$
    has integral Sen weights, hence it is almost deRham.
    Let $W_{\bar a}$ be a $\BdRA$-representation of $\absGal$
    and let $W_{\bar a}^+$ be a $\absGal$-stable $\BdRA^+$-lattice of $W_{\bar a}$
    as in Theorem \ref{non integral weights};
    we then define
    \[
      D_{\pdR, \bar a}(W_{\bar a})\coloneqq D_{\pdR}(W_{\bar a}^+(\delta_{\bar a}^{-1})[1/t]),
    \]
    which is a finite free $A$-module endowed with an $A$-linear nilpotent endomorphism $N_{\bar a}$.
    In \cite[§ 5.3]{wu2022trianguline} it is proved that the module $D_{\pdR, \bar a}(W_{\bar a})$
    does not depend on the choice of the character $\delta_{\bar a}$.

    For simplicity from now on, when we say that a $\absGal$-stable $\BdRA^+$-lattice $W^+$ \emph{has Sen weights in $\bar{\mcal A}$},
    we mean that the set of mod $\Z$ classes of the Sen weights of $W^+$ is equal to $\bar{\mcal A}$
    counting multiplicities.

    \begin{lem}
      \label{bijection of G-stable lattices and N-stable filtrations for non-integral weights}
      Let $W$ be a $\BdRA$-representation of $\absGal$ such that there exists a
      $\absGal$-stable $\BdRA^+$-lattice $W_0^+$ with Sen weights in $\mathcal{A}\subset A$.
      For every $\bar a\in\bar{\mcal A}$, let us fix a continuous character $\delta_{\bar a}:\Qp^\times\rightarrow A^\times$
      such that $\rr{wt}(\delta_{\bar a})+\Z= \bar a$ and let $W_{\bar a}$, $W_{0,\bar a}^+$ as
      in Theorem \ref{non integral weights}.
      There is a bijection
      \begin{align*}
        \left\{
          \begin{array}{c}
            \absGal\text{-stable }\\
            \BdRA^+\text{-lattices of }W\\
            \text{with Sen weights in }\bar{\mcal A}
          \end{array}
        \right\}&
        \leftrightarrow
        \left\{(\Fil^\bullet(D_{\pdR,\bar a}(W_{\bar a})))_{\bar a\in\bar{\mcal A}}\colon
          \begin{array}{c}
            \Fil^\bullet(D_{\pdR,\bar a}(W_{\bar a}))\text{ is an }\\N_{\bar a}\text{-stable filtration of}\\
            D_{\pdR,\bar a}(W_{\bar a})\text{ for all }\bar a\in\bar{\mcal A}
          \end{array}
        \right\}\\
        W^+=\bigoplus_{\bar a\in\bar{\mcal A}}W_{\bar a}^+
        &\mapsto (\Fil^\bullet_{W_{\bar a}^+(\delta_{\bar a}^{-1})}(D_{\pdR,\bar a}(W_{\bar a})))_{\bar a\in\bar{\mcal A}},
      \end{align*}
      where the filtrations $\Fil^\bullet_{W_{\bar a}^+(\delta_{\bar a}^{-1})}(D_{\pdR,\bar a}(W_{\bar a}))$
      are defined as in Proposition \ref{prop:lattices and filtrations}.
    \end{lem}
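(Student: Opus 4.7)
The plan is to reduce to the almost de Rham case that has already been established in (\ref{bijection breuil}) by decomposing along the classes $\bar a \in \bar{\mcal A}$ and then twisting by the characters $\delta_{\bar a}$.

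First, I would use Theorem \ref{non integral weights} to obtain the canonical decomposition $W=\bigoplus_{\bar a\in\bar{\mcal A}}W_{\bar a}$ and, for any $\absGal$-stable $\BdRA^+$-lattice $W^+$ of $W$ with Sen weights in $\bar{\mcal A}$, the induced decomposition $W^+=\bigoplus_{\bar a\in\bar{\mcal A}}W_{\bar a}^+$ with $W_{\bar a}^+\coloneqq W^+\cap W_{\bar a}$ a $\absGal$-stable $\BdRA^+$-lattice of $W_{\bar a}$ whose $A\otimes_{\Qp}C$-representation $W_{\bar a}^+/tW_{\bar a}^+$ has Sen weights in $\bar a$. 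Conversely, any tuple $(W_{\bar a}^+)_{\bar a\in\bar{\mcal A}}$ of such lattices yields a lattice of $W$ with Sen weights in $\bar{\mcal A}$ by taking direct sums. This sets up a bijection
\[
  \left\{\absGal\text{-stable }\BdRA^+\text{-lattices of }W\text{ with Sen weights in }\bar{\mcal A}\right\}
  \;\;\longleftrightarrow\;\;
  \prod_{\bar a\in\bar{\mcal A}}\left\{\absGal\text{-stable }\BdRA^+\text{-lattices of }W_{\bar a}\text{ with Sen weights in }\bar a\right\},
\]
reducing the claim to a bijection at each index $\bar a$ separately.

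Next, fix $\bar a\in\bar{\mcal A}$. Tensoring with $\WdR^+(\delta_{\bar a}^{-1})$ gives an equivalence between $\BdRA^+$-representations of $\absGal$ with Sen weights in $\bar a$ and $\BdRA^+$-representations with integral Sen weights, i.e.\ almost de Rham ones; in particular
\[
  W_{\bar a}^+\;\longmapsto\;W_{\bar a}^+(\delta_{\bar a}^{-1})
\]
identifies the set of $\absGal$-stable $\BdRA^+$-lattices of $W_{\bar a}$ with Sen weights in $\bar a$ with the set of $\absGal$-stable $\BdRA^+$-lattices of the almost de Rham $\BdRA$-representation $W_{\bar a}(\delta_{\bar a}^{-1})$. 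Applying the coefficient version of Proposition \ref{prop:lattices and filtrations} recalled in (\ref{bijection breuil}) to the almost de Rham representation $W_{\bar a}(\delta_{\bar a}^{-1})$ produces a bijection between such lattices and $N_{\bar a}$-stable, exhaustive filtrations of $D_{\pdR}(W_{\bar a}(\delta_{\bar a}^{-1}))=D_{\pdR,\bar a}(W_{\bar a})$, by the very definition of $D_{\pdR,\bar a}$. Composing all three steps yields the map of the statement.

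The main point to verify is that the correspondence is independent of the auxiliary choices. The independence of $D_{\pdR,\bar a}(W_{\bar a})$ from the chosen $\delta_{\bar a}$ is recorded from Wu's thesis just before the lemma; what is left is to check that the explicit filtration $\Fil^\bullet_{W_{\bar a}^+(\delta_{\bar a}^{-1})}(D_{\pdR,\bar a}(W_{\bar a}))$, defined as $(t^i\BdR^+[\log t]\otimes_{\BdR^+}W_{\bar a}^+(\delta_{\bar a}^{-1}))^{\absGal}$, transforms compatibly under the change of character $\delta_{\bar a}\rightsquigarrow\delta'_{\bar a}$ (which only shifts indices by the integer $\rr{wt}(\delta_{\bar a}/\delta'_{\bar a})$) and hence produces the same filtration up to the canonical identification. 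The only slightly delicate point is uniqueness of the decomposition $W^+=\bigoplus W_{\bar a}^+$, but this is part of Theorem \ref{non integral weights}; once one has it, each of the three bijections used is manifestly functorial, so the composition is a bijection.
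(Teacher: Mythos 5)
Your proof is correct and follows essentially the same route as the paper's: decompose via Theorem \ref{non integral weights}, twist by $\delta_{\bar a}^{-1}$ to land in the almost de Rham setting, and then invoke the bijection (\ref{bijection breuil}). The only difference is presentational — you package the argument as a composition of three bijections, whereas the paper verifies injectivity and surjectivity of the composite map directly — and your extra remark on independence of the choice of $\delta_{\bar a}$ is sensible but not required for the statement as given, since the lemma fixes the $\delta_{\bar a}$ in advance.
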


    \begin{proof}
      Let us prove injectivity: assume $W_1^+,W_2^+$ are $\absGal$-stable
      $\BdRA^+$-lattices of $W$ with Sen weights in $\mcal A$ such that for all $\bar a\in \bar{\mcal A}$
      \[
        \Fil^\bullet_{W_{1,\bar a}^+(\delta_{\bar a}^{-1})}(D_{\pdR,\bar a}(W_{\bar a}))=
        \Fil^\bullet_{W_{2,\bar a}^+(\delta_{\bar a}^{-1})}(D_{\pdR,\bar a}(W_{\bar a})).
      \]
      By the bijection (\ref{bijection breuil}),
      we have that $W_{1,\bar a}^+(\delta_{\bar a}^{-1})=W_{2,\bar a}^+(\delta_{\bar a}^{-1})$
      for all $\bar a\in\bar{\mathcal{A}}$.
      Then $W_{1,\bar a}^+=W_{1,\bar a}^+(\delta_{\bar a}^{-1})\otimes_{\BdRA^+}\WdR^+(\delta_{\bar a})
      =W_{2,\bar a}^+(\delta_{\bar a}^{-1})\otimes_{\BdRA^+}\WdR^+(\delta_{\bar a})=W_{2,\bar a}^+$.
      Moreover by Theorem \ref{non integral weights}, we have
      $W_1^+=\bigoplus_{\bar a\in\bar{\mcal A}}W_{1,\bar a}^+
      =\bigoplus_{\bar a\in\bar{\mcal A}}W_{2,\bar a}^+=W_2^+$ and this proves injectivity.
      Now let $(\Fil^\bullet(D_{\pdR,\bar a}(W_{\bar a})))_{\bar a\in\bar{\mcal A}}$
      be a tuple of filtrations of $D_{\pdR,\bar a}(W_{\bar a})$ such that each filtration is stable under
      $N_{\bar a}$ for every $\bar a\in\bar{\mcal A}$.
      By the bijection (\ref{bijection breuil}), we have that $\Fil^\bullet(D_{\pdR,\bar a}(W_{\bar a}))=
      \Fil^\bullet_{W_{\bar a}^+(\delta_{\bar a}^{-1})}(D_{\pdR,\bar a}(W_{\bar a}))$
      for some $\absGal$-stable
      $\BdRA^+$-lattice $W_{\bar a}^+(\delta_{\bar a}^{-1})$ of the $\BdRA$-representation
      $W_{0,\bar a}^+(\delta_{\bar a}^{-1})[1/t]$.
      It is then easy to see that the tuple $(\Fil^\bullet(D_{\pdR,\bar a}(W_{\bar a})))_{\bar a\in\bar{\mcal A}}$
      is the image of $\bigoplus_{\bar a \in \bar{\mcal A}}W_{\bar a}^+$ through the map described in
      the Lemma, where $W_{\bar a}^+\coloneqq W_{\bar a}^+(\delta_{\bar a}^{-1})\otimes_{\BdRA^+}\WdR^+(\delta_{\bar a})$.
      Finally, we have that $\bigoplus_{\bar a \in \bar{\mcal A}}W_{\bar a}^+$ is a $\absGal$-stable
      $\BdRA^+$-lattice of $W$, since $W_{\bar a}^+$ is $\absGal$-stable for all $\bar a\in \bar{\mcal A}$
      and $W_{\bar a}^+[1/t]=W_{\bar a}$ by construction.
      This concludes the proof.
    \end{proof}

    \begin{lem}
      \label{sen weights are the same mod Z}
      Let $D,D'$ be two triangulable $\phigam$-modules over $\robba_A$
      of parameters $(\delta_1,\dots,\delta_n)$ and $(\delta_1',\dots,\delta_n')$.
      Let $0\subset D_1\subset\dots\subset D_n$ and $0\subset D_1'\subset\dots\subset D_n'$
      be the filtrations of $D$ and $D'$ respectively giving rise to the parameters
      $\underline{\delta}$ and $\underline{\delta}'$.
      If for all $1\leq i\leq n$ we have that $D_i[1/t]=D_i'[1/t]$, then
      $\rr{wt}(\delta_i)\equiv \rr{wt}(\delta_i')\mod\Z$.
    \end{lem}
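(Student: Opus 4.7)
The strategy is to reduce to the rank-one case by taking graded pieces, and then to exploit the classification of rank-one $\phigam$-modules modulo the twist by $x\in\T$ realized through multiplication by the element $t$. Since the localization $(-)[1/t]$ is exact, the hypothesis $D_i[1/t]=D_i'[1/t]$ (as sub-$\phigam$-modules of $D[1/t]=D'[1/t]$) passes to graded pieces, giving
\[
    \robba_A(\delta_i)[1/t]\otimes_A\mcal L_i \;=\; (D_i/D_{i-1})[1/t] \;=\; (D_i'/D_{i-1}')[1/t] \;=\; \robba_A(\delta_i')[1/t]\otimes_A\mcal L_i'
\]
for suitable line bundles $\mcal L_i,\mcal L_i'$ coming from Theorem~\ref{rank 1 phi gamma modules} (and in the Artin local setting in which this lemma is used these are automatically trivial). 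Thus the problem reduces to showing that if $\robba_A(\delta)[1/t]\cong\robba_A(\delta')[1/t]$ as rank-one $\phigam$-modules over $\robba_A[1/t]$, then $\delta/\delta'=x^k$ for some $k\in\Z$.

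The key computation is that $\Phi(t)=pt$ and $\gamma\cdot t=\gamma\,t$ for $\gamma\in\Z_p^\times$, which gives $t^k\robba_A(\delta)=\robba_A(\delta\cdot x^k)$ inside $\robba_A(\delta)[1/t]$; hence the latter is unchanged (up to isomorphism) by replacing $\delta$ with $\delta\cdot x^k$. Conversely, given an abstract isomorphism $\robba_A(\delta)[1/t]\xrightarrow{\sim}\robba_A(\delta')[1/t]$, both $\robba_A(\delta)$ and the image of $\robba_A(\delta')$ are $\phigam$-stable $\robba_A$-lattices of the same rank-one $\phigam$-module over $\robba_A[1/t]$; any such lattice must be of the form $t^k\cdot\robba_A(\delta)=\robba_A(\delta\cdot x^k)$ for some $k\in\Z$, and the uniqueness clause of Theorem~\ref{rank 1 phi gamma modules} then forces $\delta'=\delta\cdot x^k$. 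Since $\rr{wt}(x)=1$, we conclude $\rr{wt}(\delta_i)\equiv\rr{wt}(\delta_i')\pmod{\Z}$.

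The main obstacle is to verify rigorously that the $\phigam$-stable $\robba_A$-lattices inside a rank-one $\phigam$-module over $\robba_A[1/t]$ are precisely those obtained from a chosen one by multiplication by $t^\Z$; this can be done by unwinding the semi-linear algebra and using that an element of $\robba_A[1/t]^\times$ intertwining $\Phi$ and $\Gamma$ up to unit scalars in $\robba_A^\times$ must lie in $\robba_A^\times\cdot t^\Z$. A more conceptual alternative is to apply the functor $\WdR$, which by Theorem~\ref{phi,gamma modules and B-pairs with coefficients} factors through the $[1/t]$-localization, and to observe that $\rr{wt}(\delta)\pmod{\Z}$ is precisely the (well-defined) mod-$\Z$ Sen weight of the rank-one $\BdRA$-representation $\WdR(\robba_A(\delta)[1/t])$.
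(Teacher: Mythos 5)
Your proposal is correct and follows essentially the same route as the paper: pass to graded pieces (exactness of $(-)[1/t]$), reduce to the rank-one claim that an isomorphism $\robba_A(\delta_i)[1/t]\cong\robba_A(\delta_i')[1/t]$ forces $\delta_i'=x^{m_i}\delta_i$ via the identity $t^{m_i}\robba_A(\delta_i)=\robba_A(x^{m_i}\delta_i)$, and then invoke the uniqueness in Theorem~\ref{rank 1 phi gamma modules}. The only real difference is that you flag as the ``main obstacle'' the claim that every $\phigam$-stable rank-one $\robba_A$-lattice of $\robba_A(\delta)[1/t]$ is $t^k\robba_A(\delta)$ and sketch two justifications, whereas the paper simply asserts this as an immediate consequence; your $\WdR$-based alternative (Sen weight mod $\Z$ only depends on the $[1/t]$-localization, via Theorem~\ref{phi,gamma modules and B-pairs with coefficients}) is a clean conceptual reformulation that the paper does not use but that is entirely consistent with the surrounding section, where $A$ is Artin local so the line-bundle twists you worry about are indeed trivial.
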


    \begin{proof}
      Observe that for all $1\leq i\leq n$ we have that
      \[
        \robba_A(\delta_i)[1/t]\cong D_i[1/t]/D_{i-1}[1/t]=D_i'[1/t]/D_{i-1}'[1/t]\cong \robba_A(\delta_i')[1/t].
      \]
      As a consequence we have that $\robba_A(\delta_i')=t^{m_i}\robba_A(\delta_i)=\robba_A(x^{m_i}\delta_i)$
      for some $m_i\in\Z$.
      This implies that $\delta_i'=x^{m_i}\delta_i$ by Theorem \ref{rank 1 phi gamma modules}
      and thus $\rr{wt}(\delta_i')=\rr{wt}(\delta_i)+m_i
      \equiv\rr{wt}(\delta_i)\mod \Z$.
    \end{proof}

    The following result is \cite[Lemma A.2.1]{wu2022trianguline}.

    \begin{lem}
      \label{lemma zhixiang1}
      Let $V$ be a continuous semi-linear $K_m((t))$-representation of $\Gamma$ and let $\gamma$
      be a topological generator of $\Gamma$.
      Let $V^{(\gamma-1)\text{-nil}}$ be the space of elements $x$ in $V$ such that $\gamma-1$ acts
      nilpotently on $x$.
      There is an isomorphism of $A$-modules
      \begin{align*}
        (K_m[\log(t)]\otimes_{K_m}V)^\Gamma&\rightarrow V^{(\gamma-1)\text{-nil}}\\
        \sum_{i=1}^la_i\log(t)^i&\mapsto a_0\\
        \sum_{i=1}^l\frac{(-1)^i}{i!}\nabla^i(a)\log(t)^i&\mapsfrom a,
      \end{align*}
      where $\nabla^{l+1}(a)=0$.
    \end{lem}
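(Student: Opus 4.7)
The plan is to exploit that the semi-linear action of $\Gamma$ on $K_m[\log(t)]\otimes_{K_m} V$ satisfies $\gamma\cdot\log(t) = \log(t) + c(\gamma)$, where $c(\gamma) := \log(\chi(\gamma))\in\Qp$ and $\chi:\Gamma=\Z_p^\times\hookrightarrow\Z_p^\times$ is the cyclotomic character (this comes from $\gamma\cdot t=\chi(\gamma)t$). Both prospective maps are manifestly $A$-linear, so the heart of the argument is to check that they are well-defined and mutually inverse.

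For the forward direction, write $x = \sum_{i=0}^l a_i\log(t)^i$ with $a_i\in V$ and expand the invariance condition $\gamma\cdot x = x$ using the binomial theorem on $(\log(t) + c(\gamma))^i$. Comparing coefficients of $\log(t)^j$ yields
\[
a_j \;=\; \sum_{i\geq j}\binom{i}{j}\gamma(a_i)\,c(\gamma)^{i-j}.
\]
Taking $j=l$ gives $\gamma(a_l)=a_l$, and a straightforward decreasing induction on $j$ then shows that $(\gamma-1)$ acts nilpotently on every $a_j$ (with nilpotency order at most $l-j+1$). In particular $a_0\in V^{(\gamma-1)\text{-nil}}$, so the projection $x\mapsto a_0$ is well-defined.

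For the inverse, I would introduce the Sen-style operator $\nabla := c(\gamma)^{-1}\log(\gamma)$ on $V^{(\gamma-1)\text{-nil}}$, where $\log(\gamma) := \sum_{k\geq 1}\frac{(-1)^{k+1}}{k}(\gamma-1)^k$ is a finite sum on every nilpotent element. One verifies that $\nabla$ preserves $V^{(\gamma-1)\text{-nil}}$ (it commutes with $\gamma$), that it is independent of the choice of topological generator $\gamma$ (replacing $\gamma$ by $\gamma^n$ rescales both numerator and denominator by $n$), and that $\gamma = \exp(c(\gamma)\nabla)$ on nilpotent elements. Granting these, applying $\gamma$ to $\sum_{i\geq 0}\frac{(-1)^i}{i!}\nabla^i(a)\log(t)^i$ and expanding both $\gamma(\nabla^i(a)) = \sum_{k\geq 0}\frac{c(\gamma)^k}{k!}\nabla^{k+i}(a)$ and $(\log(t)+c(\gamma))^i$, the coefficient of $\log(t)^j$ reduces, after reindexing with $m=k+i$, to a sum whose inner factor is $\sum_{k=0}^{m-j}\binom{m-j}{k}(-1)^k$. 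By the standard binomial identity this vanishes unless $m=j$, and one recovers exactly $\frac{(-1)^j}{j!}\nabla^j(a)$. Hence the inverse map lands in $\Gamma$-invariants.

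That the two maps are mutually inverse is then essentially automatic: the composition forward-then-backward extracts the $i=0$ term from $\sum\frac{(-1)^i}{i!}\nabla^i(a)\log(t)^i$ and returns $a$, while the other composition requires identifying the coefficients $a_i$ of a $\Gamma$-invariant element with $\frac{(-1)^i}{i!}\nabla^i(a_0)$, which one obtains by iterating the coefficient recursion of the first step and applying the same binomial identity. The principal obstacle is establishing the formal properties of $\nabla$ on $V^{(\gamma-1)\text{-nil}}$—especially the exponential identity $\gamma=\exp(c(\gamma)\nabla)$ and its independence of the choice of $\gamma$—after which the remainder is purely formal manipulation of power series.
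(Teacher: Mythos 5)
The paper does not prove this lemma itself; it cites \cite[Lemma A.2.1]{wu2022trianguline} and moves on, so there is no in-paper proof to compare against. Evaluating your argument on its own merits, the core is correct and essentially a direct reconstruction of the standard computation: the identity $\gamma\cdot\log(t)=\log(t)+c(\gamma)$, the coefficient recursion $a_j=\sum_{i\geq j}\binom{i}{j}\gamma(a_i)c(\gamma)^{i-j}$, the decreasing induction giving nilpotency, the definition $\nabla=c(\gamma)^{-1}\log(\gamma)$ with $\gamma=\exp(c(\gamma)\nabla)$, and the binomial cancellation showing that $a\mapsto\sum_i\frac{(-1)^i}{i!}\nabla^i(a)\log(t)^i$ lands in invariants all check out.

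The one place you slide past too quickly is the second composition, where you assert that $a_i=\frac{(-1)^i}{i!}\nabla^i(a_0)$ ``by iterating the coefficient recursion and applying the same binomial identity.'' As written, the recursion expresses $(1-\gamma)a_j$ in terms of the \emph{higher} coefficients $a_i$, $i>j$, so it does not directly solve $a_i$ in terms of $a_0$; making this precise is genuinely the delicate point, more so than the formal properties of $\nabla$ that you flag as the principal obstacle. A cleaner route is to prove injectivity of the forward map $\theta:x\mapsto a_0$ directly: if $a_0=0$ and $l$ is the top degree, the recursion yields $(\gamma-1)^k(a_{l-k})=(-1)^k\frac{l!}{(l-k)!}c(\gamma)^ka_l$ by induction, so $(\gamma-1)^l(a_0)=(-1)^l l!\,c(\gamma)^l a_l$, forcing $a_l=0$, contradiction. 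Combined with your verified identity $\theta\circ\psi=\mathrm{id}$, this immediately gives $\psi\circ\theta=\mathrm{id}$. You should also say a word on passing from $\gamma$-invariance to $\Gamma$-invariance (finite sums plus continuity of the action and density of $\langle\gamma\rangle$), and on why the inverse sum terminates ($\log(\gamma)=(\gamma-1)u$ for a polynomial $u$ in $\gamma-1$, so $\nabla^{l+1}(a)=0$ whenever $(\gamma-1)^{l+1}(a)=0$). With those two patches the proof is complete.
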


    \begin{lem}
      \label{replacement of Lemma zhixiang}
      Let $\Lambda$ be a free $\prod_{\tau:K_m\hookrightarrow L}A\llbracket t\rrbracket$-module endowed with
      a $\Gamma$-action with integral Sen weights.
      Then
      \[
        D_\pdR(\Lambda\otimes_{K_m\llbracket t\rrbracket}\BdR)=(\Lambda[1/t]\otimes_{K_m}K_m[\log(t)])^\Gamma.
      \]
    \end{lem}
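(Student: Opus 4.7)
The plan is to construct a natural $\absGal$-equivariant map from the right-hand side into the defining expression for $D_\pdR$, and then use Lemma \ref{lemma zhixiang1} together with a devissage along the Sen weights of $\Lambda$ to show this map is an isomorphism.

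First, since $t$ is invertible in $\BdR$, we have $\Lambda\otimes_{K_m\llbracket t\rrbracket}\BdR = \Lambda[1/t]\otimes_{K_m((t))}\BdR$, so by \eqref{def of DpdR} one has
\[
D_\pdR(\Lambda\otimes_{K_m\llbracket t\rrbracket}\BdR) = \bigl(\Lambda[1/t]\otimes_{K_m((t))}\BdR[\log t]\bigr)^{\absGal}.
\]
The inclusion of $K_m$-subalgebras $K_m[\log t]\hookrightarrow \BdR[\log t]$, combined with the canonical base change $\Lambda[1/t]\hookrightarrow \Lambda[1/t]\otimes_{K_m((t))}\BdR$, induces a natural $\absGal$-equivariant map $\Lambda[1/t]\otimes_{K_m}K_m[\log t] \to \Lambda[1/t]\otimes_{K_m((t))}\BdR[\log t]$, where $\absGal$ acts on the source through its quotient $\Gamma$. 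Passing to invariants gives a natural map
\[
\iota\colon \bigl(\Lambda[1/t]\otimes_{K_m}K_m[\log t]\bigr)^\Gamma \longrightarrow D_\pdR(\Lambda\otimes_{K_m\llbracket t\rrbracket}\BdR),
\]
which is injective because $\Gamma$-invariants are left-exact and the underlying map is already injective.

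Next, I would apply Lemma \ref{lemma zhixiang1} with $V = \Lambda[1/t]$ to identify the source of $\iota$ with $\Lambda[1/t]^{(\gamma-1)\text{-nil}}$. The integral Sen weight hypothesis ensures that the $\BdRA$-representation $\Lambda\otimes_{K_m\llbracket t\rrbracket}\BdR$ is almost de Rham, so its $D_\pdR$ is a finite free $A$-module of rank equal to the $\prod_\tau A\llbracket t\rrbracket$-rank of $\Lambda$. To prove that $\iota$ is surjective, I would match ranks by devissage: since $A$ is Artin local and $\Lambda$ is free, after possibly enlarging $L$ the module $\Lambda$ admits a $\Gamma$-stable filtration whose graded pieces are rank-one modules $A\llbracket t\rrbracket(\delta)$ with $\mathrm{wt}(\delta)\in\Z$. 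For each such rank-one piece, twisting by a suitable power of $t$ reduces to weight zero and a direct computation shows both sides of the claimed equality are free $A$-modules of rank one. The general case follows by the five-lemma applied along the filtration, using the long exact sequences for $\Gamma$-cohomology on the source and for $D_\pdR$ on the target (both of which are exact sequences of finite free $A$-modules in our setting thanks to the integral weight condition).

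The main obstacle is the surjectivity of $\iota$: one must rule out the appearance of extra $\absGal$-invariants in $\Lambda[1/t]\otimes_{K_m((t))}\BdR[\log t]$ beyond those already coming from $\Lambda[1/t]\otimes_{K_m}K_m[\log t]$. This is precisely where the integral Sen weight assumption is indispensable, since otherwise $\Lambda\otimes_{K_m\llbracket t\rrbracket}\BdR$ need not be almost de Rham and the expected dimension count would break down; concretely, integrality is exactly what guarantees that the rank-one building blocks in the devissage match on both sides.
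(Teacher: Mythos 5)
Your strategy --- build a natural injection $\iota$ and prove surjectivity by d\'evissage along a rank-one filtration of $\Lambda$ --- is a genuinely different route from the one the paper takes, which simply defers to the proof of \cite[Lemma A.2.5]{wu2022trianguline} (a direct Sen-theoretic decompletion argument showing that the $\absGal$-invariants of $\Lambda\otimes_{K_m\llbracket t\rrbracket}\BdR[\log t]$ are already visible over the decompleted ring $K_m((t))[\log t]$). The construction of $\iota$ and its injectivity are fine, as is the observation that integral Sen weights make $\Lambda\otimes_{K_m\llbracket t\rrbracket}\BdR$ almost de Rham and hence $D_\pdR$ of the right rank. But two unproved steps carry the weight of the argument.

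First, you assert that $\Lambda$ admits a $\Gamma$-stable filtration with graded pieces of the form $A\llbracket t\rrbracket(\delta)$, $\rr{wt}(\delta)\in\Z$. That is a triangulability statement which does not follow from the integral Sen weight hypothesis alone: a $\Gamma$-semilinear action on a free $\prod_\tau A\llbracket t\rrbracket$-module is not automatically an iterated extension of rank-one characters. Producing a $\Gamma$-stable saturated rank-one submodule requires lifting a $\nabla$-stable line in $\Lambda/t\Lambda$ to a $\Gamma_m$-stable line bundle in $\Lambda$ by successive approximation, \emph{and} making the lifted flag stable under the finite quotient $\Gamma/\Gamma_m$; this is a Sen-type descent argument of the same depth as the lemma itself, not an application of ``$A$ is Artin local and $\Lambda$ is free.''

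Second, even granting such a filtration, the five-lemma (or the equivalent length count) needs the source functor $\Lambda\mapsto(\Lambda[1/t]\otimes_{K_m}K_m[\log t])^\Gamma$ to be exact along it. Left-exactness is clear, but the surjectivity of $S(\Lambda_2)\to S(\Lambda_3)$ for a short exact sequence $0\to\Lambda_1\to\Lambda_2\to\Lambda_3\to 0$ is a cohomology-vanishing assertion (equivalently, exactness of $V\mapsto V^{(\gamma-1)\text{-nil}}$ after Lemma \ref{lemma zhixiang1}) that you state but do not prove. In the commuting ladder with the $D_\pdR$ row, this surjectivity is equivalent to the surjectivity of $\iota_2$ you are trying to deduce, so the d\'evissage is circular unless this exactness is established independently. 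Note also that an injection of free $A$-modules of equal rank is not automatically an isomorphism over a non-reduced Artin local $A$, so pure rank counting cannot replace the exactness either. In short, the two missing ingredients together constitute the Sen-theoretic content of the lemma; the d\'evissage repackages rather than removes the difficulty.
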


    The proof of this Lemma is the same as the proof of \cite[Lemma A.2.5]{wu2022trianguline},
    even though the statement is slightly different.

    \begin{rem}
      \label{identify Lambda0/t with DpdR}
      Let $M$ be the $\phigam$-module over $\robba_A[1/t]$ fixed at the beginning of this Section
      and let us fix a triangulable $\phigam$-module $D$ over $\robba_A$
      such that $D[1/t]=M$.
      Moreover let $\mcal A$ be the set of Sen weights of $\WdR^+(D)$ and let
      $\WdR(M)=\bigoplus_{\bar a\in\bar{\mcal A}}W_{\bar a}$
      and $\WdR^+(D)=\bigoplus_{\bar a\in\bar{\mcal A}}W_{\bar a}^+$ as in Theorem \ref{non integral weights}.
      Let $\Lambda\coloneqq D^r\otimes_{\robba_A^r,\tau}A\llbracket t\rrbracket$, then
      through an argument similar to \cite[Remark 5.2.5]{wu2022trianguline}, we can get a decomposition of $\Lambda$:
      let $\nabla\coloneqq\frac{\log(\gamma^{p^s})}{\log(\chi(\gamma^{p^s}))}$ for $s$ big enough
      be the Sen operator as in \cite[Proposition III.1.1]{berger2004equations}
      acting on $\Lambda$
      such that $\nabla(t^n)=nt^n$ for $n\in\Z$, where $\gamma$ is a topological generator of $\Gamma_m$.
      For all $n\in\N$, we have that $\nabla$ acts on $\Lambda/t^n\Lambda$,
      hence we have a decomposition
      \[
        \Lambda/t^n\Lambda=\bigoplus_{\bar a\in\bar{\mcal A}}(\Lambda/t^n\Lambda)_{\bar a}
      \]
      as $A$-modules consisting of (direct sums of) generalized eigenspaces for the $A$-linear operator $\nabla$
      (the generalized eigenspaces are the eigenspaces for $\nabla$ seen as an endomorphism of an $L$-vector space,
      but are easily seen to be stable under the $A$-action).
      Since the action of $\Gamma_m$ and $\nabla$ commute, we have that each
      $(\Lambda/t^n\Lambda)_{\bar a}$ is stable under the $\Gamma_m$-action,
      thus $\Lambda_{\bar a}\coloneqq\varprojlim_{n}(\Lambda/t^n\Lambda)_{\bar a}$
      is an $A\llbracket t\rrbracket$-representation of $\Gamma_m$ and we obviously have a decomposition
      \[
        \Lambda=\bigoplus_{\bar a\in\bar{\mcal A}}\Lambda_{\bar a}.
      \]
      Notice that we can choose a basis of generalized eigenvectors for $\nabla$ that respects the
      triangulation of $D$.
      Observe moreover that
      \[
        \bigoplus_{\bar a\in\bar{\mcal A}}W_{\bar a}^+=\WdR^+(D)=\Lambda\otimes_{K_m\llbracket t\rrbracket}\BdR^+
        =\bigoplus_{\bar a\in \bar{\mcal A}}(\Lambda_{\bar a}\otimes_{K_m\llbracket t\rrbracket}\BdR^+).
      \]
      It is then obvious that $\Lambda_{\bar a}[1/t]\otimes_{K_m((t))}\BdR =
      W_{\bar a}[1/t]$ by the uniqueness in Theorem \ref{non integral weights},
      since they have the same Sen weights;
      hence
      \[
        W_{\bar a}^+=\Lambda_{\bar a}\otimes_{K_m\llbracket t\rrbracket}\BdR^+.
      \]
      For simplicity of notations, let
      \[
        \Lambda_{\bar a}(\delta_{\bar a}^{-1})\coloneqq
        \Lambda_{\bar a}\otimes_{A\llbracket t\rrbracket}(\robba_A^r(\delta_{\bar a}^{-1})\otimes_{\robba_A^r}A\llbracket t\rrbracket)
      \]
      for some character $\delta_{\bar a}$ such that $\rr{wt}(\delta_{\bar a})+\Z=\bar a$,
      so that $\Lambda_{\bar a}(\delta_{\bar a}^{-1})$ has integral Sen weights.
      By Lemma \ref{replacement of Lemma zhixiang}, we have
      \begin{align*}
        D_{\pdR,\bar a}(W_{\bar a})&=D_{\pdR}(W_{\bar a}\otimes_{\BdR}\WdR(\delta_{\bar a}^{-1}))
        =D_{\pdR}(\Lambda_{\bar a}(\delta_{\bar a}^{-1})\otimes_{K_m\llbracket t\rrbracket}\BdR)\\
        &=(\Lambda_{\bar a}(\delta_{\bar a}^{-1})[1/t]\otimes_{K_m}K_m[\log(t)])^\Gamma.
      \end{align*}
      Therefore by Lemma \ref{lemma zhixiang1}, we have an isomorphism of $A$-modules
      \begin{align*}
        D_{\pdR,\bar a}(W_{\bar a})&\rightarrow (\Lambda_{\bar a}(\delta_{\bar a}^{-1})[1/t])^{(\gamma-1)\text{-nil}}\\
        \sum_{i=1}^la_i\log(t)^i&\mapsto a_0\\
        \sum_{i=1}^l\frac{(-1)^i}{i!}\nabla^i(a)\log(t)^i&\mapsfrom a,
      \end{align*}
      where $\nabla^{l+1}(a)=0$.
      Let $\{x_{\bar a,1}',\dots,x'_{\bar a,n_{\bar a}}\}$ be an $A$-basis of $D_{\pdR,\bar a}(W_{\bar a})$ and
      let $\{y_{\bar a,1}',\dots,y_{\bar a,n_{\bar a}}'\}$ be the $A$-basis of
      $(\Lambda_{\bar a}(\delta_{\bar a}^{-1})[1/t])^{(\gamma-1)\text{-nil}}$
      obtained as image of the basis $\{x_{\bar a,1}',\dots,x'_{\bar a,n_{\bar a}}\}$.
      Let $\Lambda(\delta_{\bar a}^{-1})_{0,\bar a}$ be the $A\llbracket t\rrbracket$-module spanned by
      $\{y_{\bar a,1}',\dots,y_{\bar a,n_{\bar a}}'\}$.
      By construction we have that $\Lambda(\delta_{\bar a}^{-1})_{0,\bar a}[1/t]=\Lambda_{\bar a}(\delta_{\bar a}^{-1})[1/t]$,
      hence $\Lambda(\delta_{\bar a}^{-1})_{0,\bar a}$ is a sub-$A\llbracket t\rrbracket$-module of
      $M^r(\delta_{\bar a}^{-1})\otimes_{\robba_A^r,\tau}A\llbracket t\rrbracket$ and it is stable under the action of
      $\Gamma_m$ inherited by $M^r(\delta_{\bar a}^{-1})$, as $(\gamma-1)$ acts nilpotently
      on the basis $\{y_{\bar a,1}',\dots,y_{\bar a,n_{\bar a}}'\}$.
      Consider the isomorphism of $A((t))$-modules
      \[
        M^r(\delta_{\bar a}^{-1})^r\otimes_{\robba_A^r[1/t],\tau}A((t))\rightarrow M^r\otimes_{\robba_A^r[1/t],\tau}A((t))
      \]
      and let $\{y_{\bar a,1},\dots,y_{\bar a,n_{\bar a}}\}$ be the elements obtained as the image of
      the $A\llbracket t\rrbracket$-basis $\{y'_{\bar a,1},\dots,y'_{\bar a,n_{\bar a}}\}$
      of $\Lambda(\delta_{\bar a}^{-1})_{0,\bar a}$ through the above map.
      We denote by $\Lambda_{0,\bar a}$ the $A\llbracket t\rrbracket$-module spanned by the
      elements $\{y_{\bar a,1},\dots,y_{\bar a,n_{\bar a}}\}$ in $M^r\otimes_{\robba_A^r[1/t],\tau}A((t))$;
      we then have that $\Lambda_{0,\bar a}$ is $\Gamma_m$-stable and $\Lambda_{0,\bar a}[1/t]=\Lambda_{\bar a}[1/t]$.
      Let us define
      \[
        \Lambda_0\coloneqq\bigoplus_{\bar a\in\bar{\mcal A}}\Lambda_{0,\bar a},
      \]
      which is a $\Gamma_m$-stable lattice of $M^r\otimes_{\robba_A^r[1/t],\tau}A((t))$.
      We have that
      \[
        \{y_{\bar a,1},\dots,y_{\bar a,n_{\bar a}}\colon\bar a\in\bar{\mcal A}\}
      \]
      is an $A\llbracket t\rrbracket$-basis of $\Lambda_0$ and hence we can choose it as
      an $A$-basis of $\Lambda_0/t\Lambda_0$.
      Observe that we can then identify the two $A$-modules $\Lambda_0/t\Lambda_0$ and
      $\bigoplus_{\bar a\in\bar{\mcal A}}D_{\pdR,\bar a}(W_{\bar a})$ through the isomorphism
      \begin{equation}
        \label{identification}
        \begin{split}
          \mcal F:\Lambda_0/t\Lambda_0&\xrightarrow{\sim}\bigoplus_{\bar a\in\bar{\mcal A}}D_{\pdR,\bar a}(W_{\bar a})\\
          y_{\bar a,i}&\mapsto \sum_{i=1}^l\frac{(-1)^i}{i!}\nabla^i(y'_{\bar a,i})\log(t)^i,
        \end{split}
      \end{equation}
      where $\nabla^{l+1}(y'_{\bar a,i})=0$.
      Obviously for all $\bar a\in\bar{\mcal A}$, we have that $\mcal F$ restricts to an isomorphism
      \[
        \mcal F:\Lambda_{0,\bar a}/t\Lambda_{0,\bar a}\xrightarrow{\sim} D_{\pdR,\bar a}(W_{\bar a}).
      \]
    \end{rem}

    \begin{lem}
      \label{iso on tLambda0'}
      Let $\Lambda(\delta_{\bar a}^{-1})_{0,\bar a}$ be the lattice constructed in Remark \ref{identify Lambda0/t with DpdR}
      for some $\bar a\in\bar{\mcal A}$.
      Then $\nabla:t\Lambda(\delta_{\bar a}^{-1})_{0,\bar a}\rightarrow t\Lambda(\delta_{\bar a}^{-1})_{0,\bar a}$ is an $A$-linear isomorphism.
    \end{lem}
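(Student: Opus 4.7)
The plan is to exploit the explicit structure of $\Lambda(\delta_{\bar a}^{-1})_{0,\bar a}$ as a free $A\llbracket t\rrbracket$-module on the distinguished basis $\{y'_{\bar a,1},\dots,y'_{\bar a,n_{\bar a}}\}$, together with the Leibniz rule for $\nabla$ and the fact that $\nabla(t^k) = kt^k$. The key observation is that, since each $y'_{\bar a,i}$ lies in $(\Lambda_{\bar a}(\delta_{\bar a}^{-1})[1/t])^{(\gamma-1)\text{-nil}}$, the operators $\gamma^{p^s}-1$ act nilpotently on these basis elements for $s$ large, and hence $\log(\gamma^{p^s})$ (a finite polynomial in $\gamma^{p^s}-1$ in this case) and therefore $\nabla$ act nilpotently on the $A$-span $V := \bigoplus_i A y'_{\bar a,i}$. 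Let $N$ denote the resulting nilpotent endomorphism of $V$.

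Next, I would expand any element of $t\Lambda(\delta_{\bar a}^{-1})_{0,\bar a}$ uniquely as $\sum_{k \geq 1} t^k v_k$ with $v_k \in V$, and compute
\[
\nabla\Bigl(\sum_{k\geq 1} t^k v_k\Bigr) = \sum_{k \geq 1} t^k\bigl(kv_k + N v_k\bigr)
\]
by $A$-linearity and the Leibniz rule. Thus, with respect to the $t$-adic grading, $\nabla$ is block-diagonal, acting on the $t^k$-component as the $A$-linear endomorphism $kI + N$ of $V$. Because $A$ is an Artin local $L$-algebra with $L$ of characteristic $0$, each positive integer $k$ is a unit in $A$; combined with the nilpotence of $N$, this makes $kI + N$ invertible on $V$ for every $k \geq 1$, with inverse given by the finite Neumann series $k^{-1}\sum_{j \geq 0}(-N/k)^j$.

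Injectivity and surjectivity of $\nabla$ on $t\Lambda(\delta_{\bar a}^{-1})_{0,\bar a}$ then follow formally: if $\sum_k t^k v_k$ lies in the kernel, then $(kI+N)v_k = 0$ forces $v_k = 0$ for all $k$; conversely, given $\sum_k t^k w_k \in t\Lambda(\delta_{\bar a}^{-1})_{0,\bar a}$, the element $\sum_k t^k (kI+N)^{-1} w_k$ is a well-defined preimage. I do not expect any real obstacle here; the only subtle point is the need to restrict to $t\Lambda(\delta_{\bar a}^{-1})_{0,\bar a}$ rather than $\Lambda(\delta_{\bar a}^{-1})_{0,\bar a}$ itself, since the degree-$0$ block $0 \cdot I + N = N$ is not invertible. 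This is precisely what the statement of the lemma accommodates.
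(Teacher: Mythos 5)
Your argument is correct and is essentially the unpacked version of what the paper does: the paper observes the same nilpotence of $\nabla$ on the mod-$t$ basis and then cites \cite[Lemma 3.1]{matthiasthesis} for the invertibility on $t\Lambda(\delta_{\bar a}^{-1})_{0,\bar a}$, whereas you carry out directly the $t$-adic block decomposition showing the degree-$k$ block is $kI+N$ with $N$ nilpotent and $k\in L^\times$ for $k\geq 1$. The mechanism is identical; your version is simply self-contained rather than deferred to the cited lemma.
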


    \begin{proof}
      Observe that $\nabla$ acts nilpotently on the basis $\{y'_{\bar a,1},\dots,y'_{\bar a,n_{\bar a}}\}$
      of $\Lambda(\delta_{\bar a}^{-1})_{0,\bar a}$ by construction.
      Hence there is an $A$-basis of $\Lambda(\delta_{\bar a}^{-1})_{0,\bar a}/t\Lambda(\delta_{\bar a}^{-1})_{0,\bar a}$ such that
      $\nabla$ has upper triangular matrix with nilpotent elements on the diagonal.
      Then the claim of the Lemma is a direct consequence of \cite[Lemma 3.1]{matthiasthesis}.
    \end{proof}

    \begin{lem}
      \label{iso on t^iLambda cap tLambda0}
      Let us choose a trivialization of $\Lambda(\delta_{\bar a}^{-1})_{0,\bar a}$
      and let $\Lambda_{\bar a}'\in\rr{Gr}_{n_{\bar a}}(A)$ stable under the action of $\Gamma_m$.
      Then we have
      \[
        \nabla:t^i\Lambda_{\bar a}'\cap t\Lambda(\delta_{\bar a}^{-1})_{0,\bar a}
        \rightarrow t^i\Lambda_{\bar a}'\cap t\Lambda(\delta_{\bar a}^{-1})_{0,\bar a}
      \]
      is an isomorphism for all $i\in\Z$.
    \end{lem}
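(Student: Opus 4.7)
The plan is to adapt the graded-piece argument of Lemma \ref{iso on tLambda0'} to the sub-$A\llbracket t\rrbracket$-module
\[
  I\coloneqq t^i\Lambda_{\bar a}'\cap t\Lambda(\delta_{\bar a}^{-1})_{0,\bar a},
\]
writing $\Lambda_0\coloneqq\Lambda(\delta_{\bar a}^{-1})_{0,\bar a}$ for brevity. The first step is to observe that $\nabla$ preserves $I$: both $t^i\Lambda_{\bar a}'$ and $t\Lambda_0$ are $\Gamma_m$-stable (the former because $\Lambda_{\bar a}'$ is, by hypothesis, and because $\gamma\cdot(t^ix)=\chi(\gamma)^it^i\gamma(x)$; the latter because $\Lambda_0$ itself is), and $\nabla$ is a convergent power series in the $\Gamma_m$-action, so it preserves every $\Gamma_m$-stable $A\llbracket t\rrbracket$-submodule of $V\coloneqq\Lambda_0[1/t]$. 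The same reasoning shows that $\nabla$ preserves each $I_n\coloneqq I\cap t^n\Lambda_0$ for $n\geq 1$. Injectivity of $\nabla|_I$ then follows at once from the injectivity of $\nabla$ on $t\Lambda_0$, which is part of Lemma \ref{iso on tLambda0'}.

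For surjectivity, I would filter $I$ by the decreasing family $\{I_n\}_{n\geq 1}$. Since $I\subseteq t\Lambda_0$ one has $I_1=I$; the filtration is separated, and $I$ is complete with respect to it because $I$ is closed inside the $t$-adically complete lattice $t\Lambda_0$, being the intersection with the closed submodule $t^i\Lambda_{\bar a}'$. Hence $I=\varprojlim_n I/I_n$. Each graded piece $I_n/I_{n+1}$ embeds $\nabla$-equivariantly into $t^n\Lambda_0/t^{n+1}\Lambda_0$, and the formula $\nabla(t^nx)=nt^nx+t^n\nabla(x)$ shows that $\nabla$ acts on the latter as $n\cdot\mathrm{id}+\bar\nabla$, where $\bar\nabla$ is the induced $A$-linear endomorphism of $\Lambda_0/t\Lambda_0$. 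By the construction of $\Lambda_0$ recalled in Remark \ref{identify Lambda0/t with DpdR}, $\bar\nabla$ is nilpotent; since $n\geq 1$ is invertible in the $\Q_p$-algebra $A$, the endomorphism $n\cdot\mathrm{id}+\bar\nabla$ is an isomorphism. Consequently $\nabla|_{I_n/I_{n+1}}$ is injective, and as $I_n/I_{n+1}$ is a finitely generated module over the Artin local ring $A$, hence of finite length, this injective endomorphism is automatically surjective.

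To globalize, I would run an induction on $n$ using the snake lemma applied to the short exact sequences $0\to I_n/I_{n+1}\to I/I_{n+1}\to I/I_n\to 0$, with trivial base case $I/I_1=0$. This shows that $\nabla$ induces an isomorphism on each $I/I_n$, and the inverses are compatible with the transition maps, so passing to the inverse limit gives that $\nabla$ is an isomorphism on $I=\varprojlim_n I/I_n$. The main point where care is required is the completeness assertion $I=\varprojlim_n I/I_n$ together with the compatibility of the inverses, since the filtration $\{I_n\}$ is inherited from the ambient module $t\Lambda_0$ rather than being intrinsic to $I$; this is handled by the closedness of $I$ inside the $t$-adically complete module $t\Lambda_0$.
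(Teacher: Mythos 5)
Your proof is correct, and it takes a genuinely different route from the one in the paper. The paper's own argument is to first observe that for $i\ll 0$ the intersection is all of $t\Lambda(\delta_{\bar a}^{-1})_{0,\bar a}$, where Lemma \ref{iso on tLambda0'} applies directly, and then to note that $\Gamma_m$-stability of both modules makes the intersection $\nabla$-stable for every $i$; the implicit final step is to compare the submodule $I\coloneqq t^i\Lambda_{\bar a}'\cap t\Lambda(\delta_{\bar a}^{-1})_{0,\bar a}$ against the ambient $t\Lambda(\delta_{\bar a}^{-1})_{0,\bar a}$ through the finite-length quotient, on which $\nabla$ is surjective and hence (by the usual Noetherian argument) bijective, so the snake lemma transfers the isomorphism from the $i\ll 0$ case to $I$. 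You instead work intrinsically on $I$, filtering it by $I_n=I\cap t^n\Lambda(\delta_{\bar a}^{-1})_{0,\bar a}$ and computing that $\nabla$ acts on each graded quotient $I_n/I_{n+1}\hookrightarrow t^n\Lambda(\delta_{\bar a}^{-1})_{0,\bar a}/t^{n+1}\Lambda(\delta_{\bar a}^{-1})_{0,\bar a}$ through the invertible operator $n\cdot\mathrm{id}+(\text{nilpotent})$ for $n\geq 1$, before passing to the limit. This is in effect a re-derivation, inside $I$, of the computation from \cite[Lemma 3.1]{matthiasthesis} that is invoked as a black box in the proof of Lemma \ref{iso on tLambda0'}, so your argument is more explicit and self-contained; the cost is the completeness discussion at the end, which you correctly single out as the delicate point (it can be settled by Artin--Rees, or more directly by noting that some $t^k\Lambda(\delta_{\bar a}^{-1})_{0,\bar a}\subseteq t^i\Lambda_{\bar a}'$ forces $I$ to be $t$-adically open, hence closed, in $t\Lambda(\delta_{\bar a}^{-1})_{0,\bar a}$). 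The paper's route is shorter precisely because it leans on Lemma \ref{iso on tLambda0'} and on the $i\ll 0$ reduction, whereas yours makes the graded mechanism transparent.
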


    \begin{proof}
      If $i<<0$, then we have $t^i\Lambda_{\bar a}'\cap t\Lambda(\delta_{\bar a}^{-1})_{0,\bar a}=t\Lambda(\delta_{\bar a}^{-1})_{0,\bar a}$
      and the claim is true by Lemma \ref{iso on tLambda0'}.
      Now it's enough to observe that
      \[
        \nabla(t^i\Lambda_{\bar a}'\cap t\Lambda(\delta_{\bar a}^{-1})_{0,\bar a})
      \subseteq t^i\Lambda_{\bar a}'\cap t\Lambda(\delta_{\bar a}^{-1})_{0,\bar a}
      \]
      for all $i\in\Z$ because both $\Lambda_{\bar a}'$ and $\Lambda(\delta_{\bar a}^{-1})_{0,\bar a}$ are $\Gamma_m$-stable,
      as showed in Remark \ref{identify Lambda0/t with DpdR}.
      Therefore $\nabla_{|t^i\Lambda_{\bar a}'\cap t\Lambda(\delta_{\bar a}^{-1})_{0,\bar a}}$
      is an automorphism.
    \end{proof}

    \begin{lem}
      \label{Gamma-stable lattices and Gamma-stable filtrations}
      Let $\Lambda_0$ be the $\Gamma_m$-stable lattice of $M^r\otimes_{\robba_A^r[1/t],\tau}A((t))$
      defined in Remark \ref{identify Lambda0/t with DpdR}.
      We have that $\Lambda_0$ decomposes as $\bigoplus_{\bar a\in\bar{\mcal A}}\Lambda_{0,\bar a}$, where
      each $\Lambda_{0,\bar a}$ is a free $A\llbracket t\rrbracket$-module stable under the action of $\Gamma_m$
      Then the action of $\Gamma_m$ on $\Lambda_{0,\bar a}$ induces a $\Gamma_m$-action
      on the $A$-module $\Lambda_{0,\bar a}/t\Lambda_{0,\bar a}$ for all $\bar a\in\bar{\mcal A}$.
      There exists a bijection
      \begin{align*}
        \left\{
          \begin{array}{c}
            \Gamma_m\text{-stable }A\llbracket t\rrbracket\text{-lattices}\\
            \text{of }M^r\otimes_{\robba_A^r[1/t],\tau}A((t))\\
            \text{ with Sen weights in }\bar{\mcal A}
          \end{array}\right\}&
          \mathrel{\mathop{\rightleftarrows}^{f}_{g}}
          \left\{(\Fil^\bullet(\Lambda_{0,\bar a}/t\Lambda_{0,\bar a}))_{\bar a\in\bar{\mcal A}}\colon
          \begin{array}{c}
            \Fil^\bullet(\Lambda_{0,\bar a}/t\Lambda_{0,\bar a})\text{ is a}\\
            \Gamma_m\text{-stable filtration}\\
            \text{of }\Lambda_{0,\bar a}/t\Lambda_{0,\bar a}\\
            \text{for all }\bar a\in\bar{\mcal A}
          \end{array}\right\}\\
        \Lambda=\bigoplus_{\bar a\in\bar{\mcal A}}\Lambda_{\bar a}&\mapsto(\pi(\Lambda_{\bar a}))_{\bar a\in\bar{\mcal A}}\\
        \bigoplus_{\bar a\in\bar{\mcal A}}\sigma(\Fil^\bullet(\Lambda_{0,\bar a}/t\Lambda_{0,\bar a}))&
        \mapsfrom(\Fil^\bullet(\Lambda_{0,\bar a}/t\Lambda_{0,\bar a}))_{\bar a\in\bar{\mcal A}},
      \end{align*}
      where the maps $\pi$ and $\sigma$ are defined as in (\ref{def pi and sigma}), while the decomposition of $\Lambda$
      as direct sum of $\Gamma_m$-stable sub-$A\llbracket t\rrbracket$-modules $\Lambda_{\bar a}$ is given
      by the same argument in Remark \ref{identify Lambda0/t with DpdR}.
    \end{lem}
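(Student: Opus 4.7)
The plan is to reduce the bijection to one bijection for each $\bar a \in \bar{\mcal A}$ separately, and then apply the Schubert-cell machinery of the affine Grassmannian developed earlier together with the Sen-operator decomposition from Remark \ref{identify Lambda0/t with DpdR}.

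First, I would establish the analogue of the decomposition in Remark \ref{identify Lambda0/t with DpdR} for any $\Gamma_m$-stable lattice $\Lambda$ with Sen weights in $\bar{\mcal A}$. Concretely, I would let the Sen operator $\nabla$ act on each quotient $\Lambda/t^n\Lambda$ and split it into the direct sum of generalized eigenspaces for $\nabla$ grouped by class mod $\Z$. Under the hypothesis on Sen weights, the only classes appearing are those in $\bar{\mcal A}$; since $\nabla$ commutes with $\Gamma_m$, each generalized eigenspace is $\Gamma_m$-stable. Taking the inverse limit in $n$ would yield a $\Gamma_m$-stable decomposition $\Lambda = \bigoplus_{\bar a \in \bar{\mcal A}} \Lambda_{\bar a}$ with each $\Lambda_{\bar a}$ a free $A\llbracket t\rrbracket$-module of the same rank as $\Lambda_{0,\bar a}$ and satisfying $\Lambda_{\bar a}[1/t] = \Lambda_{0,\bar a}[1/t]$, the latter by uniqueness in Theorem \ref{non integral weights} applied to $\WdR(M)$.

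Given the decomposition, it suffices to produce, for each fixed $\bar a$, a bijection between $\Gamma_m$-stable $A\llbracket t\rrbracket$-lattices $\Lambda_{\bar a} \subset \Lambda_{0,\bar a}[1/t]$ with Sen weights in $\bar a$ and $\Gamma_m$-stable filtrations of $\Lambda_{0,\bar a}/t\Lambda_{0,\bar a}$. Fixing a trivialization of $\Lambda_{0,\bar a}$, each $\Lambda_{\bar a}$ lies in a unique Schubert cell $\rr{Gr}_{n_{\bar a}}^\mu$, and I would send it to the filtration $\pi_\mu(\Lambda_{\bar a})$ defined in (\ref{def pi_mu}); conversely, a filtration of type $\mu$ on $\Lambda_{0,\bar a}/t\Lambda_{0,\bar a}$ gives back a lattice via $\sigma_\mu$ as in (\ref{def sigma_mu}). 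The identities $\pi_\mu \circ \sigma_\mu = \mathrm{id}$ and $\sigma_\mu \circ \pi_\mu = \mathrm{id}$ follow from the fact that $\sigma_\mu$ is a section of $\pi_\mu$ on $\rr{Gr}_{n_{\bar a}}^\mu$. For $\Gamma_m$-equivariance, each subspace $(t^{-\mu_i}\Lambda_{\bar a} \cap \Lambda_{0,\bar a})/(t^{-\mu_i}\Lambda_{\bar a} \cap t\Lambda_{0,\bar a})$ is visibly $\Gamma_m$-stable because both $\Lambda_{\bar a}$ and $\Lambda_{0,\bar a}$ are; dually, $\sum_i \Fil^i \otimes_A t^{\mu_i} A\llbracket t\rrbracket$ is $\Gamma_m$-stable when $\Fil^\bullet$ is.

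The main obstacle is the decomposition step. One must check not only that the generalized eigenspace splitting of each finite-rank $A$-module $\Lambda/t^n\Lambda$ is well-defined (which requires the hypothesis that the Sen weights of $\Lambda$ lie in $\bar{\mcal A}$, counted with multiplicity, so that classes do not mix with classes coming from outside $\bar{\mcal A}$), but also that the resulting inverse limit $\Lambda_{\bar a}$ is a finite projective $A\llbracket t\rrbracket$-lattice and that $\Lambda$ is recovered as the internal direct sum of the $\Lambda_{\bar a}$ inside $M^r \otimes_{\robba_A^r[1/t],\tau} A((t))$. Once this is in hand, the rest of the argument is essentially a bookkeeping exercise using (\ref{def pi_mu}) and (\ref{def sigma_mu}) cell by cell.
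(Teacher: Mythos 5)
The central gap is your claim that ``The identities $\pi_\mu \circ \sigma_\mu = \mathrm{id}$ and $\sigma_\mu \circ \pi_\mu = \mathrm{id}$ follow from the fact that $\sigma_\mu$ is a section of $\pi_\mu$.'' Being a section gives you only $\pi_\mu \circ \sigma_\mu = \mathrm{id}$. The other composite, $\sigma_\mu \circ \pi_\mu$, is emphatically \emph{not} the identity on an arbitrary lattice in $\rr{Gr}_{n_{\bar a}}^\mu$: the Schubert cell is an affine bundle over $GL_{n_{\bar a}}/P_\mu$ via $\pi_\mu$, and $\sigma_\mu \circ \pi_\mu$ is the retraction onto the zero section, not the identity on the total space. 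A generic lattice of type $\mu$ is not of the ``split'' form $\sum_i \Fil^i \otimes_A t^{\mu_i}A\llbracket t\rrbracket$, so $g\circ f=\mathrm{id}$ is genuinely the hard direction of this lemma and does not come for free.

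What you are missing is precisely the extra rigidity supplied by $\Gamma_m$-stability, which is what the paper's proof is actually about. The argument there shows that for a $\Gamma_m$-stable $\Lambda_{\bar a}$ one has $\Fil^i(\Lambda_{0,\bar a}/t\Lambda_{0,\bar a}) \subset t^{-\mu_{\bar a,i}}\Lambda_{\bar a}$ (viewing $\Fil^i$ as a lift inside $\Lambda_{0,\bar a}$). This forces $\sigma_{\mu_{\bar a}}(\pi_{\mu_{\bar a}}(\Lambda_{\bar a})) \subseteq \Lambda_{\bar a}$, and then comparing elementary divisors gives equality. To prove the inclusion one must twist by $\delta_{\bar a}^{-1}$ so that $(\gamma-1)$ (equivalently $\nabla$) acts nilpotently on $\Lambda(\delta_{\bar a}^{-1})_{0,\bar a}/t\Lambda(\delta_{\bar a}^{-1})_{0,\bar a}$, write a lift $x' = x_1' + t x_2'$ with $x_1' \in t^{-\mu_i}\Lambda_{\bar a}'$, $x_2' \in \Lambda(\delta_{\bar a}^{-1})_{0,\bar a}$, kill $x'$ by a high power of $\nabla$, and invoke Lemma \ref{iso on t^iLambda cap tLambda0} (that $\nabla$ is an isomorphism on $t^{i}\Lambda_{\bar a}'\cap t\Lambda(\delta_{\bar a}^{-1})_{0,\bar a}$) to conclude $tx_2' \in t^{-\mu_i}\Lambda_{\bar a}'$. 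None of this appears in your sketch, which treats $g\circ f = \mathrm{id}$ as trivial. You have also mislocated the difficulty: the decomposition step that you flag as the ``main obstacle'' is handled by quoting the construction in Remark \ref{identify Lambda0/t with DpdR}, whereas the real work is exactly the inclusion you glossed over.
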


    \begin{proof}
      First of all notice that if $\Lambda_{\bar a}\in\rr{Gr}_{n_{\bar a}}^{\mu_{\bar a},\rig}(A)$
      is a $\Gamma_m$-stable lattice of $\Lambda_{0,\bar a}[1/t]$,
      then $\pi_{\mu_{\bar a}}(\Lambda_{\bar a})$ is $\Gamma_m$-stable:
      recall the definition of $\pi_{\mu_{\bar a}}(\Lambda_{\bar a})$ in (\ref{def pi_mu})
      and let us fix a topological generator $\gamma$ of $\Gamma_m$,
      then we have $\gamma(\Lambda_{0,\bar a})\subseteq\Lambda_{0,\bar a}$ and $\gamma(t\Lambda_{0,\bar a})\subseteq t\Lambda_{0,\bar a}$
      because $\Lambda_{0,\bar a}$ and $t\Lambda_{0,\bar a}$ are $\Gamma_m$-stable by Remark \ref{identify Lambda0/t with DpdR}.
      Moreover $\gamma(t^{-\mu_{\bar a,i}}\Lambda_{\bar a})=
      \chi(\gamma)^{-\mu_{\bar a,i}}t^{-\mu_{\bar a,i}}\gamma(\Lambda_{\bar a})\subseteq t^{-\mu_{\bar a,i}}\Lambda_{\bar a}$
      by the fact that $\Lambda_{\bar a}$ is $\Gamma_m$-stable; therefore
      $\gamma(t^{-\mu_{\bar a,i}}\Lambda_{\bar a}\cap\Lambda_{0,\bar a})\subseteq t^{-\mu_{\bar a,i}}\Lambda_{\bar a}\cap\Lambda_{0,\bar a}$
      and $\gamma(t^{-\mu_{\bar a,i}}\Lambda_{\bar a}\cap t\Lambda_{0,\bar a})\subseteq t^{-\mu_{,\bar a,i}}\Lambda_{\bar a}\cap t\Lambda_{0,\bar a}$,
      hence
      \[
        \gamma((t^{-\mu_{\bar a,i}}\Lambda_{\bar a}\cap\Lambda_{0,\bar a})/(t^{-\mu_{\bar a,i}}\Lambda_{\bar a}\cap t\Lambda_{0,\bar a}))\subseteq
      (t^{-\mu_{\bar a,i}}\Lambda_{\bar a}\cap\Lambda_{0,\bar a})/(t^{-\mu_{\bar a,i}}\Lambda_{\bar a}\cap t\Lambda_{0,\bar a}).
      \]
      We already know that $\pi_{\mu_{\bar a}}\circ\sigma_{\mu_{\bar a}}=\rr{id}$
      because $\sigma_{\mu_{\bar a}}$ is a section of $\pi_{\mu_{\bar a}}$,
      which implies that $f\circ g=\rr{id}$.
      Let now $\Lambda$ be a $\Gamma_m$-stable lattice with Sen weights in $\bar{\mcal A}$ and
      let us decompose $\Lambda$ as $\bigoplus_{\bar a\in\bar{\mcal A}}\Lambda_{\bar a}$
      as in Remark \ref{identify Lambda0/t with DpdR}.
      By the assumption of $\Lambda$ having the same mod $\Z$ classes of Sen weights as $\Lambda_0$
      counted with multiplicity, we have that $\Lambda_{\bar a}$ is free of rank $n_{\bar a}$
      over $A\llbracket t\rrbracket$ for all $\bar a\in\bar{\mcal A}$.
      Let $\mu_{\bar a}\in X_*(T)^+$ such that $\Lambda_{\bar a}\in\rr{Gr}_{n_{\bar a}}^{\mu_{\bar a},\rig}(A)$
      and let $\Fil^\bullet(\Lambda_{0,\bar a}/t\Lambda_{0,\bar a})=\pi_{\mu_{\bar a}}(\Lambda_{\bar a})$.
      We consider $\Fil^i(\Lambda_{0,\bar a}/t\Lambda_{0,\bar a})$ as a submodule of $\Lambda_{0,\bar a}$ and we show that
      $\Fil^i(\Lambda_{0,\bar a}/t\Lambda_{0,\bar a})\subset t^{-\mu_{\bar a,i}}\Lambda_{\bar a}$.
      This will end the proof, since then we have
      \[
        \sigma_{\mu_{\bar a}}(\pi_{\mu_{\bar a}}(\Lambda_{\bar a}))
        =\sum_{i}\Fil^i(\Lambda_{0,\bar a}/t\Lambda_{0,\bar a})\otimes_A t^{\mu_{\bar a,i}}A\llbracket t\rrbracket
        \subseteq \Lambda_{\bar a};
      \]
      moreover $\Lambda_{\bar a}$ and $\sigma_{\mu_{\bar a}}(\pi_{\mu_{\bar a}}(\Lambda_{\bar a}))$
      have the same elementary divisors, hence they are exactly the same.
      So let $x\in \Fil^i(\Lambda_{0,\bar a}/t\Lambda_{0,\bar a})
      =(t^{-\mu_i}\Lambda_{\bar a}\cap\Lambda_{0,\bar a})/(t^{-\mu_i}\Lambda_{\bar a}\cap t\Lambda_{0,\bar a})$.
      Let us consider the isomorphism of $A((t))$-modules
      \[
        M^r\otimes_{\robba_A^r[1/t],\tau}A((t))\xrightarrow{\sim}M(\delta_{\bar a}^{-1})^r\otimes_{\robba_A^r[1/t],\tau}A((t))
      \]
      and let $x'$ be the image of $x$ through the above isomorphism.
      As in Remark \ref{identify Lambda0/t with DpdR} we have that $\Lambda(\delta_{\bar a}^{-1})_{0,\bar a}$
      is the image of $\Lambda_{0,\bar a}$ and let
      $\Lambda_{\bar a}'$ be the image of $\Lambda_{\bar a}$ through the above isomorphism.
      Then $x'\in (t^{-\mu_i}\Lambda'_{\bar a}\cap\Lambda(\delta_{\bar a}^{-1})_{0,\bar a})/(t^{-\mu_i}\Lambda'_{\bar a}\cap t\Lambda(\delta_{\bar a}^{-1})_{0,\bar a})$,
      so we can write $x'=x_1'+tx_2'$ where $x_1'\in t^{-\mu_i}\Lambda'_{\bar a}$ and $x_2'\in\Lambda(\delta_{\bar a}^{-1})_{0,\bar a}$.
      Moreover $(\gamma -1)$ is nilpotent on $x'$, since
      $(\gamma-1)$ is nilpotent on the $A$-module $\Lambda(\delta_{\bar a}^{-1})_{0,\bar a}/t\Lambda(\delta_{\bar a}^{-1})_{0,\bar a}$
      by the construction in Remark \ref{identify Lambda0/t with DpdR}
      and this implies that $\nabla$ is nilpotent too.
      Therefore we have
      \[
        0=\nabla^k(x')=\nabla^k(x_1')+\nabla^k(tx_2')
      \]
      for some $k\in\N$.
      As both $\Lambda(\delta_{\bar a}^{-1})_{0,\bar a}$ and $\Lambda_{\bar a}'$ are $\Gamma_m$-stable, we have that they are stable
      also for the action of $\nabla$, hence
      $-\nabla^k(tx_2')=\nabla^k(x_1')\in t^{-\mu_i}\Lambda_{\bar a}'\cap t\Lambda(\delta_{\bar a}^{-1})_{0,\bar a}$.
      By Lemma \ref{iso on t^iLambda cap tLambda0}, we have $tx_2'\in t^{-\mu_i}\Lambda_{\bar a}'$
      and thus $x'=x_1'+tx_2'\in t^{-\mu_i}\Lambda_{\bar a}'$.
      By definition of $x'$, this implies that $x\in t^{-\mu_i}\Lambda_{\bar a}$.
    \end{proof}

    \begin{thm}
      \label{final bijection}
      Let $D$ be the triangulable $\phigam$-module over $\robba_A$ such that $D[1/t]=M$ as
      fixed at the beginning of Remark \ref{identify Lambda0/t with DpdR},
      moreover let $(\delta_{1},\dots,\delta_{n})\in\T^n(A)$ be its parameters
      and let $\mcal A\coloneqq\{\rr{wt}(\delta_{i})\colon 1\leq i\leq n\}$.
      Let us define the space
      \begin{equation}
        \rr{Gr}^\Gamma_{M,\bar{\mcal A}}\subset\rr{Gr}^\Gamma_{M}
      \end{equation}
      of $\Gamma_m$-stable lattices in $M^r\otimes_{\robba_A^r,\tau}A((t))$ with Sen weights
      in $\bar{\mcal A}$.
      For every $\bar a\in\bar{\mcal A}\coloneqq(\mcal A+\Z)/\Z$, let $W_{\bar a}$ be the $\BdRA$-representations of $\absGal$
      such that $\bigoplus_{\bar a \in\bar{\mcal A}}W_{\bar a}=\WdR(M)$ as in Theorem \ref{non integral weights}.
      There exists a bijection
      \begin{align*}
        \rr{Gr}_{M,\bar{\mcal A}}^\Gamma&\rightarrow
        \left\{(\Fil^\bullet(D_{\pdR,\bar a}(W_{\bar a})))_{\bar a\in\bar{\mcal A}}\colon
        \begin{array}{c}
          \Fil^\bullet(D_{\pdR,\bar a}(W_{\bar a}))\text{ is an }N_{\bar a}\text{-stable}\\
          \text{filtration of } D_{\pdR,\bar a}(W_{\bar a})\text{ for all }\bar a\in\bar{\mcal A}
        \end{array}
      \right\},
      \end{align*}
      which sends a $\Gamma_m$-stable lattice $\Lambda=\bigoplus_{\bar a\in\bar{\mcal A}}\Lambda_{\bar a}$
      to $(\pi(\Lambda_{\bar a}))_{\bar a\in\bar{\mcal A}}$,
      where $\pi(\Lambda_{\bar a})$ is defined as in (\ref{def pi and sigma}).
      The claim makes sense, since we can identify $\Lambda_{0,\bar a}/t\Lambda_{0,\bar a}$ and
      $D_{\pdR,\bar a}(W_{\bar a})$ through the isomorphism (\ref{identification}).
    \end{thm}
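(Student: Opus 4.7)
The plan is to combine the bijection from Lemma \ref{Gamma-stable lattices and Gamma-stable filtrations} with the $A$-linear isomorphism $\mcal F$ from (\ref{identification}), and then to identify the resulting stability condition on the target with $N_{\bar a}$-stability. Concretely, I will first apply Lemma \ref{Gamma-stable lattices and Gamma-stable filtrations} to obtain a bijection between $\rr{Gr}^\Gamma_{M, \bar{\mcal A}}$ and tuples $(\Fil^\bullet(\Lambda_{0, \bar a}/t\Lambda_{0, \bar a}))_{\bar a \in \bar{\mcal A}}$ of $\Gamma_m$-stable filtrations, under which $\bigoplus_{\bar a} \Lambda_{\bar a}$ is sent to $(\pi(\Lambda_{\bar a}))_{\bar a}$. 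Then, for each $\bar a$, the isomorphism $\mcal F \colon \Lambda_{0, \bar a}/t\Lambda_{0, \bar a} \xrightarrow{\sim} D_{\pdR, \bar a}(W_{\bar a})$ transports these to filtrations of $D_{\pdR, \bar a}(W_{\bar a})$, giving a set-theoretic bijection which I still have to match with the $N_{\bar a}$-stable ones.

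The main obstacle will be to show that, under $\mcal F$, a filtration is $\Gamma_m$-stable if and only if it is $N_{\bar a}$-stable. My strategy is to interpret each of the two conditions as stability under a single nilpotent operator and then identify those operators. On the source side, the construction in Remark \ref{identify Lambda0/t with DpdR} arranges that $(\gamma - 1)$ acts nilpotently on $\Lambda(\delta_{\bar a}^{-1})_{0, \bar a}/t\Lambda(\delta_{\bar a}^{-1})_{0, \bar a}$ for any topological generator $\gamma$ of $\Gamma_m$, and the $\Gamma_m$-action on $\Lambda_{0, \bar a}/t\Lambda_{0, \bar a}$ differs from this unipotent action only by the scalar character $\delta_{\bar a}$. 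Hence $\Gamma_m$-stability of an $A$-submodule of $\Lambda_{0, \bar a}/t\Lambda_{0, \bar a}$ is equivalent to stability under the Sen operator $\nabla$ (up to a nonzero scalar of the form $\log \chi(\gamma^{p^s})$). On the target side, $N_{\bar a}$ is by definition, and by Remark \ref{rem:representation of Ga and nilpotent}, the nilpotent endomorphism attached to the $\mathbb{G}_a$-representation $D_{\pdR, \bar a}(W_{\bar a})$.

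The crucial compatibility to check is that the explicit formula for $\mcal F$ in (\ref{identification}), namely $y_{\bar a, i} \mapsto \sum_{j \geq 1} \frac{(-1)^j}{j!} \nabla^j(y'_{\bar a, i}) \log(t)^j$, intertwines (up to a common scalar) the action of $\nabla$ on $\Lambda_{0, \bar a}/t\Lambda_{0, \bar a}$ with the action of $N_{\bar a}$ on $D_{\pdR, \bar a}(W_{\bar a})$. This should follow by combining Lemma \ref{lemma zhixiang1} with Lemma \ref{replacement of Lemma zhixiang}: the former identifies $D_{\pdR, \bar a}(W_{\bar a})$ with $(\Lambda_{\bar a}(\delta_{\bar a}^{-1})[1/t])^{(\gamma - 1)\textup{-nil}}$, and under this identification $N_{\bar a}$, being the logarithmic derivative of the $\mathbb{G}_a$-action (essentially the operator $-d/d(\log t)$), becomes a scalar multiple of $\nabla$, matching exactly the nilpotent operator on the source. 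Granting this compatibility, the bijectivity in the theorem is immediate from Lemma \ref{Gamma-stable lattices and Gamma-stable filtrations}, and the description of the map as $\Lambda \mapsto (\pi(\Lambda_{\bar a}))_{\bar a}$ is literally the composite defined above.
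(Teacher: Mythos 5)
Your route is valid but genuinely different from the paper's. You stay entirely on the $A\llbracket t\rrbracket$-lattice side, obtaining the bijection directly from Lemma~\ref{Gamma-stable lattices and Gamma-stable filtrations} and then transporting through $\mcal F$, so that the explicit description of the map as $\Lambda\mapsto(\pi(\Lambda_{\bar a}))_{\bar a}$ is automatic; the work shifts to showing $\mcal F$ exchanges $\Gamma_m$-stability with $N_{\bar a}$-stability. The paper instead establishes the bijection abstractly through the chain $\rr{Gr}_{M,\bar{\mcal A}}^\Gamma \leftrightarrow \{\absGal\text{-stable }\BdRA^+\text{-lattices}\} \leftrightarrow \{N_{\bar a}\text{-stable filtrations}\}$ (Proposition~\ref{bijection of lattices and modules}, Theorem~\ref{bijection of modules and lattices} and Lemma~\ref{bijection of G-stable lattices and N-stable filtrations for non-integral weights}), and then does a rather lengthy computation with $\Fil^i_{W_{\bar a}^+}$ inside $\BdR^+[\log t]$ to show that the resulting map is $\Lambda\mapsto(\pi(\Lambda_{\bar a}))_{\bar a}$ under $\mcal F$. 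The two verifications are of comparable difficulty; yours is arguably cleaner since it never leaves $A\llbracket t\rrbracket$, whereas the paper's version establishes along the way the (elsewhere useful) bijection with $\BdRA^+$-lattices.

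Two points you should tighten. First, the phrase ``$N_{\bar a}$ becomes a scalar multiple of $\nabla$'' is imprecise: a short computation with $N_{\bar a}=-d/d(\log t)$ and the formula (\ref{identification}) gives $N_{\bar a}\circ\mcal F = \mcal F\circ\nabla'$ on the nose, where $\nabla'$ is the \emph{nilpotent} Sen operator on the twisted quotient $\Lambda(\delta_{\bar a}^{-1})_{0,\bar a}/t\Lambda(\delta_{\bar a}^{-1})_{0,\bar a}$; the full $\nabla$ on the untwisted $\Lambda_{0,\bar a}/t\Lambda_{0,\bar a}$ is $\nabla'+a\cdot\mathrm{id}$, a scalar \emph{shift}. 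Of course this does not affect which subspaces are stable. Second, the equivalence between $\Gamma_m$-stability and $\nabla'$-stability is not automatic for an arbitrary continuous $\Gamma_m$-action on a finite-dimensional space (think of a finite-order $\gamma$, where $\nabla=0$ but $\gamma$-stability is nontrivial); you need to invoke that the construction in Remark~\ref{identify Lambda0/t with DpdR} makes $(\gamma-1)$ act nilpotently on the twisted quotient, so the $\Gamma_m$-action there is genuinely unipotent and $\exp/\log$ give an honest dictionary between $\gamma$-stability and $\nabla'$-stability. With these clarifications, your argument closes.
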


    \begin{proof}
      First of all we show that the set $ \rr{Gr}_{M,\bar{\mcal A}}^\Gamma$ is in bijection with the
      set of $\absGal$-stable $\BdRA^+$-lattices of $\WdR(M)$ with Sen weights in $\bar{\mcal A}$.
      We already know that there is a bijection
      \[
        \rr{Gr}_M^\Gamma\leftrightarrow\{\absGal\text{-stable }\BdRA^+\text{-lattices of }\WdR(M)\}
      \]
      thanks to Proposition \ref{bijection of lattices and modules} and Theorem \ref{bijection of modules and lattices}.
      Then it is sufficient to show that if $\Lambda\in \rr{Gr}_{M,\bar{\mcal A}}^\Gamma$, then
      the corresponding $\BdRA^+$-lattice has modulo $\Z$ equivalence classes of its Sen weights in $\bar{\mcal A}$.
      Let $E$ be the $\phigam$-module over $\robba_A$ associated to $\Lambda$ through the bijection of Proposition
      \ref{bijection of lattices and modules}; then the associated $\BdRA^+$-lattice
      is $\WdR^+(E)=\Lambda\otimes_{K_m\llbracket t\rrbracket}\BdR^+$, which has the same Sen weights as $\Lambda$.
      By Lemma \ref{bijection of G-stable lattices and N-stable filtrations for non-integral weights},
      this proves that the stated bijection exists and it is given by
      \[
        \Lambda=\bigoplus_{\bar a\in\bar{\mcal A}}\Lambda_{\bar a}\mapsto (\Fil^\bullet_{W_{\bar a}^+}(D_{\pdR,\bar a}(W_{\bar a})))_{\bar a\in\bar{\mcal A}},
      \]
      where $W_{\bar a}\coloneqq W_{\bar a}^+[1/t]$ and
      $W_{\bar a}^+\coloneqq \Lambda_{\bar a}\otimes_{K_m\llbracket t\rrbracket}\BdR^+$.
      Now we show that for all $\bar a\in\bar{\mcal A}$, the flag given by the filtration
      $\Fil^\bullet_{W_{\bar a}^+}(D_{\pdR,\bar a}(W_{\bar a}))$
      corresponds to $\pi(\Lambda_{\bar a})$ through the isomorphism $\mcal F$ constructed in Remark \ref{identify Lambda0/t with DpdR}.
      Notice that for all $\bar a\in\bar{\mcal A}$, $\Lambda_{0,\bar a}$ and $\Lambda_{\bar a}$
      have the same rank, since by hypothesis
      the Sen weights of $\Lambda_0$ and $\Lambda$ are the same and have the same multiplicity
      modulo $\Z$.
      Let then $\mu_{\bar a}\in X_*(T)^+$ such that $\Lambda_{\bar a}\in\rr{Gr}_{n_{\bar a}}^{\mu_{\bar a},\rig}$;
      in other words, there is a basis $\{b_{\bar a,1},\dots,b_{\bar a,n_{\bar a}}\}$ of
      $\Lambda_{0,\bar a}$ such that $\Lambda_{\bar a}$ is spanned by
      $\{t^{\mu_{\bar a,1}}b_{\bar a,1},\dots,t^{\mu_{\bar a,n_{\bar a}}}b_{\bar a,n_{\bar a}}\}$.
      Moreover let $\{y_{\bar a,1},\dots,y_{\bar a ,n_{\bar a}}\}$ be the basis of $\Lambda_{0,\bar a}$
      constructed in Remark \ref{identify Lambda0/t with DpdR}.
      Since $\Lambda_{\bar a}$ is $\Gamma_m$-stable, we can choose the basis $\{b_{\bar a,1},\dots,b_{\bar a,n_{\bar a}}\}$
      such that $b_{\bar a,i}=\sum_{j=1}^{n_{\bar a}}a_jy_{\bar a,j}$ with $a_j\in A$
      by Lemma \ref{Gamma-stable lattices and Gamma-stable filtrations}.
      Furthermore by definition of $\pi_\mu$ (\ref{def pi_mu}), we have
      \begin{equation}
        \label{first filtration}
        \pi_{\mu_{\bar a}}(\Lambda_{\bar a})=(t^{-\mu_{\bar a,\bullet}}\Lambda_{\bar a}\cap\Lambda_{0,\bar a})
        /(t^{-\mu_{\bar a,\bullet}}\Lambda_{\bar a}\cap t\Lambda_{0,\bar a})
        =\bigoplus_{\mu_{\bar a,j}\leq\mu_{\bar a,\bullet}}A\cdot b_{\bar a,j}.
      \end{equation}
      Observe that
      \begin{align*}
        \Fil^i_{W_{\bar a}^+}(D_{\pdR,\bar a})(W_{\bar a})
        &=\left(t^{i}\BdR^+[\log(t)]\otimes_{\BdR^+}
        (\Lambda_{\bar a}\otimes_{K_m\llbracket t\rrbracket}\BdR^+)
        \otimes_{\BdR^+}\WdR^+(\delta_{\bar a}^{-1})\right)^{\absGal}\\
        &=\left(\BdR^+[\log(t)]\otimes_{\BdR^+}
        (t^i\Lambda_{\bar a}\otimes_{K_m\llbracket t\rrbracket}\BdR^+)
        \otimes_{\BdR^+}\WdR^+(\delta_{\bar a}^{-1})\right)^{\absGal}.
      \end{align*}
      Notice that $(\Lambda_{\bar a}\otimes_{K_m\llbracket t\rrbracket}\BdR^+)
      \otimes_{\BdR^+}\WdR^+(\delta_{\bar a}^{-1})$ is a $\absGal$-stable $\BdRA^+$-lattice with integral Sen weights,
      thanks to the twist by $\WdR^+(\delta_{\bar a})$.
      For $j$ such that $\mu_{\bar a,j}\leq -i$, we have that
      $b_{\bar a,j}=\sum_{k=1}^{n_{\bar a}}a_ky_{\bar a,k}\in t^i\Lambda_{\bar a}\otimes_{K_m\llbracket t\rrbracket}\BdR^+$.
      We will show that for all $k\geq0$ and for all $j$ such that $\mu_{\bar a,j}\leq -i$ we have
      \[
        \nabla^k(b_{\bar a,j})\in t^i\Lambda_{\bar a}\otimes_{K_m\llbracket t\rrbracket}\BdR^+:
      \]
      it is sufficient to prove that $\gamma^k(b_{\bar a,j})\in t^i\Lambda_{\bar a}\otimes_{K_m\llbracket t\rrbracket}\BdR^+$
      for all $k\geq0$;
      this is true, since the filtration $\pi_{\mu_{\bar a}}(\Lambda_{\bar a})$ is
      $\Gamma_m$-stable by Lemma \ref{Gamma-stable lattices and Gamma-stable filtrations}, hence
      $\gamma^k(b_{\bar a,j})\in \bigoplus_{\mu_{\bar a,l}\leq-i}A\llbracket t\rrbracket\cdot b_{\bar a,l}$.
      As a consequence, if we let $b_{\bar a,j}'$ denote the element corresponding to $b_{\bar a,j}$
      through the isomorphism of $\BdRA^+$-modules
      \[
        (t^i\Lambda_{\bar a}\otimes_{K_m\llbracket t\rrbracket}\BdR^+)
        \otimes_{\BdR^+}\WdR^+(\delta_{\bar a}^{-1})\rightarrow t^i\Lambda_{\bar a}\otimes_{K_m\llbracket t\rrbracket}\BdR^+,
      \]
      we have that $\nabla^k(b'_{\bar a,j})\in  (t^i\Lambda_{\bar a}\otimes_{K_m\llbracket t\rrbracket}\BdR^+)
      \otimes_{\BdR^+}\WdR^+(\delta_{\bar a}^{-1})$ for all $k\geq0$.
      Observe that $\nabla^k(b'_{\bar a,j})=\sum_{i=1}^{n_{\bar a}}a_i\nabla^k(y'_{\bar a,i})$, so
      by construction of $\{y_{\bar a,1}',\dots,y'_{\bar a,n_{\bar a}}\}$ in Remark \ref{identify Lambda0/t with DpdR},
      we have $\nabla^{K+1}(b'_{\bar a,j})=0$ for some $K\geq0$.
      Thus
      \begin{align*}
        \mcal F(b_{\bar a,j})&=\sum_{k=0}^K\frac{(-1)^{k}}{k!}\nabla^k(b'_{\bar a,j})(\log(t))^k
        =\sum_{i=1}^{n_{\bar a}}a_i\left(\sum_{k=0}^K\frac{(-1)^{k}}{k!}\nabla^k(y'_{\bar a,j})(\log(t))^k\right)\\
        &\in\BdR^+[\log(t)]\otimes_{\BdR^+}(t^i\Lambda_{\bar a}\otimes_{K_m\llbracket t\rrbracket}\BdR^+)
        \otimes_{\BdR^+}\WdR^+(\delta_{\bar a}^{-1}).
      \end{align*}
      Now notice that $\sum_{k=0}^K\frac{(-1)^{k}}{k!}\nabla^k(y'_{\bar a,j})(\log(t))^k\in D_{\pdR,\bar a}(W_{\bar a})$
      by the construction in Remark \ref{identify Lambda0/t with DpdR}, thus
      \[
        \mcal F(b_{\bar a,j})=\sum_{k=0}^K\frac{(-1)^{k}}{k!}\nabla^k(b'_{\bar a,j})(\log(t))^k\in
        \Fil^i_{W_{\bar a}^+}(D_{\pdR,\bar a}(W_{\bar a}))
      \]
      and they constitute an $A$-basis of $\Fil^i_{W_{\bar a}^+}(D_{\pdR,\bar a}(W_{\bar a}))$.
      Finally, observe that
      \begin{align*}
        \pi_{\mu_{\bar a}}(\Lambda_{\bar a})=\bigoplus_{\mu_{\bar a,j}\leq\mu_{\bar a,\bullet}}A\cdot b_{\bar a,j}
        \xrightarrow{\mcal F}
        &\bigoplus_{\mu_{\bar a,j}\leq\mu_{\bar a,\bullet}}
        A\cdot\mcal F(b_{\bar a,j})
        &=\Fil_{W_{\bar a}^+}^{-\mu_{\bar a,\bullet}}(D_{\pdR,\bar a}(W_{\bar a})).
      \end{align*}
    \end{proof}

\section{Trianguline representations of dimension 2}
\label{section 6}

    In this section we prove that any trianguline representation of dimension 2 is in the
    trianguline variety.
    More precisely, we show the following Theorem, which is a special case of Theorem \ref{result2}.

    \begin{thm}
    \label{DIM2}
    Let $\rho:\absGal\rightarrow GL_2(\mathcal{O}_L)$ be a trianguline representation
    and $\overline{\rho}:\absGal\rightarrow GL_2(k_L)$ a reduction of $\rho$
    to the residue field of $L$.
    Let $(\delta_1,\delta_2)\in\mathcal{T}^2(L)$ be the parameters of $\rho$.
    We have $(\rho,(\delta_1,\delta_2))\in\trivar(L)$.
    \end{thm}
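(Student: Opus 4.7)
The plan is to apply Theorem~\ref{strategy} to $\rho$. By Theorem~\ref{RESULT1} we may assume $\delta_1/\delta_2\in\T^+(L)$, say $\delta_1/\delta_2=\chi x^k$ with $k\in\N$; using Lemma~\ref{enlarge L} we may further enlarge $L$ so that $K_m\subset L$ for $m$ large enough that the framework of Section~\ref{beauville-laszlo} applies. The obstruction in Proposition~\ref{n dim T-} was exactly the non-vanishing of $H^2_{\phi,\Gamma}(\robba_L(\delta_1/\delta_2))$; the central idea is to bypass this by working with $\phigam$-modules over $\robba_A[1/t]$, where $H^2$ always vanishes (Remark~\ref{phigam cohomology with 1/t} together with Lemma~\ref{twist}), and then to reintroduce the missing lattice data via the affine Grassmannian.

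Concretely, choose an affinoid neighbourhood $\mathcal{U}=\Sp(A)\subset\T^2$ of $(\delta_1,\delta_2)$ as in Lemma~\ref{good neighbourhood}, with universal characters $(\widetilde\delta_1,\widetilde\delta_2)$. Shrinking $\mathcal{U}$, Proposition~\ref{prop:high dim twist} and Theorem~\ref{cohomology of families} yield $H^2_{\phi,\Gamma}(\robba_A(\widetilde\delta_1/\widetilde\delta_2)[1/t])=0$, trivially locally free. Theorem~\ref{extensions} then produces a vector bundle $X_2'\to\mathcal{U}$ of the expected rank together with a universal family $\D_2'$ of $\phigam$-modules over $\robba_{X_2'}[1/t]$, extensions of $\robba_{X_2'}(\widetilde\delta_2)[1/t]$ by $\robba_{X_2'}(\widetilde\delta_1)[1/t]$, and a point $x'\in X_2'(L)$ with $(x')^*\D_2'\cong\drig(\rho)[1/t]$.

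To recover genuine $\phigam$-modules over $\robba$ we use the Beauville--Laszlo-type equivalence: by Proposition~\ref{bijection of lattices and modules}, sub-$\phigam$-modules $D\subset \D_2'$ with $D[1/t]=\D_2'$ correspond to $\Gamma_m$-stable $A\llbracket t\rrbracket$-lattices in $(\D_2')^r\otimes_{\robba_A^r[1/t],\tau}A((t))^2$. Define $X_2\subset X_2'\times_L\Gr_2$ as the closed rigid subspace of pairs $(D',\Lambda)$ with $\Lambda$ a $\Gamma_m$-stable lattice having Sen weights in the mod-$\Z$ classes of $(\rr{wt}(\delta_1),\rr{wt}(\delta_2))$, cut out via the Schubert-type conditions described in Theorem~\ref{final bijection}. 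The universal lattice $\mathbb{L}$ together with the pullback of $\D_2'$ gives a family $\D_2$ of $\phigam$-modules over $\robba_{X_2}$ specializing at $x\coloneqq(x',\drig(\rho)^r\otimes A\llbracket t\rrbracket)$ to $\drig(\rho)$. On the Schubert stratum $\Gr_2^\mu$ with $\mu=(\mu_1,\mu_2)$, the parameters of the reconstructed triangulation are the shifted characters $(\widetilde\delta_1 x^{\mu_1},\widetilde\delta_2 x^{\mu_2})$; this assembles into a map $X_2\to\T^2$ as required by Theorem~\ref{strategy}.

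The main obstacle, and the bulk of the proof, is to verify that the pullback of $\T_2^{\reg}$ is Zariski open and dense in $X_2$. Density is nontrivial because $X_2$ is not flat over $\T^2$. For $n=2$ the situation is tractable: the relevant flag varieties $GL_2/P_\mu$ are either points or $\mathbb{P}^1$, so using Lemma~\ref{Y_mu gives filtrations}, Proposition~\ref{dim of Fil^N_lambda} and Remark~\ref{right number of equations}, combined with the mod-$\Z$ weight decomposition of Theorem~\ref{non integral weights}, one can compute the dimension of each stratum of $X_2$ and show that $X_2$ is irreducible of the expected dimension in a neighbourhood of $x$. The non-regular locus is then cut out by finitely many conditions of the form $\widetilde\delta_i/\widetilde\delta_j\cdot x^{\mu_i-\mu_j}\in\{x^m,\chi x^m\}$, each imposing a nontrivial constraint on $\mathcal{U}$, so it has strictly smaller dimension and is therefore nowhere dense. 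Granted this, Theorem~\ref{strategy} applied to $(X_2,\D_2,(\widetilde\delta_1 x^{\mu_1},\widetilde\delta_2 x^{\mu_2}),x)$ immediately yields $(\rho,(\delta_1,\delta_2))\in\trivar(L)$.
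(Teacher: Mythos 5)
Your high-level architecture matches the paper's: reduce via Theorem~\ref{RESULT1} to the case $\delta_1/\delta_2\in\T^+(L)$, pass to $\robba_A[1/t]$ (equivalently, to the twisted parameters $\delta_i x^{-\mu_i}$, whose ratio lies in $\T^-$) to kill $H^2_{\phi,\Gamma}$ and apply Theorem~\ref{extensions}, then recover genuine $\phigam$-modules via the Beauville--Laszlo picture by adjoining $\Gamma_m$-stable lattices in a fixed stratum of $\Gr_2$, and finally apply Theorem~\ref{strategy}. (One minor slip: the correct stratum is $Y_{(\mu_1,\mu_2)}$, the $LU_2$-orbit, not the Cartan cell $\Gr_2^\mu$; it is the $Y_\mu$-structure that guarantees the universal lattice inherits a filtration and hence a triangulation, via Lemma~\ref{Y_mu gives filtrations}.)

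The genuine gap is in the density step, which is the heart of the proof. You note correctly that $X_2\to\T^2$ is not flat, and then write that the non-regular locus ``is cut out by finitely many conditions\dots each imposing a nontrivial constraint on $\mathcal{U}$, so it has strictly smaller dimension and is therefore nowhere dense.'' This is exactly the conclusion that non-flatness prevents you from drawing: over the non-regular characters the fibers of $G:X_2\to\T^2$ jump up (because $\dim H^1$ jumps), so a codimension-one subset of $\mathcal{U}$ can very well pull back to something of \emph{full} dimension in $X_2$. This is precisely Remark~\ref{not flatness and dim X'}. Citing Lemma~\ref{Y_mu gives filtrations}, Proposition~\ref{dim of Fil^N_lambda} and Remark~\ref{right number of equations} does not by itself produce irreducibility or the correct dimension of $X_2$; in the paper those results feed into a quite different pair of arguments that you have not reproduced. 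Concretely, what is missing is: (i) a \emph{lower} bound on the dimension of any irreducible component of $X_2$ through $x$, obtained by identifying $\Spf(\widehat{\mcal O}_{X_2,x})$ as a closed formal subscheme of $\Spf(\widehat{\mcal O}_{X_2',x'})\times\Spf(\widehat{\mcal O}_{\prod B w P_\mu/P_\mu})$ cut out by exactly $\dim(B w P_\mu/P_\mu)$ equations (Lemma~\ref{def of maps}, Proposition~\ref{stalk is closed immersion}, Corollary~\ref{X cut out by M equations}, Theorem~\ref{dimZ}), and (ii) the equidimensionality of the fibers of the composite $F:X_2\to\W^2$ (Theorem~\ref{F equidimensional 3}), which is what lets one run a dimension-counting contradiction (Theorem~\ref{X is irreducible}): a component through $x$ disjoint from $F^{-1}(\W_2^\reg)$ would force some $F$-fiber to have dimension $>3$. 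Without those two ingredients, the density claim is unsubstantiated.
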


    Theorem \ref{DIM2} is obviously true if $\delta_1/\delta_2\in\Treg(L)$, because then
    $(\delta_1,\delta_2)\in\Treg_2(L)$.
    In section 4 we proved that $(\rho,(\delta_1,\delta_2))\in\trivar(L)$ if $\delta_1/\delta_2\in\mathcal{T}^-(L)$.
    Therefore the only case left to treat is $\delta_1/\delta_2\in\mathcal{T}^+(L)$, which
    is the most complicated: since $H^2_{\phi,\Gamma}(\robba_L(\delta_1/\delta_2))\neq0$,
    it is not possible to choose a neighbourhood $\mathcal{U}$ of $(\delta_1,\delta_2)$
    in $\mathcal{T}^2$ such that $H^2_{\phi,\Gamma}(\robba_{\mathcal{U}}(\widetilde\delta_1/\widetilde\delta_2))$
    is locally free, where $(\widetilde\delta_1,\widetilde\delta_2)$ are the characters
    corresponding to $\mathcal{U}\hookrightarrow\mathcal{T}^2$.
    This implies that we cannot use Theorem \ref{extensions} to construct a rigid space with a
    family of $\phigam$-modules specializing to $\drig(\rho)$, as we did
    in the proof of Proposition \ref{2 dim T-}.
    Of course this makes things more difficult, but we'll see that it's still possible
    to somehow reduce ourselves to the case in which $\delta_1/\delta_2\notin\mathcal{T}^+(L)$
    with the machinery developed in section 5 and be able then to use the results of section 4 to construct such a
    rigid space $X$ with a family of $\phigam$-modules $\D$ over $X$.
    The downside is now that the space $X$ is no more a vector bundle over a neighbourhood
    $\mathcal{U}$ of $(\delta_1,\delta_2)$.
    Hence it will be harder to show that the family of $\phigam$-modules $\D$
    is densely regular.

    Let us assume then that $\drig(\rho)$ is an extension of $\robba_L(\delta_2)$ by $\robba_L(\delta_1)$
    such that $\delta_1/\delta_2\in\T^+(L)$, which means that $\delta_1/\delta_2$
    is of the form $\chi x^n$ for some $n\in\mathbb{N}$.

\subsection{Construction of space of extensions}

  \begin{lem}
    \label{iso of H^1}
    Let $\delta_1,\delta_2\in\T(L)$ such that $\delta_1/\delta_2=\chi x^n$ for some $n\in\mathbb{N}$.
    We have the isomorphism
    \begin{equation}
      \label{from 1/t to twist}
      H^1_{\phi,\Gamma}(\robba_L(\delta_1/\delta_2)[1/t])\cong H^1_{\phi,\Gamma}(\robba_L(\chi x^{-1}))
    \end{equation}
    of $L$-modules.
  \end{lem}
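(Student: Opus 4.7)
The plan is to compute the left-hand side as a direct limit and identify it with the right-hand side via a stabilization argument. Applying Remark \ref{phigam cohomology with 1/t} to the $\phigam$-module $\robba_L(\chi x^n)$, I would write
\[
H^1_{\phi,\Gamma}(\robba_L(\chi x^n)[1/t])
=\lim_{\stackrel{\rightarrow}{k}} H^1_{\phi,\Gamma}(\robba_L(\chi x^n)(x^{-k}))
=\lim_{\stackrel{\rightarrow}{k}} H^1_{\phi,\Gamma}(\robba_L(\chi x^{n-k})),
\]
where the transition map from index $k$ to index $k+1$ is the one induced by the inclusion $t^{-k}\robba_L(\chi x^n)\hookrightarrow t^{-k-1}\robba_L(\chi x^n)$ (equivalently, by multiplication by $t$). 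This reduces the claim to identifying the value of this direct limit.

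Next, I would analyse the transition maps using Lemma \ref{twist}(i), applied one step at a time. With $\delta = \chi x^{n-k}$ one has $\mathrm{wt}(\delta) = n+1-k$, so the hypothesis $\mathrm{wt}(\delta)\notin\{1\}$ for the shift by $x^{-1}$ becomes $k\neq n$. Consequently the one-step transition
\[
H^1_{\phi,\Gamma}(\robba_L(\chi x^{n-k}))\longrightarrow H^1_{\phi,\Gamma}(\robba_L(\chi x^{n-k-1}))
\]
is an isomorphism for every $k\neq n$; in particular it is an isomorphism for all $k\geq n+1$. Hence the directed system stabilises from index $k=n+1$ onward, and the direct limit is computed by the term at this index.

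Taking $k=n+1$ yields $H^1_{\phi,\Gamma}(\robba_L(\chi x^{-1}))$, which gives the desired isomorphism. The only real point to watch is the book-keeping of the weight $\mathrm{wt}(\chi x^{n-k})=n+1-k$ and the fact that the one step that Lemma \ref{twist}(i) does not cover (namely the step $k=n$, passing from $H^1(\robba_L(\chi))$ to $H^1(\robba_L(\chi x^{-1}))$) is irrelevant because it lies strictly before the tail of isomorphisms that controls the direct limit. There is no serious obstacle; the argument is essentially a careful application of Remark \ref{phigam cohomology with 1/t} combined with Lemma \ref{twist}(i).
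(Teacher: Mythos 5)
Your proposal is correct and takes essentially the same approach as the paper: reduce via Remark \ref{phigam cohomology with 1/t} to a direct limit over shifts $\chi x^{n-k}$, and then use the weight criterion in Lemma \ref{twist}(i) to show the system stabilizes from $k=n+1$ onward. The only superficial difference is that you apply Lemma \ref{twist} one step at a time (shift by $x^{-1}$, hypothesis $\mathrm{wt}(\chi x^{n-k})=n+1-k\neq 1$), whereas the paper applies it once with a larger shift starting from the anchor character $\chi x^{-1}$ of weight $0$; both yield the same tail of isomorphisms and the same value of the colimit.
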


  \begin{proof}
    By Remark \ref{phigam cohomology with 1/t}, we have an isomorphism
    \[
      H^1_{\phi,\Gamma}(\robba_L(\delta_1/\delta_2)[1/t])\cong
      \lim_{\stackrel{\rightarrow}{i}} H^1_{\phi,\Gamma}(\robba_L(\delta_1/\delta_2 x^{-i}))=
      \lim_{\stackrel{\rightarrow}{i}} H^1_{\phi,\Gamma}(\robba_L(\chi x^{n-i})).
    \]
    Let us consider the character $\chi x^{-1}\in\T^{\mathrm{wr}}(L)\cap\T^\reg(L)$,
    which has weight $0$.
    By Lemma \ref{twist}, we have that
    \[
      H^1_{\phi,\Gamma}(\robba_L(\chi x^{-1}))\rightarrow H^1_{\phi,\Gamma}(\robba_L(\chi x^{-i}))
    \]
    is an isomorphism for all $i\geq 1$.
    This means that all the transition maps
    \[
      H^1_{\phi,\Gamma}(\robba_L(\chi x^{n-i}))\rightarrow H^1_{\phi,\Gamma}(\robba_L(\chi x^{n-i-1}))
    \]
    of the direct limit
    $\lim_i H^1_{\phi,\Gamma}(\robba_L(\chi x^{n-i}))$
    are isomorphisms for $i\geq n+1$.
    As a consequence, we have that
    \[
      H^1_{\phi,\Gamma}(\robba_L(\delta_1/\delta_2)[1/t])\cong H^1_{\phi,\Gamma}(\robba_L(\chi x^{-1})).
    \]
  \end{proof}

  Now let $\rho:G_{\Qp}\rightarrow GL_2(L)$ be a trianguline representation whose $\phigam$-module
  $\drig(\rho)$ is an extension of $\robba_L(\delta_2)$ by $\robba_L(\delta_1)$.
  Let us denote by $D'\in H^1_{\phi,\Gamma}(\robba_L(\chi x^{-1}))$ the extension corresponding to
  $\drig(\rho)[1/t]$ through the isomorphism (\ref{from 1/t to twist}).
  Notice that in particular we have that $\drig(\rho)[1/t]=D'[1/t]$.
  Let $a_1\coloneqq\rr{wt}(\delta_1)$ and $a_2\coloneqq\rr{wt}(\delta_2)$;
  since $\delta_1/\delta_2=\chi x^{n}$, we have $a_1-a_2=n+1$.
  If $a_1,a_2\in\Z$, let $\mu_1=a_1,\mu_2=a_2$; otherwise, let $\mu_1=0$ and $\mu_2=-n-1$.

  \begin{rem}
    Observe that the characters $\delta_1x^{-\mu_1},\delta_2x^{-\mu_2}$ either both have weights equal to 0
    or they have the same non-integer weight.
  \end{rem}

  The $\phigam$-module $D'$ is an extension of $\robba_L(\delta_2x^{-\mu_2})$ by $\robba_L(\delta_1 x^{-\mu_1})$.
  In fact we have that $-\mu_1+\mu_2=-\rr{wt}(\delta_1)+\rr{wt}(\delta_2)=-\rr{wt}(\delta_1/\delta_2)=-n-1$,
  thus $\delta_1x^{-\mu_1}/\delta_2x^{-\mu_2}=\delta_1/\delta_2\cdot x^{-n-1}=\chi x^{-1}$.
  Since $\delta_1/\delta_2\cdot x^{-n-1}=\chi x^{-1}\notin\T^+(L)$ by construction,
  we can apply Proposition \ref{2 dim T-} and get the following result.

  \begin{prop}
    \label{construction of X'}
    There exist
    \begin{itemize}
      \item
        a rigid space $X'$ over $L$ which is also a vector bundle of dimension 1 over an
        appropriate neighbourhood $\mathcal{U}'$ of $(\delta_1x^{-\mu_1},\delta_2 x^{-\mu_2})$ in $\T^\reg _2$;
      \item
        a family of $\phigam$-modules $\D'$ over $X'$;
      \item
        a map $(\widetilde\delta_1',\widetilde\delta_2'):X'\rightarrow\T^2$
    \end{itemize}
    such that there are
    \begin{enumerate}
      \item
        $x'\in X'$ such that $\D'\widehat\otimes k(x')=D'$ and
        $(\widetilde\delta_1',\widetilde\delta_2')\widehat\otimes k(x')=(\delta_1x^{-\mu_1},\delta_2 x^{-\mu_2})$;
      \item
        a Zariski open dense subset $U'\xhookrightarrow{j}X'$ such that $j^*\D'$
        has a filtration of sub-$\phigam$-modules with graded pieces $\robba_{U'}(\widetilde\delta_1')$
        and $\robba_{U'}(\widetilde\delta_2')$ and $j^*(\widetilde\delta_1',\widetilde\delta_2')\in\T^\reg_2(U').$
    \end{enumerate}
  \end{prop}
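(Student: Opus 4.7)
The plan is to apply Proposition \ref{2 dim T-} directly to the twisted data $(\delta_1 x^{-\mu_1},\delta_2 x^{-\mu_2})$ and $D'$. First I would verify that the hypotheses are met: by construction, $\delta_1 x^{-\mu_1}/\delta_2 x^{-\mu_2}=\chi x^{-1}$, and since $\T^+(L)=\{\chi x^n\colon n\in\N\}$ does not contain $\chi x^{-1}$, we have $(\delta_1 x^{-\mu_1},\delta_2 x^{-\mu_2})\in\T^\reg_2(L)$ and in particular $\delta_1 x^{-\mu_1}/\delta_2 x^{-\mu_2}\notin\T^+(L)$. Moreover, by Lemma \ref{iso of H^1} the module $D'$ is a genuine element of $H^1_{\phi,\Gamma}(\robba_L(\chi x^{-1}))=H^1_{\phi,\Gamma}(\robba_L(\delta_1 x^{-\mu_1}/\delta_2 x^{-\mu_2}))$, so it makes sense as an extension of $\robba_L(\delta_2 x^{-\mu_2})$ by $\robba_L(\delta_1 x^{-\mu_1})$.

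Next I would feed this input into Proposition \ref{2 dim T-}, which produces a rigid space $Y'$ that is a vector bundle over an affinoid neighbourhood $\mathcal{V}'\subset\T^2$ of $(\delta_1 x^{-\mu_1},\delta_2 x^{-\mu_2})$, together with a family $\D'$ of $\phigam$-modules and universal characters $(\widetilde\delta_1',\widetilde\delta_2'):Y'\to\T^2$ satisfying (1) and (2) of Proposition \ref{2 dim T-}. The rank of this vector bundle is $\dim_L H^1_{\phi,\Gamma}(\robba_L(\chi x^{-1}))$, which equals $1$ by Theorem \ref{cohomology of families}(iii) since $\chi x^{-1}\in\T^\reg(L)$; this matches the dimension claim.

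Finally, I would refine the base: since $\T^\reg_2$ is open in $\T^2$ and contains $(\delta_1 x^{-\mu_1},\delta_2 x^{-\mu_2})$, the intersection $\mathcal{U}'\coloneqq\mathcal{V}'\cap\T^\reg_2$ is an open neighbourhood of this point inside $\T^\reg_2$. I would then set $X'$ to be the pullback of the vector bundle $Y'\to\mathcal{V}'$ along $\mathcal{U}'\hookrightarrow\mathcal{V}'$, and let $\D'$ and $(\widetilde\delta_1',\widetilde\delta_2')$ denote the corresponding pullbacks. Property (1) is preserved because the specialisation point $x'$ sits over $(\delta_1 x^{-\mu_1},\delta_2 x^{-\mu_2})\in\mathcal{U}'$, and property (2) follows from the corresponding Zariski-open dense subset produced by Proposition \ref{2 dim T-}, intersected with the pullback of $\T^\reg_2$, which remains dense because $X'$ is a vector bundle over $\mathcal{U}'$ and open maps send dense subsets to dense subsets.

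There is essentially no hard step here, since the whole construction is an immediate invocation of the $n=2$ case already proved in Proposition \ref{2 dim T-}; the only point requiring care is the bookkeeping that the twist $x^{-\mu_i}$ kills the $\T^+$-type irregularity of the original parameters, which is exactly what allows us to bypass the obstruction $H^2_{\phi,\Gamma}(\robba_L(\delta_1/\delta_2))\neq 0$ that prevented a direct application of Theorem \ref{extensions} to $(\delta_1,\delta_2)$ in the first place.
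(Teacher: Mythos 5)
Your proposal is correct and follows essentially the same route as the paper, which simply invokes Proposition \ref{2 dim T-} after observing that $\delta_1 x^{-\mu_1}/\delta_2 x^{-\mu_2}=\chi x^{-1}\notin\T^+(L)$ (compare the sentence immediately preceding Proposition \ref{construction of X'} and Remark \ref{perfect parametrization}, where the base is likewise shrunk to a neighbourhood inside $\T^\reg_2$). The only cosmetic quibble is that for the dimension count you cite Theorem \ref{cohomology of families}(iii), which concerns families over an affinoid $A$; the pointwise statement $\dim_L H^1_{\phi,\Gamma}(\robba_L(\chi x^{-1}))=1$ follows already from the earlier unnamed theorem quoting Colmez and Liu, or from (iii) with $A=L$ as you implicitly use — either reference is fine.
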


  \begin{rem}
    \label{perfect parametrization}
    After eventually shrinking $X'$ to a neighbourhood of $x'$, we may assume that $X'=\mathrm{Sp}(R')$
    is affinoid.
    Notice moreover that the rigid space $X'$ and the family of extensions $\D'$
    parametrizes all the extensions $H^1_{\phi,\Gamma}(\robba_{X'}(\widetilde\delta_1'/\widetilde\delta_2'))$.\
    In other words, the map
    \begin{align*}
      \Psi_Y:X'(Y)&\twoheadrightarrow H^1_{\phi,\Gamma}(\robba_Y(g^*\widetilde\delta_1'/g^*\widetilde\delta_2'))\\
      f&\mapsto f^*\D'
    \end{align*}
    from Theorem \ref{extensions} is actually a bijection for all $g:Y\rightarrow\mathcal{U}'$
    map of rigid spaces.
    This follows from Theorem \ref{extensions}: in fact $(\delta_1 x^{-\mu_1},\delta_2 x^{-\mu_2})\in \T^\reg_2(L)$
    and $\dim_L H^1_{\phi,\Gamma}(\robba_L(\delta_1/\delta_2\cdot x^{-n-1}))=1$.
    Therefore we can choose the neighbourhood $\mathcal{U}'$ inside $\T^\reg_2$ in such a way that
    $H^1_{\phi,\Gamma}(\robba_{\mathcal{U}'}(\widetilde\delta_1'/\widetilde\delta_2'))$ is free of rank 1
    over $\mathcal{O}_{\mathcal{U}'}(\mathcal{U}')$.
  \end{rem}

  By Theorem \ref{sub-phigam-module}, there exist $r_0\in p^\Q\cap[0,1)$ and
  a unique $\phigam$-module $D^{r_0}$ over $\robba_L^{r_0}$ such that $\drig(\rho)=D^{r_0}\otimes_{\robba^{r_0}_L}\robba_L$;
  moreover there is $r_1\in p^\Q\cap[0,1)$ such that
  the family of $\phigam$-modules $\D'$ can be already defined over $\robba_{X'}^{r_1}$.
  Let us denote by $r$ the maximum of $r_0$ and $r_1$, in such a way that both $\drig(\rho)$
  and $\D'$ are defined over $\robba_L^r$ and $\robba_{X'}^r$ respectively.
  In other words, there exist $D^r$ a $\phigam$-module over $\robba^r_L$ such that
  $\drig(\rho)=D^r\otimes_{\robba^r_L}\robba_L$ and a $\phigam$-module $\D'^{r}$
  over $\robba_{X'}^r$
  such that $\D'^{r}\otimes_{\robba_{X'}^r}\robba_{X'}=\D'$.
  Let $m(r)$ be the smallest positive integer such that $p^{m-1}(p-1)\geq r$ for all $m\geq m(r)$,
  so that all the points of the form $1-\zeta_{p^m}$ (where $\zeta_{p^m}$ is a $p^m$-th root of unity)
  lie in the open annulus $\mathbb{B}^r$.
  Let us fix $m\geq m(r)$ and let us assume that $L$ contains $K_m$
  (by Lemma \ref{enlarge L} we know we can always make this assumption).
  As a consequence of Theorem \ref{equivalenceBL}, the $\phigam$-module $D^r$ corresponds to the
  tuple $(D^r[1/t],D^r\otimes_{\robba_L^r}\prod_{K_m\hookrightarrow L}L\llbracket t\rrbracket,f)$ through
  the equivalence \ref{equivalenceBeauvilleLaszlo}, where
  \[
    f:D^r[1/t]\otimes_{\robba^r_L[1/t]}\prod_{K_m\hookrightarrow L}L((t))\xrightarrow{\sim}
  (D^r\otimes_{\robba_L^r}\prod_{K_m\hookrightarrow L}L\llbracket t\rrbracket)[1/t]
  \]
  is defined as in Theorem \ref{equivalenceBL}.
  Let us denote by $\Lambda$ the $\Gamma$-stable finite projective module
  $D^r\otimes_{\robba_L^r}\prod_{K_m\hookrightarrow L}L\llbracket t\rrbracket$.
  By Remark \ref{stable lattices}, we have
  \[
    \Lambda=\prod_{K_m\xhookrightarrow{\tau} L}\Lambda_\tau
  \]
  where $\Lambda_\tau$ is a $\Gamma_m$-stable lattice over $L\llbracket t\rrbracket$.
  On the other hand, given such a lattice $\Lambda_\tau$, it is possible to recover $\Lambda$.
  From now on, since the two notions are equivalent,
  we fix an embedding $\tau:K_m\hookrightarrow L$ and we will denote
  by $\Lambda$ the $\Gamma_m$-stable lattice $\Lambda_\tau$ over $L\llbracket t\rrbracket$.
  In the same way, let $\Lambda'$ be the $\Gamma_m$-stable lattice over
  $L\llbracket t\rrbracket$ corresponding to the $\tau$-factor of the lattice associated to $D'$.
  Let us denote by $\mathbb{L}'$ the $\Gamma_m$-stable lattice corresponding to the
  factor $\tau:K_m\hookrightarrow L$ of the lattice $\D'^r\otimes_{\robba^r_{X'}}
  \prod_{K_m\hookrightarrow L}R'\llbracket t\rrbracket$ associated to $\D'$.
  After choosing a basis of $\D'$,
  we choose a trivialization of $\mathbb{L}'$ and fix a basis of $\mathbb{L}'$
  respecting the triangulation of $\D'$.

  We then have the following result.

  \begin{lem}
    \label{relative positions of lattices}
    $\Lambda\in Y_{(\mu_1,\mu_2)}(L)$.
  \end{lem}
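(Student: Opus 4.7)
The plan is to unravel the definitions and read off the graded pieces of $\Lambda$ directly from the triangulation of $\drig(\rho)$. First, I would fix the identification of $\Lambda[1/t]$ with $L((t))^2$ coming from the basis of $\mathbb{L}'$. By Lemma \ref{iso of H^1} and the construction of $D'$, we have $D'[1/t] = \drig(\rho)[1/t]$, hence a canonical equality $\Lambda[1/t] = \Lambda'[1/t]$. The basis $\{e_1', e_2'\}$ of $\mathbb{L}'$ was chosen to respect the triangulation of $\D'$, so its specialization at $x'$ gives a basis of $L((t))^2 \cong \Lambda[1/t]$ for which $L((t)) e_1'$ is the image of $\robba_L(\delta_1 x^{-\mu_1})[1/t] \otimes_\tau L((t))$ and the projection onto $L((t)) e_2'$ identifies with the quotient $\robba_L(\delta_2 x^{-\mu_2})[1/t] \otimes_\tau L((t))$. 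This fixes the standard filtration $\Fil^\bullet(V)$ on $V = L((t))^2$.

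Next, I would use the triangulation $0 \subset \robba_L(\delta_1) \subset \drig(\rho)$, which by Theorem \ref{sub-phigam-module} descends to a short exact sequence $0 \to \robba_L(\delta_1)^r \to D^r \to \robba_L(\delta_2)^r \to 0$. Tensoring along $\tau$ (using flatness of $L\llbracket t\rrbracket$ over $\robba_L^r$, or directly that the factors are free of rank one) gives
\[
0 \to \Lambda_1 \to \Lambda \to \Lambda_2 \to 0, \quad \Lambda_i \coloneqq \robba_L(\delta_i)^r \otimes_{\robba_L^r,\tau} L\llbracket t\rrbracket.
\]
This sequence is compatible, via $\Lambda \hookrightarrow \Lambda[1/t] = L((t))^2$, with the standard split $0 \to L((t)) e_1' \to L((t))^2 \to L((t)) e_2' \to 0$. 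A short diagram chase then yields $\Fil^1(\Lambda) = \Lambda \cap L((t)) e_1' = \Lambda_1$ and $\mathrm{gr}^2(\Lambda) = \Lambda_2 \subset L((t)) e_2'$.

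The final and main step is to identify $\Lambda_i$ inside $L((t)) e_i'$ as $t^{\mu_i} L\llbracket t\rrbracket e_i'$. Since $\mu_i \in \Z$ in both cases of the definition, the factorization $\delta_i = x^{\mu_i} \cdot (\delta_i x^{-\mu_i})$ together with the relations $\phi(t) = pt$ and $\gamma \cdot t = \gamma t$ produces an isomorphism $\robba_L(x^{\mu_i}) \cong t^{\mu_i} \robba_L$ of sub-$\robba_L$-modules of $\robba_L[1/t]$; tensoring gives $\robba_L(\delta_i) \cong t^{\mu_i} \robba_L(\delta_i x^{-\mu_i})$ inside the common $[1/t]$-localization $\robba_L(\delta_i x^{-\mu_i})[1/t]$. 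Base-changing along $\tau$ (under which $t$ maps to $t$) turns this into $\Lambda_i = t^{\mu_i} L\llbracket t\rrbracket e_i' \subset L((t)) e_i'$. Combining with the previous step, $\mathrm{gr}^i(\Lambda) = t^{\mu_i} \mathrm{gr}^i(\Lambda_0)$ for $i=1,2$, which is exactly the condition $\Lambda \in Y_{(\mu_1,\mu_2)}(L)$. The main subtle point is purely bookkeeping: one has to keep straight that the trivialization of $L((t))^2$ comes from the basis of $\mathbb{L}'$ (hence encodes the triangulation of $D'$, not of $\drig(\rho)$), and then verify that the two-case definition of $(\mu_1,\mu_2)$ always produces the same integer twist $t^{\mu_i}$ that measures the difference between the lattices $\robba_L(\delta_i)^r \otimes_\tau L\llbracket t\rrbracket$ and $\robba_L(\delta_i x^{-\mu_i})^r \otimes_\tau L\llbracket t\rrbracket$.
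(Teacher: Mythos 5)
Your proof is correct and takes essentially the same approach as the paper: both reduce to the identification $\robba_L^r(\delta_i)=t^{\mu_i}\robba_L^r(\delta_ix^{-\mu_i})$ inside $\robba_L^r(\delta_ix^{-\mu_i})[1/t]$ and then compute the graded pieces $\mathrm{gr}^i(\Lambda)$ by intersecting $\Lambda=D^r\otimes_\tau L\llbracket t\rrbracket$ with the filtration of $V=L((t))^2$ coming from the triangulation of $D'$. The only cosmetic difference is that you make explicit the short exact sequence $0\to\Lambda_1\to\Lambda\to\Lambda_2\to0$ and the diagram chase behind the equality $\Lambda\cap\Fil^1(V)=\Lambda_1$, whereas the paper writes the same intersection computation more tersely in one chain of equalities.
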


  \begin{proof}
    We have to show that $\mathrm{gr}^1(\Lambda)=t^{\mu_1}\mathrm{gr}^1(\Lambda')$ and
    $\mathrm{gr}^2(\Lambda)=t^{\mu_2}\cdot\mathrm{gr}^2(\Lambda')$.
    Let us denote by $\delta_1'\coloneqq \delta_1 x^{-\mu_1}$ and $\delta_2'\coloneqq\delta_2x^{-\mu_2}$.
    Since we chose a basis of $\Lambda'$ respecting the triangulation of $D'$, we have that
    $\mathrm{Fil}^\bullet(L((t))^2)$
    is induced by the filtration of $D'[1/t]$.
    This implies that
    \[
      \mathrm{gr}^1(\Lambda')=\robba_L^r(\delta_1')\otimes_{\robba_L^r,\tau}L\llbracket t\rrbracket
    \]
    and
    \[
      \mathrm{gr}^2(\Lambda')=\robba_L^r(\delta_2')\otimes_{\robba_L^r,\tau}L\llbracket t\rrbracket.
    \]
    Let us denote by $V$ the module $L((t))^2$ for simplicity.
    We have
    \begin{align*}
      \mathrm{gr}^1(\Lambda)&=\Lambda\cap\mathrm{Fil}^1(V)
      =\Lambda\cap\left (\robba_L^r(\delta_1')[1/t]\otimes_{\robba^r_L[1/t],\tau}L((t))\right )\\
      &=\robba_L^r(\delta_1)\otimes_{\robba_L^r,\tau}L\llbracket t\rrbracket
      =t^{\mu_1}\robba_L^r(\delta_1 x^{-\mu_1})\otimes_{\robba_L^r,\tau}L\llbracket t\rrbracket
      =t^{\mu_1}\cdot\mathrm{gr}^1(\Lambda')
    \end{align*}
    and
    \begin{align*}
      \mathrm{gr}^2(\Lambda)&=\Lambda/\mathrm{gr}^1(\Lambda)
      =\left (D^r\otimes_{\robba_L^r,\tau}L\llbracket t\rrbracket\right )
      /\left (\robba_L^r(\delta_1)\otimes_{\robba_L^r,\tau}L\llbracket t\rrbracket \right )\\
      &=\robba_L^r(\delta_2)\otimes_{\robba_L^r,\tau}L\llbracket t\rrbracket
      =\robba_L^r(\delta_2'\cdot x^{\mu_2})\otimes_{\robba_L^r,\tau}L\llbracket t\rrbracket\\
      &=t^{\mu_2}\cdot\robba^r_L(\delta_2')\otimes_{\robba_L^r,\tau}L\llbracket t\rrbracket
      =t^{\mu_2}\cdot\mathrm{gr}^2(\Lambda').
    \end{align*}
  \end{proof}

  Consider the space $X'\times\mathrm{Gr}_2^{\rig}$, where $X'$ is the rigid
  space defined in Proposition \ref{construction of X'};
  we will define a subspace consisting of $\Gamma_m$-stable lattices.
  First of all, let us fix a topological generator $\gamma$ of the group $\Gamma_m$.

  \begin{dfn}
    Let us define the map $\gamma\cdot:X'\times\mathrm{Gr}_2^{\rig}\rightarrow\mathrm{Gr}_2^{\rig}$ in the following way:
    for any affinoid $L$-algebra $A$ and for any $(y',\Lambda_A)\in(X'\times\mathrm{Gr}_2^{\rig})(A)$, let
    \[
      \gamma\cdot(y',\Lambda_A)\coloneqq \gamma\cdot\Lambda_A,
    \]
    where the action of $\gamma$ on $\Lambda_A$ is induced from the
    action of $\Gamma_m$ on $y'^*\D'$ through the isomorphism
    \[
      (y'^*\D'^r)[1/t]\otimes_{\robba_A^r[1/t],\tau}A((t))
      \cong \Lambda_A[1/t].
    \]
    Notice that $\gamma\cdot\Lambda_A\in\Gr_2(A)$, because $\gamma$ induces an automorphism on
  $(y'^*\D'^r)[1/t]$, hence the map is well-defined.

  \end{dfn}

  \begin{thm}
    \label{construction of X}
    There exist
    \begin{itemize}
      \item
      an affinoid rigid space $X=\mathrm{Sp}(R)$ over $L$;
      \item
      a family of triangulable $\phigam$-modules $\D$ over $X$;
      \item
      a map $(\widetilde\delta_1,\widetilde\delta_2):X\rightarrow\T^2$ which consists of parameters for $\D$
    \end{itemize}
    such that:
    \begin{enumerate}
      \item
      there is $x\in X$ such that $\D\widehat\otimes k(x)=\drig(\rho)$ and
      $(\widetilde\delta_1,\widetilde\delta_2)\widehat\otimes k(x)=(\delta_1,\delta_2)$;
      \item
      $(\widetilde\delta_1,\widetilde\delta_2)=(\widetilde\delta_1'x^{\mu_1},\widetilde\delta_2'x^{\mu_2})$.
    \end{enumerate}
  \end{thm}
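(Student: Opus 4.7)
The plan is to build $X$ as a closed rigid subspace of $X' \times Y_{(\mu_1,\mu_2)}^{\rig}$ parametrising pairs $(y',\Lambda)$ in which $\Lambda$ is $\Gamma_m$-stable for the $\Gamma_m$-action inherited from $y'^*\D'$, and then to obtain $\D$ by gluing $(p_1^*\D')[1/t]$ with the universal lattice via the Beauville-Laszlo equivalence of Theorem \ref{equivalenceBL} and Proposition \ref{bijection of lattices and modules}. The choice of $Y_{(\mu_1,\mu_2)}^{\rig}$ is forced by Lemma \ref{relative positions of lattices}, which says precisely that the lattice associated to $\drig(\rho)$ under Beauville-Laszlo lies in $Y_{(\mu_1,\mu_2)}(L)$.

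Concretely, fix a topological generator $\gamma$ of $\Gamma_m$ and consider the two morphisms $\gamma\cdot$ and $p_2$ from $X'\times Y_{(\mu_1,\mu_2)}^{\rig}$ to $\mathrm{Gr}_2^{\rig}$; define $X$ as their equaliser. On each finite Schubert piece $\mathrm{Gr}_{2,i}^{\rig}$ this is cut out by finitely many equations in the matrix entries parametrising lattices, so $X$ is a closed rigid analytic subspace, affinoid after shrinking around the point of interest. The continuity of the $\Gamma_m$-action together with the fact that $\Gamma_m$ is topologically generated by $\gamma$ shows that the equation $\gamma\cdot\Lambda=\Lambda$ already forces $\Lambda$ to be $\Gamma_m$-stable for the full action. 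Writing $X=\Sp(R)$ and letting $p_1,p_2$ denote the projections to $X'$ and $Y_{(\mu_1,\mu_2)}^{\rig}$, the tuple $((p_1^*\D')[1/t],\, p_2^*\mathbb{L},\, f)$ with $f$ the canonical identification after inverting $t$ is by construction an object of $\Categoryr$; applying the quasi-inverse of that equivalence produces the desired family $\D$ of $\phigam$-modules over $\robba_R$.

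For the parameters, recall from Proposition \ref{construction of X'} that $\D'$ admits a filtration with graded pieces $\robba_{X'}(\widetilde\delta_i')$. Since a lattice $\Lambda\in Y_{(\mu_1,\mu_2)}^{\rig}(R)$ satisfies $\gr^i(\Lambda)=t^{\mu_i}\gr^i(\Lambda_0)$ for the standard lattice $\Lambda_0$ (Lemma \ref{Y_mu gives filtrations}), the proof of Lemma \ref{relative positions of lattices} rerun with parameter ring $R$ in place of $L$ shows that the Beauville-Laszlo glue of the first step of the filtration of $(p_1^*\D')[1/t]$ with $p_2^*\mathrm{Fil}^1(\mathbb L)$ is a sub-$\phigam$-module of $\D$ isomorphic to $\robba_R(\widetilde\delta_1' x^{\mu_1})$ with quotient $\robba_R(\widetilde\delta_2' x^{\mu_2})$. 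This yields the required triangulation with $(\widetilde\delta_1,\widetilde\delta_2)=(\widetilde\delta_1' x^{\mu_1},\widetilde\delta_2' x^{\mu_2})$. The point $x\in X(L)$ is then $(x',\Lambda)$ with $x'\in X'(L)$ as in Proposition \ref{construction of X'} and $\Lambda$ the Beauville-Laszlo lattice of $\drig(\rho)$, which is $\Gamma_m$-stable because it comes from a genuine $\phigam$-module and lies in $Y_{(\mu_1,\mu_2)}(L)$ by Lemma \ref{relative positions of lattices}; the identities $\D\widehat\otimes k(x)=\drig(\rho)$ and $(\widetilde\delta_1,\widetilde\delta_2)\widehat\otimes k(x)=(\delta_1,\delta_2)$ then follow from $\drig(\rho)[1/t]=D'[1/t]=x'^*\D'[1/t]$ via the equivalence.

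The technical heart of the argument is the parameter computation: one must verify that the sub-$\phigam$-module produced by Beauville-Laszlo has character $\widetilde\delta_1' x^{\mu_1}$ on the nose, not merely abstractly isomorphic up to an invertible twist. This is essentially bookkeeping on the canonical trivialisation of $\gr^\bullet(\mathbb L)$ supplied by the $Y_\mu$-structure, and is precisely what motivates working over $Y_{(\mu_1,\mu_2)}^{\rig}$ rather than a larger stratum of the affine Grassmannian; writing out $\gr^i$ of the lattice explicitly as $t^{\mu_i}\gr^i(\Lambda_0)$ over $R$, this trivialisation accounts for the extra $x^{\mu_i}$ factor appearing in the parameter.
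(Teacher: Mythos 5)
Your proposal is correct and follows essentially the same route as the paper: you cut out $X$ as the equaliser of $\gamma\cdot$ and the projection inside $X'\times Y_{(\mu_1,\mu_2)}^{\rig}$ (the paper phrases this as an iterated fibre product, first over $\Gr_2\times\Gr_2$ and then intersecting with $Y_{(\mu_1,\mu_2)}^{\rig}$, which is the same space), recover $\D$ from the glueing datum $(p^*\D'[1/t],p^*\mathbb{L},f)$ via the Beauville--Laszlo equivalence, and read off the parameters by running the graded-piece computation of Lemma \ref{relative positions of lattices} over $R$. Your observation that $\gamma\cdot\Lambda=\Lambda$ for a single topological generator already yields full $\Gamma_m$-stability, via continuity and closedness of the stabiliser, makes explicit a point the paper leaves implicit, but the argument is otherwise the same.
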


  \begin{proof}
    Let us define the ind-rigid space $Y$ to be the fiber product
    \begin{equation}
      \label{def of Y}
      \begin{tikzcd}
        Y\coloneqq X'\times\Gr_2\times_{\Gr_2\times\Gr_2}\Gr_2\ar[r]\ar[d, "p_2"] & \mathrm{Gr}_2^{\rig}\ar[d, "\delta"]\\
        X'\times \Gr_2\ar[r,"(\gamma\cdot{,}\mathrm{pr})"] & \Gr_2\times\Gr_2
      \end{tikzcd}
    \end{equation}
    where the rightmost map $\delta$ is the diagonal map.
    Notice that for any point $((y',\Lambda_A),\Lambda'_A)\in Y(A)$, we have
    $(\gamma\cdot(y',\Lambda_A),\Lambda_A)=(\Lambda_A',\Lambda_A')$.
    Since $\Lambda$ is stable under the action of $\Gamma_m$ induced by the point $x'$,
    we have that $x\coloneqq((x',\Lambda),\Lambda)\in Y(L)$.
    We define $X$ to be the fiber product
    \[
      X\coloneqq Y\times_{\Gr_2}Y_{(\mu_1,\mu_2)}^\rig;
    \]
    we have then that $x\in X(L)$ because of Lemma \ref{relative positions of lattices} and,
    eventually restricting $X$, we can assume that $X$ is an affinoid neighbourhood $\mathrm{Sp}(R)$ of $x$.
    Consider the pullback $p_2^*(\D',\mathbb{L})$ to $X$ (where $\mathbb{L}$ is the universal lattice on $\Gr_2$)
    and the isomorphism of $\prod_{K_m\hookrightarrow L}R(( t))$-modules
    \[
      \widetilde{f}:p_2^*\D'^r[1/t]\otimes_{\robba_R^r[1/t]}\prod_{K_m\hookrightarrow L}R((t))
      \xrightarrow{\sim}\left (\prod_n g_m^n\cdot p_2^*\mathbb{L}\right )[1/t]
    \]
    defined as in Theorem \ref{equivalenceBL}, where $g_m$ is a generator of $\mathrm{Gal}(K_m/\Q_p)$
    and $n$ ranges over $\{0,\dots,|\mathrm{Gal}(K_m/\Q_p)|-1\}$.
    Let us denote by $\D$ the sheaf
    \[
      \left (p_2^*\D', p_2^*\mathbb{L},\widetilde{f}\right )
    \]
    on $X$, which can be seen as a sheaf of $\phigam$-modules thanks to the equivalence \ref{equivalenceBeauvilleLaszlo};
    then we have that $\D\widehat\otimes k(x)=(D^r[1/t],\Lambda,f)$.
    We have therefore constructed a space $X$ with a sheaf $\D$ specializing at some point $x\in X$ to
    $(D^r[1/t],\Lambda,f)$, which corresponds to the $\phigam$-module $\drig(\rho)$ through
    the equivalence \ref{equivalenceBeauvilleLaszlo}.
    Moreover the family of $\phigam$-modules $\D$ is obviously already defined over $\robba_X^r$,
    as $\D'$ is;
    hence there is a family of $\phigam$-modules $\D^r$ over $\robba_X^r$ such that
    $\D=\D^r\otimes_{\robba_X^r}\robba_X$.
    Notice that the sheaf of $\phigam$-modules $\D$ comes with a filtration of sub-$\phigam$-modules,
    which is the one induced by the filtration of sub-$\phigam$-modules of $\D'$.
    More precisely, for any affinoid $L$-algebra $A$ and for $y=((y',\Lambda_A),\Lambda_A)\in X(A)$, we have
    \[
      \mathrm{Fil}^i(y^*\D)=y^*\D\cap\mathrm{Fil}^i(y'^*\D'[1/t]).
    \]
    Recall that $\D'[1/t]=\D[1/t]$ by construction, so the intersection above makes sense.
    Moreover notice that $\mathrm{Fil}^i(y^*\D)$ is stable under $\phi$ and $\Gamma$,
    since both $y^*\D$ and $\mathrm{Fil}^i(y'^*\D'[1/t])$ are.
    Finally we have that $\Fil^i(y^*\D)$ is locally free by Lemma \ref{Y_mu gives filtrations}.
    Therefore $\mathrm{Fil}^\bullet(y^*\D)$ provides a triangulation of $\D$
    and hence it gives parameters $(\widetilde\delta_1,\widetilde\delta_2)\in\T^2(X)$ for the family $\D$.
    The last thing left to show is that $(\widetilde\delta_1,\widetilde\delta_2)=(\widetilde\delta_1'x^{\mu_1},\widetilde\delta_2'x^{\mu_2})$.
    We claim that
    \[
      \mathrm{gr}^1(p_2^*\mathbb{L})=\robba_R^r(\widetilde\delta_1)\otimes_{\robba_R^r,\tau}R\llbracket t\rrbracket
    \]
    and
    \[
      \mathrm{gr}^2(p_2^*\mathbb{L})=\robba_R^r(\widetilde\delta_2)\otimes_{\robba_R^r,\tau}R\llbracket t\rrbracket.
    \]
    Let $q:X\rightarrow X'$ be the projection map; since we fixed a basis
    of the standard lattice $\mathbb{L}'$ over $R'$ respecting
    the triangulation of $\D'$, we have that
    \begin{align*}
      \mathrm{Fil}^1(R((t))^2)& =q^*\mathrm{Fil}^1(R'((t))^2)=
      q^*(\robba_{R'}^r(\widetilde\delta_1')[1/t]\otimes_{\robba_{R'}^r[1/t],\tau}R'((t)))\\
      &=\robba_R^r(\widetilde\delta_1')[1/t]\otimes_{\robba_R^r[1/t],\tau}R((t)).
    \end{align*}
    Thus
    \begin{align*}
      \mathrm{gr}^1(p_2^*\mathbb{L})&=\mathrm{Fil}^1(p_2^*\mathbb{L})=
      p_2^*\mathbb{L}\cap \mathrm{Fil}^1(R((t))^2)\\
      &=\left (\D^r\otimes_{\robba_R^r,\tau}R\llbracket t\rrbracket\right )\cap
      \left (\robba_R^r(\widetilde\delta_1')[1/t]\otimes_{\robba_R^r[1/t],\tau}R((t))\right )\\
      &=\robba_R^r(\widetilde\delta_1)\otimes_{\robba_R^r,\tau}R\llbracket t\rrbracket
    \end{align*}
    and
    \begin{align*}
      \mathrm{gr}^2(p_2^*\mathbb{L})&= p_2^*\mathbb{L}/\mathrm{Fil}^1(p_2^*\mathbb{L})
      =\left (\D^r\otimes_{\robba_R^r,\tau}R\llbracket t\rrbracket\right )
      /\left (\robba_R^r(\widetilde\delta_1)\otimes_{\robba_R^r,\tau}R\llbracket t\rrbracket\right )\\
      &\cong\robba_R^r(\widetilde\delta_2)\otimes_{\robba_R^r,\tau}R\llbracket t\rrbracket.
    \end{align*}
    Restricting ourselves to lattices in $Y_{(\mu_1,\mu_2)}$ as in the definition of the space $X$, we have
    \[
      \robba_R^r(\widetilde\delta_1)\otimes_{\robba_R^r,\tau}R\llbracket t\rrbracket=
      t^{\mu_1}\robba_R^r(\widetilde\delta_1')\otimes_{\robba_R^r,\tau}R\llbracket t\rrbracket
      =\robba_R^r(x^{\mu_1}\widetilde\delta_1')\otimes_{\robba_R^r,\tau}R\llbracket t\rrbracket
    \]
    and
    \begin{align*}
      \robba_R^r(\widetilde\delta_2)\otimes_{\robba_R^r,\tau}R\llbracket t\rrbracket&=
      t^{\mu_2}\cdot \robba_R^r(\widetilde\delta_2')\otimes_{\robba_R^r,\tau}R\llbracket t\rrbracket
      =\robba_R^r(x^{\mu_2}\widetilde\delta_2')\otimes_{\robba_R^r,\tau}R\llbracket t\rrbracket,
    \end{align*}
    which concludes the proof.
  \end{proof}

  We have then shown that there is a rigid space $X$ and a family of $\phigam$-modules
  $\D$ such that $\D\widehat\otimes k(x)\cong\drig(\rho)$ at some point $x\in X(L)$.
  The only thing left to show is that the sheaf of $\phigam$-modules
  $\D$ is regular in a dense subspace of a neighbourhood of the point $x\in X(L)$.
  In order to do so, we must argue in a different way from section 4, since the rigid space $X$ is
  not a vector bundle over an appropriate neighbourhood $\mathcal{U}$ of $(\delta_1,\delta_2)$ in $\T^2$.
  As a consequence, we cannot conclude that $(\widetilde\delta_1,\widetilde\delta_2)^{-1}(\mathcal{U}\cap\Treg_2)$
  is dense in $X$, as the map $X\rightarrow\T^2$ is not necessarily open.

\subsection{Density of the regular locus}
  \label{density of regular locus in dim 2}

  To shorten the notation, let us denote by $G$ the map
  $(\widetilde\delta_1,\widetilde\delta_2):X\rightarrow\T^2$ sending a point $y$ of $X$ to
  the parameters associated to the filtration of $y^*\D$.
  Moreover, let
  \[
    H:\T^2\rightarrow\W^2,
    (\widetilde\eta_1,\widetilde\eta_2)\mapsto(\widetilde\eta_{1|\mathbb{Z}_p^\times},\widetilde\eta_{2|\mathbb{Z}_p^\times})
  \]
  be the map sending continuous characters of $\Q_p^\times$ to their restriction to $\mathbb{Z}_p^\times$.
  Finally, Let
  \begin{equation}
    \label{map F}
    F:X\xrightarrow G\T^2\xrightarrow{H}\W^2
  \end{equation}
  be the composition of the maps $G$ and $H$.

  \begin{lem}
    \label{fiber dimension of G}
    If $(\widetilde\eta_1,\widetilde\eta_2)\in\T^2(K)\cap\im (G)$ for some extension $K$ of $L$, then
    \[
      \dim(G^{-1}(\widetilde\eta_1,\widetilde\eta_2))=\dim_K H^1_{\phi,\Gamma}(\robba_K(\widetilde\eta_1/\widetilde\eta_2)).
    \]
  \end{lem}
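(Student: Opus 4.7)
The strategy is to factor $G$ through the natural projection $i : X \to X'$ coming from the fiber-product description of $X$ in Theorem~\ref{construction of X}. By Theorem~\ref{construction of X}(2), one has $G = \tau \circ G' \circ i$, where $G' : X' \to \T^2$ is the parameter map on $X'$ and $\tau : \T^2 \to \T^2$ is the automorphism given by multiplication by $(x^{\mu_1}, x^{\mu_2})$. Consequently, $G^{-1}(\widetilde\eta_1, \widetilde\eta_2) = i^{-1}(F')$, where $F' := G'^{-1}(\widetilde\eta_1 x^{-\mu_1}, \widetilde\eta_2 x^{-\mu_2}) \subset X'$.

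By Proposition~\ref{construction of X'} and Remark~\ref{perfect parametrization}, $X'$ is a vector bundle of rank $1$ over $\mathcal{U}' \subset \T_2^\reg$ whose fiber at any point $(\widetilde\delta_1', \widetilde\delta_2')$ is canonically $H^1_{\phi,\Gamma}(\robba_K(\widetilde\delta_1'/\widetilde\delta_2'))$. Since the ratio $\widetilde\delta_1'/\widetilde\delta_2'$ is regular (we are inside $\mathcal{U}' \subset \T_2^\reg$), this $H^1$ has $K$-dimension $1$ by Theorem~\ref{cohomology of families}. Thus $F' \cong \mathbb{A}^1_K$ and $\dim F' = 1$.

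For the fibers of $i$ over $x' \in F'$: by construction of $X$, $i^{-1}(x')$ consists of $\gamma_m$-stable lattices in $Y_{(\mu_1, \mu_2)}^\rig$ relative to the $\gamma_m$-action induced by $x'^*\D'$. By Proposition~\ref{bijection of lattices and modules}, such lattices are in natural bijection with sub-$(\phi,\Gamma)$-modules $D \subset x'^*\D'[1/t]$ satisfying $D[1/t] = x'^*\D'[1/t]$ and having graded pieces of Sen weights $(\mu_1, \mu_2)$, equivalently with triangulated $(\phi,\Gamma)$-modules over $\robba_K$ whose parameters are $(\widetilde\eta_1, \widetilde\eta_2)$ and whose $[1/t]$-module recovers $x'^*\D'[1/t]$. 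Collecting over all $x' \in F'$ and noting that every extension in $H^1_{\phi,\Gamma}(\robba_K(\widetilde\eta_1/\widetilde\eta_2))$ restricts after inverting $t$ to some point of $F'$ (by Lemma~\ref{iso of H^1}), we obtain a set-theoretic bijection between $G^{-1}(\widetilde\eta_1, \widetilde\eta_2)$ and $H^1_{\phi,\Gamma}(\robba_K(\widetilde\eta_1/\widetilde\eta_2))$.

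To upgrade this to the claimed equality of dimensions, note that $i : G^{-1}(\widetilde\eta_1, \widetilde\eta_2) \to F'$ is surjective (every $x' \in F'$ admits at least one $\gamma_m$-stable lattice of the correct elementary divisors), so semi-continuity of fibers gives $\dim G^{-1}(\widetilde\eta_1, \widetilde\eta_2) = 1 + d$ with $d$ the generic fiber dimension. By Theorem~\ref{final bijection}, $i^{-1}(x')$ is identified with a scheme of $N$-stable flags in a $2$-dimensional $D_\pdR$-like space; for $GL_2$ this is a $\mathbb{P}^1$ when the associated nilpotent endomorphism $N$ vanishes and a single point otherwise. The hardest step is to match the behaviour of $N$ on $F'$ with the jumps of $\dim H^1_{\phi,\Gamma}$ recorded in Theorem~\ref{cohomology of families}: namely $N \neq 0$ generically precisely when $\widetilde\eta_1/\widetilde\eta_2 \in \T^\reg$ (giving $d = 0$, total dimension $1$) and $N = 0$ generically when $\widetilde\eta_1/\widetilde\eta_2 \in \T^+ \cup \T^-$ (giving $d = 1$, total dimension $2$). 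Carrying out this almost-de-Rham-versus-de-Rham dichotomy for the $\BdR$-representations attached to the generic point of $F'$, as set up in Section~\ref{pdR}, will be the main technical obstacle.
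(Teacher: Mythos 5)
There is a genuine gap. Your approach factors $G$ through the projection $i\colon X\to X'$ and tries to compute $\dim G^{-1}(\widetilde\eta_1,\widetilde\eta_2)$ by summing $\dim F'$ with a generic fiber dimension $d$ of $i$, which you then propose to understand via the almost-de-Rham dichotomy and Theorem~\ref{final bijection}. But you stop exactly at the point where the argument would have to be carried through: you write that matching the behaviour of the nilpotent $N$ with the jumps of $\dim H^1_{\phi,\Gamma}$ ``will be the main technical obstacle.'' That obstacle is the content of the lemma, so the proposal is incomplete.

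There is also a substantive issue in the partial analysis you do give. You claim $i^{-1}(x')$ is a $\mathbb{P}^1$ when $N=0$; but $i^{-1}(x')$ consists of $\Gamma_m$-stable lattices \emph{constrained to lie in} $Y_{(\mu_1,\mu_2)}$, not arbitrary $\Gamma_m$-stable lattices. This cuts the $\mathbb{P}^1$ of $N$-stable lines down to an open cell (generically $\mathbb{A}^1$), and in fact a fiber may well be empty rather than a point. Relatedly, you assert surjectivity of $i|_{G^{-1}(\widetilde\eta_1,\widetilde\eta_2)}\to F'$ without justification; this is not obvious, since not every extension class in $H^1_{\phi,\Gamma}(\robba_K(\widetilde\eta_1'/\widetilde\eta_2'))$ need arise as $E[1/t]$ for an $E$ with the prescribed parameters $(\widetilde\eta_1,\widetilde\eta_2)$.

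The paper avoids all of this by bypassing the fibration over $X'$ entirely: it writes down the map $G^{-1}(\widetilde\eta_1,\widetilde\eta_2)\to H^1_{\phi,\Gamma}(\robba_K(\widetilde\eta_1/\widetilde\eta_2))$, $y\mapsto\D\widehat\otimes k(y)$, and proves it is a bijection directly. Surjectivity comes from the surjectivity of $\Psi_K$ (Remark~\ref{perfect parametrization}) together with an explicit computation of the graded pieces of the lattice $\Lambda_y=E^r\otimes_{\robba_K^r,\tau}K\llbracket t\rrbracket$, showing it lies in $Y_{(\mu_1,\mu_2)}$; injectivity again uses the bijectivity of $\Psi_K$. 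No appeal to $D_{\pdR}$, Sen operators, or flag-variety geometry is needed. If you want to pursue your route, you would need to prove (not assert) that $i$ restricted to $G^{-1}(\widetilde\eta_1,\widetilde\eta_2)$ has image all of $F'$, and then identify each fiber with the preimage, under $E\mapsto E[1/t]$, of the corresponding class in $H^1_{\phi,\Gamma}(\robba_K(\widetilde\eta_1'/\widetilde\eta_2'))$ — which, after an exact-sequence computation with Lemma~\ref{twist}, is an affine space of the appropriate dimension. At that point you would essentially have reproduced the paper's bijection in disguise, with extra bookkeeping.
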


  \begin{proof}
    We have a map
    \begin{equation}
      \label{fiber of G}
      G^{-1}(\widetilde\eta_1,\widetilde\eta_2)
      =\{y\in X(K)\colon \D\widehat\otimes k(y)\in H^1_{\phi,\Gamma}(\robba_K(\widetilde\eta_1/\widetilde\eta_2))\}
      \rightarrow
      H^1_{\phi,\Gamma}(\robba_K(\widetilde\eta_1/\widetilde\eta_2))
    \end{equation}
    via $y\mapsto\D\widehat\otimes k(y)$.
    In order to show the identity of dimensions, we must prove that the map is bijective.
    We start with surjectivity: let $E\in H^1_{\phi,\Gamma}(\robba_K(\widetilde\eta_1/\widetilde\eta_2))$,
    then we have that (by construction of $X'$ and of the family $\D'$) there is $y'\in X'(K)$ such that
    $(\D'\widehat\otimes k(y'))[1/t]=E[1/t]\in H^1_{\phi,\Gamma}(\robba_K(\widetilde\eta_1/\widetilde\eta_2\cdot x^{-n-1}))$.
    We still need to show that the lattice $\Lambda_y$ corresponding to the factor $\tau:K_m\hookrightarrow L$
    of the $\prod_{K_m\hookrightarrow L}K\llbracket t\rrbracket$-module
    associated to $E$ has the right graded pieces; in other words we want
    \[
      \Lambda_y\coloneqq E^r\otimes_{\robba_K^r,\tau}K\llbracket t\rrbracket \in Y_{(\mu_1,\mu_2)}(K),
    \]
    where $E^r$ is the $\phigam$-module over $\robba_K^r$ such that $E=E^r\otimes_{\robba_K^r}\robba_K$.
    This $\phigam$-module exists, since the whole family $\D'$ is defined over $\robba_{X'}^r$.
    We have that
    \begin{align*}
      \mathrm{Fil}^1(K((t))^2)&=\mathrm{Fil}^1(\mathbb{L}'[1/t])\otimes k(y')
      =(\robba_{R'}^r(\widetilde\delta_1')[1/t]\otimes_{\robba_{R'}^r[1/t],\tau}R'((t)))\otimes k(y')\\
      &=\robba_K^r(\widetilde\eta_1x^{-\mu_1})[1/t]\otimes_{\robba^r_K[1/t],\tau}K ((t)),
    \end{align*}
    so
    \begin{align*}
      \mathrm{gr}^1(\Lambda_y)&=\Lambda_y\cap \mathrm{Fil}^1(K((t))^2)\\
      &=\left (E^r\otimes_{\robba_K^r,\tau}K\llbracket t\rrbracket\right )
      \cap \left (\robba_K^r(\widetilde\eta_1 x^{-\mu_1})[1/t]\otimes_{\robba^r_K[1/t],\tau}K ((t))\right )\\
      &=\robba_K^r(\widetilde\eta_1)\otimes_{\robba^r_K,\tau}K \llbracket t\rrbracket
      =t^{\mu_1}\mathrm{gr}^1(\mathbb{L}'\otimes k(y'))
    \end{align*}
    and
    \begin{align*}
      \mathrm{gr}^2(\Lambda_y)&=\left (E^r\otimes_{\robba_K^r,\tau}K\llbracket t\rrbracket\right )
      /\left (\robba_K^r(\widetilde\eta_1)\otimes_{\robba^r_K,\tau}K \llbracket t\rrbracket \right )\\
      &=\robba_K^r(\widetilde\eta_2)\otimes_{\robba^r_K,\tau}K \llbracket t\rrbracket
      =t^{\mu_2}\robba_K^r(\widetilde\eta_2\cdot x^{-\mu_2})\otimes_{\robba^r_K,\tau}K \llbracket t\rrbracket\\
      &=t^{\mu_2}\mathrm{gr}^2(\mathbb{L}'\otimes k(y')).
    \end{align*}
    Hence we have $y\coloneqq(y',\Lambda_y)\in X(K)$ and $\D\widehat\otimes k(y)=E$.
    This proves the surjectivity of the above map.
    Now suppose by contradiction that $\D\widehat\otimes k(y_1)
    =\D\widehat\otimes k(y_2)=E\in H^1_{\phi,\Gamma}(\robba_K(\widetilde\eta_1/\widetilde\eta_2))$
    for some $y_1,y_2\in X(K)$.
    In particular we have $y_1=(y_1',\Lambda_y)$ and $y_2=(y_2',\Lambda_y)$ for some $y_1',y_2'\in X'(K)$
    such that $(\D'\widehat\otimes k(y_1'))[1/t]=(\D'\widehat\otimes k(y_2'))[1/t]=E[1/t]$
    and $\Lambda_y\coloneqq E^r\otimes_{\robba_K^r,\tau}K\llbracket t\rrbracket$.
    But since the map
    \begin{align*}
      \Psi_K:X'(K)&\rightarrow H^1_{\phi,\Gamma}(\robba_K(\widetilde\eta_1/\widetilde\eta_2\cdot x^{-n-1}))\\
      f&\mapsto f^*\D'
    \end{align*}
    is bijective (as explained in Remark \ref{perfect parametrization}), we have $y_1'=y_2'$.
    This shows injectivity and concludes the proof.
  \end{proof}

  \begin{thm}
    \label{F equidimensional 3}
    The map
    \[
      F:X\xrightarrow{G}\T^2\xrightarrow{H}\W^2
    \]
    has equidimensional fibers of dimension 3.
  \end{thm}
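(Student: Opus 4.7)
The plan is to compute every non-empty fibre of $F = H \circ G$ by decomposing the set $F^{-1}(\eta_1,\eta_2) = G^{-1}(H^{-1}(\eta_1,\eta_2))$ according to the regular/non-regular stratification of $\T^2$ and then to apply Lemma \ref{fiber dimension of G} on each stratum.

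First I would exploit the product decomposition $\T \cong \W \times \Gm^{\an}$ given by restriction to $\Z_p^\times$ and evaluation at $p$. Under this identification $H : \T^2 \to \W^2$ is the projection onto the $\W^2$-factor, so every fibre of $H$ is isomorphic to $(\Gm^{\an})^2$ and hence of dimension $2$. By Theorem \ref{construction of X} together with the vector bundle structure of $X'$ over $\mathcal{U}' \subset \T^\reg_2$ recalled in Proposition \ref{construction of X'}, the image of $G$ is the open neighbourhood $\mathcal{U} := \mathcal{U}' \cdot (x^{\mu_1},x^{\mu_2})$ of $(\delta_1,\delta_2)$ in $\T^2$. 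Thus, for $(\eta_1,\eta_2) \in \im(F)$, the set $Z := H^{-1}(\eta_1,\eta_2)\cap\mathcal{U}$ is an open subset of dimension $2$ in the $H$-fibre.

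Next I would analyse the closed sublocus $Z^{\mathrm{nr}} \subseteq Z$ on which the ratio $\widetilde\eta_1/\widetilde\eta_2$ lies in $\T^+\cup\T^-$. On $H^{-1}(\eta_1,\eta_2)$ the restriction $(\widetilde\eta_1/\widetilde\eta_2)|_{\Z_p^\times}$ is constant equal to $\eta_1/\eta_2$ and only the value at $p$ varies in $\Gm^{\an}$. For the ratio to equal some $\chi x^n$ (resp.\ $x^{-n}$) with $n \in \mathbb{N}$, one first needs $\eta_1/\eta_2 \in \{x^a|_{\Z_p^\times} : a \in \Z\}$---a discrete condition on $\W^2$ which pins down $n$ uniquely---and then the value at $p$ of the ratio must take a prescribed value, a codimension-$1$ condition on the $2$-dimensional $p$-fibre. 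Hence $Z^{\mathrm{nr}}$ is either empty or $1$-dimensional, so $Z^\reg := Z \setminus Z^{\mathrm{nr}}$ is open dense of dimension $2$ in $Z$. Lemma \ref{fiber dimension of G} then gives $\dim G^{-1}(\underline{\widetilde\eta}) = 1$ on $Z^\reg$ and $\dim G^{-1}(\underline{\widetilde\eta}) = 2$ on $Z^{\mathrm{nr}}$.

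Finally, since $Z \subseteq \mathcal{U} = \im(G)$, both restricted morphisms $G^{-1}(Z^\reg) \twoheadrightarrow Z^\reg$ and (when non-empty) $G^{-1}(Z^{\mathrm{nr}}) \twoheadrightarrow Z^{\mathrm{nr}}$ are surjective with equidimensional fibres, so the standard dimension formula yields $\dim G^{-1}(Z^\reg) = 2+1 = 3$ and $\dim G^{-1}(Z^{\mathrm{nr}}) = 1+2 = 3$. Since $F^{-1}(\eta_1,\eta_2) = G^{-1}(Z^\reg) \cup G^{-1}(Z^{\mathrm{nr}})$, this would show that every non-empty fibre of $F$ is equidimensional of dimension $3$. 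The main subtlety lies in the second step: one has to use the discreteness of $\T^+$ and $\T^-$ inside $\T$ together with the product structure $\T = \W \times \Gm^{\an}$ to verify that $Z^{\mathrm{nr}}$ is cut out inside $Z$ by the conjunction of a discrete condition on $\W^2$ and a codimension-$1$ condition on $(\Gm^{\an})^2$, giving the crucial bound $\dim Z^{\mathrm{nr}} \leq 1$.
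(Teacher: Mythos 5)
Your proposal is correct and takes essentially the same route as the paper: stratify the $H$-fibre $H^{-1}(\eta_1,\eta_2)$ by whether $\widetilde\eta_1/\widetilde\eta_2$ is regular, use the product decomposition $\T\cong\W\times\Gm^{\an}$ to see that the non-regular stratum is cut out by a single value-at-$p$ equation and hence has dimension at most $1$, and then apply Lemma~\ref{fiber dimension of G} to get $G$-fibre dimensions $1$ and $2$ on the two strata, giving $2+1 = 1+2 = 3$ in both. The only cosmetic difference is that the paper organizes this as a case split on whether $\eta_1/\eta_2 \in x^{\N_{>0}}|_{\Z_p^\times}$ (and in the other case notes additionally that the non-regular stratum is actually disjoint from $\im(G)$ because $\mathcal{U}$ avoids $\T^-$), whereas your uniform stratification handles both cases at once since the non-regular contribution never exceeds $1+2=3$.
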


  \begin{proof}
    Let $(\eta_1,\eta_2)\in\W^2(K)\cap\imF$ and let us study the fiber $F^{-1}(\eta_1,\eta_2)$.
    We have that
    \[
      \dim H^{-1}(\eta_1,\eta_2)=2,
    \]
    which is easy to see thanks to the identification $\T\cong\W\times\mathbb{G}_m^\an$.
    Now we want to consider two different cases:
    \begin{enumerate}
      \item
        If $\eta_1/\eta_2\notin x^{\N_{>0}}$, then notice that $H^{-1}(\eta_1,\eta_2)\cap\mathrm{im}(G)\subset\T^\reg_2(K)$:
        if $\eta_1/\eta_2\notin x^\Z$, then it is obvious that $H^{-1}(\eta_1,\eta_2)\subset\T^\reg_2(K)$;
        on the other hand, if $\eta_1/\eta_2=x^{-n}$ for some $n\in\N$, then
        notice that
        \[
          \{(\widetilde\eta_1,\widetilde\eta_2)\in H^{-1}(\eta_1,\eta_2)\colon \widetilde\eta_1/\widetilde\eta_2= x^{-n}\}
          \cap \mathrm{im}(G)=\emptyset:
        \]
        in fact we chose $\mathcal{U}'\subset \T^\reg_2$ and we have
        \[
          \mathrm{im}(G)=\mathcal{U}=
          \{(\widetilde\delta_1',\widetilde\delta_2'\cdot x^{-n-1})\colon (\widetilde\delta_1',\widetilde\delta_2')\in\mathcal{U}'\}.
        \]
        It is then easy to see that the above implies that
        \[
          \mathrm{im}(G)\cap\{(\widetilde\eta_1,\widetilde\eta_2)\colon
          \widetilde\eta_1/\widetilde\eta_2\in x^{-\mathbb{N}}\}=\emptyset.
        \]
        Let $(\widetilde\eta_1,\widetilde\eta_2)\in H^{-1}(\eta_1,\eta_2)\cap\mathrm{im}(G)$;
        by Lemma \ref{fiber dimension of G}, we have
        \[
          \dim G^{-1}(\widetilde\eta_1,\widetilde\eta_2)=\dim_K H^1_{\phi,\Gamma}(\robba_K(\widetilde\eta_1/\widetilde\eta_2))=1.
        \]
        The last identity follows from the fact that $(\widetilde\eta_1,\widetilde\eta_2)\in\T^\reg_2(K)$.
        Therefore,
        \[
          \dim F^{-1}(\eta_1,\eta_2)=3.
        \]
      \item
        In the case in which $\eta_1/\eta_2=x^n$ for some $n\in\N_{>0}$,
        it is convenient to partition the space $H^{-1}(\eta_1,\eta_2)$ in the following way:
        \[
          H^{-1}(\eta_1,\eta_2)=(H^{-1}(\eta_1,\eta_2)\cap\T^\reg_2)\dot\cup
          \{(\widetilde\eta_1,\widetilde\eta_2)\in H^{-1}(\eta_1,\eta_2)\colon \widetilde\eta_1/\widetilde\eta_2=\chi x^{n-1}\}.
        \]
        Let us denote by $\mathcal{A}$ and $\mathcal{B}$ respectively the first and second subspace.
        \begin{itemize}
          \item
            We have that, as $\T^\reg_2$ is open in $\T^2$
            \[
              \dim\mathcal{A}=\dim H^{-1}(\eta_1,\eta_2)=2.
            \]
            Let $(\widetilde\eta_1,\widetilde\eta_2)\in \mathcal{A}\cap\mathrm{im}(G)$;
            \[
              \dim G^{-1}(\widetilde\eta_1,\widetilde\eta_2)=\dim_K H^1_{\phi,\Gamma}(\robba_K(\widetilde\eta_1/\widetilde\eta_2))=1.
            \]
          \item
            For a point $(\widetilde\eta_1,\widetilde\eta_2)\in\mathcal{B}\cap\mathrm{im}(G)$, we have
            \[
              \dim G^{-1}(\widetilde\eta_1,\widetilde\eta_2)=\dim_K H^1_{\phi,\Gamma}(\robba_K(\widetilde\eta_1/\widetilde\eta_2))=2.
            \]
            Finally, (through the identification $\T\cong\W\times\mathbb{G}_m^\an$) we have
            \begin{align*}
              \dim\mathcal{B}&=\dim(\{(\widetilde\eta_1,\widetilde\eta_2)\in H^{-1}(\eta_1,\eta_2)
              \colon \widetilde\eta_1/\widetilde\eta_2=\chi x^{n-1}\})\\
              &=\dim(\{(\eta_1,\eta_2,\widetilde\eta_1(p),\widetilde\eta_2(p))\in\W^2\times\Gm^{\an,2}
              \colon \widetilde\eta_1(p)/\widetilde\eta_2(p)=p^{n-1}\})\\
              &=1.
            \end{align*}
        \end{itemize}
        Therefore we have
        \[
          \dim F^{-1}(\eta_1,\eta_2)=\dim\mathcal{A}+1=\dim\mathcal{B}+2=3.
        \]
    \end{enumerate}
  \end{proof}

  \begin{rem}
    \label{not flatness and dim X'}
    \begin{itemize}
      \item[(i)]
      From the proof of the previous Theorem, we can already notce that the fibers of the
      map $G$ are not equidimensional, hence in particular $G$ is not flat.
      \item[(ii)]
      Moreover we have that
      \[
        \dim (F^{-1}(\W^\reg_2))=3+\dim\W^\reg_2=5=\dim\mathcal{U}'+1=\dim X',
      \]
      since $X'$ is a vector bundle of dimension $\dim_LH^1_{\phi,\Gamma}(\robba_L(\delta_1/\delta_2'))=1$
      over $\mathcal{U}'$, as proved in Proposition \ref{construction of X'}.
    \end{itemize}
  \end{rem}

  From now on we will study the geometry of the space $X$, in order to conclude from the
  bound of the fiber dimension that $F^{-1}(\W_2^\reg)$ is dense in $X$.

  \begin{lem}
    \label{same Sen weights}
    Let $K$ be an extension of $L$ and let $A$ be an Artin local algebra with residue field $K$.
    Let $\Lambda$ be a free $K\llbracket t\rrbracket$-module endowed with an action of $\Gamma_m$
    and let $\Lambda_A$ be a free $A\llbracket t\rrbracket$-module endowed with an action of $\Gamma_m$
    such that $\Lambda_A\otimes_AK=\Lambda$.
    Then we have that the Sen weights of $\Lambda$ (viewed as a representation with $L$-coefficients)
    and $\Lambda_A$ are the same.
  \end{lem}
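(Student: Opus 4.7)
The plan is to extract the Sen weights as the eigenvalues of the Sen operator acting on the reduction modulo $t$, and then use the $A$-linearity of this operator together with the nilpotence of $\mathfrak{m}_A$ to compare the two cases.

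First, I would recall from Remark~\ref{identify Lambda0/t with DpdR} that the Sen operator $\nabla=\frac{\log(\gamma^{p^s})}{\log(\chi(\gamma^{p^s}))}$ (for $s$ large enough) makes sense on $\Lambda_A$, descends to an endomorphism of the finite free $A$-module $\Lambda_A/t\Lambda_A$, and that the Sen weights of $\Lambda_A$ viewed as an $L$-representation are precisely the eigenvalues of $\nabla$ acting $L$-linearly on this module. Since the $\Gamma_m$-action on $\Lambda_A$ commutes with the $A$-module structure (this is part of what it means for $A$ to be the coefficient ring), $\nabla$ is itself $A$-linear on $\Lambda_A/t\Lambda_A$.

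Next, because $A$ is Artin local over $L$ with residue field $K$, the maximal ideal $\mathfrak{m}_A$ is nilpotent. I would therefore consider the finite decreasing filtration by the submodules $\mathfrak{m}_A^i\cdot(\Lambda_A/t\Lambda_A)$. Each of these is preserved by $\nabla$ (by $A$-linearity), and the successive quotient $\mathfrak{m}_A^i/\mathfrak{m}_A^{i+1}\otimes_K(\Lambda/t\Lambda)$ is a $K$-vector space on which $\nabla$ acts as $\mathrm{id}\otimes\bar\nabla$, where $\bar\nabla$ is the Sen operator on $\Lambda/t\Lambda=(\Lambda_A/t\Lambda_A)\otimes_A K$; this compatibility follows from the naturality of the Sen operator under the base change $A\twoheadrightarrow K$.

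Putting this together, the characteristic polynomial of $\nabla$ acting $L$-linearly on $\Lambda_A/t\Lambda_A$ factors as the product over the graded pieces, each of which contributes the characteristic polynomial of $\bar\nabla$ on $\Lambda/t\Lambda$ (viewed as an $L$-linear endomorphism) raised to an appropriate power. Hence the set of eigenvalues of $\nabla$ on $\Lambda_A/t\Lambda_A$ coincides with the set of eigenvalues of $\bar\nabla$ on $\Lambda/t\Lambda$, i.e., the Sen weights of $\Lambda_A$ and of $\Lambda$ agree. The only mildly delicate step is verifying the $\nabla$-equivariance on the graded pieces of the $\mathfrak{m}_A$-adic filtration, but this is a direct consequence of the $A$-linearity of $\nabla$ together with the fact that $\nabla$ is defined by the same analytic formula for any coefficient ring, so it commutes with reduction modulo $\mathfrak{m}_A$.
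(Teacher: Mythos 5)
Your argument is correct and follows essentially the same route as the paper's proof: both use the $\mathfrak{m}_A$-adic filtration of $\Lambda_A/t\Lambda_A$, identify each graded piece $\Gamma_m$-equivariantly with a direct sum of copies of $\Lambda/t\Lambda$, and conclude by reading off the Sen weights as eigenvalues of the (block upper triangular) Sen operator. Phrasing the conclusion via the factorization of the characteristic polynomial is an equivalent reformulation of the paper's statement about the block upper triangular matrix.
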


  \begin{proof}
    For simplicity let us denote by $\overline{\Lambda}_A$ the $A$-module $\Lambda_A/t\Lambda_A$
    and we show that it is a successive extension of $\overline{\Lambda}\coloneqq \Lambda/t\Lambda$.
    In fact, if we let $\mathfrak{m}_A$ denote the maximal ideal of $A$, then the module
    $\overline{\Lambda}_A$ has a filtration
    \[
      \overline{\Lambda}_A\supset\mathfrak{m}_A\overline{\Lambda}_A\supset\dots\supset\mathfrak{m}_A^{l}\overline{\Lambda}_A\supset 0,
    \]
    where $l\in\N$ such that $\mathfrak{m}_A^{l+1}=0$, which exists as $A$ is Artinian.
    For $1\leq i\leq l$, let $k\coloneqq\dim_K\mathfrak{m}_A^i/\mathfrak{m}_A^{i+1}$ and let us fix a basis
    $\{m_1,\dots,m_k\}$ of $\mathfrak{m}_A^i/\mathfrak{m}_A^{i+1}$ over $K$.
    Then we have an isomorphism of $K$-vector spaces
    \begin{equation}
      \label{graded pieces}
        \mathfrak{m}_A^i\overline{\Lambda}_A/\mathfrak{m}_A^{i+1}\overline{\Lambda}_A
        \cong \mathfrak{m}_A^i/\mathfrak{m}_A^{i+1}\otimes_A\overline{\Lambda}_A\xrightarrow{\sim}\bigoplus_{j=1}^k\overline{\Lambda},
    \end{equation}
    where for every $m=\sum_{i=1}^ka_im_i\in \mathfrak{m}_A^i/\mathfrak{m}_A^{i+1}$
    and every $l\in\overline{\Lambda}_A$, we have
    \[
      m\otimes l\mapsto (a_i(l+\mathfrak{m}_A)+t\Lambda)_{i=1,\dots,k}.
    \]
    Observe moreover that $\overline{\Lambda}$ and $\overline{\Lambda}_A$ inherit
    the $\Gamma_m$-action of $\Lambda$ and $\Lambda_A$ and that the isomorphism (\ref{graded pieces})
    is $\Gamma_m$-equivariant.
    Notice furthermore that the $\Gamma_m$-action on $\overline{\Lambda}_A$ preserves the filtration,
    thus we have that, once we have chosen an appropriate basis of $\overline{\Lambda}_A$,
    the matrix of the Sen operator is an upper triangular block matrix with diagonal blocks the
    matrix of the Sen operator on $\overline{\Lambda}$.
    This concludes the proof, as the Sen weights are the eigenvalues of the Sen operator.
  \end{proof}

  \begin{rem}
    \label{same Sen weights for W^+}
    The same proof as Lemma \ref{same Sen weights} shows that if $W^+$ is a $B_{\dR,K}$-representation
    of $\absGal$ and $W_A^+$ is a $\BdRA^+$-representation of $\absGal$ such that
    $W_A^+\otimes_AK=W^+$, then $W_A^+$ and $W^+$ have the same Sen weights.
  \end{rem}

  The following results will be useful also in the $n$-dimensional case for $n>2$, so we write
  the proof for the $n$-dimensional case directly here.
  Note that in the following, in case $n = 2$ we have $X_n$ is simply $X$,
  $X_n'$ is $X'$, $\D_n$ is $\D$ and $\D_n'$ is $\D'$.

  \begin{rem}
    \label{standard lattices are the same}
    Let us consider the stalk
    $\Sp(\widehat{\mcal O}_{X_n,x_n})$
    and for simplicity let us denote by $\D_n$ and $\D_n'$ the pullback of $\D_n$ and $\D_n'$ to the stalk.
    Let $\widehat \Lambda\coloneqq \D_n^r\otimes_{\robba_R^r,\tau}R\llbracket t\rrbracket$
    and $\widehat\Lambda'\coloneqq \D_n'^r\otimes_{\robba_R^r,\tau}R\llbracket t\rrbracket$, where
    $R\coloneqq \widehat{\mcal O}_{X_n,x_n}$.
    Observe that $\widehat{\mcal O}_{X_n,x_n} \varprojlim_m\mcal O_{X_n,x_n}/\mathfrak{m}^m$,
    where $\mathfrak{m}$ is the maximal ideal associated to the point $x_n$.
    Let us consider the maps
    \[
      \Spec(O_{X_n,x_n}/\mathfrak{m}^m)\xrightarrow{q_m}\Spf(\widehat{O}_{X_n,x_n}).
    \]
    The ring $O_{X_n,x_n}/\mathfrak{m}^m$ is a finite-dimensional Artin local $\Qp$-algebra for all $m\geq1$
    hence by Lemma \ref{same Sen weights}, $q_m^*\widehat{\Lambda}'$
    has the same Sen weights as $\Lambda'$.
    Notice that by construction, if $a_1$ and $a_2$ are two Sen weights of $q_m^*\widehat\Lambda'$
    and $a_1\equiv a_2\pmod\Z$, then $a_1=a_2$; moreover observe that if $a_1\in\Z$, then $a_1=0$.
    Therefore if we let $\mcal A$ be the set of Sen weights of $\Lambda'$,
    we have that the lattice $q_m^*\widehat{\Lambda}'$ decomposes as explained in Remark \ref{identify Lambda0/t with DpdR}
    as
    \[
      q_m^*\widehat{\Lambda}'=\bigoplus_{a\in\mcal A}(q_m^*\widehat{\Lambda}')_a,
    \]
    where $(q_m^*\widehat{\Lambda}')_a$ is a $\Gamma_m$-stable sublattice for all $a\in\mcal A$.
    In this way we obtain a decomposition of $\widehat{\Lambda}'$:
    \[
      \widehat{\Lambda}'=\bigoplus_{a\in\mcal A}\widehat{\Lambda}'_a,
    \]
    where $\widehat{\Lambda}'_a$ is a $\Gamma_m$-stable sublattice.
    Moreover $\widehat\Lambda$ and $\widehat\Lambda'$ have the same
    Sen weights modulo $\Z$ by Lemma \ref{sen weights are the same mod Z}.
    Thus in exactly the same manner we obtain a decomposition
    \[
      \widehat\Lambda=\bigoplus_{a\in\mcal A}\widehat\Lambda_{a},
    \]
    where each $\widehat\Lambda_a$ is a $\Gamma_m$-stable $R\llbracket t\rrbracket$-lattice.
    For each $a\in\mcal A$, let $\delta_a$ be a character of weight $a$ if $a\neq0$
    and let $\delta_a$ be the trivial character when $a=0$.
    Then after choosing an appropriate basis of $\widehat\Lambda_a'(\delta_a^{-1})$,
    the matrix of $\gamma-1$ is upper triangular with zero on the diagonal, where $\gamma$ is a
    topological generator of $\Gamma_m$.
    As a consequence, we have that
    $(\widehat\Lambda_a'[1/t](\delta_a^{-1}))^{(\gamma-1)\text{-nil}}\otimes_RR\llbracket t\rrbracket=\widehat\Lambda_a'(\delta_a^{-1})$.
    In other words, $\widehat\Lambda'$ is the lattice $\Lambda_0$ constructed in Remark \ref{identify Lambda0/t with DpdR}
    and the basis $\{y_{\bar a,1},\dots,y_{\bar a,n_{\bar a}\colon \bar a\in\bar{\mcal A}}\}$
    defined in the Remark respects the triangulation of $\D'$ (up to reordering), hence we can
    choose it as the standard basis of $\widehat\Lambda'$.
    Moreover $\widehat\Lambda_{ a}[1/t]=\widehat\Lambda_{a}'[1/t]$ for all $\bar a\in\mcal A$.
  \end{rem}

  From now on, let us denote by $Y_\mu$ the space $Y_{(\mu_1,\dots,\mu_n)}$ for simplicity of notation.
  Let us decompose $\Lambda$ and $\Lambda'$ as in Remark \ref{identify Lambda0/t with DpdR}:
  let $\mcal{A}$ be the set of Sen weights of the $\Gamma_m$-stable lattice $\Lambda'$
  (which are the same as the mod $\Z$ classes of the Sen weights of $\Lambda$
  by Lemma \ref{sen weights are the same mod Z}); then we have
  \[
    \begin{array}{l l l}
      \Lambda=\bigoplus_{ a\in\mcal A}\Lambda_{ a}
      & \text{ and }
      & \Lambda'=\bigoplus_{ a\in{\mcal A}}\Lambda'_{ a},
    \end{array}
  \]
  where $\Lambda_{ a}$ and $\Lambda_{ a}'$ are $\Gamma_m$-stable sub-$L\llbracket t\rrbracket$-modules
  of the same rank $n_{a}$. After fixing $\Lambda_a'$ to be the standard lattice of $\Gr_{n_a}(L)$, let
  $\mu_{a}\in X_*(T_{n_a})^+$ such that $\Lambda_{ a}\in\rr{Gr}_{n_{ a}}^{\mu_{ a},\rig}$.
  Moreover let $\lambda_a\in X_*(T_{n_a})$ such that
  \[
    Y_\mu\times_{\rr{Gr}_n}\prod_{a\in\mcal A}GL_{n_a}/P_{\mu_a}=
    \prod_{a\in\mcal A}Y_{\lambda_a}\times_{\rr{Gr}_{n_a}}GL_{n_a}/P_{\mu_a}
  \]
  (which is possible by Lemma \ref{decomposition of Y_mu})
  and let $w_a$ in the Weyl group of $GL_{n_a}$ such that $\lambda_a=w_aw_0\mu_a$ for all $a\in\mcal A$.

  \begin{lem}
    \label{def of maps}
    There exists a commutative diagram
    \begin{equation}
      \label{wanted fiber product}
      \begin{tikzcd}
        \Spf(\widehat{\mathcal{O}}_{X_n,x_n})\arrow[r, hookrightarrow]\arrow[d]
        &\Spf(\widehat{R})\arrow[d]\\
        \Spf(\widehat{S})\arrow[r, hookrightarrow]
        &\Spf(\widehat{T}),
      \end{tikzcd}
    \end{equation}
    where
    \begin{align*}
      R&\coloneqq \mathcal{O}_{X_n'\times Y_{\mu}^\rig \times_{\Gr_n}\prod_{ a\in{\mcal A}}(GL_{n_{ a}}/P_{\mu_{a}})^\rig,
      (x_n,(\pi_{\mu_{ a}}(\Lambda_{ a}))_{ a\in\mcal A})},\\
      S&\coloneqq \mathcal{O}_{\prod_{ a\in{\mcal A}}V_{w_{a}}^{\mu_a},
      (N_{ a},\pi_{\mu_{ a}}(\Lambda_{ a}))_{a\in{\mcal A}}},\\
      T&\coloneqq \mathcal{O}_{\prod_{ a\in{\mcal A}}\mathfrak{b}_{n_{ a}}^\rig
      \times (B_nw_{ a}P_{\mu_{ a}}/P_{\mu_{ a}})^\rig,
      (N_{a},\pi_{\mu_{a}}(\Lambda_{a}))_{ a\in{\mcal A}}}.
    \end{align*}
    In the above $\mathfrak b_{n_{ a}}$ is the Lie algebra of the Borel of upper
    triangular matrices $B_{n_a}$, while the space $V_{w_a}^{\mu_a}$ is defined in (\ref{def of V_w}).
    Finally $N_{ a}$ is the matrix of the nilpotent endomorphism acting on
    \[
      D_{\pdR,a}(\WdR(x_n^*\D_n[1/t])_{ a})
    \]
    for all $a\in{\mcal A}$, after having chosen a basis of $D_{\pdR,a}(\WdR(x_n^*\D_n[1/t])_{ a})$.
    Observe that if we choose a basis of $D_{\pdR,a}(\WdR(x_n^*\D_n[1/t])_{ a})$
    respecting the triangulation of $x_n^*\D_n$,
    then we have that the matrix $N_a$ is upper triangular; hence we have $N_a\in\mathfrak{b}_{n_a}$
    for all $a\in\mcal A$.
  \end{lem}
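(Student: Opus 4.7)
The plan is to define the four maps in the diagram and verify commutativity; the one non-trivial point is that the left vertical map lands in $\prod_a V_{w_a}^{\mu_a}$, which I will deduce from Theorem \ref{final bijection}. For the top inclusion, by Theorem \ref{construction of X} we already have $X_n\hookrightarrow X_n'\times Y_\mu^\rig$, and composing with the morphism $Y_\mu^\rig\to\prod_a(GL_{n_a}/P_{\mu_a})^\rig$ sending $\Lambda$ to $(\pi_{\mu_a}(\Lambda_a))_a$ (using Remark \ref{standard lattices are the same} to make sense of the decomposition $\Lambda=\bigoplus_a\Lambda_a$ over the formal stalk at $x_n$, and Lemma \ref{decomposition of Y_mu} for the factorization of $Y_\mu$) gives a morphism into the fiber product defining $R$ and sends $x_n$ to $(x_n,(\pi_{\mu_a}(\Lambda_a))_a)$. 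For the right vertical map, Theorem \ref{non integral weights} combined with the Artin-local formalism of Section \ref{pdR} provides a decomposition $\WdR(y'^*\D_n'[1/t])=\bigoplus_{a\in\mcal A}W_a$ and nilpotent endomorphisms $N_a$ on $D_{\pdR,a}(W_a)$ depending functorially on $y'\in\Spf(\widehat R)$; choosing bases that respect the triangulation at $x_n$, the matrices of the $N_a$ are upper triangular there, hence valued in $\mathfrak{b}_{n_a}^\rig$ on a formal neighborhood. Pairing with the flag coordinate then defines $\Spf(\widehat R)\to\Spf(\widehat T)$.

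For the left vertical map, I would proceed functorially on Artin local quotients. For any finite-dimensional Artin local quotient $A$ of $\widehat{\mcal O}_{X_n,x_n}$ with residue field $L$, an $A$-point of $X_n$ is a pair $(y'_A,\Lambda_A)$ with $\Lambda_A$ a $\Gamma_m$-stable lattice whose Sen weights agree modulo $\Z$ with those of $\Lambda$ counted with multiplicity (Lemma \ref{same Sen weights} and Remark \ref{standard lattices are the same}). By Theorem \ref{final bijection} applied to the pullback of $M$ to $A$, the decomposition $\Lambda_A=\bigoplus_a\Lambda_{A,a}$ yields a tuple of $N_{a,A}$-stable filtrations $(\pi_{\mu_a}(\Lambda_{A,a}))_a$ of the $D_{\pdR,a}$. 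Lemma \ref{preimage in Y_mu of filtrations} moreover places each such flag inside $B_nw_aP_{\mu_a}/P_{\mu_a}$, so the assignment $(y'_A,\Lambda_A)\mapsto(N_a(y'_A),\pi_{\mu_a}(\Lambda_{A,a}))_a$ takes values in $\prod_a V_{w_a}^{\mu_a}(A)$. Passing to the limit over $A$ gives the morphism $\Spf(\widehat{\mcal O}_{X_n,x_n})\to\Spf(\widehat S)$.

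Commutativity is then immediate, since both compositions $\Spf(\widehat{\mcal O}_{X_n,x_n})\to\Spf(\widehat T)$ send $(y'_A,\Lambda_A)$ to $(N_a(y'_A),\pi_{\mu_a}(\Lambda_{A,a}))_a$. The principal technical obstacle will be verifying that the generalized-Sen-eigenspace decomposition of $\Lambda_A$, the nilpotent $N_{a,A}$, and the bijection of Theorem \ref{final bijection} all behave well under base change along Artin local quotients; this is precisely the setting developed in Section \ref{pdR}, so the remaining work is a matter of tracking identifications rather than proving substantially new compatibilities.
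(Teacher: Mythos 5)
Your proposal follows essentially the same approach as the paper's proof: constructing all four maps by passing to finite-dimensional Artin local quotients, invoking Theorem \ref{final bijection} to convert $\Gamma_m$-stable lattices to $N$-stable filtrations, and using Lemma \ref{decomposition of Y_mu} together with Lemma \ref{preimage in Y_mu of filtrations} to locate the flags in the correct Schubert cells $B_{n_a}w_aP_{\mu_a}/P_{\mu_a}$ and hence in $V_{w_a}^{\mu_a}$. One small point worth making explicit (which the paper does) is that the well-definedness of the top horizontal map into the fiber product over $\Gr_n$ rests on Lemma \ref{Gamma-stable lattices and Gamma-stable filtrations}, i.e.\ $\widehat\Lambda=\bigoplus_a\sigma_{\mu_a}(\pi_{\mu_a}(\widehat\Lambda_a))$, and that $(\pi)_{a}$ factors through the fixed component $\prod_a(GL_{n_a}/P_{\mu_a})^\rig$ by connectedness of the formal spectrum.
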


  \begin{proof}
    Recall that for a rigid space $X$ and a point $x\in X(K)$, we have that $\Spf(\widehat{\mathcal{O}}_{X,x})$
    prorepresents the functor
    \begin{align*}
      \{\text{Artin local }K\text{-algebras with residue field }K\}&\rightarrow \underline{Sets}
    \end{align*}
    sending an Artin local $K$-algebra $A$ with maximal ideal $\mathfrak{m}$
    to the set of morphisms $\Sp(A)\xrightarrow{f} X$ such that
    the diagram
    \[
      \begin{tikzcd}
        \Sp(A)\arrow[r, "f"] &X\\
        \Sp(A/\mathfrak{m})=\Sp(K)\arrow[u]\arrow[ur,"x"]
      \end{tikzcd}
    \]
    commutes (in fact if $K$ is a complete non-Archimedean valued field, then Artin
    local $K$-algebras are affinoid $K$-algebras).
    Let $\widehat\Lambda$ be the lattice associated to $\hat x_n^*\D_n$, where
    $\hat x_n:\Sp(\widehat{\mathcal{O}}_{X_n,x_n})\rightarrow X_n$.
    As explained in Remark \ref{standard lattices are the same}, we get a decomposition
    \[
      \widehat\Lambda=\bigoplus_{a\in\mcal A}\widehat\Lambda_a,
    \]
    where $\mcal A$ is the set of Sen weights of $\Lambda'$ and $\widehat{\Lambda}_a$ is a
    $\Gamma_m$-stable lattice of rank $n_a$.
    First of all notice that the map
    \[
      (\pi)_{a\in{\mcal A}}:\Sp(\widehat{\mathcal{O}}_{X_n,x_n})\rightarrow
      \prod_{ a\in{\mcal A}}\coprod_{\lambda_{a}\in X_*(T)^+}(GL_{n_{a}}/P_{\lambda_{a}})^\rig
    \]
    given by $(\pi(\widehat\Lambda_{a}))_{ a\in{\mcal A}}$
    factors through $\prod_{ a\in{\mcal A}}(GL_{n_{a}}/P_{\mu_{a}})^\rig$
    (where $(GL_{n_{ a}}/P_{\lambda_{ a}})^\rig$
    is the rigid analytification of the $L$-scheme of locally finite type $GL_{n_{a}}/P_{\lambda_{a}}$):
    in fact we have that
    $(\pi)_{a\in{\mcal A}}(x_n)\in \prod_{ a\in{\mcal A}}(GL_{n_{ a}}/P_{\mu_{ a}})^\rig$
    and $\Sp(\widehat{\mathcal{O}}_{X_n,x_n})$ is connected.
    The upper horizontal map of the diagram (\ref{wanted fiber product}) is defined then by the commutative diagram
    \begin{equation}
      \label{diagram4}
      \begin{tikzcd}
        \Sp(\widehat{\mathcal{O}}_{X_n,x_n})\ar{r}{\hat x_n}\ar{d}{(\pi)_{ a\in{\mcal A}}}
        &X_n'\times Y_{\mu}^\rig\ar{d}{j}\\
        \prod_{ a\in{\mcal A}}(GL_{n_{a}}/P_{\mu_{a}})^\rig\ar{r}{\oplus_{a\in{\mcal A}}\sigma_{\mu_{a}}}
        &\Gr_n,
      \end{tikzcd}
    \end{equation}
    which gives a map
    \begin{equation}
      \label{def of (id,pi_mu)}
      (\hat{x}_n,(\pi)_{a\in{\mcal A}}):\Sp(\widehat{\mathcal{O}}_{X_n,x_n})\rightarrow
      X_n'\times Y_{\mu}^\rig\times_{\Gr_n}\prod_{ a\in{\mcal A}}(GL_{n_{ a}}/P_{\mu_{a}})^\rig.
    \end{equation}
    Here by $\oplus_{ a\in{\mcal A}}\sigma_{\mu_{ a}}$ we mean the map
    \[
      (\Fil^\bullet(\Lambda'_a/t\Lambda'_a))_{a\in{\mcal A}}\mapsto
      \bigoplus_{ a\in{\mcal A}}\sigma_{\mu_{a}}(\Fil^\bullet(\Lambda'_a/t\Lambda'_a)).
    \]
    In fact the diagram (\ref{diagram4}) is commutative:
    since $\widehat \Lambda_{a}$ is $\Gamma_m$-stable,
    by Lemma \ref{Gamma-stable lattices and Gamma-stable filtrations} we have that
    \[
      \widehat\Lambda=(\oplus_{ a\in{\mcal A}}\sigma_{\mu_{a}})
      \circ ((\pi)_{a\in{\mcal A}})(\widehat \Lambda),
    \]
    thus the diagram commutes.
    It is obvious that
    \[
      (\hat x_n,(\pi)_{ a\in{\mcal A}})\circ x_n=(x_n,(\pi_{\mu_{a}}(\Lambda_{a}))_{ a\in{\mcal A}}).
    \]
    The lower horizontal map of (\ref{wanted fiber product}) is defined by the composition
    \[
      i:\Sp(\widehat{S})\rightarrow
      \prod_{ a\in{\mcal A}}V_{w_a}^{\mu_a}\hookrightarrow
      \prod_{ a\in{\mcal A}}\mathfrak{b}_{n_{a}}^\rig\times(GL_{n_{a}}/P_{\mu_{a}})^\rig,
    \]
    where $\mathfrak{b}_{n_{a}}^\rig$ is the rigid analytification of $\mathfrak{b}_{n_{a}}$;
    it is again obvious that
    \[
      i\circ (N_{ a},\pi_{\mu_{a}}(\Lambda_a))_{a\in{\mcal A}}
      =(N_{a},\pi_{\mu_{a}}(\Lambda_{ a}))_{ a\in{\mcal A}}.
    \]
    As for the left vertical arrow,
    let us consider again the maps
    \begin{align*}
      \Sp(\mcal{O}_{X_n,x_n}/\mathfrak{m}^m)\xrightarrow{q_m}\Sp(\widehat{\mathcal{O}}_{X_n,x_n})\xrightarrow{\hat x_n}X_n
    \end{align*}
    and the pullbacks $q_m^*\hat x_n^*\D_n$.
    We have that $\mcal{O}_{X_n,x_n}/\mathfrak{m}^m$ is a finite dimensional $\Qp$-algebra and by
    Remark \ref{same Sen weights for W^+} we have that $\WdR^+(q_m^*\hat x_n^*\D_n)$ has the same Sen weights as $\WdR^+(x_n^*\D_n)$.
    So there are maps
    \[
      (\widetilde{N}_{m, a},\pi_{\mu_a})_{ a\in{\mcal A}}:
      \Sp(\mathcal{O}_{X_n,x_n}/\mathfrak{m}^m)\rightarrow \prod_{ a\in{\mcal A}}\Fil^{\st,\rig}_{\mu_{\bar a}},
    \]
    which are determined by
    the $\widetilde N_{m, a}$-stable filtrations associated to the lattice
    $q_m^*\widehat \Lambda$
    through the bijection of Theorem \ref{final bijection}.
    As we have already remarked, after choosing an appropriate basis of
    $D_{\pdR,a}(\WdR(q_m^*x_n^*\D_n[1/t])_{ a})$, we have that $\widetilde{N}_{m,a}\in\mathfrak{b}_{n_a}$.
    Moreover we have that (as $(q_m^*\widehat{\Lambda})_a$ is $\Gamma_m$-stable)
    \[
      q_m^*\widehat{\Lambda}=\bigoplus_{a \in\mcal A}(q_m^*\widehat{\Lambda})_a
    =\bigoplus_{a \in\mcal A}\sigma_{\mu_a}(\pi_{\mu_a}(q_m^*\widehat{\Lambda})_a),
    \]
    thus
    \[
      q_m^*\widehat{\Lambda}\in Y_\mu^\rig\times_{\Gr_n}\prod_{a\in\mcal A}GL_{n_a}/P_{\mu_a}
      =\prod_{a\in\mcal A}Y_{\lambda_a}\times_{\Gr_{n_a}}GL_{n_a}/P_{\mu_a}
    \]
    by Lemma \ref{decomposition of Y_mu}.
    In particular, this implies that $(q_m^*\widehat{\Lambda})_a\in Y_{\lambda_a}$.
    By Lemma \ref{preimage in Y_mu of filtrations}, we have
    \[
      \pi_{\mu_a}((q_m^*\widehat{\Lambda})_a)\in B_{n_a}w_aP_{\mu_a}/P_{\mu_a}
    \]
    for $w_a\in W$ such that $w_aw_0\mu_a=\lambda_a$.
    This implies that the maps $(\widetilde{N}_{m, a},\pi_{\mu_a})_{ a\in{\mcal A}}$
    factor through $\prod_{a\in\mcal A}V_w^{\mu_a}$.
    Then these maps give a map
    \[
      (\widetilde{N}_{ a},\pi_{\mu_{ a}})_{a\in{\mcal A}}:\Sp(\widehat{\mathcal{O}}_{X_n,x_n})
      \rightarrow \prod_{ a\in{\mcal A}}V_w^{\mu_{a}},
    \]
    which is the desired left vertical arrow of the diagram (\ref{wanted fiber product}).
    By the definition of $N_{ a}$ and by Theorem \ref{final bijection}, we have in fact that
    $(\widetilde{N}_{a},\pi_{\mu_{ a}})_{a\in{\mcal A}}\circ x_n
    =(N_{a},\pi_{\mu_{a}}(\Lambda_a))_{a\in{\mcal A}}$.
    Observe that
    \[
      \widehat{R}=\varprojlim_m\left( R/\mathfrak{n}^m\right),
    \]
    where $\mathfrak{n}$ is the maximal ideal corresponding to the point
    $(x_n,(\pi_{\mu_{a}}(\Lambda_a))_{a\in{\mcal A}})$.
    Observe that by Lemma \ref{decomposition of Y_mu} and Lemma \ref{preimage in Y_mu of filtrations}, we have
    \begin{align*}
      X_n'\times Y_{\mu}^\rig \times_{\Gr_n}\prod_{ a\in{\mcal A}}(GL_{n_{a}}/P_{\mu_{a}})
      &=X_n'\times\prod_{ a\in{\mcal A}}Y_{\lambda_a}^\rig\times_{\Gr_{n_a}}(GL_{n_{a}}/P_{\mu_{a}})^\rig\\
      &=X_n'\times\prod_{ a\in{\mcal A}}(B_{n_a}w_aP_{\mu_a}/P_{\mu_a})^\rig.
    \end{align*}
    Let us consider the maps
    \begin{align*}
      \Sp(R/(\mathfrak{n})^m)
      \xrightarrow{p_m}\Sp(\widehat{R})\xrightarrow{q}&
      X_n'\times\prod_{ a\in{\mcal A}}(B_{n_a}w_aP_{\mu_a}/P_{\mu_a})^\rig\xrightarrow{p}X_n'
    \end{align*}
    and the pullback $p_m^*q^*p^*\D_n'$.
    The ring $R/(\mathfrak{n})^m$
    is a finite dimensional $\Qp$-algebra for all $m\in\N$, and by Remark \ref{same Sen weights for W^+}
    and Lemma \ref{sen weights are the same mod Z},
    we have that $\WdR^+(p_m^*q^*p^*\D_n')$ has the same mod $\Z$ Sen weights as $\WdR^+(x_n^*\D_n)$.
    After choosing a basis of $D_{\pdR, a}(\WdR(p_m^*q^*p^*\D_n'[1/t])_{ a})$
    respecting the triangulation of $p_m^*q^*p^*\D_n'$ for all $ a\in{\mcal A}$,
    we have maps
    \[
      \Sp(R/(\mathfrak{n})^m)
      \rightarrow \prod_{a\in{\mcal A}}\mathfrak{b}_{n_{ a}}^\rig\times(Bw_aP_{\mu_a}/P_{\mu_{a}})^\rig
    \]
    given by the matrices
    of the nilpotent endomorphisms $(N_{m,a})_{a\in{\mcal A}}$ acting on
    $D_{\pdR,a}(\WdR(p_m^*q^*p^*\D_n'[1/t])_a)$,
    which is then an element of $\prod_{ a\in{\mcal A}}\mathfrak{b}_{n_{a}}^\rig$;
    these maps give us the desired map
    \[
      ((\widetilde{N}_{a})_{ a\in{\mcal A}},\rr{id}):\Sp(\widehat{R})
      \rightarrow \prod_{a\in{\mcal A}}\mathfrak{b}_{n_{a}}^\rig\times(Bw_aP_{\mu_a}/P_{\mu_{a}})^\rig:
    \]
    in fact we have
    $((\widetilde{N}_{ a})_{a\in{\mcal A}},\rr{id})\circ(x_n,(\pi_{\mu_{a}}(\Lambda_a))_{a\in{\mcal A}})
    =(N_{a},\pi_{\mu_{a}}(\Lambda_a))_{a\in{\mcal A}}$.
    Now that we have defined the maps, it is still left to show that the diagram (\ref{wanted fiber product}) commutes.
    But this is very easy to see, in fact once we have chosen a basis of
    \[
      \varprojlim_mD_{\pdR, a}(\WdR(q_m^*\hat x_n^*\D_n[1/t])_{a})
    \]
    respecting the traingulation of $q_m^*\hat x_n^*\D_n$ for all $a \in{\mcal A}$,
    we have that both compositions are given by
    \[
      \left (\varprojlim_m\widetilde N_{m,a},\pi_{\mu_{a}}(\widehat \Lambda_{a})\right )_{ a\in{\mcal A}}.
    \]
  \end{proof}

  \begin{prop}
    \label{stalk is closed immersion}
    Let us assume the same conditions as Lemma \ref{def of maps}.
    We have that
    \[
      \Spf(\widehat{\mathcal{O}}_{X_n,x_n})\hookrightarrow
      \Spf(\widehat{\mathcal{O}}_{X_n',x_n'})\times\Spf(\widehat{\mathcal{O}}_{\prod_{ a\in{\mcal A}}B_{n_a}w_aP_{\mu_a}/P_{\mu_{ a}},
      (\Lambda, (\pi_{\mu_{a}}(\Lambda_{a}))_{a\in{\mcal A}})})
    \]
    is a closed immersion and $\Spf(\widehat{\mathcal{O}}_{X_n,x_n})$ is cut out by
    $\sum_{ a\in{\mcal A}}\dim B_{n_a}w_aP_{\mu_a}/P_{\mu_{a}}$
    equations.
  \end{prop}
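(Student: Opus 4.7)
The plan is to show that the commutative square of Lemma \ref{def of maps} is a Cartesian square of formal schemes; once this is established, the Proposition follows immediately from Remark \ref{right number of equations}. Indeed, the bottom inclusion $\Spf(\widehat S)\hookrightarrow\Spf(\widehat T)$ is, locally at the relevant point, the closed immersion
\[
\prod_{a\in\mcal A}V_{w_a}^{\mu_a}\hookrightarrow\prod_{a\in\mcal A}\mathfrak{b}_{n_a}^\rig\times(B_{n_a}w_aP_{\mu_a}/P_{\mu_a})^\rig,
\]
which by Remark \ref{right number of equations} is cut out by $\sum_{a\in\mcal A}\dim B_{n_a}w_aP_{\mu_a}/P_{\mu_a}$ equations. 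Pulling these equations back to $\Spf(\widehat R)$ via the right vertical map and using that $\widehat R$ is (after identifying $X_n'\times Y_\mu^\rig\times_{\Gr_n}\prod_{a}GL_{n_a}/P_{\mu_a}^\rig$ with $X_n'\times\prod_a B_{n_a}w_aP_{\mu_a}/P_{\mu_a}$ via Lemmas \ref{decomposition of Y_mu} and \ref{preimage in Y_mu of filtrations}) exactly the completed local ring of $X_n'\times\prod_a B_{n_a}w_aP_{\mu_a}/P_{\mu_a}$, the Cartesian property then immediately gives the desired number of equations.

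To prove the square is Cartesian, I would interpret each corner as a deformation problem. On Artin local $L$-algebras $A$ with residue field $L$, the points of $\Spf(\widehat R)$ over an $A$-point of $\Spf(\widehat{\mcal O}_{X_n',x_n'})$ parametrize $\Gamma_m$-stable lattices $\Lambda_A$ in the $\phigam$-module $\D_n'[1/t]$ over $\robba_A^r[1/t]$ whose Schubert position is $\mu$ and whose associated flags lie in the fixed Bruhat cells $B_{n_a}w_aP_{\mu_a}/P_{\mu_a}$. The points of $\Spf(\widehat{\mcal O}_{X_n,x_n})$ over the same base classify, in addition, the data of a $\phigam$-module $\D_n$ over $\robba_A$ with $\D_n[1/t]=\D_n'[1/t]$ whose lattice through the Beauville-Laszlo equivalence of Proposition \ref{bijection of lattices and modules} is exactly $\Lambda_A$. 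By Lemma \ref{same Sen weights} and Remark \ref{standard lattices are the same}, any such $\Lambda_A$ has Sen weights in $\bar{\mcal A}$, so Theorem \ref{final bijection} converts this extra data into the condition that the tuple of flags $(\pi_{\mu_a}(\Lambda_{A,a}))_{a\in\mcal A}$ be stable under the nilpotent endomorphisms $N_{a,A}$ acting on $D_{\pdR,a}(\WdR(\D_{n,A}[1/t])_a)$. But this is precisely the condition cutting out $\prod_a V_{w_a}^{\mu_a}$ inside $\prod_a\mathfrak{b}_{n_a}\times B_{n_a}w_aP_{\mu_a}/P_{\mu_a}$, via the left vertical map of the diagram. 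Thus the universal property of the fiber product yields a bijection
\[
\Spf(\widehat{\mcal O}_{X_n,x_n})(A)\xrightarrow{\sim}\Spf(\widehat R)(A)\times_{\Spf(\widehat T)(A)}\Spf(\widehat S)(A),
\]
functorial in $A$, which is exactly the Cartesian property.

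The main obstacle I expect is checking that the nilpotent operator $\widetilde N_a$ defined on $\Spf(\widehat R)$ (via the functoriality of $D_{\pdR,a}$ applied to the universal $\phigam$-module over $\robba_{\widehat R}^r[1/t]$ coming from $\D_n'$) really agrees with the pullback of the universal nilpotent on $\mathfrak{b}_{n_a}^\rig$ along the left vertical map; equivalently, one must verify compatibility of the various identifications (\ref{identification}) in families as the base varies. This requires tracking the decomposition of the universal lattice into generalized Sen eigenspaces (Remark \ref{identify Lambda0/t with DpdR}) and the isomorphism of Lemma \ref{lemma zhixiang1} through the Artin local test rings $\mcal O_{X_n,x_n}/\mathfrak{m}^m$ and $R/\mathfrak{n}^m$, where the rigid-analytic formalism of $\BdR$ with coefficients is somewhat delicate. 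Once these compatibilities are in place, the Cartesian property is formal, and the equation count follows directly from Remark \ref{right number of equations}.
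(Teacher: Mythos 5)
Your approach is essentially the paper's: you correctly identify that the proposition reduces to showing the square of Lemma \ref{def of maps} is Cartesian, that the key input converting the $\Gamma_m$-stability condition defining $X_n$ into the $N$-stability condition defining $\prod_a V_{w_a}^{\mu_a}$ is Theorem \ref{final bijection}, and that the equation count then follows from Remark \ref{right number of equations} once one identifies $\widehat R\cong\widehat{\mathcal{O}}_{X_n',x_n'}\widehat{\otimes}_L\widehat{\mathcal{O}}_{\prod_a B_{n_a}w_aP_{\mu_a}/P_{\mu_a},\,\cdot}$. This is precisely what the paper does.

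One internal inconsistency should be fixed: you write that the points of $\Spf(\widehat R)$ over $\Spf(\widehat{\mcal O}_{X_n',x_n'})$ parametrize \emph{$\Gamma_m$-stable} lattices. That is not right: $\widehat R$ is the completed stalk of $X_n'\times Y_\mu^\rig\times_{\Gr_n}\prod_a(GL_{n_a}/P_{\mu_a})^\rig$, which parametrizes arbitrary lattices of Schubert position $\mu$ lying over the given Bruhat cells. If $\Gamma_m$-stability were already built into $\widehat R$, then (since, as you observe, $\Gamma_m$-stability is equivalent to the existence of the $\phigam$-module via Proposition \ref{bijection of lattices and modules}) there would be no extra data for $\widehat{\mcal O}_{X_n,x_n}$ to classify, and the Cartesian square would be trivial. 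The $\Gamma_m$-stability is exactly the closed condition distinguishing $\widehat{\mcal O}_{X_n,x_n}$ inside $\widehat R$, and Theorem \ref{final bijection} identifies it with $N$-stability of the flag, i.e.\ the condition cutting out $\prod_a V_{w_a}^{\mu_a}$. The remainder of your outline uses this correctly, so the slip looks like a typo, but it should be corrected since read literally it makes the deformation description vacuous.
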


  \begin{proof}
    First of all we prove that the diagram
    (\ref{wanted fiber product})
    is a fiber product.
    The functor of points of a $K$-formal scheme is determined by its values on
    Artin local $K$-algebras with residue field $K$,
    so let us assume that we have an Artin local $K$-algebra $A$ with residue field $K$
    and maps
    \begin{align*}
      \Spf(A)&\xrightarrow{f_1}\Spf(\widehat{R})\\
      \Spf(A)&\xrightarrow{f_2} \Spf(\widehat{S})
    \end{align*}
    such that
    \[
      \begin{tikzcd}
        \Spf(A) \arrow[r,"f_1"]\arrow[d,"f_2"]
        &\Spf(\widehat{R})\arrow{d}{\left((\widetilde{N}_{\bar a})_{\bar a\in\bar{\mcal A}},\rr{id}\right)}\\
        \Spf(\widehat{S})\arrow[hookrightarrow]{r}{i}
        &\Spf(\widehat{T})
      \end{tikzcd}
    \]
    commutes.
    By the fact that the formal scheme of the completion of the stalk prorepresents the functor
    of lifts, we have that there exists a morphism
    \[
      \Sp(A)\xrightarrow{f_2} \prod_{a\in{\mcal A}}V_{w_a}^{\mu_{a},\rig}
    \]
    inducing $\Sp(A)\xrightarrow{f_2}\Sp(\widehat{S})$
    such that the diagram
    \begin{equation}
      \label{diagram}
      \begin{tikzcd}
        \Sp(L)\ar[dr,"a"]\ar{drr}{\left(x_n,(\pi_{\mu_{ a}}(\Lambda_ a))_{a\in{\mcal A}}\right)}
        \ar{ddr}[swap]{\left(N_{a},\pi_{\mu_{ a}}(\Lambda_ a)\right)_{a\in{\mcal A}}}
        & &\\
        &\Sp(A)\ar[r,"f_1"]\ar[d,"f_2"]
        & \Sp(\widehat{R})\ar{d}{\left((\widetilde{N}_{ a})_{a\in{\mcal A}},\rr{id}\right)}\\
        & \prod_{ a\in{\mcal A}}V_{w_a}^{\mu_{ a},\rig}\ar[hookrightarrow]{r}{i}
        & \prod_{ a\in{\mcal A}}\mathfrak{b}_{n_{a}}^\rig\times(B_{n_a}w_aP_{\mu_a}/P_{\mu_{ a}})^\rig
      \end{tikzcd}
    \end{equation}
    commutes.
    The map
    \[
      X_n'\times Y_{\mu}^\rig\times_{\Gr_n}\prod_{ a\in{\mcal A}}(GL_{n_{a}}/P_{\mu_{ a}})^\rig
      \xrightarrow{(\rr{id},\oplus_{a\in{\mcal A}}\sigma_{\mu_{ a}})}
      X_n'\times Y_{\mu}^\rig\times_{\Gr_n}\rr{Gr}^{\rig}_n
    \]
    induces a map
    \begin{align*}
      u:\Sp(A)&\xrightarrow{f_1}
      \Sp\left (\widehat{R}\right )
      \xrightarrow{(\mathrm{id},\oplus_{a\in{\mcal A}}\sigma_{\mu_{ a}})}
      \Sp\left (\widehat{\mcal O}_{X_n'\times Y_{\mu}^\rig\times_{\Gr_n}\rr{Gr}^{\rig}_n,(x_n,\Lambda)}\right ),
    \end{align*}
    in fact since $\Lambda_a$ is $\Gamma_m$-stable,
    we have that
    $\Lambda=\bigoplus_{a\in{\mcal A}}\sigma_{\mu_{ a}}(\pi_{\mu_{ a}}(\Lambda_{a}))$
    by Lemma \ref{Gamma-stable lattices and Gamma-stable filtrations}.
    We show that
    \[
      \im(u)\subseteq \Sp\left(\widehat{\mcal O}_{X_n,x_n}\right):
    \]
    since $X_n$ is the closed locus
    of $X_n'\times Y_{\mu}^\rig$ of $\Gamma_m$-stable lattices, we just need to show that
    $u^*\mathbb{L}$ is $\Gamma_m$-stable, where $\mathbb{L}$ is the universal lattice above
    $Y_{\mu}^\rig$.
    Observe that $u^*\mathbb{L}=\bigoplus_{a\in{\mcal A}}u^*\mathbb L_{ a}$
    for some smaller $A\llbracket t\rrbracket$-modules $u^*\mathbb L_{ a}$,
    as $u$ factors through $\Sp(\widehat{R})$ by definition:
    in fact we have that by definition of $R$, $(\mathrm{id},\oplus_{ a\in{\mcal A}}\sigma_{\mu_{ a}})^*\mathbb L$
    decomposes into a direct sum of sub-$\widehat{R}\llbracket t\rrbracket$-modules.
    Thanks to Theorem \ref{final bijection}, showing that $u^*\mathbb{L}$ is $\Gamma_m$-stable is equivalent
    to showing that for all $a\in{\mcal A}$ we have that
    $\pi_{\mu_{a}}(u^*\mathbb{L}_{ a})$ is a stable filtration under the nilpotent
    endomorphism $N_{ a}$ given by the $\mathbb{G}_a$-representation associated to the $A$-module
    $D_{\pdR, a}(\WdR(u^*\D_n'[1/t])_{ a})$.
    In other words, we want to show that the image of the map
    \begin{align*}
      \Sp(A)&\xrightarrow{u}
      \Sp\left (\widehat{\mcal O}_{X_n'\times Y_{\mu}^\rig\times_{\Gr_n}\rr{Gr}^{\rig}_n,(x_n,\Lambda)}\right )
      \xrightarrow{(\rr{id},(\pi_{\mu_{ a}})_{a\in{\mcal A}})}
      \Sp\left (\widehat{R}\right )\\
      &\xrightarrow{((\widetilde{N_{a}})_{ a\in{\mcal A}},\rr{id})}
      \prod_{a\in{\mcal A}}\mathfrak{b}_{n_{a}}^\rig\times(B_{n_a}w_aP_{\mu_a}/P_{\mu_{a}})^\rig
    \end{align*}
    is contained in $\prod_{a\in{\mcal A}}V_{w_a}^{\mu_{a},\rig}$.
    Now notice that
    \begin{align*}
      ((\widetilde{N_{ a}})_{a\in{\mcal A}},\rr{id})\circ(\rr{id},(\pi_{\mu_{a}})_{a\in{\mcal A}})\circ u&
      = ((\widetilde{N_{a}})_{a\in{\mcal A}},\rr{id})\circ(\rr{id},(\pi_{\mu_{a}})_{a\in{\mcal A}})
      \circ (\mathrm{id},\oplus_{a\in{\mcal A}}\sigma_{\mu_{a}})\circ f_1\\
      &= ((\widetilde{N}_{a})_{a\in{\mcal A}},\mathrm{id})\circ  f_1
      = i\circ f_2,
    \end{align*}
    where the second to last equality is due to the fact that
    $(\pi_{\mu_{a}})_{a\in{\mcal A}}\circ(\oplus_{a\in{\mcal A}}\sigma_{\mu_{a}})=\rr{id}$
    and the last equality is due to the commutativity of the diagram (\ref{diagram}).
    Hence we have proved that the map $u$
    factors through $\Sp(\widehat{\mcal O}_{X_n,x_n})$.
    Now notice that the diagram
    \begin{equation}
      \label{diagram2}
      \begin{tikzcd}[row sep=large, column sep=large]
        \Sp(A)\ar[dr,"u"]\ar{drr}{f_1}\ar{ddr}[swap]{f_2}
        & &\\
        &\Sp\left(\widehat{\mcal O}_{X_n,x_n}\right)\ar{r}[swap]{\left(\rr{id},(\pi_{\mu_{ a}})_{ a\in{\mcal A}}\right)}
        \ar{d}{\left(\widetilde{N}_{ a},\pi_{\mu_{ a}}\right)_{ a\in{\mcal A}}}
        & \Sp(\widehat{R} )
        \ar{d}{\left((\widetilde{N}_{a})_{a\in{\mcal A}},\rr{id}\right)}\\
        & \prod_{ a\in{\mcal A}}V_{w_a}^{\mu_{ a},\rig}\ar[hookrightarrow]{r}{i}
        & \prod_{ a\in{\mcal A}}\mathfrak{b}_{n_{ a}}^\rig\times(B_{n_a}w_aP_{\mu_a}/P_{\mu_{ a}})^\rig
      \end{tikzcd}
    \end{equation}
    is commutative:
    \[
      (\rr{id},(\pi_{\mu_{a}})_{ a\in{\mcal A}})\circ u
      =(\rr{id},(\pi_{\mu_{ a}})_{ a\in{\mcal A}})\circ(\mathrm{id},\oplus_{ a\in{\mcal A}}\sigma_{\mu_{ a}})\circ f_1=f_1
    \]
    and
    \begin{align*}
      i\circ(\widetilde{N}_{ a},\pi_{\mu_{ a}})_{ a\in{\mcal A}}\circ u
      &=((\widetilde{N}_{ a})_{ a\in{\mcal A}},\rr{id})\circ(\rr{id},(\pi_{\mu_{ a}})_{ a\in{\mcal A}})
      \circ(\mathrm{id},\oplus_{ a\in{\mcal A}}\sigma_{\mu_{ a}})\circ f_1\\
      &=((\widetilde{N}_{ a})_{ a\in{\mcal A}},\rr{id})\circ f_1=i\circ f_2.
    \end{align*}
    Since $i$ is a closed immersion, we can conclude that
    $(\widetilde{N}_{ a},\pi_{\mu_{ a}})_{ a\in{\mcal A}}\circ u=f_2$.
    Finally, assume there exists $v:\Sp(A)\rightarrow \Sp(\widehat{\mcal O}_{X_n,x_n})$ such that
    the diagram (\ref{diagram2}) is commutative when we substitute $v$ to $u$.
    We will denote by $(u',\Lambda_u),(v',\Lambda_v)\in X_n(A)$ the maps
    corresponding to $u$ and $v$ respectively.
    Since $\Lambda_u$ is $\Gamma_m$-stable and we have $\Lambda_u\otimes_AL=\Lambda$,
    we have a decomposition
    $\Lambda_u=\bigoplus_{a\in{\mcal A}}\Lambda_{u,a}$ and
    similarly for $\Lambda_v$.
    Then we have
    \begin{align*}
      \left (u',\Lambda_u,(\pi_{\mu_{a}}(\Lambda_{u,a}))_{ a\in{\mcal A}}\right )
      &=(\rr{id},(\pi_{\mu_{ a}})_{ a\in{\mcal A}})\circ u=f_1
      =(\rr{id},(\pi_{\mu_{a}})_{ a\in{\mcal A}})\circ v\\
      &=\left(v',\Lambda_v,(\pi_{\mu_{a}}(\Lambda_{v,a}))_{ a\in{\mcal A}}\right),
    \end{align*}
    which implies that $u=(u',\Lambda_u)=(v',\Lambda_v)=v$.
    In particular we have that the diagram
    \begin{equation}
      \label{diagram3}
      \begin{tikzcd}
        \Sp\left(\widehat{\mcal O}_{X_n,x_n}\right)\ar{r}{\left(\rr{id},(\pi_{\mu_{a}})_{a\in{\mcal A}}\right)}
        \ar{d}{\left(\widetilde{N}_{ a},\pi_{\mu_{ a}}\right)_{ a\in\mcal A}}
        & \Sp(\widehat{R} )
        \ar{d}{\left((\widetilde{N}_{a})_{a\in{\mcal A}},\rr{id}\right)}\\
        \prod_{ a\in{\mcal A}}V_{w_a}^{\mu_{ a},\rig}\ar[hookrightarrow]{r}{i}
        & \prod_{ a\in{\mcal A}}\mathfrak{b}_{n_{a}}^\rig\times(Bw_aP_{\mu_a}/P_{\mu_{a}})^\rig
      \end{tikzcd}
    \end{equation}
    is a fiber product and hence
    the morphism
    \[
      (\rr{id},(\pi_{\mu_{a}})_{a\in{\mcal A}}):\Sp(\widehat{\mcal O}_{X_n,x_n})\rightarrow
      \Sp(\widehat{R})
    \]
    is a closed immersion.
    Finally, notice that
    \[
      \begin{tikzcd}
        \Sp(A)\ar{r}{u} &\Sp\left(\widehat{\mcal O}_{X_n,x_n}\right)\\
        \Sp(L)\ar{u}{a}\ar{ur}[swap]{x_n}
      \end{tikzcd}
    \]
    commutes: observe that
    \[
      (\rr{id}, (\pi_{\mu_{ a}})_{ a\in{\mcal A}})\circ u\circ a=f_1\circ a
      =(x_n,\pi_{\mu_{ a}}(\Lambda_a)_{ a\in{\mcal A}})=(\rr{id}, (\pi_{\mu_{a}})_{ a\in{\mcal A}})\circ x_n,
    \]
    where the first equality is given by the commutativity of diagram (\ref{diagram2}),
    the second equality is by commutativity of diagram (\ref{diagram}) and the last equality
    is by definition of the map $(\rr{id},(\pi_{\mu_{ a}})_{a\in{\mcal A}})$ as in (\ref{def of (id,pi_mu)}).
    By the fact that $(\rr{id},(\pi_{\mu_{ a}})_{ a\in{\mcal A}})$ is a closed immersion and hence it has a left inverse,
    we obtain that $u\circ a=x_n$.
    As a consequence, we have that there exists a unique map
    \[
      u:\Spf(A)\rightarrow \Spf(\widehat{\mcal O}_{X_n,x_n})
    \]
    such that the following diagram is commutative:
    \[
      \begin{tikzcd}
        \Spf(A) \arrow[drr,"f_1"]\arrow{ddr}[swap]{f_2}\ar{dr}{u} & &\\
        &\Spf(\widehat{\mcal O}_{X_n,x_n})\ar{r}\ar{d}
        &\Spf(\widehat{R})\arrow[d]\\
        &\Spf(\widehat{S})\arrow[r, hookrightarrow]
        &\Spf(\widehat{T}).
      \end{tikzcd}
    \]
    This means that the diagram (\ref{wanted fiber product}) is a fiber product.
    Notice that, since $B_{n_a}w_aP_{\mu_a}/P_{\mu_{a}}$ is an affine space, we have
    \[
      \Sp(L\langle T_1,\dots, T_M\rangle)=\prod_{ a\in{\mcal A}}(B_{n_a}w_aP_{\mu_a}/P_{\mu_{a}})^\rig,
    \]
    where $M\coloneqq\sum_{a\in{\mcal A}}\dim B_{n_a}w_aP_{\mu_a}/P_{\mu_{a}}$.
    Recall that $X_n'=\Sp(L\langle x_1,\dots, x_d\rangle)$, since it is a
    vector bundle of dimension
    \[
      d \coloneqq \sum_{i=2}^n\dim_L H^1_{\phi,\Gamma}(D_{i-1}'(\delta_i^{-1}x^{\mu_i}))
    \]
    over an affinoid neighbourhood $\widetilde{\mathcal{U}}'$
    of $(\delta_1x^{-\mu_1},\dots,\delta_nx^{-\mu_n})$ in $\T^n$ ($d=1$ in case $n=2$).
    Thus
    \begin{align*}
      \widehat{R} &
      = L\langle x_1,\dots,x_d, T_1,\dots, T_M\rangle\sphat_{(x_n,(\pi_{\mu_{a}}(\Lambda_a))_{ a\in{\mcal A}})}\\
      &=L\llbracket x_1,\dots,x_d, T_1,\dots, T_M\rrbracket\\
      &=L\llbracket x_1,\dots, x_d\rrbracket\otimes_L L\llbracket T_1,\dots,T_M\rrbracket\\
      &= L\langle x_1,\dots,x_d\rangle\sphat_{x_n'}\widehat{\otimes}_L
      L\langle T_1,\dots, T_M\rangle\sphat_{(\pi_{\mu_{a}}(\Lambda_a))_{a\in{\mcal A}}}\\
      &= \widehat{\mathcal{O}}_{X_n',x_n'}\widehat{\otimes}_L
      \widehat{\mathcal{O}}_{\prod_{a\in{\mcal A}}Bw_aP_{\mu_a}/P_{\mu_{a}},
      (\pi_{\mu_{a}}(\Lambda_a))_{ a\in{\mcal A}}},
    \end{align*}
    where the completions are with respect to the subscript maximal ideals.
    Thus we have
    \begin{align*}
      &\Spf(\widehat{R})=
      \Spf(\widehat{\mathcal{O}}_{X_n',x_n'})\times\Spf(\widehat{\mathcal{O}}_{\prod_{ a\in{\mcal A}}B_{n_a}w_aP_{\mu_a}/P_{\mu_{a}},
      (\pi_{\mu_{a}}(\Lambda_a))_{\bar a\in\bar{\mcal A}}}).
    \end{align*}
    Recall by Remark \ref{right number of equations} that
    $V_{w_a}^{\mu_{a}}\hookrightarrow\mathfrak{b}_{n_a}\times B_{n_a}w_aP_{\mu_a}/P_{\mu_{a}}$ is cut out by
    exactly $\dim B_{n_a}w_aP_{\mu_a}/P_{\mu_{a}}$ equations.
    Finally by the fact that the square (\ref{wanted fiber product}) is a fiber product,
    it is then obvious that
    \[
      \Spf(\widehat{\mathcal{O}}_{X_n,x_n})\hookrightarrow
      \Spf(\widehat{\mathcal{O}}_{X_n',x_n'})\times\Spf(\widehat{\mathcal{O}}_{\prod_{a\in{\mcal A}}B_{n_a}w_aP_{\mu_a}/P_{\mu_{a}},
      (\pi_{\mu_{a}}(\Lambda_a))_{a\in{\mcal A}}})
    \]
    is a closed immersion and $\Spf(\widehat{\mathcal{O}}_{X_n,x_n})$ is cut out by
    $\sum_{a\in{\mcal A}}\dim B_{n_a}w_aP_{\mu_a}/P_{\mu_{a}}$ equations.
  \end{proof}

  We will show that the dimension of $\Spec(\widehat{\mathcal{O}}_{X_n,x_n})$ is
  at least the dimension of $\Spec(\widehat{\mathcal{O}}_{X_n',x_n'})$.

  \begin{cor}
    \label{X cut out by M equations}
    We have
    \begin{equation*}
      \dim \mathrm{Spec}(\widehat{\mathcal{O}}_{X_n,x_n})\geq\dim\mathrm{Spec}(\widehat{\mathcal{O}}_{X_n',x_n'}).
    \end{equation*}
  \end{cor}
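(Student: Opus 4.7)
My plan is to deduce the corollary directly from Proposition \ref{stalk is closed immersion} combined with Krull's Hauptidealsatz (the generalized principal ideal theorem). The inequality is essentially a clean consequence of the dimension bookkeeping for closed subschemes cut out by an explicit number of equations, so the heart of the argument has already been done in the previous proposition; what remains is dimension counting.

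First, set $M \coloneqq \sum_{a \in \mcal{A}} \dim B_{n_a} w_a P_{\mu_a}/P_{\mu_a}$, which is precisely the number of equations produced in Proposition \ref{stalk is closed immersion}. Since each Schubert cell $B_{n_a} w_a P_{\mu_a}/P_{\mu_a}$ is an affine space, its rigidification is smooth at every point, and therefore the local ring
\[
\widehat{\mathcal{O}}_{\prod_{a \in \mcal{A}} B_{n_a} w_a P_{\mu_a}/P_{\mu_a},\,(\pi_{\mu_a}(\Lambda_a))_{a \in \mcal{A}}}
\]
is a regular local ring of dimension exactly $M$. Consequently, by the standard fact that dimension is additive for completed tensor products of Noetherian local $L$-algebras with residue field $L$, the product
\[
\Spec(\widehat{\mathcal{O}}_{X_n', x_n'}) \times \Spec(\widehat{\mathcal{O}}_{\prod_{a \in \mcal{A}} B_{n_a} w_a P_{\mu_a}/P_{\mu_a},\,(\pi_{\mu_a}(\Lambda_a))_{a \in \mcal{A}}})
\]
has dimension $\dim \Spec(\widehat{\mathcal{O}}_{X_n', x_n'}) + M$.

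Next, I invoke Proposition \ref{stalk is closed immersion}, which says that $\Spec(\widehat{\mathcal{O}}_{X_n, x_n})$ embeds into the above product as a closed subscheme cut out by $M$ equations. By Krull's Hauptidealsatz, cutting out a Noetherian local ring by $M$ elements can decrease the Krull dimension by at most $M$. Therefore
\[
\dim \Spec(\widehat{\mathcal{O}}_{X_n, x_n}) \geq \bigl(\dim \Spec(\widehat{\mathcal{O}}_{X_n', x_n'}) + M\bigr) - M = \dim \Spec(\widehat{\mathcal{O}}_{X_n', x_n'}),
\]
which is exactly the desired inequality.

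There is no real obstacle in this argument; it is pure bookkeeping on top of Proposition \ref{stalk is closed immersion}. The only point that deserves a line of justification is that the number of equations produced there matches exactly the dimension of the smooth factor in the product, so that Krull's theorem gives a bound independent of $M$. The genuine content (the matching between Sen-weight decompositions, the geometry of $Y_\mu$, and the Schubert cells $B_{n_a} w_a P_{\mu_a}/P_{\mu_a}$) was already established earlier; the corollary simply records the lower bound that this geometry forces on the dimension of the stalk.
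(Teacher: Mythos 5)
Your proof is correct and takes essentially the same route as the paper: the paper's own argument also notes that by Proposition \ref{stalk is closed immersion} one has $\widehat{\mathcal{O}}_{X_n,x_n}=\widehat{\mathcal{O}}_{X_n',x_n'}\llbracket T_1,\dots,T_M\rrbracket/(f_1,\dots,f_M)$ and then invokes the standard dimension bound (adding $M$ variables raises dimension by $M$, killing $M$ elements lowers it by at most $M$) without naming Krull's Hauptidealsatz explicitly. Your write-up is slightly more explicit but the argument is the same.
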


  \begin{proof}
    Let $M\coloneqq\sum_{a\in{\mcal A}}\dim B_{n_a}w_{ a}P_{\mu_{ a}}/P_{\mu_{ a}}$.
    By Proposition \ref{stalk is closed immersion}, we have that there are
    $f_1,\dots,f_M\in\widehat{\mathcal{O}}_{X_n',x_n'}\llbracket T_1,\dots T_M\rrbracket$ such that
    \[
      \widehat{\mathcal{O}}_{X_n,x_n}=\widehat{\mathcal{O}}_{X_n',x_n'}\llbracket T_1,\dots,T_M\rrbracket/(f_1,\dots,f_M).
    \]
    This immediately concludes the proof.
  \end{proof}

  We make use of the following result from \cite[Exercise 3.20]{hartshorne2013algebraic}.

  \begin{thm}
    \label{thmHartshorne}
    Let $X$ be an integral scheme of finite type over a field $k$.
    For any closed point $P\in X$, $\dim X=\dim \mathcal{O}_{X,P}$.
  \end{thm}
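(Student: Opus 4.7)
The plan is to reduce this to the affine, standard statement that for a finitely generated integral domain $A$ over a field $k$ and a maximal ideal $\fkm\subset A$, one has $\dim A=\mathrm{height}(\fkm)$. First I would observe that the question is local at $P$: replacing $X$ by an affine open neighbourhood $U=\Spec A$ of $P$ (which exists since $X$ is a scheme), we have $\mcal{O}_{X,P}=A_\fkm$ where $\fkm\subset A$ is the maximal ideal corresponding to $P$, and also $\dim U=\dim X$ because $U$ is a dense open of the irreducible space $X$ and both sides equal the transcendence degree of the function field $K(X)$ over $k$ (using that $A$ is a finitely generated domain, so Noether normalization yields $\dim A=\mathrm{tr.deg}_k\mathrm{Frac}(A)$). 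So it suffices to show $\dim A=\dim A_\fkm$.

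Next I would invoke Noether normalization: there exist algebraically independent $y_1,\dots,y_d\in A$ (where $d=\dim A$) such that $A$ is a finite module over $B\coloneqq k[y_1,\dots,y_d]$. By the going-up theorem, chains of primes in $A$ correspond (after intersection with $B$) to chains in $B$, and conversely any chain in $B$ lifts to one in $A$. Since $B$ is a polynomial ring over a field, it is catenary of dimension $d$, and every maximal chain of primes in $B$ has length $d$. Transferring through the finite extension $B\hookrightarrow A$, one deduces that $A$ is catenary and that every maximal chain of primes from $0$ to a maximal ideal in $A$ has length $d$.

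The remaining point is that a closed point of $\Spec A$ has residue field finite over $k$. Since $P\in X$ is a closed point of a scheme of finite type over $k$, $\fkm$ is maximal and $A/\fkm$ is a field that is a finitely generated $k$-algebra; by Zariski's lemma (the weak Nullstellensatz), $A/\fkm$ is then a finite algebraic extension of $k$, in particular of Krull dimension $0$. Combined with the previous paragraph, every maximal chain of primes in $A$ ending at $\fkm$ has length exactly $d$, so $\mathrm{height}(\fkm)=d=\dim A$; equivalently $\dim A_\fkm=\dim A$, giving $\dim\mcal{O}_{X,P}=\dim X$.

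The only delicate step is establishing that maximal chains from $0$ to $\fkm$ have length $\dim A$ rather than just $\leq\dim A$; this is the catenary/Hilbert chain property for finitely generated $k$-algebras that are domains, and it is exactly where Noether normalization together with the going-down theorem (requiring normality of $B=k[y_1,\dots,y_d]$) is needed to transport length-$d$ chains from $B$ up to $A$ ending at the prescribed maximal ideal. Once this is in place, the closedness of $P$ (via Zariski's lemma) eliminates any possible defect, and the conclusion follows.
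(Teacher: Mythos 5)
The paper does not prove this statement; it simply cites it as \cite[Exercise~3.20]{hartshorne2013algebraic}. Your proof is correct and is the standard argument Hartshorne intends: reduce to an affine open $\Spec A$ (using that any dense affine open of an integral finite-type $k$-scheme has the same dimension, namely $\mathrm{trdeg}_k K(X)$), apply Noether normalization to exhibit $A$ as finite over $B=k[y_1,\dots,y_d]$, note that $\mathfrak{m}\cap B$ is maximal, and lift a maximal chain of primes in $B$ to one in $A$ ending at $\mathfrak{m}$. One small wording slip: in your second paragraph you invoke going-up to ``lift chains in $B$ to $A$,'' but the step you actually need --- producing a chain in $A$ whose \emph{top} is the prescribed maximal ideal $\mathfrak{m}$ --- is going-down (which requires normality of $B$ and $A$ a domain), as you correctly say in your final paragraph; you never really use going-up. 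Also, you do not strictly need Zariski's lemma: that $\mathfrak{m}\cap B$ is maximal already follows from $B\hookrightarrow A$ being integral and $\mathfrak{m}$ maximal. These are cosmetic points; the argument is sound.
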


  \begin{rem}
    \label{Hartshorne_for_irreducible}
    Notice first of all that since the dimension of a scheme $X$ doesn't depend on the reducible structure
    of $\mathcal{O}_X$, we have the above result for irreducible schemes more in general.
    Moreover observe that the notion of "dimension" for a scheme (resp. a rigid space) is
    equal to the dimension of the ring associated to an open affine (resp. affinoid) subspace
    of the scheme (resp. rigid space).
    As a consequence, Theorem \ref{thmHartshorne} is valid also for irreducible rigid spaces.
  \end{rem}

  Now let us go back to considering only the 2-dimensional case.
  We show that any irreducible component of $X$ has dimension at least the dimension
  of the regular locus.

    \begin{thm}
      \label{dimZ}
      Let $Z$ be an irreducible component of $X_n$ containing the point $x_n$.
      Then
      \[
        \dim Z\geq\dim (F^{-1}(\W_n^\reg)).
      \]
    \end{thm}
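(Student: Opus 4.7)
The plan is to translate the statement into a ring-theoretic estimate on the completed local ring at $x_n$, and then bridge from analytic branches of that completion to the global irreducible component $Z$ via going-down for the faithfully flat map into the completion. Set $d\coloneqq\dim F^{-1}(\W_n^\reg)$, which by Remark \ref{not flatness and dim X'} equals $\dim X_n'$. Write $R\coloneqq\mathcal{O}_{X_n,x_n}$, $\hat R\coloneqq\widehat{\mathcal{O}}_{X_n,x_n}$, and $A\coloneqq\widehat{\mathcal{O}}_{X_n',x_n'}$. After shrinking $X_n'$ to a connected neighbourhood of $x_n'$ (possible as $X_n'$ is a vector bundle over an affinoid base, hence smooth), $X_n'$ becomes smooth and irreducible, so $A$ is a regular local ring of Krull dimension $d$.

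First I would show that every minimal prime $\mathfrak q$ of $\hat R$ satisfies $\dim\hat R/\mathfrak q\geq d$. By Corollary \ref{X cut out by M equations} one has $\hat R=A\llbracket T_1,\dots,T_M\rrbracket/(f_1,\dots,f_M)$, and the ambient ring $A\llbracket T_1,\dots,T_M\rrbracket$ is regular local of dimension $d+M$. Krull's principal ideal theorem then bounds the height of every minimal prime of $(f_1,\dots,f_M)$ by $M$, so in the ambient regular ring every such prime $\mathfrak q$ has $\dim\hat R/\mathfrak q\geq(d+M)-M=d$.

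Next I would produce an analytic branch of $\hat R$ sitting inside the formal completion of $Z$ at $x_n$. Let $\mathfrak p_Z\subset R$ be the minimal prime corresponding to $Z$, so $\mathcal{O}_{Z,x_n}=R/\mathfrak p_Z$ and $\dim Z=\dim R/\mathfrak p_Z$ by Theorem \ref{thmHartshorne} together with Remark \ref{Hartshorne_for_irreducible}. Choose a prime $\mathfrak q$ of $\hat R$ minimal over $\mathfrak p_Z\hat R$. Since $R\to\hat R$ is faithfully flat, going-down applies: if $\mathfrak q\cap R$ strictly contained $\mathfrak p_Z$, then going-down would produce a prime of $\hat R$ strictly below $\mathfrak q$ yet still containing $\mathfrak p_Z\hat R$, contradicting minimality. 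This forces $\mathfrak q\cap R=\mathfrak p_Z$, and the same sort of chase shows that $\mathfrak q$ is in fact a minimal prime of $\hat R$ (any strictly smaller prime would contract to a prime of $R$ strictly smaller than $\mathfrak p_Z$, impossible).

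Finally, the inclusion $\mathfrak p_Z\hat R\subseteq\mathfrak q$ factors the quotient $\hat R\twoheadrightarrow\hat R/\mathfrak q$ through $\widehat{\mathcal{O}}_{Z,x_n}=\hat R/\mathfrak p_Z\hat R$, so
\[
d\leq\dim\hat R/\mathfrak q\leq\dim\widehat{\mathcal{O}}_{Z,x_n}=\dim\mathcal{O}_{Z,x_n}=\dim Z,
\]
where the middle equality uses that Krull dimension is preserved under completion of Noetherian local rings. The main subtlety is precisely this passage from the Krull-theoretic lower bound on analytic branches of $\hat R$ to the dimension of the global component $Z$, which is bridged by faithful flatness of the completion; the rest of the argument is essentially an application of Krull's principal ideal theorem to the presentation of $\hat R$ supplied by Proposition \ref{stalk is closed immersion}.
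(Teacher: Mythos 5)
Your proof is correct and follows essentially the same route as the paper: present $\widehat{\mathcal{O}}_{X_n,x_n}$ over $\widehat{\mathcal{O}}_{X_n',x_n'}$ via Proposition \ref{stalk is closed immersion}, apply Krull's height theorem, and translate back to $\dim Z$ via completion-invariance of Krull dimension together with Remark \ref{Hartshorne_for_irreducible}. Your going-down bridge from $\widehat{\mathcal{O}}_{X_n,x_n}$ to $\widehat{\mathcal{O}}_{Z,x_n}$ makes explicit a step the paper passes over with a bare ``therefore''; the one small slip is the parenthetical reason for minimality of $\mathfrak q$ in $\widehat{\mathcal{O}}_{X_n,x_n}$, since a prime $\mathfrak q''\subsetneq\mathfrak q$ contracts to a prime contained in $\mathfrak p_Z$ and hence \emph{equal} to $\mathfrak p_Z$ (not strictly below it), but this already gives $\mathfrak p_Z\widehat{\mathcal{O}}_{X_n,x_n}\subseteq\mathfrak q''$, which is the contradiction you need with minimality of $\mathfrak q$ over $\mathfrak p_Z\widehat{\mathcal{O}}_{X_n,x_n}$.
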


    \begin{proof}
      Since $Z$ is irreducible, we get
      \[
        \dim Z=\dim\mathrm{Spec}(\mathcal{O}_{Z,z})
      \]
      for any point $z\in Z(K)$ by Remark \ref{Hartshorne_for_irreducible}.
      For a Noetherian local ring $A$, we have $\dim\mathrm{Spec}(A)=\dim\mathrm{Spec}(\widehat{A})$,
      thus
      \[
        \dim\mathrm{Spec}(\mathcal{O}_{X_n,x_n})=\dim\mathrm{Spec}(\widehat{\mathcal{O}}_{X_n,x_n}).
      \]
      Furthermore we have that
      \[
        \widehat{\mathcal{O}}_{X_n,x_n}=\widehat{\mathcal{O}}_{X_n',x_n'}
        \llbracket T_1,\dots,T_M\rrbracket/(f_1,\dots,f_M)
      \]
      by Proposition \ref{stalk is closed immersion}.
      Therefore
      \[
        \dim \Spec(\widehat{\mathcal{O}}_{Z,x_n})\geq\dim\Spec(\widehat{\mathcal{O}}_{X',x_n'}).
      \]
      Finally putting the above together, we get
      \begin{align*}
        \dim Z&=\dim\mathrm{Spec}(\mathcal{O}_{Z,x_n})
        =\dim \mathrm{Spec}(\widehat{\mathcal{O}}_{Z,x_n})
        \geq\dim\mathrm{Spec}(\widehat{\mathcal{O}}_{X_n',x_n'})\\
        &=\dim \mathrm{Spec}(\mathcal{O}_{X_n',x_n'})= \dim(F^{-1}(\W_n^\reg)),
      \end{align*}
      where the last equality is given by  Remark \ref{Hartshorne_for_irreducible} and Remark \ref{Wreg and X' same dim}
      (Remark \ref{not flatness and dim X'} for dimension $n=2$).
    \end{proof}

    We are now back to considering the case of dimension 2.

    \begin{lem}
      \label{proof homeomorphism}
      Let $W$ be the intersection of $\W_2^\reg$ and $\im(F)$ in $\W^2$, let
      \[
        W'\coloneqq\{(\eta_1,\eta_2\cdot x^{n+1})\colon(\eta_1,\eta_2)\in W\}\subset H(\mathcal{U}')
      \]
      and let $T'$ be the intersection of $\mathcal{U}'$ and $H^{-1}(W')$ in $\T^2$.
      Recall the definition of $X'\coloneqq\mathcal{U}'\times\mathbb{A}_L^{1,\an}$ for
      some open neighbourhood $\mcal U'$ of $(\delta_1,\delta_2x^{\mu_2})$ in $\T^2$
      as in the proof of Proposition \ref{extensions}.
      The map
      \begin{equation}
        \label{homeomorphism}
        \pi:F^{-1}(W)\rightarrow T'\times\mathbb{A}_L^{1,\an}\subset\mathcal{U}'\times\mathbb{A}_L^{1,\an}=X'
      \end{equation}
      given by the restriction of the map $X\rightarrow X'$ to $F^{-1}(W)$ is a homeomorphism.
    \end{lem}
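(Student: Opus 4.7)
The map $\pi$ is the restriction to the open subspace $F^{-1}(W)\subset X$ of the natural projection $p\colon X\to X'$, $(y',\Lambda)\mapsto y'$. My plan is to show that $\pi$ is bijective and induces isomorphisms on completed local rings at every point, from which it will follow that $\pi$ is an isomorphism of rigid spaces and in particular a homeomorphism.

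For bijectivity I first verify that $\pi$ lands in $T'\times\mathbb{A}_L^{1,\mathrm{an}}$. Using the identity $(\widetilde{\delta}_1,\widetilde{\delta}_2)=(\widetilde{\delta}_1'\cdot x^{\mu_1},\widetilde{\delta}_2'\cdot x^{\mu_2})$ from Theorem \ref{construction of X} together with $\mu_1-\mu_2\in\Z$, the condition $F(y',\Lambda)\in W$ translates to $(\widetilde{\delta}_1'/\widetilde{\delta}_2')(y')|_{\Z_p^\times}\notin x^{\Z}|_{\Z_p^\times}$, which is exactly the condition that the projection of $y'$ to $\mathcal{U}'$ lies in $T'$. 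Now fix $y'\in T'\times\mathbb{A}_L^{1,\mathrm{an}}$ with residue field $K$; the fiber of $p$ over $y'$ consists of the $\Gamma_m$-stable lattices in $y'^*\D'^r\otimes_{\mathcal{R}_K^r[1/t],\tau}K(\!(t)\!)$ lying in $Y_{(\mu_1,\mu_2)}(K)$. Since $y'\in T'$, the Sen weights $a_1',a_2'$ of the standard lattice $\Lambda_0 := y'^*\D'^r\otimes_{\mathcal{R}_K^r,\tau}K\llbracket t\rrbracket$ differ by a non-integer, so $\bar{\mathcal{A}}=\{\bar a_1,\bar a_2\}$ consists of two distinct classes each of multiplicity one. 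Applying Theorem \ref{final bijection} to $M=y'^*\D'^r[1/t]$, every such $\Gamma_m$-stable lattice decomposes as a direct sum of two rank-one pieces and therefore has the form $t^{\nu_1}\Lambda_{0,\bar a_1}\oplus t^{\nu_2}\Lambda_{0,\bar a_2}$ for a unique $(\nu_1,\nu_2)\in\Z^2$. Because the standard filtration of $V$ is induced by the triangulation of $\D'$ and thus respects the Sen-weight splitting of $\Lambda_0$, a direct computation of $\mathrm{gr}^i(\Lambda)$ forces $\nu_i=\mu_i$, so the fiber is the single point $(y',\, t^{\mu_1}\Lambda_{0,\bar a_1}\oplus t^{\mu_2}\Lambda_{0,\bar a_2})$.

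For the local isomorphism, fix $x=(y',\Lambda)\in F^{-1}(W)$. Proposition \ref{stalk is closed immersion} gives a closed immersion
\[
\Spf(\widehat{\mathcal{O}}_{X,x}) \hookrightarrow \Spf(\widehat{\mathcal{O}}_{X',y'}) \times \Spf\bigl(\widehat{\mathcal{O}}_{\prod_{\bar a\in\bar{\mathcal{A}}} B_{n_{\bar a}}w_{\bar a}P_{\mu_{\bar a}}/P_{\mu_{\bar a}}}\bigr)
\]
cut out by $M:=\sum_{\bar a}\dim B_{n_{\bar a}}w_{\bar a}P_{\mu_{\bar a}}/P_{\mu_{\bar a}}$ equations. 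Since $y'\in T'$ forces $n_{\bar a_1}=n_{\bar a_2}=1$, and $GL_1$ admits no proper parabolic (so each $GL_1/P_{\mu_{\bar a_i}}$ is a point), every factor $B_{n_{\bar a_i}}w_{\bar a_i}P_{\mu_{\bar a_i}}/P_{\mu_{\bar a_i}}$ is zero-dimensional and $M=0$. Therefore $\widehat{\mathcal{O}}_{X,x}\cong\widehat{\mathcal{O}}_{X',y'}$, and $\pi$ is formally étale at $x$.

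A bijective morphism of rigid spaces that is formally étale at every point is an isomorphism of rigid spaces and in particular a homeomorphism, so combining the two previous paragraphs finishes the proof. The delicate step will be the local-rings identification: it relies on the combinatorial input $n_{\bar a_i}=1$, which in turn is forced by the non-integer Sen-weight condition imposed by $T'$ and makes the relevant Schubert cells in $GL_1/P_\bullet$ degenerate to points.
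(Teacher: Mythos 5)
Your strategy diverges substantially from the paper's. Where the paper establishes openness by unwinding $F^{-1}(W)$ as a fiber product $T'\times\mathbb{A}^{1,\mathrm{an}}_L\times\Gr_2\times_{\Gr_2\times\Gr_2}Y_{(0,-n-1)}$ (so that $\pi$ becomes a base change of a projection, which is automatically open), you instead invoke the completed-local-ring computation from Proposition \ref{stalk is closed immersion} and observe that the flag-variety factors become points when $n_{\bar a}=1$, concluding that $\pi$ is formally \'etale. This is a clever observation, and the degeneration to $GL_1$-flag varieties is genuinely what is going on geometrically in the regular locus.

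However, there is a real gap in the second half. Proposition \ref{stalk is closed immersion} (and Lemma \ref{def of maps} which it depends on) is stated and proved only at the \emph{specific} point $x_n$; the whole apparatus --- the decomposition $\widehat\Lambda=\bigoplus_{a}\widehat\Lambda_a$, the coweights $\mu_a$, the Weyl group elements $w_a$ satisfying $\lambda_a=w_aw_0\mu_a$, the identification of $\widehat\Lambda'$ with the lattice $\Lambda_0$ of Remark \ref{identify Lambda0/t with DpdR} --- is constructed at $x_n$ and pinned to the parameters $(\delta_1,\ldots,\delta_n)$ of the fixed $\rho$. You apply the proposition at an arbitrary $x=(y',\Lambda)\in F^{-1}(W)$ as a black box, but the paper never asserts it holds there. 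Making it do so would require re-running the analysis of Remark \ref{standard lattices are the same}, Lemma \ref{def of maps} and Proposition \ref{stalk is closed immersion} at $y'$ (with its own Sen weights, its own $\mu_a$, etc.). That extension is plausible but needs to be spelled out, and it is precisely the heavy machinery the paper's fiber-product argument lets one avoid. A minor additional caveat: the implication you want is ``\'etale $\Rightarrow$ open'', after which open + continuous + bijective gives homeomorphism directly; the stronger claim ``bijective + formally \'etale at every point $\Rightarrow$ isomorphism of rigid spaces'' needs some care about matching residue fields and universal injectivity, and is more than you actually need. If you want to keep the \'etale route, make the pointwise extension of Proposition \ref{stalk is closed immersion} explicit, or fall back on the paper's simpler open-projection description of $F^{-1}(W)$.
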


    \begin{proof}
      Let $z=(z',\Lambda_z)\in F^{-1}(W)$ with
      $z'=(\widetilde\eta_1',\widetilde\eta_2',a)\in\mathcal{U}'\times \mathbb{A}_L^{1,\an}= X'$.
      Then by Theorem \ref{construction of X}, we have
      that $z^*\D$ has parameters $(\widetilde\eta_1',\widetilde\eta_2'\cdot x^{-n-1})$.
      On the other hand $z\in F^{-1}(W)$, hence $H(\widetilde\eta_1',\widetilde\eta_2'\cdot x^{-n-1})\in W$,
      which proves that $H(\widetilde\eta_1',\widetilde\eta_2')\in W'$ and as a consequence
      $(\widetilde\eta_1',\widetilde\eta_2')\in T'$.
      This shows that the above map is well defined and has image in $T'\times\mathbb{A}_L^{1,\an}$.
      We will now show that the map $\pi$ is bijective.
      Notice that we have the following commutative diagram
      \[
        \begin{tikzcd}
          F^{-1}(W)\arrow[r, "G"]\arrow[d, "\pi"] & T\coloneqq H^{-1}(W)\cap\mathcal{U}\isoarrow{d}\\
          T'\times\mathbb{A}_L^{1,\an}\arrow[r] & T',
        \end{tikzcd}
      \]
      where the map $G:F^{-1}(W)\rightarrow T$ is the restriction
      of the map $G$ in (\ref{map F}) to $F^{-1}(W)$ and the right vertical map is just
      the twisting of characters.
      We will then show that
      \[
        \pi:F^{-1}(W)=\bigcup_{(\widetilde\eta_1,\widetilde\eta_2)\in T}G^{-1}(\widetilde\eta_1,\widetilde\eta_2)
        \rightarrow \bigcup_{(\widetilde\eta_1,\widetilde\eta_2 x^{n+1})\in T'}(\widetilde\eta_1,\widetilde\eta_2 x^{n+1})\times\mathbb{A}_L^{1,\an}
        =T'\times\mathbb{A}_L^{1,\an}
      \]
      restricts to a bijection on the fibers
      \[
        G^{-1}(\widetilde\eta_1,\widetilde\eta_2)\xrightarrow{\pi}(\widetilde\eta_1,\widetilde\eta_2 x^{n+1})\times\mathbb{A}_L^{1,\an}
      \]
      for all $(\widetilde\eta_1,\widetilde\eta_2)\in T(K)$ for some $K$ extension of $L$.
      It is enough to notice that we have a commutative diagram
      \[
        \begin{tikzcd}
          G^{-1}(\widetilde\eta_1,\widetilde\eta_2)\arrow[r,"\pi"]\isoarrow{d, "(\ref{fiber of G})"}
          &(\widetilde\eta_1,\widetilde\eta_2 x^{n+1})\times\mathbb{A}_L^{1,\an}\isoarrow{d}\\
          H^1_{\phi,\Gamma}(\robba_K(\widetilde\eta_1/\widetilde\eta_2))\arrow[r, "\sim"]
          & H^1_{\phi,\Gamma}(\robba_K(\widetilde\eta_1/\widetilde\eta_2\cdot x^{-n-1})),
        \end{tikzcd}
      \]
      where the map (\ref{fiber of G}) is a bijection as proved in Lemma \ref{fiber dimension of G}
      and the notation $H^1_{\phi,\Gamma}(\robba_K(\widetilde\eta_1/\widetilde\eta_2))$ (resp.
      $H^1_{\phi,\Gamma}(\robba_K(\widetilde\eta_1/\widetilde\eta_2\cdot x^{-n-1}))$) indicates
      the rigid space associated to the $k(\widetilde{\underline{\eta}})$-vector space
      $H^1_{\phi,\Gamma}(\robba_K(\widetilde\eta_1/\widetilde\eta_2))$
      (resp. $H^1_{\phi,\Gamma}(\robba_K(\widetilde\eta_1/\widetilde\eta_2\cdot x^{-n-1}))$);
      the horizontal bijection is given by Lemma \ref{twist}
      (in fact $\mathrm{wt}(\widetilde\eta_1/\widetilde\eta_2)\notin\Z$, otherwise $H(\widetilde{\eta_1},\widetilde{\eta_2})\notin\W_2^\reg$);
      finally the rightmost vertical bijection is given by Remark \ref{perfect parametrization}.
      We have thus proved that $\pi$ is a bijection.
      We  moreover have that
      \begin{align*}
        F^{-1}(W)&=G^{-1}(H^{-1}(W)\cap\mathcal{U})=G^{-1}(T)\\
        &=X\times_{\mathcal{U}}T=X'\times\Gr_2\times_{\Gr_2\times\Gr_2}Y_{(0,-n-1)}\times_{\mathcal{U}}T\\
        & = \mathcal{U}'\times\mathbb{A}^{1,\an}_L\times\Gr_2\times_{\Gr_2\times\Gr_2}Y_{(0,-n-1)}\times_{\mathcal{U}}T\\
        & = \mathcal{U}'\times_{\mathcal{U}}T\times\mathbb{A}^{1,\an}_L\times\Gr_2\times_{\Gr_2\times\Gr_2}Y_{(0,-n-1)}\\
        & = T'\times\mathbb{A}^{1,\an}_L\times\Gr_2\times_{\Gr_2\times\Gr_2}Y_{(0,-n-1)}.
      \end{align*}
      Recall that for locally ringed spaces $X$, $Y$ over a field $k$, the natural projection $X\times_kY\rightarrow X$
      is open.
      In our case, this translates to the map
      \[
        \pi:T'\times\mathbb{A}_L^{1,\an}\times\Gr_2\times_{\Gr_2\times\Gr_2}Y_{(0,-n-1)}
      \rightarrow T'\times\mathbb{A}_L^{1,\an}
      \]
      being open.
      We have than shown that $\pi$ is continuous, open and a bijection, hence a homeomorphism.
    \end{proof}

    In the following we will make use of this result from \cite[Exercise 3.22]{hartshorne2013algebraic}.

    \begin{thm}
      \label{Hartshorne dimension fiber}
      Let $f:X\rightarrow Y$ be a dominant morphism of integral schemes of finite type over a field $k$.
      Let $e=\dim X-\dim Y$ be the relative dimension of $X$ over $Y$.
      For any point $y\in f(X)$ we have that every irreducible component of the fiber
      $X_y$ has dimension $\geq e$.
    \end{thm}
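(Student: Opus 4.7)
The plan is to argue by induction on $\dim Y$, with Krull's principal ideal theorem as the key algebraic input.

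For the base case $\dim Y = 0$, integrality and finite type force $Y = \Spec K$ for some field $K/k$, so $y$ is the unique point, $X_y = X$ (which is irreducible by hypothesis), and its only component has dimension $\dim X = e$.

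For the inductive step, fix $y \in f(X)$ and an irreducible component $Z$ of $X_y$ with generic point $\xi$. First I would reduce to the affine case by shrinking $Y$ to an affine open $\Spec B$ around $y$ and $X$ to an affine open $\Spec A$ containing $\xi$; this preserves integrality, dominance, and all the relevant dimensions. If $y$ is the generic point of $Y$, then $X_y$ is the generic fiber and a standard dimension formula shows every component has dimension exactly $\dim X - \dim Y = e$. Otherwise, I would pick a nonzero $g \in B$ lying in the prime $\mathfrak{p} \subset B$ corresponding to $y$; its image $g^\ast \in A$ is then also nonzero by dominance. Krull's principal ideal theorem, applied in the integral domains $A$ and $B$, guarantees that every irreducible component of $V(g^\ast) \subset X$ and of $V(g) \subset Y$ has codimension one.

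Next, pick an irreducible component $X_1$ of $V(g^\ast)$ containing $\xi$, equipped with the reduced structure, and set $Y_1 \coloneqq \overline{f(X_1)}$ (also with the reduced structure). Then $X_1$ is integral with $\dim X_1 = \dim X - 1$, while $Y_1$ is an irreducible closed subset of $V(g)$, so $\dim Y_1 \leq \dim Y - 1$. The restriction $X_1 \to Y_1$ is dominant by construction, so the inductive hypothesis gives that every irreducible component of the fiber $(X_1)_y$ has dimension at least $\dim X_1 - \dim Y_1 \geq (\dim X - 1) - (\dim Y - 1) = e$. Since $\xi \in X_1 \cap X_y = (X_1)_y$, there is an irreducible component $Z_1$ of $(X_1)_y$ containing $\xi$; as $Z_1$ is an irreducible subset of $X_y$ containing $\xi$, it is contained in $Z$, and therefore $\dim Z \geq \dim Z_1 \geq e$, closing the induction.

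The main obstacle I anticipate is the bookkeeping in the descent step: one must simultaneously pick $X_1$ and $Y_1$ so that $f|_{X_1}$ is dominant onto $Y_1$, and then verify that the inductive estimate remains sharp enough even when $\dim Y_1$ is strictly less than $\dim Y - 1$. Fortunately the bound $\dim X_1 - \dim Y_1 \geq e$ follows automatically from the two codimension-one estimates, so the argument does not require a matching of dimensions, only the inequality. Everything else reduces to standard catenary properties of integral schemes of finite type over a field together with Krull's principal ideal theorem.
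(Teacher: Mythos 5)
Your proof is correct. The paper gives no argument of its own for this statement — it simply cites Hartshorne, Exercise II.3.22 — and your induction on $\dim Y$, pivoting on Krull's Hauptidealsatz together with the biequidimensionality of integral finite-type $k$-schemes, is precisely the argument that exercise is designed to elicit, so the approaches coincide. One small remark worth making explicit in the closing step: since $\xi$ is the generic point of the component $Z$ and $Z_1$ is an irreducible closed subset of $X_y$ containing $\xi$, one in fact gets $Z \subseteq Z_1$ from taking closures and then $Z_1 = Z$ from maximality of $Z$; the inclusion ``$Z_1 \subseteq Z$'' you invoke is true, but it is the equality (forced by $Z$ being a component) that one really uses, and happily this still gives $\dim Z = \dim Z_1 \geq e$ as needed.
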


    \begin{rem}
      \label{Hartshorne dimension fiber for rigid spaces}
      For the same reasons as written in Remark \ref{Hartshorne_for_irreducible}, we have that
      Theorem \ref{Hartshorne dimension fiber} holds for irreducible rigid spaces.
    \end{rem}

    \begin{thm}
      \label{X is irreducible}
      Let $Z$ be an irreducible component of $X$ containing the point $x$.
      Then the space $Z$ is equal to the closure of $F^{-1}(\W_2^\reg)$;
      in particular, there is only one irreducible component containing the point $x$.
    \end{thm}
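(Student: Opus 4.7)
My plan is to show that any irreducible component $Z$ of $X$ containing $x$ has dimension exactly $5$, dominates an irreducible component of $\W^2$ via $F$, and therefore meets $F^{-1}(\W_2^\reg)$; this will force $Z$ to equal the closure of the irreducible piece $F^{-1}(\W_2^\reg)$. First, I would verify that $F^{-1}(\W_2^\reg)$ is irreducible: by Lemma~\ref{proof homeomorphism} it is homeomorphic via $\pi$ to $T'\times\mathbb{A}^{1,\mathrm{an}}_L$, and after shrinking $\mathcal{U}'$ to a connected affinoid neighbourhood of $(\delta_1 x^{-\mu_1},\delta_2 x^{-\mu_2})$ the base $\mathcal{U}'$ is irreducible; since $T'$ has full dimension $4$ in $\mathcal{U}'$ (so is dense in it) it is irreducible as well, whence so is $T'\times\mathbb{A}^{1,\mathrm{an}}_L$. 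Its closure $\overline{F^{-1}(\W_2^\reg)}$ in $X$ is then an irreducible closed subspace of dimension $5$, by Remark~\ref{not flatness and dim X'}.

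Next I would pin down $\dim Z$. The lower bound $\dim Z\geq 5$ is Theorem~\ref{dimZ}. For the upper bound, I would restrict $F$ to $Z$ and exploit the dominant map $F|_Z\colon Z\to \overline{F(Z)}$: by Theorem~\ref{F equidimensional 3} all fibres of $F$ have dimension exactly $3$, so fibres of $F|_Z$ have dimension at most $3$, and the fibre-dimension formula (Theorem~\ref{Hartshorne dimension fiber} via Remark~\ref{Hartshorne dimension fiber for rigid spaces}) gives $\dim Z\leq \dim\overline{F(Z)}+3\leq \dim\W^2+3=5$. Hence $\dim Z=5$ and $\dim\overline{F(Z)}=2$, so $\overline{F(Z)}$ is an irreducible component of the equidimensional space $\W^2$.

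Then I would use density of $\W_2^\reg$ in $\W^2$ to produce a regular point of $Z$. Since $\overline{F(Z)}$ is an irreducible component of $\W^2$, density of $\W_2^\reg$ implies $\W_2^\reg\cap\overline{F(Z)}$ is dense in $\overline{F(Z)}$. The image of the dominant morphism $F|_Z$ contains a nonempty Zariski open of $\overline{F(Z)}$ (a Chevalley-type statement for rigid spaces), and this open must meet the dense set $\W_2^\reg\cap\overline{F(Z)}$; therefore $Z\cap F^{-1}(\W_2^\reg)\neq\emptyset$. By irreducibility of $Z$ the relative open $Z\cap F^{-1}(\W_2^\reg)$ is dense in $Z$, so $Z\subset\overline{F^{-1}(\W_2^\reg)}$. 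Both sides are irreducible closed subspaces of dimension $5$ and $Z$ is a maximal irreducible closed subspace, whence $Z=\overline{F^{-1}(\W_2^\reg)}$; this identification is independent of $Z$, giving the uniqueness assertion.

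The main obstacle I expect is the density step: since $\W^2\setminus\W_2^\reg$ is a countable (not finite) union of proper closed analytic subspaces, one needs to justify carefully both that density of $\W_2^\reg$ survives restriction to the irreducible component $\overline{F(Z)}$ and that the image of a dominant morphism of finite type of irreducible rigid spaces contains a Zariski open of its target, which is the Chevalley-type statement alluded to above.
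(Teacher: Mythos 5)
Your proof shares the same skeleton as the paper's — Lemma~\ref{proof homeomorphism} to make $F^{-1}(\W_2^\reg)$ irreducible, Theorem~\ref{dimZ} for the lower dimension bound, Theorem~\ref{F equidimensional 3} for the fibre dimension, and Theorem~\ref{Hartshorne dimension fiber} via Remark~\ref{Hartshorne dimension fiber for rigid spaces} — but the closing argument is genuinely different, and the two obstacles you flag are genuine gaps rather than cleanup items. Your route needs (i) a Chevalley-type statement for rigid spaces to conclude $F(Z)$ contains a Zariski-open of $\overline{F(Z)}$, and (ii) density in $Z$ of $Z\cap F^{-1}(\W_2^\reg)$. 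For (i), constructibility of images of rigid morphisms is not available in the form you want, and nothing in the paper supplies such a tool. For (ii), $F^{-1}(\W_2^\reg)$ is a countable intersection of Zariski-opens (one for each $k\in\Z$, cutting out $\widetilde\eta_1/\widetilde\eta_2\neq x^k$ on $\Z_p^\times$), not a Zariski-open; the inference ``nonempty open in an irreducible space is dense'' does not apply, and rigid spaces have no generic point to save the argument the way one would for schemes. The sentence ``By irreducibility of $Z$ the relative open $Z\cap F^{-1}(\W_2^\reg)$ is dense in $Z$'' is thus the precise place your proof breaks as written.

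The paper sidesteps both issues by arguing by contradiction rather than by exhibiting a regular point inside $Z$: it takes $V=X\setminus\bigcup_{Z''\neq Z}Z''$, a dense open of $Z$, observes that $F(V)\cap\W_2^\reg=\emptyset$ (a regular preimage would lie in $Z'=\overline{F^{-1}(\W_2^\reg)}$, a component disjoint from $V$ once one assumes $Z'\neq Z$), then uses that the irreducible set $F(V)$ sits in the pairwise-disjoint closed pieces $\{\eta_1/\eta_2=x^k\}$ to bound $\dim\overline{F(V)}\le 1$, and finally invokes the lower fibre-dimension bound to force a fibre of $F$ of dimension $>3$, contradicting Theorem~\ref{F equidimensional 3}. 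This uses the fibre-dimension inequality in the opposite direction from you, which is exactly what makes Chevalley and the density of the countable intersection unnecessary. Your upper bound $\dim Z\le 5$ extracted from combining Theorems~\ref{F equidimensional 3} and~\ref{Hartshorne dimension fiber} is a correct and tidy observation not spelled out in the paper, but on its own it does not produce a regular point in $Z$, and the two steps you lean on to produce one are not justified.
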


    \begin{proof}
      Let $W\coloneqq\W_2^\reg\cap\im(F)\subset\W^2$
      and let $Z'$ be the closure of $F^{-1}(W)$ in $X$.
      By Lemma \ref{proof homeomorphism}, we have that $F^{-1}(W)$ is irreducible,
      as $X'$ is obviously irreducible; as a consequence $Z'$ is irreducible, since
      closure of irreducible.
      Now we show that $Z'$ is an irreducible component of $X$: assume $\widetilde Z\supset Z'$
      is an irreducible component of $X$ containing $Z'$.
      Then $F^{-1}(W)$ is open in $Z'$ and $\widetilde{Z}$, so
      \[
        \dim\widetilde Z=\dim F^{-1}(W)=\dim Z'.
      \]
      This shows that $Z'=\widetilde Z$, since they are both irreducible of the same dimension.
      Now we prove that $Z'$ is equal to $Z$:
      assume  by contradiction that $Z\neq Z'$ and let
      \[
        V\coloneqq X\setminus\bigcup_{Z''\neq Z}Z''\subset Z
      \]
      where the union ranges over all the irreducible components $Z''$ of $X$ that are not $Z$.
      We have that $V$ is open and irreducible and so
      $F(V)$ is irreducible.
      Moreover we have that $F(V)\cap\W_2^\reg=\emptyset$:
      if $\underline{\eta}\in F(V)\cap\W_2^\reg$, then $Z'\cap V\neq\emptyset$,
      which is a contradiction by definition of $V$.
      Therefore we have that $F(V)$ is closed in $\W^2$ and
      \[
        \dim F(V)<\dim \W^2=2.
      \]
      Using Remark \ref{Hartshorne dimension fiber for rigid spaces}, notice that for $\underline{\eta}\in F(V)$,
      we have that every irreducible component of
      $F^{-1}(\underline{\eta})$ has dimension that is at least $\dim V-\dim F(V)$.
      Finally, recall that by Theorem \ref{dimZ},
      $\dim Z\geq \dim X'$; in particular $\dim V=\dim Z\geq \dim X'=\dim F^{-1}(\W^\reg_2)$
      by Remark \ref{not flatness and dim X'}.
      We then have
      \[
        \dim F^{-1}(\underline{\eta})\geq \dim V-\dim F(V)>\dim V-2\geq \dim F^{-1}(\W_2^\reg)-2=3.
      \]
      We have then a contradiction, since $\dim F^{-1}(\underline{\eta})=3$ for all
      $\underline{\eta}\in\im(F)$ by Theorem \ref{F equidimensional 3}.
    \end{proof}

    \begin{cor}
      The subspace $F^{-1}(\W^\reg_2)$ is dense in the irreducible component of $X$ containing the point $x$.
    \end{cor}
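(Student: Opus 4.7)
The corollary follows almost immediately from Theorem \ref{X is irreducible}, so the plan is essentially a two-line unpacking of that result. My approach is to identify $F^{-1}(\W_2^\reg)$ with $F^{-1}(W)$ in the notation of Theorem \ref{X is irreducible}, and then read off the conclusion.

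More precisely, first I would observe that, by definition, any point $y \in F^{-1}(\W_2^\reg)$ satisfies $F(y) \in \W_2^\reg$, and in particular $F(y) \in \im(F)$; therefore
\[
F^{-1}(\W_2^\reg) \;=\; F^{-1}\bigl(\W_2^\reg \cap \im(F)\bigr) \;=\; F^{-1}(W),
\]
where $W \coloneqq \W_2^\reg \cap \im(F)$ is the set used in the statement of Theorem \ref{X is irreducible}.

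Next, Theorem \ref{X is irreducible} asserts two things: that there is a unique irreducible component $Z$ of $X$ containing the point $x$, and that this component $Z$ equals the closure of $F^{-1}(W)$ inside $X$. Combining this with the identity above gives $Z = \overline{F^{-1}(\W_2^\reg)}$, which is precisely the statement that $F^{-1}(\W_2^\reg)$ is Zariski dense in $Z$.

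Since the conclusion is a direct consequence of Theorem \ref{X is irreducible}, there is no genuine obstacle; the only subtlety is the trivial check that restricting to $\im(F)$ does not change the preimage, which is immediate from the definition of preimage. In writing the proof out I would simply state this identification and cite the previous theorem.
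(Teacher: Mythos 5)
Your proof is correct and takes essentially the same approach as the paper, which simply cites Theorem \ref{X is irreducible} as a direct consequence. Your extra observation that $F^{-1}(\W_2^\reg) = F^{-1}(W)$ with $W = \W_2^\reg \cap \im(F)$ is not even needed, since the statement of Theorem \ref{X is irreducible} already asserts $Z = \overline{F^{-1}(\W_2^\reg)}$ directly.
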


    \begin{proof}
      This is a direct consequence of Theorem \ref{X is irreducible}.
    \end{proof}

\section{Construction of family of extensions for trianguline \texorpdfstring{\\}{} representations of higher dimension}
\label{section 7}

We would like to generalize the strategy from section 5 in order for it to work
for trianguline representations of dimension $n>2$.
The goal is to prove the following Theorem.

\difficultresult*

We again want to use the following strategy:

\strategy*

Thus in the first section we will see how to construct such a space $X$.
More precisely, given a trianguline representation $\rho:\absGal\rightarrow GL_n(\mathcal{O}_L)$
with parameters $\underline{\delta}\in\T^n(L)$, we will show that there exist
\begin{itemize}
\item
a rigid space $X$ over $L$;
\item
a family of $\phigam$-modules $\D$ over $X$;
\item
a map $(\widetilde{\delta_1},\dots,\widetilde{\delta_n}):X\rightarrow\mathcal{T}^n$
\end{itemize}
such that there is a point $x\in X(L)$ such that
$\underline{\widetilde{\delta}}\widehat\otimes k(x)=\underline\delta$
and $\D\widehat{\otimes} k(x)=\drig(\rho)$.
In the second section we will show that such a family has regular parameters in a dense subspace
locally at $x$.

\subsection{Construction of space of extensions}

  Let $n\geq2$ and fix $\rho:\absGal\rightarrow GL_n(\mathcal{O}_L)$ a trianguline representation of
  parameters $(\delta_1,\dots,\delta_n)\in\T^n(L)$ such that $\delta_i/\delta_j\in\T^+(L)$ for some $i<j$.
  For simplicity, let us denote by $D_i$ the $i$-th piece of the filtration $\Fil^i(\drig(\rho))$
  of $\drig(\rho)$ giving rise to the parameters $(\delta_1,\dots,\delta_n)$.
  Let $\bar\rho:\absGal\rightarrow GL_n(k_L)$ be the reduction of $\rho$ to the residue field of $L$.
  In this section we want to construct a space $X$ with a family of $\phigam$-modules
  specializing to $\drig(\rho)$.

  The following is a generalization of Lemma \ref{iso of H^1}.

  \begin{lem}
    \label{iso of H^1 in high dim}
    Let $(\delta_1,\dots,\delta_n)\in\T^n(L)$ such that $\delta_i/\delta_j\in\T^+(L)$
    for some $i<j$ and let $D_{n}$ be a triangulable $\phigam$-module over $\robba_L$
    of parameters $(\delta_1,\dots,\delta_n)$.
    Let us denote by $D_i$ the $i$-th piece of the filtration of $D_n$ giving rise to parameters
    $(\delta_1,\dots,\delta_n)$, so that $D_i$ is an extension of
    $\robba_L(\delta_1),\dots,\robba_L(\delta_{i})$ for $1\leq i\leq n$.
    There exist
    \begin{enumerate}
      \item
      characters $(\delta_1',\delta_2',\dots,\delta_n')\in\T^n(L)$ where $\delta_i'\in\delta_i x^\Z$
      and such that if $\rr{wt}(\delta_i')\equiv\rr{wt}(\delta_j')\pmod\Z$ for some $i,j$,
      then $\rr{wt}(\delta_i')=\rr{wt}(\delta_j')$;
      moreover if $\rr{wt}(\delta_i')\in\Z$, then $\rr{wt}(\delta_i')=0$;
      \item
      a triangulable $\phigam$-module $D_{n}'$ over $\robba_L$ of parameters $(\delta_1',\dots,\delta_n')$
    \end{enumerate}
    such that (if we let $D_i'$ denote the $i$-th piece of the filtration of $D_n'$ giving rise
    to the parameters $(\delta_1',\dots,\delta_n')$)
    \begin{itemize}
      \item
      $\delta_i'/\delta_j'\notin\T^+(L)$ for all $1\leq i<j\leq n$;
      \item
      if $\delta_i/\delta_j\in\T^-(L)$ then $\delta_i'/\delta_j'\in\T^-(L)$ for all $1\leq i<j\leq n$;
      \item
      $H_{\phi,\Gamma}(D_{i-1}(\delta_i^{-1})[1/t])\cong H^1_{\phigam}(D_{i-1}'(\delta_i'^{-1}))$
      for all $2\leq i\leq n$.
    \end{itemize}
  \end{lem}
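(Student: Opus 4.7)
The plan is to define characters $\delta_i'$ by integer twists of the $\delta_i$, then inductively construct $D_n'$ together with a parallel inductive claim that $D_i'[1/t]\cong D_i[1/t]$ as $\phigam$-modules over $\robba_L[1/t]$. Set $a_i\coloneqq\rr{wt}(\delta_i)$ and let $\bar{\mcal A}\subset L/\Z$ be the set of mod-$\Z$ classes of the $a_i$. For each $\bar a\in\bar{\mcal A}$ fix a single lift $\tilde a\in L$, choosing $\tilde a=0$ whenever $\bar a$ is the integer class. Let $a_i'$ be the chosen lift of the class of $a_i$, and put $\delta_i'\coloneqq\delta_i\cdot x^{a_i'-a_i}$; this lies in $\delta_i\cdot x^\Z$ since $a_i'-a_i\in\Z$. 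By construction $\rr{wt}(\delta_i'/\delta_j')=a_i'-a_j'$ is either $0$ (when $a_i\equiv a_j\pmod\Z$, forcing $a_i'=a_j'$) or non-integer, and in particular it is never a positive integer. This immediately yields $\delta_i'/\delta_j'\notin\T^+(L)$, since every character in $\T^+$ has weight at least $1$. If moreover $\delta_i/\delta_j=x^{-m}\in\T^-(L)$, then $a_i,a_j$ are integers so $a_i'=a_j'=0$, whence $\delta_i'/\delta_j'=(\delta_i/\delta_j)\cdot x^{(a_i'-a_i)-(a_j'-a_j)}=x^{-m}\cdot x^{m}=x^0\in\T^-(L)$.

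For the modules, proceed by induction on $i$. Set $D_1'\coloneqq\robba_L(\delta_1')$; since $\delta_1/\delta_1'\in x^\Z$, the natural twist identifies $D_1[1/t]$ with $D_1'[1/t]$. Assume $D_{i-1}'$ has been constructed with $D_{i-1}'[1/t]\cong D_{i-1}[1/t]$. Build the chain of isomorphisms
\[
    H^1_{\phi,\Gamma}(D_{i-1}(\delta_i^{-1})[1/t])
    \;\stackrel{(a)}{\cong}\; H^1_{\phi,\Gamma}(D_{i-1}'(\delta_i'^{-1})[1/t])
    \;\stackrel{(b)}{\cong}\; H^1_{\phi,\Gamma}(D_{i-1}'(\delta_i'^{-1})),
\]
where (a) combines the inductive identification $D_{i-1}[1/t]\cong D_{i-1}'[1/t]$ with $\robba_L(\delta_i^{-1})[1/t]\cong\robba_L(\delta_i'^{-1})[1/t]$ (valid because $\delta_i/\delta_i'$ is a power of $x$), and (b) comes from Remark \ref{phigam cohomology with 1/t} together with Proposition \ref{prop:high dim twist} applied to $D_{i-1}'$ and $\delta_i'$: all transition maps in $\lim_k H^1_{\phi,\Gamma}(D_{i-1}'(\delta_i'^{-1}x^{-k}))$ are then isomorphisms, so the limit collapses. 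Define $D_i'$ as the extension of $\robba_L(\delta_i')$ by $D_{i-1}'$ whose class in $H^1_{\phi,\Gamma}(D_{i-1}'(\delta_i'^{-1}))$ matches, under this chain, the image of $[D_i]$ under the natural map $H^1_{\phi,\Gamma}(D_{i-1}(\delta_i^{-1}))\to H^1_{\phi,\Gamma}(D_{i-1}(\delta_i^{-1})[1/t])$. Then $D_i'[1/t]$ and $D_i[1/t]$ are extensions of the same $\phigam$-module over $\robba_L[1/t]$ by the same $\phigam$-module, corresponding to the same cohomology class, so they are isomorphic; this propagates the inductive hypothesis. The third bullet of the statement is exactly the composition $(b)\circ(a)$.

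The delicate point is the hypothesis of Proposition \ref{prop:high dim twist} used in step (b), namely $\rr{wt}(\delta_j'/\delta_i')\notin\N_{>0}$ for all $j<i$, which is precisely what the choice of representatives $a_i'$ is engineered to guarantee: forcing the integer class to be represented by $0$ and taking distinct representatives for distinct non-integer classes ensures that every weight difference $a_i'-a_j'$ is either zero or non-integer, never a positive integer. Without this careful choice the direct limit in (b) would fail to stabilize and the bridge from the $[1/t]$-cohomology back to the untwisted cohomology would break down, which is why the naive attempt to apply Proposition \ref{prop:high dim twist} directly to $D_{n-1}$ and $\delta_n$ fails in our setting.
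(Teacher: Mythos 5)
Your proof is correct and follows essentially the same route as the paper: induction on $i$, constructing $D_i'$ as the extension matching $[D_i]$ under the bridge to $[1/t]$-cohomology and collapsing the direct limit via Proposition \ref{prop:high dim twist}, which requires precisely the non-positivity of the weight differences $\rr{wt}(\delta_j'/\delta_i')$. The only cosmetic difference is that you fix the twist exponents $a_i'-a_i$ globally by choosing one representative per mod-$\Z$ class of weights (with $0$ for the integer class), whereas the paper defines $\mu_n$ iteratively during the induction; both choices achieve the same constraints and your version makes the hypothesis of Proposition \ref{prop:high dim twist} immediate.
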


  \begin{proof}
    We will prove the statement by induction.
    \begin{itemize}
      \item[$n=2$:]
      This case is proved by Lemma \ref{iso of H^1}.
      \item[$n>2:$]
      Assume by inductive hypothesis that there are $(\delta_1',\delta_2',\dots,\delta_{n-1}')\in\T^n(L)$
      such that $\delta_i'/\delta_j'\notin\T^+(L)$ for all $1\leq i<j\leq n-1$;
      there is moreover an extension $D_{n-2}'$ of $\robba_L(\delta_1'),\robba_L(\delta_2'),\dots,\robba_L(\delta_{n-2}')$
      such that
      \begin{equation}
        \label{proof:induction step}
        H_{\phi,\Gamma}(D_{n-2}(\delta_{n-1}^{-1})[1/t])\cong H^1_{\phigam}(D_{n-2}'(\delta_{n-1}'^{-1})).
      \end{equation}
      We consider different cases:
      \begin{itemize}
        \item[-]
          if $\rr{wt}(\delta_n)\in\Z$, then let $\mu_n\coloneqq\rr{wt}(\delta_n)$;
        \item[-]
          if $\rr{wt}(\delta_n)\notin\Z$ and $\rr{wt}(\delta_i/\delta_n)\notin\Z$
          for all $i<n$, then let $\mu_n\coloneqq 0$;
        \item[-]
          finally if $\rr{wt}(\delta_n)\notin\Z$ and $\rr{wt}(\delta_i/\delta_n)\in\Z$
          for some $i<n$, then let $j\coloneqq\min\{i<n\colon \rr{wt}(\delta_i/\delta_n)\in\Z\}$
          and let $\mu_n\coloneqq -\rr{wt}(\delta_j/\delta_n)$.
      \end{itemize}
      Let now $\delta_n'\coloneqq\delta_n x^{-\mu_n}$.
      Observe that either $\rr{wt}(\delta_i'/\delta_n')\notin\Z$ or $\rr{wt}(\delta_i'/\delta_n')=0$
      for all $i<n$;
      in particular $\delta_i'/\delta_n'\notin\T^+(L)$ and $\mathrm{wt}(\delta_i'/\delta_n')\notin\N_{>0}$
      for all $1\leq i\leq n-1$.
      Let us denote by $D_{n-1}'\in H^1_{\phigam}(D_{n-2}'(\delta_{n-1}'^{-1}))$ the extension
      corresponding to $D_{n-1}[1/t]\in H_{\phi,\Gamma}(D_{n-2}(\delta_{n-1}^{-1})[1/t])$
      through the isomorphism (\ref{proof:induction step}).
      In particular we have that $D_{n-1}[1/t]=D_{n-1}'[1/t]$.
      Then we have that
      \begin{align*}
        H^1_{\phi,\Gamma}(D_{n-1}(\delta_n^{-1})[1/t])&=H^1_{\phi,\Gamma}(D_{n-1}'(\delta_n^{-1})[1/t])
        =\lim_{\stackrel{\rightarrow}{k}}H^1_{\phi,\Gamma}(D_{n-1}'(\delta_n^{-1}x^{-k})).
      \end{align*}
      Notice that for all $k\geq -\mu_n$, we have
      \[
        H^1_{\phi,\Gamma}(D_{n-1}'(\delta_n'^{-1}))\xrightarrow{\sim}H^1_{\phi,\Gamma}(D_{n-1}'(\delta_n^{-1}x^{-k}))
      \]
      is an isomorphism thanks to Proposition \ref{prop:high dim twist}.
      We then have
      \[
        H^1_{\phi,\Gamma}(D_{n-1}(\delta_n^{-1})[1/t])=\lim_{\stackrel{\rightarrow}{k}}H^1_{\phi,\Gamma}(D_{n-1}'(\delta_n^{-1}x^{-k}))
        \cong  H^1_{\phi,\Gamma}(D_{n-1}'(\delta_n'^{-1})).
      \]
      Finally what is left to prove is that if $\delta_i/\delta_j\in\T^-(L)$,
      then $x^{-\mu_i}\delta_i/x^{-\mu_j}\delta_j\in\T^-(L)$ for all $1\leq i<j\leq n$.
      By inductive hypothesis, this is true for all $1\leq i<j\leq n-1$.
      Now it is enough to notice that if $\delta_i/\delta_n\in\T^-$, then
      $\rr{wt}(\delta_i)\equiv\rr{wt}(\delta_n)\pmod \Z$ and by the way we chose $\mu_i,\mu_n$
      we have that $\rr{wt}(\delta_i')-\rr{wt}(\delta_n')=0$; as a consequence,
      $\delta_i'/\delta_n'=x^0\in\T^-$.
    \end{itemize}
  \end{proof}

  By Lemma \ref{iso of H^1 in high dim}, there exists a tuple
  $\mu\coloneqq(\mu_1,\mu_2,\dots,\mu_n)$ of integers such that
  $x^{-\mu_i}\delta_i/x^{-\mu_j}\delta_j\notin\T^+(L)$,
  $\mathrm{wt}(x^{-\mu_i}\delta_i/x^{-\mu_j\delta_j})\notin\Z\setminus\{0\}$
  and if $\delta_i/\delta_j\in\T^-(L)$ then $x^{-\mu_i}\delta_i/x^{-\mu_j}\delta_j=x^0\in\T^-(L)$ for all $1\leq i<j\leq n$.
  Moreover there exists an extenion $D_{n}'$ of $\robba_L(\delta_1x^{-\mu_1}),\robba_L(\delta_2x^{-\mu_2}),\dots,\robba_L(\delta_{n}x^{-\mu_{n}})$
  such that there are isomorphisms
  \[
    H^1_{\phi,\Gamma}(D_{i-1}(\delta_{i}^{-1})[1/t])\xrightarrow{\sim} H^1_{\phigam}(D_{i-1}'(\delta_{i}^{-1}x^{\mu_{i}}))
  \]
  for all $2\leq i\leq n-1$, where $D_i'$ is the $i$-th piece of the filtration of $D_n'$.
  In particular we have $D_n'[1/t]=\drig(\rho)[1/t]$ and $D_i'[1/t]=D_i[1/t]$
  for all $1\leq i\leq n-1$.
  We are going to generalize Proposition \ref{construction of X'} in higher dimensions.

  \begin{prop}
    \label{construction of Xn'}
    There exist
      \begin{itemize}
        \item
          a rigid space $X_2'$ over $L$ which is also a vector bundle over an
          appropriate neighbourhood $\mathcal{U}_1'\times\mathcal{U}_2'$ of $(\delta_1x^{-\mu_1},\delta_2x^{-\mu_2})$ in $\T^2$;
        \item
          rigid spaces $X'_i$ over $L$ which are also vector bundles over
          $X_{i-1}'\times\mathcal{U}_i'$, where $\mathcal{U}_i'$ is an
          appropriate neighbourhood of $\delta_ix^{-\mu_i}$ in $\T$ for all $3\leq i\leq n$;
        \item
          families of $\phigam$-modules $\D_i'$ over $X'_i$ for all $2\leq i\leq n$;
        \item
          maps $(\widetilde\delta_1',\dots,\widetilde\delta_i'):X_i'\rightarrow\T^i$ for all $2\leq i\leq n$
      \end{itemize}
      such that there are
      \begin{enumerate}
        \item
          $x'_i\in X_i'$ such that $\D_i'\widehat\otimes k(x'_i)=D'_i$ and
          \[
            (\widetilde\delta_1',\dots,\widetilde\delta_i')\widehat\otimes k(x'_i)=(\delta_1x^{-\mu_1},\delta_2x^{-\mu_2},\dots,\delta_ix^{-\mu_i})
          \]
          for all $2\leq i\leq n$;
        \item
          Zariski-open dense subsets $U'_i\xhookrightarrow{j_i}X'_i$ such that $j_i^*\D_i'$
          has a filtration of sub-$\phigam$-modules with graded pieces $\robba_{U'_i}(\widetilde{\delta_1}'),\dots,
          \robba_{U'_i}(\widetilde\delta_i')$ and $j_i^*(\widetilde{\delta_1}',\dots,\widetilde{\delta_i}')\in\T^\reg_i(U'_i).$
      \end{enumerate}
  \end{prop}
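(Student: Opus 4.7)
The plan is to observe that Proposition \ref{construction of Xn'} is essentially a direct application of Proposition \ref{n dim T-} to the twisted $\phigam$-module $D_n'$ with parameters $(\delta_1 x^{-\mu_1}, \dots, \delta_n x^{-\mu_n})$. By the choice of the $\mu_i$ dictated by Lemma \ref{iso of H^1 in high dim}, we have arranged that
\[
  (\delta_i x^{-\mu_i})/(\delta_j x^{-\mu_j}) \notin \mathcal{T}^+(L) \quad \text{for all } 1 \leq i < j \leq n,
\]
so the hypothesis of Proposition \ref{n dim T-} is fulfilled verbatim. First, I would verify that $D_n'$ is a trianguline $\phigam$-module whose $i$-th graded piece in its filtration is $\robba_L(\delta_i x^{-\mu_i})$ and whose $i$-th filtered piece is $D_i'$; this is exactly the content of the existence part of Lemma \ref{iso of H^1 in high dim}, so $D_{n}'$ is an extension of the form required by Proposition \ref{n dim T-} (applied to the tuple $(\delta_1 x^{-\mu_1},\dots,\delta_n x^{-\mu_n})$).

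Next, I would invoke Proposition \ref{n dim T-} with this tuple and the $\phigam$-module $D_n'$ in place of $D_{n+1}$, to produce, by induction on $i$, rigid spaces $X_i'$, families $\D_i'$, and maps $(\widetilde{\delta}_1',\dots,\widetilde{\delta}_i'):X_i'\to\mathcal{T}^i$ for $2\leq i\leq n$, together with points $x_i'\in X_i'(L)$ such that $\D_i'\widehat\otimes k(x_i')=D_i'$ and $(\widetilde{\delta}_1',\dots,\widetilde{\delta}_i')\widehat\otimes k(x_i')=(\delta_1 x^{-\mu_1},\dots,\delta_i x^{-\mu_i})$. The shape of the spaces is exactly as required: $X_2'$ is a vector bundle over an open $\mathcal{U}_1'\times \mathcal{U}_2'\subset\mathcal{T}^2$ containing $(\delta_1 x^{-\mu_1},\delta_2 x^{-\mu_2})$, and each $X_i'$ for $i\geq 3$ is a vector bundle over $X_{i-1}'\times\mathcal{U}_i'$ for an appropriate neighborhood $\mathcal{U}_i'$ of $\delta_i x^{-\mu_i}$ in $\mathcal{T}$. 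Finally, the Zariski-open dense subsets $U_i'\hookrightarrow X_i'$ with regular parameters are provided by the same statement of Proposition \ref{n dim T-}, since the preimage of the open dense $\mathcal{T}_i^{\mathrm{reg}}$ under an open map (a vector bundle projection) is open and dense.

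There is essentially no obstacle beyond bookkeeping: the entire difficulty has already been absorbed into Lemma \ref{iso of H^1 in high dim}, which produced the tuple $(\mu_1,\dots,\mu_n)$ ensuring the avoidance of $\mathcal{T}^+$, and into the earlier Proposition \ref{n dim T-} itself. The only point worth being careful about is that, in contrast with Section 4, here we do \emph{not} recover a family specializing to $\drig(\rho)$; rather, we get a family specializing to $D_n'$, whose localization at $t$ agrees with $\drig(\rho)[1/t]$. This discrepancy is the reason the construction does not conclude the proof of Theorem \ref{result2} on its own, and it is precisely what the Beauville--Laszlo formalism of Section \ref{beauville-laszlo} together with the affine Grassmannian $\mathrm{Gr}_n^{\mathrm{rig}}$ will be used to correct in the next step (analogously to Theorem \ref{construction of X} in dimension $2$), by glueing $\D_n'$ with a universal $\Gamma_m$-stable lattice.
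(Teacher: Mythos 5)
Your proposal is correct and takes essentially the same approach as the paper, whose proof simply says that the statement descends directly from Proposition \ref{n dim T-}; you have spelled out why that application is legitimate (the hypothesis $\delta_i'/\delta_j'\notin\mathcal{T}^+$ is guaranteed by Lemma \ref{iso of H^1 in high dim}, which also produces the triangulable module $D_n'$ with filtered pieces $D_i'$). Your closing remark about the discrepancy between $D_n'$ and $\drig(\rho)$ accurately anticipates why the Beauville--Laszlo gluing step is still needed.
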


  \begin{proof}
    The statement descends directly from Proposition \ref{n dim T-}.
  \end{proof}

  \begin{rem}
    \label{good U_n'}
    Notice that by Lemma \ref{good neighbourhood}, we can choose the neighbourhoods $\mathcal{U}_i'$
    of $\delta_ix^{-\mu_i}$ in such a way so that for any
    $u\in\widetilde{\mathcal{U}}_n'(K)\coloneqq\mathcal{U}'_1(K)\times\mathcal{U}_2'(K)\times\dots\times\mathcal{U}_n'(K)$,
    we have
    \begin{itemize}
      \item
        $(\widetilde{\delta_i}'/\widetilde{\delta}_j')\widehat{\otimes}k(u)=x^{a_{ij}}$ for some $a_{ij}\in\Z$
        only if $x^{-\mu_i}\delta_i/x^{-\mu_j}\delta_j=x^{a_{ij}}$;
      \item
        $(\widetilde{\delta_i}'/\widetilde{\delta}_j')\widehat{\otimes}k(u)=\chi x^{a_{ij}}$ for some $a_{ij}\in\Z$
        only if $x^{-\mu_i}\delta_i/x^{-\mu_j}\delta_j=\chi x^{a_{ij}}$.
    \end{itemize}
  \end{rem}

  As in the two-dimensional case, by Theorem \ref{sub-phigam-module}, there exist $r_0\in p^\Q\cap[0,1)$ and
  a unique $\phigam$-module $D^{r_0}_n$ over $\robba_L^{r_0}$ such that $\drig(\rho)=D^{r_0}_n\otimes_{\robba^{r_0}_L}\robba_L$;
  moreover there is $r_1\in p^\Q\cap[0,1)$ such that
  the family of $\phigam$-modules $\D'_n$ can be already defined over $\robba_{X'_n}^{r_1}$.
  Let us denote by $r$ the maximum of $r_0$ and $r_1$, in such a way that both $\drig(\rho)$
  and $\D'_n$ are defined over $\robba_L^r$ and $\robba_{X'_n}^r$ respectively.
  In other words, there exist $D^r_n$ a $\phigam$-module over $\robba^r_L$ such that
  $\drig(\rho)=D^r_n\otimes_{\robba^r_L}\robba_L$ and a $\phigam$-module $\D'^{r}_n$
  over $\robba_{X'_n}^r$ such that
  $\D'^{r}_n\otimes_{\robba_{X_n'}^r}\robba_{X_n'}=\D'_n$.
  We also denote by $D_i^r$ and $D'^r_i$ the $\phigam$-modules over $\robba_L^r$ such that
  $D_i^r\otimes_{\robba_L^r}\robba_L=D_i$ and $D'^r_i\otimes_{\robba_L^r}\robba_L=D'_i$
  for all $1\leq i\leq n-1$.
  Finally for any $2\leq i\leq n-1$,
  we let $\D_i'^r$ be the $\phigam$-module over $\robba_{X_i'}$ such that
  $\D'^{r}_i\otimes_{\robba_{X_i'}^r}\robba_{X_i'}=\D'_i$.
  Let $m(r)$ be the smallest positive integer such that $p^{m-1}(p-1)\geq r$ for all $m\geq m(r)$,
  so that all the points of the form $1-\zeta_{p^m}$ (where $\zeta_{p^m}$ is a $p^m$-th root of unity)
  lie in the open annulus $\mathbb{B}^r$.
  Let us fix $m\geq m(r)$ and let us assume that $L$ contains $K_m$
  (by Lemma \ref{enlarge L} we know we can always make this assumption).
  As a consequence of Theorem \ref{equivalenceBL}, the $\phigam$-module $D^r_i$ corresponds to the
  tuple $(D^r_i[1/t],D^r_i\otimes_{\robba_L^r}\prod_{K_m\hookrightarrow L}L\llbracket t\rrbracket,f_i)$ through
  the equivalence \ref{equivalenceBeauvilleLaszlo} for all $1\leq i\leq n$, where
  $f_i:D^r_i[1/t]\otimes_{\robba^r_L[1/t]}\prod_{K_m\hookrightarrow L}L((t))\xrightarrow{\sim}
  (D^r_i\otimes_{\robba_L^r}\prod_{K_m\hookrightarrow L}L\llbracket t\rrbracket)[1/t]$
  is defined as in Theorem \ref{equivalenceBL}.
  Let us denote by $\Lambda_i$ the $\Gamma$-stable finite projective module
  $D^r_i\otimes_{\robba_L^r}\prod_{K_m\hookrightarrow L}L\llbracket t\rrbracket$
  for all $1\leq i\leq n$.
  By Remark \ref{stable lattices}, we have
  \[
    \Lambda_i=\prod_{K_m\xhookrightarrow{\tau} L}\Lambda_{i,\tau}
  \]
  where $\Lambda_{i,\tau}$ is a $\Gamma_m$-stable lattice over $L\llbracket t\rrbracket$.
  On the other hand, given such a lattice $\Lambda_{i,\tau}$, it is possible to recover $\Lambda_i$.
  From now on, since the two notions are equivalent,
  we fix an embedding $\tau:K_m\hookrightarrow L$ and we will denote
  by $\Lambda_i$ the $\Gamma_m$-stable lattice $\Lambda_{i,\tau}$ over $L\llbracket t\rrbracket$
  for all $1\leq i\leq n$.
  In the same way, let $\Lambda'_i$ be the $\Gamma_m$-stable lattice over
  $L\llbracket t\rrbracket$ corresponding to the $\tau$-factor of the lattice associated to $D'^r_i$.
  Let us denote by $\mathbb{L}'_i$ the $\Gamma_m$-stable lattice corresponding to the
  factor $\tau:K_m\hookrightarrow L$ of the lattice $\D'^r_i\otimes_{\robba^r_{X_i'}}
  \prod_{K_m\hookrightarrow L}R'_i\llbracket t\rrbracket$ associated to $\D'_i$ for all $2\leq i\leq n$.
  After choosing a basis of $\D'_i$,
  we choose a trivialization of $\mathbb{L}'_i$ and fix a basis of $\mathbb{L}'_i$
  respecting the triangulation of $\D'_i$ for all $1\leq i\leq n$.

  We then have the following result.

  \begin{lem}
    \label{relative positions of lattices in dim n}
    $\Lambda_i\in Y_{(\mu_1,\mu_2,\dots,\mu_i)}(L)$ for all $2\leq i\leq n$.
  \end{lem}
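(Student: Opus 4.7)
The plan is to mimic the proof of Lemma \ref{relative positions of lattices} from the two-dimensional case, computing each graded piece $\mathrm{gr}^j(\Lambda_i)$ and comparing it with $t^{\mu_j}\mathrm{gr}^j(\Lambda_i')$. The single key input is the identity $D_j[1/t] = D_j'[1/t]$ for all $1 \le j \le n$, which is built into the construction of $D_n'$ via Lemma \ref{iso of H^1 in high dim}. This ensures that after base change along $\tau\colon K_m \hookrightarrow L$, the standard filtration $\mathrm{Fil}^\bullet(V)$ on $V \coloneqq L((t))^i$ induced by the triangulation of $D_i'$ agrees with the one induced by the triangulation of $D_i$.

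First, I would fix a basis $\{f_1,\dots,f_i\}$ of $D_i^r$ over $\robba_L^r$ respecting the triangulation of $D_i$, so that $\{f_1,\dots,f_j\}$ is a basis of $D_j^r$ for each $j \le i$, and recall that the basis of $\Lambda_i'$ has been chosen to respect the triangulation of $D_i'$. Under the identification $D_j^r[1/t] = D_j'^r[1/t]$, this gives
\[
    \mathrm{Fil}^j(V) \;=\; D_j^r[1/t]\otimes_{\robba_L^r[1/t],\tau} L((t)) \;=\; D_j'^r[1/t]\otimes_{\robba_L^r[1/t],\tau} L((t)).
\]
Writing $\Lambda_i = \bigoplus_{k=1}^i L\llbracket t\rrbracket f_k$ and using the $f$-basis to expand the intersection, I would compute
\[
    \Lambda_i \cap \mathrm{Fil}^j(V) \;=\; \bigl(D_i^r\otimes_{\robba_L^r,\tau} L\llbracket t\rrbracket\bigr) \cap \bigl(D_j^r[1/t]\otimes_{\robba_L^r[1/t],\tau} L((t))\bigr) \;=\; D_j^r\otimes_{\robba_L^r,\tau} L\llbracket t\rrbracket.
\]
Taking the quotient by the analogous expression for $j-1$ yields
\[
    \mathrm{gr}^j(\Lambda_i) \;=\; \bigl(D_j^r/D_{j-1}^r\bigr)\otimes_{\robba_L^r,\tau} L\llbracket t\rrbracket \;\cong\; \robba_L^r(\delta_j)\otimes_{\robba_L^r,\tau} L\llbracket t\rrbracket.
\]

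The same computation applied to $\Lambda_i' = D_i'^r\otimes_{\robba_L^r,\tau}L\llbracket t\rrbracket$ (using the $e'$-basis respecting the triangulation of $D_i'$) gives
\[
    \mathrm{gr}^j(\Lambda_i') \;\cong\; \robba_L^r(\delta_j x^{-\mu_j})\otimes_{\robba_L^r,\tau} L\llbracket t\rrbracket.
\]
To finish, I would invoke the identity $\robba_L^r(\delta_j) = t^{\mu_j}\robba_L^r(\delta_j x^{-\mu_j})$ inside $\robba_L^r(\delta_j)[1/t]$, which yields $\mathrm{gr}^j(\Lambda_i) = t^{\mu_j}\mathrm{gr}^j(\Lambda_i')$ for every $1 \le j \le i$; this is by definition the condition that $\Lambda_i \in Y_{(\mu_1,\dots,\mu_i)}(L)$.

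There is no genuine obstacle here beyond bookkeeping; the proof is essentially a direct generalization of Lemma \ref{relative positions of lattices}. The only point that requires some care is to make sure that the identification $\mathrm{Fil}^j(V) = D_j^r[1/t]\otimes L((t)) = D_j'^r[1/t]\otimes L((t))$ really is forced by the construction in Lemma \ref{iso of H^1 in high dim}: that construction produces $D_j'\in H^1_{\phi,\Gamma}(D_{j-1}'(\delta_j'^{-1}))$ as the image of $D_j[1/t] \in H^1_{\phi,\Gamma}(D_{j-1}(\delta_j^{-1})[1/t])$ under the inverse direct-limit isomorphism from Proposition \ref{prop:high dim twist}, so that $D_j'[1/t] = D_j[1/t]$ holds inductively on $j$, and therefore the two filtrations coincide as submodules of $D_n^r[1/t] = D_n'^r[1/t]$. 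Once that is in place, everything else is a straightforward intersection/quotient calculation.
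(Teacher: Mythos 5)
Your proposal is correct and takes essentially the same approach as the paper: both proofs identify $\mathrm{Fil}^j(V)$ with $D_j^r[1/t]\otimes_{\robba_L^r[1/t],\tau}L((t)) = D_j'^r[1/t]\otimes_{\robba_L^r[1/t],\tau}L((t))$, compute $\Lambda_i\cap\mathrm{Fil}^j(V)=D_j^r\otimes_{\robba_L^r,\tau}L\llbracket t\rrbracket$, pass to graded pieces, and conclude via $\robba_L^r(\delta_j)=t^{\mu_j}\robba_L^r(\delta_j x^{-\mu_j})$. The only cosmetic difference is that the paper treats $j=1$ separately while you handle all $j$ uniformly; your explicit emphasis on why $D_j[1/t]=D_j'[1/t]$ follows from Lemma \ref{iso of H^1 in high dim} is a point the paper uses but does not belabor.
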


  \begin{proof}
    We have to show that
    $\mathrm{gr}^j(\Lambda_i)=t^{\mu_j}\cdot\mathrm{gr}^j(\Lambda_i')$ for all $1\leq j\leq i\leq n$.
    Let us denote by $\delta_i'\coloneqq\delta_ix^{-\mu_i}$.
    Since we chose a basis of $\Lambda'_i$ respecting the triangulation of $D'_i$, we have that
    $\mathrm{Fil}^\bullet(L((t))^i)$
    is induced by the fltration of $D'_i[1/t]$.
    This implies that
    \[
      \mathrm{gr}^j(\Lambda_i')=\robba_L^r(\delta_j')\otimes_{\robba_L^r,\tau}L\llbracket t\rrbracket
    \]
    for all $1\leq j\leq i\leq n$.
    Let us denote by $V_i$ the module $L((t))^i$ for simplicity.
    We have
    \begin{align*}
      \mathrm{gr}^1(\Lambda_i)&=\Lambda_i\cap\mathrm{Fil}^1(V_i)
      =\Lambda_i\cap\left (\robba_L^r(\delta_1)[1/t]\otimes_{\robba^r_L[1/t],\tau}L((t))\right )\\
      &=\robba_L^r(\delta_1)\otimes_{\robba_L^r,\tau}L\llbracket t\rrbracket
      =t^{\mu_1}\robba_L^r(x^{-\mu_1}\delta_1)\otimes_{\robba_L^r,\tau}L\llbracket t\rrbracket
      =t^{\mu_1}\mathrm{gr}^1(\Lambda_i')
    \end{align*}
    and
    \begin{align*}
      \mathrm{gr}^j(\Lambda_i)&=\Fil^j(\Lambda_i)/\Fil^{j-1}(\Lambda_i)
      =(\Lambda_i\cap\Fil^j(V_i))/(\Lambda_i\cap\Fil^{j-1}(V_i))
    \end{align*}
    for all $2\leq j\leq i\leq n$.
    Notice that
    \begin{align*}
      \Lambda_i\cap\Fil^j(V_i)&=\Lambda_i\cap(D_j'^r[1/t]\otimes_{\robba_L^r[1/t],\tau}L((t)))
      =\Lambda_i\cap(D_j^r[1/t]\otimes_{\robba_L^r[1/t],\tau}L((t)))\\
      &=D_j^r\otimes_{\robba_L^r,\tau}L\llbracket t\rrbracket
    \end{align*}
    for all $2\leq j\leq i\leq n$.
    Therefore
    \begin{align*}
      \mathrm{gr}^j(\Lambda_i)&=(\Lambda_i\cap\Fil^j(V_i))/(\Lambda_i\cap\Fil^{j-1}(V_i))
      =(D_j^r\otimes_{\robba_L^r,\tau}L\llbracket t\rrbracket)/(D_{j-1}^r\otimes_{\robba_L^r,\tau}L\llbracket t\rrbracket)\\
      &=\robba_L^r(\delta_j)\otimes_{\robba_L^r,\tau}L\llbracket t\rrbracket
      =\robba_L^r(\delta_j'x^{\mu_j})\otimes_{\robba_L^r,\tau}L\llbracket t\rrbracket\\
      &=t^{\mu_j}\robba_L^r(\delta_j')\otimes_{\robba_L^r,\tau}L\llbracket t\rrbracket
      =t^{\mu_j}\mathrm{gr}^j(\Lambda_i')
    \end{align*}
    for all $2\leq j\leq i\leq n$.
  \end{proof}

  Just like in the two-dimensional case we will construct a subspace of $X_n'\times\Gr_n$
  (where $X_n'$ is the space defined in Proposition \ref{construction of Xn'})
  consisting of $\Gamma_m$-stable lattices.
  Again, let us denote by $\gamma$ a choice of a topological generator of $\Gamma_m$.

  \begin{dfn}
    For all $2\leq i\leq n$, let us define the map
    $\gamma\cdot:X'_i\times\mathrm{Gr}_i^{\rig}\rightarrow\mathrm{Gr}_i^{\rig}$ in the following way:
    for any affinoid $L$-algebra $A$ and for any $(y',\Lambda_A)\in(X_i'\times\mathrm{Gr}_i^{\rig})(A)$, let
    \[
      \gamma\cdot(y',\Lambda_A)\coloneqq \gamma\cdot\Lambda_A,
    \]
    where the action of $\gamma$ on $\Lambda_A$ is induced from the
    action of $\Gamma_m$ on $y'^*\D_i'$ through the isomorphism
    \[
      (y'^*\D_i'^r)[1/t]\otimes_{\robba_A^r[1/t],\tau}A((t))
      \cong \Lambda_A[1/t].
    \]
    Notice that $\gamma\cdot\Lambda_A\in\Gr_i(A)$, because $\gamma$ induces an automorphism on
    $(y'^*\D_i'^r)[1/t]$, hence the map is well-defined.
  \end{dfn}

  \begin{thm}
    \label{construction of X_n}
    For all $2\leq i\leq n$ there exist
    \begin{itemize}
      \item
      an affinoid rigid space $X_i=\mathrm{Sp}(R_i)$ over $L$;
      \item
      a family of $\phigam$-modules $\D_i$ over $X_i$;
      \item
      a map $(\widetilde\delta_1,\dots,\widetilde\delta_i):X_i\rightarrow\T^i$ which consists of parameters for $\D_i$
    \end{itemize}
    such that:
    \begin{enumerate}
      \item
      there is $x_i\in X_i$ such that $\D_i\widehat\otimes k(x_i)=D_i$ and
      $(\widetilde\delta_1,\dots,\widetilde\delta_i)\widehat\otimes k(x_i)=(\delta_1,\dots,\delta_i)$;
      \item
      $(\widetilde\delta_1,\dots,\widetilde\delta_i)=(x^{\mu_1}\widetilde\delta_1',\dots,x^{\mu_i}\widetilde\delta_i')$.
    \end{enumerate}
  \end{thm}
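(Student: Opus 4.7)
The plan is to imitate, for each fixed $2 \leq i \leq n$, the construction carried out in the two-dimensional case (Theorem \ref{construction of X}), using the affinoid $X_i'$ and family $\D_i'$ supplied by Proposition \ref{construction of Xn'} in place of $X'$ and $\D'$. No induction on $i$ is required: each $X_i$ is built independently by the same procedure, and the key point is that all the ingredients used in the $n=2$ construction (the $\Gamma_m$-action transported across the Beauville-Laszlo equivalence, the subspace of $\Gr_i^\rig$ with prescribed relative position, and the graded-piece computation) have direct $i$-dimensional analogues.

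Fix $i$. The $\Gamma_m$-action on $\D_i'$ together with the isomorphism $(y'^*\D_i'^r)[1/t] \otimes_{\robba_A^r[1/t], \tau} A((t)) \cong \Lambda_A[1/t]$ yields a map $\gamma \cdot : X_i' \times \Gr_i^\rig \to \Gr_i^\rig$. Form the fiber product
\[
Y_i \;:=\; X_i' \times \Gr_i^\rig \times_{\Gr_i^\rig \times \Gr_i^\rig} \Gr_i^\rig,
\]
where the maps are $(\gamma\cdot, \rr{pr})$ and the diagonal; its $A$-points are pairs $(y', \Lambda_A)$ with $\Lambda_A$ a $\Gamma_m$-stable lattice. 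Then set
\[
X_i \;:=\; Y_i \times_{\Gr_i^\rig} Y_{(\mu_1,\ldots,\mu_i)}^\rig.
\]
By Lemma \ref{relative positions of lattices in dim n} the lattice $\Lambda_i$ lies in $Y_{(\mu_1,\ldots,\mu_i)}(L)$, and it is $\Gamma_m$-stable because it is the Beauville-Laszlo image of the $\phigam$-module $D_i$. Hence $x_i := ((x_i', \Lambda_i), \Lambda_i) \in X_i(L)$, and after passing to an affinoid neighbourhood of $x_i$ we may write $X_i = \Sp(R_i)$.

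Define $\D_i$ on $X_i$ as the sheaf of $\phigam$-modules corresponding under Theorem \ref{equivalenceBL} to the triple $(p_2^* \D_i', p_2^* \mathbb{L}_i, \widetilde{f}_i)$, where $\mathbb{L}_i$ is the universal lattice on $\Gr_i^\rig$ and $\widetilde{f}_i$ is the canonical isomorphism as in Theorem \ref{equivalenceBL}. The equivalence, together with the fact that $(D_i'^r[1/t], \Lambda_i, f_i)$ corresponds to $D_i$, immediately gives $\D_i \widehat{\otimes} k(x_i) = D_i$. The triangulation of $\D_i'$ induces a triangulation of $\D_i$ via $\Fil^j(y^*\D_i) := y^* \D_i \cap \Fil^j(y'^* \D_i'[1/t])$; this is $\phi$- and $\Gamma$-stable because both factors are, and locally free by Lemma \ref{Y_mu gives filtrations}. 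This yields parameters $(\widetilde\delta_1,\ldots,\widetilde\delta_i) \in \T^i(X_i)$.

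The identification $\widetilde\delta_j = x^{\mu_j} \widetilde\delta_j'$ is the same graded-piece computation as in the proof of Theorem \ref{construction of X}, carried out at each level $j$: since the trivialisation of $\mathbb{L}_i'$ respects the triangulation of $\D_i'$, one has $\gr^j(\mathbb{L}_i') = \robba_{R_i'}^r(\widetilde\delta_j') \otimes_{\robba_{R_i'}^r, \tau} R_i' \llbracket t \rrbracket$; intersecting with the filtration induced on $p_2^*\mathbb{L}_i$ and using that we are inside $Y_{(\mu_1,\ldots,\mu_i)}^\rig$ gives $\gr^j(p_2^*\mathbb{L}_i) = t^{\mu_j} \gr^j(p_2^*\mathbb{L}_i') = \robba_{R_i}^r(x^{\mu_j}\widetilde\delta_j') \otimes_{\robba_{R_i}^r, \tau} R_i \llbracket t \rrbracket$. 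The only step that goes slightly beyond a verbatim repetition of the $n=2$ computation is verifying that for the intermediate indices $1 < j < i$ the intersection of $\Fil^j(y'^*\D_i'[1/t])$ with $p_2^*\mathbb{L}_i$ gives the expected sub-$\phigam$-module; this is precisely the content of the calculation in Lemma \ref{relative positions of lattices in dim n}, applied universally over $R_i$ rather than fibrewise. The main bookkeeping obstacle is keeping the indices and the compatibilities of the characters $\widetilde\delta_j'$ and the shifts $\mu_j$ straight; once that is done, the construction goes through without further input.
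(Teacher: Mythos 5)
Your proposal is correct and follows essentially the same route as the paper: form $Y_i$ as the fiber product enforcing $\Gamma_m$-stability, intersect with $Y_{(\mu_1,\dots,\mu_i)}$, pull back the Beauville--Laszlo triple $(p_2^*\D_i', p_2^*\mathbb{L}_i, \widetilde f_i)$, and compute the graded pieces of $p_2^*\mathbb{L}_i$ by intersecting with the filtration $\Fil^\bullet(R_i((t))^i)$ inherited from $\D_i'$. The paper likewise treats each $2\leq i\leq n$ independently by repeating the $n=2$ argument, and the intermediate-index graded-piece computation you flag is indeed handled by the same intersection calculation as in Lemma \ref{relative positions of lattices in dim n}, done directly over $R_i$.
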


  \begin{proof}
    The proof of this result is completely analogous to the proof of Theorem \ref{construction of X},
    thus we basically repeat the same proof, changing just a few details.

    Let us define the ind-rigid space $Y_i$ to be the fiber product
    \[
      \begin{tikzcd}
        Y_i\coloneqq X'_i\times\Gr_i\times_{\Gr_i\times\Gr_i}\Gr_i\ar[r]\ar[d, "p_2"] & \mathrm{Gr}_i^{\rig}\ar[d, "\delta"]\\
        X'_i\times \Gr_i\ar[r,"(\gamma\cdot{,}\mathrm{pr})"] & \Gr_i\times\Gr_i
      \end{tikzcd}
    \]
    where the rightmost map $\delta$ is the diagonal map.
    Notice that for any $((y',\Lambda_A),\Lambda'_A)\in Y_i(A)$, we have
    $(\gamma\cdot(y',\Lambda_A),\Lambda_A)=(\Lambda_A',\Lambda_A')$.
    Since $\Lambda_i$ is stable under the action of $\Gamma_m$ induced by the point $x_i'$,
    we have that $x_i\coloneqq((x_i',\Lambda_i),\Lambda_i)\in Y_i(L)$.
    We define $X_i$ to be the fiber product
    \[
      X_i\coloneqq Y_i\times_{\Gr_i}Y_{(\mu_1,\dots,\mu_i)};
    \]
    we have then that $x_i\in X_i(L)$ because of Lemma \ref{relative positions of lattices in dim n} and,
    eventually restricting $X_i$, we can assume that $X_i$ is an affinoid neighbourhood $\mathrm{Sp}(R_i)$ of $x_i$.
    Consider the pullback $p_2^*(\D_i',\mathbb{L}_i)$ to $X_i$ (where $\mathbb{L}_i$ is the universal lattice on $\Gr_i$)
    and the isomorphism of $\prod_{K_m\hookrightarrow L}R_i(( t))$-modules
    \[
      \widetilde{f}_i:p_2^*\D_i'^r[1/t]\otimes_{\robba_{R_i}^r[1/t]}\prod_{K_m\hookrightarrow L}R_i((t))
      \xrightarrow{\sim}\left (\prod_k g_m^k\cdot p_2^*\mathbb{L}_i\right )[1/t]
    \]
    defined as in Theorem \ref{equivalenceBL}, where $g_m$ is a generator of $\mathrm{Gal}(K_m/\Q_p)$
    and $k$ ranges over $\{0,\dots,|\mathrm{Gal}(K_m/\Q_p)|-1\}$.
    Let us denote by $\D_i$ the sheaf
    \[
      \left (p_2^*\D'_i, p_2^*\mathbb{L}_i,\widetilde{f}_i\right )
    \]
    on $X_i$, which can be seen as a sheaf of $\phigam$-modules thanks to the equivalence \ref{equivalenceBeauvilleLaszlo};
    then we have that $\D_i\widehat\otimes k(x_i)=(D^r_i[1/t],\Lambda_i,f_i)$.
    We have therefore constructed a space $X_i$ with a sheaf $\D_i$ specializing at some point $x_i\in X_i$ to
    $(D_i^r[1/t],\Lambda_i,f_i)$, which corresponds to the $\phigam$-module $D_i$ through
    the equivalence \ref{equivalenceBeauvilleLaszlo}.
    Moreover the family of $\phigam$-modules $\D_i$ is obviously already defined over $\robba_{X_i}^r$,
    as $\D_i'$ is;
    hence there is a family of $\phigam$-modules $\D_i^r$ over $\robba_{R_i}^r$ such that
    $\D_i=\D_i^r\otimes_{\robba_{R_i}^r}\robba_{R_i}$.
    Notice that the sheaf of $\phigam$-modules $\D_i$ comes with a filtration of sub-$\phigam$-modules,
    which is the one induced by the filtration of sub-$\phigam$-modules of $\D_i'$.
    More precisely, for any affinoid $L$-algebra $A$ and for $y=((y',\Lambda_A),\Lambda_A)\in X_i(A)$, we have
    \[
      \mathrm{Fil}^j(y^*\D_i)=y^*\D_i\cap\mathrm{Fil}^j(y'^*\D_i'[1/t]).
    \]
    Recall that $\D_i'[1/t]=\D_i[1/t]$ by construction, so the intersection above makes sense.
    Moreover notice that $\mathrm{Fil}^j(y^*\D_i)$ is stable under $\Phi$ and $\Gamma$,
    since both $y^*\D_i$ and $\mathrm{Fil}^j(y'^*\D_i'[1/t])$ are.
    Finally, by Lemma \ref{Y_mu gives filtrations} we have that $\mathrm{Fil}^j(y^*\D_i)$ is locally free.
    Therefore $\mathrm{Fil}^\bullet(y^*\D_i)$ provides a triangulation of $\D_i$
    and hence it gives parameters $(\widetilde\delta_1,\dots,\widetilde\delta_i)\in\T^i(X_i)$ for the family $\D_i$.
    The last thing left to show is that
    $(\widetilde\delta_1,\dots,\widetilde\delta_i)=(x^{\mu_1}\widetilde\delta_1',\dots,x^{\mu_i}\widetilde\delta_i')$.
    We claim that
    \[
      \mathrm{gr}^j(p_2^*\mathbb{L}_i)=\robba_{R_i}^r(\widetilde\delta_j)\otimes_{\robba_{R_i}^r,\tau}R_i\llbracket t\rrbracket
    \]
    for all $1\leq j\leq i\leq n$.
    Let $q:X_i\rightarrow X_i'$ be the projection map; since we fixed a basis
    of the standard lattice $\mathbb{L}_i'$ over $R_i'$ respecting
    the triangulation of $\D'_i$, we have that
    \begin{align*}
      \mathrm{Fil}^j(R_i((t))^i)& =q^*\mathrm{Fil}^j(R'_i((t))^i)=
      q^*(\D'^r_j[1/t]\otimes_{\robba_{R_i'}^r[1/t],\tau}R_i'((t)))\\
      &=q^*\D_j'^r[1/t]\otimes_{\robba_{R_i}^r[1/t],\tau}R_i((t)).
    \end{align*}
    Thus
    \begin{align*}
      \mathrm{gr}^1(p_2^*\mathbb{L}_i)&=\mathrm{Fil}^1(p_2^*\mathbb{L}_i)=
      p_2^*\mathbb{L}_i\cap \mathrm{Fil}^1(R_i((t))^i)\\
      &=\left (\D_i^r\otimes_{\robba_{R_i}^r,\tau}R_i\llbracket t\rrbracket\right )\cap
      \left (\robba_{R_i}^r(\widetilde\delta_1')[1/t]\otimes_{\robba_{R_i}^r[1/t],\tau}R_i((t))\right )\\
      &=\robba_{R_i}^r(\widetilde\delta_1)\otimes_{\robba_{R_i}^r,\tau}R_i\llbracket t\rrbracket
    \end{align*}
    and
    \begin{align*}
      \mathrm{gr}^j(p_2^*\mathbb{L}_i)&= \Fil^j(p_2^*\mathbb{L}_i)/\Fil^{j-1}(p_2^*\mathbb{L}_i)\\
      &=(p_2^*\mathbb{L}_i\cap\Fil^j(R_i((t))^i))/(p_2^*\mathbb{L}_i\cap\Fil^{j-1}(R_i((t))^i))
    \end{align*}
    for all $2\leq j\leq i\leq n$.
    Notice that
    \begin{align*}
      p_2^*\mathbb{L}_i\cap\Fil^j(R_i((t))^i)&=p_2^*\mathbb{L}_i\cap(q^*\D_j'^r[1/t]\otimes_{\robba_{R_i}^r[1/t],\tau}R_i((t)))\\
      &=\D_j^r\otimes_{\robba_{R_i}^r,\tau}R_i\llbracket t\rrbracket
    \end{align*}
    for all $2\leq j\leq i\leq n$.
    Therefore
    \begin{align*}
      \mathrm{gr}^j(p_2^*\mathbb{L}_i)&=(p_2^*\mathbb{L}_i\cap\Fil^j(R_i((t))^i))/(p_2^*\mathbb{L}_i\cap\Fil^{j-1}(R_i((t))^i))\\
      &=(\D_j^r\otimes_{\robba_{R_i}^r,\tau}R_i\llbracket t\rrbracket)/(\D_{j-1}^r\otimes_{\robba_{R_i}^r,\tau}R_i\llbracket t\rrbracket)
      =\robba_{R_i}(\widetilde\delta_j)\otimes_{\robba_{R_i}^r,\tau}R_i\llbracket t\rrbracket
    \end{align*}
    for all $2\leq j\leq i\leq n$.
    Restricting ourselves to lattices in $Y_{(\mu_1,\dots,\mu_i)}$ as in the definition of the space $X_i$, we have
    \begin{align*}
      \robba_{R_i}^r(\widetilde\delta_j)\otimes_{\robba_{R_i}^r,\tau}R_i\llbracket t\rrbracket&=
      t^{\mu_j}\cdot \robba_{R_i}^r(\widetilde\delta_j')\otimes_{\robba_{R_i}^r,\tau}R_i\llbracket t\rrbracket\\
      &=\robba_{R_i}^r(x^{\mu_j}\widetilde\delta_j')\otimes_{\robba_{R_i}^r,\tau}R_i\llbracket t\rrbracket
    \end{align*}
    for all $1\leq j\leq i\leq n$, which concludes the proof.
  \end{proof}

  Exactly like in the case of dimension two, we have constructed a rigid space $X_n$ over $L$
  and a family of $\phigam$-modules $\D_n$ specializing to $\drig(\rho)$ at some point $x_n\in X_n$.

  \begin{rem}
    Notice that the above construcion works in any dimension $n$ and that for any
    $3\leq i\leq n$, we have $\D_i\in H^1_{\phi,\Gamma}(\D_{i-1}(\widetilde\delta_i^{-1}))$.
  \end{rem}

  We still have to show that the family $\D_n$ is regular in a dense subspace of a
  neighbourhood of $x_n\in X_n(L)$. As in the two-dimensional case, the space $X_n$ is not
  a vector bundle over an open of $\T^n$, hence we are again going to use a more elaborate
  argument to be able to conclude.

\subsection{Density of the regular locus}
\label{density of the regular locus}

  In order to simplify the notation, we denote by $G$ the map
  \[
    (\widetilde\delta_1,\dots,\widetilde\delta_n):X_n\rightarrow\T^n
  \]
  sending a point $y_n$ of $X_n$ to
  the parameters associated to the filtration of $y_n^*\D_n$.
  Moreover, let
  \[
    H_n:\T^n\rightarrow\W^n,
    (\widetilde\eta_1,\dots,\widetilde\eta_n)\mapsto(\widetilde\eta_{1|\mathbb{Z}_p^\times},\dots,\widetilde\eta_{n|\mathbb{Z}_p^\times})
  \]
  be the map sending continuous characters of $\Q_p^\times$ to their restriction to $\mathbb{Z}_p^\times$.
  Finally, let
  \begin{equation}
    \label{map F}
    F:X_n\xrightarrow G\T^n\xrightarrow{H_n}\W^n
  \end{equation}
  be the composition of the maps $G$ and $H_n$.
  To be able to prove the density of the regular locus, we need again to use some argument involving
  the dimension of the preimage through $F$ of certain subspaces of $\W^n$.

  First of all we need the following results.

  \begin{lem}
    \label{D[1/t] splits}
    Let $K$ be an extension of $\Qp$ and let $D$ be a $\phigam$-module over $\robba_K$ such that there exists a
    short exact sequence
    \[
      0\rightarrow E\rightarrow D\rightarrow \robba_K(\delta)\rightarrow0
    \]
    for some $\phigam$-module $E$ over $\robba_K$ and for some continuous character $\delta:\Qp^\times\rightarrow K^\times$
    such that $H^0_{\phi,\Gamma}(\robba_K(\delta))\neq0$.
    If the induced map $H^0_{\phi,\Gamma}(D)\rightarrow H^0_{\phi,\Gamma}(\robba_K(\delta))$ is surjective,
    then $D[1/t]=E[1/t]\oplus \robba_K(\delta)[1/t]$ as $\phigam$-module.
  \end{lem}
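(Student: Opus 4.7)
The plan is to split the sequence after inverting $t$ by producing a $\phigam$-equivariant section. First, I would use the hypothesis $H^0_{\phi,\Gamma}(\robba_K(\delta))\neq 0$ together with the classification of rank-$1$ cohomology recalled earlier to conclude $\delta\in\mathcal{T}^-(K)$, so $\delta=x^{-n}$ for some $n\in\mathbb{N}$. If $e$ is the canonical basis of $\robba_K(\delta)$ (so $\phi(e)=p^{-n}e$ and $\gamma\cdot e=\chi(\gamma)^{-n}e$), then using $\phi(t)=pt$ and $\gamma(t)=\chi(\gamma)t$ one checks directly that $t^n e$ is $\phigam$-invariant, and since $\dim_K H^0_{\phi,\Gamma}(\robba_K(\delta))=1$ it generates this space.

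Next, by the surjectivity hypothesis I would pick $d\in H^0_{\phi,\Gamma}(D)$ mapping to $\lambda t^n e$ for some $\lambda\in K^\times$. The element $d$ being $\phigam$-fixed, the $\robba_K$-linear map
\[
\varphi:\robba_K\longrightarrow D,\qquad f\longmapsto fd,
\]
is a morphism of $\phigam$-modules. Inverting $t$ gives $\varphi[1/t]:\robba_K[1/t]\to D[1/t]$, and composing with the projection $\pi:D[1/t]\twoheadrightarrow\robba_K(\delta)[1/t]$ sends $1\mapsto\lambda t^n e$. The key point is that $t^n$ is a unit in $\robba_K[1/t]$, so $\lambda t^n e$ is a basis of the rank-one free $\robba_K[1/t]$-module $\robba_K(\delta)[1/t]=\robba_K[1/t]\cdot e$. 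Therefore $\pi\circ\varphi[1/t]$ is a surjection between rank-one free $\robba_K[1/t]$-modules, hence an isomorphism of $\phigam$-modules.

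Denoting by $\psi$ the inverse of this composition, the map $s\coloneqq \varphi[1/t]\circ\psi:\robba_K(\delta)[1/t]\to D[1/t]$ is a $\phigam$-equivariant section of $\pi$. Since localization at $t$ is exact, the sequence
\[
0\longrightarrow E[1/t]\longrightarrow D[1/t]\xrightarrow{\pi}\robba_K(\delta)[1/t]\longrightarrow 0
\]
remains short exact, and the section $s$ splits it in the category of $\phigam$-modules, yielding $D[1/t]=E[1/t]\oplus\robba_K(\delta)[1/t]$ as desired.

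I do not expect any significant obstacle. The only substantive observation is the one isolated above: although the invariant $t^n e$ does not generate $\robba_K(\delta)$ over $\robba_K$ when $n>0$ (which is precisely what obstructs splitting the original sequence), it \emph{does} generate $\robba_K(\delta)[1/t]$ over $\robba_K[1/t]$, so that the obstruction vanishes after inverting $t$.
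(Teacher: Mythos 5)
Your proof is correct. Both arguments hinge on the same observation: a $\phigam$-invariant element $d\in H^0_{\phi,\Gamma}(D)$ lifting a nonzero invariant of $\robba_K(\delta)$ becomes, after inverting $t$, a generator of the quotient, because $t^n$ is a unit in $\robba_K[1/t]$. The packaging differs. The paper works inside $D$: it forms the intermediate $\phigam$-submodule $\widetilde p^{-1}(\robba_K\cdot H^0_{\phi,\Gamma}(\robba_K(\delta)))$, identifies it with $E\oplus\robba_K\cdot d$, observes that the torsion cokernel $T$ of $\robba_K\cdot H^0\hookrightarrow\robba_K(\delta)$ dies after inverting $t$, and transports the decomposition across. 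You instead build an explicit $\phigam$-equivariant section $s:\robba_K(\delta)[1/t]\to D[1/t]$ of the projection $\pi$ directly, by noting that $f\mapsto fd$ followed by $\pi$ carries $1$ to the unit multiple $\lambda t^ne$ of the basis $e$. Your version is somewhat cleaner: it avoids the auxiliary torsion module, makes the role of $t^n$ becoming a unit explicit, and is perhaps easier to audit; the paper's version has the minor advantage of never needing to write $\delta=x^{-n}$ explicitly, though that identification is of course what is happening implicitly in $\robba_K\cdot H^0_{\phi,\Gamma}(\robba_K(\delta))\subsetneq\robba_K(\delta)$ as well. Both are complete; no gaps.
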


  \begin{proof}
    Let us denote by $\widetilde p$ the surjection of $\phigam$-modules
    \[
      \widetilde{p}:D\twoheadrightarrow\robba_K(\delta)
    \]
    and by $p$ the restriction of the map $\widetilde{p}$ to
    $\widetilde{p}^{-1}(\robba_K\cdot H^0_{\phi,\Gamma}(\robba_K(\delta)))$,
    so that
    \[
      p:\widetilde{p}^{-1}(\robba_K\cdot H^0_{\phi,\Gamma}(\robba_K(\delta)))
      \twoheadrightarrow \robba_K\cdot H^0_{\phi,\Gamma}(\robba_K(\delta)).
    \]
    We get the following map of short exact sequences:
    \[
      \begin{tikzcd}
        0\arrow[r]&\widetilde{p}^{-1}(\robba_K\cdot H^0_{\phi,\Gamma}(\robba_K(\delta)))\arrow[r]\arrow[d,twoheadrightarrow,"p"]
        &D\arrow[r]\arrow[d,twoheadrightarrow,"\widetilde{p}"]&T\arrow[r]\arrow[d,phantom,"=" labl]&0\\
        0\arrow[r]&\robba_K\cdot H^0_{\phi,\Gamma}(\robba_K(\delta))\arrow[r]
        &\robba_K(\delta)\arrow[r]&T\arrow[r]&0,
      \end{tikzcd}
    \]
    where $T$ is some torsion $\robba_K$-module.
    Let $d\in H^0_{\phi,\Gamma}(D)$ such that $\widetilde{p}(d)\neq0$, which exists by the hypothesis that
    $\widetilde{p}:H^0_{\phi,\Gamma}(D)\rightarrow H^0_{\phi,\Gamma}(\robba_K(\delta))$ is surjective.
    Since $E=\ker(\widetilde{p})$ and $H^0_{\phi,\Gamma}(\robba_K(\delta))$ is a 1-dimensional vector
    space over $K$, we have that
    \[
      \widetilde{p}^{-1}(\robba_K\cdot H^0_{\phi,\Gamma}(\robba_K(\delta)))=E\oplus\robba_K\cdot d
    \]
    as a $\phigam$-module over $\robba_K$.
    Observe in fact that $E$ is obviously stable under the action of $\Phi$ and $\Gamma$,
    since it is a $\phigam$-module itself; on the other hand, $\robba_K\cdot d$ is also
    stable under $\Phi$ and $\Gamma$, since $d\in H^0_{\phi,\Gamma}(D)$ and
    the 0-th $\phigam$-cohomology is the submodule of fixed elements by $\Phi$ and $\Gamma$ by definition.
    After inverting $t$, the torsion $\robba_K$-module $T[1/t]$ gets annihilated, hence (since localization is flat)
    we have the following commutative diagram
    \[
      \begin{tikzcd}
        E[1/t]\oplus(\robba_K\cdot d)[1/t]\arrow[r,"\sim"]\arrow[d,twoheadrightarrow,"p"]
        &D[1/t]\arrow[d]\arrow[d,twoheadrightarrow,"p"]\\
        \robba_K[1/t]\cdot H^0_{\phi,\Gamma}(\robba_K(\delta)\arrow[r,"\sim"]
        &\robba_K(\delta)[1/t],
      \end{tikzcd}
    \]
    where the horizontal arrows are isomorphisms of $\phigam$-modules.
  \end{proof}

  \begin{lem}
    \label{bound of H^0}
    Let $K$ be an extension of $\Qp$ and let $D_n,D_n'$ be two triangulable $\phigam$-modules over $\robba_K$ of the same rank.
    Let us denote by $D_i$ and $D_i'$ the $i$-th piece of the filtration of $D_n$ and $D_n'$ respectively
    and let $(\delta_1,\dots,\delta_n),(\delta_1',\dots,\delta_n')\in\T^n(K)$ be the parameters
    of $D_n$ and $D_n'$ associated to the fixed filtration respectively.
    Assume that
    \begin{enumerate}
      \item
        $\delta_i'=\delta_ix^{-\mu_i}$ for some $\mu_i\in\Z$ for all $1\leq i\leq n$;
      \item
        if $\delta_i/\delta_j\in\T^-(K)$, then $\delta_i'/\delta_j'\in\T^-(K)$ for all $i<j$;
      \item
        $D_i[1/t]=D_i'[1/t]$ for all $1\leq i\leq n$;
      \item
        $H^1_{\phi,\Gamma}(D_i'(\delta_j'^{-1})[1/t])\cong H^1_{\phi,\Gamma}(D_i'(\delta_j'^{-1}))$
        for all $i<j$.
    \end{enumerate}
    Then
    \[
      \dim_KH^0_{\phi,\Gamma}(D_{n-1}(\delta_n^{-1}))\leq\dim_KH^0_{\phi,\Gamma}(D_{n-1}'(\delta_n'^{-1})).
    \]
  \end{lem}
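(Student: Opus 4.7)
The plan is to induct on $n$. The base case is $n=2$, where $D_1=\robba_K(\delta_1)$ and $D_1'=\robba_K(\delta_1')$, so the inequality reduces to $\dim_K H^0(\robba_K(\delta_1/\delta_2))\leq\dim_K H^0(\robba_K(\delta_1'/\delta_2'))$; this follows immediately from hypothesis (2), since $H^0$ of a rank-one $\phigam$-module is one-dimensional exactly when its character lies in $\T^-(K)$ and vanishes otherwise.

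For the inductive step with $n\geq 3$, I would consider the short exact sequence
\[
0\to D_{n-2}(\delta_n^{-1})\to D_{n-1}(\delta_n^{-1})\to\robba_K(\delta_{n-1}/\delta_n)\to 0
\]
and the analogous one with primes. The induced long exact sequences in $\phigam$-cohomology give
\[
\dim H^0(D_{n-1}(\delta_n^{-1}))=\dim H^0(D_{n-2}(\delta_n^{-1}))+\dim\ker(c),
\]
with $c:H^0(\robba_K(\delta_{n-1}/\delta_n))\to H^1(D_{n-2}(\delta_n^{-1}))$ the connecting map, and similarly for the primed version with $c'$. Applying the inductive hypothesis to the truncated configurations $(D_{n-2},D_{n-2}')$ with parameters $(\delta_1,\dots,\delta_{n-2},\delta_n)$ and $(\delta_1',\dots,\delta_{n-2}',\delta_n')$ (one checks that the four hypotheses of the lemma restrict to these subcollections), we obtain $\dim H^0(D_{n-2}(\delta_n^{-1}))\leq\dim H^0(D_{n-2}'(\delta_n'^{-1}))$. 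If $\delta_{n-1}/\delta_n\notin\T^-(K)$ then $H^0(\robba_K(\delta_{n-1}/\delta_n))=0$ and the inequality follows at once. Otherwise hypothesis (2) forces $\delta_{n-1}'/\delta_n'\in\T^-(K)$ as well, and both $H^0$ of the rank-one quotients are one-dimensional; the inequality will reduce to showing that if $\ker(c)$ is nontrivial (hence $c=0$), then also $c'=0$.

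The main obstacle, and the heart of the proof, is this last implication. The idea will be as follows: if $c=0$ then $H^0(D_{n-1}(\delta_n^{-1}))$ surjects onto $H^0(\robba_K(\delta_{n-1}/\delta_n))$, so Lemma \ref{D[1/t] splits} yields a decomposition
\[
D_{n-1}(\delta_n^{-1})[1/t]\cong D_{n-2}(\delta_n^{-1})[1/t]\oplus\robba_K(\delta_{n-1}/\delta_n)[1/t].
\]
Using hypothesis (1) (the characters differ by powers of $x$, which become invisible after inverting $t$) together with hypothesis (3), one identifies $D_i(\delta_n^{-1})[1/t]$ with $D_i'(\delta_n'^{-1})[1/t]$ for $i=n-2,n-1$ and $\robba_K(\delta_{n-1}/\delta_n)[1/t]$ with $\robba_K(\delta_{n-1}'/\delta_n')[1/t]$; the same decomposition therefore holds for the primed setup after inverting $t$. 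Hence the extension class of $D_{n-1}'(\delta_n'^{-1})$ in $H^1(D_{n-2}'(\delta_{n-1}'^{-1}))$ becomes trivial in $H^1(D_{n-2}'(\delta_{n-1}'^{-1})[1/t])$, and hypothesis (4) applied with $i=n-2$, $j=n-1$ promotes this isomorphism of cohomologies to give that the class itself is zero. Then $D_{n-1}'(\delta_n'^{-1})$ splits, $c'=0$, and the induction closes.
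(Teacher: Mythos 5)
Your proof follows essentially the same strategy as the paper: induct on $n$, split according to whether the connecting map $c$ is zero (equivalently whether $\dim H^0(D_{n-1}(\delta_n^{-1}))$ exceeds $\dim H^0(D_{n-2}(\delta_n^{-1}))$), and in the nontrivial case use Lemma \ref{D[1/t] splits} together with hypotheses (3) and (4) to propagate the splitting to the primed side. The only difference is cosmetic bookkeeping — you track dimensions via $\ker(c)$ and $\ker(c')$ whereas the paper computes $H^0$ of the split module directly and invokes hypothesis (2) for the rank-one summand — but the underlying argument is identical.
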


  \begin{proof}
    We proceed by induction on $n$.
    \begin{itemize}
      \item[$n=2:$]
      We distinguish two cases:
      \begin{itemize}
        \item
          if $\delta_1/\delta_2\in\T^-(K)$, then $\delta_1'/\delta_2'\in\T^-(K)$ and we have
          \[
            \dim_KH^0_{\phi,\Gamma}(\robba_K(\delta_1/\delta_2))=1=\dim_KH^0_{\phi,\Gamma}(\robba_K(\delta_1'/\delta_2')).
          \]
        \item
          If $\delta_1/\delta_2\notin\T^-(K)$, then
          \[
            \dim_KH^0_{\phi,\Gamma}(\robba_K(\delta_1/\delta_2))=0\leq\dim_KH^0_{\phi,\Gamma}(\robba_K(\delta_1'/\delta_2')).
          \]
      \end{itemize}
      \item[$n>2:$]
      We consider two different cases too:
      \begin{itemize}
        \item
          First assume that
          $\dim_KH^0_{\phi,\Gamma}(D_{n-1}(\delta_n^{-1}))=\dim_KH^0_{\phi,\Gamma}(D_{n-2}(\delta_n^{-1}))$.
          By inductive hypothesis we have that
          \[
            \dim_KH^0_{\phi,\Gamma}(D_{n-2}(\delta_n^{-1}))\leq\dim_KH^0_{\phi,\Gamma}(D'_{n-2}(\delta_n'^{-1}))
          \]
          so we have
          \[
            \dim_KH^0_{\phi,\Gamma}(D_{n-1}(\delta_n^{-1}))\leq\dim_KH^0_{\phi,\Gamma}(D'_{n-2}(\delta_n'^{-1})).
          \]
          On the other hand, from the short exact sequence
          \[
            0\rightarrow D_{n-2}'(\delta_n'^{-1})\rightarrow D_{n-1}'(\delta_n'^{-1})
            \rightarrow\robba_K(\delta_{n-1}'/\delta_n')\rightarrow0,
          \]
          we get the long exact sequence
          \[
            0\rightarrow H^0_{\phi,\Gamma}(D'_{n-2}(\delta_n'^{-1}))\rightarrow H^0_{\phi,\Gamma}(D'_{n-1}(\delta_n'^{-1}))
            \rightarrow H^0_{\phi,\Gamma}(\robba_K(\delta_{n-1}'/\delta_n'))\rightarrow\dots.
          \]
          Hence we have
          \[
            \dim_KH^0_{\phi,\Gamma}(D'_{n-2}(\delta_n'^{-1}))\leq\dim_KH^0_{\phi,\Gamma}(D'_{n-1}(\delta_n'^{-1})).
          \]
          Combining all of the inequalities above, we get
          \[
            \dim_KH^0_{\phi,\Gamma}(D_{n-1}(\delta_n^{-1}))\leq\dim_KH^0_{\phi,\Gamma}(D_{n-1}'(\delta_n'^{-1})).
          \]
        \item
          If instead $\dim_KH^0_{\phi,\Gamma}(D_{n-1}(\delta_n^{-1}))\neq\dim_KH^0_{\phi,\Gamma}(D_{n-2}(\delta_n^{-1}))$,
          then by the long exact sequence
          \[
            0\rightarrow H^0_{\phi,\Gamma}(D_{n-2}(\delta_n^{-1}))\rightarrow H^0_{\phi,\Gamma}(D_{n-1}(\delta_n^{-1}))
            \rightarrow H^0_{\phi,\Gamma}(\robba_K(\delta_{n-1}/\delta_n))\rightarrow\dots,
          \]
          we must have that $\dim_KH^0_{\phi,\Gamma}(\robba_K(\delta_{n-1}/\delta_n))\neq0$
          and the map
          \[
            H^0_{\phi,\Gamma}(D_{n-1}(\delta_n^{-1}))\twoheadrightarrow H^0_{\phi,\Gamma}(\robba_K(\delta_{n-1}/\delta_n))
          \]
          is surjective, in fact $\dim_KH^0_{\phi,\Gamma}(\robba_K(\delta_{n-1}/\delta_n))=1$.
          By Lemma \ref{D[1/t] splits}, $D_{n-1}(\delta_n^{-1})[1/t]$ is the split extension in
          $H^1_{\phi,\Gamma}(D_{n-2}(\delta_{n-1}^{-1})[1/t])$.
          We moreover have that by hypothesis
          \begin{align*}
            H^1_{\phi,\Gamma}(D_{n-2}(\delta_{n-1}^{-1})[1/t])\cong H^1_{\phi,\Gamma}(D_{n-2}'(\delta_{n-1}'^{-1})[1/t])
            \cong H^1_{\phi,\Gamma}(D_{n-2}'(\delta_{n-1}'^{-1})).
          \end{align*}
          We have that $D_{n-1}'(\delta_n'^{-1})$ is the image of $D_{n-1}(\delta_n^{-1})[1/t]$
          through the above isomorphism, hence this in particular implies that
          \[
            D_{n-1}'(\delta_n'^{-1})=D_{n-2}'(\delta_n'^{-1})\oplus\robba_K(\delta_{n-1}'/\delta_n')
          \]
          is also split.
          Observe that
          \begin{align*}
            H^0_{\phi,\Gamma}(D_{n-1}'(\delta_n'^{-1}))&=(D_{n-1}'(\delta_n'^{-1}))^{(\Phi,\Gamma)}\\
            &=(D_{n-2}'(\delta_n'^{-1}))^{(\Phi,\Gamma)}\oplus(\robba_K(\delta_{n-1}'/\delta_n'))^{(\Phi,\Gamma)}\\
            &=H^0_{\phi,\Gamma}(D_{n-2}'(\delta_n'^{-1}))\oplus H^0_{\phi,\Gamma}(\robba_K(\delta_{n-1}'/\delta_n')).
          \end{align*}
          By the second hypothesis, we have that
          \[
            \dim_KH^0_{\phi,\Gamma}(\robba_K(\delta_i/\delta_j))\leq\dim_KH^0_{\phi,\Gamma}(\robba_K(\delta_i'/\delta_j'))
          \]
          for all $i<j$.
          Finally, by inductive hypothesis we have
          \[
            \dim_KH^0_{\phi,\Gamma}(D_{n-2}(\delta_n^{-1}))\leq\dim_KH^0_{\phi,\Gamma}(D'_{n-2}(\delta_n'^{-1})),
          \]
          thus
          \begin{align*}
            \dim_KH^0_{\phi,\Gamma}(D_{n-1}(\delta_n^{-1}))&
            =\dim_KH^0_{\phi,\Gamma}(D_{n-2}(\delta_n^{-1}))+\dim_KH^0_{\phi,\Gamma}(\robba_K(\delta_{n-1}/\delta_n))\\
            &\leq\dim_KH^0_{\phi,\Gamma}(D'_{n-2}(\delta_n'^{-1}))+\dim_KH^0_{\phi,\Gamma}(\robba_K(\delta_{n-1}'/\delta_n'))\\
            &=\dim_KH^0_{\phi,\Gamma}(D_{n-1}'(\delta_n'^{-1})).
          \end{align*}
      \end{itemize}
    \end{itemize}
  \end{proof}

  \begin{lem}
    \label{satisfy hypothesis of bound H^0}
    Let $y=(y',\Lambda_y)\in X_n(K)$ for some extension $K$ of $L$.
    We have that the $\phigam$-modules $\D_n\widehat\otimes k(y)$
    and $\D_n'\widehat\otimes k(y')$ satisfy the hypothesis 1,2 and 3 of Lemma \ref{bound of H^0}.
  \end{lem}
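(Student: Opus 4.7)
Let me write $D_n \coloneqq \D_n\widehat\otimes k(y)$ and $D_n'\coloneqq \D_n'\widehat\otimes k(y')$, with induced filtrations $(D_i)_i$ and $(D_i')_i$ coming from the filtrations of $\D_n$ and $\D_n'$ specialized at $y$ and $y'$. The approach is to verify hypotheses 1, 2, 3 of Lemma \ref{bound of H^0} one at a time, each of which reduces to a general feature of the construction in Theorem \ref{construction of X_n} combined with the way the neighbourhoods $\mcal U_i'$ were selected in Remark \ref{good U_n'}.

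For hypothesis 1, I would simply invoke Theorem \ref{construction of X_n}: the parameters of $\D_n$ are $(x^{\mu_1}\widetilde\delta_1',\dots,x^{\mu_n}\widetilde\delta_n')$, so after specialization the parameters of $D_n$ and $D_n'$ are related by $\delta_i = x^{\mu_i}\delta_i'$, which is exactly what is required. For hypothesis 3, the key observation is that by the construction of $\D_n$ through the Beauville-Laszlo type equivalence, we have $\D_n[1/t] = p_2^*\D_n'[1/t]$ as $\phigam$-modules over $\robba_{X_n}[1/t]$, and moreover the filtration of $\D_n$ is defined by $\Fil^i(\D_n) = \D_n \cap \Fil^i(p_2^*\D_n'[1/t])$, so after inverting $t$ we obtain $\Fil^i(\D_n)[1/t] = \Fil^i(p_2^*\D_n'[1/t])$. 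Pulling back to the point $y$ (with image $y'$ in $X_n'$) yields $D_i[1/t] = D_i'[1/t]$.

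The only step that requires some care is hypothesis 2. Assume $\delta_i/\delta_j \in \T^-(K)$, so that $\delta_i/\delta_j = x^{-k}$ for some $k \geq 0$. Using hypothesis 1 we get $\delta_i'/\delta_j' = x^{\mu_j-\mu_i-k}$, which in particular is an integer power of $x$. By Remark \ref{good U_n'} applied to $y'\in\widetilde{\mcal U}_n'$, such an equality can hold only if the original characters at the base point satisfy $\delta_i^{(0)} x^{-\mu_i}/\delta_j^{(0)} x^{-\mu_j} = x^{\mu_j-\mu_i-k}$, and hence $\delta_i^{(0)}/\delta_j^{(0)} = x^{-k} \in \T^-(L)$, where $(\delta_1^{(0)},\dots,\delta_n^{(0)})$ are the parameters of $\drig(\rho)$. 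But the $\mu_i$'s were chosen via Lemma \ref{iso of H^1 in high dim} precisely so that whenever $\delta_i^{(0)}/\delta_j^{(0)} \in \T^-(L)$, the twist satisfies $\delta_i^{(0)}x^{-\mu_i}/\delta_j^{(0)}x^{-\mu_j} = x^0$. Hence $\mu_j-\mu_i-k = 0$, giving $\delta_i'/\delta_j' = x^0 \in \T^-(K)$ as required.

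The main obstacle will be bookkeeping the interaction between the three distinct layers of characters, namely the original parameters $(\delta_i^{(0)})$ of $\rho$, the twisted characters $(\delta_i^{(0)} x^{-\mu_i})$ used to define $X_n'$, and the specializations $(\delta_i)$, $(\delta_i')$ of the universal parameters at the points $y$ and $y'$. Once the notation is set up carefully, each verification is formal: hypotheses 1 and 3 are essentially built into the construction, while hypothesis 2 follows from the combination of Remark \ref{good U_n'} (which rules out ``accidental'' integer-power ratios) with the specific choice of the twisting exponents $\mu_i$ in Lemma \ref{iso of H^1 in high dim}.
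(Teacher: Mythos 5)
Your proof is correct and follows essentially the same approach as the paper's: hypothesis 1 is read off from Theorem \ref{construction of X_n}, hypothesis 3 follows from the formula $\Fil^i(\D_n)=\D_n\cap\Fil^i(p_2^*\D_n'[1/t])$ combined with $\D_n[1/t]=p_2^*\D_n'[1/t]$, and hypothesis 2 uses Remark \ref{good U_n'} to rule out accidental integer-power ratios and then the normalization in Lemma \ref{iso of H^1 in high dim} to conclude the exponent vanishes. The only difference is cosmetic: the paper specializes to the point $y$ first and then inverts $t$, whereas you invert $t$ at the level of families and then specialize — these commute and yield identical arguments.
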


  \begin{proof}
    \begin{enumerate}
      \item
        Let us denote by $(\widetilde\eta_1,\dots,\widetilde\eta_n),(\widetilde\eta_1',\dots,\widetilde\eta_n')\in\T^n(K)$
        the parameters of $\D_n\widehat\otimes k(y)$ and $\D_n'\widehat\otimes k(y')$ respectively.
        By Theorem \ref{construction of X_n} we have that
        \[
          \widetilde\eta_i=\widetilde\delta_i\widehat\otimes k(y)=x^{\mu_i}\widetilde\delta_i'\widehat\otimes k(y')=x^{\mu_i}\widetilde\eta_i'
        \]
        for all $1\leq i\leq n$.
      \item
        Let us assume that $\widetilde\eta_i/\widetilde\eta_j=x^{-a}$ for some $a\in\N$ and for some $i<j$.
        Then $\widetilde\eta_i'/\widetilde\eta_j'=\widetilde\eta_i/\widetilde\eta_j\cdot x^{-\mu_i+\mu_j}=x^{-a-\mu_i+\mu_j}$.
        Recall that by Lemma \ref{good neighbourhood} we chose the neighbourhood $\widetilde{\mathcal{U}}_n'$
        of $(\delta_1x^{-\mu_1},\dots,\delta_nx^{-\mu_n})$ in such a way so that
        $\widetilde\delta_i'/\widetilde\delta_j'\widehat\otimes k(u)=x^{a_{ij}}$ for some $a_{ij}\in\Z$ only if
        $x^{-\mu_i}\delta_i/x^{-\mu_j}\delta_j=x^{a_{ij}}$.
        Thus we have that $x^{-\mu_i}\delta_i/x^{-\mu_j}\delta_j=x^{-a-\mu_i+\mu_j}$, which implies that
        $\delta_i/\delta_j=x^{-a}\in\T^-(L)$.
        By Lemma \ref{iso of H^1 in high dim}, we have if $\delta_i/\delta_j\in\T^-(L)$
        then $x^{-\mu_i}\delta_i/x^{-\mu_j}\delta_j=x^{0}\in\T^-(L)$, hence $-a-\mu_i+\mu_j=0$.
        This concludes that $\widetilde\eta_i'/\widetilde\eta_j'=x^{-a-\mu_i+\mu_j}\in\T^-(K)$.
      \item
        Let $E_i\coloneqq\Fil^i(\D_n\widehat\otimes k(y))$ and $E_i'\coloneqq\Fil^i(\D_n'\widehat\otimes k(y'))$.
        We have by definition
        \begin{align*}
          E_i&=(\D_n\widehat\otimes k(y))\cap\Fil^i((\D_n'\widehat\otimes k(y'))[1/t])\\
          &=(\D_n\widehat\otimes k(y))\cap E_i'[1/t].
        \end{align*}
        Hence
        \[
          E_i[1/t]=(\D_n\widehat\otimes k(y))[1/t]\cap E_i'[1/t]=E_i'[1/t]
        \]
        because $(\D_n\widehat\otimes k(y))[1/t]=(\D_n'\widehat\otimes k(y'))[1/t]$.
    \end{enumerate}
  \end{proof}

  Recall the definition of the open neighbourhoods $\mcal U_i'$ of $\delta_ix^{-\mu_i}$ as
  in Proposition \ref{construction of Xn'}.
  Notice that the map $G:X_n\rightarrow \T^n$ factors through
  \[
    G:X_n\xrightarrow{G_n}X_{n-1}\times\mathcal{U}_{n}\xrightarrow{G_{n-1}}
    \dots\xrightarrow{G_3}X_2\times\mathcal{U}_3\times\dots\times\mathcal{U}_n
    \xrightarrow{G_2}\widetilde{\mathcal{U}}_n\coloneqq\mathcal{U}_{1}\times\dots\times\mathcal{U}_n
  \]
  where the spaces $\mathcal{U}_i\coloneqq x^{\mu_i}\mathcal{U}_i'\subset\T$
  are the open subspaces defined by the characters $\widetilde\delta_i$, as in Theorem \ref{construction of X_n}.
  From now on, for all $3\leq i\leq n$ we denote by $d_i$
  the dimension of the vector bundle $X_i'$ over $X_{i-1}'\times\mathcal{U}_i'$
  and by $d_2$ the dimension of the vector bundle $X_2'$ over $\mathcal{U}'_{1}\times\mathcal{U}'_2$.
  As explained in Theorem \ref{extensions}, we have that $d_i\coloneqq\dim_LH^1_{\phi,\Gamma}(D_{i-1}'(\delta_i^{-1}x^{\mu_i}))$
  and $d_2\coloneqq\dim_LH^1_{\phi,\Gamma}(\robba_L(\delta_1x^{-\mu_1}/\delta_2x^{-\mu_2}))$.

  \begin{dfn}
    We can partition the space $\T^n$
    into subspaces consisting of $(\widetilde\eta_1,\dots,\widetilde\eta_n)\in\widetilde{\mathcal{U}}_n$
    satisfying compatible conditions of the form
    \[
      \begin{array}{lcr}
        \widetilde\eta_i/\widetilde\eta_j=x^{-b_{ij}},
        &\widetilde\eta_i/\widetilde\eta_j=\chi x^{b_{ij}},
        &\widetilde\eta_i/\widetilde\eta_j\in\T^\reg
      \end{array}
    \]
    for some $b_{ij}\in\N$.
    More precisely, for any $2\leq j\leq n$, let us fix $\mathcal{L}_j,\mathcal{M}_j\subset\{1,\dots,j-1\}$.
    Moreover for every $i\in\mathcal{L}_j\cup\mathcal{M}_j$ we fix $b_{ij}\in\N$.
    We define
    \begin{equation}
      \label{def of Z}
      \mathcal{Z}_{(\mathcal{L}_j,\mathcal{M}_j)_j}^{(b_{ij})}\coloneqq\left\{
        \underline{\widetilde\eta}\in\T^n\colon
        \begin{array}{c}
          \widetilde\eta_i/\widetilde\eta_j=x^{-b_{ij}}\Leftrightarrow i\in\mathcal{L}_j,
          \widetilde\eta_i/\widetilde\eta_j=\chi x^{b_{ij}}\Leftrightarrow i\in\mathcal{M}_j\\
          \text{ and }\widetilde\eta_i/\widetilde\eta_j\in\T^\reg\Leftrightarrow i\notin\mathcal{L}_j\cup\mathcal{M}_j
        \end{array}
        \right\}.
    \end{equation}
  \end{dfn}

  We will now give a bound for the dimension of the preimage through $G$ of the
  spaces of the form $\mathcal{Z}_{(\mathcal{L}_j,\mathcal{M}_j)_j}^{(b_{ij})}$.
  We need first some preliminary results.

  \begin{lem}
    \label{only good Z}
    Let $K$ be an extension of $L$.
    We have that $\widetilde{\mathcal{U}}_n(K)$
    is covered by spaces of the form $\mathcal{Z}_{(\mathcal{L}_j,\mathcal{M}_j)_j}^{(a_{ij})}$
    where $\mathcal{L}_j\subseteq\{i<j\colon \delta_i/\delta_j\in\T^-\}$,
    $\mathcal{M}_j\subseteq\{i<j\colon \delta_i/\delta_j\in\T^+\}$
    and $a_{ij}\in\N$ such that $\delta_i/\delta_j=x^{-a_{ij}}$ or $\delta_i/\delta_j=\chi x^{a_{ij}}$.
  \end{lem}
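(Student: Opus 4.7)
The plan is to show that the claimed covering is obtained tautologically: for every point $u \in \widetilde{\mathcal{U}}_n(K)$, I assign indices $(\mathcal{L}_j(u), \mathcal{M}_j(u))_j$ and integers $(a_{ij}(u))$ in such a way that $u$ lies in the corresponding piece $\mathcal{Z}_{(\mathcal{L}_j,\mathcal{M}_j)_j}^{(a_{ij})}$, and then to verify that these indices satisfy the constraints stated in the lemma. The only nontrivial input is Remark \ref{good U_n'}, which was arranged precisely to rule out the ``bad'' non-regularities at points of $\widetilde{\mathcal{U}}_n'$, and hence (after twisting) at points of $\widetilde{\mathcal{U}}_n$.

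More concretely, fix $u \in \widetilde{\mathcal{U}}_n(K)$ with image $(\widetilde\eta_1, \ldots, \widetilde\eta_n) \in \T^n(K)$. For every pair $i < j$, exactly one of the following three mutually exclusive possibilities holds: either $\widetilde\eta_i/\widetilde\eta_j \in \T^-(K)$, or $\widetilde\eta_i/\widetilde\eta_j \in \T^+(K)$, or $\widetilde\eta_i/\widetilde\eta_j \in \T^{\reg}(K)$. In the first case set $i \in \mathcal{L}_j$ and let $a_{ij}$ be the unique non-negative integer with $\widetilde\eta_i/\widetilde\eta_j = x^{-a_{ij}}$; in the second case set $i \in \mathcal{M}_j$ and let $a_{ij}$ be the unique non-negative integer with $\widetilde\eta_i/\widetilde\eta_j = \chi x^{a_{ij}}$; in the third case do nothing. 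By construction, $u$ belongs to the space $\mathcal{Z}_{(\mathcal{L}_j, \mathcal{M}_j)_j}^{(a_{ij})}$ defined in \eqref{def of Z}.

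It remains to check that the chosen $\mathcal{L}_j, \mathcal{M}_j$ and $a_{ij}$ satisfy the constraints: $\mathcal{L}_j \subseteq \{i < j \colon \delta_i/\delta_j \in \T^-\}$, $\mathcal{M}_j \subseteq \{i < j \colon \delta_i/\delta_j \in \T^+\}$, and the corresponding integer exponents match those of $\delta_i/\delta_j$. Here I would translate through the identification $\widetilde\delta_i = x^{\mu_i}\widetilde\delta_i'$ from Theorem \ref{construction of X_n}: if $\widetilde\eta_i/\widetilde\eta_j = x^{-a_{ij}}$, then $\widetilde\delta_i'/\widetilde\delta_j' \widehat\otimes k(u) = x^{\mu_j - \mu_i - a_{ij}}$, and Remark \ref{good U_n'} forces $x^{-\mu_i}\delta_i / x^{-\mu_j}\delta_j = x^{\mu_j - \mu_i - a_{ij}}$, i.e.\ $\delta_i/\delta_j = x^{-a_{ij}} \in \T^-(L)$. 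The analogous computation in the $\chi x^{a_{ij}}$ case gives $\delta_i/\delta_j = \chi x^{a_{ij}} \in \T^+(L)$. This shows $u \in \mathcal{Z}_{(\mathcal{L}_j, \mathcal{M}_j)_j}^{(a_{ij})}$ with $(\mathcal{L}_j, \mathcal{M}_j, a_{ij})$ of the allowed form, proving the covering.

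There is no real obstacle here; the work was already done when choosing $\widetilde{\mathcal{U}}_n'$ via Lemma \ref{good neighbourhood}. The only point to be careful about is the twist by $x^{\mu_i}$ that relates the neighbourhood of the original parameters $\delta_i$ to the neighbourhood of the shifted parameters $\delta_i x^{-\mu_i}$ used in the construction of $X_n'$, which requires tracking the integer $\mu_j - \mu_i$ correctly in both the $\T^-$ and $\T^+$ cases.
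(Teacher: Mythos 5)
Your proof is correct and takes essentially the same approach as the paper: both arguments reduce the claim to showing that a non-regularity $\widetilde\eta_i/\widetilde\eta_j=x^{-a_{ij}}$ (resp. $\chi x^{a_{ij}}$) at a point of $\widetilde{\mathcal{U}}_n$ forces the corresponding non-regularity $\delta_i/\delta_j=x^{-a_{ij}}$ (resp. $\chi x^{a_{ij}}$), and both obtain this by translating to the shifted characters $\widetilde\delta_i'$ via $\widetilde\delta_i=x^{\mu_i}\widetilde\delta_i'$ and invoking Remark \ref{good U_n'}. The paper additionally observes $b_{ij}-\mu_i+\mu_j<0$ in the $\T^+$ case, but this is not needed for the logic since Remark \ref{good U_n'} covers all $a_{ij}\in\Z$, so your omission of that remark is harmless.
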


  \begin{proof}
    We only need to show that for $\underline{\widetilde{\eta}}\in\widetilde{\mathcal{U}}_n(K)$
    we have
    \[
      \widetilde\eta_i/\widetilde\eta_j=x^{-b_{ij}}\text{ for some }b_{ij}\in\N\Rightarrow
      \delta_i/\delta_j=x^{-b_{ij}}
    \]
    and
    \[
      \widetilde\eta_i/\widetilde\eta_j=\chi x^{b_{ij}}\text{ for some }b_{ij}\in\N\Rightarrow
      \delta_i/\delta_j=\chi x^{b_{ij}}.
    \]
    Notice that
    \[
      \widetilde\eta_i/\widetilde\eta_j=x^{-b_{ij}}\Leftrightarrow
      x^{-\mu_i}\widetilde\eta_i/x^{-\mu_j}\widetilde\eta_j=x^{-b_{ij}-\mu_i+\mu_j},
    \]
    which implies by Remark \ref{good U_n'} that
    \[
      x^{-\mu_i}\delta_i/x^{-\mu_j}\delta_j=x^{-b_{ij}-\mu_i+\mu_j}\Leftrightarrow \delta_i/\delta_j=x^{-b_{ij}}.
    \]
    On the other hand,
    \[
      \widetilde\eta_i/\widetilde\eta_j=\chi x^{b_{ij}}\Leftrightarrow
      x^{-\mu_i}\widetilde\eta_i/x^{-\mu_j}\widetilde\eta_j=\chi x^{b_{ij}-\mu_i+\mu_j}.
    \]
    Observe that in the proof of Proposition \ref{n dim T-}, we have chosen $\widetilde{\mathcal{U}}_n$
    in such a way so that for every $i<j$ and every $u\in\widetilde{\mathcal{U}}_n$ we have
    $(\widetilde\delta_i'/\widetilde\delta_j')\widehat{\otimes}k(u)\notin\T^+$.
    Hence $b_{ij}-\mu_i+\mu_j<0$, which implies always by Remark \ref{good U_n'} that
    \[
      x^{-\mu_i}\delta_i/x^{-\mu_j}\delta_j=\chi x^{b_{ij}-\mu_i+\mu_j}\Leftrightarrow \delta_i/\delta_j=\chi x^{b_{ij}}.
    \]
  \end{proof}

  \begin{lem}
    \label{how many x' specializing to E}
    Let $y_i'=(y_{i-1}',\widetilde\eta_i',e)\in X_i'(K)=(X_{i-1}'\times\mathcal{U}_i'\times\mathbb{A}^{d_i})(K)$
    for some extension $K$ of $L$ (when $i=2$, we let $y_{i-1}'\coloneqq x^{-\mu_1}\widetilde{\eta}_1$ and $X_{i-1}'\coloneqq\mathcal{U}_1'$).
    Let us denote by $E_i'$ and $E_{i-1}'$ the $\phigam$-modules $\D_i'\widehat\otimes k(y_i')$
    and $\D_{i-1}'\widehat\otimes k(y_{i-1}')$ respectively.
    If $i=2$, then we denote by $\D_{i-1}'$ the $\phigam$-module $\robba_{\mathcal{U}'_1}(x^{-\mu_1}\widetilde{\delta}_1)$.
    We have that
    \begin{align*}
      \dim_K(\{z_i'\in X_i'(K)\colon \D_i'\widehat\otimes k(z_i')=E_i'\})
      =d_i-\dim_KH^1_{\phi,\Gamma}(E_{i-1}'(\widetilde\eta_i'^{-1})).
    \end{align*}
  \end{lem}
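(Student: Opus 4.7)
The plan is to decompose a candidate $z_i' = (z_{i-1}', \widetilde{\zeta}_i', e') \in X_{i-1}'\times \mathcal{U}_i'\times\mathbb{A}^{d_i}(K)$, first reduce to the fibre over $(y_{i-1}', \widetilde{\eta}_i')$, and then apply Theorem \ref{extensions} (in its refined form of Remark \ref{perfect parametrization}) at this single $K$-point to identify the remaining fibre with a coset of a kernel inside $K^{d_i}$.

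First I would verify that the condition $\D_i'\widehat\otimes k(z_i') = E_i'$ (as filtered $\phigam$-modules) forces $z_{i-1}' = y_{i-1}'$ and $\widetilde{\zeta}_i' = \widetilde{\eta}_i'$. The family $\D_i'$ on $X_i'$ carries the triangulation produced in Proposition \ref{n dim T-}, whose sub-object at $z_i'$ is $\D_{i-1}'\widehat\otimes k(z_{i-1}')$ and whose rank-one quotient is $\robba_K(\widetilde{\zeta}_i')$. Matching this filtration against that of $E_i'$, Theorem \ref{rank 1 phi gamma modules} forces $\widetilde{\zeta}_i' = \widetilde{\eta}_i'$ by comparing top graded pieces, and the bijectivity of the parameter-to-extension-class map at the previous level (Remark \ref{perfect parametrization} applied inductively) combined with $\D_{i-1}'\widehat\otimes k(z_{i-1}') = E_{i-1}'$ forces $z_{i-1}' = y_{i-1}'$.

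With this reduction the remaining locus is $\{y_{i-1}'\}\times\{\widetilde{\eta}_i'\}\times\mathbb{A}^{d_i}(K) \cong K^{d_i}$. By construction, $\mathbb{A}^{d_i}$ is the total space of the vector bundle produced in Theorem \ref{extensions} from a surjection onto $H^1_{\phi,\Gamma}(\D_{i-1}'(\widetilde{\delta}_i'^{-1}))$. Base changing to $k(y_{i-1}', \widetilde{\eta}_i') = K$, and noting that $H^2_{\phi,\Gamma}(\D_{i-1}'(\widetilde{\delta}_i'^{-1}))$ vanishes on the entirety of $X_{i-1}'\times\mathcal{U}_i'$ by Theorem \ref{cohomology of families} (since the neighbourhoods $\widetilde{\mathcal{U}}_n'$ were chosen so that no $\widetilde{\delta}_i'/\widetilde{\delta}_j'$ specializes into $\mathcal{T}^+$, cf.\ Remark \ref{good U_n'}), Theorem \ref{bc} yields a surjective $K$-linear map
\[
\Psi_K : K^{d_i} \twoheadrightarrow H^1_{\phi,\Gamma}(E_{i-1}'(\widetilde{\eta}_i'^{-1})), \qquad e' \mapsto [\D_i'\widehat\otimes k(y_{i-1}', \widetilde{\eta}_i', e')],
\]
sending the parameter $e'$ to the class of the resulting extension of $\robba_K(\widetilde{\eta}_i')$ by $E_{i-1}'$.

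Finally, two parameters $e,e'$ give the same filtered $\phigam$-module $E_i'$ exactly when $\Psi_K(e) = \Psi_K(e')$, so the fibre in question is the affine $K$-subspace $e + \ker \Psi_K \subset K^{d_i}$, whose dimension equals $d_i - \dim_K H^1_{\phi,\Gamma}(E_{i-1}'(\widetilde{\eta}_i'^{-1}))$, as required. The main subtle point is the first step, namely that equality as (filtered) $\phigam$-modules really pins down $z_{i-1}'$ uniquely; this is where the inductive use of the perfect parametrization of Remark \ref{perfect parametrization} is essential, since it upgrades ``two parameters produce the same extension class'' to ``the parameters coincide''. Everything else is linear algebra over $K$ once the vanishing of $H^2_{\phi,\Gamma}$ has been secured by the choice of neighbourhood.
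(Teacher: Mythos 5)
Your argument mirrors the paper's own proof structurally: both identify the locus $\{z_i'\colon\D_i'\widehat\otimes k(z_i')=E_i'\}$ with a coset of the kernel of the surjection $\Psi_K\colon K^{d_i}\twoheadrightarrow H^1_{\phi,\Gamma}(E_{i-1}'(\widetilde\eta_i'^{-1}))$ furnished by Theorem \ref{extensions}, after first restricting to the slice $\{(y_{i-1}',\widetilde\eta_i')\}\times\mathbb{A}^{d_i}(K)$ of $X_i'(K)$. The cohomology base-change and $H^2$-vanishing you spell out are exactly what makes the specialization of $\Psi$ to the $K$-point go through; the paper invokes this implicitly.

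The one place you go beyond the paper is the opening reduction, and there your justification does not hold up. Forcing $\widetilde\zeta_i'=\widetilde\eta_i'$ from the top graded piece via Theorem \ref{rank 1 phi gamma modules} is fine, but the claim that $z_{i-1}'=y_{i-1}'$ follows from ``Remark \ref{perfect parametrization} applied inductively'' is not available at levels $i>2$. That remark records a \emph{bijection} only in the two-dimensional situation, and only because the neighbourhood $\mathcal{U}'$ was shrunk so that $H^1_{\phi,\Gamma}(\robba_{\mathcal{U}'}(\widetilde\delta_1'/\widetilde\delta_2'))$ is free of rank $1$. For $i>2$, Theorem \ref{extensions} as used in Proposition \ref{n dim T-} only gives \emph{surjectivity} of $\Psi$: the sheaf $H^1_{\phi,\Gamma}(\D_{i-2}'(\widetilde\delta_{i-1}'^{-1}))$ need not be locally free over $X_{i-2}'\times\mathcal{U}_{i-1}'$, since by Theorem \ref{poincare duality} its fibre rank jumps wherever $H^0_{\phi,\Gamma}$ does (which can happen in $\widetilde{\mathcal{U}}_n'$ — only $H^2$-jumps were excluded), so the map $z_{i-1}'\mapsto\D_{i-1}'\widehat\otimes k(z_{i-1}')$ can have positive-dimensional fibres. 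You are right to single this out as the subtle point; the paper's proof asserts the same slicing without comment, so you have not introduced an error the paper avoids, but the mechanism you propose to close it does not apply in the generality needed.
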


  \begin{proof}
    Notice that
    \begin{align*}
      \{z_i'\in X_i'(K)\colon \D_i'\widehat\otimes k(z_i')=E_i'\}
      &=\{(y_{i-1}',\widetilde\eta_i',e')\in X_i'(K)\colon \Psi_K(y_{i-1}',\widetilde\eta_i',e')=E_i'\}\\
      &=(y_{i-1}',\widetilde\eta_i')\times\{e'\in\mathbb{A}^{d_i}_K\colon\Psi_K(y_{i-1}',\widetilde\eta_i',e')=E_i'\},
    \end{align*}
    where $\Psi_K$ is the surjective map for $X_i'$ defined in Theorem \ref{extensions}.
    Thus
    \begin{align*}
      \dim_K(\{z_i'\in X_i'(K)\colon \D_i'\widehat\otimes k(z_i')=E_i'\})
      &=\dim_K\ker(\mathbb{A}_K^{d_i}\xrightarrowdbl{\Psi_K}H^1_{\phi,\Gamma}(E_{i-1}'(\widetilde\eta_i'^{-1})))\\
      &=d_i-\dim_K H^1_{\phi,\Gamma}(E_{i-1}'(\widetilde\eta_i'^{-1})).
    \end{align*}
  \end{proof}

  \begin{prop}
    \label{fiber dimension of G_i}
    For $2\leq i\leq n$, let $(y_{i-1},\widetilde\eta_i,\dots,\widetilde\eta_n)\in\im(G_i)(K)$ for some extension $K$ of $L$
    (if $i=2$, then $y_{i-1}=\widetilde{\eta}_1\in\T(K)$).
    We have
    \begin{align*}
      \dim_K G_i^{-1}(y_{i-1},\widetilde\eta_i,\dots,\widetilde\eta_n)&
      =\dim_KH_{\phi,\Gamma}^1((\D_{i-1}\widehat\otimes k(y_{i-1}))(\widetilde\eta_i^{-1}))\\
      &+d_i-\dim_KH_{\phi,\Gamma}^1((\D_{i-1}'\widehat\otimes k(y'_{i-1}))(x^{\mu_i}\widetilde\eta_i^{-1})).
    \end{align*}
    When $i=2$, we put $\D_{i-1}=\robba_{\mathcal{U}_1}(\widetilde{\delta}_1)$
    and $\D_{i-1}'=\robba_{\mathcal{U}_1'}(x^{-\mu_1}\widetilde{\delta}_1)$.
  \end{prop}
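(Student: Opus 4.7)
The plan is to adapt the bijectivity argument of Lemma \ref{fiber dimension of G} to the higher dimensional setting, combined with Lemma \ref{how many x' specializing to E} to absorb the gap between $X_i$ and $X_i'$ introduced by the Beauville-Laszlo construction. Since $G_i$ is the identity on the factors $\mathcal{U}_{i+1}\times\dots\times\mathcal{U}_n$, the dimension of $G_i^{-1}(y_{i-1},\widetilde\eta_i,\dots,\widetilde\eta_n)$ equals that of the fiber $\mathcal{F}$ of the natural map $X_i\to X_{i-1}\times\mathcal{U}_i$ over $(y_{i-1},\widetilde\eta_i)$, so the task reduces to computing $\dim_K\mathcal{F}$.

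First I would define a map
$$\Phi:\mathcal{F}\longrightarrow H^1_{\phi,\Gamma}\bigl((\D_{i-1}\widehat\otimes k(y_{i-1}))(\widetilde\eta_i^{-1})\bigr)$$
sending $y_i\in\mathcal{F}$ to the extension class of $\D_i\widehat\otimes k(y_i)$ as an extension of $\D_{i-1}\widehat\otimes k(y_{i-1})$ by $\robba_K(\widetilde\eta_i)$; this is well-defined because the $\Fil^{i-1}$ piece of the lattice $\Lambda_{y_i}$ coincides with the lattice attached to $y_{i-1}$, by the construction of $X_i$ in Theorem \ref{construction of X_n}. For surjectivity, given an extension $E_i$, Proposition \ref{bijection of lattices and modules} attaches to it a unique $\Gamma_m$-stable lattice $\Lambda_{E_i}$, and a repetition of the graded-piece computation of Lemma \ref{relative positions of lattices in dim n} (now applied to $E_i$ in place of $D_i$) shows that $\Lambda_{E_i}\in Y_{(\mu_1,\dots,\mu_i)}$; meanwhile $E_i[1/t]$ sits in $H^1_{\phi,\Gamma}((\D_{i-1}'\widehat\otimes k(y'_{i-1}))(x^{\mu_i}\widetilde\eta_i^{-1}))$ because $\D_{i-1}[1/t]=\D_{i-1}'[1/t]$, so the surjectivity of the parametrization $\Psi_K$ for $X_i'$ (Theorem \ref{extensions}) provides some $y_i'\in X_i'$ lifting $(y'_{i-1},x^{-\mu_i}\widetilde\eta_i)$ with $\D_i'\widehat\otimes k(y_i')=E_i[1/t]$. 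Then $y_i:=((y_i',\Lambda_{E_i}),\Lambda_{E_i})\in\mathcal{F}$ satisfies $\Phi(y_i)=E_i$ by the Beauville-Laszlo equivalence of Theorem \ref{equivalenceBL}.

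To conclude, for each $E_i$ in the image of $\Phi$, Proposition \ref{bijection of lattices and modules} forces the lattice component of any $y_i\in\Phi^{-1}(E_i)$ to equal $\Lambda_{E_i}$, so $\Phi^{-1}(E_i)$ is identified with the set of $y_i'\in X_i'$ lifting $(y'_{i-1},x^{-\mu_i}\widetilde\eta_i)$ with $\D_i'\widehat\otimes k(y_i')=E_i[1/t]$; Lemma \ref{how many x' specializing to E} shows this set has dimension $d_i-\dim_K H^1_{\phi,\Gamma}((\D_{i-1}'\widehat\otimes k(y'_{i-1}))(x^{\mu_i}\widetilde\eta_i^{-1}))$. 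Since this dimension is independent of $E_i$, summing with the dimension of $\im(\Phi)=H^1_{\phi,\Gamma}((\D_{i-1}\widehat\otimes k(y_{i-1}))(\widetilde\eta_i^{-1}))$ yields the stated formula. The main obstacle is the surjectivity step: verifying that the Beauville-Laszlo lattice of a general extension $E_i$ lies in the correct Schubert cell $Y_{(\mu_1,\dots,\mu_i)}$, but this follows essentially verbatim from the argument already used in Lemma \ref{relative positions of lattices in dim n}, once one records that the parameters of $E_i$ and of $E_i[1/t]$ differ by the prescribed integer twists $\mu_j$.
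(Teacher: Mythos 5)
Your proposal is correct and takes essentially the same approach as the paper. Where you fiber $G_i^{-1}(y_{i-1},\widetilde\eta_i,\dots,\widetilde\eta_n)$ over the extension class $\Phi(y_i)\in H^1_{\phi,\Gamma}((\D_{i-1}\widehat\otimes k(y_{i-1}))(\widetilde\eta_i^{-1}))$ and compute the dimension of each $\Phi$-fiber via Lemma \ref{how many x' specializing to E}, the paper packages the same data as a bijection between $G_i^{-1}(y_{i-1},\widetilde\eta_i,\dots,\widetilde\eta_n)$ and the set $S$ of pairs $(y_i',E_i)$ with $E_i\in H^1_{\phi,\Gamma}(E_{i-1}(\widetilde\eta_i^{-1}))$ and $E_i[1/t]=(\D_i'\widehat\otimes k(y_i'))[1/t]$; both use Proposition \ref{bijection of lattices and modules} for the lattice/module identification, the graded-piece computation of Lemma \ref{relative positions of lattices in dim n} to place the lattice in $Y_{(\mu_1,\dots,\mu_i)}$, and Lemma \ref{how many x' specializing to E} for the $d_i-\dim_KH^1_{\phi,\Gamma}(E_{i-1}'(x^{\mu_i}\widetilde\eta_i^{-1}))$ contribution. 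One small imprecision: you write ``$\D_i'\widehat\otimes k(y_i')=E_i[1/t]$'' where you mean $(\D_i'\widehat\otimes k(y_i'))[1/t]=E_i[1/t]$; the passage from $E_i[1/t]$ to an honest extension in $H^1_{\phi,\Gamma}(E_{i-1}'(x^{\mu_i}\widetilde\eta_i^{-1}))$ over $\robba_K$ (to which $\Psi_K$-surjectivity can be applied) uses the isomorphism of Proposition \ref{prop:high dim twist}, which is available by the weight conditions built into $\widetilde{\mathcal{U}}_n'$.
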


  \begin{proof}
    For simplicity, let us denote by $E_{i-1}\coloneqq\D_{i-1}\widehat\otimes k(y_{i-1})$
    and $E_{i-1}'\coloneqq\D_{i-1}'\widehat\otimes k(y'_{i-1})$
    (in case $i=2$, then $E_{i-1}=\robba_K(\widetilde{\eta}_1)$
    and $E_{i-1}'=\robba_K(x^{-\mu_1}\widetilde{\eta}_1)$).
    We then have that the fiber $G_i^{-1}(y_{i-1},\widetilde\eta_i,\dots,\widetilde\eta_n)$ is
    \begin{align*}
      \left\{
        (y_i',\Lambda_{y_i})\in X_i'\times Y_{(\mu_1,\dots,\mu_i)}\colon
        \begin{array}{c}
          \D_i'\widehat\otimes k(y_i')\in H_{\phi,\Gamma}^1(E_{i-1}'(x^{\mu_i}\widetilde\eta_i^{-1}))\text{ and}\\
          \Lambda_{y_i}\text{ is a }\Gamma_m\text{-stable lattice of }\\
          (\D_i'^r\widehat\otimes k(y_i'))[1/t]\otimes_{\robba_K^r[1/t],\tau}K((t))
        \end{array}
      \right\}.
    \end{align*}
    We define the set
    \begin{align*}
      S\coloneqq\left\{
        (y_i',E_i)\in X_i'\times H^1_{\phi,\Gamma}(E_{i-1}(\widetilde\eta_i^{-1}))\colon
        \begin{array}{c}
          \D_i'\widehat\otimes k(y_i')\in H_{\phi,\Gamma}^1(E_{i-1}'(x^{\mu_i}\widetilde\eta_i^{-1}))\\
          \text{such that }E_i[1/t]=(\D_i'\widehat\otimes k(y_i'))[1/t]
        \end{array}
      \right\}.
    \end{align*}
    Observe that there is a bijection
    \begin{align*}
      G_i^{-1}(y_{i-1},\widetilde\eta_i,\dots,\widetilde\eta_n)&\rightarrow S\\
      (y_i',E_i^r\otimes_{\robba_K^r,\tau}K\llbracket t\rrbracket)&\mapsfrom(y_i',E_i),
    \end{align*}
    where $E_i^r$ is a $\phigam$-module over $\robba_K^r$ such that $E_i=E_i^r\otimes_{\robba_K^r,\tau}\robba_K$.
    In fact, the map is well-defined because obviously $E_i^r\otimes_{\robba_K^r,\tau}K\llbracket t\rrbracket$
    is a $\Gamma_m$-stable lattice of $(\D_i'^r\widehat\otimes k(y_i'))[1/t]\otimes_{\robba_K^r[1/t],\tau}K((t))$
    and it is in $Y_{(\mu_1,\dots,\mu_n)}$:
    in fact
    \[
      \gr^j(E_i^r\otimes_{\robba_K^r,\tau}K\llbracket t\rrbracket)=
      t^{\mu_j}\gr^j(E_i'^r\otimes_{\robba_K^r,\tau}K\llbracket t\rrbracket)
    \]
    for all $1\leq j\leq i$ by similar computations to the ones in the proof of Lemma \ref{relative positions of lattices}.
    Then the bijection descends by the bijection of Proposition \ref{bijection of lattices and modules}.
    Finally by Lemma \ref{how many x' specializing to E} we have that
    \begin{align*}
      \dim_KG_i^{-1}(y_{i-1},\widetilde\eta_i,\dots,\widetilde\eta_n)
      =\dim_KS=&\dim_KH_{\phi,\Gamma}^1((\D_{i-1}\widehat\otimes k(y_{i-1}))(\widetilde\eta_i^{-1}))\\
      &+d_i-\dim_KH_{\phi,\Gamma}^1((\D_{i-1}'\widehat\otimes k(y'_{i-1}))(x^{\mu_i}\widetilde\eta_i^{-1})).
    \end{align*}
  \end{proof}

  \begin{lem}
    \label{bound of H^1}
    Let $D_{n-1}$ be a triangulable $\phigam$-module over $\robba_K$ of parameters
    $(\delta_1,\dots,\delta_{n-1})\in\T^{n-1}(K)$.
    Let $\delta_n\in\T(K)$ and let us define the following sets
    $\mathcal{L}\coloneqq\{i<n\colon\delta_i/\delta_n\in\T^-(K)\}$ and
    $\mathcal{M}\coloneqq\{i<n\colon\delta_i/\delta_n\in\T^+(K)\}$.
    We have
    \[
      \dim_KH^1_{\phi,\Gamma}(D_{n-1}(\delta_n^{-1}))\leq n-1+|\mathcal{L}|+|\mathcal{M}|.
    \]
  \end{lem}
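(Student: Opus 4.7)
The plan is to prove the bound by induction on the rank of $D_{n-1}$, using sub-additivity of $\dim_K H^1_{\phi,\Gamma}$ in short exact sequences together with the classification of the $\phigam$-cohomology of rank-$1$ modules recalled in the paper: $\dim_K H^1_{\phi,\Gamma}(\robba_K(\delta))$ equals $1$ if $\delta\in\T^\reg(K)$ and $2$ if $\delta\in(\T^+\cup\T^-)(K)$.

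For the base case (rank $1$), we have $D_{n-1}=\robba_K(\delta_1)$, so the claim reduces to the rank-$1$ computation: if $\delta_1/\delta_n\in\T^\reg$ then $|\mathcal{L}|+|\mathcal{M}|=0$ and the bound reads $1\leq 1$; if $\delta_1/\delta_n\in\T^+\cup\T^-$ then $|\mathcal{L}|+|\mathcal{M}|=1$ and the bound reads $2\leq 2$.

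For the inductive step, I would twist the triangulation $0\to D_{n-2}\to D_{n-1}\to\robba_K(\delta_{n-1})\to 0$ by $\delta_n^{-1}$ and pass to the long exact sequence of $\phigam$-cohomology, which yields
\[
\dim_K H^1_{\phi,\Gamma}(D_{n-1}(\delta_n^{-1}))\leq \dim_K H^1_{\phi,\Gamma}(D_{n-2}(\delta_n^{-1}))+\dim_K H^1_{\phi,\Gamma}(\robba_K(\delta_{n-1}/\delta_n)).
\]
Applying the inductive hypothesis to $D_{n-2}$, whose associated sets are $\mathcal{L}'=\mathcal{L}\cap\{1,\dots,n-2\}$ and $\mathcal{M}'=\mathcal{M}\cap\{1,\dots,n-2\}$, bounds the first term by $(n-2)+|\mathcal{L}'|+|\mathcal{M}'|$.

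It then remains to split into three cases according to the position of $\delta_{n-1}/\delta_n$. If $\delta_{n-1}/\delta_n\in\T^\reg$, then $n-1\notin\mathcal{L}\cup\mathcal{M}$ so $|\mathcal{L}'|=|\mathcal{L}|$, $|\mathcal{M}'|=|\mathcal{M}|$, and the rank-$1$ term contributes $1$, giving $(n-1)+|\mathcal{L}|+|\mathcal{M}|$. If $\delta_{n-1}/\delta_n\in\T^-$, then $n-1\in\mathcal{L}$, so $|\mathcal{L}'|=|\mathcal{L}|-1$, but now the rank-$1$ term contributes $2$, and the two effects cancel to again produce $(n-1)+|\mathcal{L}|+|\mathcal{M}|$; the case $\delta_{n-1}/\delta_n\in\T^+$ is symmetric. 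Thus the induction closes. There is no real obstacle: the argument is a routine combination of the long exact sequence of $\phigam$-cohomology with the rank-$1$ dimension count, and the only thing to verify is the combinatorial bookkeeping on the sets $\mathcal{L}$ and $\mathcal{M}$.
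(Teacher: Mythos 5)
Your proof is correct and follows essentially the same route as the paper's: induction on rank, the long exact sequence in $\phigam$-cohomology coming from the triangulation twisted by $\delta_n^{-1}$, and the three-way case split on whether $\delta_{n-1}/\delta_n$ lies in $\T^{\reg}$, $\T^-$, or $\T^+$, with the rank-$1$ dimension count absorbing the bookkeeping. No gap.
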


  \begin{proof}
    We prove this result by induction.
    \begin{itemize}
      \item[$n=2:$]
        We distinguish two cases:
        \begin{itemize}
          \item
            if $\delta_1/\delta_2\in\T^\reg(K)$, then $\mathcal{L}=\mathcal{M}=\emptyset$
            and in fact
            \[
              \dim_K H^1_{\phi,\Gamma}(\robba_K(\delta_1/\delta_2))=1.
            \]
          \item
            If instead $\delta_1/\delta_2\notin\T^\reg(K)$, then $|\mathcal{L}|+|\mathcal{M}|=1$
            and
            \[
              2=\dim_KH^1_{\phi,\Gamma}(\robba_K(\delta_1/\delta_2))=1+|\mathcal{L}|+|\mathcal{M}|.
            \]
        \end{itemize}
      \item[$n>2:$]
        Let $D_{n-2}$ be the $n-2$-th piece of the filtration of $D_{n-1}$, so that
        $D_{n-1}\in H^1_{\phi,\Gamma}(D_{n-2}(\delta_{n-1}^{-1}))$.
        We have the following long exact sequence
        \[
          \dots\rightarrow H^1_{\phi,\Gamma}(D_{n-2}(\delta_n^{-1}))\rightarrow H^1_{\phi,\Gamma}(D_{n-1}(\delta_n^{-1}))
          \rightarrow H^1_{\phi,\Gamma}(\robba_K(\delta_{n-1}/\delta_n))\rightarrow\dots.
        \]
        Thus
        \[
          \dim_KH^1_{\phi,\Gamma}(D_{n-1}(\delta_n^{-1}))\leq\dim_KH^1_{\phi,\Gamma}(D_{n-2}(\delta_n^{-1}))
          +\dim_KH^1_{\phi,\Gamma}(\robba_K(\delta_{n-1}/\delta_n)).
        \]
        Now we distinguish three cases:
        \begin{itemize}
          \item
            if $\delta_{n-1}/\delta_n\in\T^\reg(K)$, then $\dim_KH^1_{\phi,\Gamma}(\robba_K(\delta_{n-1}/\delta_n))=1$;
            moreover $\mathcal{L}=\{i<n-1\colon\delta_i/\delta_n\in\T^-(K)\}$ and
            $\mathcal{M}=\{i<n-1\colon\delta_i/\delta_n\in\T^+(K)\}$.
            By inductive hypothesis then
            \[
              \dim_KH^1_{\phi,\Gamma}(D_{n-2}(\delta_n^{-1}))\leq n-2+|\mathcal{L}|+|\mathcal{M}|
            \]
            and hence
            \[
              \dim_KH^1_{\phi,\Gamma}(D_{n-1}(\delta_n^{-1}))\leq n-2+|\mathcal{L}|+|\mathcal{M}|+1
              =n-1+|\mathcal{L}|+|\mathcal{M}|.
            \]
          \item
            If $\delta_{n-1}/\delta_n\in\T^-(K)$,  then $\dim_KH^1_{\phi,\Gamma}(\robba_K(\delta_{n-1}/\delta_n))=2$.
            On the other hand, we have $\{i<n-1\colon\delta_i/\delta_n\in\T^-(K)\}=\mathcal{L}\setminus\{n-1\}$
            and $\mathcal{M}=\{i<n-1\colon\delta_i/\delta_n\in\T^+(K)\}$.
            By inductive hypothesis we have
            \[
              \dim_KH^1_{\phi,\Gamma}(D_{n-2}(\delta_n^{-1}))\leq n-2+|\mathcal{L}|-1+|\mathcal{M}|,
            \]
            thus
            \[
              \dim_KH^1_{\phi,\Gamma}(D_{n-1}(\delta_n^{-1}))\leq n-2+|\mathcal{L}|-1+|\mathcal{M}|+2
              =n-1+|\mathcal{L}|+|\mathcal{M}|.
            \]
          \item
            Finally if $\delta_{n-1}/\delta_n\in\T^+(K)$, we have again that
            \[
              \dim_KH^1_{\phi,\Gamma}(\robba_K(\delta_{n-1}/\delta_n))=2,
            \]
            $\mathcal{L}=\{i<n-1\colon\delta_i/\delta_n\in\T^-(K)\}$ and
            $\{i<n-1\colon\delta_i/\delta_n\in\T^+(K)\}=\mathcal{M}\setminus\{n-1\}$.
            Therefore by inductive hypothesis
            \[
              \dim_KH^1_{\phi,\Gamma}(D_{n-2}(\delta_n^{-1}))\leq n-2+|\mathcal{L}|+|\mathcal{M}|-1.
            \]
            Consequently
            \[
              \dim_KH^1_{\phi,\Gamma}(D_{n-1}(\delta_n^{-1}))\leq n-2+|\mathcal{L}|+|\mathcal{M}|-1+2
              =n-1+|\mathcal{L}|+|\mathcal{M}|.
            \]
        \end{itemize}
    \end{itemize}
  \end{proof}

  \begin{prop}
    \label{fiber dimension of G_n}
    Let $\mathcal{Z}_{(\mathcal{L}_j,\mathcal{M}_j)_j}^{(a_{ij})}\subset\T^n$ be a subspace
    defined as in (\ref{def of Z}).
    We have that for $\underline{\widetilde{\eta}}\in\mathcal{Z}_{(\mathcal{L}_j,\mathcal{M}_j)_j}^{(a_{ij})}\cap\widetilde{\mathcal{U}}_n(K)$
    \[
      \dim_K G^{-1}(\underline{\widetilde{\eta}})\leq
      \sum_{i=2}^n d_i +\sum_{j=2}^n(|\mathcal{L}_j|+|\mathcal{M}_j|).
    \]
  \end{prop}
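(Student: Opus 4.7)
The plan is to decompose $G^{-1}(\underline{\widetilde\eta})$ along the factorization $G = G_2 \circ G_3 \circ \cdots \circ G_n$ described just before the statement. By iterated application of the standard fiber-dimension inequality for compositions of morphisms of rigid spaces,
\[
\dim_K G^{-1}(\underline{\widetilde\eta}) \leq \sum_{i=2}^n \sup \dim_K G_i^{-1}(y_{i-1}, \widetilde\eta_i, \ldots, \widetilde\eta_n),
\]
where the supremum is over points of the relevant intermediate fibers. So the task reduces to bounding each fiber dimension of $G_i$. Proposition \ref{fiber dimension of G_i} computes the latter exactly as
\[
d_i + \dim_K H^1_{\phi,\Gamma}(E_{i-1}(\widetilde\eta_i^{-1})) - \dim_K H^1_{\phi,\Gamma}(E'_{i-1}(x^{\mu_i}\widetilde\eta_i^{-1})),
\]
where $E_{i-1} = \D_{i-1} \widehat\otimes k(y_{i-1})$ and $E'_{i-1} = \D'_{i-1} \widehat\otimes k(y'_{i-1})$. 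Hence the heart of the matter is an upper bound of $|\mathcal{L}_i| + |\mathcal{M}_i|$ on the difference of $H^1$-dimensions.

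The upper bound on the first term is precisely Lemma \ref{bound of H^1}: the parameters of $E_{i-1}$ are $(\widetilde\eta_1, \ldots, \widetilde\eta_{i-1})$, and the sets $\mathcal{L}_i, \mathcal{M}_i$ from the definition of $\mathcal{Z}_{(\mathcal{L}_j,\mathcal{M}_j)_j}^{(a_{ij})}$ agree with the sets $\mathcal{L}, \mathcal{M}$ appearing in the lemma applied with $\delta_n \rightsquigarrow \widetilde\eta_i$, yielding
\[
\dim_K H^1_{\phi,\Gamma}(E_{i-1}(\widetilde\eta_i^{-1})) \leq (i-1) + |\mathcal{L}_i| + |\mathcal{M}_i|.
\]
The matching lower bound on the second term comes from $H^2$-vanishing. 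By the inductive construction of $\widetilde{\mathcal{U}}'_n$ in Proposition \ref{construction of Xn'} together with Remark \ref{good U_n'}, every specialization $\widetilde\delta'_k/\widetilde\delta'_l \widehat\otimes k(u)$ for $u \in \widetilde{\mathcal{U}}'_n$ and $k < l$ avoids $\T^+$. Hence $H^2_{\phi,\Gamma}(\robba_K(\widetilde\eta'_k/\widetilde\eta'_l)) = 0$ for every graded piece of the triangulation of $E'_{i-1}(x^{\mu_i}\widetilde\eta_i^{-1})$, and the long exact cohomology sequence propagates the vanishing to $H^2_{\phi,\Gamma}(E'_{i-1}(x^{\mu_i}\widetilde\eta_i^{-1})) = 0$. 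Theorem \ref{poincare duality} then forces
\[
\dim_K H^1_{\phi,\Gamma}(E'_{i-1}(x^{\mu_i}\widetilde\eta_i^{-1})) = (i-1) + \dim_K H^0_{\phi,\Gamma}(E'_{i-1}(x^{\mu_i}\widetilde\eta_i^{-1})) \geq i-1.
\]

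Subtracting gives $\dim_K G_i^{-1}(\cdot) \leq d_i + |\mathcal{L}_i| + |\mathcal{M}_i|$, and summing over $i$ produces the claimed inequality. The main conceptual obstacle is establishing the uniform $H^2$-vanishing for the primed setup across \emph{all} of $\widetilde{\mathcal{U}}'_n$ rather than just at the central point $(\delta_1 x^{-\mu_1}, \ldots, \delta_n x^{-\mu_n})$; this is precisely what the careful choice of the shift exponents $\mu_i$ in Lemma \ref{iso of H^1 in high dim} (forcing the primed ratios to avoid $\T^+$ by enforcing either integer weight $0$ or a non-integer weight) together with the neighborhood refinement of Lemma \ref{good neighbourhood} were designed to guarantee. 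Once that uniform vanishing is in place, the rest of the argument is a direct Euler-characteristic bookkeeping.
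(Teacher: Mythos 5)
Your proof is correct and follows essentially the same route as the paper's: decompose $G$ through the tower $G_2\circ\cdots\circ G_n$, plug in the exact fiber-dimension formula from Proposition \ref{fiber dimension of G_i}, bound the unprimed $H^1$ from above via Lemma \ref{bound of H^1}, bound the primed $H^1$ from below via the Euler characteristic, and sum. The one small detour in your write-up is the explicit $H^2$-vanishing argument for $E'_{i-1}(x^{\mu_i}\widetilde\eta_i^{-1})$; the paper does not need it, since the Euler characteristic formula already gives $\dim_K H^1 = (i-1) + \dim_K H^0 + \dim_K H^2 \geq i-1$ unconditionally from the non-negativity of $H^0$ and $H^2$, with no need to know that $H^2$ actually vanishes over $\widetilde{\mathcal U}_n'$. (That vanishing is a true fact and is indeed used elsewhere in the paper, just not here.)
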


  \begin{proof}
    Let $\underline{\widetilde{\eta}}\in\mathcal{Z}_{(\mathcal{L}_j,\mathcal{M}_j)_j}^{(a_{ij})}\cap\widetilde{\mathcal{U}}_n(K)$.
    For $2\leq i\leq n$, let $(y_{i-1},\widetilde\eta_i,\dots,\widetilde\eta_n)\in\im(G_i)(K)$
    (if $i=2$ we let $y_{i-1}=\widetilde{\eta}_1$,
    $\D_{i-1}=\mathcal{R}_{\mathcal{U}_1}(\widetilde{\delta}_1)$
    and $\D_{i-1}'=\mathcal{R}_{\mathcal{U}_1'}(x^{-\mu_1}\widetilde{\delta}_1)$).
    By Proposition \ref{fiber dimension of G_i} and Lemma \ref{bound of H^1} we have
    \begin{align*}
      \dim_K G_i^{-1}(y_{i-1},\widetilde\eta_i,\dots,\widetilde\eta_n)&
      =\dim_KH_{\phi,\Gamma}^1((\D_{i-1}\widehat\otimes k(y_{i-1}))(\widetilde\eta_i^{-1}))\\
      &+d_i-\dim_KH_{\phi,\Gamma}^1((\D_{i-1}'\widehat\otimes k(y'_{i-1}))(x^{\mu_i}\widetilde\eta_i^{-1}))\\
      &\leq i-1+|\mathcal{L}_{i}|+|\mathcal{M}_i|\\
      &+d_i-\dim_KH_{\phi,\Gamma}^1((\D_{i-1}'\widehat\otimes k(y'_{i-1}))(x^{\mu_i}\widetilde\eta_i^{-1})).
    \end{align*}
    On the other hand, by Theorem \ref{poincare duality}, we have
    \[
      \dim_KH_{\phi,\Gamma}^1((\D_{i-1}'\widehat\otimes k(y'_{i-1}))(x^{\mu_i}\widetilde\eta_i^{-1}))\geq i-1,
    \]
    thus
    \begin{align*}
      \dim_K G_i^{-1}(y_{i-1},\widetilde\eta_i,\dots,\widetilde\eta_n)&\leq i-1+|\mathcal{L}_{i}|+|\mathcal{M}_i|\\
      &+d_i-\dim_KH_{\phi,\Gamma}^1((\D_{i-1}'\widehat\otimes k(y'_{i-1}))(x^{\mu_i}\widetilde\eta_i^{-1}))\\
      &\leq d_i+|\mathcal{L}_{i}|+|\mathcal{M}_i|.
    \end{align*}
    This proves the wanted inequality.
  \end{proof}

  It is possible to compute the exact dimension of the preimage of the regular locus through $F$.

  \begin{prop}
    \label{dim of Wreg}
    We have
    \[
      \dim_KF^{-1}(\W_n^\reg)=2n+\sum_{i=2}^nd_i.
    \]
  \end{prop}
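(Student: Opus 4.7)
The plan is to factor $F=H_n\circ G$ and compute each piece separately.

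First, I would show that $H_n^{-1}(\W_n^\reg)\cap\widetilde{\mathcal{U}}_n\subseteq\T_n^\reg$: since both $\T^+=\{\chi x^m:m\in\N\}$ and $\T^-=\{x^{-m}:m\in\N\}$ restrict under $H$ to characters in $x^\Z\subseteq\W$, any tuple whose pairwise ratios on $\Z_p^\times$ avoid $x^\Z$ already has all ratios in $\T^\reg$. Using the identification $\T\cong\W\times\Gm^{\an}$, the forgetful map $H_n$ is a $(\Gm^{\an})^n$-bundle of relative dimension $n$, while $\W_n^\reg$ is open and dense in $\W^n$ of dimension $n$; hence the open intersection $H_n^{-1}(\W_n^\reg)\cap\widetilde{\mathcal{U}}_n\subseteq\T_n^\reg$ has dimension $2n$.

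Second, I would compute the fibers of $G$ over this base. Applying Proposition \ref{fiber dimension of G_i} at each level of the factorization $G=G_2\circ\cdots\circ G_n$ gives
\[
\dim_K G_i^{-1}(\cdot)=\dim_K H^1_{\phi,\Gamma}(E_{i-1}(\widetilde\eta_i^{-1}))+d_i-\dim_K H^1_{\phi,\Gamma}(E'_{i-1}(x^{\mu_i}\widetilde\eta_i^{-1})).
\]
Regularity of all ratios $\widetilde\eta_i/\widetilde\eta_j$ forces every graded piece of both twisted modules to lie in $\T^\reg$, hence to have $H^0=H^2=0$ and $\dim H^1=1$ by Theorem \ref{cohomology of families}. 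An induction up the filtration, using the long exact sequences in cohomology, gives $H^0=H^2=0$ and $\dim H^1=i-1$ for the whole twisted modules on both the primed and unprimed sides. Thus each $G_i$ has fiber dimension exactly $d_i$ on this locus and the telescoped fiber dimension of $G$ is $\sum_{i=2}^n d_i$.

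Third, I would check that $G$ surjects onto a Zariski-dense subset of $H_n^{-1}(\W_n^\reg)\cap\widetilde{\mathcal{U}}_n$. Given any regular tuple $\underline{\widetilde\eta}$ in this open, the split extension $\bigoplus_j\robba_K(\widetilde\eta_j x^{-\mu_j})$ defines a point $y_n'\in X_n'$, and $\sigma_{(\mu_1,\ldots,\mu_n)}$ applied to the standard flag of $K((t))^n$ produces a $\Gamma_m$-stable lattice in $Y_{(\mu_1,\ldots,\mu_n)}$ (thanks to Lemma \ref{Gamma-stable lattices and Gamma-stable filtrations}), yielding a preimage $y_n\in X_n$ with $G(y_n)=\underline{\widetilde\eta}$. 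Since $G$ restricted to $F^{-1}(\W_n^\reg)$ then has constant fiber dimension $\sum_{i=2}^n d_i$ over an image of dimension $2n$, the standard base-plus-fiber dimension identity yields
\[
\dim_K F^{-1}(\W_n^\reg)=2n+\sum_{i=2}^n d_i.
\]
The main subtlety is the equidimensionality check in the second step; without it the identity could fail, since $G$ is not flat globally by Remark \ref{not flatness and dim X'}, but this pathology is confined outside of the regular locus and so does not affect the present computation.
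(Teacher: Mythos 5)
Your proposal is correct and follows essentially the same route as the paper's proof: factor $F=H_n\circ G$, compute the base $H_n^{-1}(\W_n^\reg)\cap\widetilde{\mathcal{U}}_n$ to have dimension $2n$, and then use Proposition \ref{fiber dimension of G_i} together with regularity of all ratios (forcing $\dim H^1_{\phi,\Gamma}=i-1$ on both the primed and unprimed sides) to get fiber dimension exactly $d_i$ at each stage. The only cosmetic difference is that you obtain $\dim H^1=i-1$ by a direct long-exact-sequence induction, whereas the paper sandwiches it between the bound of Lemma \ref{bound of H^1} and the Euler characteristic from Theorem \ref{poincare duality}, and you make the surjectivity of $G$ onto a full-dimensional subset explicit (via the split-extension construction) where the paper leaves it implicit.
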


  \begin{proof}
    It is easy to see that
    \[
      \dim_K(\W_n^\reg\cap\im(H_n))=n=\dim_K(H_n^{-1}(\W_n^\reg)).
    \]
    Notice that $H_n^{-1}(\W_n^\reg)\subset\T_n^\reg$ by definition of $\W_n^\reg$.
    Fix $\underline{\widetilde\eta}\in H_n^{-1}(\W_n^\reg)\cap\widetilde{\mathcal{U}}_n(K)$.
    For $2\leq i\leq n$, let $(y_{i-1},\widetilde\eta_i,\dots,\widetilde\eta_n)\in\im(G_i)(K)\cap
    G_{i-1}^{-1}\circ\dots\circ G_2^{-1}(\underline{\widetilde\eta})$
    (if $i=2$, we have $y_{i-1}=\widetilde{\eta}_1\in\T(K)$).
    By Proposition \ref{fiber dimension of G_i} we have
    \begin{align*}
      \dim G_i^{-1}(y_{i-1},\widetilde\eta_i,\dots,\widetilde\eta_n)&
      =\dim_KH_{\phi,\Gamma}^1((\D_{i-1}\widehat\otimes k(y_{i-1}))(\widetilde\eta_i^{-1}))\\
      &+d_i-\dim_KH_{\phi,\Gamma}^1((\D_{i-1}'\widehat\otimes k(y'_{i-1}))(x^{\mu_i}\widetilde\eta_i^{-1})).
    \end{align*}
    Since $(y_{i-1},\widetilde\eta_i,\dots,\widetilde\eta_n)\in G_{i-1}^{-1}\circ\dots\circ G_2^{-1}(\underline{\widetilde\eta})$,
    we have that the parameters of the $\phigam$-module $\D_{i-1}\widehat\otimes k(y_{i-1})$
    are exactly $(\widetilde\eta_1,\dots,\widetilde\eta_{i-1})$.
    Since $\underline{\widetilde\eta}\in\T_n^\reg$, we have that
    \[
      \mathcal{L}_i\coloneqq\{j<i\colon \widetilde\eta_j/\widetilde\eta_i\in\T^-(K)\}=\emptyset
    \]
    and
    \[
      \mathcal{M}_i\coloneqq\{j<i\colon \widetilde\eta_j/\widetilde\eta_i\in\T^+(K)\}=\emptyset
    \]
    for all $2\leq i\leq n$.
    Thus by Lemma \ref{bound of H^1}, we have
    \[
      \dim_KH_{\phi,\Gamma}^1((\D_{i-1}\widehat\otimes k(y_{i-1}))(\widetilde\eta_i^{-1}))\leq i-1.
    \]
    On the other hand, by Theorem \ref{poincare duality}, we know that
    \[
      \dim_KH_{\phi,\Gamma}^1((\D_{i-1}\widehat\otimes k(y_{i-1}))(\widetilde\eta_i^{-1}))\geq i-1,
    \]
    allowing us to conclude that the above inequalities are actually equalities.
    Furthermore we have that $\D_{i-1}'\widehat\otimes k(y'_{i-1})$
    has parameters $(\widetilde\eta_1x^{-\mu_1},\dots,\widetilde\eta_{i-1}x^{-\mu_{i-1}})$.
    Since $\widetilde\eta_i/\widetilde\eta_j\notin\{x^a\colon a\in\Z\}\cup\{\chi x^a\colon a\in\Z\}$
    for all $i<j$
    (because $\underline{\widetilde\eta}\in H_n^{-1}(\W_n^\reg)$),
    we have that $x^{-\mu_i}\widetilde\eta_i/x^{-\mu_j}\widetilde\eta_j\in\T^\reg(K)$ for all $i<j$.
    Therefore the same exact argument as above applies to
    $\dim_KH_{\phi,\Gamma}^1((\D_{i-1}'\widehat\otimes k(y'_{i-1}))(x^{\mu_i}\widetilde\eta_i^{-1}))$
    and we can conclude that
    \[
      \dim_KH_{\phi,\Gamma}^1((\D_{i-1}'\widehat\otimes k(y'_{i-1}))(x^{\mu_i}\widetilde\eta_i^{-1}))=i-1.
    \]
    Finally, we have that
    \begin{align*}
      \dim G_i^{-1}(y_{i-1},\widetilde\eta_i,\dots,\widetilde\eta_n)&
      =\dim_KH_{\phi,\Gamma}^1((\D_{i-1}\widehat\otimes k(y_{i-1}))(\widetilde\eta_i^{-1}))\\
      &+d_i-\dim_KH_{\phi,\Gamma}^1((\D_{i-1}'\widehat\otimes k(y'_{i-1}))(x^{\mu_i}\widetilde\eta_i^{-1}))\\
      &=d_i.
    \end{align*}
    This ends the proof.
  \end{proof}

  \begin{rem}
    \label{Wreg and X' same dim}
    In particular observe that
    \begin{align*}
      \dim F^{-1}(\W_n^\reg)=2n+\sum_{i=2}^nd_i
      =\dim(\widetilde{\mathcal{U}}_n'\times\mathbb{A}_L^{d_2,\an}\times\dots\times\mathbb{A}_L^{d_n,\an})
      =\dim X'_n.
    \end{align*}
  \end{rem}

  Now our goal is to show that the dimension of the preimage through $F$ of certain
  subspaces of $\W^n$ is strictly less than the dimension of the preimage of the regular
  locus $\W_n^\reg$.

  Like in the case of $\T^n$, we define subspaces of $\W^n$ consisting of characters $(\eta_1,\dots,\eta_n)$
  satisfying conditions of the form:
  \[
    \begin{array}{lcr}
      \eta_i/\eta_j=x^{b_{ij}},
      &\eta_i/\eta_j=x^{-b_{ij}}
    \end{array}
  \]
  for some $b_{ij}\in\N$.

  \begin{dfn}
    For any $2\leq j\leq n$, fix  $\mathcal{L}_j,\mathcal{M}_j\subset\{1,\dots,j-1\}$;
    for all $2\leq j\leq n$ and for all $i\in\mathcal{L}_j\cup\mathcal{M}_j$
    let us fix $b_{ij}\in\N$.
    We define
    \begin{equation}
      \label{def of W_(L,M)}
      \mathcal{W}_{(\mathcal{L}_j,\mathcal{M}_j)_j}^{(b_{ij})}\coloneqq\left\{\underline{\eta}\in\W^n\colon
      \begin{array}{c}
      \eta_i/\eta_j=x^{-b_{ij}}\Leftrightarrow i\in\mathcal{L}_j\text{ and }\\
      \eta_i/\eta_j=x^{b_{ij}+1}\Leftrightarrow i\in\mathcal{M}_j
      \end{array}
      \right\}.
    \end{equation}
  \end{dfn}

  We then have that the union of such spaces covers the whole $\W^n$.
  Since we are again interested in the dimension of $F^{-1}(\mathcal{W}_{(\mathcal{L}_j,\mathcal{M}_j)_j}^{(b_{ij})})$,
  we want to consider only those spaces such that
  $H_n^{-1}(\mathcal{W}_{(\mathcal{L}_j,\mathcal{M}_j)_j}^{(b_{ij})})\cap \widetilde{\mathcal{U}}_n\neq\emptyset$.

  \begin{lem}
    \label{good Z for H^-1(W)}
    Let $\mathcal{W}_{(\mathcal{L}_j,\mathcal{M}_j)_j}^{(b_{ij})}$ be the set defined in (\ref{def of W_(L,M)}).
    We have that the space $H_n^{-1}(\mathcal{W}_{(\mathcal{L}_j,\mathcal{M}_j)_j}^{(b_{ij})})$ is covered
    by those $\mathcal{Z}_{(\mathcal{L}_j',\mathcal{M}_j')_j}^{(b_{ij})}$
    such that $\mathcal{L}_j'\subseteq\mathcal{L}_j$ and $\mathcal{M}_j'\subseteq\mathcal{M}_j$
    for all $2\leq j\leq n$.
  \end{lem}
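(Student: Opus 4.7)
The plan is a direct case analysis based on how the restriction map $H_n:\mathcal{T}^n\to\mathcal{W}^n$ interacts with the special strata $\mathcal{T}^+$ and $\mathcal{T}^-$. The key observation is that $\chi|_{\mathbb{Z}_p^\times}=x|_{\mathbb{Z}_p^\times}$ (since $|z|=1$ for $z\in\mathbb{Z}_p^\times$), so restriction sends $x^{-c}\mapsto x^{-c}|_{\mathbb{Z}_p^\times}$ and $\chi x^c\mapsto x^{c+1}|_{\mathbb{Z}_p^\times}$. Because the character $x|_{\mathbb{Z}_p^\times}$ has infinite order on $\mathbb{Z}_p^\times$, the exponent is uniquely determined, so an element of $\mathcal{T}^+\cup\mathcal{T}^-$ is recovered from its restriction by remembering whether the exponent $m$ of $x^m|_{\mathbb{Z}_p^\times}$ is nonpositive (then the lift is in $\mathcal{T}^-$) or strictly positive (then the lift is in $\mathcal{T}^+$).

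Given $\underline{\widetilde\eta}\in H_n^{-1}(\mathcal{W}_{(\mathcal{L}_j,\mathcal{M}_j)_j}^{(b_{ij})})$, I will associate the canonical stratum
\[
  \mathcal{L}_j':=\{i<j:\widetilde\eta_i/\widetilde\eta_j\in\mathcal{T}^-\},\qquad
  \mathcal{M}_j':=\{i<j:\widetilde\eta_i/\widetilde\eta_j\in\mathcal{T}^+\},
\]
and show both the membership $\underline{\widetilde\eta}\in\mathcal{Z}_{(\mathcal{L}_j',\mathcal{M}_j')_j}^{(b_{ij})}$ (with the same $b_{ij}$) and the inclusions $\mathcal{L}_j'\subseteq\mathcal{L}_j$, $\mathcal{M}_j'\subseteq\mathcal{M}_j$. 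Concretely, for $i\in\mathcal{L}_j'$, write $\widetilde\eta_i/\widetilde\eta_j=x^{-c_{ij}}$ with $c_{ij}\in\mathbb{N}$; restriction yields $\eta_i/\eta_j=x^{-c_{ij}}|_{\mathbb{Z}_p^\times}$. Since the $\mathcal{W}$-condition forces $\eta_i/\eta_j$ to equal $x^{-b_{ij}}|_{\mathbb{Z}_p^\times}$ (only for $i\in\mathcal{L}_j$) or $x^{b_{ij}+1}|_{\mathbb{Z}_p^\times}$ (only for $i\in\mathcal{M}_j$), and the second option has strictly positive exponent while $-c_{ij}\leq 0$, only the first can occur, giving $i\in\mathcal{L}_j$ and $c_{ij}=b_{ij}$. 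The argument for $\mathcal{M}_j'$ is symmetric: if $\widetilde\eta_i/\widetilde\eta_j=\chi x^{c_{ij}}$, then $\eta_i/\eta_j=x^{c_{ij}+1}|_{\mathbb{Z}_p^\times}$ with exponent $\geq 1$, forcing $i\in\mathcal{M}_j$ and $c_{ij}=b_{ij}$.

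This directly produces the desired covering, so there is no genuine obstacle; the entire content lies in the uniqueness of the exponent after restriction, which is exactly what ensures the $b_{ij}$ data matches and the inclusions hold. The proof is therefore short, consisting of recalling the restriction formula $\chi|_{\mathbb{Z}_p^\times}=x|_{\mathbb{Z}_p^\times}$ and carrying out the two symmetric checks above.
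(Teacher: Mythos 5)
Your proof is correct and takes essentially the same route as the paper's: both exploit the identification $\mathcal{T}\cong\mathcal{W}\times\mathbb{G}_m^{\mathrm{an}}$, so that a character of $\mathcal{T}$ is determined by its restriction to $\mathbb{Z}_p^\times$ together with its value at $p$, and both rely on the uniqueness of the exponent of $x$ on $\mathbb{Z}_p^\times$ to match the $b_{ij}$ data. The only cosmetic difference is the order of construction: you define $\mathcal{L}_j',\mathcal{M}_j'$ intrinsically from $\underline{\widetilde\eta}$ as the $\mathcal{T}^-$ and $\mathcal{T}^+$ loci and then verify the inclusions (which also makes the $\mathcal{T}^{\mathrm{reg}}$ condition in the definition of $\mathcal{Z}$ automatic), whereas the paper cuts $\mathcal{L}_j',\mathcal{M}_j'$ out of $\mathcal{L}_j,\mathcal{M}_j$ by a $p$-value condition so the inclusion holds by fiat; the resulting sets are the same.
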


  \begin{proof}
    Let $\underline{\widetilde{\eta}}\in H_n^{-1}(\mathcal{W}_{(\mathcal{L}_j,\mathcal{M}_j)_j}^{(b_{ij})})$.
    Using the isomorphism $\T\cong\W\times \mathbb{A}^1$, we write
    $\underline{\widetilde{\eta}}=(\eta_i,a_i)_{i=1,\dots,n}$ where $\eta_i\coloneqq\widetilde{\eta_i}_{|\Z_p^\times}$
    and $a_i\coloneqq\widetilde{\eta_i}(p)$.
    Then for any $2\leq j\leq n$ and any $i\in\mathcal{L}_j$, we let $i\in\mathcal{L}_j'$
    if and only if $a_i/a_j=p^{-b_{ij}}$.
    In the same way, for any $2\leq j\leq n$ and any $i\in\mathcal{M}_j$, we let $i\in\mathcal{M}_j'$
    if and only if $a_i/a_j=p^{b_{ij}}$.
    It is easy to see that $\underline{\widetilde{\eta}}\in \mathcal{Z}_{(\mathcal{L}_j',\mathcal{M}_j')_j}^{(b_{ij})}$.
  \end{proof}

  \begin{lem}
    \label{good W}
    Let $K$ be an extension of $L$ and let $\mathcal{W}_{(\mathcal{L}_j,\mathcal{M}_j)_j}^{(b_{ij})}$
    be as in (\ref{def of W_(L,M)}).
    Then
    $H_n^{-1}(\mathcal{W}_{(\mathcal{L}_j,\mathcal{M}_j)_j}^{(b_{ij})})\cap\widetilde{\mcal U}_n(K)$ is covered
    by spaces of the form $\mathcal{Z}_{(\mathcal{L}_j',\mathcal{M}_j')_j}^{(a_{ij})}$,
    where
    \[
      \mcal L'_j\subseteq\mcal L_j\cap\{i<j\colon b_{ij}=a_{ij}\}\cap\{i<j\colon \delta_i/\delta_j\in\T^-\}
    \]
    and
    \[
      \mcal M'_j\subseteq\mcal M_j\cap\{i<j\colon b_{ij}=a_{ij}\}\cap\{i<j\colon \delta_i/\delta_j\in\T^+\}.
    \]
  \end{lem}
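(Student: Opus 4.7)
The plan is to assemble the statement from the two preceding covering lemmas, Lemma~\ref{good Z for H^-1(W)} and Lemma~\ref{only good Z}, and then track the compatibility constraints on the exponents $a_{ij}$ that arise when one intersects the two resulting covers.

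First I would apply Lemma~\ref{good Z for H^-1(W)}, which gives a covering of $H_n^{-1}(\mathcal{W}_{(\mathcal{L}_j,\mathcal{M}_j)_j}^{(b_{ij})})$ by spaces of the form $\mathcal{Z}_{(\mathcal{L}_j'',\mathcal{M}_j'')_j}^{(b_{ij})}$ with $\mathcal{L}_j''\subseteq\mathcal{L}_j$, $\mathcal{M}_j''\subseteq\mathcal{M}_j$, and with the same exponents $b_{ij}$. Intersecting with $\widetilde{\mathcal{U}}_n(K)$ and then applying Lemma~\ref{only good Z} to this intersection, each point lands in some $\mathcal{Z}_{(\widetilde{\mathcal{L}}_j,\widetilde{\mathcal{M}}_j)_j}^{(c_{ij})}$ with $\widetilde{\mathcal{L}}_j\subseteq\{i<j\colon\delta_i/\delta_j\in\mathcal{T}^-\}$, $\widetilde{\mathcal{M}}_j\subseteq\{i<j\colon\delta_i/\delta_j\in\mathcal{T}^+\}$, and with $c_{ij}$ determined by the relation $\delta_i/\delta_j=x^{-c_{ij}}$ or $\delta_i/\delta_j=\chi x^{c_{ij}}$.

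Next I would intersect the two coverings. A point lying in $\mathcal{Z}_{(\mathcal{L}_j'',\mathcal{M}_j'')_j}^{(b_{ij})}\cap\mathcal{Z}_{(\widetilde{\mathcal{L}}_j,\widetilde{\mathcal{M}}_j)_j}^{(c_{ij})}$ satisfies, for each index $i<j$, both defining relations simultaneously. Setting $\mathcal{L}_j'\coloneqq\mathcal{L}_j''\cap\widetilde{\mathcal{L}}_j$ and $\mathcal{M}_j'\coloneqq\mathcal{M}_j''\cap\widetilde{\mathcal{M}}_j$, the only way for the intersection to be non-empty on a given index is that the two prescribed exponents match; here the key observation is that on $\mathbb{Z}_p^\times$ we have $x^a=x^{a'}$ iff $a=a'$ and $\chi|_{\mathbb{Z}_p^\times}=x|_{\mathbb{Z}_p^\times}$, so comparing $\widetilde{\eta}_i/\widetilde{\eta}_j$ with its restriction to $\mathbb{Z}_p^\times$ forces $b_{ij}=c_{ij}=a_{ij}$ whenever $i\in\mathcal{L}_j'\cup\mathcal{M}_j'$. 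This is the content of the extra condition $\{i<j\colon b_{ij}=a_{ij}\}$ in the statement.

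Combining these observations yields exactly the claimed covering: every point of $H_n^{-1}(\mathcal{W}_{(\mathcal{L}_j,\mathcal{M}_j)_j}^{(b_{ij})})\cap\widetilde{\mathcal{U}}_n(K)$ lies in some $\mathcal{Z}_{(\mathcal{L}_j',\mathcal{M}_j')_j}^{(a_{ij})}$ with $\mathcal{L}_j'\subseteq\mathcal{L}_j\cap\{i<j\colon b_{ij}=a_{ij}\}\cap\{i<j\colon\delta_i/\delta_j\in\mathcal{T}^-\}$ and analogously for $\mathcal{M}_j'$. There is no real obstacle here; the only thing one must be careful about is the bookkeeping when $i\notin\mathcal{L}_j'\cup\mathcal{M}_j'$, in which case the quotient $\widetilde{\eta}_i/\widetilde{\eta}_j$ is regular and no exponent condition is imposed, so the two exponents $b_{ij}$ and $a_{ij}$ need not agree on those indices. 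Thus the argument is essentially a formal intersection of two previously established covers, with the compatibility $b_{ij}=a_{ij}$ coming from the injectivity of $a\mapsto x^a$ on $\mathbb{Z}_p^\times$.
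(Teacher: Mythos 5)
Your proof is correct and uses the same two ingredients as the paper: apply Lemma~\ref{good Z for H^-1(W)} and Lemma~\ref{only good Z} to a given point and then match up the two resulting $\mathcal{Z}$-sets; the paper phrases the matching step via the mutual disjointness of the $\mathcal{Z}$-sets (forcing the two to coincide, hence forcing the index sets and exponents to agree), whereas you phrase it by directly intersecting the defining conditions, which is the same argument. The brief appeal to $\Z_p^\times$ when deducing $b_{ij}=c_{ij}$ is superfluous—that equality already follows from the two $\T$-character conditions on $\Q_p^\times$—but it does not affect the correctness.
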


  \begin{proof}
    Let $\widetilde{\underline{\eta}}\in H_n^{-1}(\mathcal{W}_{(\mathcal{L}_j,\mathcal{M}_j)_j}^{(b_{ij})})\cap\widetilde{\mcal U}_n(K)$.
    As proved in Lemma \ref{good Z for H^-1(W)}, we have
    $\widetilde{\underline{\eta}}\in \mathcal{Z}_{(\mathcal{L}_j',\mathcal{M}_j')_j}^{(b_{ij})}$ for some
    $\mathcal{L}_j'\subseteq\mathcal{L}_j$ and $\mathcal{M}_j'\subseteq\mathcal{M}_j$.
    By Lemma \ref{only good Z}, we have that
    $\widetilde{\underline{\eta}}\in \mathcal{Z}_{(\mathcal{L}_j',\mathcal{M}_j')_j}^{(b_{ij})}
    \cap \mathcal{Z}_{(\mathcal{L}_j'',\mathcal{M}_j'')_j}^{(a_{ij})}$ for some
    $\mcal L''_j\subseteq\{i<j\colon \delta_i/\delta_j\in\T^-\}$ and
    $\mcal M''_j\subseteq\{i<j\colon \delta_i/\delta_j\in\T^+\}$.
    As the sets of the form $\mathcal{Z}_{(\mathcal{L}_j,\mathcal{M}_j)_j}^{(b_{ij})}$ are disjoint,
    we must have that $\mathcal{Z}_{(\mathcal{L}_j',\mathcal{M}_j')_j}^{(b_{ij})}
    =\mathcal{Z}_{(\mathcal{L}_j'',\mathcal{M}_j'')_j}^{(a_{ij})}$; in particular
    we have $\widetilde{\underline{\eta}}\in\mathcal{Z}_{(\mathcal{L}_j',\mathcal{M}_j')_j}^{(a_{ij})}$
    with $\mcal L'_j\subseteq\mcal L_j\cap\{i<j\colon b_{ij}=a_{ij}\}\cap\{i<j\colon \delta_i/\delta_j\in\T^-\}$ and
    $\mcal M'_j\subseteq\mcal M_j\cap\{i<j\colon b_{ij}=a_{ij}\}\cap\{i<j\colon \delta_i/\delta_j\in\T^+\}$
    as wanted.
  \end{proof}

  \begin{lem}
    \label{dim of H^-1(eta)cap Z}
    Let us fix $\underline{\eta}\in\mathcal{W}_{(\mathcal{L}_j,\mathcal{M}_j)_j}^{(b_{ij})}(K)$.
    We have
    \[
      \dim_K\mathcal{W}_{(\mathcal{L}_j,\mathcal{M}_j)_j}^{(b_{ij})}
      =\dim_K(H_n^{-1}(\underline{\eta})\cap \mathcal{Z}_{(\mathcal{L}_j,\mathcal{M}_j)_j}^{(b_{ij})}).
    \]
  \end{lem}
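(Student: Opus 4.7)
The plan is to use the isomorphism $\mathcal{T}\cong\mathcal{W}\times\mathbb{G}_m^{\mathrm{an}}$ via $\widetilde{\eta}\mapsto(\widetilde{\eta}|_{\mathbb{Z}_p^\times},\widetilde{\eta}(p))$, so that $\mathcal{T}^n\cong\mathcal{W}^n\times(\mathbb{G}_m^{\mathrm{an}})^n$ and the map $H_n$ becomes the first projection. In particular $H_n^{-1}(\underline{\eta})\cong(\mathbb{G}_m^{\mathrm{an}})^n$ has dimension $n$, so comparing the two dimensions in the lemma reduces to a combinatorial count of equations in two isomorphic (as rigid groups) 1-dimensional coordinates.

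Next I would translate the defining conditions of $\mathcal{Z}^{(b_{ij})}_{(\mathcal{L}_j,\mathcal{M}_j)_j}$ into the split coordinates, using $x(p)=p$, $\chi(p)=1$, and $\chi|_{\mathbb{Z}_p^\times}=x|_{\mathbb{Z}_p^\times}$. Writing $a_i:=\widetilde{\eta}_i(p)$, the condition $\widetilde{\eta}_i/\widetilde{\eta}_j=x^{-b_{ij}}$ becomes the pair $\eta_i/\eta_j=x^{-b_{ij}}$ together with $a_i/a_j=p^{-b_{ij}}$, and $\widetilde{\eta}_i/\widetilde{\eta}_j=\chi x^{b_{ij}}$ becomes $\eta_i/\eta_j=x^{b_{ij}+1}$ together with $a_i/a_j=p^{b_{ij}}$. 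The regularity condition $\widetilde{\eta}_i/\widetilde{\eta}_j\in\mathcal{T}^{\mathrm{reg}}$ for $i\notin\mathcal{L}_j\cup\mathcal{M}_j$ is an open condition excluding finitely many points and hence does not affect dimension. After fixing $\underline{\eta}$, the conditions defining $H_n^{-1}(\underline{\eta})\cap\mathcal{Z}^{(b_{ij})}_{(\mathcal{L}_j,\mathcal{M}_j)_j}$ inside $(\mathbb{G}_m^{\mathrm{an}})^n$ reduce exactly to $a_i/a_j=p^{-b_{ij}}$ for $i\in\mathcal{L}_j$ and $a_i/a_j=p^{b_{ij}}$ for $i\in\mathcal{M}_j$, plus open conditions.

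I would then observe that the systems of equalities cutting out $\mathcal{W}^{(b_{ij})}_{(\mathcal{L}_j,\mathcal{M}_j)_j}\subseteq\mathcal{W}^n$ on the $\eta$-side and the intersection $H_n^{-1}(\underline{\eta})\cap\mathcal{Z}^{(b_{ij})}_{(\mathcal{L}_j,\mathcal{M}_j)_j}\subseteq(\mathbb{G}_m^{\mathrm{an}})^n$ on the $a$-side share the same combinatorial shape: one equation per pair $(i,j)$ with $i\in\mathcal{L}_j\cup\mathcal{M}_j$, each fixing the ratio of two coordinates in a 1-dimensional rigid group. Let $\Gamma$ be the graph on $\{1,\dots,n\}$ with an edge between $i$ and $j$ whenever $i\in\mathcal{L}_j\cup\mathcal{M}_j$, and let $c$ be its number of connected components. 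Within each connected component of size $k$, a spanning tree contributes $k-1$ independent equations, leaving a $1$-dimensional subspace; the remaining cycle equations must be automatically satisfied. This gives dimension $c$ in both cases, hence the desired equality.

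The main obstacle I anticipate is verifying that the consistency of the $\mathcal{W}$-side cycle relations (which is guaranteed by the existence of $\underline{\eta}$) transfers to consistency of the $\mathbb{G}_m$-side cycle relations. Indeed, each $\mathcal{M}$-type edge contributes the shift $+1$ on the $\eta$-side (since $\chi|_{\mathbb{Z}_p^\times}=x|_{\mathbb{Z}_p^\times}$) but $0$ on the $a$-side (since $\chi(p)=1$), so a priori a cycle containing an unbalanced number of $\mathcal{M}$-edges could be consistent on $\mathcal{W}$ but not on $\mathbb{G}_m^{\mathrm{an}}$. I expect this is controlled by the restrictions on $(\mathcal{L}_j,\mathcal{M}_j)_j$ and $(b_{ij})$ implicit in Lemma \ref{good W}, which precisely identifies those tuples for which the corresponding $\mathcal{Z}^{(b_{ij})}_{(\mathcal{L}_j,\mathcal{M}_j)_j}$ is non-empty and meets $H_n^{-1}(\widetilde{\mathcal{U}}_n)$; assuming this compatibility, both sides have dimension $c$ and the lemma follows.
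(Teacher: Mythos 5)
Your approach follows the same route as the paper's proof: both use the splitting $\T\cong\W\times\Gm^{\an}$ via $\widetilde\eta\mapsto(\widetilde\eta|_{\Z_p^\times},\widetilde\eta(p))$, after which $H_n^{-1}(\underline{\eta})$ becomes $\{\underline{\eta}\}\times(\Gm^{\an})^n$ and the comparison reduces to two systems of ``ratio-equals-constant'' equations inside $n$-fold products of a $1$-dimensional rigid group. The paper then simply asserts that the two cut-out spaces are ``obviously isomorphic''; your connected-components count via a spanning tree is a correct and more explicit way to extract the common dimension, once non-emptiness is known.

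However, the cycle-consistency issue you flag is not a side worry — it is an actual defect of the lemma as stated. The $\W$-side constant for an $\mathcal{M}$-edge is $x^{b_{ij}+1}$ while the $\Gm$-side constant is $p^{b_{ij}}$ (since $\chi|_{\Z_p^\times}=x|_{\Z_p^\times}$ but $\chi(p)=1$), so the two consistency conditions around a cycle differ by the signed number of $\mathcal{M}$-edges traversed. For example, take $n=3$, $\mathcal{L}_2=\mathcal{L}_3=\emptyset$, $\mathcal{M}_2=\{1\}$, $\mathcal{M}_3=\{1,2\}$, $b_{12}=b_{23}=1$, $b_{13}=3$. On the $\W$-side the relations $\eta_1/\eta_2=x^2$, $\eta_2/\eta_3=x^2$, $\eta_1/\eta_3=x^4$ are compatible, so $\mathcal{W}_{(\mathcal{L}_j,\mathcal{M}_j)_j}^{(b_{ij})}$ is non-empty of dimension $1$. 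But the $\T$-side conditions would force $(\chi x)(\chi x)=\chi x^3$, i.e.\ $\chi=x$, which is false, so $\mathcal{Z}_{(\mathcal{L}_j,\mathcal{M}_j)_j}^{(b_{ij})}$ is empty and the right-hand side of the lemma has no points at all. Hence the lemma requires the additional hypothesis that $H_n^{-1}(\underline{\eta})\cap\mathcal{Z}_{(\mathcal{L}_j,\mathcal{M}_j)_j}^{(b_{ij})}$ be non-empty, which the paper's one-line ``it is then obvious'' does not supply. You correctly observe that this is secured in the one place the lemma is invoked: in the proof of Theorem \ref{dim F-1(W)< dim regular locus for n=3} one first fixes, using Lemma \ref{good W}, an actual point $\underline{\widetilde{\eta}}$ in the relevant intersection, so the application is sound even though the statement is not.
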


  \begin{proof}
    Recall that
    \[
      \mathcal{W}_{(\mathcal{L}_j,\mathcal{M}_j)_j}^{(b_{ij})}\coloneqq\left\{\underline{\eta}\in\W^n(K)\colon
    \begin{array}{c}
    \eta_i/\eta_j=x^{-b_{ij}}\Leftrightarrow i\in\mathcal{L}_j\text{ and }\\
    \eta_i/\eta_j=x^{b_{ij}+1}\Leftrightarrow i\in\mathcal{M}_j
    \end{array}
    \right\}.
    \]
    Using the isomorphism $\T\cong\W\times\mathbb{A}^1$, we get
    \begin{align*}
      H_n^{-1}(\underline{\eta})\cap \mathcal{Z}_{(\mathcal{L}_j,\mathcal{M}_j)_j}^{(b_{ij})}
      \cong\left\{\underline{a}\in\mathbb{A}^n(K)\colon
      \begin{array}{c}
        a_i/a_j=p^{-b_{ij}}\Leftrightarrow i\in\mathcal{L}_j\text{ and}\\
        a_i/a_j=p^{b_{ij}}\Leftrightarrow i\in\mathcal{M}_j
      \end{array}
      \right\}.
    \end{align*}
    It is then obvious that these two spaces are isomorphic and hence have the same dimension.
  \end{proof}

  \begin{thm}
    \label{dim F-1(W)< dim regular locus for n=3}
    If $\sum_{j=2}^n|\mathcal{L}_j|+|\mathcal{M}_j|\leq 5$, then
    \[
      \dim_KF^{-1}(\mathcal{W}_{(\mathcal{L}_j,\mathcal{M}_j)_j}^{(b_{ij})})<\dim_KF^{-1}(\W_n^\reg).
    \]
  \end{thm}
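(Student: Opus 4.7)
The plan is to decompose $F^{-1}(\mathcal{W}_{(\mathcal{L}_j,\mathcal{M}_j)_j}^{(b_{ij})})$ along the finite covering of $H_n^{-1}(\mathcal{W}_{(\mathcal{L}_j,\mathcal{M}_j)_j}^{(b_{ij})})\cap\widetilde{\mathcal{U}}_n(K)$ provided by Lemma~\ref{good W}, to bound the dimension of each piece by combining Proposition~\ref{fiber dimension of G_n} with an explicit count of the corresponding intersection with the image of $H_n$, and finally to reduce the required strict inequality to a purely combinatorial statement about a simple graph with at most $5$ edges. I write $\mathcal{W}$ for $\mathcal{W}_{(\mathcal{L}_j,\mathcal{M}_j)_j}^{(b_{ij})}$ and $N:=\sum_{j=2}^n(|\mathcal{L}_j|+|\mathcal{M}_j|)$, and I assume $N\geq 1$ throughout (otherwise $\mathcal{W}\supseteq\W_n^{\reg}$ and the statement is vacuous).

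To the datum $((\mathcal{L}_j,\mathcal{M}_j)_j,(b_{ij}))$ I associate the simple graph $\Gamma$ on the vertex set $\{1,\dots,n\}$ whose edges are the pairs $\{i,j\}$ with $i<j$ and $i\in\mathcal{L}_j\cup\mathcal{M}_j$, so $|E(\Gamma)|=N$ and the rank $r$ of $\Gamma$ (the number of edges of any spanning forest) equals $n$ minus the number of connected components. Any $\mathcal{Z}':=\mathcal{Z}_{(\mathcal{L}'_j,\mathcal{M}'_j)_j}^{(a_{ij})}$ appearing in Lemma~\ref{good W} corresponds, by $\mathcal{L}'_j\subseteq\mathcal{L}_j$, $\mathcal{M}'_j\subseteq\mathcal{M}_j$ and $a_{ij}=b_{ij}$, to a subgraph $\Gamma'\subseteq\Gamma$ with $N':=\sum_j(|\mathcal{L}'_j|+|\mathcal{M}'_j|)$ edges and rank $r'\leq r$. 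Using the decomposition $\T\cong\W\times\Gm^{\an}$, under which $H_n$ becomes the projection $\W^n\times\Gm^{\an,n}\twoheadrightarrow\W^n$, each equality $\widetilde\eta_i/\widetilde\eta_j=\mathrm{const}$ cutting out $\mathcal{Z}'$ splits into a $\W$-equality and a $\Gm$-equality; for $\{i,j\}\in E(\Gamma')$ the $\W$-equality coincides with the defining equation of $\mathcal{W}$ (via $a_{ij}=b_{ij}$), while for $\{i,j\}\in E(\Gamma)\setminus E(\Gamma')$ the $\mathcal{Z}'$-condition ``$\widetilde\eta_i/\widetilde\eta_j\in\T^{\reg}$'' is Zariski open and does not reduce dimension. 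A direct count then gives
\[
\dim_K\bigl(H_n^{-1}(\mathcal{W})\cap\mathcal{Z}'\bigr)\leq(n-r)+(n-r')=2n-r-r'.
\]

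Combining this with the fiber bound $\dim_K G^{-1}(\underline{\widetilde\eta})\leq\sum_{i=2}^n d_i+N'$ of Proposition~\ref{fiber dimension of G_n} and with the identity $F^{-1}(\mathcal{W})\cap G^{-1}(\mathcal{Z}')=G^{-1}(H_n^{-1}(\mathcal{W})\cap\mathcal{Z}')$, each piece has dimension at most $2n-r-r'+\sum_{i=2}^n d_i+N'$. In view of $\dim_KF^{-1}(\W_n^{\reg})=2n+\sum_{i=2}^n d_i$ from Proposition~\ref{dim of Wreg}, the claim reduces to the strict inequality $r+r'>N'$. Since $E(\Gamma')\subseteq E(\Gamma)$ forces $r\geq r'$, it suffices to prove $2r'>N'$; because a simple graph of rank $r'$ has at most $\binom{r'+1}{2}$ edges, one obtains $r'\geq\lceil(\sqrt{1+8N'}-1)/2\rceil$, which for $N'\in\{1,\dots,5\}$ gives $r'\geq 1,2,2,3,3$ respectively, so that $2r'\in\{2,4,4,6,6\}>N'$ in each case; for $N'=0$ the inequality degenerates to $r\geq 1$, which holds by $N\geq 1$. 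Taking the maximum over the finitely many pieces of the covering yields the theorem. The hard step---and the only one that genuinely uses the hypothesis $N\leq 5$---is the combinatorial bound $2r'>N'$; it fails precisely at $N'=6$, witnessed by the edge set of $K_4$ (of rank $3$), showing that the hypothesis cannot be relaxed without a new idea.
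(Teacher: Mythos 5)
Your proof is correct and follows essentially the same route as the paper's: decompose $F^{-1}(\mathcal{W})$ via Lemma~\ref{good W}, bound each piece using Proposition~\ref{fiber dimension of G_n} together with the dimension count on the base (Lemma~\ref{dim of H^-1(eta)cap Z} in the paper, your explicit $\T\cong\W\times\Gm^{\an}$ count), compare with Proposition~\ref{dim of Wreg}, and reduce to a numerical inequality in the number of non-regular pairs. The paper verifies that inequality (in the form $2\dim_K\mathcal{W}'+N'<2n$) by a direct case-by-case check for $N'=1,\dots,5$, estimating $\dim_K\mathcal{W}'$ by hand in each case; you package the same count via the rank $r'$ of the associated graph and the bound $N'\leq\binom{r'+1}{2}$, which is a cleaner way of organizing the case analysis and makes transparent why the argument stops at $5$ (the $K_4$ obstruction, which the paper only handles separately via Theorem~\ref{dim F-1(W)< dim regular locus for n=4} with extra hypotheses). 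You are also somewhat more careful than the paper at the boundary: the paper's reduction step asserts that it suffices to verify $2\dim_K\mathcal{W}'+N'<2n$ for all $\mathcal{W}'$, but this fails literally when $N'=0$ (where $\dim_K\mathcal{W}'=n$ and the inequality becomes $2n<2n$); the needed inequality $\dim_K\mathcal{W}+\dim_K\mathcal{W}'+N'<2n$ does hold there because $\dim_K\mathcal{W}<n$, which is exactly your observation that the $N'=0$ case degenerates to $r\geq 1$.
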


  \begin{proof}
    Let us fix $\mathcal{W}_{(\mathcal{L}_j,\mathcal{M}_j)_j}^{(b_{ij})}$.
    We are going to show that for any $\mathcal{Z}_{(\mathcal{L}_j',\mathcal{M}_j')_j}^{(a_{ij})}$
    with $\mathcal{L}_j'\subseteq\mathcal{L}_j\cap \{i<j\colon b_{ij}=a_{ij}\}\cap\{i<j\colon \delta_i/\delta_j\in\T^-\}$
    and $\mathcal{M}_j'\subseteq\mathcal{M}_j\cap \{i<j\colon b_{ij}=a_{ij}\}\cap\{i<j\colon \delta_i/\delta_j\in\T^+\}$
    for all $2\leq j\leq n$ such that $\widetilde{\mathcal{U}}_n\cap\mathcal{Z}_{(\mathcal{L}_j',\mathcal{M}_j')_j}^{(a_{ij})}\neq\emptyset$,
    for any $\underline{\eta}\in\mathcal{W}_{(\mathcal{L}_j,\mathcal{M}_j)_j}^{(b_{ij})}(K)$
    and for any
    $\underline{\widetilde{\eta}}\in (H_n^{-1}(\underline{\eta})\cap\mathcal{Z}_{(\mathcal{L}_j',\mathcal{M}_j')_j}^{(a_{ij})})(K)
    \cap\widetilde{\mathcal{U}}_n$,
    we have
    \begin{equation}
      \dim_K\mathcal{W}_{(\mathcal{L}_j,\mathcal{M}_j)_j}^{(b_{ij})}+
      \dim_K(H_n^{-1}(\underline{\eta})\cap\mathcal{Z}_{(\mathcal{L}_j',\mathcal{M}_j')_j}^{(a_{ij})})
      +\dim_KG^{-1}(\underline{\widetilde{\eta}})<\dim_KF^{-1}(\W_n^\reg).
    \end{equation}
    By Lemma \ref{good W} this will prove the claim of the Theorem.
    By Proposition \ref{fiber dimension of G_n}, we have
    \[
      \dim G^{-1}(\underline{\widetilde{\eta}})\leq
      \sum_{i=2}^n d_i +\sum_{j=2}^n(|\mathcal{L}_j'|+|\mathcal{M}_j'|)
    \]
    and by Lemma \ref{dim of H^-1(eta)cap Z}, we have
    \[
      \dim_K\mathcal{W}_{(\mathcal{L}_j',\mathcal{M}_j')_j}^{(a_{ij})}
      =\dim_K(H_n^{-1}(\underline{\eta})\cap \mathcal{Z}_{(\mathcal{L}_j',\mathcal{M}_j)_j'}^{(a_{ij})}).
    \]
    Hence it is enough to show
    \begin{equation}
      \label{wanted inequality}
      \dim_K\mathcal{W}_{(\mathcal{L}_j,\mathcal{M}_j)_j}^{(b_{ij})}+
      \dim_K\mathcal{W}_{(\mathcal{L}_j',\mathcal{M}_j')_j}^{(a_{ij})}
      +\sum_{i=2}^n d_i +\sum_{j=2}^n(|\mathcal{L}_j'|+|\mathcal{M}_j'|)<\dim_KF^{-1}(\W_n^\reg).
    \end{equation}
    Observe that $\dim_K\mathcal{W}_{(\mathcal{L}_j,\mathcal{M}_j)_j}^{(b_{ij})}=\dim_K\mathcal{W}_{(\mathcal{L}_j,\mathcal{M}_j)_j}^{(c_{ij})}$
    for any $b_{ij},c_{ij}\in\Z$, in fact the dimension of these spaces only depends on
    the sets $\mcal L_j,\mcal M_j$.
    We claim now that it is enough to show the above inequality only for $\mathcal{L}_j'=\mathcal{L}_j$
    and $\mathcal{M}_j'=\mathcal{M}_j$:
    assume in fact that for any $\mathcal{W}_{(\mathcal{L}_j',\mathcal{M}_j')_j}^{(b_{ij})}$ we have proved
    \[
      2\dim_K\mathcal{W}_{(\mathcal{L}_j',\mathcal{M}_j')_j}^{(b_{ij})}
      +\sum_{i=2}^n d_i +\sum_{j=2}^n(|\mathcal{L}_j'|+|\mathcal{M}_j'|)<\dim_KF^{-1}(\W_n^\reg).
    \]
    If $\mathcal{L}_j'\subseteq\mathcal{L}_j$ and $\mathcal{M}_j'\subseteq\mathcal{M}_j$
    for all $2\leq j\leq n$, then
    \[
      \dim_K\mathcal{W}_{(\mathcal{L}_j,\mathcal{M}_j)_j}^{(b_{ij})}
    \leq \dim_K\mathcal{W}_{(\mathcal{L}_j',\mathcal{M}_j')_j}^{(c_{ij})}
    \]
    for any $b_{ij},c_{ij}\in\Z$.
    We thus get the wanted inequality (\ref{wanted inequality}) for any $\mathcal{W}_{(\mathcal{L}_j',\mathcal{M}_j')_j}^{(a_{ij})}$
    such that $\mathcal{L}_j'\subsetneq\mathcal{L}_j$ and $\mathcal{M}_j'\subsetneq\mathcal{M}_j$.
    It is then left to prove the inequality (\ref{wanted inequality}) when
    $\mathcal{L}_j'=\mathcal{L}_j$ and $\mathcal{M}_j'=\mathcal{M}_j$.
    In other words, we want to show
    \begin{equation}
      \label{better inequality}
      2\dim_K\mathcal{W}_{(\mathcal{L}_j,\mathcal{M}_j)_j}^{(b_{ij})}+\sum_{i=2}^n d_i +\sum_{j=2}^n(|\mathcal{L}_j|+|\mathcal{M}_j|)
      <\dim_KF^{-1}(\W_n^\reg)=2n+\sum_{i=2}^nd_i.
    \end{equation}
    Now we proceed by cases:
      \begin{itemize}
        \item
          if $\sum_{j=1}^n|\mathcal{L}_j|+|\mathcal{M}_j|=1$, then we have
          $\mathcal{W}_{(\mathcal{L}_j,\mathcal{M}_j)_j}^{(b_{ij})}$ is cut out by only one equation
          inside $\W^n$, hence $\dim_K\mathcal{W}_{(\mathcal{L}_j,\mathcal{M}_j)_j}^{(b_{ij})}=n-1$.
          Therefore we have
          \begin{align*}
            2\dim_K\mathcal{W}_{(\mathcal{L}_j,\mathcal{M}_j)_j}^{(b_{ij})}+\sum_{j=2}^n(|\mathcal{L}_j|+|\mathcal{M}_j|)
            =2n-1<2n.
          \end{align*}
        \item
          If $\sum_{j=1}^n|\mathcal{L}_j|+|\mathcal{M}_j|=2$, then
          $\dim_K\mathcal{W}_{(\mathcal{L}_j,\mathcal{M}_j)_j}^{(b_{ij})}= n-2$
          because we are cutting out two independent equations.
          As a consequence
          \begin{align*}
            2\dim_K\mathcal{W}_{(\mathcal{L}_j,\mathcal{M}_j)_j}^{(b_{ij})}+\sum_{j=2}^n(|\mathcal{L}_j|+|\mathcal{M}_j|)
            \leq 2n-2<2n.
          \end{align*}
        \item
          If $\sum_{j=1}^n|\mathcal{L}_j|+|\mathcal{M}_j|=3$, then
          $\dim_K\mathcal{W}_{(\mathcal{L}_j,\mathcal{M}_j)_j}^{(b_{ij})}\leq n-2$,
          because it could be that there are only two independent equations and that one of the three
          is a consequence of the other two.
          More precisely, we could have for example $\eta_{i}/\eta_j,\eta_k/\eta_j\in x^\Z$
          for some $i<k<j$: in this case, we will also have the equation $\eta_i/\eta_k\in x^\Z$,
          but it depends on the previous two.
          Thus we have
          \begin{align*}
            2\dim_K\mathcal{W}_{(\mathcal{L}_j,\mathcal{M}_j)_j}^{(b_{ij})}+\sum_{j=2}^n(|\mathcal{L}_j|+|\mathcal{M}_j|)
            \leq 2n-1<2n.
          \end{align*}
        \item
          If $\sum_{j=1}^n|\mathcal{L}_j|+|\mathcal{M}_j|=4$, then
          $\dim_K\mathcal{W}_{(\mathcal{L}_j,\mathcal{M}_j)_j}^{(b_{ij})}\leq n-3$,
          because we have at least three independent equations.
          So we get
          \begin{align*}
            2\dim_K\mathcal{W}_{(\mathcal{L}_j,\mathcal{M}_j)_j}^{(b_{ij})}+\sum_{j=2}^n(|\mathcal{L}_j|+|\mathcal{M}_j|)
            \leq 2n-2<2n.
          \end{align*}
        \item
          Finally, if $\sum_{j=1}^n|\mathcal{L}_j|+|\mathcal{M}_j|=5$, then
          $\dim_K\mathcal{W}_{(\mathcal{L}_j,\mathcal{M}_j)_j}^{(b_{ij})}\leq n-3$,
          because two of the five equations could depend on the other three.
          So we get
          \begin{align*}
            2\dim_K\mathcal{W}_{(\mathcal{L}_j,\mathcal{M}_j)_j}^{(b_{ij})}+\sum_{j=2}^n(|\mathcal{L}_j|+|\mathcal{M}_j|)
            \leq 2n-1<2n.
          \end{align*}
      \end{itemize}
      For all of the above cases we have then showed that the inequality (\ref{better inequality}) holds.
  \end{proof}

  \begin{cor}
    \label{dim F-1(W) for regular enough}
    If $|\{\delta_i/\delta_j\in\T\setminus\T^\reg\colon i<j\}|\leq5$,
    then for any $\mathcal{W}_{(\mathcal{L}_j,\mathcal{M}_j)_j}^{(b_{ij})}$ we have
    \[
      \dim_KF^{-1}(\mathcal{W}_{(\mathcal{L}_j,\mathcal{M}_j)_j}^{(b_{ij})})<\dim_KF^{-1}(\W_n^\reg).
    \]
  \end{cor}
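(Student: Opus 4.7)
The plan is to reduce the statement to Theorem \ref{dim F-1(W)< dim regular locus for n=3} by observing that although the corollary's hypothesis is global (it constrains the parameters $\underline{\delta}$ of $\rho$), every piece that actually contributes to $\dim F^{-1}(\mathcal{W}_{(\mathcal{L}_j,\mathcal{M}_j)_j}^{(b_{ij})})$ is automatically governed by the non-regular quotients $\delta_i/\delta_j$. More precisely, since $G$ factors through $\widetilde{\mathcal{U}}_n$ and $F=H_n\circ G$, we have
\[
F^{-1}\bigl(\mathcal{W}_{(\mathcal{L}_j,\mathcal{M}_j)_j}^{(b_{ij})}\bigr)
= G^{-1}\bigl(H_n^{-1}(\mathcal{W}_{(\mathcal{L}_j,\mathcal{M}_j)_j}^{(b_{ij})})\cap\widetilde{\mathcal{U}}_n\bigr).
\]
If this intersection is empty the inequality is trivial, so assume it is not.

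By Lemma \ref{good W}, the intersection $H_n^{-1}(\mathcal{W}_{(\mathcal{L}_j,\mathcal{M}_j)_j}^{(b_{ij})})\cap\widetilde{\mathcal{U}}_n$ is covered by finitely many pieces of the form $\mathcal{Z}_{(\mathcal{L}_j',\mathcal{M}_j')_j}^{(a_{ij})}$ with
\[
\mathcal{L}_j'\subseteq\mathcal{L}_j\cap\{i<j:\delta_i/\delta_j\in\mathcal{T}^-\}
\quad\text{and}\quad
\mathcal{M}_j'\subseteq\mathcal{M}_j\cap\{i<j:\delta_i/\delta_j\in\mathcal{T}^+\}.
\]
Consequently,
\[
\sum_{j=2}^n\bigl(|\mathcal{L}_j'|+|\mathcal{M}_j'|\bigr)
\;\leq\;|\{\delta_i/\delta_j\in\mathcal{T}\setminus\mathcal{T}^{\mathrm{reg}}: i<j\}|
\;\leq\;5.
\]
This is the crucial translation between the two hypotheses: the combinatorial bound required by Theorem \ref{dim F-1(W)< dim regular locus for n=3} is now handed to us by the assumption on $\underline{\delta}$.

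It then remains to bound $\dim_K G^{-1}(\mathcal{Z}_{(\mathcal{L}_j',\mathcal{M}_j')_j}^{(a_{ij})})$ for each piece of the cover. I would simply rerun the argument of Theorem \ref{dim F-1(W)< dim regular locus for n=3}: combining Proposition \ref{fiber dimension of G_n} (applied to a point $\underline{\widetilde{\eta}}\in\mathcal{Z}_{(\mathcal{L}_j',\mathcal{M}_j')_j}^{(a_{ij})}\cap\widetilde{\mathcal{U}}_n$) with Lemma \ref{dim of H^-1(eta)cap Z} yields
\[
\dim_K F^{-1}(\mathcal{W}_{(\mathcal{L}_j,\mathcal{M}_j)_j}^{(b_{ij})})
\;\leq\;
\dim_K\mathcal{W}_{(\mathcal{L}_j,\mathcal{M}_j)_j}^{(b_{ij})}
+\dim_K\mathcal{W}_{(\mathcal{L}_j',\mathcal{M}_j')_j}^{(a_{ij})}
+\sum_{i=2}^n d_i
+\sum_{j=2}^n(|\mathcal{L}_j'|+|\mathcal{M}_j'|).
\]
Since $\mathcal{L}_j'\subseteq\mathcal{L}_j$ and $\mathcal{M}_j'\subseteq\mathcal{M}_j$, the monotonicity $\dim\mathcal{W}_{(\mathcal{L}_j,\mathcal{M}_j)_j}^{(b_{ij})}\leq\dim\mathcal{W}_{(\mathcal{L}_j',\mathcal{M}_j')_j}^{(a_{ij})}$ used in the theorem's proof reduces everything to showing
\[
2\dim_K\mathcal{W}_{(\mathcal{L}_j',\mathcal{M}_j')_j}^{(a_{ij})} + \sum_{j=2}^n(|\mathcal{L}_j'|+|\mathcal{M}_j'|)<2n.
\]
This is precisely the case analysis carried out in the last paragraph of the proof of Theorem \ref{dim F-1(W)< dim regular locus for n=3}, which is valid whenever the above sum is $\leq 5$. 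Taking the maximum over the finitely many pieces of the cover yields the desired strict inequality. The only potentially delicate step is verifying that the constraints on $\mathcal{L}_j',\mathcal{M}_j'$ are honestly inherited from the $\delta_i/\delta_j$, but this is exactly the content of Lemma \ref{good W}; the rest is a direct transcription of the theorem.
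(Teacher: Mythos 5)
Your proposal is correct and follows essentially the same route as the paper: invoke Lemma \ref{good W} to reduce to $\mathcal{Z}_{(\mathcal{L}_j',\mathcal{M}_j')_j}^{(a_{ij})}$-pieces whose index sets are forced into the non-regular quotients of the fixed parameters, observe this makes $\sum_j(|\mathcal{L}_j'|+|\mathcal{M}_j'|)\leq 5$, apply the monotonicity of $\dim\mathcal{W}$ under shrinking $\mathcal{L}_j,\mathcal{M}_j$, and finish with the case analysis already established inside the proof of Theorem \ref{dim F-1(W)< dim regular locus for n=3}. The only cosmetic difference is that the paper first disposes of the case $\sum_j(|\mathcal{L}_j|+|\mathcal{M}_j|)\leq 5$ directly via Theorem \ref{dim F-1(W)< dim regular locus for n=3} and then handles the remaining case, whereas you argue uniformly — but the content is the same.
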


  \begin{proof}
    The claim is true if $\sum_{j=2}^n|\mathcal{L}_j|+|\mathcal{M}_j|\leq 5$ by Theorem \ref{dim F-1(W)< dim regular locus for n=3}.
    Suppose now that $\sum_{j=2}^n|\mathcal{L}_j|+|\mathcal{M}_j|>5$.
    Exactly as in the proof of Theorem \ref{dim F-1(W)< dim regular locus for n=3},
    we want to show that for any $\mathcal{Z}_{(\mathcal{L}_j',\mathcal{M}_j')_j}^{(a_{ij})}$
    with $\mathcal{L}_j'\subseteq\mathcal{L}_j\cap \{i<j\colon b_{ij}=a_{ij}\}\cap\{i<j\colon \delta_i/\delta_j\in\T^-\}$
    and $\mathcal{M}_j'\subseteq\mathcal{M}_j\cap \{i<j\colon b_{ij}=a_{ij}\}\cap\{i<j\colon \delta_i/\delta_j\in\T^+\}$
    for all $2\leq j\leq n$ such that $\widetilde{\mathcal{U}}_n\cap\mathcal{Z}_{(\mathcal{L}_j',\mathcal{M}_j')_j}^{(a_{ij})}\neq\emptyset$,
    for any $\underline{\eta}\in\mathcal{W}_{(\mathcal{L}_j,\mathcal{M}_j)_j}^{(b_{ij})}(K)$
    and for any
    $\underline{\widetilde{\eta}}\in (H_n^{-1}(\underline{\eta})\cap\mathcal{Z}_{(\mathcal{L}_j',\mathcal{M}_j')_j}^{(a_{ij})})(K)
    \cap\widetilde{\mathcal{U}}_n$,
    we have
    \begin{equation}
      \label{equation for regular enough}
      \dim_K\mathcal{W}_{(\mathcal{L}_j,\mathcal{M}_j)_j}^{(b_{ij})}+
      \dim_K(H_n^{-1}(\underline{\eta})\cap\mathcal{Z}_{(\mathcal{L}_j',\mathcal{M}_j')_j}^{(a_{ij})})
      +\dim_KG^{-1}(\underline{\widetilde{\eta}})<\dim_KF^{-1}(\W_n^\reg).
    \end{equation}
    By Lemma \ref{good W} this will prove the claim of the Corollary.
    Since $\mcal L_j'\subseteq\mcal L_j$ and $\mcal M_j'\subseteq\mcal M_j$ for all $2\leq j\leq n$,
    we have $\dim_K\mathcal{W}_{(\mathcal{L}_j,\mathcal{M}_j)_j}^{(b_{ij})}\leq\dim_K\mathcal{W}_{(\mathcal{L}_j',\mathcal{M}_j')_j}^{(b_{ij})}$.
    Now notice that by construction $\sum_{j=2}^n|\mcal L_j'|+|\mcal M_j'|\leq 5$ and
    in the proof of Theorem \ref{dim F-1(W)< dim regular locus for n=3} we have showed that
    \[
      \dim_K\mathcal{W}_{(\mathcal{L}_j',\mathcal{M}_j')_j}^{(b_{ij})}+
      \dim_K(H_n^{-1}(\underline{\eta})\cap\mathcal{Z}_{(\mathcal{L}_j',\mathcal{M}_j')_j}^{(a_{ij})})
      +\dim_KG^{-1}(\underline{\widetilde{\eta}})<\dim_KF^{-1}(\W_n^\reg)
    \]
    holds.
    This implies that (\ref{equation for regular enough}) holds too.
  \end{proof}

  \begin{lem}
    \label{bound for dim H^2}
    Let $D_{n-1}$ be a triangulable $\phigam$-module over $\robba_K$
    of parameters $(\delta_1,\dots,\delta_{n-1})\in\T^{n-1}(K)$ for some extension $K$ of $L$.
    Let $\delta_n\in\T(K)$ and we define
    $\mathcal{M}\coloneqq\{i<n\colon \delta_i/\delta_n\in\T^+(K)\}$.
    We have that
    \[
      \dim_KH^2_{\phi,\Gamma}(D_{n-1}(\delta_n^{-1}))\leq|\mathcal{M}|.
    \]
  \end{lem}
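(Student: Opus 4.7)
The plan is to prove the bound by induction on $n$, mimicking the structure of the proof of Lemma \ref{bound of H^1} but using the long exact sequence for $H^2$ instead of $H^1$. The key inputs are: (a) the explicit computation of $H^2$ for rank $1$ modules ($\dim H^2(\robba_K(\delta))$ is $1$ if $\delta \in \T^+$ and $0$ otherwise, from the Theorem just after \eqref{def of Tnreg}); (b) the vanishing $H^i_{\phi,\Gamma} = 0$ for $i \geq 3$ on any $\phigam$-module, so the long exact sequence terminates at $H^2$.

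For the base case $n=2$, we have $D_{n-1} = \robba_K(\delta_1)$, and $\mathcal{M} = \{1\}$ precisely when $\delta_1/\delta_2 \in \T^+(K)$, matching $\dim_K H^2_{\phi,\Gamma}(\robba_K(\delta_1/\delta_2))$.

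For the inductive step, let $D_{n-2}$ denote the $(n-2)$-th piece of the triangulation of $D_{n-1}$, so that we have a short exact sequence
\[
0 \to D_{n-2}(\delta_n^{-1}) \to D_{n-1}(\delta_n^{-1}) \to \robba_K(\delta_{n-1}/\delta_n) \to 0.
\]
Applying the associated long exact sequence in $\phigam$-cohomology and using that the sequence ends at $H^2$ (i.e.\ the next term $H^3$ vanishes), we obtain
\[
\dim_K H^2_{\phi,\Gamma}(D_{n-1}(\delta_n^{-1})) \leq \dim_K H^2_{\phi,\Gamma}(D_{n-2}(\delta_n^{-1})) + \dim_K H^2_{\phi,\Gamma}(\robba_K(\delta_{n-1}/\delta_n)).
\]
Now split into two cases. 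If $\delta_{n-1}/\delta_n \in \T^+(K)$, then $n-1 \in \mathcal{M}$, the last term contributes $1$, and $\{i < n-1 : \delta_i/\delta_n \in \T^+(K)\} = \mathcal{M} \setminus \{n-1\}$, so by the inductive hypothesis the first term on the right is bounded by $|\mathcal{M}| - 1$, giving the desired bound $|\mathcal{M}|$. If instead $\delta_{n-1}/\delta_n \notin \T^+(K)$, the last term is $0$ and $\mathcal{M} = \{i < n-1 : \delta_i/\delta_n \in \T^+(K)\}$, so the inductive hypothesis directly gives $\dim_K H^2_{\phi,\Gamma}(D_{n-2}(\delta_n^{-1})) \leq |\mathcal{M}|$, and we are done.

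There is no real obstacle here: unlike Lemma \ref{bound of H^1}, where one must track both the $\T^-$ and $\T^+$ type non-regularities and the contribution is additive, here only the $\T^+$ non-regularities contribute to $H^2$, and the rank-one computation matches exactly the cardinality increment of $\mathcal{M}$. The only point worth emphasizing is the use of the termination of the long exact sequence at $H^2$, which is what allows the bound on $H^2$ to behave sub-additively across extensions.
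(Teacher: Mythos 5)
Your argument is exactly the paper's: induction on $n$, the long exact sequence attached to $0 \to D_{n-2}(\delta_n^{-1}) \to D_{n-1}(\delta_n^{-1}) \to \robba_K(\delta_{n-1}/\delta_n) \to 0$ together with vanishing of $H^3$ to obtain subadditivity of $\dim H^2$, and a two-way case split on whether $\delta_{n-1}/\delta_n$ lies in $\T^+(K)$. The proposal is correct and matches the paper's proof in structure and substance.
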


  \begin{proof}
    We proceed by induction:
    \begin{itemize}
      \item[$n=2:$]
        If $\delta_1/\delta_2\notin\T^+(K)$,
        then $\dim_KH^2_{\phi,\Gamma}(\robba_K(\delta_1/\delta_2))=0$, so the claim is obviously true.
        If $\delta_1/\delta_2\in\T^+(K)$, then $\mathcal{M}=\{1\}$ and
        $\dim_KH^2_{\phi,\Gamma}(\robba_K(\delta_1/\delta_2))=1=|\mathcal{M}|$.
      \item[$n>2$:]
        Let $D_{n-2}$ be the $n-2$-th piece of the filtration of $D_{n-1}$ giving rise to the parameters
        $(\delta_1,\dots,\delta_{n-1})$; we have the long exact sequence
        \[
          \dots \rightarrow H^2_{\phi,\Gamma}(D_{n-2}(\delta_n^{-1}))\rightarrow H^2_{\phi,\Gamma}(D_{n-1}(\delta_n^{-1}))
          \rightarrow H^2_{\phi,\Gamma}(\robba_K(\delta_{n-1}/\delta_n))\rightarrow 0.
        \]
        As a consequence
        \[
          \dim_KH^2_{\phi,\Gamma}(D_{n-1}(\delta_n^{-1}))\leq
          \dim_KH^2_{\phi,\Gamma}(D_{n-2}(\delta_n^{-1}))+\dim_KH^2_{\phi,\Gamma}(\robba_K(\delta_{n-1}/\delta_n)).
        \]
        By inductive hypothesis we have
        \[
          \dim_KH^2_{\phi,\Gamma}(D_{n-2}(\delta_n^{-1}))\leq |\mathcal{M}'|,
        \]
        where $\mathcal{M}'\coloneqq\{i<n-1\colon \delta_i/\delta_n\in\T^+(K)\}$.
        \begin{itemize}
          \item
            If $\delta_{n-1}/\delta_n\notin\T^+(K)$, then $\mathcal{M}=\mathcal{M}'$ and
            \[
              \dim_KH^2_{\phi,\Gamma}(D_{n-1}(\delta_n^{-1}))\leq
              \dim_KH^2_{\phi,\Gamma}(D_{n-2}(\delta_n^{-1}))\leq |\mathcal{M}'|=|\mathcal{M}|.
            \]
          \item
            If $\delta_{n-1}/\delta_n\in\T^+(K)$, then $\mathcal{M}=\mathcal{M}'\cup\{n-1\}$ and
            \[
              \dim_KH^2_{\phi,\Gamma}(D_{n-1}(\delta_n^{-1}))\leq
              \dim_KH^2_{\phi,\Gamma}(D_{n-2}(\delta_n^{-1}))+1\leq |\mathcal{M}'|+1=|\mathcal{M}|.
            \]
        \end{itemize}
    \end{itemize}
  \end{proof}

  \begin{thm}
    \label{dim F-1(W)< dim regular locus for n=4}
    Let $n=4$.
    Let $\mathcal{L}_j\coloneqq \{i<j\colon (\delta_i/\delta_j)_{|\Z_p^\times}\in x^{-\N}\}$ and
    $\mathcal{M}_j\coloneqq\{i<j\colon (\delta_i/\delta_j)_{|\Z_p^\times}\in x^{\N_{>0}}\}$ for any
    $2\leq j\leq 4$.
    If
    \begin{equation}
      \label{condition}
      2\dim_K\mathcal{W}_{(\mathcal{L}_j,\mathcal{M}_j)_j}^{(b_{ij})}+\sum_{j=2}^4|\mathcal{M}_j|<8,
    \end{equation}
    then
    \[
      \dim_KF^{-1}(\mathcal{W}_{(\mathcal{L}_j,\mathcal{M}_j)_j}^{(b_{ij})})<\dim_KF^{-1}(\W_4^\reg).
    \]
  \end{thm}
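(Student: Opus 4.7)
The approach will follow the three-step strategy of the proof of Theorem \ref{dim F-1(W)< dim regular locus for n=3}: cover $H_4^{-1}(\mcal W_{(\mcal L, \mcal M)}^{(b)})\cap \widetilde{\mcal U}_4$ by pieces $\mcal Z_{(\mcal L'_j,\mcal M'_j)}^{(a_{ij})}$ via Lemma \ref{good W}; obtain a uniform bound on the fiber dimension $\dim_K G^{-1}(\widetilde\eta)$; and reduce to an elementary inequality in the indices. The key novelty is a sharper bound on $\dim_K G^{-1}(\widetilde\eta)$ than the one supplied by Proposition \ref{fiber dimension of G_n}, since the hypothesis here is phrased only in terms of the $|\mcal M_j|$ rather than both $|\mcal L_j|$ and $|\mcal M_j|$.

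To derive this refinement, I would start from the identity in Proposition \ref{fiber dimension of G_i} and rewrite each of its two $H^1$-terms as $\dim H^0+\dim H^2+(i-1)$ using Theorem \ref{poincare duality}. By Remark \ref{good U_n'}, the ratios $\widetilde\delta_j'/\widetilde\delta_k'$ never lie in $\T^+$ over $\widetilde{\mcal U}_n'$, so $H^2(\D'_{i-1}(x^{\mu_i}\widetilde\eta_i^{-1}))$ vanishes and the rank contributions cancel. Lemma \ref{bound of H^0} (applicable by Lemma \ref{satisfy hypothesis of bound H^0}) renders the $H^0$-difference non-positive, and Lemma \ref{bound for dim H^2} bounds the surviving $H^2$-term by $|\mcal M'_i|$. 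Summing over $i$ gives
\[
\dim_K G^{-1}(\widetilde\eta) \;\leq\; \sum_{i=2}^4 d_i + \sum_{j=2}^4 |\mcal M'_j|.
\]
Combining this estimate with Lemma \ref{dim of H^-1(eta)cap Z} and Proposition \ref{dim of Wreg}, exactly as in the proof of Theorem \ref{dim F-1(W)< dim regular locus for n=3}, reduces the theorem to the combinatorial inequality
\[
\dim_K \mcal W_{(\mcal L,\mcal M)}^{(b)} + \dim_K \mcal W_{(\mcal L',\mcal M')}^{(a)} + \sum_{j=2}^4 |\mcal M'_j| \;<\; 8
\]
for every admissible $(\mcal L'_j,\mcal M'_j)$.

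To handle this inequality I view the equations $\{\eta_i/\eta_j=\mathrm{const}\}_{i\in\mcal L_j\cup\mcal M_j}$ as a graph $G$ on the four vertices $\{1,2,3,4\}$, with $(\mcal L',\mcal M')$ carving out a subgraph $G'$; then $\dim_K \mcal W_{(\mcal L,\mcal M)}^{(b)}$ equals the number $c$ of connected components of $G$ and $\dim_K \mcal W_{(\mcal L',\mcal M')}^{(a)}$ equals the number $c'$ of components of $G'$, so the inequality to verify becomes $c+c'+\sum|\mcal M'_j|<8$. When $c'=c$ this is immediate from the hypothesis $2c+\sum|\mcal M_j|\leq 7$ together with $\sum|\mcal M'_j|\leq\sum|\mcal M_j|$; when $c'>c$ I use the elementary bound $|\mcal L'|+|\mcal M'|\leq\binom{5-c'}{2}$ for a $c'$-component graph on four vertices (giving $6,3,1,0$ for $c'=1,2,3,4$) and check each of the finitely many pairs $(c,c')\in\{(1,2),(1,3),(1,4),(2,3),(2,4),(3,4)\}$ directly. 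The main obstacle is precisely this last step: contrary to Theorem \ref{dim F-1(W)< dim regular locus for n=3}, one cannot uniformly reduce to $(\mcal L',\mcal M')=(\mcal L,\mcal M)$, because passing to a strictly smaller $\mcal L'$ can increase $\dim_K\mcal W_{(\mcal L',\mcal M')}^{(a)}$ by more than is compensated by the decrease in $\sum|\mcal M'_j|$. Exploiting how cycles in $G$ produce redundancies that keep $c'$ small for $n=4$ is what makes the remaining case analysis tractable.
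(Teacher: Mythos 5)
Your identification of the key refinement is exactly right: rewriting the two $H^1$-terms in Proposition \ref{fiber dimension of G_i} via the Euler characteristic (Theorem \ref{poincare duality}), using the vanishing of $H^2$ for $\D'$ over $\widetilde{\mcal U}_n'$, bounding the $H^0$-difference by Lemma \ref{bound of H^0}, and bounding the surviving $H^2(\D)$-term by $|\mcal M|$ via Lemma \ref{bound for dim H^2}. This is precisely the mechanism the paper uses for the critical case.

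There is, however, a genuine gap in your derivation of the uniform estimate $\dim_K G^{-1}(\widetilde\eta) \leq \sum_i d_i + \sum_j |\mcal M_j'|$ over \emph{all} pieces $\mcal Z_{(\mcal L',\mcal M')}^{(a)}$. You justify the non-positivity of the $H^0$-difference by citing Lemma \ref{bound of H^0} "(applicable by Lemma \ref{satisfy hypothesis of bound H^0})", but Lemma \ref{satisfy hypothesis of bound H^0} only verifies hypotheses (1), (2), (3) of Lemma \ref{bound of H^0}. The essential hypothesis (4) — the isomorphism $H^1_{\phi,\Gamma}(E_{i-1}'(\widetilde\eta_i'^{-1})[1/t])\cong H^1_{\phi,\Gamma}(E_{i-1}'(\widetilde\eta_i'^{-1}))$ — must be checked separately via Proposition \ref{prop:high dim twist}, which requires $\rr{wt}(x^{-\mu_l}\widetilde\eta_l / x^{-\mu_k}\widetilde\eta_k)\notin\N_{>0}$ for all $l<k$. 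That weight condition is only under control when $\widetilde\eta_l/\widetilde\eta_k=\delta_l/\delta_k$ for every pair, so that the weight constraints are inherited from the construction of the $\mu_i$ in Lemma \ref{iso of H^1 in high dim}; and this identification of ratios is exactly what one obtains when $\sum_{j}(|\mcal L_j'|+|\mcal M_j'|)=6$. For a piece with $\sum<6$, some pair has $\widetilde\eta_l/\widetilde\eta_k\in\T^\reg$, and neither Remark \ref{good U_n'} nor membership in $\T^\reg$ prevents $\rr{wt}(x^{-\mu_l}\widetilde\eta_l / x^{-\mu_k}\widetilde\eta_k)$ from landing in $\N_{>0}$ (regularity excludes $x^\Z$ and $\chi x^\Z$ but does not control the weight). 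So the refined bound is not established in these cases, and the subsequent graph-theoretic case analysis rests on an unproved estimate.

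The paper sidesteps exactly this obstruction by bifurcating: for $\sum_j(|\mcal L_j'|+|\mcal M_j'|)\leq 5$ it uses only the cruder Proposition \ref{fiber dimension of G_n} (which tracks $|\mcal L'|+|\mcal M'|$, not just $|\mcal M'|$) together with the inequality already established inside the proof of Theorem \ref{dim F-1(W)< dim regular locus for n=3} and the monotonicity $\dim\mcal W_{(\mcal L,\mcal M)}\leq\dim\mcal W_{(\mcal L',\mcal M')}$; the refined Euler-characteristic argument is reserved for the single remaining case $\sum=6$, where hypothesis (4) is actually verifiable. Your observation that $\dim_K\mcal W_{(\mcal L,\mcal M)}$ equals the number of connected components of the associated constraint graph is a clean reformulation, and the inequality $|\mcal L'|+|\mcal M'|\leq\binom{5-c'}{2}$ is correct for $n=4$, but this bookkeeping only becomes relevant once the uniform fiber estimate is secured — and as written it is not.
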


  \begin{proof}
    Again, we want to show that for any $\mathcal{Z}_{(\mathcal{L}_j',\mathcal{M}_j')_j}^{(a_{ij})}$
    with $\mathcal{L}_j'\subseteq\mathcal{L}_j\cap \{i<j\colon b_{ij}=a_{ij}\}\cap\{i<j\colon \delta_i/\delta_j\in\T^-\}$
    and $\mathcal{M}_j'\subseteq\mathcal{M}_j\cap \{i<j\colon b_{ij}=a_{ij}\}\cap\{i<j\colon \delta_i/\delta_j\in\T^+\}$
    for all $2\leq j\leq 4$ such that $\widetilde{\mathcal{U}}_4\cap\mathcal{Z}_{(\mathcal{L}_j',\mathcal{M}_j')_j}^{(a_{ij})}\neq\emptyset$,
    for any $\underline{\eta}\in\mathcal{W}_{(\mathcal{L}_j,\mathcal{M}_j)_j}^{(b_{ij})}$
    and for any
    $\underline{\widetilde{\eta}}\in (H_4^{-1}(\underline{\eta})\cap\mathcal{Z}_{(\mathcal{L}_j',\mathcal{M}_j')_j}^{(a_{ij})}
    \cap\widetilde{\mathcal{U}}_4)(K)$,
    we have
    \begin{equation*}
      \dim_K\mathcal{W}_{(\mathcal{L}_j,\mathcal{M}_j)_j}^{(b_{ij})}+
      \dim_K(H_4^{-1}(\underline{\eta})\cap\mathcal{Z}_{(\mathcal{L}_j',\mathcal{M}_j')_j}^{(a_{ij})})
      +\dim_KG^{-1}(\underline{\widetilde{\eta}})<\dim_KF^{-1}(\W_4^\reg).
    \end{equation*}
    By Lemma \ref{dim of H^-1(eta)cap Z}, we have
    \[
      \dim_K\mathcal{W}_{(\mathcal{L}_j',\mathcal{M}_j')_j}^{(a_{ij})}
      =\dim_K(H_4^{-1}(\underline{\eta})\cap \mathcal{Z}_{(\mathcal{L}_j',\mathcal{M}_j)_j'}^{(a_{ij})}),
    \]
    thus it is enough to show
    \begin{equation}
      \label{wanted inequality for n=4}
      \dim_K\mathcal{W}_{(\mathcal{L}_j,\mathcal{M}_j)_j}^{(b_{ij})}+
      \dim_K\mathcal{W}_{(\mathcal{L}_j',\mathcal{M}_j')_j}^{(a_{ij})}
      +\dim_KG^{-1}(\underline{\widetilde{\eta}})<\dim_KF^{-1}(\W_4^\reg).
    \end{equation}
    Now notice that by Theorem \ref{dim F-1(W)< dim regular locus for n=3}, we have
    \[
      2\dim_K\mathcal{W}_{(\mathcal{L}_j',\mathcal{M}_j')_j}^{(b_{ij})}
      +\dim_KG^{-1}(\underline{\widetilde{\eta}})<\dim_KF^{-1}(\W_4^\reg)
    \]
    whenever $\sum_{j=2}^4|\mathcal{L}_j'|+|\mathcal{M}_j'|\leq 5$.
    Since $\dim_K \mathcal{W}_{(\mathcal{L}_j,\mathcal{M}_j)_j}^{(b_{ij})}\leq \dim_K\mathcal{W}_{(\mathcal{L}_j',\mathcal{M}_j')_j}^{(a_{ij})}$,
    we have that (\ref{wanted inequality for n=4}) holds when $\sum_{j=2}^4|\mathcal{L}_j'|+|\mathcal{M}_j'|\leq 5$.
    Notice that $|\{i<j\colon 2\leq j\leq 4\}|=6$, thus it is only left to show that the inequality
    (\ref{wanted inequality for n=4}) holds when $\sum_{j=2}^4(|\mathcal{L}_j|+|\mathcal{M}_j|)=6$ and
    $\mathcal{L}_j'=\mathcal{L}_j,\mathcal{M}_j'=\mathcal{M}_j$ for all $2\leq j\leq 4$.
    We will show that for any $y_2\in G_2^{-1}(\underline{\widetilde{\eta}})$,
    for any $y_3\in G_3^{-1}(y_2,\widetilde{\eta}_3,\widetilde{\eta}_4)$ and for any
    $y_4\in G_4^{-1}(y_{3},\widetilde{\eta}_4)$ we have
    \begin{equation}
      \label{new equation}
      \begin{split}
        2\dim_K\mathcal{W}_{(\mathcal{L}_j,\mathcal{M}_j)_j}^{(b_{ij})}&+\dim_KG_2^{-1}(\underline{\widetilde{\eta}})
        +\dim_K G_3^{-1}(y_{2},\widetilde{\eta}_3,\widetilde{\eta}_4)+\dim_K G_4^{-1}(y_3,\widetilde{\eta}_4)\\
        &<\dim_KF^{-1}(\W_4^\reg).
      \end{split}
    \end{equation}
    By Proposition \ref{fiber dimension of G_i}, we have
    \begin{align*}
      \dim G_i^{-1}(y_{i-1},\widetilde\eta_i,\dots,\widetilde\eta_n)&
      =\dim_KH_{\phi,\Gamma}^1((\D_{i-1}\widehat\otimes k(y_{i-1}))(\widetilde\eta_i^{-1}))\\
      &+d_i-\dim_KH_{\phi,\Gamma}^1((\D_{i-1}'\widehat\otimes k(y'_{i-1}))(x^{\mu_i}\widetilde\eta_i^{-1})).
    \end{align*}
    for all $2\leq i\leq 4$.
    For simplicity, let $E_i\coloneqq\D_{i}\widehat\otimes k(y_{i})$ and $E_i'\coloneqq\D_{i}'\widehat\otimes k(y'_{i})$.
    Hence the inequality (\ref{new equation}) becomes
    \begin{equation}
      \begin{split}
        2\dim_K\mathcal{W}_{(\mathcal{L}_j,\mathcal{M}_j)_j}^{(b_{ij})}&+
        \sum_{i=2}^4(\dim_KH_{\phi,\Gamma}^1(E_{i-1}(\widetilde\eta_i^{-1}))-\dim_KH_{\phi,\Gamma}^1(E_{i-1}'(x^{\mu_i}\widetilde\eta_i^{-1})))
        <2n.
      \end{split}
    \end{equation}
    By Theorem \ref{poincare duality}, we have
    \begin{align*}
      &\dim_KH_{\phi,\Gamma}^1(E_{i-1}(\widetilde\eta_i^{-1}))-\dim_KH_{\phi,\Gamma}^1(E_{i-1}'(x^{\mu_i}\widetilde\eta_i^{-1}))\\
      &=\dim_KH^0_{\phi,\Gamma}(E_{i-1}(\widetilde\eta_i^{-1}))+\dim_KH^2_{\phi,\Gamma}(E_{i-1}(\widetilde\eta_i^{-1}))\\
      &-\dim_KH^0_{\phi,\Gamma}(E_{i-1}'(x^{\mu_i}\widetilde\eta_i^{-1}))-\dim_KH^2_{\phi,\Gamma}(E_{i-1}'(x^{\mu_i}\widetilde\eta_i^{-1})).
    \end{align*}
    First of all notice that by the way we have chosen the neighbourhood $\widetilde{\mathcal{U}}_4'$, we have
    $x^{-\mu_i}\widetilde{\eta}_i/x^{-\mu_j}\widetilde{\eta}_j\notin\T^+(K)$
    for all $1\leq i< j\leq 4$, hence $\dim_KH^2_{\phi,\Gamma}(E_{i-1}'(x^{\mu_i}\widetilde\eta_i^{-1}))=0$
    by Lemma \ref{bound for dim H^2}.
    Moreover observe that since $\underline{\widetilde{\eta}}\in \mathcal{Z}_{(\mathcal{L}_j,\mathcal{M}_j)_j}^{(a_{ij})}$,
    we have that
    $\widetilde{\eta}_i/\widetilde{\eta}_j=\delta_i/\delta_j$ for all $i\in\mathcal{L}_j\cup\mathcal{M}_j$
    and $2\leq j\leq 4$.
    Since we are treating the case where $\sum_{j=2}^4|\mathcal{L}_j|+|\mathcal{M}_j|=6$, we have that
    $\widetilde{\eta}_i/\widetilde{\eta}_j=\delta_i/\delta_j$ for all $1\leq i<j\leq 4$.
    In particular this tells us that
    $\mathrm{wt}(x^{-\mu_i}\widetilde{\eta}_i/x^{-\mu_j}\widetilde{\eta}_j)\notin\N_{>0}$.
    By Proposition \ref{prop:high dim twist}, we have that
    \[
      H^1_{\phi,\Gamma}(E_{i-1}'(x^{\mu_i}\widetilde\eta_i^{-1}))\cong H^1_{\phi,\Gamma}(E_{i-1}'(x^{\mu_i}\widetilde\eta_i^{-1})[1/t])
    \]
    for all $2\leq i\leq n$.
    Moreover by Lemma \ref{satisfy hypothesis of bound H^0} we have that the $\phigam$-modules
    $E_{i-1},E_{i-1}'$ satisfy
    the hypothesis of Lemma \ref{bound of H^0} and hence
    \[
      \dim_KH^0_{\phi,\Gamma}(E_{i-1}(\widetilde\eta_i^{-1}))-\dim_KH^0_{\phi,\Gamma}(E_{i-1}'(x^{\mu_i}\widetilde\eta_i^{-1}))\leq 0.
    \]
    It is thus sufficient to prove that
    \begin{equation}
      \label{better inequality for n=4}
      \begin{split}
        2\dim_K\mathcal{W}_{(\mathcal{L}_j,\mathcal{M}_j)_j}^{(b_{ij})}&+
        \sum_{i=2}^4\dim_KH_{\phi,\Gamma}^2(E_{i-1}(\widetilde\eta_i^{-1}))<2n.
      \end{split}
    \end{equation}
    By Lemma \ref{bound for dim H^2} we have that
    \[
      \dim_KH_{\phi,\Gamma}^2(E_{i-1}(\widetilde\eta_i^{-1}))<|\mathcal{M}_i|
    \]
    for all $2\leq i\leq n$.
    It is then enough to show that
    \[
      2\dim_K\mathcal{W}_{(\mathcal{L}_j,\mathcal{M}_j)_j}^{(b_{ij})}+
        \sum_{i=2}^4|\mathcal{M}_i|<2n
    \]
    holds, but this true by hypothesis.
  \end{proof}

  \begin{rem}
    \label{when is condition necessary}
    Let
    \[
      \begin{array}{l l l}
        \mathcal{L}_j\coloneqq \{i<j\colon (\delta_i/\delta_j)_{|\Z_p^\times}\in x^{-\N}\}
        &\text{ and }
        &\mathcal{M}_j\coloneqq\{i<j\colon (\delta_i/\delta_j)_{|\Z_p^\times}\in x^{\N_{>0}}\}
      \end{array}
    \]
    for any $2\leq j\leq 4$.
    Notice in the proof of Theorem \ref{dim F-1(W)< dim regular locus for n=4} that the condition
    (\ref{condition}) is necessary only when $\sum_{j=2}^4|\mathcal{L}_j|+|\mathcal{M}_j|=6$
    and $\delta_i/\delta_j\notin\T^\reg(L)$ for all $1\leq i<j\leq 4$.
    Otherwise the claim holds by Theorem \ref{dim F-1(W)< dim regular locus for n=3} without the
    need of assuming (\ref{condition}).
  \end{rem}

  \begin{cor}
    \label{dim F-1(W) for n=3,4}
    For $n=3,4$ and for any $\mathcal{W}_{(\mathcal{L}_j,\mathcal{M}_j)_j}^{(b_{ij})}$ we have
    \[
      \dim_KF^{-1}(\mathcal{W}_{(\mathcal{L}_j,\mathcal{M}_j)_j}^{(b_{ij})})<\dim_KF^{-1}(\W_n^\reg).
    \]
  \end{cor}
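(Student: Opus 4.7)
The plan is to divide into cases according to the total number of non-regularity conditions
\[
N := \sum_{j=2}^n (|\mathcal{L}_j| + |\mathcal{M}_j|)
\]
defining $\mathcal{W} = \mathcal{W}_{(\mathcal{L}_j,\mathcal{M}_j)_j}^{(b_{ij})}$. For $n = 3$ there are only $\binom{3}{2}=3$ pairs $(i,j)$ with $i < j$, so automatically $N \leq 3 \leq 5$, and Theorem \ref{dim F-1(W)< dim regular locus for n=3} delivers the inequality immediately. For $n = 4$ the case $N \leq 5$ is again covered by the same theorem, and one is reduced to the single remaining situation $N = \binom{4}{2} = 6$, where every pair $(i,j)$ with $1\leq i<j\leq 4$ lies in $\mathcal{L}_j \cup \mathcal{M}_j$.

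In this extremal situation I plan to invoke Theorem \ref{dim F-1(W)< dim regular locus for n=4}, and the task becomes to verify its hypothesis
\[
2\dim_K \mathcal{W}_{(\mathcal{L}_j,\mathcal{M}_j)_j}^{(b_{ij})} + \sum_{j=2}^4 |\mathcal{M}_j| < 8.
\]
A direct combinatorial count gives $\dim_K \mathcal{W} \leq 1$: each of the six conditions forces the ratio $\eta_i/\eta_j$ to be a specific power of $x$, so that once $\eta_1$ is chosen, the remaining characters $\eta_2,\eta_3,\eta_4$ are determined up to the consistency relations among the $b_{ij}$, leaving a one-dimensional family inside the four-dimensional space $\W^4$. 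Consequently the hypothesis collapses to the requirement $\sum_{j=2}^4 |\mathcal{M}_j| \leq 5$, which holds whenever at least one pair lies in $\mathcal{L}_j$ rather than $\mathcal{M}_j$.

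The hard part will be the boundary subcase $\sum|\mathcal{M}_j| = 6$, in which every pair satisfies $\delta_i/\delta_j \in \mathcal{T}^+$ and the bound of Theorem \ref{dim F-1(W)< dim regular locus for n=4} degenerates to equality. Here I plan to refine the bound $\dim_K H^2(E_{i-1}(\widetilde{\eta}_i^{-1})) \leq |\mathcal{M}_i|$ of Lemma \ref{bound for dim H^2}: the long exact sequences in $\phigam$-cohomology coming from the filtration of $E_{i-1}$ introduce connecting morphisms $H^1(\mathcal{R}_K(\delta_{k}/\widetilde{\eta}_i)) \to H^2(\mathcal{R}_K(D_{k-1}(\widetilde{\eta}_i^{-1})))$ which, because $\widetilde{\eta}$ lies in a neighbourhood of the parameters of the non-split triangulation $\drig(\rho)$, cannot vanish for every $i \in \{2,3,4\}$ simultaneously. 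This yields strict inequality $\dim_K H^2 < |\mathcal{M}_i|$ for at least one $i$, tightens Proposition \ref{fiber dimension of G_n} by one, and recovers the strict inequality $2 \dim_K \mathcal{W} + \sum |\mathcal{M}_j| < 8$ needed to invoke Theorem \ref{dim F-1(W)< dim regular locus for n=4}.
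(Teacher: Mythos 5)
Your reduction for $n=3$ and for $n=4$ with $\sum(|\mathcal{L}_j|+|\mathcal{M}_j|)\leq 5$ is correct and is exactly the paper's approach; likewise the observation $\dim_K\mathcal{W}\leq 1$ when all six pairs are constrained. The gap is in what you call the ``hard part'': the boundary subcase $\sum_{j}|\mathcal{M}_j|=6$ simply does not arise, and the cohomological refinement you propose to handle it is both unnecessary and not supported by the outline you give.

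Here is why the case is vacuous. In proving the inequality one never works directly with the given $\mathcal{L}_j,\mathcal{M}_j$; by Lemma \ref{good W}, the set $H_n^{-1}(\mathcal{W}_{(\mathcal{L}_j,\mathcal{M}_j)_j}^{(b_{ij})})\cap\widetilde{\mathcal{U}}_n$ is covered by strata $\mathcal{Z}_{(\mathcal{L}_j',\mathcal{M}_j')_j}^{(a_{ij})}$ whose index sets are pinched by $\delta$ itself, in particular $\mathcal{M}_j'\subseteq\{i<j\colon\delta_i/\delta_j\in\mathcal{T}^+\}$. But one can never have $\delta_i/\delta_j\in\mathcal{T}^+$ for all six pairs $i<j$ in $\{1,\dots,4\}$: the multiplicativity $\delta_i/\delta_k=(\delta_i/\delta_j)(\delta_j/\delta_k)$ would produce a factor $\chi^2$, and $\chi^2\notin x^{\mathbb{Z}}\cup\chi x^{\mathbb{Z}}$, so $\delta_i/\delta_k\notin\mathcal{T}^+\cup\mathcal{T}^-$. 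Consequently $\bigl|\{(i,j)\colon i<j,\ \delta_i/\delta_j\in\mathcal{T}^+\}\bigr|\leq 4$, and the covering $\mathcal{Z}'$ always has $\sum_j|\mathcal{M}_j'|\leq 4$. The paper makes this explicit: when all six $\delta_i/\delta_j$ lie in $\mathcal{T}^+\cup\mathcal{T}^-$, the indices split into a prefix $B$ and a suffix $A$ according to their $\chi$-content, giving exactly three configurations ($|B|\in\{1,2,3\}$), with $3$, $4$, and $3$ pairs in $\mathcal{T}^+$ respectively, and one directly checks $2\dim_K\mathcal{W}+\sum_j|\mathcal{M}_j|<8$ in each. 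The remaining possibility, that some $\delta_i/\delta_j$ is regular, is dispatched by Remark \ref{when is condition necessary}, since then the covering strata already have $\sum(|\mathcal{L}_j'|+|\mathcal{M}_j'|)\leq 5$ and Theorem \ref{dim F-1(W)< dim regular locus for n=3} applies directly.

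As for the proposed alternative: the assertion that the connecting maps $H^1(\mathcal{R}_K(\delta_k/\widetilde\eta_i))\to H^2$ ``cannot vanish for every $i$ simultaneously'' because $\drig(\rho)$ is non-split is not justified, and in fact points the wrong way. The bound $\dim_K H^2_{\phi,\Gamma}(E_{i-1}(\widetilde\eta_i^{-1}))\leq|\mathcal{M}_i|$ is applied at arbitrary points $y_{i-1}$ of the fiber, where $E_{i-1}=\D_{i-1}\widehat\otimes k(y_{i-1})$ can be far more degenerate (in particular, fully split) than $D_{i-1}$, and such degenerate points are precisely where the fiber dimension of $G$ is largest; so non-splitness of $\rho$ gives you nothing at those points. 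You should replace the planned cohomological refinement with the observation on the multiplicative structure of $\mathcal{T}^+$ and the resulting enumeration of possible $\delta$-configurations.
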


  \begin{proof}
    We treat the two cases $n=3$ and $n=4$ separately.
    \begin{itemize}
      \item[$n=3$:]
        In this case we have that $\sum_{j=1}^3|\mathcal{L}_j|+|\mathcal{M}_j|\leq 3$
        for any $\mathcal{L}_j,\mathcal{M}_j$.
        Hence the claim descends directly from Theorem \ref{dim F-1(W)< dim regular locus for n=3}.
      \item[$n=4:$]
        If $\sum_{j=1}^4|\mathcal{L}_j|+|\mathcal{M}_j|\leq 5$, then the wanted inequality
        holds by Theorem \ref{dim F-1(W)< dim regular locus for n=3}.
        If $\sum_{j=1}^n|\mathcal{L}_j|+|\mathcal{M}_j|= 6$, then we distinguish two cases:
        \begin{itemize}
          \item
            if $\delta_i/\delta_j\in\T^\reg(L)$ for some $1\leq i<j\leq 4$,
            then by Remark \ref{when is condition necessary} we have that
            \[
              \dim_KF^{-1}(\mathcal{W}_{(\mathcal{L}_j,\mathcal{M}_j)_j}^{(b_{ij})})<\dim_KF^{-1}(\W_n^\reg)
            \]
            as wanted.
          \item
            If instead $\delta_i/\delta_j\notin\T^\reg(L)$ for all $1\leq i<j\leq 4$,
            then it means that there is
            $\underline{\widetilde{\eta}}\in\widetilde{\mathcal{U}}_n\cap\mathcal{Z}_{(\mathcal{L}_j,\mathcal{M}_j)_j}^{(a_{ij})}$
            such that
            $\widetilde{\eta}_i/\widetilde{\eta_j}=x^{-a_{ij}}\Leftrightarrow i\in\mathcal{L}_j$ and
            $\widetilde{\eta}_i/\widetilde{\eta_j}=\chi x^{a_{ij}}\Leftrightarrow i\in\mathcal{M}_j$
            for all $2\leq j\leq 4$.
            As a consequence we have
            $x^{-\mu_i}\widetilde{\eta}_i/x^{-\mu_j}\widetilde{\eta_j}=x^{-a_{ij}-\mu_i+\mu_j}\Leftrightarrow i\in\mathcal{L}_j$ and
            $x^{-\mu_i}\widetilde{\eta}_i/x^{-\mu_j}\widetilde{\eta_j}=\chi x^{a_{ij}-\mu_i+\mu_j}\Leftrightarrow i\in\mathcal{M}_j$
            for all $2\leq j\leq 4$.
            Since $(x^{-\mu_1}\widetilde{\eta}_1,\dots,x^{-\mu_4}\widetilde{\eta_4})\in\widetilde{\mathcal{U}}_4'(K)$,
            by Remark \ref{good U_n'} we have
            \[
              x^{-\mu_i}\delta_i/x^{-\mu_j}\delta_j=x^{-a_{ij}-\mu_i+\mu_j} \text{ for all } i\in\mathcal{L}_j
            \]
            and
            \[
              x^{-\mu_i}\delta_i/x^{-\mu_j}\delta_j=\chi x^{a_{ij}-\mu_i+\mu_j}\text{ for all } i\in\mathcal{M}_j
            \]
            and for all $2\leq j\leq 4$.
            Obviously we get $\delta_i/\delta_j=x^{-a_{ij}}$ for all $ i\in\mathcal{L}_j$ and
            $\delta_i/\delta_j=\chi x^{a_{ij}}$ for all $ i\in\mathcal{M}_j$ and for all $2\leq j\leq 4$.
            In particular we have $\mathcal{L}_j= \{i<j\colon \delta_i/\delta_j= x^{-a_{ij}}\}$ and
            $\mathcal{M}_j=\{i<j\colon \delta_i/\delta_j= \chi x^{a_{ij}}\}$ for all
            $2\leq j\leq 4$.
            It is easy to see that there are only three possibilities
            so that $\sum_{j=2}^4|\mathcal{L}_j|+|\mathcal{M}_j|=6$ and $\sum_{j=2}^4|\mathcal{M}_j|\geq 1$:
            \begin{enumerate}
              \item
                \[
                  \begin{array}{lcr}
                    \delta_1/\delta_2=\chi x^{a_{12}}, &\delta_1/\delta_3=\chi x^{a_{13}}, &\delta_1/\delta_4=\chi x^{a_{14}},\\
                    \delta_2/\delta_3=x^{-a_{12}+a_{13}}=x^{-a_{23}}, &\delta_2/\delta_4= x^{-a_{12}+a_{14}}=x^{-a_{24}}, &\\
                    \delta_3/\delta_4= x^{-a_{13}+a_{14}}=x^{-a_{14}}. & &
                  \end{array}
                \]
              \item
                \[
                  \begin{array}{lcr}
                    \delta_1/\delta_2= x^{-a_{12}}, &\delta_1/\delta_3=\chi x^{a_{13}}, &\delta_1/\delta_4=\chi x^{a_{14}},\\
                    \delta_2/\delta_3=\chi x^{a_{12}+a_{13}}=\chi x^{a_{23}}, &\delta_2/\delta_4=\chi x^{a_{12}+a_{14}}=\chi x^{a_{24}}, &\\
                    \delta_3/\delta_4= x^{-a_{13}+a_{14}}=x^{-a_{14}}. & &
                  \end{array}
                \]
              \item
                \[
                  \begin{array}{lcr}
                    \delta_1/\delta_2= x^{-a_{12}}, &\delta_1/\delta_3= x^{-a_{13}}, &\delta_1/\delta_4=\chi x^{a_{14}},\\
                    \delta_2/\delta_3= x^{a_{12}-a_{13}}=x^{-a_{23}}, &\delta_2/\delta_4=\chi x^{a_{12}+a_{14}}=\chi x^{a_{24}}, &\\
                    \delta_3/\delta_4=\chi x^{a_{13}+a_{14}}=\chi x^{a_{14}}. & &
                  \end{array}
                \]
            \end{enumerate}
            For the three above cases we have
            \[
              2\dim_K\mathcal{W}_{(\mathcal{L}_j,\mathcal{M}_j)_j}^{(b_{ij})}+\sum_{j=2}^4|\mathcal{M}_j|=2+3=5<8.
            \]
            This shows that the condition (\ref{condition}) is always satisfied,
            hence we can conclude by Theorem \ref{dim F-1(W)< dim regular locus for n=4}.
        \end{itemize}
    \end{itemize}
  \end{proof}

  \begin{lem}
    \label{description of pi-1(z')}
    Recall that by construction (Theorem \ref{extensions}) we have
    $X_n'\cong \widetilde{\mathcal{U}}_n'\times\times\mathbb{A}_L^{d_2,\an}\times\dots\times\mathbb{A}_L^{d_n,\an}$,
    where $d_i\coloneqq\dim_LH^1_{\phigam}(D_{i-1}'(\delta_i^{-1}x^{\mu_i}))$.
    Let $z'\coloneqq (\underline{\widetilde{\eta}}',\underline{a})
    \in(\widetilde{\mathcal{U}}_n'\times\times\mathbb{A}_L^{d_2,\an}\times\dots\times\mathbb{A}_L^{d_n,\an})(K)=X'_n(K)$
    for some extension $K$ of $L$
    and let
    \[
      p:X_n\rightarrow X_n'
    \]
    be the projection map.
    We have that the fiber $p^{-1}(z')$ is in bijection with the set
    \[
      \left\{
        \begin{array}{c}
          \phigam\text{-modules }E_n\text{ over }\robba_{K}\text{ of parameters }
          (\widetilde{\eta}_1'x^{\mu_1},\dots,\widetilde{\eta}'_nx^{\mu_n})\\
          \text{ such that }E_n[1/t]=z'^*\D_n'[1/t]
        \end{array}
      \right\}.
    \]
  \end{lem}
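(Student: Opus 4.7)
The plan is to unpack the definition of $X_n$ and then combine the Beauville--Laszlo type bijection of Proposition \ref{bijection of lattices and modules} with the computation of graded pieces carried out earlier.

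First, I would unwind the construction in Theorem \ref{construction of X_n}: since $X_n = Y_n \times_{\Gr_n} Y_{(\mu_1,\dots,\mu_n)}^{\rig}$, a $K$-point of the fiber $p^{-1}(z')$ is exactly a $\Gamma_m$-stable lattice $\Lambda$ in
\[
  (z'^*\D_n'^r)[1/t]\otimes_{\robba_K^r[1/t],\tau}K((t))
\]
whose graded pieces with respect to the filtration induced by the triangulation of $z'^*\D_n'$ are $t^{\mu_i}\mathrm{gr}^i(\Lambda_{z'}')$ (where $\Lambda_{z'}'$ denotes the standard $\Gamma_m$-stable lattice on the $z'$-fiber). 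The $\Gamma_m$-stability comes from the fiber product defining $Y_n$, while the condition on graded pieces is exactly membership in $Y_{(\mu_1,\dots,\mu_n)}^{\rig}$.

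Next, I would apply Proposition \ref{bijection of lattices and modules} (with $A=K$ and $M = z'^*\D_n'[1/t]$) to identify the set of $\Gamma_m$-stable lattices in $M^r\otimes_{\robba_K^r[1/t],\tau}K((t))$ with the set $M^{\phigam}$ of $\phigam$-submodules $E_n \subset M$ over $\robba_K$ satisfying $E_n[1/t]=M = z'^*\D_n'[1/t]$. Under this bijection, a lattice $\Lambda$ corresponds to the $\phigam$-module $E_n$ with $E_n^r \otimes_{\robba_K^r,\tau}K\llbracket t\rrbracket = \Lambda$.

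Finally, it remains to show that the condition $\Lambda \in Y_{(\mu_1,\dots,\mu_n)}^{\rig}$ corresponds under this bijection to $E_n$ having parameters $(\widetilde{\eta}_1'x^{\mu_1},\dots,\widetilde{\eta}_n'x^{\mu_n})$. The direction "parameters $\Rightarrow$ relative position" is precisely the computation of graded pieces in Lemma \ref{relative positions of lattices in dim n}, applied fiber-wise at $z'$: if $E_n$ has parameters $\widetilde{\eta}_i' x^{\mu_i}$ and $E_n[1/t] = z'^*\D_n'[1/t]$, then $\mathrm{gr}^i(\Lambda) = t^{\mu_i}\mathrm{gr}^i(\Lambda_{z'}')$. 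For the reverse direction, one observes that the filtration $\mathrm{Fil}^i(E_n) = E_n \cap \mathrm{Fil}^i(z'^*\D_n'[1/t])$ on $E_n$ is a triangulation (each graded piece is locally free of rank one by Lemma \ref{Y_mu gives filtrations} applied to the lattice in $Y_{(\mu_1,\dots,\mu_n)}$), and one computes the graded pieces as in Theorem \ref{construction of X_n} to get exactly $\robba_K(\widetilde{\eta}_i' x^{\mu_i})$. The main (mild) obstacle is bookkeeping this last equivalence between "relative position $(\mu_1,\dots,\mu_n)$" and "parameters $\widetilde{\eta}_i' x^{\mu_i}$", but it reduces to the same graded-piece computation already performed in the two places cited, now read in both directions.
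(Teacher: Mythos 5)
Your proposal is correct and takes essentially the same route as the paper: unwind $X_n$ to identify the fiber $p^{-1}(z')$ with $\Gamma_m$-stable lattices in $Y_{(\mu_1,\dots,\mu_n)}$, invoke Proposition \ref{bijection of lattices and modules} to pass to $\phigam$-modules $E_n$ with $E_n[1/t] = z'^*\D_n'[1/t]$, and then match the relative-position condition with the parameter condition via the graded-piece computations. If anything, you are slightly more explicit than the paper's proof, which spells out only the direction "parameters $\Rightarrow$ lattice lies in $Y_{(\mu_1,\dots,\mu_n)}$" and leaves the converse implicit (it follows from the statement of Theorem \ref{construction of X_n} that every fiber of the universal family $\D_n$ has parameters $x^{\mu_i}\widetilde\delta_i'$); both directions are handled by the same type of computation, so this is a presentational rather than a mathematical difference.
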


  \begin{proof}
    By definition of $X_n$ we have that the fiber $p^{-1}(z')$ is the set
    \begin{align*}
      \{\Lambda_z\in Y_{(\mu_1,\dots,\mu_n)}\colon
      \Lambda_z\text{ is a }\Gamma_m\text{-stable lattice of }z'^*\D_n'^r[1/t]\otimes_{\robba_K^r,\tau}K((t))\}.
    \end{align*}
    By Proposition \ref{bijection of lattices and modules}, we know that there is a bijection between the two sets
    \begin{align*}
      \{\Lambda_z\in \Gr_n\colon
      \Lambda_z\text{ is a }\Gamma_m\text{-stable lattice of }z'^*\D_n'^r[1/t]\otimes_{\robba_K^r,\tau}K((t))\}
    \end{align*}
    and
    \begin{align*}
      \{\phigam\text{-modules }E_n\text{ over }\robba_K\text{ such that }E_n[1/t]=z'^*\D_n'[1/t]\}.
    \end{align*}
    It is then enough to show that if $E_n$ has parameters $(\widetilde{\eta}_1'x^{\mu_1},\dots,\widetilde{\eta}'_nx^{\mu_n})$,
    then the corresponding $\Lambda_z$ through the bijection in Proposition \ref{bijection of lattices and modules}
    is in $Y_{(\mu_1,\dots,\mu_n)}$.
    Let us denote by $\Lambda_z'$ the $\Gamma_m$-stable lattice of $z'^*\D_n'^r[1/t]\otimes_{\robba_K^r,\tau}K((t))$
    associated to $z'^*\D_n'^r$.
    Observe that
    \begin{align*}
      \gr^1(\Lambda_z)&=\Lambda_z\cap\Fil^1(K((t))^n)
      =(E_n^r\otimes_{\robba_K^r,\tau}K\llbracket t\rrbracket)\cap(\robba_K^r(\widetilde{\eta}_1')[1/t]\otimes_{\robba_K^r[1/t],\tau}K((t)) )\\
      &=\robba_K^r(x^{\mu_1}\widetilde{\eta}_1')\otimes_{\robba_K^r,\tau}K\llbracket t\rrbracket
      =t^{\mu_1}\gr^1(\Lambda_z').
    \end{align*}
    Moreover for every $2\leq i\leq n$ let $E_i$ be the $i$-th piece of the filtration of
    $E_n$ giving parameters $(\widetilde{\eta}_1'x^{\mu_1},\dots,\widetilde{\eta}'_nx^{\mu_n})$.
    We have
    \begin{align*}
      \Fil^i(\Lambda_z)&=\Lambda_z\cap\Fil^i(K((t))^n)
      =(E_n^r\otimes_{\robba_K^r,\tau}K\llbracket t\rrbracket)\cap(z'^*\D_i'^r[1/t]\otimes_{\robba_K^r[1/t],\tau}K((t)))\\
      &=E_i^r\otimes_{\robba_K^r,\tau}K\llbracket t\rrbracket.
    \end{align*}
    Therefore
    \begin{align*}
      \gr^i(\Lambda_z)&=\Fil^i(\Lambda_z)/\Fil^{i-1}(\Lambda_z)
      =(E_i^r\otimes_{\robba_K^r,\tau}K\llbracket t\rrbracket)/(E_{i-1}^r\otimes_{\robba_K^r,\tau}K\llbracket t\rrbracket)\\
      &\cong\robba_K^r(x^{\mu_i}\widetilde{\eta}_i')\otimes_{\robba_K^r,\tau}K\llbracket t\rrbracket
      = t^{\mu_i}\gr^i(\Lambda_z').
    \end{align*}
    The above computations show that $\Lambda_z\in Y_{(\mu_1,\dots,\mu_n)}$ and ends the proof.
  \end{proof}

  \begin{lem}
    \label{homeo of regular locus}
    Let $W$ be the intersection of $\W_n^\reg$ and $\im(F)$ in $\W^n$, let
      \[
        W'\coloneqq\{(\eta_1x^{-\mu_1},\dots,\eta_n x^{-\mu_n})\colon(\eta_1,\dots,\eta_n)\in W\}\subset H_n(\widetilde{\mathcal{U}}_n')
      \]
       and let $T'$ be the intersection of $\widetilde{\mathcal{U}}_n'$ and $H_n^{-1}(W')$ in $\T^n$.
      The map
      \begin{equation}
        \label{homeomorphism}
        p:F^{-1}(W)\rightarrow T'\times\mathbb{A}_L^{d_2,\an}\times\dots\times\mathbb{A}_L^{d_n,\an}
        \subset\widetilde{\mathcal{U}}_n'\times\mathbb{A}_L^{d_2,\an}\times\dots\times\mathbb{A}_L^{d_n,\an}=X'
      \end{equation}
      given by the restriction of the map $X_n\rightarrow X'_n$ to $F^{-1}(W)$ is a homeomorphism.
  \end{lem}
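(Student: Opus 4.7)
The plan is to adapt the argument of Lemma~\ref{proof homeomorphism} (the $n=2$ case) to arbitrary dimension. The proof splits into three steps: showing $p$ is well-defined with image in $T'\times\prod_{i=2}^n\mathbb{A}_L^{d_i,\an}$, showing it is bijective on points, and showing it is open.

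\emph{Well-definedness.} For $y=(y',\Lambda_y)\in F^{-1}(W)$, write $y'=(\underline{\widetilde\eta}',\underline a)\in\widetilde{\mathcal U}_n'\times\prod_{i=2}^n\mathbb A_L^{d_i,\an}=X_n'$. Theorem~\ref{construction of X_n} gives that $y^*\D_n$ has parameters $(x^{\mu_i}\widetilde\eta_i')_{i=1}^n$. The condition $y\in F^{-1}(W)$ amounts to $H_n(x^{\mu_i}\widetilde\eta_i')\in W$, equivalently $H_n(\underline{\widetilde\eta}')\in W'$, so $\underline{\widetilde\eta}'\in T'$ and $p(y)\in T'\times\prod\mathbb A_L^{d_i,\an}$.

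\emph{Bijectivity on points.} Fix $z'\in(T'\times\prod\mathbb A_L^{d_i,\an})(K)$. By Lemma~\ref{description of pi-1(z')}, $p^{-1}(z')$ corresponds bijectively to the set of $\phigam$-modules $E_n$ over $\robba_K$ of parameters $\underline{\widetilde\eta}\coloneqq(x^{\mu_i}\widetilde\eta_i')$ satisfying $E_n[1/t]=z'^*\D_n'[1/t]$. Since $z'\in T'$, we have $H_n(\underline{\widetilde\eta})\in W\subset\W_n^\reg$, so $\mathrm{wt}(\widetilde\eta_i/\widetilde\eta_j)\notin\mathbb Z$ for $i<j$; in particular $\mathrm{wt}(\widetilde\eta_j/\widetilde\eta_i)\notin\mathbb N_{>0}$ for $j<i$. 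I build $E_n$ inductively along the filtration $\Fil^\bullet(z'^*\D_n'[1/t])$, whose graded pieces are $\robba_K(\widetilde\eta_i)[1/t]$. Set $E_1\coloneqq\robba_K(\widetilde\eta_1)$; given $E_{i-1}$ with $E_{i-1}[1/t]=\Fil^{i-1}(z'^*\D_n'[1/t])$, extensions of $\robba_K(\widetilde\eta_i)$ by $E_{i-1}$ are classified by $H^1_{\phi,\Gamma}(E_{i-1}(\widetilde\eta_i^{-1}))$, and the localization map to $H^1_{\phi,\Gamma}(E_{i-1}(\widetilde\eta_i^{-1})[1/t])$ must hit the class of $\Fil^i(z'^*\D_n'[1/t])$. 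By Proposition~\ref{prop:high dim twist} (with $D_n=E_{i-1}$, $\delta=\widetilde\eta_i$) this localization map is an isomorphism, so the class of $E_i$ is uniquely determined. Passing through the lattice/module dictionary of Proposition~\ref{bijection of lattices and modules} together with Lemma~\ref{relative positions of lattices in dim n} (which identifies the relevant Schubert cell as $Y_\mu$), this yields a unique $\Gamma_m$-stable lattice in $Y_\mu$ above $z'$.

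\emph{Openness.} Mimicking the chain of fiber product manipulations carried out in the proof of Lemma~\ref{proof homeomorphism}, I identify
\[
F^{-1}(W)=T'\times\prod_{i=2}^n\mathbb A_L^{d_i,\an}\times\Gr_n\times_{\Gr_n\times\Gr_n}Y_\mu,
\]
exploiting that, once the lattice is constrained to lie in $Y_\mu$, the parameters of $\D_n$ on $X_n$ depend only on $\underline{\widetilde\eta}'$ (cf.\ Theorem~\ref{construction of X_n}). The projection onto the first factor $T'\times\prod\mathbb A_L^{d_i,\an}$ is then the natural projection of a product, hence open. Combined with continuity of $p$ (as restriction of a morphism of rigid spaces) and the bijectivity above, this exhibits $p$ as a homeomorphism onto its image.

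The main obstacle is the bijectivity step: Proposition~\ref{prop:high dim twist} guarantees uniqueness of the extension class at each stage of the induction, but one must upgrade ``unique extension class'' to ``unique $\phigam$-submodule of $z'^*\D_n'[1/t]$,'' and verify that the resulting chain of submodules assembles into a single $\Gamma_m$-stable lattice lying in $Y_\mu$ inside the affine Grassmannian. The lattice/module dictionary of Proposition~\ref{bijection of lattices and modules} together with Lemma~\ref{relative positions of lattices in dim n} is designed precisely to handle this translation.
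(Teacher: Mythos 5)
Your proof follows the same three-step strategy as the paper's (well-definedness via Theorem~\ref{construction of X_n}, fiberwise bijectivity via Lemma~\ref{description of pi-1(z')} and the inductive construction through Proposition~\ref{prop:high dim twist}, openness via the fiber-product identification of $F^{-1}(W)$), and it is correct. The only cosmetic issue is in your closing paragraph: the worry about upgrading ``unique extension class'' to ``unique $\Gamma_m$-stable lattice in $Y_\mu$'' is already dispatched at the start of your bijectivity step, since Lemma~\ref{description of pi-1(z')} packages the lattice/module translation and the $Y_\mu$ constraint in one statement, so the re-invocation of Proposition~\ref{bijection of lattices and modules} and Lemma~\ref{relative positions of lattices in dim n} at the end is redundant rather than a genuine obstacle.
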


  \begin{proof}
    First of all we show that if $z=(z',\Lambda_z)\in F^{-1}(W)$,
    then $z'\in T'\times\mathbb{A}_L^{d_2,\an}\times\dots\times\mathbb{A}_L^{d_n,\an}$;
    in other words we show that the parameters $\underline{\widetilde{\eta}}'$
    of $z'^*\D'_n$ are in $T'$:
    in fact by Theorem \ref{construction of X_n}, we have that $z^*\D_n$ has parameters
    $(\widetilde{\eta}_1'x^{\mu_1},\dots,\widetilde{\eta}_n'x^{\mu_n})$;
    moreover we know $z\in F^{-1}(W)$, which implies
    $H_n(\widetilde{\eta}_1'x^{\mu_1},\dots,\widetilde{\eta}_n'x^{\mu_n})\in W$;
    as a consequence we have $H_n(\underline{\widetilde{\eta}}')\in W'$, hence
    $\underline{\widetilde{\eta}}\in T'$.
    We have thus showed that the map $p$ is well-defined.
    Now let $T\coloneqq H_n^{-1}(W)\cap \widetilde{\mathcal{U}}_n$.
    Observe that
    \[
      \underline{\widetilde{\eta}}\in T\Leftrightarrow
      (\widetilde{\eta}_1x^{-\mu_1},\dots,\widetilde{\eta}_nx^{-\mu_n})\in T':
    \]
    in fact by definition we have that
    $\underline{\widetilde{\eta}}\in T=H_n^{-1}(W)\cap \widetilde{\mathcal{U}}_n$
    if and only if $H_n(\widetilde{\eta}_1x^{-\mu_1},\dots,\widetilde{\eta}_nx^{-\mu_n})\in W'$ and
    $(\widetilde{\eta}_1x^{-\mu_1},\dots,\widetilde{\eta}_nx^{-\mu_n})\in \widetilde{\mathcal{U}}_n'$,
    which means that
    $(\widetilde{\eta}_1x^{-\mu_1},\dots,\widetilde{\eta}_nx^{-\mu_n})\in H_n^{-1}(W')\cap \widetilde{\mathcal{U}}_n'= T'$.
    We will now prove that the map
    \begin{align*}
      p:F^{-1}(W)=\bigcup_{\underline{\widetilde{\eta}}\in T}G^{-1}(\underline{\widetilde{\eta}})
        \rightarrow &\bigcup_{\underline{\widetilde{\eta}}'\in T'}
        \underline{\widetilde{\eta}}'\times\mathbb{A}_L^{d_2,\an}\times\dots\times\mathbb{A}_L^{d_n,\an}\\
        &=T'\times\mathbb{A}_L^{d_2,\an}\times\dots\times\mathbb{A}_L^{d_n,\an}
    \end{align*}
    is a bijection.
    We will show this by proving that for every $\underline{\widetilde{\eta}}\in T(K)$ for some
    extension $K$ of $L$, we have
    that the map
    \[
      p:G^{-1}(\underline{\widetilde{\eta}})\rightarrow
      (\widetilde{\eta}_1x^{-\mu_1},\dots,\widetilde{\eta}_nx^{-\mu_n})
      \times\mathbb{A}_L^{d_2,\an}\times\dots\times\mathbb{A}_L^{d_n,\an}
    \]
    given by the restriction of the map $p$ to $G^{-1}(\underline{\widetilde{\eta}})$ is a bijection.
    Let $z'\coloneqq (\widetilde{\eta}_1x^{-\mu_1},\dots,\widetilde{\eta}_nx^{-\mu_n})\times\underline{a}\in
    (\widetilde{\eta}_1x^{-\mu_1},\dots,\widetilde{\eta}_nx^{-\mu_n})
    \times\mathbb{A}_L^{d_2,\an}\times\dots\times\mathbb{A}_L^{d_n,\an}$.
    By Lemma \ref{description of pi-1(z')}, we have that $p^{-1}(z')$ is in bijection with the set
    \[
      \{\phigam\text{-modules }E_n\text{ over }\robba_K\text{ of parameters }\underline{\widetilde{\eta}}
      \text{ such that }E_n[1/t]=z'^*\D_n'[1/t]\}
    \]
    and we will now show that such set has cardinality 1, which implies the desired bijection.
    Since $H_n(\underline{\widetilde{\eta}})\in\W_n^\reg(K)$, we have that $\mathrm{wt}(\widetilde{\eta}_i/\widetilde{\eta}_j)\notin\N_{>0}$
    for all $1\leq i<j\leq n$.
    This in particular implies that $\mathrm{wt}(x^{-\mu_i}\widetilde{\eta}_i/x^{-\mu_j}\widetilde{\eta}_j)\notin\N_{>0}$
    for all $1\leq i<j\leq n$ by Remark \ref{good U_n'}.
    Thus by Proposition \ref{prop:high dim twist}, we have the following isomorphism:
    \[
      H^1_{\phi,\Gamma}(\robba_K(\widetilde{\eta}_1/\widetilde{\eta}_2))
      \cong H^1_{\phi,\Gamma}(\robba_K(\widetilde{\eta}_1/\widetilde{\eta}_2)[1/t])
      \cong H^1_{\phi,\Gamma}(\robba_K(x^{-\mu_1}\widetilde{\eta}_1/x^{-\mu_2}\widetilde{\eta}_2)).
    \]
    Let us denote by $E_2$ the extension of $\robba_K(\widetilde{\eta}_2)$ by $\robba_K(\widetilde{\eta}_1)$
    corresponding to the extension $z'^*\D_2'\in H^1_{\phi,\Gamma}(\robba_K(x^{-\mu_1}\widetilde{\eta}_1/x^{-\mu_2}\widetilde{\eta}_2))$
    through the isomorphism above.
    In an iterative manner, for every $3\leq i\leq n$ we define the $\phigam$-module $E_i$
    to be the extension of $\robba_K(\widetilde{\eta}_i)$ by $E_{i-1}$ corresponding to the extension
    $z'^*\D_i'\in H^1_{\phi, \Gamma}(z'^*\D_{i-1}'(x^{\mu_i}\widetilde{\eta}_i^{-1}))$ through the
    isomorphism
    \[
      H^1_{\phi, \Gamma}(E_{i-1}(\widetilde{\eta}_i^{-1}))\cong H^1_{\phi, \Gamma}(E_{i-1}(\widetilde{\eta}_i^{-1})[1/t])
      \cong H^1_{\phi, \Gamma}(z'^*\D_{i-1}'(x^{\mu_i}\widetilde{\eta}_i^{-1}))
    \]
    given by Proposition \ref{prop:high dim twist}.
    We have then found a unique $E_n$ of parameters $\underline{\widetilde{\eta}}$ such that
    $E_n[1/t]=z'^*\D_n'[1/t]$ and this proves that the above map is a bijection.
    Finally we have that
    \begin{align*}
      F^{-1}(W)&=G^{-1}(H_n^{-1}(W)\cap\widetilde{\mathcal{U}}_n)=G^{-1}(T)\\
      &=X_n\times_{\widetilde{\mathcal{U}}_n}T
      =X_n'\times\Gr_n\times_{\Gr_n\times\Gr_n}Y_{(\mu_1,\dots,\mu_n)}\times_{\widetilde{\mathcal{U}}_n}T\\
      & = \widetilde{\mathcal{U}}_n'\times\mathbb{A}_L^{d_2,\an}\times\dots\times\mathbb{A}_L^{d_n,\an}
      \times\Gr_n\times_{\Gr_n\times\Gr_n}Y_{(\mu_1,\dots,\mu_n)}\times_{\widetilde{\mathcal{U}}_n}T\\
      & = \widetilde{\mathcal{U}}_n'\times_{\widetilde{\mathcal{U}}_n}T\times\mathbb{A}^{d_2,\an}_L\times\dots\times\mathbb{A}_L^{d_n,\an}
      \times\Gr_n\times_{\Gr_n\times\Gr_n}Y_{(\mu_1,\dots,\mu_n)}\\
      & = T'\times\mathbb{A}^{d_2,\an}_L\times\dots\times\mathbb{A}_L^{d_n,\an}\times\Gr_n\times_{\Gr_n\times\Gr_n}Y_{(\mu_1,\dots,\mu_n)}.
    \end{align*}
    Recall that for locally ringed spaces $X$, $Y$ over a field $k$, the natural projection $X\times_kY\rightarrow X$
    is open.
    In our case, this translates to the map
    \[
      p:T'\times\mathbb{A}_L^{d_2,\an}\times\dots\times\mathbb{A}_L^{d_n,\an}\times\Gr_n\times_{\Gr_n\times\Gr_n}Y_{(\mu_1,\dots,\mu_n)}
      \rightarrow T'\times\mathbb{A}_L^{d_2,\an}\times\dots\times\mathbb{A}_L^{d_n,\an}
    \]
    being open.
    We have then showed that $p$ is continuous, open and a bijection, hence a homeomorphism.
  \end{proof}

  \begin{thm}
    \label{X_n is irreducible}
    If $n=3,4$ or if $|\{\delta_i/\delta_j\in\T\setminus\T^\reg\colon i<j\}|\leq5$,
    the irredeucible component $Z$ of $X_n$ containing the point $x_n$ is equal to the closure of $F^{-1}(\W_n^\reg)$.
  \end{thm}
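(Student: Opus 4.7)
The plan is to generalize the proof of Theorem \ref{X is irreducible} from the two-dimensional case, with the dimension count coming from fiber-equidimensionality of $F$ replaced by a stratum-by-stratum analysis supplied by Corollaries \ref{dim F-1(W) for regular enough} and \ref{dim F-1(W) for n=3,4}. Let $W \coloneqq \W_n^\reg \cap \im(F)$ and $Z' \coloneqq \overline{F^{-1}(W)}$ in $X_n$; note that $F^{-1}(W) = F^{-1}(\W_n^\reg)$. By Lemma \ref{homeo of regular locus}, $F^{-1}(W)$ is homeomorphic to the open subspace $T' \times \mathbb{A}_L^{d_2,\an} \times \cdots \times \mathbb{A}_L^{d_n,\an}$ of the irreducible vector bundle $X_n'$, hence irreducible; so $Z'$ is irreducible. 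For any irreducible closed $\widetilde Z \supseteq Z'$, the nonempty open $F^{-1}(W)$ is contained in $\widetilde Z$, so $\dim \widetilde Z = \dim F^{-1}(W) = \dim Z'$ by Remark \ref{Hartshorne_for_irreducible}, forcing $\widetilde Z = Z'$; thus $Z'$ is an irreducible component of $X_n$.

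Suppose for contradiction that $Z \neq Z'$ and define
\[
  V \coloneqq X_n \setminus \bigcup_{Z'' \neq Z} Z'',
\]
where the union ranges over the irreducible components of $X_n$ distinct from $Z$. Then $V$ is a nonempty open subspace of $X_n$ contained in $Z$ (any point of $V$ lies in some component, which must be $Z$), and is irreducible as a nonempty open of the irreducible $Z$. Since $F^{-1}(\W_n^\reg) \subseteq Z'$ and $V \cap Z' = \emptyset$, we get $V \cap F^{-1}(\W_n^\reg) = \emptyset$. But $V$ is a nonempty open of the irreducible $Z$ and $F^{-1}(\W_n^\reg)$ is open in $X_n$, so any nonempty $Z \cap F^{-1}(\W_n^\reg)$ would meet $V$; therefore $Z \subseteq F^{-1}(\W^n \setminus \W_n^\reg)$.

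To derive the contradiction, I would bound $\dim(F^{-1}(\W^n \setminus \W_n^\reg) \cap X_n)$ by reducing to finitely many strata. Shrinking $\widetilde{\mathcal{U}}_n$ from Lemma \ref{good neighbourhood} if necessary, continuity of $\rr{wt}(\widetilde\delta_i/\widetilde\delta_j)$ on the affinoid $X_n$ implies that only integers $m$ in a small neighbourhood of $\rr{wt}(\delta_i/\delta_j)$ can be realized as $\widetilde\delta_i/\widetilde\delta_j|_{\Z_p^\times} = x^m$ at some point of $X_n$, so only finitely many strata $\mathcal{W}_1, \ldots, \mathcal{W}_N$ of $\W^n$ meet $F(X_n)$, and hence $F^{-1}(\W^n \setminus \W_n^\reg) \cap X_n = \bigcup_{i=1}^N F^{-1}(\mathcal{W}_i)$. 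Under either hypothesis of the theorem, Corollaries \ref{dim F-1(W) for regular enough} and \ref{dim F-1(W) for n=3,4} give $\dim F^{-1}(\mathcal{W}_i) < \dim F^{-1}(\W_n^\reg)$ for every $i$. Combining with $\dim Z \geq \dim F^{-1}(\W_n^\reg) = \dim X_n'$ from Theorem \ref{dimZ} and Remark \ref{Wreg and X' same dim} yields
\[
  \dim X_n' \leq \dim Z \leq \max_{i} \dim F^{-1}(\mathcal{W}_i) < \dim X_n',
\]
the desired contradiction. The main obstacle is the finiteness-of-relevant-strata claim: the characters $\{x^m\}_{m \in \Z}$ do not form a discrete subset of $\W$, and the condition $\widetilde\delta_i/\widetilde\delta_j|_{\Z_p^\times} = x^m$ only constrains the restriction to $\Z_p^\times$, so the finiteness must be extracted from continuity of the weight map on the affinoid $X_n$ together with the careful choice of $\widetilde{\mathcal{U}}_n$ dictated by Lemma \ref{good neighbourhood} and Remark \ref{good U_n'}.
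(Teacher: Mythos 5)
Your proof follows the same overall route as the paper's: establish that $Z' := \overline{F^{-1}(W)}$ is irreducible via Lemma \ref{homeo of regular locus}, conclude it is an irreducible component, and then bound $\dim Z$ against the non-regular strata to get a contradiction. You have also correctly put your finger on the one genuine subtlety — the reduction of $F^{-1}(\W^n\setminus\W_n^\reg)\cap X_n$ to \emph{finitely many} strata — since $\{x^m\colon m\in\Z\}$ is a countably infinite union of closed subspaces of $\W$, so $\W_n^\reg$ is dense but \emph{not} Zariski-open (the paper is careful to say only "dense", in contrast to $\T_n^\reg$ which it does call open). This means your assertions that "$F^{-1}(W)$ is open" and "$F^{-1}(\W_n^\reg)$ is open in $X_n$" are not free; they become true only after the finiteness is in place.

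Where you diverge slightly from the paper is in how that finiteness enters, and in the last step. The paper avoids ever needing $Z\subseteq F^{-1}(\W^n\setminus\W_n^\reg)$: it works with the open $V\subseteq Z$, uses $V\subseteq F^{-1}(F(V))$ and $\dim V=\dim Z\geq\dim F^{-1}(\W_n^\reg)$, and derives the contradiction from $F(V)\cap\W_n^\reg=\emptyset$ together with the stratum bound; no openness of $F^{-1}(\W_n^\reg)$ is invoked. Likewise, for the irreducible-component step the paper restricts to a Zariski-open $W_0\subseteq W$ rather than treating $F^{-1}(W)$ itself as open. Your approach is salvageable, but it makes the openness load-bearing where the paper's doesn't. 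As for the finiteness mechanism, what actually does the work is not abstract continuity of $\rr{wt}$ on the affinoid $X_n$ but the prepared neighbourhood $\widetilde{\mathcal{U}}_n$ together with Lemma \ref{good W}: it shows that for a stratum $\mathcal{W}_{(\mathcal{L}_j,\mathcal{M}_j)_j}^{(b_{ij})}$, its preimage inside $\widetilde{\mathcal{U}}_n$ is covered by $\mathcal{Z}_{(\mathcal{L}_j',\mathcal{M}_j')_j}^{(a_{ij})}$ with $\mathcal{L}_j'\subseteq\{i<j\colon\delta_i/\delta_j\in\T^-\}$, $\mathcal{M}_j'\subseteq\{i<j\colon\delta_i/\delta_j\in\T^+\}$, and with exponents $a_{ij}$ pinned to the ones coming from $\delta_i/\delta_j$ (Remark \ref{good U_n'}). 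This is why the paper can drop the superscript $(b_{ij})$ in the final display and sum over finitely many $(\mathcal{L}_j,\mathcal{M}_j)$ only. You should cite Lemma \ref{good W} and Remark \ref{good U_n'} rather than gesture at continuity; otherwise the argument is sound.
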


  \begin{proof}
    Let $W\coloneqq\W_n^\reg\cap\im(F)\subset\W^n$
    and let $Z'$ be the closure of $F^{-1}(W)$ in $X_n$.
    By Lemma \ref{homeo of regular locus} we have that $F^{-1}(W)$ is irreducible,
    thus $Z'$ is irreducible, as closure of irreducible.
    Now we show that $Z'$ is an irreducible component of $X_n$: assume $\widetilde Z\subset Z'$
    is an irreducible component of $X_n$ containing $Z'$.
    Let $W_0$ be an open in $W$, then $F^{-1}(W_0)$ is open in $Z'$ and $\widetilde Z$, hence
    \[
      \dim\widetilde Z=\dim F^{-1}(W_0)=\dim Z'.
    \]
    This shows that $Z'=\widetilde Z$, since they are both irreducible of the same dimension.
    Now we prove that $Z'=Z$:
    let
    \[
      V\coloneqq Z\setminus\bigcup_{Z''\neq Z}Z''\subset Z
    \]
    where the union ranges over all the irreducible components $Z''$ of $X_n$ that are not $Z$.
    Notice that $V$ is open in $Z$, hence by Theorem \ref{dimZ}
    and Remark \ref{Wreg and X' same dim} we have
    \[
      \dim V=\dim Z\geq \dim X_n'=\dim F^{-1}(\W_n^\reg).
    \]
    Finally, we have $V\subseteq F^{-1}(F(V))$, thus
    \[
      \dim F^{-1}(F(V))\geq \dim V\geq \dim F^{-1}(\W_n^\reg).
    \]
    On the other hand, by Corollary \ref{dim F-1(W) for n=3,4} and Corollary \ref{dim F-1(W) for regular enough}, for every
    $\mathcal{L}_j\subseteq\{i<j\colon (\delta_i/\delta_j)_{|\Z_p^\times}\in x^{-\N}\}$ and
    $\mathcal{M}_j\subseteq\{i<j\colon (\delta_i/\delta_j)_{|\Z_p^\times}\in x^{\N_{>0}}\}$
    we have that $\dim_KF^{-1}(\mathcal{W}_{(\mathcal{L}_j,\mathcal{M}_j)_j})<\dim_KF^{-1}(\W_n^\reg)$.
    Therefore we must have $F^{-1}(F(V))\cap F^{-1}(\W_n^\reg)\neq\emptyset$, which
    is a contradiction, as $F(V)\cap\W_n^\reg=\emptyset$ by definition of $V$.
  \end{proof}

  \begin{rem}
    Theorem \ref{X_n is irreducible} concludes the proof of Theorem \ref{result2}.
    Notice that the only part in which we need the assumption $n\leq 4$ or the assumption
    $|\{\delta_i/\delta_j\in\T\setminus\T^\reg\colon i<j\}|$ is for
    giving a bound to the dimension of the sets of the form $F^{-1}(\mathcal{W}^{(b_{ij})}_{(\mathcal{L}_j,\mathcal{M}_j)_j})$.
  \end{rem}

\bibliography{triangrep}
\bibliographystyle{amsalpha}

\end{document}